\title[A free boundary inviscid model of flow-structure interaction]{A free boundary inviscid model of flow-structure interaction} \author[I.~Kukavica]{Igor Kukavica} \address{Department of Mathematics, University of Southern California, Los Angeles, CA 90089} \email{kukavica@usc.edu} \author[A.~Tuffaha]{Amjad Tuffaha} \address{Department of Mathematics and Statistics, American University of Sharjah, Sharjah, UAE} \email{atufaha\char'100aus.edu}   \chardef\forshowkeys=0   \chardef\refcheck=0   \chardef\showllabel=0   \chardef\sketches=0 \usepackage{enumitem} \usepackage{datetime} \usepackage{fancyhdr}
\fi \usepackage[margin=1in]{geometry} \usepackage{amsmath, amsthm, amssymb} \usepackage{times} \usepackage{graphicx} \usepackage[usenames,dvipsnames,svgnames,table]{xcolor} \usepackage{marginnote} \usepackage[unicode,breaklinks=true,colorlinks=true,linkcolor=blue,urlcolor=blue,citecolor=blue]{hyperref} \usepackage[most]{tcolorbox}
\begin{document} \def\XX{X} \def\YY{Y}
\def\ZZZ{Z} \def\intint{\int\!\!\!\!\int} \def\OO{\mathcal O} \def\SS{\mathbb S} \def\CC{\mathbb C} \def\RR{\mathbb R} \def\TT{\mathbb T} \def\ZZ{\mathbb Z} \def\HH{\mathbb H} \def\RSZ{\mathcal R} \def\LL{\mathcal L} \def\SL{\LL^1} \def\ZL{\LL^\infty} \def\GG{\mathcal G}
\def\tt{\langle t\rangle} \def\erf{\mathrm{Erf}} \def\mgt#1{\textcolor{magenta}{#1}} \def\ff{\rho} \def\gg{G} \def\sqrtnu{\sqrt{\nu}} \def\ww{w} \def\ft#1{#1_\xi} \def\les{\lesssim} \def\ges{\gtrsim} \renewcommand*{\Re}{\ensuremath{\mathrm{{\mathbb R}e\,}}} \renewcommand*{\Im}{\ensuremath{\mathrm{{\mathbb I}m\,}}} \ifnum\showllabel=1  \def\llabel#1{\marginnote{\color{lightgray}\rm\small(#1)}[-0.0cm]\notag}
\else \def\llabel#1{\notag} \fi \newcommand{\norm}[1]{\left\|#1\right\|} \newcommand{\nnorm}[1]{\lVert #1\rVert} \newcommand{\abs}[1]{\left|#1\right|} \newcommand{\NORM}[1]{|\!|\!| #1|\!|\!|} \newtheorem{theorem}{Theorem}[section] \newtheorem{Theorem}{Theorem}[section] \newtheorem{corollary}[theorem]{Corollary} \newtheorem{Corollary}[theorem]{Corollary} \newtheorem{proposition}[theorem]{Proposition} \newtheorem{Proposition}[theorem]{Proposition} \newtheorem{Lemma}[theorem]{Lemma}
\newtheorem{lemma}[theorem]{Lemma} \theoremstyle{definition} \newtheorem{definition}{Definition}[section] \newtheorem{Remark}[theorem]{Remark} \def\theequation{\thesection.\arabic{equation}} \numberwithin{equation}{section} \definecolor{mygray}{rgb}{.6,.6,.6} \definecolor{myblue}{rgb}{9, 0, 1}   \definecolor{colorforkeys}{rgb}{1.0,0.0,0.0} \newlength\mytemplen \newsavebox\mytempbox \def\weaks{\text{\,\,\,\,\,\,weakly-* in }} \def\weak{\text{\,\,\,\,\,\,weakly in }} \def\inn{\text{\,\,\,\,\,\,in }}
\def\cof{\mathop{\rm cof\,}\nolimits} \def\Dn{\frac{\partial}{\partial N}} \def\Dnn#1{\frac{\partial #1}{\partial N}} \def\tdb{\tilde{b}} \def\tda{b} \def\qqq{u} \def\cite#1{[#1]} \def\lat{\Delta_2} \def\biglinem{\vskip0.5truecm\par==========================\par\vskip0.5truecm} \def\inon#1{\hbox{\ \ \ \ \ \ \ }\hbox{#1}}                \def\onon#1{\inon{on~$#1$}} \def\inin#1{\inon{in~$#1$}} \def\FF{F} \def\andand{\text{\indeq and\indeq}}
\def\ww{w(y)} \def\ll{{\color{red}\ell}} \def\ee{\epsilon_0} \def\startnewsection#1#2{ \section{#1}\label{#2}\setcounter{equation}{0}}    \def\nnewpage{ } \def\sgn{\mathop{\rm sgn\,}\nolimits}     \def\Tr{\mathop{\rm Tr}\nolimits}     \def\div{\mathop{\rm div}\nolimits} \def\curl{\mathop{\rm curl}\nolimits} \def\dist{\mathop{\rm dist}\nolimits}   \def\supp{\mathop{\rm supp}\nolimits} \def\indeq{\quad{}}            \def\period{.}                        \def\semicolon{\,;}                  
\def\nts#1{{\cor #1\cob}} \def\colr{\color{red}} \def\colrr{\color{black}} \def\colb{\color{black}} \def\coly{\color{lightgray}} \definecolor{colorgggg}{rgb}{0.1,0.5,0.3} \definecolor{colorllll}{rgb}{0.0,0.7,0.0} \definecolor{colorhhhh}{rgb}{0.3,0.75,0.4} \definecolor{colorpppp}{rgb}{0.7,0.0,0.2} \definecolor{coloroooo}{rgb}{0.45,0.0,0.0} \definecolor{colorqqqq}{rgb}{0.1,0.7,0} \def\colg{\color{colorgggg}} \def\collg{\color{colorllll}} \def\cole{\color{coloroooo}}
\def\coleo{\color{colorpppp}} \def\cole{\color{black}} \def\colu{\color{blue}} \def\colc{\color{colorhhhh}} \def\colW{\colb}    \definecolor{coloraaaa}{rgb}{0.6,0.6,0.6} \def\colw{\color{coloraaaa}} \def\comma{ {\rm ,\qquad{}} }             \def\commaone{ {\rm ,\quad{}} }           \def\les{\lesssim} \def\thelt#1{}\def\dlkjfhlaskdhjflkasdjhflkasjhdflkasjhdflkasjhdfls{\lesssim} \def\nts#1{{\color{blue}\hbox{\bf ~#1~}}}  \def\ntsf#1{\footnote{\color{colorgggg}\hbox{#1}}}  \def\blackdot{{\color{red}{\hskip-.0truecm\rule[-1mm]{4mm}{4mm}\hskip.2truecm}}\hskip-.3truecm}
\def\bluedot{{\color{blue}{\hskip-.0truecm\rule[-1mm]{4mm}{4mm}\hskip.2truecm}}\hskip-.3truecm} \def\purpledot{{\color{colorpppp}{\hskip-.0truecm\rule[-1mm]{4mm}{4mm}\hskip.2truecm}}\hskip-.3truecm} \def\greendot{{\color{colorgggg}{\hskip-.0truecm\rule[-1mm]{4mm}{4mm}\hskip.2truecm}}\hskip-.3truecm} \def\cyandot{{\color{cyan}{\hskip-.0truecm\rule[-1mm]{4mm}{4mm}\hskip.2truecm}}\hskip-.3truecm} \def\reddot{{\color{red}{\hskip-.0truecm\rule[-1mm]{4mm}{4mm}\hskip.2truecm}}\hskip-.3truecm} \def\tdot{{\color{green}{\hskip-.0truecm\rule[-.5mm]{3mm}{3mm}\hskip.2truecm}}\hskip-.1truecm} \def\gdot{\greendot} \def\bdot{\bluedot} \def\ydot{\cyandot} \def\rdot{\cyandot} \def\fractext#1#2{{#1}/{#2}} \def\ii{\hat\imath} \def\fei#1{\textcolor{blue}{#1}} \def\vlad#1{\textcolor{cyan}{#1}}
\def\igor#1{\text{{\textcolor{colorqqqq}{#1}}}} \def\igorf#1{\footnote{\text{{\textcolor{colorqqqq}{#1}}}}} \def\AA{Y} \newcommand{\p}{\partial} \newcommand{\UE}{U^{\rm E}} \newcommand{\PE}{P^{\rm E}} \newcommand{\KP}{K_{\rm P}} \newcommand{\uNS}{u^{\rm NS}} \newcommand{\vNS}{v^{\rm NS}} \newcommand{\pNS}{p^{\rm NS}} \newcommand{\omegaNS}{\omega^{\rm NS}} \newcommand{\uE}{u^{\rm E}} \newcommand{\vE}{v^{\rm E}} \newcommand{\pE}{p^{\rm E}}
\newcommand{\omegaE}{\omega^{\rm E}} \newcommand{\ua}{u_{\rm   a}} \newcommand{\va}{v_{\rm   a}} \newcommand{\omegaa}{\omega_{\rm   a}} \newcommand{\ue}{u_{\rm   e}} \newcommand{\ve}{v_{\rm   e}} \newcommand{\omegae}{\omega_{\rm e}} \newcommand{\omegaeic}{\omega_{{\rm e}0}} \newcommand{\ueic}{u_{{\rm   e}0}} \newcommand{\veic}{v_{{\rm   e}0}} \newcommand{\up}{u^{\rm P}} \newcommand{\vp}{v^{\rm P}} \newcommand{\tup}{{\tilde u}^{\rm P}} \newcommand{\bvp}{{\bar v}^{\rm P}}
\newcommand{\omegap}{\omega^{\rm P}} \newcommand{\tomegap}{\tilde \omega^{\rm P}} \renewcommand{\up}{u^{\rm P}} \renewcommand{\vp}{v^{\rm P}} \renewcommand{\omegap}{\Omega^{\rm P}} \renewcommand{\tomegap}{\omega^{\rm P}} \begin{abstract} We obtain the local existence and uniqueness for a system describing interaction of an incompressible inviscid fluid, modeled by the Euler equations, and an elastic plate, represented by the fourth-order hyperbolic PDE.  We provide a~priori estimates for the existence with the optimal regularity $H^{r}$, for $r>2.5$, on the fluid  initial data and construct a unique solution of the system  for initial data $u_0\in H^{r}$ for $r\geq3$.
An important feature of the existence theorem is that the Taylor-Rayleigh instability does not occur. \end{abstract} \keywords{Euler equations, free-boundary problems, Euler-plate system} \maketitle \setcounter{tocdepth}{2}  \tableofcontents \colb \startnewsection{Introduction}{sec01} In this paper, we prove the existence and uniqueness of local-in-time solutions to a system describing the interaction between an inviscid incompressible fluid and an elastic plate. The model couples the 3D incompressible Euler equations with a hyperbolic fourth-order equation that describes the motion of the free-moving interface. We consider a domain that is a channel with a rigid bottom boundary and a top moving boundary which is only allowed to move in the vertical direction according to a displacement function $w$. The function $w$ satisfies a fourth-order hyperbolic equation, with a forcing imposed by the fluid normal stress.  The boundary conditions on the Euler equations match the normal component of the fluid velocity with the normal velocity of the plate, while periodic boundary conditions are imposed in the horizontal directions. We also prove that if solutions with the proposed regularity exist, they are unique. As far as we know, this is the first treatment of the moving boundary fluid-elastic structure system where the fluid is inviscid. \par The viscous model, involving the Navier-Stokes equations, has been treated in the literature by several authors. The earliest known work on the free-moving domain model is by Beir\~ao da~Veiga~\cite{B}, who considered the coupled 2D Navier-Stokes-plate model and established the existence of a strong solution. In \cite{DEGL, CDEG}, Desjardins et~al considered the existence of weak solutions to the 3D Navier-Stokes system coupled with a strongly damped plate. Weak solutions to the 2D model without damping were obtained in~\cite{G}. In all the treatments mentioned above, the plate equations were considered under clamped boundary conditions at the ends of the interface. \par More recent works have considered an infinite plate model with periodic boundary conditions.  In \cite{GH}, Grandmont and Hillairet obtained global solutions to the 2D model, with lower order damping on the plate. Local-in-time strong solutions for the 2D model were also constructed in \cite{GHL} under different scenarios involving either a plate with rotational inertia (no damping) or a rod instead of a beam (wave equation). Models, where the plate equation on the lower dimensional interface is replaced by the damped wave equation, have also been treated earlier by Lequeurre in both 2D and 3D~\cite{L1,L2}.  In another recent work, Badra and Takahashi \cite{BT} proved the well-posedness and Gevrey regularity of the viscous 2D model without imposing any damping, rotational inertia, or any other approximation on the plate equations.
\par Models of viscous Koiter shell interactions which involve coupling the Navier-Stokes equations with fourth-order hyperbolic equations on cylindrical domains were also studied in numerous works~\cite{CS2, CCS, GGCC, GGCCL, CGH, GM, L, LR, MC1, MC2, MC3}. The considered shell equations are nonlinear and model blood flow inside the arteries.  The same 3D model on a cylindrical domain was also studied by Maity, Roy, and Raymond \cite{MRR}, who obtained the local-in-time solutions under less regularity on the initial data. For other related works on plate models, see~\cite{Bo, BKS, BS1, BS2, C, CK, DEGLT, MS} and for other results on fluids interacting with elastic objects, see~\cite{AL,Bo,BST,CS1,IKLT,KOT,KT,RV,TT} \par For mathematical treatments of the non-moving boundary viscous models of flow structure interaction, one can find plenty of works on well-posedness and stabilization; cf.~for example~\cite{AB, AGW, Ch, CR}.  On the other hand, inviscid models have been treated mainly through linearized potential flow-structure interaction models on non-moving boundary~\cite{CLW, LW, W}. These models are mathematically valuable and physically meaningful if one considers a high order of magnitude in the structure velocity relative to the displacement in which non-moving domains provide a fairly good approximation. \par Up to our knowledge, there have been no works in the literature on the well-posedness of the inviscid free boundary model where the Euler equations are considered in place of the Navier-Stokes equations. To address the existence of solutions, we use an ALE (Arbitrary Lagrangian Eulerian) formulation, which fixes the domain and provides the necessary additional regularity for the variables. In particular, we use a change of variable via the harmonic extension of the boundary transversal displacement. The a~priori estimates are then obtained using a div-curl type bound on the fluid velocity.  Tangential bounds provide control of the structure displacement and velocity, while the pressure term is determined by solving an elliptic problem with Robin boundary conditions on the plate. \par The construction of solutions turns out to be a challenging problem. Naturally, we need to first solve the variable coefficients Euler equations with a non-homogeneous type boundary conditions (normal component) but the low regularity of the pressure on the boundary does not allow for the usual fixed point scheme to be carried through.  In our construction scheme, we solve the variable coefficients Euler equations with nonhomogeneous type boundary conditions (normal component given) in five stages. In the first step, we solve a linear transport equation under more regular boundary data, where we rely on two new tools: an extension operator allowing to solve the problem on  the whole space with no boundary conditions and a specially designed boundary value problem for the pressure that exploits the regularizing effect coming through the boundary data at the interface, and is based on certain cancellations that appear when formulating the Neumann Robin type boundary conditions for the pressure (cf.~Remark~\ref{R01}).  The approximate problem is then solved in the whole space employing a new technique involving Sobolev extensions without imposing any boundary conditions, and without imposing the variable divergence-free condition. In the next stage, the nonlinear problem, still with more regular boundary data, is solved by a fixed point technique using the extension operator and the solution of the linear problem. In the third sage, we prove that the unique fixed point solutions to the Euler equations with given variable coefficients satisfy the boundary conditions and the divergence conditions. In the fourth stage, we employ the vorticity formulation (pressure free) whereby we solve a div-curl type systems and derive  estimates for the full regularity of velocity in terms of  less regular boundary data.   In the final step, we derive solutions to the variable Euler equations under less regular data using a standard density argument and the uniform estimates in the previous step, thus concluding the proof of existence for the variable Euler equations.
\par For the construction of solutions to the coupled Euler-plate system, the low regularity of the pressure does not allow for a fixed point scheme to be used.  Instead, we use the fixed point scheme to obtain solutions to a regularized system that includes a damping term in the plate. Once solutions to the regularized system are obtained, the coupled a~priori estimates which involve the cancellation of the pressure boundary terms give rise to estimates uniform in the damping parameter $\nu$ and thus allow us to pass through the limit in the damping parameter to obtain solutions to the original system without damping. \par The paper is structured as follows.  In Section~\ref{sec02}, we introduce the model and restate it in the ALE variables.  The first main result, contained in Theorem~\ref{T01}, provides a~priori estimates for the existence of a local in time solution for the initial velocity in $H^{2.5+\delta}$ (the minimal regularity for the classical Euler equations) and the initial plate velocity in $H^{2+\delta}$, where $\delta>0$ is arbitrary and not necessarily small.  Next, Theorem~\ref{T03} provides the existence of a local solution, i.e., gives a construction of a solution, when $\delta\geq0.5$.  In Section~\ref{secap}, we prove the statement on the a~priori estimates. The first part of the proof, stated in Lemma~\ref{L01}, contains bounds on the cofactor matrix and the Jacobian.  The estimates controlling the tangential components on the boundary are obtained in Lemma~\ref{L02}.  There, energy estimates performed on the plate equation are derived by exploiting the coupling with the Euler equation to eliminate the pressure term. A characteristic feature of these estimates is that the fluid velocity in the interior appears as a lower order term in these estimates. \par The estimates controlling the pressure term are derived in Lemma~\ref{L03} via solving an elliptic problem for the pressure with Robin type boundary conditions on the moving interface.  Control of the interior fluid velocity is accomplished using the ALE vorticity formulation and div-curl type estimates (Lemma~\ref{L04}) with estimates performed on the whole space using Sobolev extensions.  The proof of Theorem~\ref{T01} is then provided in Section~\ref{sec06}.  Next, a short Section~\ref{sec07} provides a discussion on the compatibility conditions imposed on the data at the boundary.  Section~\ref{sec08} contains the proof of uniqueness of solutions, in the regularity class $H^{2.5+ \delta}$ for the fluid velocity $v$ and $H^{4+\delta} \times H^{2+\delta}$ for plate displacement $w$ and velocity $w_{t}$, with the additional constraint $\delta \geq 0.5$.  This additional constraint on the regularity exponent turns out to be necessary in the uniqueness argument when performing the pressure estimate (cf.~the comment below \eqref{8ThswELzXU3X7Ebd1KdZ7v1rN3GiirRXGKWK099ovBM0FDJCvkopYNQ2aN94Z7k0UnUKamE3OjU8DFYFFokbSI2J9V9gVlM8ALWThDPnPu3EL7HPD2VDaZTggzcCCmbvc70qqPcC9mt60ogcrTiA3HEjwTK8ymKeuJMc4q6dVz200XnYUtLR9GYjPXvFOVr6W1zUK1WbPToaWJJuKnxBLnd0ftDEbMmj4loHYyhZyMjM91zQS4p7z8eKa9h0JrbacekcirexG0z4n3155}) and when bounding the commutator terms on the difference of the two solutions. \par Finally, in Section~\ref{secle}, we provide the construction of solutions.  We start with the construction of solutions for the variable coefficient Euler equations, where the difficulties are the inflow condition \eqref{8ThswELzXU3X7Ebd1KdZ7v1rN3GiirRXGKWK099ovBM0FDJCvkopYNQ2aN94Z7k0UnUKamE3OjU8DFYFFokbSI2J9V9gVlM8ALWThDPnPu3EL7HPD2VDaZTggzcCCmbvc70qqPcC9mt60ogcrTiA3HEjwTK8ymKeuJMc4q6dVz200XnYUtLR9GYjPXvFOVr6W1zUK1WbPToaWJJuKnxBLnd0ftDEbMmj4loHYyhZyMjM91zQS4p7z8eKa9h0JrbacekcirexG0z4n3194} on the top and the low regularity of the pressure boundary condition \eqref{8ThswELzXU3X7Ebd1KdZ7v1rN3GiirRXGKWK099ovBM0FDJCvkopYNQ2aN94Z7k0UnUKamE3OjU8DFYFFokbSI2J9V9gVlM8ALWThDPnPu3EL7HPD2VDaZTggzcCCmbvc70qqPcC9mt60ogcrTiA3HEjwTK8ymKeuJMc4q6dVz200XnYUtLR9GYjPXvFOVr6W1zUK1WbPToaWJJuKnxBLnd0ftDEbMmj4loHYyhZyMjM91zQS4p7z8eKa9h0JrbacekcirexG0z4n3212} due to the first term, $w_{tt}$.  In the second step, Sections~\ref{sec20}--\ref{sec11}, we construct a local solution for a regularized Euler-plate system.  Finally, in the last step of the proof, we pass to the limit in the plate damping parameter $\nu\to0$, concluding the construction. \par \startnewsection{The model and the main results}{sec02} We consider a flow-structure interaction system, defined on an open bounded domain $\Omega(t)\subseteq {\mathbb R}^{3}$,  which evolves in time $t$ over $[0,T]$, where $T>0$.
The dynamics of the flow are modeled by the incompressible Euler equations   \begin{align}\thelt{8Th sw ELzX U3X7 Ebd1Kd Z7 v 1rN 3Gi irR XG KWK0 99ov BM0FDJ Cv k opY NQ2 aN9 4Z 7k0U nUKa mE3OjU 8D F YFF okb SI2 J9 V9gV lM8A LWThDP nP u 3EL 7HP D2V Da ZTgg zcCC mbvc70 qq P cC9 mt6 0og cr TiA3 HEjw TK8ymK eu J Mc4 q6d Vz2 00 XnYU tLR9 GYjPXv FO V r6W 1zU K1W bP ToaW JJuK nxBLnd 0f t DEb Mmj 4lo HY yhZy MjM9 1zQS4p 7z 8 eKa 9h0 Jrb ac ekci rexG 0z4n3x z0 Q OWS vFj 3jL hW XUIU 21iI AwJtI3 Rb W a90 I7r zAI qI 3UEl UJG7 tLtUXz w4 K QNE TvX zqW au}   \begin{split}    & u_t         + (u\cdot \nabla) u        + \nabla p = 0    \\&    \nabla \cdot u=0   \end{split}    \label{8ThswELzXU3X7Ebd1KdZ7v1rN3GiirRXGKWK099ovBM0FDJCvkopYNQ2aN94Z7k0UnUKamE3OjU8DFYFFokbSI2J9V9gVlM8ALWThDPnPu3EL7HPD2VDaZTggzcCCmbvc70qqPcC9mt60ogcrTiA3HEjwTK8ymKeuJMc4q6dVz200XnYUtLR9GYjPXvFOVr6W1zUK1WbPToaWJJuKnxBLnd0ftDEbMmj4loHYyhZyMjM91zQS4p7z8eKa9h0JrbacekcirexG0z4n301}   \end{align} in $\Omega(t) \times[0,T]$. For simplicity of presentation, we assume
that $\Omega(0)=\Omega={\mathbb T}^2\times [0,1]$, i.e., the initial domain is ${\mathbb R}^2\times[0,1]$, with the 1-periodic boundary conditions on the sides. Denote   \begin{equation}    \Gamma_1={\mathbb T}^2 \times \{1\}    \label{8ThswELzXU3X7Ebd1KdZ7v1rN3GiirRXGKWK099ovBM0FDJCvkopYNQ2aN94Z7k0UnUKamE3OjU8DFYFFokbSI2J9V9gVlM8ALWThDPnPu3EL7HPD2VDaZTggzcCCmbvc70qqPcC9mt60ogcrTiA3HEjwTK8ymKeuJMc4q6dVz200XnYUtLR9GYjPXvFOVr6W1zUK1WbPToaWJJuKnxBLnd0ftDEbMmj4loHYyhZyMjM91zQS4p7z8eKa9h0JrbacekcirexG0z4n302}   \end{equation} and   \begin{equation}    \Gamma_0={\mathbb T}^2 \times \{0\}    \label{8ThswELzXU3X7Ebd1KdZ7v1rN3GiirRXGKWK099ovBM0FDJCvkopYNQ2aN94Z7k0UnUKamE3OjU8DFYFFokbSI2J9V9gVlM8ALWThDPnPu3EL7HPD2VDaZTggzcCCmbvc70qqPcC9mt60ogcrTiA3HEjwTK8ymKeuJMc4q6dVz200XnYUtLR9GYjPXvFOVr6W1zUK1WbPToaWJJuKnxBLnd0ftDEbMmj4loHYyhZyMjM91zQS4p7z8eKa9h0JrbacekcirexG0z4n303}   \end{equation} the initial position of the upper and the lower portions of the
boundary. We impose  the slip boundary condition on the bottom   \begin{equation}     u\cdot N     = 0     \onon{\Gamma_0}    .    \label{8ThswELzXU3X7Ebd1KdZ7v1rN3GiirRXGKWK099ovBM0FDJCvkopYNQ2aN94Z7k0UnUKamE3OjU8DFYFFokbSI2J9V9gVlM8ALWThDPnPu3EL7HPD2VDaZTggzcCCmbvc70qqPcC9mt60ogcrTiA3HEjwTK8ymKeuJMc4q6dVz200XnYUtLR9GYjPXvFOVr6W1zUK1WbPToaWJJuKnxBLnd0ftDEbMmj4loHYyhZyMjM91zQS4p7z8eKa9h0JrbacekcirexG0z4n304}   \end{equation} A function $w\colon \Gamma_1\times[0,T)\to {\mathbb R}$ satisfies the  fourth-order damped plate equation    \begin{equation}
   w_{tt} + \Delta_2^2 w    - \nu \Delta_{2} w_{t}    = p    \onon{\Gamma_1\times[0,T]}     ,    \label{8ThswELzXU3X7Ebd1KdZ7v1rN3GiirRXGKWK099ovBM0FDJCvkopYNQ2aN94Z7k0UnUKamE3OjU8DFYFFokbSI2J9V9gVlM8ALWThDPnPu3EL7HPD2VDaZTggzcCCmbvc70qqPcC9mt60ogcrTiA3HEjwTK8ymKeuJMc4q6dVz200XnYUtLR9GYjPXvFOVr6W1zUK1WbPToaWJJuKnxBLnd0ftDEbMmj4loHYyhZyMjM91zQS4p7z8eKa9h0JrbacekcirexG0z4n305}   \end{equation} where $\nu\geq 0$ is fixed; the pressure $p$ is evaluated at $(x_1,x_2,w(x_1,x_2,t))$, with the initial condition   \begin{equation}    (w,w_t)|_{t=0}=(0,w_1)    .    \label{8ThswELzXU3X7Ebd1KdZ7v1rN3GiirRXGKWK099ovBM0FDJCvkopYNQ2aN94Z7k0UnUKamE3OjU8DFYFFokbSI2J9V9gVlM8ALWThDPnPu3EL7HPD2VDaZTggzcCCmbvc70qqPcC9mt60ogcrTiA3HEjwTK8ymKeuJMc4q6dVz200XnYUtLR9GYjPXvFOVr6W1zUK1WbPToaWJJuKnxBLnd0ftDEbMmj4loHYyhZyMjM91zQS4p7z8eKa9h0JrbacekcirexG0z4n306}
  \end{equation} The general initial data, i.e., $w(0)$ nonzero, can be considered using the same approach. We emphasize that the case $\nu=0$ is included and is our primary model. However, in order to construct a solution when $\nu=0$, we first obtain solutions with $\nu>0$ satisfying a uniform in $\nu$ bound in an  appropriate solution space, and pass to the limit as $\nu\to0$. The reason why the parameter $\nu>0$ is needed is the low regularity of the pressure term forcing the plate equation, while a~priori estimates rely on cancellation of the lower regularity term involving the pressure. Since we are mainly interested in the limiting case $\nu=0$, we always assume $\nu\in[0,1]$. The variable $w$ represents the height of the interface at $t\in[0,T]$. We assume that the plate evolves with the fluid velocity, and $w$ thus satisfies 
the kinematic condition \def\UIOIUYOIUyHJGKHJLOIUYOIUOIUYOIYIOUYTIUYIOOOIUYOIUYPOIUPOIUPOIUYOIUYOIUYOIUHOUHOHIOUHOIHOIUHOIUHIOUH{\partial}   \begin{equation}    w_t    + u_1 \UIOIUYOIUyHJGKHJLOIUYOIUOIUYOIYIOUYTIUYIOOOIUYOIUYPOIUPOIUPOIUYOIUYOIUYOIUHOUHOHIOUHOIHOIUHOIUHIOUH_{1} w    + u_2 \UIOIUYOIUyHJGKHJLOIUYOIUOIUYOIYIOUYTIUYIOOOIUYOIUYPOIUPOIUPOIUYOIUYOIUYOIUHOUHOHIOUHOIHOIUHOIUHIOUH_{2} w    = u_3    .    \label{8ThswELzXU3X7Ebd1KdZ7v1rN3GiirRXGKWK099ovBM0FDJCvkopYNQ2aN94Z7k0UnUKamE3OjU8DFYFFokbSI2J9V9gVlM8ALWThDPnPu3EL7HPD2VDaZTggzcCCmbvc70qqPcC9mt60ogcrTiA3HEjwTK8ymKeuJMc4q6dVz200XnYUtLR9GYjPXvFOVr6W1zUK1WbPToaWJJuKnxBLnd0ftDEbMmj4loHYyhZyMjM91zQS4p7z8eKa9h0JrbacekcirexG0z4n307}   \end{equation} Note that \eqref{8ThswELzXU3X7Ebd1KdZ7v1rN3GiirRXGKWK099ovBM0FDJCvkopYNQ2aN94Z7k0UnUKamE3OjU8DFYFFokbSI2J9V9gVlM8ALWThDPnPu3EL7HPD2VDaZTggzcCCmbvc70qqPcC9mt60ogcrTiA3HEjwTK8ymKeuJMc4q6dVz200XnYUtLR9GYjPXvFOVr6W1zUK1WbPToaWJJuKnxBLnd0ftDEbMmj4loHYyhZyMjM91zQS4p7z8eKa9h0JrbacekcirexG0z4n307} may be rewritten as    \begin{equation}     |(\UIOIUYOIUyHJGKHJLOIUYOIUOIUYOIYIOUYTIUYIOOOIUYOIUYPOIUPOIUPOIUYOIUYOIUYOIUHOUHOHIOUHOIHOIUHOIUHIOUH_{1}w,\UIOIUYOIUyHJGKHJLOIUYOIUOIUYOIYIOUYTIUYIOOOIUYOIUYPOIUPOIUPOIUYOIUYOIUYOIUHOUHOHIOUHOIHOIUHOIUHIOUH_{2}w,-1)|   u(x_1,x_2,w(x_1,x_2,t))\cdot n = 0     ,
   \label{8ThswELzXU3X7Ebd1KdZ7v1rN3GiirRXGKWK099ovBM0FDJCvkopYNQ2aN94Z7k0UnUKamE3OjU8DFYFFokbSI2J9V9gVlM8ALWThDPnPu3EL7HPD2VDaZTggzcCCmbvc70qqPcC9mt60ogcrTiA3HEjwTK8ymKeuJMc4q6dVz200XnYUtLR9GYjPXvFOVr6W1zUK1WbPToaWJJuKnxBLnd0ftDEbMmj4loHYyhZyMjM91zQS4p7z8eKa9h0JrbacekcirexG0z4n308}   \end{equation} where $n$ is the dynamic normal, asserting matching of the normal velocity components. Denote by $\psi\colon \Omega\to {\mathbb R}$ the harmonic extension of  $1+w$ to the domain $\Omega=\Omega(0)$, i.e., assume that $\psi$ solves   \begin{align}\thelt{ mt6 0og cr TiA3 HEjw TK8ymK eu J Mc4 q6d Vz2 00 XnYU tLR9 GYjPXv FO V r6W 1zU K1W bP ToaW JJuK nxBLnd 0f t DEb Mmj 4lo HY yhZy MjM9 1zQS4p 7z 8 eKa 9h0 Jrb ac ekci rexG 0z4n3x z0 Q OWS vFj 3jL hW XUIU 21iI AwJtI3 Rb W a90 I7r zAI qI 3UEl UJG7 tLtUXz w4 K QNE TvX zqW au jEMe nYlN IzLGxg B3 A uJ8 6VS 6Rc PJ 8OXW w8im tcKZEz Ho p 84G 1gS As0 PC owMI 2fLK TdD60y nH g 7lk NFj JLq Oo Qvfk fZBN G3o1Dg Cn 9 hyU h5V SP5 z6 1qvQ wceU dVJJsB vX D G4E LHQ H}    \begin{split}    &\Delta \psi = 0      \inon{on $\Omega$}    \\&    \psi(x_1,x_2,1,t)=1+w(x_1,x_2,t)      \inon{on $\Gamma_1\times [0,T]$}
   \\&    \psi(x_1,x_2,0,t)=0      \inon{on $\Gamma_0\times [0,T]$}    .    \end{split}    \label{8ThswELzXU3X7Ebd1KdZ7v1rN3GiirRXGKWK099ovBM0FDJCvkopYNQ2aN94Z7k0UnUKamE3OjU8DFYFFokbSI2J9V9gVlM8ALWThDPnPu3EL7HPD2VDaZTggzcCCmbvc70qqPcC9mt60ogcrTiA3HEjwTK8ymKeuJMc4q6dVz200XnYUtLR9GYjPXvFOVr6W1zUK1WbPToaWJJuKnxBLnd0ftDEbMmj4loHYyhZyMjM91zQS4p7z8eKa9h0JrbacekcirexG0z4n309}   \end{align} Next, we define $\eta\colon \Omega\times[0,T]\to \Omega(t) $ as   \begin{equation}    \eta(x_1,x_2,x_2,t)=(x_1,x_2,\psi(x_1,x_2,x_3,t))    \comma (x_1,x_2,x_3)\in \Omega    ,    \label{8ThswELzXU3X7Ebd1KdZ7v1rN3GiirRXGKWK099ovBM0FDJCvkopYNQ2aN94Z7k0UnUKamE3OjU8DFYFFokbSI2J9V9gVlM8ALWThDPnPu3EL7HPD2VDaZTggzcCCmbvc70qqPcC9mt60ogcrTiA3HEjwTK8ymKeuJMc4q6dVz200XnYUtLR9GYjPXvFOVr6W1zUK1WbPToaWJJuKnxBLnd0ftDEbMmj4loHYyhZyMjM91zQS4p7z8eKa9h0JrbacekcirexG0z4n310}   \end{equation}
which represents the ALE change of variable.  Note that   \begin{equation}    \nabla \eta    =   \begin{pmatrix}     1 &  0 & 0 \\     0 &  1 & 0 \\     \UIOIUYOIUyHJGKHJLOIUYOIUOIUYOIYIOUYTIUYIOOOIUYOIUYPOIUPOIUPOIUYOIUYOIUYOIUHOUHOHIOUHOIHOIUHOIUHIOUH_{1}\psi & \UIOIUYOIUyHJGKHJLOIUYOIUOIUYOIYIOUYTIUYIOOOIUYOIUYPOIUPOIUPOIUYOIUYOIUYOIUHOUHOHIOUHOIHOIUHOIUHIOUH_{2} \psi & \UIOIUYOIUyHJGKHJLOIUYOIUOIUYOIYIOUYTIUYIOOOIUYOIUYPOIUPOIUPOIUYOIUYOIUYOIUHOUHOHIOUHOIHOIUHOIUHIOUH_{3}\psi   \end{pmatrix}   .   \label{8ThswELzXU3X7Ebd1KdZ7v1rN3GiirRXGKWK099ovBM0FDJCvkopYNQ2aN94Z7k0UnUKamE3OjU8DFYFFokbSI2J9V9gVlM8ALWThDPnPu3EL7HPD2VDaZTggzcCCmbvc70qqPcC9mt60ogcrTiA3HEjwTK8ymKeuJMc4q6dVz200XnYUtLR9GYjPXvFOVr6W1zUK1WbPToaWJJuKnxBLnd0ftDEbMmj4loHYyhZyMjM91zQS4p7z8eKa9h0JrbacekcirexG0z4n311}   \end{equation} Denote $a=(\nabla \eta)^{-1}$, or in the matrix notation
  \begin{equation}    a = \frac{1}{J} \tda      =      \begin{pmatrix}        1 &  0 & 0 \\        0 & 1 & 0 \\        -\UIOIUYOIUyHJGKHJLOIUYOIUOIUYOIYIOUYTIUYIOOOIUYOIUYPOIUPOIUPOIUYOIUYOIUYOIUHOUHOHIOUHOIHOIUHOIUHIOUH_{1}\psi/\UIOIUYOIUyHJGKHJLOIUYOIUOIUYOIYIOUYTIUYIOOOIUYOIUYPOIUPOIUPOIUYOIUYOIUYOIUHOUHOHIOUHOIHOIUHOIUHIOUH_{3}\psi & -\UIOIUYOIUyHJGKHJLOIUYOIUOIUYOIYIOUYTIUYIOOOIUYOIUYPOIUPOIUPOIUYOIUYOIUYOIUHOUHOHIOUHOIHOIUHOIUHIOUH_{2}\psi/\UIOIUYOIUyHJGKHJLOIUYOIUOIUYOIYIOUYTIUYIOOOIUYOIUYPOIUPOIUPOIUYOIUYOIUYOIUHOUHOHIOUHOIHOIUHOIUHIOUH_{3}\psi & 1/\UIOIUYOIUyHJGKHJLOIUYOIUOIUYOIYIOUYTIUYIOOOIUYOIUYPOIUPOIUPOIUYOIUYOIUYOIUHOUHOHIOUHOIHOIUHOIUHIOUH_{3}\psi      \end{pmatrix}     ,    \label{8ThswELzXU3X7Ebd1KdZ7v1rN3GiirRXGKWK099ovBM0FDJCvkopYNQ2aN94Z7k0UnUKamE3OjU8DFYFFokbSI2J9V9gVlM8ALWThDPnPu3EL7HPD2VDaZTggzcCCmbvc70qqPcC9mt60ogcrTiA3HEjwTK8ymKeuJMc4q6dVz200XnYUtLR9GYjPXvFOVr6W1zUK1WbPToaWJJuKnxBLnd0ftDEbMmj4loHYyhZyMjM91zQS4p7z8eKa9h0JrbacekcirexG0z4n312}   \end{equation} where   \begin{equation}    J=\UIOIUYOIUyHJGKHJLOIUYOIUOIUYOIYIOUYTIUYIOOOIUYOIUYPOIUPOIUPOIUYOIUYOIUYOIUHOUHOHIOUHOIHOIUHOIUHIOUH_{3}\psi
   \label{8ThswELzXU3X7Ebd1KdZ7v1rN3GiirRXGKWK099ovBM0FDJCvkopYNQ2aN94Z7k0UnUKamE3OjU8DFYFFokbSI2J9V9gVlM8ALWThDPnPu3EL7HPD2VDaZTggzcCCmbvc70qqPcC9mt60ogcrTiA3HEjwTK8ymKeuJMc4q6dVz200XnYUtLR9GYjPXvFOVr6W1zUK1WbPToaWJJuKnxBLnd0ftDEbMmj4loHYyhZyMjM91zQS4p7z8eKa9h0JrbacekcirexG0z4n313}   \end{equation} is the Jacobian and    \begin{equation}    \tda      =      \begin{pmatrix}        \UIOIUYOIUyHJGKHJLOIUYOIUOIUYOIYIOUYTIUYIOOOIUYOIUYPOIUPOIUPOIUYOIUYOIUYOIUHOUHOHIOUHOIHOIUHOIUHIOUH_{3}\psi &  0 & 0 \\        0 & \UIOIUYOIUyHJGKHJLOIUYOIUOIUYOIYIOUYTIUYIOOOIUYOIUYPOIUPOIUPOIUYOIUYOIUYOIUHOUHOHIOUHOIHOIUHOIUHIOUH_{3} \psi & 0 \\        -\UIOIUYOIUyHJGKHJLOIUYOIUOIUYOIYIOUYTIUYIOOOIUYOIUYPOIUPOIUPOIUYOIUYOIUYOIUHOUHOHIOUHOIHOIUHOIUHIOUH_{1}\psi & -\UIOIUYOIUyHJGKHJLOIUYOIUOIUYOIYIOUYTIUYIOOOIUYOIUYPOIUPOIUPOIUYOIUYOIUYOIUHOUHOHIOUHOIHOIUHOIUHIOUH_{2}\psi & 1      \end{pmatrix}    \label{8ThswELzXU3X7Ebd1KdZ7v1rN3GiirRXGKWK099ovBM0FDJCvkopYNQ2aN94Z7k0UnUKamE3OjU8DFYFFokbSI2J9V9gVlM8ALWThDPnPu3EL7HPD2VDaZTggzcCCmbvc70qqPcC9mt60ogcrTiA3HEjwTK8ymKeuJMc4q6dVz200XnYUtLR9GYjPXvFOVr6W1zUK1WbPToaWJJuKnxBLnd0ftDEbMmj4loHYyhZyMjM91zQS4p7z8eKa9h0JrbacekcirexG0z4n314}   \end{equation}
stands for the cofactor matrix. Since $b$ is the cofactor matrix, it satisfies the Piola identity   \begin{equation}    \UIOIUYOIUyHJGKHJLOIUYOIUOIUYOIYIOUYTIUYIOOOIUYOIUYPOIUPOIUPOIUYOIUYOIUYOIUHOUHOHIOUHOIHOIUHOIUHIOUH_{i}\tda_{ij}=0    \comma j=1,2,3    ,    \label{8ThswELzXU3X7Ebd1KdZ7v1rN3GiirRXGKWK099ovBM0FDJCvkopYNQ2aN94Z7k0UnUKamE3OjU8DFYFFokbSI2J9V9gVlM8ALWThDPnPu3EL7HPD2VDaZTggzcCCmbvc70qqPcC9mt60ogcrTiA3HEjwTK8ymKeuJMc4q6dVz200XnYUtLR9GYjPXvFOVr6W1zUK1WbPToaWJJuKnxBLnd0ftDEbMmj4loHYyhZyMjM91zQS4p7z8eKa9h0JrbacekcirexG0z4n315}   \end{equation} which can also be verified directly from~\eqref{8ThswELzXU3X7Ebd1KdZ7v1rN3GiirRXGKWK099ovBM0FDJCvkopYNQ2aN94Z7k0UnUKamE3OjU8DFYFFokbSI2J9V9gVlM8ALWThDPnPu3EL7HPD2VDaZTggzcCCmbvc70qqPcC9mt60ogcrTiA3HEjwTK8ymKeuJMc4q6dVz200XnYUtLR9GYjPXvFOVr6W1zUK1WbPToaWJJuKnxBLnd0ftDEbMmj4loHYyhZyMjM91zQS4p7z8eKa9h0JrbacekcirexG0z4n314}. We use the summation convention on repeated indices; thus, unless indicated otherwise, the repeated indices are summed over 1,~2,~3. Next, denote by   \begin{align}\thelt{Q OWS vFj 3jL hW XUIU 21iI AwJtI3 Rb W a90 I7r zAI qI 3UEl UJG7 tLtUXz w4 K QNE TvX zqW au jEMe nYlN IzLGxg B3 A uJ8 6VS 6Rc PJ 8OXW w8im tcKZEz Ho p 84G 1gS As0 PC owMI 2fLK TdD60y nH g 7lk NFj JLq Oo Qvfk fZBN G3o1Dg Cn 9 hyU h5V SP5 z6 1qvQ wceU dVJJsB vX D G4E LHQ HIa PT bMTr sLsm tXGyOB 7p 2 Os4 3US bq5 ik 4Lin 769O TkUxmp I8 u GYn fBK bYI 9A QzCF w3h0 geJftZ ZK U 74r Yle ajm km ZJdi TGHO OaSt1N nl B 7Y7 h0y oWJ ry rVrT zHO8 2S7oub QA W x9d }   \begin{split}
    &   v(x,t) = u(\eta(x,t),t)     \\&         q(x,t) = p(\eta(x,t),t)   \end{split}   \label{8ThswELzXU3X7Ebd1KdZ7v1rN3GiirRXGKWK099ovBM0FDJCvkopYNQ2aN94Z7k0UnUKamE3OjU8DFYFFokbSI2J9V9gVlM8ALWThDPnPu3EL7HPD2VDaZTggzcCCmbvc70qqPcC9mt60ogcrTiA3HEjwTK8ymKeuJMc4q6dVz200XnYUtLR9GYjPXvFOVr6W1zUK1WbPToaWJJuKnxBLnd0ftDEbMmj4loHYyhZyMjM91zQS4p7z8eKa9h0JrbacekcirexG0z4n316}   \end{align} the ALE velocity and the pressure.  With this change of variable, the system \eqref{8ThswELzXU3X7Ebd1KdZ7v1rN3GiirRXGKWK099ovBM0FDJCvkopYNQ2aN94Z7k0UnUKamE3OjU8DFYFFokbSI2J9V9gVlM8ALWThDPnPu3EL7HPD2VDaZTggzcCCmbvc70qqPcC9mt60ogcrTiA3HEjwTK8ymKeuJMc4q6dVz200XnYUtLR9GYjPXvFOVr6W1zUK1WbPToaWJJuKnxBLnd0ftDEbMmj4loHYyhZyMjM91zQS4p7z8eKa9h0JrbacekcirexG0z4n301} becomes   \begin{align}\thelt{y nH g 7lk NFj JLq Oo Qvfk fZBN G3o1Dg Cn 9 hyU h5V SP5 z6 1qvQ wceU dVJJsB vX D G4E LHQ HIa PT bMTr sLsm tXGyOB 7p 2 Os4 3US bq5 ik 4Lin 769O TkUxmp I8 u GYn fBK bYI 9A QzCF w3h0 geJftZ ZK U 74r Yle ajm km ZJdi TGHO OaSt1N nl B 7Y7 h0y oWJ ry rVrT zHO8 2S7oub QA W x9d z2X YWB e5 Kf3A LsUF vqgtM2 O2 I dim rjZ 7RN 28 4KGY trVa WW4nTZ XV b RVo Q77 hVL X6 K2kq FWFm aZnsF9 Ch p 8Kx rsc SGP iS tVXB J3xZ cD5IP4 Fu 9 Lcd TR2 Vwb cL DlGK 1ro3 EEyqEA zw 6}    \begin{split}    &     \UIOIUYOIUyHJGKHJLOIUYOIUOIUYOIYIOUYTIUYIOOOIUYOIUYPOIUPOIUPOIUYOIUYOIUYOIUHOUHOHIOUHOIHOIUHOIUHIOUH_{t} v_i     + v_1 a_{j1} \UIOIUYOIUyHJGKHJLOIUYOIUOIUYOIYIOUYTIUYIOOOIUYOIUYPOIUPOIUPOIUYOIUYOIUYOIUHOUHOHIOUHOIHOIUHOIUHIOUH_{j}v_i
    + v_2 a_{j2} \UIOIUYOIUyHJGKHJLOIUYOIUOIUYOIYIOUYTIUYIOOOIUYOIUYPOIUPOIUPOIUYOIUYOIUYOIUHOUHOHIOUHOIHOIUHOIUHIOUH_{j}v_i     + \frac{1}{\UIOIUYOIUyHJGKHJLOIUYOIUOIUYOIYIOUYTIUYIOOOIUYOIUYPOIUPOIUPOIUYOIUYOIUYOIUHOUHOHIOUHOIHOIUHOIUHIOUH_{3}\psi}(v_3-\psi_t) \UIOIUYOIUyHJGKHJLOIUYOIUOIUYOIYIOUYTIUYIOOOIUYOIUYPOIUPOIUPOIUYOIUYOIUYOIUHOUHOHIOUHOIHOIUHOIUHIOUH_{3} v_i     + a_{ki}\UIOIUYOIUyHJGKHJLOIUYOIUOIUYOIYIOUYTIUYIOOOIUYOIUYPOIUPOIUPOIUYOIUYOIUYOIUHOUHOHIOUHOIHOIUHOIUHIOUH_{k}q     =0     ,     \\&     a_{ki} \UIOIUYOIUyHJGKHJLOIUYOIUOIUYOIYIOUYTIUYIOOOIUYOIUYPOIUPOIUPOIUYOIUYOIUYOIUHOUHOHIOUHOIHOIUHOIUHIOUH_{k}v_i=0    \end{split}    \label{8ThswELzXU3X7Ebd1KdZ7v1rN3GiirRXGKWK099ovBM0FDJCvkopYNQ2aN94Z7k0UnUKamE3OjU8DFYFFokbSI2J9V9gVlM8ALWThDPnPu3EL7HPD2VDaZTggzcCCmbvc70qqPcC9mt60ogcrTiA3HEjwTK8ymKeuJMc4q6dVz200XnYUtLR9GYjPXvFOVr6W1zUK1WbPToaWJJuKnxBLnd0ftDEbMmj4loHYyhZyMjM91zQS4p7z8eKa9h0JrbacekcirexG0z4n317}   \end{align} in $\Omega\times[0,T]$, where we used $a_{j3}\UIOIUYOIUyHJGKHJLOIUYOIUOIUYOIYIOUYTIUYIOOOIUYOIUYPOIUPOIUPOIUYOIUYOIUYOIUHOUHOHIOUHOIHOIUHOIUHIOUH_{j} v_i=(1/\UIOIUYOIUyHJGKHJLOIUYOIUOIUYOIYIOUYTIUYIOOOIUYOIUYPOIUPOIUPOIUYOIUYOIUYOIUHOUHOHIOUHOIHOIUHOIUHIOUH_{3}\psi )\UIOIUYOIUyHJGKHJLOIUYOIUOIUYOIYIOUYTIUYIOOOIUYOIUYPOIUPOIUPOIUYOIUYOIUYOIUHOUHOHIOUHOIHOIUHOIUHIOUH_{3}v_i$. The initial condition reads
  \begin{equation}    v|_{t=0} = v_0    .    \label{8ThswELzXU3X7Ebd1KdZ7v1rN3GiirRXGKWK099ovBM0FDJCvkopYNQ2aN94Z7k0UnUKamE3OjU8DFYFFokbSI2J9V9gVlM8ALWThDPnPu3EL7HPD2VDaZTggzcCCmbvc70qqPcC9mt60ogcrTiA3HEjwTK8ymKeuJMc4q6dVz200XnYUtLR9GYjPXvFOVr6W1zUK1WbPToaWJJuKnxBLnd0ftDEbMmj4loHYyhZyMjM91zQS4p7z8eKa9h0JrbacekcirexG0z4n318}   \end{equation} The boundary condition on the bottom boundary is   \begin{equation}    v_3=0    \inon{on $\Gamma_0$}    ,    \label{8ThswELzXU3X7Ebd1KdZ7v1rN3GiirRXGKWK099ovBM0FDJCvkopYNQ2aN94Z7k0UnUKamE3OjU8DFYFFokbSI2J9V9gVlM8ALWThDPnPu3EL7HPD2VDaZTggzcCCmbvc70qqPcC9mt60ogcrTiA3HEjwTK8ymKeuJMc4q6dVz200XnYUtLR9GYjPXvFOVr6W1zUK1WbPToaWJJuKnxBLnd0ftDEbMmj4loHYyhZyMjM91zQS4p7z8eKa9h0JrbacekcirexG0z4n320}   \end{equation} while, using \eqref{8ThswELzXU3X7Ebd1KdZ7v1rN3GiirRXGKWK099ovBM0FDJCvkopYNQ2aN94Z7k0UnUKamE3OjU8DFYFFokbSI2J9V9gVlM8ALWThDPnPu3EL7HPD2VDaZTggzcCCmbvc70qqPcC9mt60ogcrTiA3HEjwTK8ymKeuJMc4q6dVz200XnYUtLR9GYjPXvFOVr6W1zUK1WbPToaWJJuKnxBLnd0ftDEbMmj4loHYyhZyMjM91zQS4p7z8eKa9h0JrbacekcirexG0z4n314} and the second equation in \eqref{8ThswELzXU3X7Ebd1KdZ7v1rN3GiirRXGKWK099ovBM0FDJCvkopYNQ2aN94Z7k0UnUKamE3OjU8DFYFFokbSI2J9V9gVlM8ALWThDPnPu3EL7HPD2VDaZTggzcCCmbvc70qqPcC9mt60ogcrTiA3HEjwTK8ymKeuJMc4q6dVz200XnYUtLR9GYjPXvFOVr6W1zUK1WbPToaWJJuKnxBLnd0ftDEbMmj4loHYyhZyMjM91zQS4p7z8eKa9h0JrbacekcirexG0z4n309}, we may rewrite \eqref{8ThswELzXU3X7Ebd1KdZ7v1rN3GiirRXGKWK099ovBM0FDJCvkopYNQ2aN94Z7k0UnUKamE3OjU8DFYFFokbSI2J9V9gVlM8ALWThDPnPu3EL7HPD2VDaZTggzcCCmbvc70qqPcC9mt60ogcrTiA3HEjwTK8ymKeuJMc4q6dVz200XnYUtLR9GYjPXvFOVr6W1zUK1WbPToaWJJuKnxBLnd0ftDEbMmj4loHYyhZyMjM91zQS4p7z8eKa9h0JrbacekcirexG0z4n307} as
  \begin{equation}      \tda_{3i}v_i = w_t     \inon{on $\Gamma_1$}    .    \label{8ThswELzXU3X7Ebd1KdZ7v1rN3GiirRXGKWK099ovBM0FDJCvkopYNQ2aN94Z7k0UnUKamE3OjU8DFYFFokbSI2J9V9gVlM8ALWThDPnPu3EL7HPD2VDaZTggzcCCmbvc70qqPcC9mt60ogcrTiA3HEjwTK8ymKeuJMc4q6dVz200XnYUtLR9GYjPXvFOVr6W1zUK1WbPToaWJJuKnxBLnd0ftDEbMmj4loHYyhZyMjM91zQS4p7z8eKa9h0JrbacekcirexG0z4n321}   \end{equation} On the other hand, the plate equation \eqref{8ThswELzXU3X7Ebd1KdZ7v1rN3GiirRXGKWK099ovBM0FDJCvkopYNQ2aN94Z7k0UnUKamE3OjU8DFYFFokbSI2J9V9gVlM8ALWThDPnPu3EL7HPD2VDaZTggzcCCmbvc70qqPcC9mt60ogcrTiA3HEjwTK8ymKeuJMc4q6dVz200XnYUtLR9GYjPXvFOVr6W1zUK1WbPToaWJJuKnxBLnd0ftDEbMmj4loHYyhZyMjM91zQS4p7z8eKa9h0JrbacekcirexG0z4n305} simply reads   \begin{align}\thelt{geJftZ ZK U 74r Yle ajm km ZJdi TGHO OaSt1N nl B 7Y7 h0y oWJ ry rVrT zHO8 2S7oub QA W x9d z2X YWB e5 Kf3A LsUF vqgtM2 O2 I dim rjZ 7RN 28 4KGY trVa WW4nTZ XV b RVo Q77 hVL X6 K2kq FWFm aZnsF9 Ch p 8Kx rsc SGP iS tVXB J3xZ cD5IP4 Fu 9 Lcd TR2 Vwb cL DlGK 1ro3 EEyqEA zw 6 sKe Eg2 sFf jz MtrZ 9kbd xNw66c xf t lzD GZh xQA WQ KkSX jqmm rEpNuG 6P y loq 8hH lSf Ma LXm5 RzEX W4Y1Bq ib 3 UOh Yw9 5h6 f6 o8kw 6frZ wg6fIy XP n ae1 TQJ Mt2 TT fWWf jJrX ilpYGr}     w_{tt}      +\Delta_2^2 w     - \nu \Delta_{2} w_{t}     = q    ,    \label{8ThswELzXU3X7Ebd1KdZ7v1rN3GiirRXGKWK099ovBM0FDJCvkopYNQ2aN94Z7k0UnUKamE3OjU8DFYFFokbSI2J9V9gVlM8ALWThDPnPu3EL7HPD2VDaZTggzcCCmbvc70qqPcC9mt60ogcrTiA3HEjwTK8ymKeuJMc4q6dVz200XnYUtLR9GYjPXvFOVr6W1zUK1WbPToaWJJuKnxBLnd0ftDEbMmj4loHYyhZyMjM91zQS4p7z8eKa9h0JrbacekcirexG0z4n322}
  \end{align} \def\OIUYJHUGFAJKLDHFKJLSDHFLKSDJFHLKSDJHFLKSDJHFLKDJFHLLDKHFLKSDHJFALKJHLJLHGLKHHLKJHLKGKHGJKHGKJHLKHJLKJH{\int} where the pressure is normalized by the condition   \begin{equation}    \OIUYJHUGFAJKLDHFKJLSDHFLKSDJFHLKSDJHFLKSDJHFLKDJFHLLDKHFLKSDHJFALKJHLJLHGLKHHLKJHLKGKHGJKHGKJHLKHJLKJH_{\Gamma_1} q = 0    ,    \label{8ThswELzXU3X7Ebd1KdZ7v1rN3GiirRXGKWK099ovBM0FDJCvkopYNQ2aN94Z7k0UnUKamE3OjU8DFYFFokbSI2J9V9gVlM8ALWThDPnPu3EL7HPD2VDaZTggzcCCmbvc70qqPcC9mt60ogcrTiA3HEjwTK8ymKeuJMc4q6dVz200XnYUtLR9GYjPXvFOVr6W1zUK1WbPToaWJJuKnxBLnd0ftDEbMmj4loHYyhZyMjM91zQS4p7z8eKa9h0JrbacekcirexG0z4n326}   \end{equation} for all $t\in[0,T]$. \par The next theorem, asserting the a~priori estimates for the local existence for the flow-structure problem \eqref{8ThswELzXU3X7Ebd1KdZ7v1rN3GiirRXGKWK099ovBM0FDJCvkopYNQ2aN94Z7k0UnUKamE3OjU8DFYFFokbSI2J9V9gVlM8ALWThDPnPu3EL7HPD2VDaZTggzcCCmbvc70qqPcC9mt60ogcrTiA3HEjwTK8ymKeuJMc4q6dVz200XnYUtLR9GYjPXvFOVr6W1zUK1WbPToaWJJuKnxBLnd0ftDEbMmj4loHYyhZyMjM91zQS4p7z8eKa9h0JrbacekcirexG0z4n317}--\eqref{8ThswELzXU3X7Ebd1KdZ7v1rN3GiirRXGKWK099ovBM0FDJCvkopYNQ2aN94Z7k0UnUKamE3OjU8DFYFFokbSI2J9V9gVlM8ALWThDPnPu3EL7HPD2VDaZTggzcCCmbvc70qqPcC9mt60ogcrTiA3HEjwTK8ymKeuJMc4q6dVz200XnYUtLR9GYjPXvFOVr6W1zUK1WbPToaWJJuKnxBLnd0ftDEbMmj4loHYyhZyMjM91zQS4p7z8eKa9h0JrbacekcirexG0z4n322}, is the main result of the paper.
\par \cole \begin{Theorem} \label{T01} (A~priori~estimates~for~existence) Let $0\leq \nu \leq 1$.  Assume that $(v,w)$ is a $C^{\infty}$ solution on an interval $[0,T]$ with   \begin{align}\thelt{FWFm aZnsF9 Ch p 8Kx rsc SGP iS tVXB J3xZ cD5IP4 Fu 9 Lcd TR2 Vwb cL DlGK 1ro3 EEyqEA zw 6 sKe Eg2 sFf jz MtrZ 9kbd xNw66c xf t lzD GZh xQA WQ KkSX jqmm rEpNuG 6P y loq 8hH lSf Ma LXm5 RzEX W4Y1Bq ib 3 UOh Yw9 5h6 f6 o8kw 6frZ wg6fIy XP n ae1 TQJ Mt2 TT fWWf jJrX ilpYGr Ul Q 4uM 7Ds p0r Vg 3gIE mQOz TFh9LA KO 8 csQ u6m h25 r8 WqRI DZWg SYkWDu lL 8 Gpt ZW1 0Gd SY FUXL zyQZ hVZMn9 am P 9aE Wzk au0 6d ZghM ym3R jfdePG ln 8 s7x HYC IV9 Hw Ka6v EjH5 J}    \begin{split}     \Vert v_0\Vert_{H^{2.5+\delta}},     \Vert w_1\Vert_{H^{2+\delta}(\Gamma_1)}     \leq M        ,
   \end{split}    \label{8ThswELzXU3X7Ebd1KdZ7v1rN3GiirRXGKWK099ovBM0FDJCvkopYNQ2aN94Z7k0UnUKamE3OjU8DFYFFokbSI2J9V9gVlM8ALWThDPnPu3EL7HPD2VDaZTggzcCCmbvc70qqPcC9mt60ogcrTiA3HEjwTK8ymKeuJMc4q6dVz200XnYUtLR9GYjPXvFOVr6W1zUK1WbPToaWJJuKnxBLnd0ftDEbMmj4loHYyhZyMjM91zQS4p7z8eKa9h0JrbacekcirexG0z4n323}   \end{align} where $M\geq1$ and $\delta>0$. Then  $v$, $w$, $\psi$, and $a$ satisfy   \begin{align}\thelt{LXm5 RzEX W4Y1Bq ib 3 UOh Yw9 5h6 f6 o8kw 6frZ wg6fIy XP n ae1 TQJ Mt2 TT fWWf jJrX ilpYGr Ul Q 4uM 7Ds p0r Vg 3gIE mQOz TFh9LA KO 8 csQ u6m h25 r8 WqRI DZWg SYkWDu lL 8 Gpt ZW1 0Gd SY FUXL zyQZ hVZMn9 am P 9aE Wzk au0 6d ZghM ym3R jfdePG ln 8 s7x HYC IV9 Hw Ka6v EjH5 J8Ipr7 Nk C xWR 84T Wnq s0 fsiP qGgs Id1fs5 3A T 71q RIc zPX 77 Si23 GirL 9MQZ4F pi g dru NYt h1K 4M Zilv rRk6 B4W5B8 Id 3 Xq9 nhx EN4 P6 ipZl a2UQ Qx8mda g7 r VD3 zdD rhB vk LDJo t}      \begin{split}       &\Vert v\Vert_{H^{2.5+\delta}},        \Vert w\Vert_{H^{4+\delta}(\Gamma_1)},        \Vert w_t\Vert_{H^{2+\delta}(\Gamma_1)},        \Vert \psi\Vert_{H^{4.5+\delta}},        \Vert \psi_t\Vert_{H^{2.5+\delta}}, \Vert a\Vert_{H^{3.5+\delta}}   \leq C_0 M \comma t\in[0,T_0]
  ,   \end{split}   \label{8ThswELzXU3X7Ebd1KdZ7v1rN3GiirRXGKWK099ovBM0FDJCvkopYNQ2aN94Z7k0UnUKamE3OjU8DFYFFokbSI2J9V9gVlM8ALWThDPnPu3EL7HPD2VDaZTggzcCCmbvc70qqPcC9mt60ogcrTiA3HEjwTK8ymKeuJMc4q6dVz200XnYUtLR9GYjPXvFOVr6W1zUK1WbPToaWJJuKnxBLnd0ftDEbMmj4loHYyhZyMjM91zQS4p7z8eKa9h0JrbacekcirexG0z4n324}   \end{align} with   \begin{equation}     \nu^{1/2} \Vert w_t\Vert_{L^2H^{3+\delta}(\Gamma_1\times[0,T_0])} \leq C_0 M       \label{8ThswELzXU3X7Ebd1KdZ7v1rN3GiirRXGKWK099ovBM0FDJCvkopYNQ2aN94Z7k0UnUKamE3OjU8DFYFFokbSI2J9V9gVlM8ALWThDPnPu3EL7HPD2VDaZTggzcCCmbvc70qqPcC9mt60ogcrTiA3HEjwTK8ymKeuJMc4q6dVz200XnYUtLR9GYjPXvFOVr6W1zUK1WbPToaWJJuKnxBLnd0ftDEbMmj4loHYyhZyMjM91zQS4p7z8eKa9h0JrbacekcirexG0z4n339}   \end{equation} and   \begin{align}\thelt{d SY FUXL zyQZ hVZMn9 am P 9aE Wzk au0 6d ZghM ym3R jfdePG ln 8 s7x HYC IV9 Hw Ka6v EjH5 J8Ipr7 Nk C xWR 84T Wnq s0 fsiP qGgs Id1fs5 3A T 71q RIc zPX 77 Si23 GirL 9MQZ4F pi g dru NYt h1K 4M Zilv rRk6 B4W5B8 Id 3 Xq9 nhx EN4 P6 ipZl a2UQ Qx8mda g7 r VD3 zdD rhB vk LDJo tKyV 5IrmyJ R5 e txS 1cv EsY xG zj2T rfSR myZo4L m5 D mqN iZd acg GQ 0KRw QKGX g9o8v8 wm B fUu tCO cKc zz kx4U fhuA a8pYzW Vq 9 Sp6 CmA cZL Mx ceBX Dwug sjWuii Gl v JDb 08h BOV C1 p}   \begin{split}    &   \Vert v_t\Vert_{H^{1.5+\delta}},    \Vert w_{tt}\Vert_{H^{\delta}(\Gamma_1)},
   \Vert q\Vert_{H^{1.5+\delta}}    \leq K    \comma t\in[0,T_0]    ,   \end{split}    \label{8ThswELzXU3X7Ebd1KdZ7v1rN3GiirRXGKWK099ovBM0FDJCvkopYNQ2aN94Z7k0UnUKamE3OjU8DFYFFokbSI2J9V9gVlM8ALWThDPnPu3EL7HPD2VDaZTggzcCCmbvc70qqPcC9mt60ogcrTiA3HEjwTK8ymKeuJMc4q6dVz200XnYUtLR9GYjPXvFOVr6W1zUK1WbPToaWJJuKnxBLnd0ftDEbMmj4loHYyhZyMjM91zQS4p7z8eKa9h0JrbacekcirexG0z4n325}   \end{align} where $C_0>0$ is a constant, $K$ and $T_0$ are constants depending on~$M$. In particular, $C_0$, $K$, and $T_0$ do not depend on~$\nu$. \end{Theorem}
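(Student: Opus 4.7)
I would set up a bootstrap/continuation argument. Fix a putative constant $C_0$ and assume on some maximal interval $[0,T^*]$ the inductive hypothesis
\[
\|v\|_{H^{2.5+\delta}} + \|w\|_{H^{4+\delta}(\Gamma_1)} + \|w_t\|_{H^{2+\delta}(\Gamma_1)} \leq 2 C_0 M.
\]
The plan is to verify, using the four lemmas announced in the introduction (Lemmas~\ref{L01}--\ref{L04}), that this bound is in fact preserved with factor $C_0$ on a subinterval $[0,T_0]$ whose length depends only on $M$. The geometric estimates of Lemma~\ref{L01} come first: since $\psi$ is the harmonic extension of $1+w$, elliptic regularity gives $\|\psi\|_{H^{4.5+\delta}} \lesssim 1 + \|w\|_{H^{4+\delta}(\Gamma_1)}$ and an analogous bound for $\psi_t$. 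Because $J = \partial_3\psi$ equals $1$ at $t=0$, the smallness of $w$ on a short interval (from $w|_{t=0}=0$ and the hypothesis on $w_t$) yields $J \ge 1/2$ pointwise; the explicit formulas for $b$ and $a=b/J$ then produce the stated $H^{3.5+\delta}$ bound on $a$.

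For the boundary-tangential estimates (Lemma~\ref{L02}), I would apply a tangential fractional derivative of order $2+\delta$ to the plate equation $w_{tt}+\Delta_2^2 w - \nu\Delta_2 w_t = q$ and test against the corresponding image of $w_t$. The identity that results has the time derivative of $\tfrac12(\|w_t\|_{H^{2+\delta}(\Gamma_1)}^2 + \|w\|_{H^{4+\delta}(\Gamma_1)}^2)$ on the left, the damping term $\nu\|w_t\|_{H^{3+\delta}(\Gamma_1)}^2$, and, at top order, the boundary integral $\int_{\Gamma_1} q\,w_t$ on the right. Using the kinematic identity $b_{3i}v_i = w_t$, this integral is recast in terms of $q$ and $v_i$ on $\Gamma_1$; after an integration by parts tangentially (justified by the Piola identity) and substitution of $a_{ki}\partial_k q = -\partial_t v_i - v_j a_{kj}\partial_k v_i$ from the ALE momentum equation, the would-be half-derivative loss on $q$ is absorbed into a time derivative of $\int v_i b_{3i}\cdot(\cdots)$-type quantities. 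This cancellation, which is the structural reason the Taylor--Rayleigh instability does not appear, is the essential step, and crucially is valid uniformly in $\nu$.

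For the pressure (Lemma~\ref{L03}), I would apply the ALE divergence $a_{ki}\partial_k$ to the momentum equation; using $a_{ki}\partial_k v_i = 0$, this produces a variable-coefficient elliptic equation for $q$ of the schematic form $\partial_k(a_{ji}a_{ki}\partial_j q) = F(v,\partial a,\partial v)$. The Neumann-type boundary condition on $\Gamma_0$ comes from $v_3=0$, while the Robin-type condition on $\Gamma_1$ combines $q = w_{tt}+\Delta_2^2 w - \nu\Delta_2 w_t$ with the time derivative of $b_{3i}v_i = w_t$; this is the specially-designed boundary data that the authors highlight in Remark~\ref{R01}. Elliptic regularity then delivers $\|q\|_{H^{1.5+\delta}} \le K$, whereupon the Euler equation gives $\|v_t\|_{H^{1.5+\delta}} \le K$ and the plate equation gives $\|w_{tt}\|_{H^{\delta}(\Gamma_1)} \le K$. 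Full interior control of $v$ at the top regularity (Lemma~\ref{L04}) is then obtained from the ALE vorticity equation, which is pressure-free, via a transport-type $H^{1.5+\delta}$ estimate combined with a div-curl bound: the ALE divergence vanishes, the vorticity is controlled, and the normal trace on $\Gamma_1$ is controlled via $b_{3i}v_i = w_t$, recovering $\|v\|_{H^{2.5+\delta}}$.

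Assembling these four lemmas yields a differential inequality of the form
\[
\frac{d}{dt}E(t) + \nu\|w_t\|_{H^{3+\delta}(\Gamma_1)}^2 \le P\bigl(E(t)\bigr),
\]
where $E$ is the sum of the squared norms in \eqref{8ThswELzXU3X7Ebd1KdZ7v1rN3GiirRXGKWK099ovBM0FDJCvkopYNQ2aN94Z7k0UnUKamE3OjU8DFYFFokbSI2J9V9gVlM8ALWThDPnPu3EL7HPD2VDaZTggzcCCmbvc70qqPcC9mt60ogcrTiA3HEjwTK8ymKeuJMc4q6dVz200XnYUtLR9GYjPXvFOVr6W1zUK1WbPToaWJJuKnxBLnd0ftDEbMmj4loHYyhZyMjM91zQS4p7z8eKa9h0JrbacekcirexG0z4n324} and $P$ is a polynomial with coefficients depending only on absolute constants. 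A Gronwall argument then closes the bootstrap on a time interval $[0,T_0]$ with $T_0$ depending only on $M$, producing constants $C_0$, $K$, and $T_0$ independent of $\nu\in[0,1]$. The main obstacle is, throughout, the low regularity of $q$ on $\Gamma_1$: any naive energy estimate on the plate loses half a derivative on the pressure, and its resolution requires both the coupling cancellation of Lemma~\ref{L02} and careful $\nu$-independent control of every commutator term arising from that cancellation, so that the damping-free limit $\nu\to0$ can subsequently be taken in the construction.
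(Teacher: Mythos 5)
Your plan follows essentially the same route as the paper's proof: coefficient/Jacobian bounds for $\psi$, $a$, $b$, $J$; a coupled tangential plate--fluid energy estimate in which the pressure boundary term $\int_{\Gamma_1} q\,\Lambda^{2(2+\delta)}w_t$ cancels via the kinematic identity $b_{3i}v_i=w_t$; a Robin-type elliptic problem for $q$ obtained by inserting the plate equation into the boundary condition; and recovery of $\Vert v\Vert_{H^{2.5+\delta}}$ from the pressure-free ALE vorticity together with a div-curl estimate, all closed by an absorption/Gronwall argument uniform in $\nu\in[0,1]$. The differences are cosmetic (the paper applies $b_{ji}\partial_j$ rather than $a_{ki}\partial_k$ to derive the pressure equation, and carries out the vorticity estimate on a Sobolev extension to $\mathbb{T}^2\times\mathbb{R}$ so that the fractional derivative $\Lambda_3^{1.5+\delta}$ makes sense), so the proposal is correct in approach.
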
 \colb \par The parameter $\delta>0$, which does not have to be small, is fixed throughout; in particular, we allow all the constants to depend on $\delta$ without mention.
All the results in this paper also apply when $\nu\geq 1$ with the constants depending on~$\nu$. The proof of Theorem~\ref{T01} is provided in Section~\ref{secap}. \par Next, we assert  the uniqueness of solutions in Theorem~\ref{T01}. For this, we need slightly more regular solutions; namely, we need to assume $\delta\geq0.5$. \par \cole \begin{Theorem} \label{T02} (Uniqueness) Let $0\leq \nu \leq 1$ and
$\delta\geq0.5$. Assume that two solutions $(u,w)$ and $(\tilde u, \tilde w)$ satisfy the regularity \eqref{8ThswELzXU3X7Ebd1KdZ7v1rN3GiirRXGKWK099ovBM0FDJCvkopYNQ2aN94Z7k0UnUKamE3OjU8DFYFFokbSI2J9V9gVlM8ALWThDPnPu3EL7HPD2VDaZTggzcCCmbvc70qqPcC9mt60ogcrTiA3HEjwTK8ymKeuJMc4q6dVz200XnYUtLR9GYjPXvFOVr6W1zUK1WbPToaWJJuKnxBLnd0ftDEbMmj4loHYyhZyMjM91zQS4p7z8eKa9h0JrbacekcirexG0z4n324}--\eqref{8ThswELzXU3X7Ebd1KdZ7v1rN3GiirRXGKWK099ovBM0FDJCvkopYNQ2aN94Z7k0UnUKamE3OjU8DFYFFokbSI2J9V9gVlM8ALWThDPnPu3EL7HPD2VDaZTggzcCCmbvc70qqPcC9mt60ogcrTiA3HEjwTK8ymKeuJMc4q6dVz200XnYUtLR9GYjPXvFOVr6W1zUK1WbPToaWJJuKnxBLnd0ftDEbMmj4loHYyhZyMjM91zQS4p7z8eKa9h0JrbacekcirexG0z4n325} for some $T>0$ and   \begin{equation}    (v(0),w(0))=(\tilde v(0),\tilde w(0))    .    \label{8ThswELzXU3X7Ebd1KdZ7v1rN3GiirRXGKWK099ovBM0FDJCvkopYNQ2aN94Z7k0UnUKamE3OjU8DFYFFokbSI2J9V9gVlM8ALWThDPnPu3EL7HPD2VDaZTggzcCCmbvc70qqPcC9mt60ogcrTiA3HEjwTK8ymKeuJMc4q6dVz200XnYUtLR9GYjPXvFOVr6W1zUK1WbPToaWJJuKnxBLnd0ftDEbMmj4loHYyhZyMjM91zQS4p7z8eKa9h0JrbacekcirexG0z4n3344}   \end{equation} Then $(u,v)$ and $(\tilde u, \tilde v)$ agree on~$[0,T]$. \par \end{Theorem}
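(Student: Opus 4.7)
\textit{Proof sketch.} The plan is to form the differences $V=v-\tilde v$, $W=w-\tilde w$, $Q=q-\tilde q$, $\Psi=\psi-\tilde\psi$, $A=a-\tilde a$, $B=\tda-\tilde\tda$, and to close a low-order energy estimate by Gr\"onwall's lemma, exploiting the same pressure/plate cancellation that drives Theorem~\ref{T01}. Since $\psi$ is the harmonic extension of $1+w$ and $a,\tda$ are smooth nonlinear functions of $\nabla\psi$ with $\partial_{3}\psi$ uniformly bounded away from zero (by the $H^{4+\delta}$ regularity in \eqref{8ThswELzXU3X7Ebd1KdZ7v1rN3GiirRXGKWK099ovBM0FDJCvkopYNQ2aN94Z7k0UnUKamE3OjU8DFYFFokbSI2J9V9gVlM8ALWThDPnPu3EL7HPD2VDaZTggzcCCmbvc70qqPcC9mt60ogcrTiA3HEjwTK8ymKeuJMc4q6dVz200XnYUtLR9GYjPXvFOVr6W1zUK1WbPToaWJJuKnxBLnd0ftDEbMmj4loHYyhZyMjM91zQS4p7z8eKa9h0JrbacekcirexG0z4n324}), standard Lipschitz-type estimates give
\begin{equation*}
\|\Psi\|_{H^{s}}+\|A\|_{H^{s-1}}+\|B\|_{H^{s-1}}\lesssim \|W\|_{H^{s-1/2}(\Gamma_{1})}
\end{equation*}
for the range of $s$ needed, with constants depending on $M$. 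In particular every perturbation tensor is controlled by $\|W\|_{H^{2}(\Gamma_{1})}+\|W_{t}\|_{L^{2}(\Gamma_{1})}$ plus lower-order pieces, which singles out the natural low-order energy
\begin{equation*}
E(t)=\|V\|_{L^{2}(\Omega)}^{2}+\|W_{t}\|_{L^{2}(\Gamma_{1})}^{2}+\|\Delta_{2}W\|_{L^{2}(\Gamma_{1})}^{2}
\end{equation*}
as the object to track.

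Subtracting the ALE momentum equations \eqref{8ThswELzXU3X7Ebd1KdZ7v1rN3GiirRXGKWK099ovBM0FDJCvkopYNQ2aN94Z7k0UnUKamE3OjU8DFYFFokbSI2J9V9gVlM8ALWThDPnPu3EL7HPD2VDaZTggzcCCmbvc70qqPcC9mt60ogcrTiA3HEjwTK8ymKeuJMc4q6dVz200XnYUtLR9GYjPXvFOVr6W1zUK1WbPToaWJJuKnxBLnd0ftDEbMmj4loHYyhZyMjM91zQS4p7z8eKa9h0JrbacekcirexG0z4n317} and packaging the coefficient differences into a forcing gives
\begin{equation*}
\partial_{t}V_{i}+v_{j}a_{kj}\partial_{k}V_{i}+a_{ki}\partial_{k}Q=F_{i},\qquad a_{ki}\partial_{k}V_{i}=G,
\end{equation*}
where $F$ and $G$ are linear in $(V,W,W_{t},\Psi,\Psi_{t})$ with coefficients built from $\tilde v$, $\tilde v_{t}$, $\tilde q$, and the unperturbed $a$, $\tda$. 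The hypothesis $\delta\geq 1/2$ enters here to ensure $\tilde v\in L^{\infty}_{t}H^{3}$, hence $\nabla\tilde v\in L^{\infty}_{t,x}$, so that all such commutator coefficients are uniformly bounded. Similarly, subtracting the plate equations \eqref{8ThswELzXU3X7Ebd1KdZ7v1rN3GiirRXGKWK099ovBM0FDJCvkopYNQ2aN94Z7k0UnUKamE3OjU8DFYFFokbSI2J9V9gVlM8ALWThDPnPu3EL7HPD2VDaZTggzcCCmbvc70qqPcC9mt60ogcrTiA3HEjwTK8ymKeuJMc4q6dVz200XnYUtLR9GYjPXvFOVr6W1zUK1WbPToaWJJuKnxBLnd0ftDEbMmj4loHYyhZyMjM91zQS4p7z8eKa9h0JrbacekcirexG0z4n322} yields $W_{tt}+\Delta_{2}^{2}W-\nu\Delta_{2}W_{t}=Q|_{\Gamma_{1}}$, while the kinematic identity \eqref{8ThswELzXU3X7Ebd1KdZ7v1rN3GiirRXGKWK099ovBM0FDJCvkopYNQ2aN94Z7k0UnUKamE3OjU8DFYFFokbSI2J9V9gVlM8ALWThDPnPu3EL7HPD2VDaZTggzcCCmbvc70qqPcC9mt60ogcrTiA3HEjwTK8ymKeuJMc4q6dVz200XnYUtLR9GYjPXvFOVr6W1zUK1WbPToaWJJuKnxBLnd0ftDEbMmj4loHYyhZyMjM91zQS4p7z8eKa9h0JrbacekcirexG0z4n321} gives $\tda_{3i}V_{i}=W_{t}-B_{3i}\tilde v_{i}$ on $\Gamma_{1}$.

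The energy estimate is then a faithful linearization of the a~priori calculation behind Theorem~\ref{T01}. Testing the transport equation against $JV_{i}$, integrating over $\Omega$, and using the Piola identity \eqref{8ThswELzXU3X7Ebd1KdZ7v1rN3GiirRXGKWK099ovBM0FDJCvkopYNQ2aN94Z7k0UnUKamE3OjU8DFYFFokbSI2J9V9gVlM8ALWThDPnPu3EL7HPD2VDaZTggzcCCmbvc70qqPcC9mt60ogcrTiA3HEjwTK8ymKeuJMc4q6dVz200XnYUtLR9GYjPXvFOVr6W1zUK1WbPToaWJJuKnxBLnd0ftDEbMmj4loHYyhZyMjM91zQS4p7z8eKa9h0JrbacekcirexG0z4n315} together with $V_{3}=0$ on $\Gamma_{0}$ reduce the transport part to $\tfrac{1}{2}\tfrac{d}{dt}\int_{\Omega}J|V|^{2}$ plus lower-order commutators, while the pressure part collapses to the boundary integral $\int_{\Gamma_{1}}Q\,\tda_{3i}V_{i}$. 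Substituting first $\tda_{3i}V_{i}=W_{t}-B_{3i}\tilde v_{i}$ and then the plate equation for $Q$ on $\Gamma_{1}$ produces the key cancellation
\begin{equation*}
\int_{\Gamma_{1}}Q\,W_{t}=\tfrac{1}{2}\tfrac{d}{dt}\bigl(\|W_{t}\|_{L^{2}(\Gamma_{1})}^{2}+\|\Delta_{2}W\|_{L^{2}(\Gamma_{1})}^{2}\bigr)+\nu\|\nabla W_{t}\|_{L^{2}(\Gamma_{1})}^{2},
\end{equation*}
so that the whole of $E(t)$ together with a nonnegative $\nu$-dissipation appears on the left-hand side.

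The main obstacle and the exact reason for the hypothesis $\delta\geq 1/2$ is the bound on $Q$ used to absorb the remaining right-hand side. The difference $Q$ is recovered from a variable-coefficient elliptic problem on $\Omega$ with a Robin-type boundary condition on $\Gamma_{1}$ obtained by differentiating the kinematic identity in time, whose forcing contains $W_{tt}$ together with commutators typified by $A\cdot\nabla\tilde v_{t}$. Controlling such a commutator in $L^{2}(\Omega)$ by $E(t)^{1/2}\|\tilde v_{t}\|_{H^{1.5+\delta}}$---the bound provided by \eqref{8ThswELzXU3X7Ebd1KdZ7v1rN3GiirRXGKWK099ovBM0FDJCvkopYNQ2aN94Z7k0UnUKamE3OjU8DFYFFokbSI2J9V9gVlM8ALWThDPnPu3EL7HPD2VDaZTggzcCCmbvc70qqPcC9mt60ogcrTiA3HEjwTK8ymKeuJMc4q6dVz200XnYUtLR9GYjPXvFOVr6W1zUK1WbPToaWJJuKnxBLnd0ftDEbMmj4loHYyhZyMjM91zQS4p7z8eKa9h0JrbacekcirexG0z4n325}---forces $A$ into $H^{1.5+\delta}$, hence $W$ into $H^{2+\delta}(\Gamma_{1})$ via the Lipschitz bound above, which via trace and Sobolev multiplication on the two-dimensional interface closes precisely when $\delta\geq 1/2$; the analogous commutator involving $\tilde q$ is handled in the same way. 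Once $\|Q\|_{H^{1/2}(\Gamma_{1})}+\|Q\|_{L^{2}(\Omega)}\lesssim E(t)^{1/2}$ modulo absorbable multiples of the $\nu$-dissipation, the resulting differential inequality $\tfrac{d}{dt}E(t)\leq CE(t)$ holds uniformly in $\nu\in[0,1]$, and $E(0)=0$---which follows from \eqref{8ThswELzXU3X7Ebd1KdZ7v1rN3GiirRXGKWK099ovBM0FDJCvkopYNQ2aN94Z7k0UnUKamE3OjU8DFYFFokbSI2J9V9gVlM8ALWThDPnPu3EL7HPD2VDaZTggzcCCmbvc70qqPcC9mt60ogcrTiA3HEjwTK8ymKeuJMc4q6dVz200XnYUtLR9GYjPXvFOVr6W1zUK1WbPToaWJJuKnxBLnd0ftDEbMmj4loHYyhZyMjM91zQS4p7z8eKa9h0JrbacekcirexG0z4n3344} together with the kinematic identity at $t=0$ forcing $W_{t}(0)=0$---yields $E\equiv 0$ on $[0,T]$, giving $V\equiv 0$ and $W\equiv 0$, completing the proof.
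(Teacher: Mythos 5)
Your skeleton (form the differences, exploit the plate/pressure cancellation on $\Gamma_1$, close by Gr\"onwall) is the right one, but the energy you chose is too weak, and the step on which everything hinges—the bound on the pressure difference—fails at that level. Your $E(t)$ controls only $\Vert V\Vert_{L^2(\Omega)}$, $\Vert W_t\Vert_{L^2(\Gamma_1)}$ and $\Vert \Delta_2 W\Vert_{L^2(\Gamma_1)}$. But $Q$ is recovered from a variable-coefficient elliptic problem whose Robin datum on $\Gamma_1$ contains $\Delta_2^2 W$ (together with $W_{tt}$ and $\partial_t B_{3i}\tilde v_i$), and whose interior forcing, even written in divergence form, contains $\nabla V$ multiplied by the base solution. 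The elliptic estimate of Lemma~\ref{L08} is only available for $1\leq l\leq 2$ and needs $g_1\in H^{l-3/2}(\Gamma_1)$ and $f\in H^{l-1}(\Omega)$; for any admissible $l$ this already requires control of $W$ in $H^{3+\delta}(\Gamma_1)$ and of $V$ at the $H^{1}$-or-better level, neither of which is dominated by $E(t)^{1/2}$. Hence the claimed bound $\Vert Q\Vert_{H^{1/2}(\Gamma_1)}+\Vert Q\Vert_{L^2(\Omega)}\lesssim E(t)^{1/2}$ cannot be established, and without it the leftover boundary term $\int_{\Gamma_1}Q\,B_{3i}\tilde v_i$ (what remains after the $\int_{\Gamma_1}Q\,W_t$ cancellation) and the interior terms generated by the perturbed divergence condition $\tda_{ki}\partial_k V_i=-B_{ki}\partial_k\tilde v_i$ cannot be absorbed, so the inequality $\frac{d}{dt}E\leq CE$ does not close. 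A side point: your claim that $\delta\geq 1/2$ is what makes $\nabla\tilde v\in L^\infty$ is not the role of that hypothesis, since $H^{2.5+\delta}(\Omega)\subset W^{1,\infty}$ already for every $\delta>0$.

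The missing idea is to run the entire uniqueness argument one derivative below the existence level rather than at the $L^2$ level: track $V$ in $H^{1.5+\delta}$, $W$ in $H^{3+\delta}(\Gamma_1)$, $W_t$ in $H^{1+\delta}(\Gamma_1)$, and $Q$ in $H^{0.5+\delta}$. This is what the paper does: a tangential estimate obtained by testing the difference plate equation with $\Lambda^{2(1+\delta)}W_t$ and the difference momentum equation at the $\Lambda^{0.5+\delta}$/$\Lambda^{1.5+\delta}$ level (the analogue of Lemma~\ref{L02}, with the same pressure-boundary cancellation); a pressure estimate for $Q$ in $H^{0.5+\delta}$ from Lemma~\ref{L08} applied with $l=0.5+\delta$—this is precisely where $\delta\geq 1/2$ enters, since the lemma needs $l\geq 1$—yielding $\Vert Q\Vert_{H^{0.5+\delta}}\lesssim \Vert V\Vert_{H^{1.5+\delta}}+\Vert W\Vert_{H^{3+\delta}(\Gamma_1)}+\Vert W_t\Vert_{H^{1+\delta}(\Gamma_1)}$; and an $H^{0.5+\delta}$ estimate for the difference of the extended vorticities, which via the div-curl structure recovers the full $H^{1.5+\delta}$ norm of $V$. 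A barrier/Gr\"onwall argument over these three quantities, with zero initial data, then gives uniqueness. Repairing your proof essentially amounts to replacing your low-order $E(t)$ by this intermediate-regularity functional; as written, the pressure step is a genuine gap, not a technical omission.
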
 \colb \par
The theorem is proven in Section~\ref{sec08}. \par Next, we assert  the local existence with initial data  $(v_0,w_1)$ in $H^{m}(\Omega)\times H^{m-0.5}$ where $m\geq 3$ is not necessarily an integer. \par \cole \begin{Theorem} \label{T03} (Local existence) Let $0\leq \nu \leq 1$.  Assume that initial data   \begin{equation}
   (v_{0}, w_{1})\in H^{2.5 +\delta} \times   H^{2+\delta}(\Gamma_{1})    ,    \llabel{8ThswELzXU3X7Ebd1KdZ7v1rN3GiirRXGKWK099ovBM0FDJCvkopYNQ2aN94Z7k0UnUKamE3OjU8DFYFFokbSI2J9V9gVlM8ALWThDPnPu3EL7HPD2VDaZTggzcCCmbvc70qqPcC9mt60ogcrTiA3HEjwTK8ymKeuJMc4q6dVz200XnYUtLR9GYjPXvFOVr6W1zUK1WbPToaWJJuKnxBLnd0ftDEbMmj4loHYyhZyMjM91zQS4p7z8eKa9h0JrbacekcirexG0z4n3346}   \end{equation}  where $\delta \geq 0.5$, satisfy the compatibility conditions    \begin{equation}      v_{0}\cdot N |_{\Gamma_{1}}    =w_1    \label{8ThswELzXU3X7Ebd1KdZ7v1rN3GiirRXGKWK099ovBM0FDJCvkopYNQ2aN94Z7k0UnUKamE3OjU8DFYFFokbSI2J9V9gVlM8ALWThDPnPu3EL7HPD2VDaZTggzcCCmbvc70qqPcC9mt60ogcrTiA3HEjwTK8ymKeuJMc4q6dVz200XnYUtLR9GYjPXvFOVr6W1zUK1WbPToaWJJuKnxBLnd0ftDEbMmj4loHYyhZyMjM91zQS4p7z8eKa9h0JrbacekcirexG0z4n327}   \end{equation}  and    \begin{equation}      v_{0}\cdot N |_{\Gamma_{0}}=0
   \label{8ThswELzXU3X7Ebd1KdZ7v1rN3GiirRXGKWK099ovBM0FDJCvkopYNQ2aN94Z7k0UnUKamE3OjU8DFYFFokbSI2J9V9gVlM8ALWThDPnPu3EL7HPD2VDaZTggzcCCmbvc70qqPcC9mt60ogcrTiA3HEjwTK8ymKeuJMc4q6dVz200XnYUtLR9GYjPXvFOVr6W1zUK1WbPToaWJJuKnxBLnd0ftDEbMmj4loHYyhZyMjM91zQS4p7z8eKa9h0JrbacekcirexG0z4n328}    \end{equation} with   \begin{equation}      \div v_{0}=0 \inon{in~$\Omega$}     ,    \label{8ThswELzXU3X7Ebd1KdZ7v1rN3GiirRXGKWK099ovBM0FDJCvkopYNQ2aN94Z7k0UnUKamE3OjU8DFYFFokbSI2J9V9gVlM8ALWThDPnPu3EL7HPD2VDaZTggzcCCmbvc70qqPcC9mt60ogcrTiA3HEjwTK8ymKeuJMc4q6dVz200XnYUtLR9GYjPXvFOVr6W1zUK1WbPToaWJJuKnxBLnd0ftDEbMmj4loHYyhZyMjM91zQS4p7z8eKa9h0JrbacekcirexG0z4n329}    \end{equation} and   \begin{equation}    \OIUYJHUGFAJKLDHFKJLSDHFLKSDJFHLKSDJHFLKSDJHFLKDJFHLLDKHFLKSDHJFALKJHLJLHGLKHHLKJHLKGKHGJKHGKJHLKHJLKJH_{\Gamma_1} w_1 = 0    .    \label{8ThswELzXU3X7Ebd1KdZ7v1rN3GiirRXGKWK099ovBM0FDJCvkopYNQ2aN94Z7k0UnUKamE3OjU8DFYFFokbSI2J9V9gVlM8ALWThDPnPu3EL7HPD2VDaZTggzcCCmbvc70qqPcC9mt60ogcrTiA3HEjwTK8ymKeuJMc4q6dVz200XnYUtLR9GYjPXvFOVr6W1zUK1WbPToaWJJuKnxBLnd0ftDEbMmj4loHYyhZyMjM91zQS4p7z8eKa9h0JrbacekcirexG0z4n3321}   \end{equation}
Then there exists a unique local-in-time solution $(v,q,w,w_{t} )$ to the  Euler-plate system  \eqref{8ThswELzXU3X7Ebd1KdZ7v1rN3GiirRXGKWK099ovBM0FDJCvkopYNQ2aN94Z7k0UnUKamE3OjU8DFYFFokbSI2J9V9gVlM8ALWThDPnPu3EL7HPD2VDaZTggzcCCmbvc70qqPcC9mt60ogcrTiA3HEjwTK8ymKeuJMc4q6dVz200XnYUtLR9GYjPXvFOVr6W1zUK1WbPToaWJJuKnxBLnd0ftDEbMmj4loHYyhZyMjM91zQS4p7z8eKa9h0JrbacekcirexG0z4n317}--\eqref{8ThswELzXU3X7Ebd1KdZ7v1rN3GiirRXGKWK099ovBM0FDJCvkopYNQ2aN94Z7k0UnUKamE3OjU8DFYFFokbSI2J9V9gVlM8ALWThDPnPu3EL7HPD2VDaZTggzcCCmbvc70qqPcC9mt60ogcrTiA3HEjwTK8ymKeuJMc4q6dVz200XnYUtLR9GYjPXvFOVr6W1zUK1WbPToaWJJuKnxBLnd0ftDEbMmj4loHYyhZyMjM91zQS4p7z8eKa9h0JrbacekcirexG0z4n322} with the initial data $(v_0,w_1)$ such that   \begin{align}\thelt{Yt h1K 4M Zilv rRk6 B4W5B8 Id 3 Xq9 nhx EN4 P6 ipZl a2UQ Qx8mda g7 r VD3 zdD rhB vk LDJo tKyV 5IrmyJ R5 e txS 1cv EsY xG zj2T rfSR myZo4L m5 D mqN iZd acg GQ 0KRw QKGX g9o8v8 wm B fUu tCO cKc zz kx4U fhuA a8pYzW Vq 9 Sp6 CmA cZL Mx ceBX Dwug sjWuii Gl v JDb 08h BOV C1 pni6 4TTq Opzezq ZB J y5o KS8 BhH sd nKkH gnZl UCm7j0 Iv Y jQE 7JN 9fd ED ddys 3y1x 52pbiG Lc a 71j G3e uli Ce uzv2 R40Q 50JZUB uK d U3m May 0uo S7 ulWD h7qG 2FKw2T JX z BES 2Jk Q4U}   \begin{split}    &v \in L^{\infty}([0,T];H^{2.5+ \delta}(\Omega))     \cap C([0,T];H^{0.5 +\delta}(\Omega))    ,    \\&       v_{t} \in L^{\infty}([0,T];H^{0.5 +\delta}(\Omega))    ,    \\&      q \in L^{\infty}([0,T];H^{1.5 + \delta}(\Omega))
   ,     \\&      w \in L^{\infty}([0,T];H^{4+ \delta}(\Gamma_{1}))    ,    \\&      w_{t} \in L^{\infty}([0,T];H^{2+ \delta}(\Gamma_{1}))      ,   \end{split}    \label{8ThswELzXU3X7Ebd1KdZ7v1rN3GiirRXGKWK099ovBM0FDJCvkopYNQ2aN94Z7k0UnUKamE3OjU8DFYFFokbSI2J9V9gVlM8ALWThDPnPu3EL7HPD2VDaZTggzcCCmbvc70qqPcC9mt60ogcrTiA3HEjwTK8ymKeuJMc4q6dVz200XnYUtLR9GYjPXvFOVr6W1zUK1WbPToaWJJuKnxBLnd0ftDEbMmj4loHYyhZyMjM91zQS4p7z8eKa9h0JrbacekcirexG0z4n330}   \end{align} for some time $T>0$ depending on the size of the initial data. \end{Theorem}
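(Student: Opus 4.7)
The plan is to follow the two-stage scheme announced in the introduction: for each $\nu>0$, construct a solution of the regularized system (with plate damping $-\nu\Delta_2 w_t$) by a fixed-point argument, and then use the uniform-in-$\nu$ bounds provided by Theorem~\ref{T01} to pass to the limit $\nu\to 0$. Uniqueness is immediate from Theorem~\ref{T02}, since the assumption $\delta\geq 0.5$ is in force, so the entire effort is concentrated on existence. As a preliminary step I mollify $(v_0,w_1)$ to smooth approximants that respect the kinematic compatibility $v_0\cdot N|_{\Gamma_1}=w_1$, the slip condition on $\Gamma_0$, the divergence-free constraint, and the zero-mean condition on $w_1$; this is carried out by mollifying in the tangential variables and then using the harmonic-extension machinery that defines $\psi$ to preserve the boundary identities.

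For fixed $\nu>0$, the damping lifts the regularity of $w_t$, and therefore that of the pressure trace on $\Gamma_1$ delivered by the Robin-type elliptic problem of Lemma~\ref{L03}, to a level at which a contraction map can be closed. Following the five-stage scheme outlined in the introduction, I first solve the linearized variable-coefficient Euler equations with prescribed boundary data: a Sobolev extension to $\RR^3$ bypasses the inflow-type boundary condition, while the pressure is determined from a specially designed Neumann-Robin boundary value problem exploiting the cancellation pointed out in Remark~\ref{R01}. A fixed point then upgrades this to the nonlinear variable-coefficient Euler system with prescribed (higher-regularity) boundary data; the boundary and divergence conditions are verified a~posteriori, and a vorticity (div-curl) reformulation recovers full velocity regularity from less regular data by density. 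Coupling with the plate is achieved by a further fixed point on the map sending $w$ (equivalently, $w_t$ and the induced boundary data for $v$) to the updated $w$ produced by the damped plate equation driven by the newly computed pressure trace; the smoothing effect of $\nu>0$ makes this map a contraction on a short time interval $[0,T_\nu]$.

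To pass to the limit, Theorem~\ref{T01} delivers bounds on $(v_\nu, w_\nu, q_\nu)$ that are uniform in $\nu\in[0,1]$ on a common interval $[0,T_0]$ depending only on the size $M$ of the initial data; the standard continuation criterion then forces $T_\nu\geq T_0$ for every $\nu$. Together with the time-derivative estimates $\|v_{t,\nu}\|_{H^{1.5+\delta}}$ and $\|w_{tt,\nu}\|_{H^{\delta}(\Gamma_1)}$ from the same theorem, an Aubin-Lions compactness argument produces a subsequence that converges strongly in intermediate Sobolev norms and weakly-$*$ in the target regularity spaces listed in the statement. Passing to the limit in the weak formulation of the ALE Euler-plate system is routine for the convection and pressure terms; the only genuinely $\nu$-dependent term is $-\nu\Delta_2 w_{t,\nu}$, which tends to zero in the sense of distributions thanks to the bound $\nu^{1/2}\|w_{t,\nu}\|_{L^2 H^{3+\delta}}\lesssim M$ furnished by Theorem~\ref{T01}.

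The main obstacle I expect is the construction in the regularized setting, specifically the interplay between the inflow boundary condition $\tda_{3i}v_i=w_t$ and the pressure trace imposed by the plate equation. In a naive fixed-point approach the standard mismatch between the regularity of $v$ and that of $\nabla q$ is fatal; the remedy, which is the technical heart of the construction, is the whole-space Sobolev extension combined with the vorticity-based div-curl estimates, which together decouple the velocity regularity from the pressure regularity at each iteration. A secondary difficulty is arranging the mollification of the initial data so that all four compatibility conditions and the divergence-free constraint are preserved simultaneously, which is handled within the same harmonic-extension framework used throughout the paper.
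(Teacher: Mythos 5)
Your overall architecture matches the paper's: a multi-stage construction for the variable-coefficient Euler problem (extension to the whole space, Neumann--Robin pressure problem exploiting the cancellation of Remark~\ref{R01}, fixed point, a~posteriori recovery of the divergence and boundary conditions, div-curl/vorticity regularity, density in the boundary data), then a fixed point coupling with the damped plate for $\nu>0$, and finally a passage to the limit $\nu\to0$ using the uniform bounds and Aubin--Lions compactness, with the damping term killed by the $\nu^{1/2}\Vert w_t\Vert_{L^2H^{3+\delta}}$ bound. Uniqueness from Theorem~\ref{T02} is indeed immediate since $\delta\geq0.5$.

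There is, however, a genuine gap at the pivotal step where you invoke Theorem~\ref{T01} to get bounds uniform in $\nu$ on a common interval. Theorem~\ref{T01} is an a~priori estimate for $C^{\infty}$ solutions: its proof applies $\Lambda^{2.5+\delta}$ tangentially, integrates by parts, and uses $v_t$, $w_{tt}$ at regularity levels ($H^{1.5+\delta}$, $H^{\delta}$ pointwise in time, with further commutator manipulations) that the constructed $\nu>0$ solutions do not possess: the fixed point of the regularized system only delivers $v\in L^{\infty}H^{2.5+\delta}$, $v_t\in L^{\infty}H^{0.5+\delta}$, $q\in L^{\infty}H^{1.5+\delta}$, and nothing more. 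Your proposed remedy, mollifying the initial data, does not close this gap, because neither your scheme nor the paper's establishes propagation of higher regularity for the regularized free-boundary system; smooth data still only give solutions in the stated class, so Theorem~\ref{T01} cannot be applied to them as written. The paper resolves this by redoing the tangential energy estimates, the plate estimate, and the vorticity estimate on horizontal difference quotients $D_{h,l}$ of the constructed solutions (the computations around \eqref{8ThswELzXU3X7Ebd1KdZ7v1rN3GiirRXGKWK099ovBM0FDJCvkopYNQ2aN94Z7k0UnUKamE3OjU8DFYFFokbSI2J9V9gVlM8ALWThDPnPu3EL7HPD2VDaZTggzcCCmbvc70qqPcC9mt60ogcrTiA3HEjwTK8ymKeuJMc4q6dVz200XnYUtLR9GYjPXvFOVr6W1zUK1WbPToaWJJuKnxBLnd0ftDEbMmj4loHYyhZyMjM91zQS4p7z8eKa9h0JrbacekcirexG0z4n3284} and following), exploiting $\delta\geq0.5$ so that all commutators close at the available regularity; some variant of this (or an independent higher-regularity propagation argument) is needed before your limiting step, and with it your Aubin--Lions passage to the limit and the treatment of $-\nu\Delta_2 w_t$ go through as you describe. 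A related minor slip: your compactness step quotes $\Vert v_{t,\nu}\Vert_{H^{1.5+\delta}}$ from Theorem~\ref{T01}, but in the limit passage only the $L^{\infty}H^{0.5+\delta}$ bound on $v_t$ (read off directly from the equation) is available and is all that is needed.
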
 \colb \par
The theorem is proven in Section~\ref{secle} below. Note that \eqref{8ThswELzXU3X7Ebd1KdZ7v1rN3GiirRXGKWK099ovBM0FDJCvkopYNQ2aN94Z7k0UnUKamE3OjU8DFYFFokbSI2J9V9gVlM8ALWThDPnPu3EL7HPD2VDaZTggzcCCmbvc70qqPcC9mt60ogcrTiA3HEjwTK8ymKeuJMc4q6dVz200XnYUtLR9GYjPXvFOVr6W1zUK1WbPToaWJJuKnxBLnd0ftDEbMmj4loHYyhZyMjM91zQS4p7z8eKa9h0JrbacekcirexG0z4n330}$_1$ self-improves to $v\in C([0,T];H^{2.5+\delta_0}(\Omega))$ for any $\delta_0<\delta$. \par \startnewsection{A~priori bounds}{secap} This section is devoted to establishing the a~priori bounds for the Euler-plate system. \par \subsection{Basic properties of the coefficient matrix $a$, the cofactor matrix $b$, and the Jacobian~$J$} \label{sec02a} Note that, by multiplying the equation \eqref{8ThswELzXU3X7Ebd1KdZ7v1rN3GiirRXGKWK099ovBM0FDJCvkopYNQ2aN94Z7k0UnUKamE3OjU8DFYFFokbSI2J9V9gVlM8ALWThDPnPu3EL7HPD2VDaZTggzcCCmbvc70qqPcC9mt60ogcrTiA3HEjwTK8ymKeuJMc4q6dVz200XnYUtLR9GYjPXvFOVr6W1zUK1WbPToaWJJuKnxBLnd0ftDEbMmj4loHYyhZyMjM91zQS4p7z8eKa9h0JrbacekcirexG0z4n317}$_2$ with $J$ and integrating it over $\Omega$, while using \eqref{8ThswELzXU3X7Ebd1KdZ7v1rN3GiirRXGKWK099ovBM0FDJCvkopYNQ2aN94Z7k0UnUKamE3OjU8DFYFFokbSI2J9V9gVlM8ALWThDPnPu3EL7HPD2VDaZTggzcCCmbvc70qqPcC9mt60ogcrTiA3HEjwTK8ymKeuJMc4q6dVz200XnYUtLR9GYjPXvFOVr6W1zUK1WbPToaWJJuKnxBLnd0ftDEbMmj4loHYyhZyMjM91zQS4p7z8eKa9h0JrbacekcirexG0z4n320} and the Piola identity \eqref{8ThswELzXU3X7Ebd1KdZ7v1rN3GiirRXGKWK099ovBM0FDJCvkopYNQ2aN94Z7k0UnUKamE3OjU8DFYFFokbSI2J9V9gVlM8ALWThDPnPu3EL7HPD2VDaZTggzcCCmbvc70qqPcC9mt60ogcrTiA3HEjwTK8ymKeuJMc4q6dVz200XnYUtLR9GYjPXvFOVr6W1zUK1WbPToaWJJuKnxBLnd0ftDEbMmj4loHYyhZyMjM91zQS4p7z8eKa9h0JrbacekcirexG0z4n315}, we get $\OIUYJHUGFAJKLDHFKJLSDHFLKSDJFHLKSDJHFLKSDJHFLKDJFHLLDKHFLKSDHJFALKJHLJLHGLKHHLKJHLKGKHGJKHGKJHLKHJLKJH_{\Gamma_1} \tda_{3i}v_i =0$, which in turn, by \eqref{8ThswELzXU3X7Ebd1KdZ7v1rN3GiirRXGKWK099ovBM0FDJCvkopYNQ2aN94Z7k0UnUKamE3OjU8DFYFFokbSI2J9V9gVlM8ALWThDPnPu3EL7HPD2VDaZTggzcCCmbvc70qqPcC9mt60ogcrTiA3HEjwTK8ymKeuJMc4q6dVz200XnYUtLR9GYjPXvFOVr6W1zUK1WbPToaWJJuKnxBLnd0ftDEbMmj4loHYyhZyMjM91zQS4p7z8eKa9h0JrbacekcirexG0z4n321}, implies    \begin{equation}    \OIUYJHUGFAJKLDHFKJLSDHFLKSDJFHLKSDJHFLKSDJHFLKDJFHLLDKHFLKSDHJFALKJHLJLHGLKHHLKJHLKGKHGJKHGKJHLKHJLKJH_{\Gamma_1} w_t=0    .
   \label{8ThswELzXU3X7Ebd1KdZ7v1rN3GiirRXGKWK099ovBM0FDJCvkopYNQ2aN94Z7k0UnUKamE3OjU8DFYFFokbSI2J9V9gVlM8ALWThDPnPu3EL7HPD2VDaZTggzcCCmbvc70qqPcC9mt60ogcrTiA3HEjwTK8ymKeuJMc4q6dVz200XnYUtLR9GYjPXvFOVr6W1zUK1WbPToaWJJuKnxBLnd0ftDEbMmj4loHYyhZyMjM91zQS4p7z8eKa9h0JrbacekcirexG0z4n331}   \end{equation} Also, since $a=(\nabla \eta)^{-1}$, we have   \begin{equation}    a_t = - a\nabla \eta_t a    ,    \label{8ThswELzXU3X7Ebd1KdZ7v1rN3GiirRXGKWK099ovBM0FDJCvkopYNQ2aN94Z7k0UnUKamE3OjU8DFYFFokbSI2J9V9gVlM8ALWThDPnPu3EL7HPD2VDaZTggzcCCmbvc70qqPcC9mt60ogcrTiA3HEjwTK8ymKeuJMc4q6dVz200XnYUtLR9GYjPXvFOVr6W1zUK1WbPToaWJJuKnxBLnd0ftDEbMmj4loHYyhZyMjM91zQS4p7z8eKa9h0JrbacekcirexG0z4n332}   \end{equation} where the right-hand side is understood as a product of three matrices. In the proof of the a~priori estimates, we work on an interval of time $[0,T]$ such that \eqref{8ThswELzXU3X7Ebd1KdZ7v1rN3GiirRXGKWK099ovBM0FDJCvkopYNQ2aN94Z7k0UnUKamE3OjU8DFYFFokbSI2J9V9gVlM8ALWThDPnPu3EL7HPD2VDaZTggzcCCmbvc70qqPcC9mt60ogcrTiA3HEjwTK8ymKeuJMc4q6dVz200XnYUtLR9GYjPXvFOVr6W1zUK1WbPToaWJJuKnxBLnd0ftDEbMmj4loHYyhZyMjM91zQS4p7z8eKa9h0JrbacekcirexG0z4n324} holds, where $C_0$ is a fixed constant determined in the Gronwall argument below. \par \cole \begin{Lemma}
\label{L01} Let $\epsilon\in(0,1/2]$. Assume that    \begin{align}\thelt{fUu tCO cKc zz kx4U fhuA a8pYzW Vq 9 Sp6 CmA cZL Mx ceBX Dwug sjWuii Gl v JDb 08h BOV C1 pni6 4TTq Opzezq ZB J y5o KS8 BhH sd nKkH gnZl UCm7j0 Iv Y jQE 7JN 9fd ED ddys 3y1x 52pbiG Lc a 71j G3e uli Ce uzv2 R40Q 50JZUB uK d U3m May 0uo S7 ulWD h7qG 2FKw2T JX z BES 2Jk Q4U Dy 4aJ2 IXs4 RNH41s py T GNh hk0 w5Z C8 B3nU Bp9p 8eLKh8 UO 4 fMq Y6w lcA GM xCHt vlOx MqAJoQ QU 1 e8a 2aX 9Y6 2r lIS6 dejK Y3KCUm 25 7 oCl VeE e8p 1z UJSv bmLd Fy7ObQ FN l J6F Rd}   \begin{split}    &\Vert v\Vert_{H^{2.5+\delta}},    \Vert w\Vert_{H^{4+\delta}(\Gamma_1)},    \Vert w_t\Vert_{H^{2+\delta}(\Gamma_1)}    \leq C_0 M    \comma t\in[0,T_0]    ,   \end{split}    \label{8ThswELzXU3X7Ebd1KdZ7v1rN3GiirRXGKWK099ovBM0FDJCvkopYNQ2aN94Z7k0UnUKamE3OjU8DFYFFokbSI2J9V9gVlM8ALWThDPnPu3EL7HPD2VDaZTggzcCCmbvc70qqPcC9mt60ogcrTiA3HEjwTK8ymKeuJMc4q6dVz200XnYUtLR9GYjPXvFOVr6W1zUK1WbPToaWJJuKnxBLnd0ftDEbMmj4loHYyhZyMjM91zQS4p7z8eKa9h0JrbacekcirexG0z4n333}   \end{align}
where $M\geq1$ is as in the statement of Theorem~\ref{T01}. Then we have   \begin{equation}     \Vert a-I\Vert_{H^{1.5+\delta}},     \Vert \tda-I\Vert_{H^{1.5+\delta}},     \Vert J-1\Vert_{H^{1.5+\delta}}      \leq \epsilon     \comma t\in [0,T_0]    \label{8ThswELzXU3X7Ebd1KdZ7v1rN3GiirRXGKWK099ovBM0FDJCvkopYNQ2aN94Z7k0UnUKamE3OjU8DFYFFokbSI2J9V9gVlM8ALWThDPnPu3EL7HPD2VDaZTggzcCCmbvc70qqPcC9mt60ogcrTiA3HEjwTK8ymKeuJMc4q6dVz200XnYUtLR9GYjPXvFOVr6W1zUK1WbPToaWJJuKnxBLnd0ftDEbMmj4loHYyhZyMjM91zQS4p7z8eKa9h0JrbacekcirexG0z4n334}   \end{equation} and    \begin{equation}     \Vert J-1\Vert_{L^{\infty}}      \leq \epsilon
    \comma t\in [0,T_0]     ,    \label{8ThswELzXU3X7Ebd1KdZ7v1rN3GiirRXGKWK099ovBM0FDJCvkopYNQ2aN94Z7k0UnUKamE3OjU8DFYFFokbSI2J9V9gVlM8ALWThDPnPu3EL7HPD2VDaZTggzcCCmbvc70qqPcC9mt60ogcrTiA3HEjwTK8ymKeuJMc4q6dVz200XnYUtLR9GYjPXvFOVr6W1zUK1WbPToaWJJuKnxBLnd0ftDEbMmj4loHYyhZyMjM91zQS4p7z8eKa9h0JrbacekcirexG0z4n335}   \end{equation} where $T_0$ satisfies   \begin{equation}    0<T_0\leq \frac{\epsilon}{C M^{3}}    ,    \llabel{8ThswELzXU3X7Ebd1KdZ7v1rN3GiirRXGKWK099ovBM0FDJCvkopYNQ2aN94Z7k0UnUKamE3OjU8DFYFFokbSI2J9V9gVlM8ALWThDPnPu3EL7HPD2VDaZTggzcCCmbvc70qqPcC9mt60ogcrTiA3HEjwTK8ymKeuJMc4q6dVz200XnYUtLR9GYjPXvFOVr6W1zUK1WbPToaWJJuKnxBLnd0ftDEbMmj4loHYyhZyMjM91zQS4p7z8eKa9h0JrbacekcirexG0z4n336}   \end{equation} and $C$ depends on $C_0$. \end{Lemma}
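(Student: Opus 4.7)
The natural approach is to exploit the fact that at $t=0$ one has $w(0)=0$, so $\psi(x,0)=x_3$ and hence $\nabla\psi(0)=(0,0,1)^T$, giving $\tda(0)=I$, $J(0)=1$, and $a(0)=I$. The three required smallness bounds then reduce to controlling deviations from these reference values over a short time interval, which I would achieve by the fundamental theorem of calculus in time coupled with elliptic regularity for $\psi$.

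First, since $w(0)=0$ and $\|w_t\|_{H^{2+\delta}(\Gamma_1)}\leq C_0 M$ on $[0,T_0]$, integration in time gives
\[
\|w(t)\|_{H^{2+\delta}(\Gamma_1)}\leq T_0 C_0 M, \qquad t\in[0,T_0].
\]
Since $\psi-x_3$ is harmonic on $\Omega$ with boundary data $w$ on $\Gamma_1$ and $0$ on $\Gamma_0$, standard elliptic regularity for the Dirichlet problem yields
\[
\|\psi-x_3\|_{H^{2.5+\delta}(\Omega)}\leq C\|w\|_{H^{2+\delta}(\Gamma_1)}\leq CT_0 C_0 M.
\]

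Second, reading off the explicit formulas, the entries of $\tda-I$ are linear in the components of $\nabla\psi-(0,0,1)^T$ (namely $\partial_3\psi-1$ on the first two diagonal entries and $-\partial_1\psi,-\partial_2\psi$ in the last row), and $J-1=\partial_3\psi-1$. Consequently
\[
\|\tda-I\|_{H^{1.5+\delta}},\ \|J-1\|_{H^{1.5+\delta}}\leq C\|\nabla\psi-(0,0,1)\|_{H^{1.5+\delta}}\leq C\|\psi-x_3\|_{H^{2.5+\delta}}\leq CT_0 C_0 M.
\]
The pointwise bound $\|J-1\|_{L^\infty}\leq CT_0 C_0 M$ then follows from the Sobolev embedding $H^{1.5+\delta}(\Omega)\hookrightarrow L^\infty(\Omega)$, which holds for $\delta>0$ in three space dimensions.

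Third, for $a-I$, I would use $a=J^{-1}\tda$. Once $T_0$ is chosen small enough that the previous step gives $\|J-1\|_{L^\infty}\leq 1/2$, the function $J$ is bounded below by $1/2$, so the smooth map $z\mapsto 1/z$ near $z=1$ acts continuously on $H^{1.5+\delta}$ (which is a Banach algebra for $\delta>0$ by the condition $1.5+\delta>3/2$). Combining this with the algebra property, one obtains
\[
\|a-I\|_{H^{1.5+\delta}}\leq C\bigl(\|\tda-I\|_{H^{1.5+\delta}}+\|J-1\|_{H^{1.5+\delta}}\bigr)\leq CT_0 C_0 M.
\]
Imposing $T_0\leq \epsilon/(CM^3)$ with $C$ sufficiently large (absorbing $C_0$ and leaving margin for the nonlinear division by $J$) then makes all three right-hand sides at most $\epsilon$.

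The only non-routine step is the division by $J$; however, because the $L^\infty$ smallness of $J-1$ is already established directly from Sobolev embedding in the second step, there is no circularity, and the remaining estimate is a standard consequence of the Banach-algebra structure of $H^{1.5+\delta}$ together with the smoothness of $z\mapsto 1/z$ on a neighborhood of $1$.
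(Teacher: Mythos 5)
Your proposal is correct, but it takes a genuinely different route from the paper. The paper works at the level of the coefficient matrices themselves: it time-integrates the identity $a_t=-a\,\nabla\eta_t\,a$ (and $J_t=\partial_3\psi_t$), bounds the integrands by $\|a\|\,\|\nabla\eta_t\|\,\|a\|\lesssim M^3$ (respectively $\lesssim M$) using the elliptic bounds $\|\eta\|_{H^{4.5+\delta}}\lesssim\|w\|_{H^{4+\delta}(\Gamma_1)}$ and $\|\eta_t\|_{H^{2.5+\delta}}\lesssim\|w_t\|_{H^{2+\delta}(\Gamma_1)}$, and then obtains the cofactor bound from $b=Ja$; this is why the threshold is stated as $T_0\leq\epsilon/CM^3$. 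You instead integrate at the level of the boundary displacement: from $w(0)=0$ and the $w_t$ bound you make $\|w\|_{H^{2+\delta}(\Gamma_1)}\lesssim T_0M$ small, transfer this by the harmonic-extension elliptic estimate to $\|\nabla\psi-e_3\|_{H^{1.5+\delta}}$, read off $b-I$ and $J-1$ linearly from the explicit formulas, and recover $a-I$ by dividing by $J$ via the Banach-algebra property of $H^{1.5+\delta}$ and a standard composition (or Neumann-series) estimate, with the $L^\infty$ bound on $J-1$ coming from Sobolev embedding, exactly as needed to avoid circularity. Your argument buys a sharper smallness rate ($T_0M$ rather than $T_0M^3$, so the stated restriction on $T_0$ is more than sufficient), uses only the $w_t$ hypothesis and $w(0)=0$ rather than the full set of bounds, and avoids the matrix ODE and the a~priori bounds on $\|a\|$; the paper's route buys the fact that it reuses the identity $a_t=-a\nabla\eta_t a$ that is already central to the rest of the a~priori estimates and handles $a-I$ without any division or composition step. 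Both proofs rely on $w(0)=0$ (so that $\psi(0)=x_3$, $a(0)=b(0)=I$, $J(0)=1$), consistent with the paper's setup.
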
 \colb \par
Note that, by \eqref{8ThswELzXU3X7Ebd1KdZ7v1rN3GiirRXGKWK099ovBM0FDJCvkopYNQ2aN94Z7k0UnUKamE3OjU8DFYFFokbSI2J9V9gVlM8ALWThDPnPu3EL7HPD2VDaZTggzcCCmbvc70qqPcC9mt60ogcrTiA3HEjwTK8ymKeuJMc4q6dVz200XnYUtLR9GYjPXvFOVr6W1zUK1WbPToaWJJuKnxBLnd0ftDEbMmj4loHYyhZyMjM91zQS4p7z8eKa9h0JrbacekcirexG0z4n334}, we also have   \begin{equation}     \Vert a-I\Vert_{L^{\infty}},     \Vert \tda-I\Vert_{L^{\infty}}     \dlkjfhlaskdhjflkasdjhflkasjhdflkasjhdflkasjhdfls \epsilon     \comma t\in [0,T_0]    ,    \llabel{8ThswELzXU3X7Ebd1KdZ7v1rN3GiirRXGKWK099ovBM0FDJCvkopYNQ2aN94Z7k0UnUKamE3OjU8DFYFFokbSI2J9V9gVlM8ALWThDPnPu3EL7HPD2VDaZTggzcCCmbvc70qqPcC9mt60ogcrTiA3HEjwTK8ymKeuJMc4q6dVz200XnYUtLR9GYjPXvFOVr6W1zUK1WbPToaWJJuKnxBLnd0ftDEbMmj4loHYyhZyMjM91zQS4p7z8eKa9h0JrbacekcirexG0z4n337}   \end{equation} while \eqref{8ThswELzXU3X7Ebd1KdZ7v1rN3GiirRXGKWK099ovBM0FDJCvkopYNQ2aN94Z7k0UnUKamE3OjU8DFYFFokbSI2J9V9gVlM8ALWThDPnPu3EL7HPD2VDaZTggzcCCmbvc70qqPcC9mt60ogcrTiA3HEjwTK8ymKeuJMc4q6dVz200XnYUtLR9GYjPXvFOVr6W1zUK1WbPToaWJJuKnxBLnd0ftDEbMmj4loHYyhZyMjM91zQS4p7z8eKa9h0JrbacekcirexG0z4n335} gives   \begin{equation}    \frac12 \leq J \leq \frac32     \comma t\in [0,T_0]     ;
   \label{8ThswELzXU3X7Ebd1KdZ7v1rN3GiirRXGKWK099ovBM0FDJCvkopYNQ2aN94Z7k0UnUKamE3OjU8DFYFFokbSI2J9V9gVlM8ALWThDPnPu3EL7HPD2VDaZTggzcCCmbvc70qqPcC9mt60ogcrTiA3HEjwTK8ymKeuJMc4q6dVz200XnYUtLR9GYjPXvFOVr6W1zUK1WbPToaWJJuKnxBLnd0ftDEbMmj4loHYyhZyMjM91zQS4p7z8eKa9h0JrbacekcirexG0z4n338}   \end{equation} in particular, $J=\UIOIUYOIUyHJGKHJLOIUYOIUOIUYOIYIOUYTIUYIOOOIUYOIUYPOIUPOIUPOIUYOIUYOIUYOIUHOUHOHIOUHOIHOIUHOIUHIOUH_{3}\psi$ is positive and stays away from~$0$. The pressure estimates require $\epsilon\leq 1/C$, where $C$ is a constant, while  we need $\epsilon\leq 1/C M$ when concluding the a~priori estimates in Section~\ref{sec06} below. Therefore, we fix   \begin{equation}    \epsilon=\frac{1}{CM}    ,
   \label{8ThswELzXU3X7Ebd1KdZ7v1rN3GiirRXGKWK099ovBM0FDJCvkopYNQ2aN94Z7k0UnUKamE3OjU8DFYFFokbSI2J9V9gVlM8ALWThDPnPu3EL7HPD2VDaZTggzcCCmbvc70qqPcC9mt60ogcrTiA3HEjwTK8ymKeuJMc4q6dVz200XnYUtLR9GYjPXvFOVr6W1zUK1WbPToaWJJuKnxBLnd0ftDEbMmj4loHYyhZyMjM91zQS4p7z8eKa9h0JrbacekcirexG0z4n340}   \end{equation} where we assumed for convenience $M\geq1$ and  work with   \begin{equation}    T_0    = \frac{1}{C M^{4}}    ,    \label{8ThswELzXU3X7Ebd1KdZ7v1rN3GiirRXGKWK099ovBM0FDJCvkopYNQ2aN94Z7k0UnUKamE3OjU8DFYFFokbSI2J9V9gVlM8ALWThDPnPu3EL7HPD2VDaZTggzcCCmbvc70qqPcC9mt60ogcrTiA3HEjwTK8ymKeuJMc4q6dVz200XnYUtLR9GYjPXvFOVr6W1zUK1WbPToaWJJuKnxBLnd0ftDEbMmj4loHYyhZyMjM91zQS4p7z8eKa9h0JrbacekcirexG0z4n341}   \end{equation} where $C$ is  a sufficiently large constant. The symbol $C\geq1$ denotes a sufficiently large constant, which may change from inequality to inequality. Also, we write
$A\dlkjfhlaskdhjflkasdjhflkasjhdflkasjhdflkasjhdfls B$ when $A\leq C B$ for a constant~$C$. \par Before the proof, note that by the definitions of  $\psi$ and $\eta$ in \eqref{8ThswELzXU3X7Ebd1KdZ7v1rN3GiirRXGKWK099ovBM0FDJCvkopYNQ2aN94Z7k0UnUKamE3OjU8DFYFFokbSI2J9V9gVlM8ALWThDPnPu3EL7HPD2VDaZTggzcCCmbvc70qqPcC9mt60ogcrTiA3HEjwTK8ymKeuJMc4q6dVz200XnYUtLR9GYjPXvFOVr6W1zUK1WbPToaWJJuKnxBLnd0ftDEbMmj4loHYyhZyMjM91zQS4p7z8eKa9h0JrbacekcirexG0z4n309}  and \eqref{8ThswELzXU3X7Ebd1KdZ7v1rN3GiirRXGKWK099ovBM0FDJCvkopYNQ2aN94Z7k0UnUKamE3OjU8DFYFFokbSI2J9V9gVlM8ALWThDPnPu3EL7HPD2VDaZTggzcCCmbvc70qqPcC9mt60ogcrTiA3HEjwTK8ymKeuJMc4q6dVz200XnYUtLR9GYjPXvFOVr6W1zUK1WbPToaWJJuKnxBLnd0ftDEbMmj4loHYyhZyMjM91zQS4p7z8eKa9h0JrbacekcirexG0z4n310} we have   \begin{align}\thelt{Lc a 71j G3e uli Ce uzv2 R40Q 50JZUB uK d U3m May 0uo S7 ulWD h7qG 2FKw2T JX z BES 2Jk Q4U Dy 4aJ2 IXs4 RNH41s py T GNh hk0 w5Z C8 B3nU Bp9p 8eLKh8 UO 4 fMq Y6w lcA GM xCHt vlOx MqAJoQ QU 1 e8a 2aX 9Y6 2r lIS6 dejK Y3KCUm 25 7 oCl VeE e8p 1z UJSv bmLd Fy7ObQ FN l J6F RdF kEm qM N0Fd NZJ0 8DYuq2 pL X JNz 4rO ZkZ X2 IjTD 1fVt z4BmFI Pi 0 GKD R2W PhO zH zTLP lbAE OT9XW0 gb T Lb3 XRQ qGG 8o 4TPE 6WRc uMqMXh s6 x Ofv 8st jDi u8 rtJt TKSK jlGkGw t8 n F}    \begin{split}     \Vert \eta\Vert_{H^{4.5+\delta}}     \dlkjfhlaskdhjflkasdjhflkasjhdflkasjhdflkasjhdfls     \Vert \psi\Vert_{H^{4.5+\delta}}     \dlkjfhlaskdhjflkasdjhflkasjhdflkasjhdflkasjhdfls     \Vert w\Vert_{H^{4+\delta}(\Gamma_1)}
   \end{split}    \label{8ThswELzXU3X7Ebd1KdZ7v1rN3GiirRXGKWK099ovBM0FDJCvkopYNQ2aN94Z7k0UnUKamE3OjU8DFYFFokbSI2J9V9gVlM8ALWThDPnPu3EL7HPD2VDaZTggzcCCmbvc70qqPcC9mt60ogcrTiA3HEjwTK8ymKeuJMc4q6dVz200XnYUtLR9GYjPXvFOVr6W1zUK1WbPToaWJJuKnxBLnd0ftDEbMmj4loHYyhZyMjM91zQS4p7z8eKa9h0JrbacekcirexG0z4n342}   \end{align} and   \begin{align}\thelt{AJoQ QU 1 e8a 2aX 9Y6 2r lIS6 dejK Y3KCUm 25 7 oCl VeE e8p 1z UJSv bmLd Fy7ObQ FN l J6F RdF kEm qM N0Fd NZJ0 8DYuq2 pL X JNz 4rO ZkZ X2 IjTD 1fVt z4BmFI Pi 0 GKD R2W PhO zH zTLP lbAE OT9XW0 gb T Lb3 XRQ qGG 8o 4TPE 6WRc uMqMXh s6 x Ofv 8st jDi u8 rtJt TKSK jlGkGw t8 n FDx jA9 fCm iu FqMW jeox 5Akw3w Sd 8 1vK 8c4 C0O dj CHIs eHUO hyqGx3 Kw O lDq l1Y 4NY 4I vI7X DE4c FeXdFV bC F HaJ sb4 OC0 hu Mj65 J4fa vgGo7q Y5 X tLy izY DvH TR zd9x SRVg 0Pl6Z8 9}    \begin{split}     \Vert \eta_t\Vert_{H^{2.5+\delta}}     \dlkjfhlaskdhjflkasdjhflkasjhdflkasjhdflkasjhdfls     \Vert \psi_t\Vert_{H^{2.5+\delta}}     \dlkjfhlaskdhjflkasdjhflkasjhdflkasjhdflkasjhdfls     \Vert w_t\Vert_{H^{2+\delta}(\Gamma_1)}     ,    \end{split}    \label{8ThswELzXU3X7Ebd1KdZ7v1rN3GiirRXGKWK099ovBM0FDJCvkopYNQ2aN94Z7k0UnUKamE3OjU8DFYFFokbSI2J9V9gVlM8ALWThDPnPu3EL7HPD2VDaZTggzcCCmbvc70qqPcC9mt60ogcrTiA3HEjwTK8ymKeuJMc4q6dVz200XnYUtLR9GYjPXvFOVr6W1zUK1WbPToaWJJuKnxBLnd0ftDEbMmj4loHYyhZyMjM91zQS4p7z8eKa9h0JrbacekcirexG0z4n343}
  \end{align} and both far right sides are bounded by constant multiples of $M$. \par Also, we have   \begin{align}\thelt{AE OT9XW0 gb T Lb3 XRQ qGG 8o 4TPE 6WRc uMqMXh s6 x Ofv 8st jDi u8 rtJt TKSK jlGkGw t8 n FDx jA9 fCm iu FqMW jeox 5Akw3w Sd 8 1vK 8c4 C0O dj CHIs eHUO hyqGx3 Kw O lDq l1Y 4NY 4I vI7X DE4c FeXdFV bC F HaJ sb4 OC0 hu Mj65 J4fa vgGo7q Y5 X tLy izY DvH TR zd9x SRVg 0Pl6Z8 9X z fLh GlH IYB x9 OELo 5loZ x4wag4 cn F aCE KfA 0uz fw HMUV M9Qy eARFe3 Py 6 kQG GFx rPf 6T ZBQR la1a 6Aeker Xg k blz nSm mhY jc z3io WYjz h33sxR JM k Dos EAA hUO Oz aQfK Z0cn 5kq}    \begin{split}    \Vert J\Vert_{H^{3.5+\delta}}    =    \Vert \UIOIUYOIUyHJGKHJLOIUYOIUOIUYOIYIOUYTIUYIOOOIUYOIUYPOIUPOIUPOIUYOIUYOIUYOIUHOUHOHIOUHOIHOIUHOIUHIOUH_{3}\psi\Vert_{H^{3.5+\delta}}    \dlkjfhlaskdhjflkasdjhflkasjhdflkasjhdflkasjhdfls    \Vert \psi\Vert_{H^{4.5+\delta}}    \dlkjfhlaskdhjflkasdjhflkasjhdflkasjhdflkasjhdfls    \Vert w\Vert_{H^{4+\delta}(\Gamma_1)}    \end{split}
   \label{8ThswELzXU3X7Ebd1KdZ7v1rN3GiirRXGKWK099ovBM0FDJCvkopYNQ2aN94Z7k0UnUKamE3OjU8DFYFFokbSI2J9V9gVlM8ALWThDPnPu3EL7HPD2VDaZTggzcCCmbvc70qqPcC9mt60ogcrTiA3HEjwTK8ymKeuJMc4q6dVz200XnYUtLR9GYjPXvFOVr6W1zUK1WbPToaWJJuKnxBLnd0ftDEbMmj4loHYyhZyMjM91zQS4p7z8eKa9h0JrbacekcirexG0z4n344}   \end{align} and   \begin{equation}    \Vert J_t\Vert_{H^{1.5+\delta}}    \dlkjfhlaskdhjflkasdjhflkasjhdflkasjhdflkasjhdfls    \Vert \psi_t\Vert_{H^{2.5+\delta}}    \dlkjfhlaskdhjflkasdjhflkasjhdflkasjhdflkasjhdfls    \Vert \eta_t\Vert_{H^{2.5+\delta}}    \dlkjfhlaskdhjflkasdjhflkasjhdflkasjhdflkasjhdfls    \Vert w_t\Vert_{H^{2+\delta}(\Gamma_1)}    ,    \label{8ThswELzXU3X7Ebd1KdZ7v1rN3GiirRXGKWK099ovBM0FDJCvkopYNQ2aN94Z7k0UnUKamE3OjU8DFYFFokbSI2J9V9gVlM8ALWThDPnPu3EL7HPD2VDaZTggzcCCmbvc70qqPcC9mt60ogcrTiA3HEjwTK8ymKeuJMc4q6dVz200XnYUtLR9GYjPXvFOVr6W1zUK1WbPToaWJJuKnxBLnd0ftDEbMmj4loHYyhZyMjM91zQS4p7z8eKa9h0JrbacekcirexG0z4n345}   \end{equation}
with both right sides bounded by a constant multiple of $M$. \par \begin{proof}[Proof of Lemma~\ref{L01}] By \eqref{8ThswELzXU3X7Ebd1KdZ7v1rN3GiirRXGKWK099ovBM0FDJCvkopYNQ2aN94Z7k0UnUKamE3OjU8DFYFFokbSI2J9V9gVlM8ALWThDPnPu3EL7HPD2VDaZTggzcCCmbvc70qqPcC9mt60ogcrTiA3HEjwTK8ymKeuJMc4q6dVz200XnYUtLR9GYjPXvFOVr6W1zUK1WbPToaWJJuKnxBLnd0ftDEbMmj4loHYyhZyMjM91zQS4p7z8eKa9h0JrbacekcirexG0z4n332}, we have   \begin{align}\thelt{7X DE4c FeXdFV bC F HaJ sb4 OC0 hu Mj65 J4fa vgGo7q Y5 X tLy izY DvH TR zd9x SRVg 0Pl6Z8 9X z fLh GlH IYB x9 OELo 5loZ x4wag4 cn F aCE KfA 0uz fw HMUV M9Qy eARFe3 Py 6 kQG GFx rPf 6T ZBQR la1a 6Aeker Xg k blz nSm mhY jc z3io WYjz h33sxR JM k Dos EAA hUO Oz aQfK Z0cn 5kqYPn W7 1 vCT 69a EC9 LD EQ5S BK4J fVFLAo Qp N dzZ HAl JaL Mn vRqH 7pBB qOr7fv oa e BSA 8TE btx y3 jwK3 v244 dlfwRL Dc g X14 vTp Wd8 zy YWjw eQmF yD5y5l DN l ZbA Jac cld kx Yn3V QYI}    \begin{split}    \Vert a-I\Vert_{H^{1.5+\delta}}    \dlkjfhlaskdhjflkasdjhflkasjhdflkasjhdflkasjhdfls    \left\Vert     \OIUYJHUGFAJKLDHFKJLSDHFLKSDJFHLKSDJHFLKSDJHFLKDJFHLLDKHFLKSDHJFALKJHLJLHGLKHHLKJHLKGKHGJKHGKJHLKHJLKJH_{0}^{t} a \nabla \eta_t a \,ds    \right\Vert_{H^{1.5+\delta}}    \dlkjfhlaskdhjflkasdjhflkasjhdflkasjhdflkasjhdfls T_0 M^{3}    ,    \end{split}
   \llabel{8ThswELzXU3X7Ebd1KdZ7v1rN3GiirRXGKWK099ovBM0FDJCvkopYNQ2aN94Z7k0UnUKamE3OjU8DFYFFokbSI2J9V9gVlM8ALWThDPnPu3EL7HPD2VDaZTggzcCCmbvc70qqPcC9mt60ogcrTiA3HEjwTK8ymKeuJMc4q6dVz200XnYUtLR9GYjPXvFOVr6W1zUK1WbPToaWJJuKnxBLnd0ftDEbMmj4loHYyhZyMjM91zQS4p7z8eKa9h0JrbacekcirexG0z4n346}   \end{align} where we used \eqref{8ThswELzXU3X7Ebd1KdZ7v1rN3GiirRXGKWK099ovBM0FDJCvkopYNQ2aN94Z7k0UnUKamE3OjU8DFYFFokbSI2J9V9gVlM8ALWThDPnPu3EL7HPD2VDaZTggzcCCmbvc70qqPcC9mt60ogcrTiA3HEjwTK8ymKeuJMc4q6dVz200XnYUtLR9GYjPXvFOVr6W1zUK1WbPToaWJJuKnxBLnd0ftDEbMmj4loHYyhZyMjM91zQS4p7z8eKa9h0JrbacekcirexG0z4n342} and~\eqref{8ThswELzXU3X7Ebd1KdZ7v1rN3GiirRXGKWK099ovBM0FDJCvkopYNQ2aN94Z7k0UnUKamE3OjU8DFYFFokbSI2J9V9gVlM8ALWThDPnPu3EL7HPD2VDaZTggzcCCmbvc70qqPcC9mt60ogcrTiA3HEjwTK8ymKeuJMc4q6dVz200XnYUtLR9GYjPXvFOVr6W1zUK1WbPToaWJJuKnxBLnd0ftDEbMmj4loHYyhZyMjM91zQS4p7z8eKa9h0JrbacekcirexG0z4n345}. Now we only need to choose $T_0 \leq \epsilon/C M^{3}$, where $C$ is a sufficiently large constant, and the bound on the first term in \eqref{8ThswELzXU3X7Ebd1KdZ7v1rN3GiirRXGKWK099ovBM0FDJCvkopYNQ2aN94Z7k0UnUKamE3OjU8DFYFFokbSI2J9V9gVlM8ALWThDPnPu3EL7HPD2VDaZTggzcCCmbvc70qqPcC9mt60ogcrTiA3HEjwTK8ymKeuJMc4q6dVz200XnYUtLR9GYjPXvFOVr6W1zUK1WbPToaWJJuKnxBLnd0ftDEbMmj4loHYyhZyMjM91zQS4p7z8eKa9h0JrbacekcirexG0z4n334} is established. Similarly, we have   \begin{align}\thelt{6T ZBQR la1a 6Aeker Xg k blz nSm mhY jc z3io WYjz h33sxR JM k Dos EAA hUO Oz aQfK Z0cn 5kqYPn W7 1 vCT 69a EC9 LD EQ5S BK4J fVFLAo Qp N dzZ HAl JaL Mn vRqH 7pBB qOr7fv oa e BSA 8TE btx y3 jwK3 v244 dlfwRL Dc g X14 vTp Wd8 zy YWjw eQmF yD5y5l DN l ZbA Jac cld kx Yn3V QYIV v6fwmH z1 9 w3y D4Y ezR M9 BduE L7D9 2wTHHc Do g ZxZ WRW Jxi pv fz48 ZVB7 FZtgK0 Y1 w oCo hLA i70 NO Ta06 u2sY GlmspV l2 x y0X B37 x43 k5 kaoZ deyE sDglRF Xi 9 6b6 w9B dId Ko gSU}    \begin{split}    \Vert J-1\Vert_{H^{1.5+\delta}}    \dlkjfhlaskdhjflkasdjhflkasjhdflkasjhdflkasjhdfls     \left\Vert \OIUYJHUGFAJKLDHFKJLSDHFLKSDJFHLKSDJHFLKSDJHFLKDJFHLLDKHFLKSDHJFALKJHLJLHGLKHHLKJHLKGKHGJKHGKJHLKHJLKJH_{0}^{t} J_t\,ds\right\Vert_{H^{1.5+\delta}}    \dlkjfhlaskdhjflkasdjhflkasjhdflkasjhdflkasjhdfls    M T_0
   .    \end{split}    \llabel{8ThswELzXU3X7Ebd1KdZ7v1rN3GiirRXGKWK099ovBM0FDJCvkopYNQ2aN94Z7k0UnUKamE3OjU8DFYFFokbSI2J9V9gVlM8ALWThDPnPu3EL7HPD2VDaZTggzcCCmbvc70qqPcC9mt60ogcrTiA3HEjwTK8ymKeuJMc4q6dVz200XnYUtLR9GYjPXvFOVr6W1zUK1WbPToaWJJuKnxBLnd0ftDEbMmj4loHYyhZyMjM91zQS4p7z8eKa9h0JrbacekcirexG0z4n347}   \end{align} The bound  for     $\Vert \tda-I\Vert_{H^{1.5+\delta}}$ follows immediately from those on $\Vert a-I\Vert_{H^{1.5+\delta}}$ and $    \Vert J-1\Vert_{H^{1.5+\delta}}$ by using $\tda = J a $. \end{proof} \par As pointed out above, the value of $\epsilon$ in Lemma~\ref{L01} is fixed in the pressure estimates and then further restricted in the conclusion of a~priori bounds in Section~\ref{sec06}.
\par Note that by the definitions of $a$ and $b$ in the beginning of Section~\ref{sec02}, we have   \begin{equation}    \Vert a\Vert_{H^{3.5+\delta}},    \Vert \tda\Vert_{H^{3.5+\delta}}    \leq    P(\Vert w\Vert_{H^{4+\delta}(\Gamma_1)})    \label{8ThswELzXU3X7Ebd1KdZ7v1rN3GiirRXGKWK099ovBM0FDJCvkopYNQ2aN94Z7k0UnUKamE3OjU8DFYFFokbSI2J9V9gVlM8ALWThDPnPu3EL7HPD2VDaZTggzcCCmbvc70qqPcC9mt60ogcrTiA3HEjwTK8ymKeuJMc4q6dVz200XnYUtLR9GYjPXvFOVr6W1zUK1WbPToaWJJuKnxBLnd0ftDEbMmj4loHYyhZyMjM91zQS4p7z8eKa9h0JrbacekcirexG0z4n348}   \end{equation} and   \begin{equation}    \Vert a_t\Vert_{H^{1.5+\delta}},    \Vert \tda_t\Vert_{H^{1.5+\delta}}
   \leq     P(\Vert w\Vert_{H^{4+\delta}(\Gamma_1)},       \Vert w_t\Vert_{H^{2+\delta}(\Gamma_1)}     )    .    \label{8ThswELzXU3X7Ebd1KdZ7v1rN3GiirRXGKWK099ovBM0FDJCvkopYNQ2aN94Z7k0UnUKamE3OjU8DFYFFokbSI2J9V9gVlM8ALWThDPnPu3EL7HPD2VDaZTggzcCCmbvc70qqPcC9mt60ogcrTiA3HEjwTK8ymKeuJMc4q6dVz200XnYUtLR9GYjPXvFOVr6W1zUK1WbPToaWJJuKnxBLnd0ftDEbMmj4loHYyhZyMjM91zQS4p7z8eKa9h0JrbacekcirexG0z4n349}   \end{equation} \colb Above and in the sequel, the symbol $P$ denotes a generic polynomial of its arguments. It is assumed to be nonnegative and is allowed to change from inequality to inequality. \par \subsection{The tangential estimate} \label{sec03} Denote
\def\UIPOIUPOIUPOOYIUIUYOIUYOIUHOIUOIUHIOPUHPOIJPOIJPOUHOIUHOILJHLIUHYOIUYOUI{\Lambda}   \begin{equation}    \UIPOIUPOIUPOOYIUIUYOIUYOIUHOIUOIUHIOPUHPOIJPOIJPOUHOIUHOILJHLIUHYOIUYOUI=(I-\Delta_2)^{1/2}    ,    \label{8ThswELzXU3X7Ebd1KdZ7v1rN3GiirRXGKWK099ovBM0FDJCvkopYNQ2aN94Z7k0UnUKamE3OjU8DFYFFokbSI2J9V9gVlM8ALWThDPnPu3EL7HPD2VDaZTggzcCCmbvc70qqPcC9mt60ogcrTiA3HEjwTK8ymKeuJMc4q6dVz200XnYUtLR9GYjPXvFOVr6W1zUK1WbPToaWJJuKnxBLnd0ftDEbMmj4loHYyhZyMjM91zQS4p7z8eKa9h0JrbacekcirexG0z4n350}   \end{equation} where $\Delta_2$ denotes the Laplacian in $x_1$ and $x_2$ variables. The purpose of this section is to obtain the following a~priori estimate. \par \cole \begin{Lemma} \label{L02} Under the assumptions of Theorem~\ref{T01}, we have   \begin{align}\thelt{ btx y3 jwK3 v244 dlfwRL Dc g X14 vTp Wd8 zy YWjw eQmF yD5y5l DN l ZbA Jac cld kx Yn3V QYIV v6fwmH z1 9 w3y D4Y ezR M9 BduE L7D9 2wTHHc Do g ZxZ WRW Jxi pv fz48 ZVB7 FZtgK0 Y1 w oCo hLA i70 NO Ta06 u2sY GlmspV l2 x y0X B37 x43 k5 kaoZ deyE sDglRF Xi 9 6b6 w9B dId Ko gSUM NLLb CRzeQL UZ m i9O 2qv VzD hz v1r6 spSl jwNhG6 s6 i SdX hob hbp 2u sEdl 95LP AtrBBi bP C wSh pFC CUa yz xYS5 78ro f3UwDP sC I pES HB1 qFP SW 5tt0 I7oz jXun6c z4 c QLB J4M NmI 6}
   \begin{split}    &    \Vert  \UIPOIUPOIUPOOYIUIUYOIUYOIUHOIUOIUHIOPUHPOIJPOIJPOUHOIUHOILJHLIUHYOIUYOUI^{4+\delta} w\Vert_{L^2(\Gamma_1)}^2    +       \Vert \UIPOIUPOIUPOOYIUIUYOIUYOIUHOIUOIUHIOPUHPOIJPOIJPOUHOIUHOILJHLIUHYOIUYOUI^{2+\delta} w_{t}\Vert_{L^2(\Gamma_1)}^2    +    \nu \OIUYJHUGFAJKLDHFKJLSDHFLKSDJFHLKSDJHFLKSDJHFLKDJFHLLDKHFLKSDHJFALKJHLJLHGLKHHLKJHLKGKHGJKHGKJHLKHJLKJH_{0}^{t}   \Vert  \nabla_2  \UIPOIUPOIUPOOYIUIUYOIUYOIUHOIUOIUHIOPUHPOIJPOIJPOUHOIUHOILJHLIUHYOIUYOUI^{2+\delta} w_{t} \Vert_{L^2(\Gamma_1)}^2 \, ds    \\&\indeq    \dlkjfhlaskdhjflkasdjhflkasjhdflkasjhdflkasjhdfls       \Vert w_{t}(0)\Vert_{H^{2+\delta}(\Gamma_1)}^2     +     \Vert  v(0) \Vert_{H^{2.5+\delta}}^2     +    \Vert v\Vert_{L^2}^{1/(2.5+\delta)}          \Vert v\Vert_{H^{2.5+\delta}}^{(4+2\delta)/(2.5+\delta)}    \\&\indeq\indeq    +\OIUYJHUGFAJKLDHFKJLSDHFLKSDJFHLKSDJHFLKSDJHFLKDJFHLLDKHFLKSDHJFALKJHLJLHGLKHHLKJHLKGKHGJKHGKJHLKHJLKJH_{0}^{t}        P(
        \Vert v\Vert_{H^{2.5+\delta}},          \Vert q\Vert_{H^{1.5+\delta}},         \Vert w\Vert_{H^{4+\delta}(\Gamma_1)},         \Vert w_{t}\Vert_{H^{2+\delta}(\Gamma_1)}        )\,ds    ,    \end{split}    \label{8ThswELzXU3X7Ebd1KdZ7v1rN3GiirRXGKWK099ovBM0FDJCvkopYNQ2aN94Z7k0UnUKamE3OjU8DFYFFokbSI2J9V9gVlM8ALWThDPnPu3EL7HPD2VDaZTggzcCCmbvc70qqPcC9mt60ogcrTiA3HEjwTK8ymKeuJMc4q6dVz200XnYUtLR9GYjPXvFOVr6W1zUK1WbPToaWJJuKnxBLnd0ftDEbMmj4loHYyhZyMjM91zQS4p7z8eKa9h0JrbacekcirexG0z4n351}   \end{align}  for $t\in [0,T_0]$. \end{Lemma}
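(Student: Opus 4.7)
The plan is to derive an energy identity for the plate at tangential Sobolev regularity $\Lambda^{2+\delta}$ and to absorb the pressure forcing, which sits on $\Gamma_1$ at only low regularity, into a controllable interior quantity using the kinematic coupling and the Euler equation. I would start by applying $\Lambda^{2+\delta}$ to the plate equation $w_{tt} + \Delta_2^2 w - \nu\Delta_2 w_t = q$ and pairing with $\Lambda^{2+\delta}w_t$ in $L^2(\Gamma_1)$. Since $\Lambda$ is self-adjoint, commutes with $\partial_t$ and $\Delta_2$, and $\Delta_2^2 = \Lambda^4 - 2\Lambda^2 + I$, this yields
\begin{equation*}
\frac{1}{2}\frac{d}{dt}\bigl[\|\Lambda^{2+\delta}w_t\|^2_{L^2(\Gamma_1)} + \|\Delta_2\Lambda^{2+\delta}w\|^2_{L^2(\Gamma_1)}\bigr] + \nu\|\nabla_2\Lambda^{2+\delta}w_t\|^2_{L^2(\Gamma_1)} = \int_{\Gamma_1}\Lambda^{2+\delta}q\,\Lambda^{2+\delta}w_t,
\end{equation*}
where $\|\Delta_2\Lambda^{2+\delta}w\|^2$ controls $\|\Lambda^{4+\delta}w\|^2$ up to lower-order terms in $w$ that are absorbed into the polynomial time-integrand.

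For the pressure term, I would substitute the kinematic coupling $w_t = b_{3i}v_i$ on $\Gamma_1$ and split off a commutator to isolate $\int_{\Gamma_1}b_{3i}\Lambda^{2+\delta}q\,\Lambda^{2+\delta}v_i$ as the leading piece. The Piola identity $\partial_k b_{ki}=0$, together with the homogeneous data on $\Gamma_0$ (namely $v_3=0$ and $b_{3\alpha}|_{\Gamma_0}=\delta_{3\alpha}$), lifts this boundary integral to the volume integral
\begin{equation*}
\int_{\Gamma_1}b_{3i}\Lambda^{2+\delta}q\,\Lambda^{2+\delta}v_i = \int_\Omega b_{ki}\,\partial_k\Lambda^{2+\delta}q\;\Lambda^{2+\delta}v_i\,dx + \int_\Omega b_{ki}\Lambda^{2+\delta}q\;\partial_k\Lambda^{2+\delta}v_i\,dx.
\end{equation*}
After applying the variable divergence-free condition $b_{ki}\partial_k v_i=0$, the second volume integral reduces to a lower-order commutator. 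In the first, the Euler momentum equation provides $b_{ki}\partial_k q = -J\partial_t v_i$ modulo transport contributions, so its top-order part becomes
\begin{equation*}
-\int_\Omega J\,\Lambda^{2+\delta}\partial_t v_i\;\Lambda^{2+\delta}v_i\,dx = -\frac{1}{2}\frac{d}{dt}\int_\Omega J|\Lambda^{2+\delta}v|^2\,dx + \frac{1}{2}\int_\Omega J_t|\Lambda^{2+\delta}v|^2\,dx,
\end{equation*}
and the total time derivative is absorbed into the energy on the left-hand side.

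Integrating in time and invoking Lemma~\ref{L01} (so $J\sim 1$), the initial value $\tfrac12\int J(0)|\Lambda^{2+\delta}v_0|^2$ is dominated by $\|v_0\|^2_{H^{2.5+\delta}}$, while the running value $\tfrac12\int J(t)|\Lambda^{2+\delta}v(t)|^2 \sim \|v(t)\|^2_{H^{2+\delta}}$ is moved to the right-hand side and controlled by the Gagliardo-Nirenberg interpolation $\|v\|^2_{H^{2+\delta}} \lesssim \|v\|_{L^2}^{1/(2.5+\delta)}\|v\|_{H^{2.5+\delta}}^{(4+2\delta)/(2.5+\delta)}$, reproducing exactly the mixed term in the statement. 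All commutator, transport, and lower-order contributions are bounded by $P(\|v\|_{H^{2.5+\delta}},\|q\|_{H^{1.5+\delta}},\|w\|_{H^{4+\delta}(\Gamma_1)},\|w_t\|_{H^{2+\delta}(\Gamma_1)})$ via Kato-Ponce-type product estimates, Lemma~\ref{L01}, and the trace inequality.

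The hard part is the pressure cancellation itself. Since $q$ is only controlled in $H^{1.5+\delta}(\Omega)$, the trace $\Lambda^{2+\delta}q|_{\Gamma_1}$ has too little regularity to be paired directly with $\Lambda^{2+\delta}w_t$; only the simultaneous use of the Piola identity (to prevent derivatives falling on $b$), the divergence-free constraint (to kill the solenoidal contribution after integration by parts), and the Euler equation (to exchange $\nabla q$ for $\partial_t v$) permits trading the ill-posed boundary pairing for a total time derivative of a well-defined interior energy plus genuinely lower-order terms, and carefully tracking the commutators without losing derivatives is the principal technical burden.
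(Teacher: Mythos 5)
Your proposal is correct and follows essentially the same route as the paper's proof: test the plate equation with $\Lambda^{2(2+\delta)}w_t$, and cancel the resulting pressure forcing against a tangential interior energy of $v$ extracted from the Euler equation via the kinematic condition $w_t=b_{3i}v_i$, the Piola identity, and the variable divergence-free constraint, with the running interior energy handled by the same $L^2$--$H^{2.5+\delta}$ interpolation and all remainders absorbed into the polynomial time integrand. The only differences are organizational: you lift the boundary pairing into the interior, whereas the paper derives the tangential velocity estimate first and identifies the boundary term that cancels, and you use the symmetric energy $\int J|\Lambda^{2+\delta}v|^2$ where the paper uses the asymmetric pairing $\int J\,\Lambda^{1.5+\delta}v_i\,\Lambda^{2.5+\delta}v_i$ precisely so that the commutators see at most $1.5+\delta$ derivatives on $q$ (respectively $1+\delta$ on its trace) and only $H^{0.5+\delta}$ of $v_t$ -- the same derivative bookkeeping you will have to perform in the commutator estimates you defer.
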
 \colb \par \begin{proof}[Proof of Lemma~\ref{L02}]
Assume that \eqref{8ThswELzXU3X7Ebd1KdZ7v1rN3GiirRXGKWK099ovBM0FDJCvkopYNQ2aN94Z7k0UnUKamE3OjU8DFYFFokbSI2J9V9gVlM8ALWThDPnPu3EL7HPD2VDaZTggzcCCmbvc70qqPcC9mt60ogcrTiA3HEjwTK8ymKeuJMc4q6dVz200XnYUtLR9GYjPXvFOVr6W1zUK1WbPToaWJJuKnxBLnd0ftDEbMmj4loHYyhZyMjM91zQS4p7z8eKa9h0JrbacekcirexG0z4n323} holds. We test the plate equation \eqref{8ThswELzXU3X7Ebd1KdZ7v1rN3GiirRXGKWK099ovBM0FDJCvkopYNQ2aN94Z7k0UnUKamE3OjU8DFYFFokbSI2J9V9gVlM8ALWThDPnPu3EL7HPD2VDaZTggzcCCmbvc70qqPcC9mt60ogcrTiA3HEjwTK8ymKeuJMc4q6dVz200XnYUtLR9GYjPXvFOVr6W1zUK1WbPToaWJJuKnxBLnd0ftDEbMmj4loHYyhZyMjM91zQS4p7z8eKa9h0JrbacekcirexG0z4n322}  with $\UIPOIUPOIUPOOYIUIUYOIUYOIUHOIUOIUHIOPUHPOIJPOIJPOUHOIUHOILJHLIUHYOIUYOUI^{2(2+\delta)}w_{t}$, obtaining   \begin{equation}    \frac12 \frac{d}{dt}     \Bigl(      \Vert \Delta_2 \UIPOIUPOIUPOOYIUIUYOIUYOIUHOIUOIUHIOPUHPOIJPOIJPOUHOIUHOILJHLIUHYOIUYOUI^{2+\delta}w\Vert_{L^2(\Gamma_1)}^2      + \Vert \UIPOIUPOIUPOOYIUIUYOIUYOIUHOIUOIUHIOPUHPOIJPOIJPOUHOIUHOILJHLIUHYOIUYOUI^{2+\delta}w_{t}\Vert_{L^2(\Gamma_1)}^2   \Bigr)      +    \nu  \Vert  \nabla_2  \UIPOIUPOIUPOOYIUIUYOIUYOIUHOIUOIUHIOPUHPOIJPOIJPOUHOIUHOILJHLIUHYOIUYOUI^{2+\delta} w_{t} \Vert_{L^2(\Gamma_1)}^2     = \OIUYJHUGFAJKLDHFKJLSDHFLKSDJFHLKSDJHFLKSDJHFLKDJFHLLDKHFLKSDHJFALKJHLJLHGLKHHLKJHLKGKHGJKHGKJHLKHJLKJH_{\Gamma_1} q \UIPOIUPOIUPOOYIUIUYOIUYOIUHOIUOIUHIOPUHPOIJPOIJPOUHOIUHOILJHLIUHYOIUYOUI^{2(2+\delta)} w_{t}    .    \label{8ThswELzXU3X7Ebd1KdZ7v1rN3GiirRXGKWK099ovBM0FDJCvkopYNQ2aN94Z7k0UnUKamE3OjU8DFYFFokbSI2J9V9gVlM8ALWThDPnPu3EL7HPD2VDaZTggzcCCmbvc70qqPcC9mt60ogcrTiA3HEjwTK8ymKeuJMc4q6dVz200XnYUtLR9GYjPXvFOVr6W1zUK1WbPToaWJJuKnxBLnd0ftDEbMmj4loHYyhZyMjM91zQS4p7z8eKa9h0JrbacekcirexG0z4n352}
  \end{equation} Integrating in time leads to   \begin{align}\thelt{o hLA i70 NO Ta06 u2sY GlmspV l2 x y0X B37 x43 k5 kaoZ deyE sDglRF Xi 9 6b6 w9B dId Ko gSUM NLLb CRzeQL UZ m i9O 2qv VzD hz v1r6 spSl jwNhG6 s6 i SdX hob hbp 2u sEdl 95LP AtrBBi bP C wSh pFC CUa yz xYS5 78ro f3UwDP sC I pES HB1 qFP SW 5tt0 I7oz jXun6c z4 c QLB J4M NmI 6F 08S2 Il8C 0JQYiU lI 1 YkK oiu bVt fG uOeg Sllv b4HGn3 bS Z LlX efa eN6 v1 B6m3 Ek3J SXUIjX 8P d NKI UFN JvP Ha Vr4T eARP dXEV7B xM 0 A7w 7je p8M 4Q ahOi hEVo Pxbi1V uG e tOt HbP }   \begin{split}    &    \frac12 \Vert  \Delta_2 \UIPOIUPOIUPOOYIUIUYOIUYOIUHOIUOIUHIOPUHPOIJPOIJPOUHOIUHOILJHLIUHYOIUYOUI^{2+\delta} w\Vert_{L^2(\Gamma_1)}^2    +  \frac12 \Vert \UIPOIUPOIUPOOYIUIUYOIUYOIUHOIUOIUHIOPUHPOIJPOIJPOUHOIUHOILJHLIUHYOIUYOUI^{2+\delta} w_{t}\Vert_{L^2(\Gamma_1)}^2    +    \nu \OIUYJHUGFAJKLDHFKJLSDHFLKSDJFHLKSDJHFLKSDJHFLKDJFHLLDKHFLKSDHJFALKJHLJLHGLKHHLKJHLKGKHGJKHGKJHLKHJLKJH_{0}^{t}   \Vert  \nabla_2  \UIPOIUPOIUPOOYIUIUYOIUYOIUHOIUOIUHIOPUHPOIJPOIJPOUHOIUHOILJHLIUHYOIUYOUI^{2+\delta} w_{t} \Vert_{L^2(\Gamma_1)}^2 \, ds   \\&\indeq   =    \frac12 \Vert \UIPOIUPOIUPOOYIUIUYOIUYOIUHOIUOIUHIOPUHPOIJPOIJPOUHOIUHOILJHLIUHYOIUYOUI^{2+\delta} w_{t}(0)\Vert_{L^2(\Gamma_1)}^2    + \OIUYJHUGFAJKLDHFKJLSDHFLKSDJFHLKSDJHFLKSDJHFLKDJFHLLDKHFLKSDHJFALKJHLJLHGLKHHLKJHLKGKHGJKHGKJHLKHJLKJH_{0}^{t} \OIUYJHUGFAJKLDHFKJLSDHFLKSDJFHLKSDJHFLKSDJHFLKDJFHLLDKHFLKSDHJFALKJHLJLHGLKHHLKJHLKGKHGJKHGKJHLKHJLKJH_{\Gamma_1}  q \UIPOIUPOIUPOOYIUIUYOIUYOIUHOIUOIUHIOPUHPOIJPOIJPOUHOIUHOILJHLIUHYOIUYOUI^{2(2+\delta)} w_{t}    ,   \end{split}
   \label{8ThswELzXU3X7Ebd1KdZ7v1rN3GiirRXGKWK099ovBM0FDJCvkopYNQ2aN94Z7k0UnUKamE3OjU8DFYFFokbSI2J9V9gVlM8ALWThDPnPu3EL7HPD2VDaZTggzcCCmbvc70qqPcC9mt60ogcrTiA3HEjwTK8ymKeuJMc4q6dVz200XnYUtLR9GYjPXvFOVr6W1zUK1WbPToaWJJuKnxBLnd0ftDEbMmj4loHYyhZyMjM91zQS4p7z8eKa9h0JrbacekcirexG0z4n353}   \end{align} where we also used $w(0)=0$. Note that, by \eqref{8ThswELzXU3X7Ebd1KdZ7v1rN3GiirRXGKWK099ovBM0FDJCvkopYNQ2aN94Z7k0UnUKamE3OjU8DFYFFokbSI2J9V9gVlM8ALWThDPnPu3EL7HPD2VDaZTggzcCCmbvc70qqPcC9mt60ogcrTiA3HEjwTK8ymKeuJMc4q6dVz200XnYUtLR9GYjPXvFOVr6W1zUK1WbPToaWJJuKnxBLnd0ftDEbMmj4loHYyhZyMjM91zQS4p7z8eKa9h0JrbacekcirexG0z4n304}, the boundary condition on $\Gamma_0$ reads   \begin{equation}    v\cdot N=0    \inon{on $\Gamma_0$}    .    \label{8ThswELzXU3X7Ebd1KdZ7v1rN3GiirRXGKWK099ovBM0FDJCvkopYNQ2aN94Z7k0UnUKamE3OjU8DFYFFokbSI2J9V9gVlM8ALWThDPnPu3EL7HPD2VDaZTggzcCCmbvc70qqPcC9mt60ogcrTiA3HEjwTK8ymKeuJMc4q6dVz200XnYUtLR9GYjPXvFOVr6W1zUK1WbPToaWJJuKnxBLnd0ftDEbMmj4loHYyhZyMjM91zQS4p7z8eKa9h0JrbacekcirexG0z4n355}   \end{equation} To obtain \eqref{8ThswELzXU3X7Ebd1KdZ7v1rN3GiirRXGKWK099ovBM0FDJCvkopYNQ2aN94Z7k0UnUKamE3OjU8DFYFFokbSI2J9V9gVlM8ALWThDPnPu3EL7HPD2VDaZTggzcCCmbvc70qqPcC9mt60ogcrTiA3HEjwTK8ymKeuJMc4q6dVz200XnYUtLR9GYjPXvFOVr6W1zUK1WbPToaWJJuKnxBLnd0ftDEbMmj4loHYyhZyMjM91zQS4p7z8eKa9h0JrbacekcirexG0z4n351}, we claim that   \begin{align}\thelt{ C wSh pFC CUa yz xYS5 78ro f3UwDP sC I pES HB1 qFP SW 5tt0 I7oz jXun6c z4 c QLB J4M NmI 6F 08S2 Il8C 0JQYiU lI 1 YkK oiu bVt fG uOeg Sllv b4HGn3 bS Z LlX efa eN6 v1 B6m3 Ek3J SXUIjX 8P d NKI UFN JvP Ha Vr4T eARP dXEV7B xM 0 A7w 7je p8M 4Q ahOi hEVo Pxbi1V uG e tOt HbP tsO 5r 363R ez9n A5EJ55 pc L lQQ Hg6 X1J EW K8Cf 9kZm 14A5li rN 7 kKZ rY0 K10 It eJd3 kMGw opVnfY EG 2 orG fj0 TTA Xt ecJK eTM0 x1N9f0 lR p QkP M37 3r0 iA 6EFs 1F6f 4mjOB5 zu 5 GGT}    \begin{split}    &
   \frac12    \OIUYJHUGFAJKLDHFKJLSDHFLKSDJFHLKSDJHFLKSDJHFLKDJFHLLDKHFLKSDHJFALKJHLJLHGLKHHLKJHLKGKHGJKHGKJHLKHJLKJH  J \UIPOIUPOIUPOOYIUIUYOIUYOIUHOIUOIUHIOPUHPOIJPOIJPOUHOIUHOILJHLIUHYOIUYOUI^{1.5+\delta} v_i \UIPOIUPOIUPOOYIUIUYOIUYOIUHOIUOIUHIOPUHPOIJPOIJPOUHOIUHOILJHLIUHYOIUYOUI^{2.5+\delta} v_i \Big|_{t}    -    \frac12    \OIUYJHUGFAJKLDHFKJLSDHFLKSDJFHLKSDJHFLKSDJHFLKDJFHLLDKHFLKSDHJFALKJHLJLHGLKHHLKJHLKGKHGJKHGKJHLKHJLKJH  J \UIPOIUPOIUPOOYIUIUYOIUYOIUHOIUOIUHIOPUHPOIJPOIJPOUHOIUHOILJHLIUHYOIUYOUI^{1.5+\delta} v_i \UIPOIUPOIUPOOYIUIUYOIUYOIUHOIUOIUHIOPUHPOIJPOIJPOUHOIUHOILJHLIUHYOIUYOUI^{2.5+\delta} v_i \Big|_{0}    \\&\indeq    \leq    -     \OIUYJHUGFAJKLDHFKJLSDHFLKSDJFHLKSDJHFLKSDJHFLKDJFHLLDKHFLKSDHJFALKJHLJLHGLKHHLKJHLKGKHGJKHGKJHLKHJLKJH_{0}^{t}    \OIUYJHUGFAJKLDHFKJLSDHFLKSDJFHLKSDJHFLKSDJHFLKDJFHLLDKHFLKSDHJFALKJHLJLHGLKHHLKJHLKGKHGJKHGKJHLKHJLKJH_{\Gamma_1} q\UIPOIUPOIUPOOYIUIUYOIUYOIUHOIUOIUHIOPUHPOIJPOIJPOUHOIUHOILJHLIUHYOIUYOUI^{2(2+\delta)}w_{t}\,d\sigma \,ds    +     \OIUYJHUGFAJKLDHFKJLSDHFLKSDJFHLKSDJHFLKSDJHFLKDJFHLLDKHFLKSDHJFALKJHLJLHGLKHHLKJHLKGKHGJKHGKJHLKHJLKJH_{0}^{t}         P(           \Vert v\Vert_{H^{2.5+\delta}},
          \Vert q\Vert_{H^{1.5+\delta}},           \Vert w\Vert_{H^{4+\delta}(\Gamma_1)},           \Vert w_{t}\Vert_{H^{2+\delta}(\Gamma_1)}          )\,ds    .    \end{split}    \label{8ThswELzXU3X7Ebd1KdZ7v1rN3GiirRXGKWK099ovBM0FDJCvkopYNQ2aN94Z7k0UnUKamE3OjU8DFYFFokbSI2J9V9gVlM8ALWThDPnPu3EL7HPD2VDaZTggzcCCmbvc70qqPcC9mt60ogcrTiA3HEjwTK8ymKeuJMc4q6dVz200XnYUtLR9GYjPXvFOVr6W1zUK1WbPToaWJJuKnxBLnd0ftDEbMmj4loHYyhZyMjM91zQS4p7z8eKa9h0JrbacekcirexG0z4n3326}   \end{align} After \eqref{8ThswELzXU3X7Ebd1KdZ7v1rN3GiirRXGKWK099ovBM0FDJCvkopYNQ2aN94Z7k0UnUKamE3OjU8DFYFFokbSI2J9V9gVlM8ALWThDPnPu3EL7HPD2VDaZTggzcCCmbvc70qqPcC9mt60ogcrTiA3HEjwTK8ymKeuJMc4q6dVz200XnYUtLR9GYjPXvFOVr6W1zUK1WbPToaWJJuKnxBLnd0ftDEbMmj4loHYyhZyMjM91zQS4p7z8eKa9h0JrbacekcirexG0z4n3326} is established, we simply add \eqref{8ThswELzXU3X7Ebd1KdZ7v1rN3GiirRXGKWK099ovBM0FDJCvkopYNQ2aN94Z7k0UnUKamE3OjU8DFYFFokbSI2J9V9gVlM8ALWThDPnPu3EL7HPD2VDaZTggzcCCmbvc70qqPcC9mt60ogcrTiA3HEjwTK8ymKeuJMc4q6dVz200XnYUtLR9GYjPXvFOVr6W1zUK1WbPToaWJJuKnxBLnd0ftDEbMmj4loHYyhZyMjM91zQS4p7z8eKa9h0JrbacekcirexG0z4n352}  and \eqref{8ThswELzXU3X7Ebd1KdZ7v1rN3GiirRXGKWK099ovBM0FDJCvkopYNQ2aN94Z7k0UnUKamE3OjU8DFYFFokbSI2J9V9gVlM8ALWThDPnPu3EL7HPD2VDaZTggzcCCmbvc70qqPcC9mt60ogcrTiA3HEjwTK8ymKeuJMc4q6dVz200XnYUtLR9GYjPXvFOVr6W1zUK1WbPToaWJJuKnxBLnd0ftDEbMmj4loHYyhZyMjM91zQS4p7z8eKa9h0JrbacekcirexG0z4n3326}; the high order terms containing the pressure~$q$ cancel and \eqref{8ThswELzXU3X7Ebd1KdZ7v1rN3GiirRXGKWK099ovBM0FDJCvkopYNQ2aN94Z7k0UnUKamE3OjU8DFYFFokbSI2J9V9gVlM8ALWThDPnPu3EL7HPD2VDaZTggzcCCmbvc70qqPcC9mt60ogcrTiA3HEjwTK8ymKeuJMc4q6dVz200XnYUtLR9GYjPXvFOVr6W1zUK1WbPToaWJJuKnxBLnd0ftDEbMmj4loHYyhZyMjM91zQS4p7z8eKa9h0JrbacekcirexG0z4n351} follows; note that the negative of the first term on the left-hand side of \eqref{8ThswELzXU3X7Ebd1KdZ7v1rN3GiirRXGKWK099ovBM0FDJCvkopYNQ2aN94Z7k0UnUKamE3OjU8DFYFFokbSI2J9V9gVlM8ALWThDPnPu3EL7HPD2VDaZTggzcCCmbvc70qqPcC9mt60ogcrTiA3HEjwTK8ymKeuJMc4q6dVz200XnYUtLR9GYjPXvFOVr6W1zUK1WbPToaWJJuKnxBLnd0ftDEbMmj4loHYyhZyMjM91zQS4p7z8eKa9h0JrbacekcirexG0z4n3326} is estimated by the
third term on the right-hand side of \eqref{8ThswELzXU3X7Ebd1KdZ7v1rN3GiirRXGKWK099ovBM0FDJCvkopYNQ2aN94Z7k0UnUKamE3OjU8DFYFFokbSI2J9V9gVlM8ALWThDPnPu3EL7HPD2VDaZTggzcCCmbvc70qqPcC9mt60ogcrTiA3HEjwTK8ymKeuJMc4q6dVz200XnYUtLR9GYjPXvFOVr6W1zUK1WbPToaWJJuKnxBLnd0ftDEbMmj4loHYyhZyMjM91zQS4p7z8eKa9h0JrbacekcirexG0z4n351} (see \eqref{8ThswELzXU3X7Ebd1KdZ7v1rN3GiirRXGKWK099ovBM0FDJCvkopYNQ2aN94Z7k0UnUKamE3OjU8DFYFFokbSI2J9V9gVlM8ALWThDPnPu3EL7HPD2VDaZTggzcCCmbvc70qqPcC9mt60ogcrTiA3HEjwTK8ymKeuJMc4q6dVz200XnYUtLR9GYjPXvFOVr6W1zUK1WbPToaWJJuKnxBLnd0ftDEbMmj4loHYyhZyMjM91zQS4p7z8eKa9h0JrbacekcirexG0z4n376} below). To prove \eqref{8ThswELzXU3X7Ebd1KdZ7v1rN3GiirRXGKWK099ovBM0FDJCvkopYNQ2aN94Z7k0UnUKamE3OjU8DFYFFokbSI2J9V9gVlM8ALWThDPnPu3EL7HPD2VDaZTggzcCCmbvc70qqPcC9mt60ogcrTiA3HEjwTK8ymKeuJMc4q6dVz200XnYUtLR9GYjPXvFOVr6W1zUK1WbPToaWJJuKnxBLnd0ftDEbMmj4loHYyhZyMjM91zQS4p7z8eKa9h0JrbacekcirexG0z4n3326}, we first claim   \begin{align}\thelt{jX 8P d NKI UFN JvP Ha Vr4T eARP dXEV7B xM 0 A7w 7je p8M 4Q ahOi hEVo Pxbi1V uG e tOt HbP tsO 5r 363R ez9n A5EJ55 pc L lQQ Hg6 X1J EW K8Cf 9kZm 14A5li rN 7 kKZ rY0 K10 It eJd3 kMGw opVnfY EG 2 orG fj0 TTA Xt ecJK eTM0 x1N9f0 lR p QkP M37 3r0 iA 6EFs 1F6f 4mjOB5 zu 5 GGT Ncl Bmk b5 jOOK 4yny My04oz 6m 6 Akz NnP JXh Bn PHRu N5Ly qSguz5 Nn W 2lU Yx3 fX4 hu LieH L30w g93Xwc gj 1 I9d O9b EPC R0 vc6A 005Q VFy1ly K7 o VRV pbJ zZn xY dcld XgQa DXY3gz x3 }    \begin{split}    &    \frac12    \frac{d}{dt}    \OIUYJHUGFAJKLDHFKJLSDHFLKSDJFHLKSDJHFLKSDJHFLKDJFHLLDKHFLKSDHJFALKJHLJLHGLKHHLKJHLKGKHGJKHGKJHLKHJLKJH  J \UIPOIUPOIUPOOYIUIUYOIUYOIUHOIUOIUHIOPUHPOIJPOIJPOUHOIUHOILJHLIUHYOIUYOUI^{1.5+\delta} v_i \UIPOIUPOIUPOOYIUIUYOIUYOIUHOIUOIUHIOPUHPOIJPOIJPOUHOIUHOILJHLIUHYOIUYOUI^{2.5+\delta} v_i      =       \frac12 \OIUYJHUGFAJKLDHFKJLSDHFLKSDJFHLKSDJHFLKSDJHFLKDJFHLLDKHFLKSDHJFALKJHLJLHGLKHHLKJHLKGKHGJKHGKJHLKHJLKJH J_t \UIPOIUPOIUPOOYIUIUYOIUYOIUHOIUOIUHIOPUHPOIJPOIJPOUHOIUHOILJHLIUHYOIUYOUI^{1.5+\delta} v_i \UIPOIUPOIUPOOYIUIUYOIUYOIUHOIUOIUHIOPUHPOIJPOIJPOUHOIUHOILJHLIUHYOIUYOUI^{2.5+\delta} v_i      + \OIUYJHUGFAJKLDHFKJLSDHFLKSDJFHLKSDJHFLKSDJHFLKDJFHLLDKHFLKSDHJFALKJHLJLHGLKHHLKJHLKGKHGJKHGKJHLKHJLKJH            J \UIPOIUPOIUPOOYIUIUYOIUYOIUHOIUOIUHIOPUHPOIJPOIJPOUHOIUHOILJHLIUHYOIUYOUI^{1.5+\delta} \UIOIUYOIUyHJGKHJLOIUYOIUOIUYOIYIOUYTIUYIOOOIUYOIUYPOIUPOIUPOIUYOIUYOIUYOIUHOUHOHIOUHOIHOIUHOIUHIOUH_{t}v_i             \UIPOIUPOIUPOOYIUIUYOIUYOIUHOIUOIUHIOPUHPOIJPOIJPOUHOIUHOILJHLIUHYOIUYOUI^{2.5+\delta} v_i
     + \bar I     ,    \end{split}    \label{8ThswELzXU3X7Ebd1KdZ7v1rN3GiirRXGKWK099ovBM0FDJCvkopYNQ2aN94Z7k0UnUKamE3OjU8DFYFFokbSI2J9V9gVlM8ALWThDPnPu3EL7HPD2VDaZTggzcCCmbvc70qqPcC9mt60ogcrTiA3HEjwTK8ymKeuJMc4q6dVz200XnYUtLR9GYjPXvFOVr6W1zUK1WbPToaWJJuKnxBLnd0ftDEbMmj4loHYyhZyMjM91zQS4p7z8eKa9h0JrbacekcirexG0z4n3328}   \end{align} where   \begin{equation}    \bar I    \leq      P(        \Vert v\Vert_{H^{2.5+\delta}},        \Vert q\Vert_{H^{1.5+\delta}},        \Vert w\Vert_{H^{4+\delta}(\Gamma_1)},        \Vert w_{t}\Vert_{H^{2+\delta}(\Gamma_1)}
      )    .    \label{8ThswELzXU3X7Ebd1KdZ7v1rN3GiirRXGKWK099ovBM0FDJCvkopYNQ2aN94Z7k0UnUKamE3OjU8DFYFFokbSI2J9V9gVlM8ALWThDPnPu3EL7HPD2VDaZTggzcCCmbvc70qqPcC9mt60ogcrTiA3HEjwTK8ymKeuJMc4q6dVz200XnYUtLR9GYjPXvFOVr6W1zUK1WbPToaWJJuKnxBLnd0ftDEbMmj4loHYyhZyMjM91zQS4p7z8eKa9h0JrbacekcirexG0z4n3327}   \end{equation} To show \eqref{8ThswELzXU3X7Ebd1KdZ7v1rN3GiirRXGKWK099ovBM0FDJCvkopYNQ2aN94Z7k0UnUKamE3OjU8DFYFFokbSI2J9V9gVlM8ALWThDPnPu3EL7HPD2VDaZTggzcCCmbvc70qqPcC9mt60ogcrTiA3HEjwTK8ymKeuJMc4q6dVz200XnYUtLR9GYjPXvFOVr6W1zUK1WbPToaWJJuKnxBLnd0ftDEbMmj4loHYyhZyMjM91zQS4p7z8eKa9h0JrbacekcirexG0z4n3328}--\eqref{8ThswELzXU3X7Ebd1KdZ7v1rN3GiirRXGKWK099ovBM0FDJCvkopYNQ2aN94Z7k0UnUKamE3OjU8DFYFFokbSI2J9V9gVlM8ALWThDPnPu3EL7HPD2VDaZTggzcCCmbvc70qqPcC9mt60ogcrTiA3HEjwTK8ymKeuJMc4q6dVz200XnYUtLR9GYjPXvFOVr6W1zUK1WbPToaWJJuKnxBLnd0ftDEbMmj4loHYyhZyMjM91zQS4p7z8eKa9h0JrbacekcirexG0z4n3327}, first observe that, by the product rule, \eqref{8ThswELzXU3X7Ebd1KdZ7v1rN3GiirRXGKWK099ovBM0FDJCvkopYNQ2aN94Z7k0UnUKamE3OjU8DFYFFokbSI2J9V9gVlM8ALWThDPnPu3EL7HPD2VDaZTggzcCCmbvc70qqPcC9mt60ogcrTiA3HEjwTK8ymKeuJMc4q6dVz200XnYUtLR9GYjPXvFOVr6W1zUK1WbPToaWJJuKnxBLnd0ftDEbMmj4loHYyhZyMjM91zQS4p7z8eKa9h0JrbacekcirexG0z4n3328} holds with   \begin{equation}    \bar I      =      \frac12\OIUYJHUGFAJKLDHFKJLSDHFLKSDJFHLKSDJHFLKSDJHFLKDJFHLLDKHFLKSDHJFALKJHLJLHGLKHHLKJHLKGKHGJKHGKJHLKHJLKJH            J \UIPOIUPOIUPOOYIUIUYOIUYOIUHOIUOIUHIOPUHPOIJPOIJPOUHOIUHOILJHLIUHYOIUYOUI^{1.5+\delta} v_i             \UIPOIUPOIUPOOYIUIUYOIUYOIUHOIUOIUHIOPUHPOIJPOIJPOUHOIUHOILJHLIUHYOIUYOUI^{2.5+\delta} \UIOIUYOIUyHJGKHJLOIUYOIUOIUYOIYIOUYTIUYIOOOIUYOIUYPOIUPOIUPOIUYOIUYOIUYOIUHOUHOHIOUHOIHOIUHOIUHIOUH_{t}v_i      -      \frac12\OIUYJHUGFAJKLDHFKJLSDHFLKSDJFHLKSDJHFLKSDJHFLKDJFHLLDKHFLKSDHJFALKJHLJLHGLKHHLKJHLKGKHGJKHGKJHLKHJLKJH 
          J \UIPOIUPOIUPOOYIUIUYOIUYOIUHOIUOIUHIOPUHPOIJPOIJPOUHOIUHOILJHLIUHYOIUYOUI^{1.5+\delta} \UIOIUYOIUyHJGKHJLOIUYOIUOIUYOIYIOUYTIUYIOOOIUYOIUYPOIUPOIUPOIUYOIUYOIUYOIUHOUHOHIOUHOIHOIUHOIUHIOUH_{t}v_i             \UIPOIUPOIUPOOYIUIUYOIUYOIUHOIUOIUHIOPUHPOIJPOIJPOUHOIUHOILJHLIUHYOIUYOUI^{2.5+\delta} v_i     .    \label{8ThswELzXU3X7Ebd1KdZ7v1rN3GiirRXGKWK099ovBM0FDJCvkopYNQ2aN94Z7k0UnUKamE3OjU8DFYFFokbSI2J9V9gVlM8ALWThDPnPu3EL7HPD2VDaZTggzcCCmbvc70qqPcC9mt60ogcrTiA3HEjwTK8ymKeuJMc4q6dVz200XnYUtLR9GYjPXvFOVr6W1zUK1WbPToaWJJuKnxBLnd0ftDEbMmj4loHYyhZyMjM91zQS4p7z8eKa9h0JrbacekcirexG0z4n3329}   \end{equation} In order to show that \eqref{8ThswELzXU3X7Ebd1KdZ7v1rN3GiirRXGKWK099ovBM0FDJCvkopYNQ2aN94Z7k0UnUKamE3OjU8DFYFFokbSI2J9V9gVlM8ALWThDPnPu3EL7HPD2VDaZTggzcCCmbvc70qqPcC9mt60ogcrTiA3HEjwTK8ymKeuJMc4q6dVz200XnYUtLR9GYjPXvFOVr6W1zUK1WbPToaWJJuKnxBLnd0ftDEbMmj4loHYyhZyMjM91zQS4p7z8eKa9h0JrbacekcirexG0z4n3329} has a commutator form, we rewrite   \begin{align}\thelt{ opVnfY EG 2 orG fj0 TTA Xt ecJK eTM0 x1N9f0 lR p QkP M37 3r0 iA 6EFs 1F6f 4mjOB5 zu 5 GGT Ncl Bmk b5 jOOK 4yny My04oz 6m 6 Akz NnP JXh Bn PHRu N5Ly qSguz5 Nn W 2lU Yx3 fX4 hu LieH L30w g93Xwc gj 1 I9d O9b EPC R0 vc6A 005Q VFy1ly K7 o VRV pbJ zZn xY dcld XgQa DXY3gz x3 6 8OR JFK 9Uh XT e3xY bVHG oYqdHg Vy f 5kK Qzm mK4 9x xiAp jVkw gzJOdE 4v g hAv 9bV IHe wc Vqcb SUcF 1pHzol Nj T l1B urc Sam IP zkUS 8wwS a7wVWR 4D L VGf 1RF r59 9H tyGq hDT0 TDloo}    \begin{split}    \bar I    &=     \frac12      \OIUYJHUGFAJKLDHFKJLSDHFLKSDJFHLKSDJHFLKSDJHFLKDJFHLLDKHFLKSDHJFALKJHLJLHGLKHHLKJHLKGKHGJKHGKJHLKHJLKJH        \Bigl(        \UIPOIUPOIUPOOYIUIUYOIUYOIUHOIUOIUHIOPUHPOIJPOIJPOUHOIUHOILJHLIUHYOIUYOUI^2 (J\UIPOIUPOIUPOOYIUIUYOIUYOIUHOIUOIUHIOPUHPOIJPOIJPOUHOIUHOILJHLIUHYOIUYOUI^{1.5+\delta}v_i)
       - \UIPOIUPOIUPOOYIUIUYOIUYOIUHOIUOIUHIOPUHPOIJPOIJPOUHOIUHOILJHLIUHYOIUYOUI (J \UIPOIUPOIUPOOYIUIUYOIUYOIUHOIUOIUHIOPUHPOIJPOIJPOUHOIUHOILJHLIUHYOIUYOUI^{2.5+\delta}v_i)       \Bigr)       \UIPOIUPOIUPOOYIUIUYOIUYOIUHOIUOIUHIOPUHPOIJPOIJPOUHOIUHOILJHLIUHYOIUYOUI^{0.5+\delta} \UIOIUYOIUyHJGKHJLOIUYOIUOIUYOIYIOUYTIUYIOOOIUYOIUYPOIUPOIUPOIUYOIUYOIUYOIUHOUHOHIOUHOIHOIUHOIUHIOUH_{t}v_i     \\&      =      \frac12      \OIUYJHUGFAJKLDHFKJLSDHFLKSDJFHLKSDJHFLKSDJHFLKDJFHLLDKHFLKSDHJFALKJHLJLHGLKHHLKJHLKGKHGJKHGKJHLKHJLKJH        \Bigl(        \UIPOIUPOIUPOOYIUIUYOIUYOIUHOIUOIUHIOPUHPOIJPOIJPOUHOIUHOILJHLIUHYOIUYOUI^2 (J\UIPOIUPOIUPOOYIUIUYOIUYOIUHOIUOIUHIOPUHPOIJPOIJPOUHOIUHOILJHLIUHYOIUYOUI^{1.5+\delta}v_i)        - J \UIPOIUPOIUPOOYIUIUYOIUYOIUHOIUOIUHIOPUHPOIJPOIJPOUHOIUHOILJHLIUHYOIUYOUI^{3.5+\delta}v_i       \Bigr)       \UIPOIUPOIUPOOYIUIUYOIUYOIUHOIUOIUHIOPUHPOIJPOIJPOUHOIUHOILJHLIUHYOIUYOUI^{0.5+\delta} \UIOIUYOIUyHJGKHJLOIUYOIUOIUYOIYIOUYTIUYIOOOIUYOIUYPOIUPOIUPOIUYOIUYOIUYOIUHOUHOHIOUHOIHOIUHOIUHIOUH_{t}v_i       +      \frac12
     \OIUYJHUGFAJKLDHFKJLSDHFLKSDJFHLKSDJHFLKSDJHFLKDJFHLLDKHFLKSDHJFALKJHLJLHGLKHHLKJHLKGKHGJKHGKJHLKHJLKJH        \Bigl(        J \UIPOIUPOIUPOOYIUIUYOIUYOIUHOIUOIUHIOPUHPOIJPOIJPOUHOIUHOILJHLIUHYOIUYOUI^{3.5+\delta}v_i        - \UIPOIUPOIUPOOYIUIUYOIUYOIUHOIUOIUHIOPUHPOIJPOIJPOUHOIUHOILJHLIUHYOIUYOUI (J \UIPOIUPOIUPOOYIUIUYOIUYOIUHOIUOIUHIOPUHPOIJPOIJPOUHOIUHOILJHLIUHYOIUYOUI^{2.5+\delta}v_i)       \Bigr)       \UIPOIUPOIUPOOYIUIUYOIUYOIUHOIUOIUHIOPUHPOIJPOIJPOUHOIUHOILJHLIUHYOIUYOUI^{0.5+\delta} \UIOIUYOIUyHJGKHJLOIUYOIUOIUYOIYIOUYTIUYIOOOIUYOIUYPOIUPOIUPOIUYOIUYOIUYOIUHOUHOHIOUHOIHOIUHOIUHIOUH_{t}v_i     \\&     \dlkjfhlaskdhjflkasdjhflkasjhdflkasjhdflkasjhdfls     \Vert J\Vert_{H^{3}}     \Vert v\Vert_{H^{2.5+\delta}}     \Vert v_t\Vert_{H^{0.5+\delta}}    \leq           P(           \Vert v\Vert_{H^{2.5+\delta}},
          \Vert q\Vert_{H^{1.5+\delta}},           \Vert w\Vert_{H^{4+\delta}(\Gamma_1)},           \Vert w_{t}\Vert_{H^{2+\delta}(\Gamma_1)}          )    ,    \end{split}    \label{8ThswELzXU3X7Ebd1KdZ7v1rN3GiirRXGKWK099ovBM0FDJCvkopYNQ2aN94Z7k0UnUKamE3OjU8DFYFFokbSI2J9V9gVlM8ALWThDPnPu3EL7HPD2VDaZTggzcCCmbvc70qqPcC9mt60ogcrTiA3HEjwTK8ymKeuJMc4q6dVz200XnYUtLR9GYjPXvFOVr6W1zUK1WbPToaWJJuKnxBLnd0ftDEbMmj4loHYyhZyMjM91zQS4p7z8eKa9h0JrbacekcirexG0z4n3330}   \end{align} where in the last inequality, we bounded $v_t$ in terms of $v$ and $q$ directly from \eqref{8ThswELzXU3X7Ebd1KdZ7v1rN3GiirRXGKWK099ovBM0FDJCvkopYNQ2aN94Z7k0UnUKamE3OjU8DFYFFokbSI2J9V9gVlM8ALWThDPnPu3EL7HPD2VDaZTggzcCCmbvc70qqPcC9mt60ogcrTiA3HEjwTK8ymKeuJMc4q6dVz200XnYUtLR9GYjPXvFOVr6W1zUK1WbPToaWJJuKnxBLnd0ftDEbMmj4loHYyhZyMjM91zQS4p7z8eKa9h0JrbacekcirexG0z4n317}$_1$ as   \begin{align}\thelt{ L30w g93Xwc gj 1 I9d O9b EPC R0 vc6A 005Q VFy1ly K7 o VRV pbJ zZn xY dcld XgQa DXY3gz x3 6 8OR JFK 9Uh XT e3xY bVHG oYqdHg Vy f 5kK Qzm mK4 9x xiAp jVkw gzJOdE 4v g hAv 9bV IHe wc Vqcb SUcF 1pHzol Nj T l1B urc Sam IP zkUS 8wwS a7wVWR 4D L VGf 1RF r59 9H tyGq hDT0 TDlooa mg j 9am png aWe nG XU2T zXLh IYOW5v 2d A rCG sLk s53 pW AuAy DQlF 6spKyd HT 9 Z1X n2s U1g 0D Llao YuLP PB6YKo D1 M 0fi qHU l4A Ia joiV Q6af VT6wvY Md 0 pCY BZp 7RX Hd xTb0 sjJ0 }    \begin{split}      \Vert v_t\Vert_{H^{0.5+\delta}}      &\dlkjfhlaskdhjflkasdjhflkasjhdflkasjhdflkasjhdfls      \Vert v\Vert_{H^{1.5+\delta}}
     \Vert a\Vert_{H^{1.5+\delta}}      \Vert \nabla v\Vert_{H^{0.5+\delta}}      +      (\Vert v\Vert_{H^{1.5+\delta}}        + \Vert \psi_t\Vert_{H^{1.5+\delta}}      )      \Vert \nabla v\Vert_{H^{0.5+\delta}}      \\&\indeq      +      \Vert a\Vert_{H^{1.5+\delta}}      \Vert q\Vert_{H^{1.5+\delta}}      \\&      \leq           P(
          \Vert v\Vert_{H^{2.5+\delta}},           \Vert q\Vert_{H^{1.5+\delta}},           \Vert w\Vert_{H^{4+\delta}(\Gamma_1)},           \Vert w_{t}\Vert_{H^{2+\delta}(\Gamma_1)}          )     .    \end{split}    \label{8ThswELzXU3X7Ebd1KdZ7v1rN3GiirRXGKWK099ovBM0FDJCvkopYNQ2aN94Z7k0UnUKamE3OjU8DFYFFokbSI2J9V9gVlM8ALWThDPnPu3EL7HPD2VDaZTggzcCCmbvc70qqPcC9mt60ogcrTiA3HEjwTK8ymKeuJMc4q6dVz200XnYUtLR9GYjPXvFOVr6W1zUK1WbPToaWJJuKnxBLnd0ftDEbMmj4loHYyhZyMjM91zQS4p7z8eKa9h0JrbacekcirexG0z4n358}   \end{align} Thus \eqref{8ThswELzXU3X7Ebd1KdZ7v1rN3GiirRXGKWK099ovBM0FDJCvkopYNQ2aN94Z7k0UnUKamE3OjU8DFYFFokbSI2J9V9gVlM8ALWThDPnPu3EL7HPD2VDaZTggzcCCmbvc70qqPcC9mt60ogcrTiA3HEjwTK8ymKeuJMc4q6dVz200XnYUtLR9GYjPXvFOVr6W1zUK1WbPToaWJJuKnxBLnd0ftDEbMmj4loHYyhZyMjM91zQS4p7z8eKa9h0JrbacekcirexG0z4n3328}, with the estimate \eqref{8ThswELzXU3X7Ebd1KdZ7v1rN3GiirRXGKWK099ovBM0FDJCvkopYNQ2aN94Z7k0UnUKamE3OjU8DFYFFokbSI2J9V9gVlM8ALWThDPnPu3EL7HPD2VDaZTggzcCCmbvc70qqPcC9mt60ogcrTiA3HEjwTK8ymKeuJMc4q6dVz200XnYUtLR9GYjPXvFOVr6W1zUK1WbPToaWJJuKnxBLnd0ftDEbMmj4loHYyhZyMjM91zQS4p7z8eKa9h0JrbacekcirexG0z4n3327}, is established. \par Note that the equations \eqref{8ThswELzXU3X7Ebd1KdZ7v1rN3GiirRXGKWK099ovBM0FDJCvkopYNQ2aN94Z7k0UnUKamE3OjU8DFYFFokbSI2J9V9gVlM8ALWThDPnPu3EL7HPD2VDaZTggzcCCmbvc70qqPcC9mt60ogcrTiA3HEjwTK8ymKeuJMc4q6dVz200XnYUtLR9GYjPXvFOVr6W1zUK1WbPToaWJJuKnxBLnd0ftDEbMmj4loHYyhZyMjM91zQS4p7z8eKa9h0JrbacekcirexG0z4n317} may be rewritten as   \begin{align}\thelt{ Vqcb SUcF 1pHzol Nj T l1B urc Sam IP zkUS 8wwS a7wVWR 4D L VGf 1RF r59 9H tyGq hDT0 TDlooa mg j 9am png aWe nG XU2T zXLh IYOW5v 2d A rCG sLk s53 pW AuAy DQlF 6spKyd HT 9 Z1X n2s U1g 0D Llao YuLP PB6YKo D1 M 0fi qHU l4A Ia joiV Q6af VT6wvY Md 0 pCY BZp 7RX Hd xTb0 sjJ0 Beqpkc 8b N OgZ 0Tr 0wq h1 C2Hn YQXM 8nJ0Pf uG J Be2 vuq Duk LV AJwv 2tYc JOM1uK h7 p cgo iiK t0b 3e URec DVM7 ivRMh1 T6 p AWl upj kEj UL R3xN VAu5 kEbnrV HE 1 OrJ 2bx dUP yD vyVi }    \begin{split}
   &     J\UIOIUYOIUyHJGKHJLOIUYOIUOIUYOIYIOUYTIUYIOOOIUYOIUYPOIUPOIUPOIUYOIUYOIUYOIUHOUHOHIOUHOIHOIUHOIUHIOUH_{t} v_i     + v_1 b_{j1} \UIOIUYOIUyHJGKHJLOIUYOIUOIUYOIYIOUYTIUYIOOOIUYOIUYPOIUPOIUPOIUYOIUYOIUYOIUHOUHOHIOUHOIHOIUHOIUHIOUH_{j}v_i     + v_2 b_{j2} \UIOIUYOIUyHJGKHJLOIUYOIUOIUYOIYIOUYTIUYIOOOIUYOIUYPOIUPOIUPOIUYOIUYOIUYOIUHOUHOHIOUHOIHOIUHOIUHIOUH_{j}v_i     + (v_3-\psi_t) \UIOIUYOIUyHJGKHJLOIUYOIUOIUYOIYIOUYTIUYIOOOIUYOIUYPOIUPOIUPOIUYOIUYOIUYOIUHOUHOHIOUHOIHOIUHOIUHIOUH_{3} v_i     + b_{ki}\UIOIUYOIUyHJGKHJLOIUYOIUOIUYOIYIOUYTIUYIOOOIUYOIUYPOIUPOIUPOIUYOIUYOIUYOIUHOUHOHIOUHOIHOIUHOIUHIOUH_{k}q     =0     ,     \\&     b_{ki} \UIOIUYOIUyHJGKHJLOIUYOIUOIUYOIYIOUYTIUYIOOOIUYOIUYPOIUPOIUPOIUYOIUYOIUYOIUHOUHOHIOUHOIHOIUHOIUHIOUH_{k}v_i=0     .    \end{split}    \label{8ThswELzXU3X7Ebd1KdZ7v1rN3GiirRXGKWK099ovBM0FDJCvkopYNQ2aN94Z7k0UnUKamE3OjU8DFYFFokbSI2J9V9gVlM8ALWThDPnPu3EL7HPD2VDaZTggzcCCmbvc70qqPcC9mt60ogcrTiA3HEjwTK8ymKeuJMc4q6dVz200XnYUtLR9GYjPXvFOVr6W1zUK1WbPToaWJJuKnxBLnd0ftDEbMmj4loHYyhZyMjM91zQS4p7z8eKa9h0JrbacekcirexG0z4n354}   \end{align}
Using \eqref{8ThswELzXU3X7Ebd1KdZ7v1rN3GiirRXGKWK099ovBM0FDJCvkopYNQ2aN94Z7k0UnUKamE3OjU8DFYFFokbSI2J9V9gVlM8ALWThDPnPu3EL7HPD2VDaZTggzcCCmbvc70qqPcC9mt60ogcrTiA3HEjwTK8ymKeuJMc4q6dVz200XnYUtLR9GYjPXvFOVr6W1zUK1WbPToaWJJuKnxBLnd0ftDEbMmj4loHYyhZyMjM91zQS4p7z8eKa9h0JrbacekcirexG0z4n354}$_1$ in the second term of \eqref{8ThswELzXU3X7Ebd1KdZ7v1rN3GiirRXGKWK099ovBM0FDJCvkopYNQ2aN94Z7k0UnUKamE3OjU8DFYFFokbSI2J9V9gVlM8ALWThDPnPu3EL7HPD2VDaZTggzcCCmbvc70qqPcC9mt60ogcrTiA3HEjwTK8ymKeuJMc4q6dVz200XnYUtLR9GYjPXvFOVr6W1zUK1WbPToaWJJuKnxBLnd0ftDEbMmj4loHYyhZyMjM91zQS4p7z8eKa9h0JrbacekcirexG0z4n3328}, we obtain   \begin{align}\thelt{1g 0D Llao YuLP PB6YKo D1 M 0fi qHU l4A Ia joiV Q6af VT6wvY Md 0 pCY BZp 7RX Hd xTb0 sjJ0 Beqpkc 8b N OgZ 0Tr 0wq h1 C2Hn YQXM 8nJ0Pf uG J Be2 vuq Duk LV AJwv 2tYc JOM1uK h7 p cgo iiK t0b 3e URec DVM7 ivRMh1 T6 p AWl upj kEj UL R3xN VAu5 kEbnrV HE 1 OrJ 2bx dUP yD vyVi x6sC BpGDSx jB C n9P Fiu xkF vw 0QPo fRjy 2OFItV eD B tDz lc9 xVy A0 de9Y 5h8c 7dYCFk Fl v WPD SuN VI6 MZ 72u9 MBtK 9BGLNs Yp l X2y b5U HgH AD bW8X Rzkv UJZShW QH G oKX yVA rsH TQ }    \begin{split}    &    \frac12    \frac{d}{dt}    \OIUYJHUGFAJKLDHFKJLSDHFLKSDJFHLKSDJHFLKSDJHFLKDJFHLLDKHFLKSDHJFALKJHLJLHGLKHHLKJHLKGKHGJKHGKJHLKHJLKJH  J \UIPOIUPOIUPOOYIUIUYOIUYOIUHOIUOIUHIOPUHPOIJPOIJPOUHOIUHOILJHLIUHYOIUYOUI^{1.5+\delta} v_i \UIPOIUPOIUPOOYIUIUYOIUYOIUHOIUOIUHIOPUHPOIJPOIJPOUHOIUHOILJHLIUHYOIUYOUI^{2.5+\delta} v_i      \\&\indeq      =       \frac12 \OIUYJHUGFAJKLDHFKJLSDHFLKSDJFHLKSDJHFLKSDJHFLKDJFHLLDKHFLKSDHJFALKJHLJLHGLKHHLKJHLKGKHGJKHGKJHLKHJLKJH J_t \UIPOIUPOIUPOOYIUIUYOIUYOIUHOIUOIUHIOPUHPOIJPOIJPOUHOIUHOILJHLIUHYOIUYOUI^{1.5+\delta} v_i \UIPOIUPOIUPOOYIUIUYOIUYOIUHOIUOIUHIOPUHPOIJPOIJPOUHOIUHOILJHLIUHYOIUYOUI^{2.5+\delta}v_i      + \OIUYJHUGFAJKLDHFKJLSDHFLKSDJFHLKSDJHFLKSDJHFLKDJFHLLDKHFLKSDHJFALKJHLJLHGLKHHLKJHLKGKHGJKHGKJHLKHJLKJH           \Bigl(           J \UIPOIUPOIUPOOYIUIUYOIUYOIUHOIUOIUHIOPUHPOIJPOIJPOUHOIUHOILJHLIUHYOIUYOUI^{1.5+\delta} (\UIOIUYOIUyHJGKHJLOIUYOIUOIUYOIYIOUYTIUYIOOOIUYOIUYPOIUPOIUPOIUYOIUYOIUYOIUHOUHOHIOUHOIHOIUHOIUHIOUH_{t}v_i)
            -            \UIPOIUPOIUPOOYIUIUYOIUYOIUHOIUOIUHIOPUHPOIJPOIJPOUHOIUHOILJHLIUHYOIUYOUI^{1.5+\delta}(J\UIOIUYOIUyHJGKHJLOIUYOIUOIUYOIYIOUYTIUYIOOOIUYOIUYPOIUPOIUPOIUYOIUYOIUYOIUHOUHOHIOUHOIHOIUHOIUHIOUH_t v_i)           \Bigr) \UIPOIUPOIUPOOYIUIUYOIUYOIUHOIUOIUHIOPUHPOIJPOIJPOUHOIUHOILJHLIUHYOIUYOUI^{2.5+\delta}v_i     \\&\indeq\indeq     - \sum_{m=1}^{2}\OIUYJHUGFAJKLDHFKJLSDHFLKSDJFHLKSDJHFLKSDJHFLKDJFHLLDKHFLKSDHJFALKJHLJLHGLKHHLKJHLKGKHGJKHGKJHLKHJLKJH \UIPOIUPOIUPOOYIUIUYOIUYOIUHOIUOIUHIOPUHPOIJPOIJPOUHOIUHOILJHLIUHYOIUYOUI^{1.5+\delta}(v_m\tda_{jm}\UIOIUYOIUyHJGKHJLOIUYOIUOIUYOIYIOUYTIUYIOOOIUYOIUYPOIUPOIUPOIUYOIUYOIUYOIUHOUHOHIOUHOIHOIUHOIUHIOUH_{j}v_i) \UIPOIUPOIUPOOYIUIUYOIUYOIUHOIUOIUHIOPUHPOIJPOIJPOUHOIUHOILJHLIUHYOIUYOUI^{2.5+\delta} v_i     - \OIUYJHUGFAJKLDHFKJLSDHFLKSDJFHLKSDJHFLKSDJHFLKDJFHLLDKHFLKSDHJFALKJHLJLHGLKHHLKJHLKGKHGJKHGKJHLKHJLKJH \UIPOIUPOIUPOOYIUIUYOIUYOIUHOIUOIUHIOPUHPOIJPOIJPOUHOIUHOILJHLIUHYOIUYOUI^{1.5+\delta}              \bigl(               (v_3-\psi_t)\UIOIUYOIUyHJGKHJLOIUYOIUOIUYOIYIOUYTIUYIOOOIUYOIUYPOIUPOIUPOIUYOIUYOIUYOIUHOUHOHIOUHOIHOIUHOIUHIOUH_{3}v_i                                                                       \bigr) \UIPOIUPOIUPOOYIUIUYOIUYOIUHOIUOIUHIOPUHPOIJPOIJPOUHOIUHOILJHLIUHYOIUYOUI^{2.5+\delta} v_i    \\&\indeq\indeq     -\OIUYJHUGFAJKLDHFKJLSDHFLKSDJFHLKSDJHFLKSDJHFLKDJFHLLDKHFLKSDHJFALKJHLJLHGLKHHLKJHLKGKHGJKHGKJHLKHJLKJH \UIPOIUPOIUPOOYIUIUYOIUYOIUHOIUOIUHIOPUHPOIJPOIJPOUHOIUHOILJHLIUHYOIUYOUI^{2+\delta}(\tda_{ki}\UIOIUYOIUyHJGKHJLOIUYOIUOIUYOIYIOUYTIUYIOOOIUYOIUYPOIUPOIUPOIUYOIUYOIUYOIUHOUHOHIOUHOIHOIUHOIUHIOUH_{k}q)\UIPOIUPOIUPOOYIUIUYOIUYOIUHOIUOIUHIOPUHPOIJPOIJPOUHOIUHOILJHLIUHYOIUYOUI^{2+\delta} v_i      + \bar I     \\&\indeq     = I_1 + I_2 + I_3 + I_4 + I_5 + \bar I
   .    \end{split}    \label{8ThswELzXU3X7Ebd1KdZ7v1rN3GiirRXGKWK099ovBM0FDJCvkopYNQ2aN94Z7k0UnUKamE3OjU8DFYFFokbSI2J9V9gVlM8ALWThDPnPu3EL7HPD2VDaZTggzcCCmbvc70qqPcC9mt60ogcrTiA3HEjwTK8ymKeuJMc4q6dVz200XnYUtLR9GYjPXvFOVr6W1zUK1WbPToaWJJuKnxBLnd0ftDEbMmj4loHYyhZyMjM91zQS4p7z8eKa9h0JrbacekcirexG0z4n356}   \end{align} Above and in the sequel, unless indicated otherwise, all integrals and norms are assumed to be over $\Omega$. For the first two terms, we have   \begin{align}\thelt{iiK t0b 3e URec DVM7 ivRMh1 T6 p AWl upj kEj UL R3xN VAu5 kEbnrV HE 1 OrJ 2bx dUP yD vyVi x6sC BpGDSx jB C n9P Fiu xkF vw 0QPo fRjy 2OFItV eD B tDz lc9 xVy A0 de9Y 5h8c 7dYCFk Fl v WPD SuN VI6 MZ 72u9 MBtK 9BGLNs Yp l X2y b5U HgH AD bW8X Rzkv UJZShW QH G oKX yVA rsH TQ 1Vbd dK2M IxmTf6 wE T 9cX Fbu uVx Cb SBBp 0v2J MQ5Z8z 3p M EGp TU6 KCc YN 2BlW dp2t mliPDH JQ W jIR Rgq i5l AP gikl c8ru HnvYFM AI r Ih7 Ths 9tE hA AYgS swZZ fws19P 5w e JvM imb sF}   \begin{split}    I_1 + I_2    &\dlkjfhlaskdhjflkasdjhflkasjhdflkasjhdflkasjhdfls           \Vert J_t\Vert_{L^\infty}           \Vert v\Vert_{H^{1.5+\delta}}          \Vert v\Vert_{H^{2.5+\delta}}        +
         \Vert \UIPOIUPOIUPOOYIUIUYOIUYOIUHOIUOIUHIOPUHPOIJPOIJPOUHOIUHOILJHLIUHYOIUYOUI^{1.5+\delta} J\Vert_{L^6}          \Vert v_{t}\Vert_{L^3}          \Vert \UIPOIUPOIUPOOYIUIUYOIUYOIUHOIUOIUHIOPUHPOIJPOIJPOUHOIUHOILJHLIUHYOIUYOUI^{2.5+\delta}v\Vert_{L^2}        +          \Vert \UIPOIUPOIUPOOYIUIUYOIUYOIUHOIUOIUHIOPUHPOIJPOIJPOUHOIUHOILJHLIUHYOIUYOUI J\Vert_{L^\infty}          \Vert \UIPOIUPOIUPOOYIUIUYOIUYOIUHOIUOIUHIOPUHPOIJPOIJPOUHOIUHOILJHLIUHYOIUYOUI^{0.5+\delta} v_{t}\Vert_{L^2}          \Vert \UIPOIUPOIUPOOYIUIUYOIUYOIUHOIUOIUHIOPUHPOIJPOIJPOUHOIUHOILJHLIUHYOIUYOUI^{2.5+\delta} v\Vert_{L^2}     \\&      \dlkjfhlaskdhjflkasdjhflkasjhdflkasjhdflkasjhdfls          \Vert J_t\Vert_{H^{1.5+\delta}}           \Vert v\Vert_{H^{1.5+\delta}}          \Vert v\Vert_{H^{2.5+\delta}}        +          \Vert J\Vert_{H^{2.5+\delta}}
         \Vert v_{t}\Vert_{H^{0.5}}          \Vert v\Vert_{H^{2.5+\delta}}        +          \Vert J\Vert_{H^{2.5+\delta}}          \Vert v_{t}\Vert_{H^{0.5+\delta}}          \Vert v\Vert_{H^{2.5+\delta}}    \\&    \leq    P(\Vert v\Vert_{H^{2.5+\delta}},      \Vert v_{t}\Vert_{H^{0.5+\delta}},      \Vert w\Vert_{H^{4+\delta}(\Gamma_1)},       \Vert w_{t}\Vert_{H^{2+\delta}(\Gamma_1)}      )    \\&
   \leq    P(\Vert v\Vert_{H^{2.5+\delta}},      \Vert q\Vert_{H^{1.5+\delta}},      \Vert w\Vert_{H^{4+\delta}(\Gamma_1)},       \Vert w_{t}\Vert_{H^{2+\delta}(\Gamma_1)}    )    ,    \end{split}    \llabel{8ThswELzXU3X7Ebd1KdZ7v1rN3GiirRXGKWK099ovBM0FDJCvkopYNQ2aN94Z7k0UnUKamE3OjU8DFYFFokbSI2J9V9gVlM8ALWThDPnPu3EL7HPD2VDaZTggzcCCmbvc70qqPcC9mt60ogcrTiA3HEjwTK8ymKeuJMc4q6dVz200XnYUtLR9GYjPXvFOVr6W1zUK1WbPToaWJJuKnxBLnd0ftDEbMmj4loHYyhZyMjM91zQS4p7z8eKa9h0JrbacekcirexG0z4n357}   \end{align} where we used \eqref{8ThswELzXU3X7Ebd1KdZ7v1rN3GiirRXGKWK099ovBM0FDJCvkopYNQ2aN94Z7k0UnUKamE3OjU8DFYFFokbSI2J9V9gVlM8ALWThDPnPu3EL7HPD2VDaZTggzcCCmbvc70qqPcC9mt60ogcrTiA3HEjwTK8ymKeuJMc4q6dVz200XnYUtLR9GYjPXvFOVr6W1zUK1WbPToaWJJuKnxBLnd0ftDEbMmj4loHYyhZyMjM91zQS4p7z8eKa9h0JrbacekcirexG0z4n344} and \eqref{8ThswELzXU3X7Ebd1KdZ7v1rN3GiirRXGKWK099ovBM0FDJCvkopYNQ2aN94Z7k0UnUKamE3OjU8DFYFFokbSI2J9V9gVlM8ALWThDPnPu3EL7HPD2VDaZTggzcCCmbvc70qqPcC9mt60ogcrTiA3HEjwTK8ymKeuJMc4q6dVz200XnYUtLR9GYjPXvFOVr6W1zUK1WbPToaWJJuKnxBLnd0ftDEbMmj4loHYyhZyMjM91zQS4p7z8eKa9h0JrbacekcirexG0z4n345} in the third inequality and \eqref{8ThswELzXU3X7Ebd1KdZ7v1rN3GiirRXGKWK099ovBM0FDJCvkopYNQ2aN94Z7k0UnUKamE3OjU8DFYFFokbSI2J9V9gVlM8ALWThDPnPu3EL7HPD2VDaZTggzcCCmbvc70qqPcC9mt60ogcrTiA3HEjwTK8ymKeuJMc4q6dVz200XnYUtLR9GYjPXvFOVr6W1zUK1WbPToaWJJuKnxBLnd0ftDEbMmj4loHYyhZyMjM91zQS4p7z8eKa9h0JrbacekcirexG0z4n358} in the fourth. For $I_3$, we write   \begin{align}\thelt{ WPD SuN VI6 MZ 72u9 MBtK 9BGLNs Yp l X2y b5U HgH AD bW8X Rzkv UJZShW QH G oKX yVA rsH TQ 1Vbd dK2M IxmTf6 wE T 9cX Fbu uVx Cb SBBp 0v2J MQ5Z8z 3p M EGp TU6 KCc YN 2BlW dp2t mliPDH JQ W jIR Rgq i5l AP gikl c8ru HnvYFM AI r Ih7 Ths 9tE hA AYgS swZZ fws19P 5w e JvM imb sFH Th CnSZ HORm yt98w3 U3 z ant zAy Twq 0C jgDI Etkb h98V4u o5 2 jjA Zz1 kLo C8 oHGv Z5Ru Gwv3kK 4W B 50T oMt q7Q WG 9mtb SIlc 87ruZf Kw Z Ph3 1ZA Osq 8l jVQJ LTXC gyQn0v KE S iSq B}
   \begin{split}    I_3     &\dlkjfhlaskdhjflkasdjhflkasjhdflkasjhdflkasjhdfls     \bigl\Vert        v_m \tda_{jm}\UIOIUYOIUyHJGKHJLOIUYOIUOIUYOIYIOUYTIUYIOOOIUYOIUYPOIUPOIUPOIUYOIUYOIUYOIUHOUHOHIOUHOIHOIUHOIUHIOUH_{j} v_i     \bigr\Vert_{H^{1.5+\delta}}     \Vert       v     \Vert_{H^{2.5+\delta}}     \dlkjfhlaskdhjflkasdjhflkasjhdflkasjhdflkasjhdfls     \Vert v\Vert_{H^{2.5+\delta}}^3     \Vert b\Vert_{H^{3.5+\delta}}    \leq    P(\Vert v\Vert_{H^{2.5+\delta}},
     \Vert w\Vert_{H^{4+\delta}(\Gamma_1)}      )    ,    \end{split}    \llabel{8ThswELzXU3X7Ebd1KdZ7v1rN3GiirRXGKWK099ovBM0FDJCvkopYNQ2aN94Z7k0UnUKamE3OjU8DFYFFokbSI2J9V9gVlM8ALWThDPnPu3EL7HPD2VDaZTggzcCCmbvc70qqPcC9mt60ogcrTiA3HEjwTK8ymKeuJMc4q6dVz200XnYUtLR9GYjPXvFOVr6W1zUK1WbPToaWJJuKnxBLnd0ftDEbMmj4loHYyhZyMjM91zQS4p7z8eKa9h0JrbacekcirexG0z4n359}   \end{align} by~\eqref{8ThswELzXU3X7Ebd1KdZ7v1rN3GiirRXGKWK099ovBM0FDJCvkopYNQ2aN94Z7k0UnUKamE3OjU8DFYFFokbSI2J9V9gVlM8ALWThDPnPu3EL7HPD2VDaZTggzcCCmbvc70qqPcC9mt60ogcrTiA3HEjwTK8ymKeuJMc4q6dVz200XnYUtLR9GYjPXvFOVr6W1zUK1WbPToaWJJuKnxBLnd0ftDEbMmj4loHYyhZyMjM91zQS4p7z8eKa9h0JrbacekcirexG0z4n348}. In the last step, we used the multiplicative Sobolev inequality   \begin{equation}    \Vert a b c\Vert_{H^{k}}    \dlkjfhlaskdhjflkasdjhflkasjhdflkasjhdflkasjhdfls    \Vert a\Vert_{H^{l}}    \Vert b\Vert_{H^{m}}    \Vert c\Vert_{H^{n}}
   \comma     \label{8ThswELzXU3X7Ebd1KdZ7v1rN3GiirRXGKWK099ovBM0FDJCvkopYNQ2aN94Z7k0UnUKamE3OjU8DFYFFokbSI2J9V9gVlM8ALWThDPnPu3EL7HPD2VDaZTggzcCCmbvc70qqPcC9mt60ogcrTiA3HEjwTK8ymKeuJMc4q6dVz200XnYUtLR9GYjPXvFOVr6W1zUK1WbPToaWJJuKnxBLnd0ftDEbMmj4loHYyhZyMjM91zQS4p7z8eKa9h0JrbacekcirexG0z4n360}   \end{equation} where $l,m,n\geq k\geq 0$, which holds when $l+m+n> 3+k$ or when $l+m+n= 3+k$ and at least two of the parameters $l,m,n$ are strictly greater than~$k$. Next, we treat $I_4$ similarly to $I_3$ and write   \begin{align}\thelt{ JQ W jIR Rgq i5l AP gikl c8ru HnvYFM AI r Ih7 Ths 9tE hA AYgS swZZ fws19P 5w e JvM imb sFH Th CnSZ HORm yt98w3 U3 z ant zAy Twq 0C jgDI Etkb h98V4u o5 2 jjA Zz1 kLo C8 oHGv Z5Ru Gwv3kK 4W B 50T oMt q7Q WG 9mtb SIlc 87ruZf Kw Z Ph3 1ZA Osq 8l jVQJ LTXC gyQn0v KE S iSq Bpa wtH xc IJe4 SiE1 izzxim ke P Y3s 7SX 5DA SG XHqC r38V YP3Hxv OI R ZtM fqN oLF oU 7vNd txzw UkX32t 94 n Fdq qTR QOv Yq Ebig jrSZ kTN7Xw tP F gNs O7M 1mb DA btVB 3LGC pgE9hV FK Y }    \begin{split}    I_4    &\dlkjfhlaskdhjflkasdjhflkasjhdflkasjhdflkasjhdfls     \bigl\Vert                (v_3-\psi_t)\UIOIUYOIUyHJGKHJLOIUYOIUOIUYOIYIOUYTIUYIOOOIUYOIUYPOIUPOIUPOIUYOIUYOIUYOIUHOUHOHIOUHOIHOIUHOIUHIOUH_{3}v                                                          
    \bigr\Vert_{H^{1.5+\delta}}     \Vert       v     \Vert_{H^{2.5+\delta}}     \\&     \dlkjfhlaskdhjflkasdjhflkasjhdflkasjhdflkasjhdfls     \bigl\Vert          v_3\UIOIUYOIUyHJGKHJLOIUYOIUOIUYOIYIOUYTIUYIOOOIUYOIUYPOIUPOIUPOIUYOIUYOIUYOIUHOUHOHIOUHOIHOIUHOIUHIOUH_{3}v     \bigr\Vert_{H^{1.5+\delta}}     \Vert       v     \Vert_{H^{2.5+\delta}}     +     \bigl\Vert 
              \UIOIUYOIUyHJGKHJLOIUYOIUOIUYOIYIOUYTIUYIOOOIUYOIUYPOIUPOIUPOIUYOIUYOIUYOIUHOUHOHIOUHOIHOIUHOIUHIOUH_{t}\eta_3  \UIOIUYOIUyHJGKHJLOIUYOIUOIUYOIYIOUYTIUYIOOOIUYOIUYPOIUPOIUPOIUYOIUYOIUYOIUHOUHOHIOUHOIHOIUHOIUHIOUH_{3}v     \bigr\Vert_{H^{1.5+\delta}}     \Vert       v     \Vert_{H^{2.5+\delta}}     \\&     \dlkjfhlaskdhjflkasdjhflkasjhdflkasjhdflkasjhdfls     \Vert v\Vert_{H^{2.5+\delta}}^3     +     \Vert \eta_{t}\Vert_{H^{2.5+\delta}}     \Vert v\Vert_{H^{2.5+\delta}}^2    \leq    P(\Vert v\Vert_{H^{2.5+\delta}},      \Vert w_{t}\Vert_{H^{2+\delta}(\Gamma_1)}
     )     ,    \end{split}    \llabel{8ThswELzXU3X7Ebd1KdZ7v1rN3GiirRXGKWK099ovBM0FDJCvkopYNQ2aN94Z7k0UnUKamE3OjU8DFYFFokbSI2J9V9gVlM8ALWThDPnPu3EL7HPD2VDaZTggzcCCmbvc70qqPcC9mt60ogcrTiA3HEjwTK8ymKeuJMc4q6dVz200XnYUtLR9GYjPXvFOVr6W1zUK1WbPToaWJJuKnxBLnd0ftDEbMmj4loHYyhZyMjM91zQS4p7z8eKa9h0JrbacekcirexG0z4n361}   \end{align} where we used \eqref{8ThswELzXU3X7Ebd1KdZ7v1rN3GiirRXGKWK099ovBM0FDJCvkopYNQ2aN94Z7k0UnUKamE3OjU8DFYFFokbSI2J9V9gVlM8ALWThDPnPu3EL7HPD2VDaZTggzcCCmbvc70qqPcC9mt60ogcrTiA3HEjwTK8ymKeuJMc4q6dVz200XnYUtLR9GYjPXvFOVr6W1zUK1WbPToaWJJuKnxBLnd0ftDEbMmj4loHYyhZyMjM91zQS4p7z8eKa9h0JrbacekcirexG0z4n360}, and a similar multiplicative Sobolev inequality for two factors   \begin{equation}    \Vert a b \Vert_{H^{k}}    \dlkjfhlaskdhjflkasdjhflkasjhdflkasjhdflkasjhdfls    \Vert a\Vert_{H^{l}}    \Vert b\Vert_{H^{m}}    ,    \llabel{8ThswELzXU3X7Ebd1KdZ7v1rN3GiirRXGKWK099ovBM0FDJCvkopYNQ2aN94Z7k0UnUKamE3OjU8DFYFFokbSI2J9V9gVlM8ALWThDPnPu3EL7HPD2VDaZTggzcCCmbvc70qqPcC9mt60ogcrTiA3HEjwTK8ymKeuJMc4q6dVz200XnYUtLR9GYjPXvFOVr6W1zUK1WbPToaWJJuKnxBLnd0ftDEbMmj4loHYyhZyMjM91zQS4p7z8eKa9h0JrbacekcirexG0z4n362}   \end{equation}
where  either $l,m\geq k$ and $l+m> k+1.5$ or $l,m> k\geq0$ and $l+m= k+1.5$. Finally, we treat the  pressure term~$I_5$, for which we use integration by parts in $x_k$ to rewrite it as   \begin{align}\thelt{wv3kK 4W B 50T oMt q7Q WG 9mtb SIlc 87ruZf Kw Z Ph3 1ZA Osq 8l jVQJ LTXC gyQn0v KE S iSq Bpa wtH xc IJe4 SiE1 izzxim ke P Y3s 7SX 5DA SG XHqC r38V YP3Hxv OI R ZtM fqN oLF oU 7vNd txzw UkX32t 94 n Fdq qTR QOv Yq Ebig jrSZ kTN7Xw tP F gNs O7M 1mb DA btVB 3LGC pgE9hV FK Y LcS GmF 863 7a ZDiz 4CuJ bLnpE7 yl 8 5jg Many Thanks, POL OG EPOe Mru1 v25XLJ Fz h wgE lnu Ymq rX 1YKV Kvgm MK7gI4 6h 5 kZB OoJ tfC 5g VvA1 kNJr 2o7om1 XN p Uwt CWX fFT SW DjsI wux}    \begin{split}    I_5    &=     -\OIUYJHUGFAJKLDHFKJLSDHFLKSDJFHLKSDJHFLKSDJHFLKDJFHLLDKHFLKSDHJFALKJHLJLHGLKHHLKJHLKGKHGJKHGKJHLKHJLKJH \UIPOIUPOIUPOOYIUIUYOIUYOIUHOIUOIUHIOPUHPOIJPOIJPOUHOIUHOILJHLIUHYOIUYOUI^{2+\delta}(\tda_{ki}\UIOIUYOIUyHJGKHJLOIUYOIUOIUYOIYIOUYTIUYIOOOIUYOIUYPOIUPOIUPOIUYOIUYOIUYOIUHOUHOHIOUHOIHOIUHOIUHIOUH_{k}q)\UIPOIUPOIUPOOYIUIUYOIUYOIUHOIUOIUHIOPUHPOIJPOIJPOUHOIUHOILJHLIUHYOIUYOUI^{2+\delta} v_i     =     \OIUYJHUGFAJKLDHFKJLSDHFLKSDJFHLKSDJHFLKSDJHFLKDJFHLLDKHFLKSDHJFALKJHLJLHGLKHHLKJHLKGKHGJKHGKJHLKHJLKJH \UIPOIUPOIUPOOYIUIUYOIUYOIUHOIUOIUHIOPUHPOIJPOIJPOUHOIUHOILJHLIUHYOIUYOUI^{2+\delta}(\tda_{ki}q)\UIPOIUPOIUPOOYIUIUYOIUYOIUHOIUOIUHIOPUHPOIJPOIJPOUHOIUHOILJHLIUHYOIUYOUI^{2+\delta} \UIOIUYOIUyHJGKHJLOIUYOIUOIUYOIYIOUYTIUYIOOOIUYOIUYPOIUPOIUPOIUYOIUYOIUYOIUHOUHOHIOUHOIHOIUHOIUHIOUH_{k} v_i      -    \OIUYJHUGFAJKLDHFKJLSDHFLKSDJFHLKSDJHFLKSDJHFLKDJFHLLDKHFLKSDHJFALKJHLJLHGLKHHLKJHLKGKHGJKHGKJHLKHJLKJH_{\Gamma_1} \UIPOIUPOIUPOOYIUIUYOIUYOIUHOIUOIUHIOPUHPOIJPOIJPOUHOIUHOILJHLIUHYOIUYOUI^{2+\delta}(\tda_{3i}q)\UIPOIUPOIUPOOYIUIUYOIUYOIUHOIUOIUHIOPUHPOIJPOIJPOUHOIUHOILJHLIUHYOIUYOUI^{2+\delta} v_i
   \\&    =     \OIUYJHUGFAJKLDHFKJLSDHFLKSDJFHLKSDJHFLKSDJHFLKDJFHLLDKHFLKSDHJFALKJHLJLHGLKHHLKJHLKGKHGJKHGKJHLKHJLKJH \UIPOIUPOIUPOOYIUIUYOIUYOIUHOIUOIUHIOPUHPOIJPOIJPOUHOIUHOILJHLIUHYOIUYOUI^{1.5+\delta}(\tda_{ki}q)\UIPOIUPOIUPOOYIUIUYOIUYOIUHOIUOIUHIOPUHPOIJPOIJPOUHOIUHOILJHLIUHYOIUYOUI^{2.5+\delta} \UIOIUYOIUyHJGKHJLOIUYOIUOIUYOIYIOUYTIUYIOOOIUYOIUYPOIUPOIUPOIUYOIUYOIUYOIUHOUHOHIOUHOIHOIUHOIUHIOUH_{k} v_i      -    \OIUYJHUGFAJKLDHFKJLSDHFLKSDJFHLKSDJHFLKSDJHFLKDJFHLLDKHFLKSDHJFALKJHLJLHGLKHHLKJHLKGKHGJKHGKJHLKHJLKJH_{\Gamma_1} \UIPOIUPOIUPOOYIUIUYOIUYOIUHOIUOIUHIOPUHPOIJPOIJPOUHOIUHOILJHLIUHYOIUYOUI^{2+\delta}(\tda_{3i}q)\UIPOIUPOIUPOOYIUIUYOIUYOIUHOIUOIUHIOPUHPOIJPOIJPOUHOIUHOILJHLIUHYOIUYOUI^{2+\delta} v_i   \\&    = I_{51} + I_{52}    ,    \end{split}    \label{8ThswELzXU3X7Ebd1KdZ7v1rN3GiirRXGKWK099ovBM0FDJCvkopYNQ2aN94Z7k0UnUKamE3OjU8DFYFFokbSI2J9V9gVlM8ALWThDPnPu3EL7HPD2VDaZTggzcCCmbvc70qqPcC9mt60ogcrTiA3HEjwTK8ymKeuJMc4q6dVz200XnYUtLR9GYjPXvFOVr6W1zUK1WbPToaWJJuKnxBLnd0ftDEbMmj4loHYyhZyMjM91zQS4p7z8eKa9h0JrbacekcirexG0z4n363}   \end{align} where we used the Piola identity \eqref{8ThswELzXU3X7Ebd1KdZ7v1rN3GiirRXGKWK099ovBM0FDJCvkopYNQ2aN94Z7k0UnUKamE3OjU8DFYFFokbSI2J9V9gVlM8ALWThDPnPu3EL7HPD2VDaZTggzcCCmbvc70qqPcC9mt60ogcrTiA3HEjwTK8ymKeuJMc4q6dVz200XnYUtLR9GYjPXvFOVr6W1zUK1WbPToaWJJuKnxBLnd0ftDEbMmj4loHYyhZyMjM91zQS4p7z8eKa9h0JrbacekcirexG0z4n315} and $N=(0,0,1)$ on $\Gamma_1$. Note that the boundary integral over  $\Gamma_0$ vanishes since
  \begin{align}\thelt{xzw UkX32t 94 n Fdq qTR QOv Yq Ebig jrSZ kTN7Xw tP F gNs O7M 1mb DA btVB 3LGC pgE9hV FK Y LcS GmF 863 7a ZDiz 4CuJ bLnpE7 yl 8 5jg Many Thanks, POL OG EPOe Mru1 v25XLJ Fz h wgE lnu Ymq rX 1YKV Kvgm MK7gI4 6h 5 kZB OoJ tfC 5g VvA1 kNJr 2o7om1 XN p Uwt CWX fFT SW DjsI wuxO JxLU1S xA 5 ObG 3IO UdL qJ cCAr gzKM 08DvX2 mu i 13T t71 Iwq oF UI0E Ef5S V2vxcy SY I QGr qrB HID TJ v1OB 1CzD IDdW4E 4j J mv6 Ktx oBO s9 ADWB q218 BJJzRy UQ i 2Gp weE T8L aO 4ho}    \begin{split}     &\OIUYJHUGFAJKLDHFKJLSDHFLKSDJFHLKSDJHFLKSDJHFLKDJFHLLDKHFLKSDHJFALKJHLJLHGLKHHLKJHLKGKHGJKHGKJHLKHJLKJH_{\Gamma_0}      \UIPOIUPOIUPOOYIUIUYOIUYOIUHOIUOIUHIOPUHPOIJPOIJPOUHOIUHOILJHLIUHYOIUYOUI^{2+\delta}(\tda_{ki}q)\UIPOIUPOIUPOOYIUIUYOIUYOIUHOIUOIUHIOPUHPOIJPOIJPOUHOIUHOILJHLIUHYOIUYOUI^{2+\delta} v_i N_k      = \OIUYJHUGFAJKLDHFKJLSDHFLKSDJFHLKSDJHFLKSDJHFLKDJFHLLDKHFLKSDHJFALKJHLJLHGLKHHLKJHLKGKHGJKHGKJHLKHJLKJH_{\Gamma_0} \UIPOIUPOIUPOOYIUIUYOIUYOIUHOIUOIUHIOPUHPOIJPOIJPOUHOIUHOILJHLIUHYOIUYOUI^{2+\delta}(\tda_{3i}q)\UIPOIUPOIUPOOYIUIUYOIUYOIUHOIUOIUHIOPUHPOIJPOIJPOUHOIUHOILJHLIUHYOIUYOUI^{2+\delta} v_i      =  \OIUYJHUGFAJKLDHFKJLSDHFLKSDJFHLKSDJHFLKSDJHFLKDJFHLLDKHFLKSDHJFALKJHLJLHGLKHHLKJHLKGKHGJKHGKJHLKHJLKJH_{\Gamma_0} \UIPOIUPOIUPOOYIUIUYOIUYOIUHOIUOIUHIOPUHPOIJPOIJPOUHOIUHOILJHLIUHYOIUYOUI^{2+\delta}(\tda_{33}q)\UIPOIUPOIUPOOYIUIUYOIUYOIUHOIUOIUHIOPUHPOIJPOIJPOUHOIUHOILJHLIUHYOIUYOUI^{2+\delta} v_3      = 0     ,    \end{split}    \llabel{8ThswELzXU3X7Ebd1KdZ7v1rN3GiirRXGKWK099ovBM0FDJCvkopYNQ2aN94Z7k0UnUKamE3OjU8DFYFFokbSI2J9V9gVlM8ALWThDPnPu3EL7HPD2VDaZTggzcCCmbvc70qqPcC9mt60ogcrTiA3HEjwTK8ymKeuJMc4q6dVz200XnYUtLR9GYjPXvFOVr6W1zUK1WbPToaWJJuKnxBLnd0ftDEbMmj4loHYyhZyMjM91zQS4p7z8eKa9h0JrbacekcirexG0z4n364}   \end{align} where we used \eqref{8ThswELzXU3X7Ebd1KdZ7v1rN3GiirRXGKWK099ovBM0FDJCvkopYNQ2aN94Z7k0UnUKamE3OjU8DFYFFokbSI2J9V9gVlM8ALWThDPnPu3EL7HPD2VDaZTggzcCCmbvc70qqPcC9mt60ogcrTiA3HEjwTK8ymKeuJMc4q6dVz200XnYUtLR9GYjPXvFOVr6W1zUK1WbPToaWJJuKnxBLnd0ftDEbMmj4loHYyhZyMjM91zQS4p7z8eKa9h0JrbacekcirexG0z4n314} and \eqref{8ThswELzXU3X7Ebd1KdZ7v1rN3GiirRXGKWK099ovBM0FDJCvkopYNQ2aN94Z7k0UnUKamE3OjU8DFYFFokbSI2J9V9gVlM8ALWThDPnPu3EL7HPD2VDaZTggzcCCmbvc70qqPcC9mt60ogcrTiA3HEjwTK8ymKeuJMc4q6dVz200XnYUtLR9GYjPXvFOVr6W1zUK1WbPToaWJJuKnxBLnd0ftDEbMmj4loHYyhZyMjM91zQS4p7z8eKa9h0JrbacekcirexG0z4n309}$_3$ in the second step, and \eqref{8ThswELzXU3X7Ebd1KdZ7v1rN3GiirRXGKWK099ovBM0FDJCvkopYNQ2aN94Z7k0UnUKamE3OjU8DFYFFokbSI2J9V9gVlM8ALWThDPnPu3EL7HPD2VDaZTggzcCCmbvc70qqPcC9mt60ogcrTiA3HEjwTK8ymKeuJMc4q6dVz200XnYUtLR9GYjPXvFOVr6W1zUK1WbPToaWJJuKnxBLnd0ftDEbMmj4loHYyhZyMjM91zQS4p7z8eKa9h0JrbacekcirexG0z4n355} in the third. For the first term in \eqref{8ThswELzXU3X7Ebd1KdZ7v1rN3GiirRXGKWK099ovBM0FDJCvkopYNQ2aN94Z7k0UnUKamE3OjU8DFYFFokbSI2J9V9gVlM8ALWThDPnPu3EL7HPD2VDaZTggzcCCmbvc70qqPcC9mt60ogcrTiA3HEjwTK8ymKeuJMc4q6dVz200XnYUtLR9GYjPXvFOVr6W1zUK1WbPToaWJJuKnxBLnd0ftDEbMmj4loHYyhZyMjM91zQS4p7z8eKa9h0JrbacekcirexG0z4n363}, we have
  \begin{align}\thelt{ Ymq rX 1YKV Kvgm MK7gI4 6h 5 kZB OoJ tfC 5g VvA1 kNJr 2o7om1 XN p Uwt CWX fFT SW DjsI wuxO JxLU1S xA 5 ObG 3IO UdL qJ cCAr gzKM 08DvX2 mu i 13T t71 Iwq oF UI0E Ef5S V2vxcy SY I QGr qrB HID TJ v1OB 1CzD IDdW4E 4j J mv6 Ktx oBO s9 ADWB q218 BJJzRy UQ i 2Gp weE T8L aO 4ho9 5g4v WQmoiq jS w MA9 Cvn Gqx l1 LrYu MjGb oUpuvY Q2 C dBl AB9 7ew jc 5RJE SFGs ORedoM 0b B k25 VEK B8V A9 ytAE Oyof G8QIj2 7a I 3jy Rmz yET Kx pgUq 4Bvb cD1b1g KB y oE3 azg elV N}    \begin{split}    I_{51}    &=     \OIUYJHUGFAJKLDHFKJLSDHFLKSDJFHLKSDJHFLKSDJHFLKDJFHLLDKHFLKSDHJFALKJHLJLHGLKHHLKJHLKGKHGJKHGKJHLKHJLKJH \tda_{ki}\UIPOIUPOIUPOOYIUIUYOIUYOIUHOIUOIUHIOPUHPOIJPOIJPOUHOIUHOILJHLIUHYOIUYOUI^{1.5+\delta}q\UIPOIUPOIUPOOYIUIUYOIUYOIUHOIUOIUHIOPUHPOIJPOIJPOUHOIUHOILJHLIUHYOIUYOUI^{2.5+\delta} \UIOIUYOIUyHJGKHJLOIUYOIUOIUYOIYIOUYTIUYIOOOIUYOIUYPOIUPOIUPOIUYOIUYOIUYOIUHOUHOHIOUHOIHOIUHOIUHIOUH_{k}v_i      +\OIUYJHUGFAJKLDHFKJLSDHFLKSDJFHLKSDJHFLKSDJHFLKDJFHLLDKHFLKSDHJFALKJHLJLHGLKHHLKJHLKGKHGJKHGKJHLKHJLKJH \Bigl(\UIPOIUPOIUPOOYIUIUYOIUYOIUHOIUOIUHIOPUHPOIJPOIJPOUHOIUHOILJHLIUHYOIUYOUI^{1.5+\delta}(\tda_{ki}q)                  - \tda_{ki}\UIPOIUPOIUPOOYIUIUYOIUYOIUHOIUOIUHIOPUHPOIJPOIJPOUHOIUHOILJHLIUHYOIUYOUI^{1.5+\delta}q           \Bigr)\UIPOIUPOIUPOOYIUIUYOIUYOIUHOIUOIUHIOPUHPOIJPOIJPOUHOIUHOILJHLIUHYOIUYOUI^{2.5+\delta} \UIOIUYOIUyHJGKHJLOIUYOIUOIUYOIYIOUYTIUYIOOOIUYOIUYPOIUPOIUPOIUYOIUYOIUYOIUHOUHOHIOUHOIHOIUHOIUHIOUH_{k}v_i     \\&    =     -     \OIUYJHUGFAJKLDHFKJLSDHFLKSDJFHLKSDJHFLKSDJHFLKDJFHLLDKHFLKSDHJFALKJHLJLHGLKHHLKJHLKGKHGJKHGKJHLKHJLKJH          \Bigl(            \UIPOIUPOIUPOOYIUIUYOIUYOIUHOIUOIUHIOPUHPOIJPOIJPOUHOIUHOILJHLIUHYOIUYOUI^{2.5+\delta} \UIOIUYOIUyHJGKHJLOIUYOIUOIUYOIYIOUYTIUYIOOOIUYOIUYPOIUPOIUPOIUYOIUYOIUYOIUHOUHOHIOUHOIHOIUHOIUHIOUH_{k}(    \tda_{ki}  v_i )
         -           \tda_{ki} \UIPOIUPOIUPOOYIUIUYOIUYOIUHOIUOIUHIOPUHPOIJPOIJPOUHOIUHOILJHLIUHYOIUYOUI^{2.5+\delta} \UIOIUYOIUyHJGKHJLOIUYOIUOIUYOIYIOUYTIUYIOOOIUYOIUYPOIUPOIUPOIUYOIUYOIUYOIUHOUHOHIOUHOIHOIUHOIUHIOUH_{k}v_i          \Bigr)     \UIPOIUPOIUPOOYIUIUYOIUYOIUHOIUOIUHIOPUHPOIJPOIJPOUHOIUHOILJHLIUHYOIUYOUI^{1.5+\delta}q     +\OIUYJHUGFAJKLDHFKJLSDHFLKSDJFHLKSDJHFLKSDJHFLKDJFHLLDKHFLKSDHJFALKJHLJLHGLKHHLKJHLKGKHGJKHGKJHLKHJLKJH \Bigl(\UIPOIUPOIUPOOYIUIUYOIUYOIUHOIUOIUHIOPUHPOIJPOIJPOUHOIUHOILJHLIUHYOIUYOUI^{1.5+\delta}(\tda_{ki}q)                  - \tda_{ki}\UIPOIUPOIUPOOYIUIUYOIUYOIUHOIUOIUHIOPUHPOIJPOIJPOUHOIUHOILJHLIUHYOIUYOUI^{1.5+\delta}q           \Bigr)\UIPOIUPOIUPOOYIUIUYOIUYOIUHOIUOIUHIOPUHPOIJPOIJPOUHOIUHOILJHLIUHYOIUYOUI^{2.5+\delta} \UIOIUYOIUyHJGKHJLOIUYOIUOIUYOIYIOUYTIUYIOOOIUYOIUYPOIUPOIUPOIUYOIUYOIUYOIUHOUHOHIOUHOIHOIUHOIUHIOUH_{k}v_i     \\&     = I_{511} + I_{512}     ,    \end{split}    \label{8ThswELzXU3X7Ebd1KdZ7v1rN3GiirRXGKWK099ovBM0FDJCvkopYNQ2aN94Z7k0UnUKamE3OjU8DFYFFokbSI2J9V9gVlM8ALWThDPnPu3EL7HPD2VDaZTggzcCCmbvc70qqPcC9mt60ogcrTiA3HEjwTK8ymKeuJMc4q6dVz200XnYUtLR9GYjPXvFOVr6W1zUK1WbPToaWJJuKnxBLnd0ftDEbMmj4loHYyhZyMjM91zQS4p7z8eKa9h0JrbacekcirexG0z4n365}   \end{align} where we used \eqref{8ThswELzXU3X7Ebd1KdZ7v1rN3GiirRXGKWK099ovBM0FDJCvkopYNQ2aN94Z7k0UnUKamE3OjU8DFYFFokbSI2J9V9gVlM8ALWThDPnPu3EL7HPD2VDaZTggzcCCmbvc70qqPcC9mt60ogcrTiA3HEjwTK8ymKeuJMc4q6dVz200XnYUtLR9GYjPXvFOVr6W1zUK1WbPToaWJJuKnxBLnd0ftDEbMmj4loHYyhZyMjM91zQS4p7z8eKa9h0JrbacekcirexG0z4n354}$_2$  and the Piola identity \eqref{8ThswELzXU3X7Ebd1KdZ7v1rN3GiirRXGKWK099ovBM0FDJCvkopYNQ2aN94Z7k0UnUKamE3OjU8DFYFFokbSI2J9V9gVlM8ALWThDPnPu3EL7HPD2VDaZTggzcCCmbvc70qqPcC9mt60ogcrTiA3HEjwTK8ymKeuJMc4q6dVz200XnYUtLR9GYjPXvFOVr6W1zUK1WbPToaWJJuKnxBLnd0ftDEbMmj4loHYyhZyMjM91zQS4p7z8eKa9h0JrbacekcirexG0z4n315} in the second equality.
For the first term, we use the Kato-Ponce commutator inequality to write   \begin{align}\thelt{r qrB HID TJ v1OB 1CzD IDdW4E 4j J mv6 Ktx oBO s9 ADWB q218 BJJzRy UQ i 2Gp weE T8L aO 4ho9 5g4v WQmoiq jS w MA9 Cvn Gqx l1 LrYu MjGb oUpuvY Q2 C dBl AB9 7ew jc 5RJE SFGs ORedoM 0b B k25 VEK B8V A9 ytAE Oyof G8QIj2 7a I 3jy Rmz yET Kx pgUq 4Bvb cD1b1g KB y oE3 azg elV Nu 8iZ1 w1tq twKx8C LN 2 8yn jdo jUW vN H9qy HaXZ GhjUgm uL I 87i Y7Q 9MQ Wa iFFS Gzt8 4mSQq2 5O N ltT gbl 8YD QS AzXq pJEK 7bGL1U Jn 0 f59 vPr wdt d6 sDLj Loo1 8tQXf5 5u p mTa dJD }   \begin{split}      I_{511}      &\dlkjfhlaskdhjflkasdjhflkasjhdflkasjhdflkasjhdfls       \Vert b\Vert_{H^{3.5+\delta}}      \Vert v \Vert_{L^{\infty}}      \Vert q\Vert_{H^{1.5+\delta}}      +       \Vert b\Vert_{W^{1,\infty}}      \Vert v \Vert_{H^{2.5+\delta}}      \Vert q\Vert_{H^{1.5+\delta}}     \leq       P(\Vert v \Vert_{H^{2.5+\delta}}, 
       \Vert w\Vert_{H^{4+\delta}(\Gamma_1)},      \Vert q\Vert_{H^{1.5+\delta}})      .   \end{split}    \llabel{8ThswELzXU3X7Ebd1KdZ7v1rN3GiirRXGKWK099ovBM0FDJCvkopYNQ2aN94Z7k0UnUKamE3OjU8DFYFFokbSI2J9V9gVlM8ALWThDPnPu3EL7HPD2VDaZTggzcCCmbvc70qqPcC9mt60ogcrTiA3HEjwTK8ymKeuJMc4q6dVz200XnYUtLR9GYjPXvFOVr6W1zUK1WbPToaWJJuKnxBLnd0ftDEbMmj4loHYyhZyMjM91zQS4p7z8eKa9h0JrbacekcirexG0z4n366}   \end{align} The term $I_{512}$ cannot be treated with the Kato-Ponce inequality directly since $v$ is not bounded in $H^{3.5+\delta}$. Instead, we use $\UIPOIUPOIUPOOYIUIUYOIUYOIUHOIUOIUHIOPUHPOIJPOIJPOUHOIUHOILJHLIUHYOIUYOUI^2= I -\sum_{m=1}^{2}\UIOIUYOIUyHJGKHJLOIUYOIUOIUYOIYIOUYTIUYIOOOIUYOIUYPOIUPOIUPOIUYOIUYOIUYOIUHOUHOHIOUHOIHOIUHOIUHIOUH_{m}^2$, by \eqref{8ThswELzXU3X7Ebd1KdZ7v1rN3GiirRXGKWK099ovBM0FDJCvkopYNQ2aN94Z7k0UnUKamE3OjU8DFYFFokbSI2J9V9gVlM8ALWThDPnPu3EL7HPD2VDaZTggzcCCmbvc70qqPcC9mt60ogcrTiA3HEjwTK8ymKeuJMc4q6dVz200XnYUtLR9GYjPXvFOVr6W1zUK1WbPToaWJJuKnxBLnd0ftDEbMmj4loHYyhZyMjM91zQS4p7z8eKa9h0JrbacekcirexG0z4n350}, and write   \begin{align}\thelt{ B k25 VEK B8V A9 ytAE Oyof G8QIj2 7a I 3jy Rmz yET Kx pgUq 4Bvb cD1b1g KB y oE3 azg elV Nu 8iZ1 w1tq twKx8C LN 2 8yn jdo jUW vN H9qy HaXZ GhjUgm uL I 87i Y7Q 9MQ Wa iFFS Gzt8 4mSQq2 5O N ltT gbl 8YD QS AzXq pJEK 7bGL1U Jn 0 f59 vPr wdt d6 sDLj Loo1 8tQXf5 5u p mTa dJD sEL pH 2vqY uTAm YzDg95 1P K FP6 pEi zIJ Qd 8Ngn HTND 6z6ExR XV 0 ouU jWT kAK AB eAC9 Rfja c43Ajk Xn H dgS y3v 5cB et s3VX qfpP BqiGf9 0a w g4d W9U kvR iJ y46G bH3U cJ86hW Va C Mje}    \begin{split}    I_{512}    &=
    -     \sum_{m=1}^{2}     \OIUYJHUGFAJKLDHFKJLSDHFLKSDJFHLKSDJHFLKSDJHFLKDJFHLLDKHFLKSDHJFALKJHLJLHGLKHHLKJHLKGKHGJKHGKJHLKHJLKJH \UIOIUYOIUyHJGKHJLOIUYOIUOIUYOIYIOUYTIUYIOOOIUYOIUYPOIUPOIUPOIUYOIUYOIUYOIUHOUHOHIOUHOIHOIUHOIUHIOUH_{m}\Bigl(\UIPOIUPOIUPOOYIUIUYOIUYOIUHOIUOIUHIOPUHPOIJPOIJPOUHOIUHOILJHLIUHYOIUYOUI^{1.5+\delta}(\tda_{ki}q)                  - \tda_{ki}\UIPOIUPOIUPOOYIUIUYOIUYOIUHOIUOIUHIOPUHPOIJPOIJPOUHOIUHOILJHLIUHYOIUYOUI^{1.5+\delta}q           \Bigr)\UIPOIUPOIUPOOYIUIUYOIUYOIUHOIUOIUHIOPUHPOIJPOIJPOUHOIUHOILJHLIUHYOIUYOUI^{0.5+\delta} \UIOIUYOIUyHJGKHJLOIUYOIUOIUYOIYIOUYTIUYIOOOIUYOIUYPOIUPOIUPOIUYOIUYOIUYOIUHOUHOHIOUHOIHOIUHOIUHIOUH_{m}\UIOIUYOIUyHJGKHJLOIUYOIUOIUYOIYIOUYTIUYIOOOIUYOIUYPOIUPOIUPOIUYOIUYOIUYOIUHOUHOHIOUHOIHOIUHOIUHIOUH_{k}v_i        \\&\indeq    +     \OIUYJHUGFAJKLDHFKJLSDHFLKSDJFHLKSDJHFLKSDJHFLKDJFHLLDKHFLKSDHJFALKJHLJLHGLKHHLKJHLKGKHGJKHGKJHLKHJLKJH \Bigl(\UIPOIUPOIUPOOYIUIUYOIUYOIUHOIUOIUHIOPUHPOIJPOIJPOUHOIUHOILJHLIUHYOIUYOUI^{1.5+\delta}(\tda_{ki}q)                  - \tda_{ki}\UIPOIUPOIUPOOYIUIUYOIUYOIUHOIUOIUHIOPUHPOIJPOIJPOUHOIUHOILJHLIUHYOIUYOUI^{1.5+\delta}q           \Bigr)\UIPOIUPOIUPOOYIUIUYOIUYOIUHOIUOIUHIOPUHPOIJPOIJPOUHOIUHOILJHLIUHYOIUYOUI^{0.5+\delta} \UIOIUYOIUyHJGKHJLOIUYOIUOIUYOIYIOUYTIUYIOOOIUYOIUYPOIUPOIUPOIUYOIUYOIUYOIUHOUHOHIOUHOIHOIUHOIUHIOUH_{k}v_i       \\&     =     -     \sum_{m=1}^{2}
    \OIUYJHUGFAJKLDHFKJLSDHFLKSDJFHLKSDJHFLKSDJHFLKDJFHLLDKHFLKSDHJFALKJHLJLHGLKHHLKJHLKGKHGJKHGKJHLKHJLKJH \Bigl(\UIOIUYOIUyHJGKHJLOIUYOIUOIUYOIYIOUYTIUYIOOOIUYOIUYPOIUPOIUPOIUYOIUYOIUYOIUHOUHOHIOUHOIHOIUHOIUHIOUH_{m}\UIPOIUPOIUPOOYIUIUYOIUYOIUHOIUOIUHIOPUHPOIJPOIJPOUHOIUHOILJHLIUHYOIUYOUI^{1.5+\delta}(\tda_{ki}q)                  - \tda_{ki}\UIPOIUPOIUPOOYIUIUYOIUYOIUHOIUOIUHIOPUHPOIJPOIJPOUHOIUHOILJHLIUHYOIUYOUI^{1.5+\delta}\UIOIUYOIUyHJGKHJLOIUYOIUOIUYOIYIOUYTIUYIOOOIUYOIUYPOIUPOIUPOIUYOIUYOIUYOIUHOUHOHIOUHOIHOIUHOIUHIOUH_{m}q           \Bigr)\UIOIUYOIUyHJGKHJLOIUYOIUOIUYOIYIOUYTIUYIOOOIUYOIUYPOIUPOIUPOIUYOIUYOIUYOIUHOUHOHIOUHOIHOIUHOIUHIOUH_{m}\UIPOIUPOIUPOOYIUIUYOIUYOIUHOIUOIUHIOPUHPOIJPOIJPOUHOIUHOILJHLIUHYOIUYOUI^{0.5+\delta} \UIOIUYOIUyHJGKHJLOIUYOIUOIUYOIYIOUYTIUYIOOOIUYOIUYPOIUPOIUPOIUYOIUYOIUYOIUHOUHOHIOUHOIHOIUHOIUHIOUH_{k}v_i        \\&\indeq     +     \sum_{m=1}^{2}     \OIUYJHUGFAJKLDHFKJLSDHFLKSDJFHLKSDJHFLKSDJHFLKDJFHLLDKHFLKSDHJFALKJHLJLHGLKHHLKJHLKGKHGJKHGKJHLKHJLKJH   \UIOIUYOIUyHJGKHJLOIUYOIUOIUYOIYIOUYTIUYIOOOIUYOIUYPOIUPOIUPOIUYOIUYOIUYOIUHOUHOHIOUHOIHOIUHOIUHIOUH_{m}\tda_{ki}\UIPOIUPOIUPOOYIUIUYOIUYOIUHOIUOIUHIOPUHPOIJPOIJPOUHOIUHOILJHLIUHYOIUYOUI^{1.5+\delta}q              \UIPOIUPOIUPOOYIUIUYOIUYOIUHOIUOIUHIOPUHPOIJPOIJPOUHOIUHOILJHLIUHYOIUYOUI^{0.5+\delta} \UIOIUYOIUyHJGKHJLOIUYOIUOIUYOIYIOUYTIUYIOOOIUYOIUYPOIUPOIUPOIUYOIUYOIUYOIUHOUHOHIOUHOIHOIUHOIUHIOUH_{m}\UIOIUYOIUyHJGKHJLOIUYOIUOIUYOIYIOUYTIUYIOOOIUYOIUYPOIUPOIUPOIUYOIUYOIUYOIUHOUHOHIOUHOIHOIUHOIUHIOUH_{k}v_i       +     \OIUYJHUGFAJKLDHFKJLSDHFLKSDJFHLKSDJHFLKSDJHFLKDJFHLLDKHFLKSDHJFALKJHLJLHGLKHHLKJHLKGKHGJKHGKJHLKHJLKJH \Bigl(\UIPOIUPOIUPOOYIUIUYOIUYOIUHOIUOIUHIOPUHPOIJPOIJPOUHOIUHOILJHLIUHYOIUYOUI^{1.5+\delta}(\tda_{ki}q)                  - \tda_{ki}\UIPOIUPOIUPOOYIUIUYOIUYOIUHOIUOIUHIOPUHPOIJPOIJPOUHOIUHOILJHLIUHYOIUYOUI^{1.5+\delta}q           \Bigr)\UIPOIUPOIUPOOYIUIUYOIUYOIUHOIUOIUHIOPUHPOIJPOIJPOUHOIUHOILJHLIUHYOIUYOUI^{0.5+\delta} \UIOIUYOIUyHJGKHJLOIUYOIUOIUYOIYIOUYTIUYIOOOIUYOIUYPOIUPOIUPOIUYOIUYOIUYOIUHOUHOHIOUHOIHOIUHOIUHIOUH_{k}v_i       .    \end{split}
   \llabel{8ThswELzXU3X7Ebd1KdZ7v1rN3GiirRXGKWK099ovBM0FDJCvkopYNQ2aN94Z7k0UnUKamE3OjU8DFYFFokbSI2J9V9gVlM8ALWThDPnPu3EL7HPD2VDaZTggzcCCmbvc70qqPcC9mt60ogcrTiA3HEjwTK8ymKeuJMc4q6dVz200XnYUtLR9GYjPXvFOVr6W1zUK1WbPToaWJJuKnxBLnd0ftDEbMmj4loHYyhZyMjM91zQS4p7z8eKa9h0JrbacekcirexG0z4n367}   \end{align} The first term is bounded using the Kato-Ponce commutator estimate, while the second and the third terms are estimated directly. Thus,   \begin{align}\thelt{q2 5O N ltT gbl 8YD QS AzXq pJEK 7bGL1U Jn 0 f59 vPr wdt d6 sDLj Loo1 8tQXf5 5u p mTa dJD sEL pH 2vqY uTAm YzDg95 1P K FP6 pEi zIJ Qd 8Ngn HTND 6z6ExR XV 0 ouU jWT kAK AB eAC9 Rfja c43Ajk Xn H dgS y3v 5cB et s3VX qfpP BqiGf9 0a w g4d W9U kvR iJ y46G bH3U cJ86hW Va C Mje dsU cqD SZ 1DlP 2mfB hzu5dv u1 i 6eW 2YN LhM 3f WOdz KS6Q ov14wx YY d 8sa S38 hIl cP tS4l 9B7h FC3JXJ Gp s tll 7a7 WNr VM wunm nmDc 5duVpZ xT C l8F I01 jhn 5B l4Jz aEV7 CKMThL ji }    \begin{split}     I_{512}        &\dlkjfhlaskdhjflkasdjhflkasjhdflkasjhdflkasjhdfls      \Vert b\Vert_{H^{3+\delta}}      \Vert q \Vert_{H^{1}}      \Vert v\Vert_{H^{2.5+\delta}}         +       \Vert b\Vert_{H^{2.5+\delta}}
     \Vert q \Vert_{H^{1.5+\delta}}      \Vert v\Vert_{H^{2.5+\delta}}        \leq       P(\Vert v \Vert_{H^{2.5+\delta}},       \Vert q \Vert_{H^{1.5+\delta}},        \Vert w\Vert_{H^{4+\delta}(\Gamma_1)})    .    \end{split}    \llabel{8ThswELzXU3X7Ebd1KdZ7v1rN3GiirRXGKWK099ovBM0FDJCvkopYNQ2aN94Z7k0UnUKamE3OjU8DFYFFokbSI2J9V9gVlM8ALWThDPnPu3EL7HPD2VDaZTggzcCCmbvc70qqPcC9mt60ogcrTiA3HEjwTK8ymKeuJMc4q6dVz200XnYUtLR9GYjPXvFOVr6W1zUK1WbPToaWJJuKnxBLnd0ftDEbMmj4loHYyhZyMjM91zQS4p7z8eKa9h0JrbacekcirexG0z4n368}   \end{align} The boundary term $I_{52}$ may be rewritten as   \begin{align}\thelt{ c43Ajk Xn H dgS y3v 5cB et s3VX qfpP BqiGf9 0a w g4d W9U kvR iJ y46G bH3U cJ86hW Va C Mje dsU cqD SZ 1DlP 2mfB hzu5dv u1 i 6eW 2YN LhM 3f WOdz KS6Q ov14wx YY d 8sa S38 hIl cP tS4l 9B7h FC3JXJ Gp s tll 7a7 WNr VM wunm nmDc 5duVpZ xT C l8F I01 jhn 5B l4Jz aEV7 CKMThL ji 1 gyZ uXc Iv4 03 3NqZ LITG Ux3ClP CB K O3v RUi mJq l5 blI9 GrWy irWHof lH 7 3ZT eZX kop eq 8XL1 RQ3a Uj6Ess nj 2 0MA 3As rSV ft 3F9w zB1q DQVOnH Cm m P3d WSb jst oj 3oGj advz qcMB6}    \begin{split}    I_{52}
   &=    -    \OIUYJHUGFAJKLDHFKJLSDHFLKSDJFHLKSDJHFLKSDJHFLKDJFHLLDKHFLKSDHJFALKJHLJLHGLKHHLKJHLKGKHGJKHGKJHLKHJLKJH_{\Gamma_1} \UIPOIUPOIUPOOYIUIUYOIUYOIUHOIUOIUHIOPUHPOIJPOIJPOUHOIUHOILJHLIUHYOIUYOUI^{1+\delta} q \UIPOIUPOIUPOOYIUIUYOIUYOIUHOIUOIUHIOPUHPOIJPOIJPOUHOIUHOILJHLIUHYOIUYOUI( \tda_{3i}\UIPOIUPOIUPOOYIUIUYOIUYOIUHOIUOIUHIOPUHPOIJPOIJPOUHOIUHOILJHLIUHYOIUYOUI^{2+\delta} v_i)    -    \OIUYJHUGFAJKLDHFKJLSDHFLKSDJFHLKSDJHFLKSDJHFLKDJFHLLDKHFLKSDHJFALKJHLJLHGLKHHLKJHLKGKHGJKHGKJHLKHJLKJH_{\Gamma_1} \Bigl(               \UIPOIUPOIUPOOYIUIUYOIUYOIUHOIUOIUHIOPUHPOIJPOIJPOUHOIUHOILJHLIUHYOIUYOUI^{2+\delta}(\tda_{3i}q) - \tda_{3i} \UIPOIUPOIUPOOYIUIUYOIUYOIUHOIUOIUHIOPUHPOIJPOIJPOUHOIUHOILJHLIUHYOIUYOUI^{2+\delta}q                    \Bigr)                    \UIPOIUPOIUPOOYIUIUYOIUYOIUHOIUOIUHIOPUHPOIJPOIJPOUHOIUHOILJHLIUHYOIUYOUI^{2+\delta} v_i    ,    \end{split}    \llabel{8ThswELzXU3X7Ebd1KdZ7v1rN3GiirRXGKWK099ovBM0FDJCvkopYNQ2aN94Z7k0UnUKamE3OjU8DFYFFokbSI2J9V9gVlM8ALWThDPnPu3EL7HPD2VDaZTggzcCCmbvc70qqPcC9mt60ogcrTiA3HEjwTK8ymKeuJMc4q6dVz200XnYUtLR9GYjPXvFOVr6W1zUK1WbPToaWJJuKnxBLnd0ftDEbMmj4loHYyhZyMjM91zQS4p7z8eKa9h0JrbacekcirexG0z4n369}   \end{align} which may be rewritten as   \begin{align}\thelt{ 9B7h FC3JXJ Gp s tll 7a7 WNr VM wunm nmDc 5duVpZ xT C l8F I01 jhn 5B l4Jz aEV7 CKMThL ji 1 gyZ uXc Iv4 03 3NqZ LITG Ux3ClP CB K O3v RUi mJq l5 blI9 GrWy irWHof lH 7 3ZT eZX kop eq 8XL1 RQ3a Uj6Ess nj 2 0MA 3As rSV ft 3F9w zB1q DQVOnH Cm m P3d WSb jst oj 3oGj advz qcMB6Y 6k D 9sZ 0bd Mjt UT hULG TWU9 Nmr3E4 CN b zUO vTh hqL 1p xAxT ezrH dVMgLY TT r Sfx LUX CMr WA bE69 K6XH i5re1f x4 G DKk iB7 f2D Xz Xez2 k2Yc Yc4QjU yM Y R1o DeY NWf 74 hByF dsWk }
   \begin{split}    I_{52}      &=    -    \OIUYJHUGFAJKLDHFKJLSDHFLKSDJFHLKSDJHFLKSDJHFLKDJFHLLDKHFLKSDHJFALKJHLJLHGLKHHLKJHLKGKHGJKHGKJHLKHJLKJH_{\Gamma_1} \UIPOIUPOIUPOOYIUIUYOIUYOIUHOIUOIUHIOPUHPOIJPOIJPOUHOIUHOILJHLIUHYOIUYOUI^{1+\delta} q  \tda_{3i}\UIPOIUPOIUPOOYIUIUYOIUYOIUHOIUOIUHIOPUHPOIJPOIJPOUHOIUHOILJHLIUHYOIUYOUI^{3+\delta} v_i    -     \OIUYJHUGFAJKLDHFKJLSDHFLKSDJFHLKSDJHFLKSDJHFLKDJFHLLDKHFLKSDHJFALKJHLJLHGLKHHLKJHLKGKHGJKHGKJHLKHJLKJH_{\Gamma_1} \UIPOIUPOIUPOOYIUIUYOIUYOIUHOIUOIUHIOPUHPOIJPOIJPOUHOIUHOILJHLIUHYOIUYOUI^{1+\delta} q             \Bigl(               \UIPOIUPOIUPOOYIUIUYOIUYOIUHOIUOIUHIOPUHPOIJPOIJPOUHOIUHOILJHLIUHYOIUYOUI(\tda_{3i} \UIPOIUPOIUPOOYIUIUYOIUYOIUHOIUOIUHIOPUHPOIJPOIJPOUHOIUHOILJHLIUHYOIUYOUI^{2+\delta}v_i)                               - \tda_{3i} \UIPOIUPOIUPOOYIUIUYOIUYOIUHOIUOIUHIOPUHPOIJPOIJPOUHOIUHOILJHLIUHYOIUYOUI^{3+\delta}v_i            \Bigr)    -    \OIUYJHUGFAJKLDHFKJLSDHFLKSDJFHLKSDJHFLKSDJHFLKDJFHLLDKHFLKSDHJFALKJHLJLHGLKHHLKJHLKGKHGJKHGKJHLKHJLKJH_{\Gamma_1} \Bigl(               \UIPOIUPOIUPOOYIUIUYOIUYOIUHOIUOIUHIOPUHPOIJPOIJPOUHOIUHOILJHLIUHYOIUYOUI^{2+\delta}(\tda_{3i}q) - \tda_{3i} \UIPOIUPOIUPOOYIUIUYOIUYOIUHOIUOIUHIOPUHPOIJPOIJPOUHOIUHOILJHLIUHYOIUYOUI^{2+\delta}q
                   \Bigr)                    \UIPOIUPOIUPOOYIUIUYOIUYOIUHOIUOIUHIOPUHPOIJPOIJPOUHOIUHOILJHLIUHYOIUYOUI^{2+\delta} v_i    \\&    =    -    \OIUYJHUGFAJKLDHFKJLSDHFLKSDJFHLKSDJHFLKSDJHFLKDJFHLLDKHFLKSDHJFALKJHLJLHGLKHHLKJHLKGKHGJKHGKJHLKHJLKJH_{\Gamma_1} \UIPOIUPOIUPOOYIUIUYOIUYOIUHOIUOIUHIOPUHPOIJPOIJPOUHOIUHOILJHLIUHYOIUYOUI^{1+\delta} q   \UIPOIUPOIUPOOYIUIUYOIUYOIUHOIUOIUHIOPUHPOIJPOIJPOUHOIUHOILJHLIUHYOIUYOUI^{3+\delta}(\tda_{3i} v_i)    +     \OIUYJHUGFAJKLDHFKJLSDHFLKSDJFHLKSDJHFLKSDJHFLKDJFHLLDKHFLKSDHJFALKJHLJLHGLKHHLKJHLKGKHGJKHGKJHLKHJLKJH_{\Gamma_1} \UIPOIUPOIUPOOYIUIUYOIUYOIUHOIUOIUHIOPUHPOIJPOIJPOUHOIUHOILJHLIUHYOIUYOUI^{1+\delta} q   \Bigl(\UIPOIUPOIUPOOYIUIUYOIUYOIUHOIUOIUHIOPUHPOIJPOIJPOUHOIUHOILJHLIUHYOIUYOUI^{3+\delta}(\tda_{3i} v_i)                                                  - \tda_{3i} \UIPOIUPOIUPOOYIUIUYOIUYOIUHOIUOIUHIOPUHPOIJPOIJPOUHOIUHOILJHLIUHYOIUYOUI^{3+\delta}v_i                                             \Bigr)    \\&\indeq    -     \OIUYJHUGFAJKLDHFKJLSDHFLKSDJFHLKSDJHFLKSDJHFLKDJFHLLDKHFLKSDHJFALKJHLJLHGLKHHLKJHLKGKHGJKHGKJHLKHJLKJH_{\Gamma_1} \UIPOIUPOIUPOOYIUIUYOIUYOIUHOIUOIUHIOPUHPOIJPOIJPOUHOIUHOILJHLIUHYOIUYOUI^{1+\delta} q             \Bigl(
              \UIPOIUPOIUPOOYIUIUYOIUYOIUHOIUOIUHIOPUHPOIJPOIJPOUHOIUHOILJHLIUHYOIUYOUI(\tda_{3i} \UIPOIUPOIUPOOYIUIUYOIUYOIUHOIUOIUHIOPUHPOIJPOIJPOUHOIUHOILJHLIUHYOIUYOUI^{2+\delta}v_i)                               - \tda_{3i} \UIPOIUPOIUPOOYIUIUYOIUYOIUHOIUOIUHIOPUHPOIJPOIJPOUHOIUHOILJHLIUHYOIUYOUI^{3+\delta}v_i            \Bigr)    -    \OIUYJHUGFAJKLDHFKJLSDHFLKSDJFHLKSDJHFLKSDJHFLKDJFHLLDKHFLKSDHJFALKJHLJLHGLKHHLKJHLKGKHGJKHGKJHLKHJLKJH_{\Gamma_1} \Bigl(               \UIPOIUPOIUPOOYIUIUYOIUYOIUHOIUOIUHIOPUHPOIJPOIJPOUHOIUHOILJHLIUHYOIUYOUI^{2+\delta}(\tda_{3i}q) - \tda_{3i} \UIPOIUPOIUPOOYIUIUYOIUYOIUHOIUOIUHIOPUHPOIJPOIJPOUHOIUHOILJHLIUHYOIUYOUI^{2+\delta}q                    \Bigr)                    \UIPOIUPOIUPOOYIUIUYOIUYOIUHOIUOIUHIOPUHPOIJPOIJPOUHOIUHOILJHLIUHYOIUYOUI^{2+\delta} v_i     \\&     = I_{521} + I_{522} + I_{523} + I_{524}    .    \end{split}    \label{8ThswELzXU3X7Ebd1KdZ7v1rN3GiirRXGKWK099ovBM0FDJCvkopYNQ2aN94Z7k0UnUKamE3OjU8DFYFFokbSI2J9V9gVlM8ALWThDPnPu3EL7HPD2VDaZTggzcCCmbvc70qqPcC9mt60ogcrTiA3HEjwTK8ymKeuJMc4q6dVz200XnYUtLR9GYjPXvFOVr6W1zUK1WbPToaWJJuKnxBLnd0ftDEbMmj4loHYyhZyMjM91zQS4p7z8eKa9h0JrbacekcirexG0z4n370}   \end{align}
The first term is the leading one and, using \eqref{8ThswELzXU3X7Ebd1KdZ7v1rN3GiirRXGKWK099ovBM0FDJCvkopYNQ2aN94Z7k0UnUKamE3OjU8DFYFFokbSI2J9V9gVlM8ALWThDPnPu3EL7HPD2VDaZTggzcCCmbvc70qqPcC9mt60ogcrTiA3HEjwTK8ymKeuJMc4q6dVz200XnYUtLR9GYjPXvFOVr6W1zUK1WbPToaWJJuKnxBLnd0ftDEbMmj4loHYyhZyMjM91zQS4p7z8eKa9h0JrbacekcirexG0z4n321}, it may be rewritten as   \begin{align}\thelt{ 8XL1 RQ3a Uj6Ess nj 2 0MA 3As rSV ft 3F9w zB1q DQVOnH Cm m P3d WSb jst oj 3oGj advz qcMB6Y 6k D 9sZ 0bd Mjt UT hULG TWU9 Nmr3E4 CN b zUO vTh hqL 1p xAxT ezrH dVMgLY TT r Sfx LUX CMr WA bE69 K6XH i5re1f x4 G DKk iB7 f2D Xz Xez2 k2Yc Yc4QjU yM Y R1o DeY NWf 74 hByF dsWk 4cUbCR DX a q4e DWd 7qb Ot 7GOu oklg jJ00J9 Il O Jxn tzF VBC Ft pABp VLEE 2y5Qcg b3 5 DU4 igj 4dz zW soNF wvqj bNFma0 am F Kiv Aap pzM zr VqYf OulM HafaBk 6J r eOQ BaT EsJ BB tHXj }    \begin{split}    I_{521}     &=    -    \OIUYJHUGFAJKLDHFKJLSDHFLKSDJFHLKSDJHFLKSDJHFLKDJFHLLDKHFLKSDHJFALKJHLJLHGLKHHLKJHLKGKHGJKHGKJHLKHJLKJH_{\Gamma_1} \UIPOIUPOIUPOOYIUIUYOIUYOIUHOIUOIUHIOPUHPOIJPOIJPOUHOIUHOILJHLIUHYOIUYOUI^{1+\delta} q   \UIPOIUPOIUPOOYIUIUYOIUYOIUHOIUOIUHIOPUHPOIJPOIJPOUHOIUHOILJHLIUHYOIUYOUI^{3+\delta} w_{t}     =    \colrr    -     \OIUYJHUGFAJKLDHFKJLSDHFLKSDJFHLKSDJHFLKSDJHFLKDJFHLLDKHFLKSDHJFALKJHLJLHGLKHHLKJHLKGKHGJKHGKJHLKHJLKJH_{\Gamma_1} q\UIPOIUPOIUPOOYIUIUYOIUYOIUHOIUOIUHIOPUHPOIJPOIJPOUHOIUHOILJHLIUHYOIUYOUI^{2(2+\delta)}w_{t}    \colb    ,    \end{split}
   \llabel{8ThswELzXU3X7Ebd1KdZ7v1rN3GiirRXGKWK099ovBM0FDJCvkopYNQ2aN94Z7k0UnUKamE3OjU8DFYFFokbSI2J9V9gVlM8ALWThDPnPu3EL7HPD2VDaZTggzcCCmbvc70qqPcC9mt60ogcrTiA3HEjwTK8ymKeuJMc4q6dVz200XnYUtLR9GYjPXvFOVr6W1zUK1WbPToaWJJuKnxBLnd0ftDEbMmj4loHYyhZyMjM91zQS4p7z8eKa9h0JrbacekcirexG0z4n371}   \end{align} which cancels with the second term on the right-hand side of \eqref{8ThswELzXU3X7Ebd1KdZ7v1rN3GiirRXGKWK099ovBM0FDJCvkopYNQ2aN94Z7k0UnUKamE3OjU8DFYFFokbSI2J9V9gVlM8ALWThDPnPu3EL7HPD2VDaZTggzcCCmbvc70qqPcC9mt60ogcrTiA3HEjwTK8ymKeuJMc4q6dVz200XnYUtLR9GYjPXvFOVr6W1zUK1WbPToaWJJuKnxBLnd0ftDEbMmj4loHYyhZyMjM91zQS4p7z8eKa9h0JrbacekcirexG0z4n353} upon adding \eqref{8ThswELzXU3X7Ebd1KdZ7v1rN3GiirRXGKWK099ovBM0FDJCvkopYNQ2aN94Z7k0UnUKamE3OjU8DFYFFokbSI2J9V9gVlM8ALWThDPnPu3EL7HPD2VDaZTggzcCCmbvc70qqPcC9mt60ogcrTiA3HEjwTK8ymKeuJMc4q6dVz200XnYUtLR9GYjPXvFOVr6W1zUK1WbPToaWJJuKnxBLnd0ftDEbMmj4loHYyhZyMjM91zQS4p7z8eKa9h0JrbacekcirexG0z4n356}, integrated in time, to~\eqref{8ThswELzXU3X7Ebd1KdZ7v1rN3GiirRXGKWK099ovBM0FDJCvkopYNQ2aN94Z7k0UnUKamE3OjU8DFYFFokbSI2J9V9gVlM8ALWThDPnPu3EL7HPD2VDaZTggzcCCmbvc70qqPcC9mt60ogcrTiA3HEjwTK8ymKeuJMc4q6dVz200XnYUtLR9GYjPXvFOVr6W1zUK1WbPToaWJJuKnxBLnd0ftDEbMmj4loHYyhZyMjM91zQS4p7z8eKa9h0JrbacekcirexG0z4n353}. The next three terms are commutators. For the first one, we have   \begin{align}\thelt{Mr WA bE69 K6XH i5re1f x4 G DKk iB7 f2D Xz Xez2 k2Yc Yc4QjU yM Y R1o DeY NWf 74 hByF dsWk 4cUbCR DX a q4e DWd 7qb Ot 7GOu oklg jJ00J9 Il O Jxn tzF VBC Ft pABp VLEE 2y5Qcg b3 5 DU4 igj 4dz zW soNF wvqj bNFma0 am F Kiv Aap pzM zr VqYf OulM HafaBk 6J r eOQ BaT EsJ BB tHXj n2EU CNleWp cv W JIg gWX Ksn B3 wvmo WK49 Nl492o gR 6 fvc 8ff jJm sW Jr0j zI9p CBsIUV of D kKH Ub7 vxp uQ UXA6 hMUr yvxEpc Tq l Tkz z0q HbX pO 8jFu h6nw zVPPzp A8 9 61V 78c O2W aw }    \begin{split}    I_{522}    &\dlkjfhlaskdhjflkasdjhflkasjhdflkasjhdflkasjhdfls    \Vert \UIPOIUPOIUPOOYIUIUYOIUYOIUHOIUOIUHIOPUHPOIJPOIJPOUHOIUHOILJHLIUHYOIUYOUI^{1+\delta}q\Vert_{L^2(\Gamma_1)}    \Vert \UIPOIUPOIUPOOYIUIUYOIUYOIUHOIUOIUHIOPUHPOIJPOIJPOUHOIUHOILJHLIUHYOIUYOUI^{3+\delta}\tda\Vert_{L^{2}(\Gamma_1)}    \Vert v\Vert_{L^{\infty}(\Gamma_1)}    + 
   \Vert \UIPOIUPOIUPOOYIUIUYOIUYOIUHOIUOIUHIOPUHPOIJPOIJPOUHOIUHOILJHLIUHYOIUYOUI^{1+\delta}q\Vert_{L^2(\Gamma_1)}    \Vert \UIPOIUPOIUPOOYIUIUYOIUYOIUHOIUOIUHIOPUHPOIJPOIJPOUHOIUHOILJHLIUHYOIUYOUI \tda\Vert_{L^{\infty}(\Gamma_1)}    \Vert \UIPOIUPOIUPOOYIUIUYOIUYOIUHOIUOIUHIOPUHPOIJPOIJPOUHOIUHOILJHLIUHYOIUYOUI^{2+\delta}v\Vert_{L^{2}(\Gamma_1)}    \\&    \dlkjfhlaskdhjflkasdjhflkasjhdflkasjhdflkasjhdfls    \Vert q\Vert_{H^{1+\delta}(\Gamma_1)}    \Vert \tda\Vert_{H^{3+\delta}(\Gamma_1)}    \Vert v\Vert_{H^{2}(\Gamma_1)}    +     \Vert q\Vert_{H^{1+\delta}(\Gamma_1)}    \Vert \tda\Vert_{H^{2+\delta}(\Gamma_1)}    \Vert v\Vert_{H^{2+\delta}(\Gamma_1)}    \\&    \leq
   P(      \Vert v\Vert_{H^{2.5+\delta}},      \Vert q\Vert_{H^{1.5+\delta}},      \Vert w\Vert_{H^{4+\delta}(\Gamma_1)}     )    ,    \end{split}    \llabel{8ThswELzXU3X7Ebd1KdZ7v1rN3GiirRXGKWK099ovBM0FDJCvkopYNQ2aN94Z7k0UnUKamE3OjU8DFYFFokbSI2J9V9gVlM8ALWThDPnPu3EL7HPD2VDaZTggzcCCmbvc70qqPcC9mt60ogcrTiA3HEjwTK8ymKeuJMc4q6dVz200XnYUtLR9GYjPXvFOVr6W1zUK1WbPToaWJJuKnxBLnd0ftDEbMmj4loHYyhZyMjM91zQS4p7z8eKa9h0JrbacekcirexG0z4n372}   \end{align} using the trace inequalities. The second commutator term in \eqref{8ThswELzXU3X7Ebd1KdZ7v1rN3GiirRXGKWK099ovBM0FDJCvkopYNQ2aN94Z7k0UnUKamE3OjU8DFYFFokbSI2J9V9gVlM8ALWThDPnPu3EL7HPD2VDaZTggzcCCmbvc70qqPcC9mt60ogcrTiA3HEjwTK8ymKeuJMc4q6dVz200XnYUtLR9GYjPXvFOVr6W1zUK1WbPToaWJJuKnxBLnd0ftDEbMmj4loHYyhZyMjM91zQS4p7z8eKa9h0JrbacekcirexG0z4n370} is estimated similarly as   \begin{align}\thelt{igj 4dz zW soNF wvqj bNFma0 am F Kiv Aap pzM zr VqYf OulM HafaBk 6J r eOQ BaT EsJ BB tHXj n2EU CNleWp cv W JIg gWX Ksn B3 wvmo WK49 Nl492o gR 6 fvc 8ff jJm sW Jr0j zI9p CBsIUV of D kKH Ub7 vxp uQ UXA6 hMUr yvxEpc Tq l Tkz z0q HbX pO 8jFu h6nw zVPPzp A8 9 61V 78c O2W aw 0yGn CHVq BVjTUH lk p 6dG HOd voE E8 cw7Q DL1o 1qg5TX qo V 720 hhQ TyF tp TJDg 9E8D nsp1Qi X9 8 ZVQ N3s duZ qc n9IX ozWh Fd16IB 0K 9 JeB Hvi 364 kQ lFMM JOn0 OUBrnv pY y jUB Ofs Pz}    \begin{split}    I_{523}
   &\dlkjfhlaskdhjflkasdjhflkasjhdflkasjhdflkasjhdfls    \Vert \UIPOIUPOIUPOOYIUIUYOIUYOIUHOIUOIUHIOPUHPOIJPOIJPOUHOIUHOILJHLIUHYOIUYOUI^{1+\delta}q\Vert_{L^2(\Gamma_1)}    \Vert \UIPOIUPOIUPOOYIUIUYOIUYOIUHOIUOIUHIOPUHPOIJPOIJPOUHOIUHOILJHLIUHYOIUYOUI \tda\Vert_{L^{\infty}(\Gamma_1)}    \Vert \UIPOIUPOIUPOOYIUIUYOIUYOIUHOIUOIUHIOPUHPOIJPOIJPOUHOIUHOILJHLIUHYOIUYOUI^{2+\delta} v\Vert_{L^{2}(\Gamma_1)}    \dlkjfhlaskdhjflkasdjhflkasjhdflkasjhdflkasjhdfls    \Vert q\Vert_{H^{1+\delta}(\Gamma_1)}    \Vert \tda\Vert_{H^{2+\delta}(\Gamma_1)}    \Vert v\Vert_{H^{2+\delta}(\Gamma_1)}    \\&    \leq    P(      \Vert v\Vert_{H^{2.5+\delta}},      \Vert q\Vert_{H^{1.5+\delta}},      \Vert w\Vert_{H^{4+\delta}(\Gamma_1)}
    )    ,    \end{split}    \llabel{8ThswELzXU3X7Ebd1KdZ7v1rN3GiirRXGKWK099ovBM0FDJCvkopYNQ2aN94Z7k0UnUKamE3OjU8DFYFFokbSI2J9V9gVlM8ALWThDPnPu3EL7HPD2VDaZTggzcCCmbvc70qqPcC9mt60ogcrTiA3HEjwTK8ymKeuJMc4q6dVz200XnYUtLR9GYjPXvFOVr6W1zUK1WbPToaWJJuKnxBLnd0ftDEbMmj4loHYyhZyMjM91zQS4p7z8eKa9h0JrbacekcirexG0z4n373}   \end{align} while for the last commutator term $I_{524}$, we have   \begin{align}\thelt{ kKH Ub7 vxp uQ UXA6 hMUr yvxEpc Tq l Tkz z0q HbX pO 8jFu h6nw zVPPzp A8 9 61V 78c O2W aw 0yGn CHVq BVjTUH lk p 6dG HOd voE E8 cw7Q DL1o 1qg5TX qo V 720 hhQ TyF tp TJDg 9E8D nsp1Qi X9 8 ZVQ N3s duZ qc n9IX ozWh Fd16IB 0K 9 JeB Hvi 364 kQ lFMM JOn0 OUBrnv pY y jUB Ofs Pzx l4 zcMn JHdq OjSi6N Mn 8 bR6 kPe klT Fd VlwD SrhT 8Qr0sC hN h 88j 8ZA vvW VD 03wt ETKK NUdr7W EK 1 jKS IHF Kh2 sr 1RRV Ra8J mBtkWI 1u k uZT F2B 4p8 E7 Y3p0 DX20 JM3XzQ tZ 3 bMC v}    \begin{split}    I_{524}    &\dlkjfhlaskdhjflkasdjhflkasjhdflkasjhdflkasjhdfls    \Vert \UIPOIUPOIUPOOYIUIUYOIUYOIUHOIUOIUHIOPUHPOIJPOIJPOUHOIUHOILJHLIUHYOIUYOUI^{2+\delta}\tda\Vert_{L^2(\Gamma_1)}    \Vert q\Vert_{L^{\infty}(\Gamma_1)}    \Vert \UIPOIUPOIUPOOYIUIUYOIUYOIUHOIUOIUHIOPUHPOIJPOIJPOUHOIUHOILJHLIUHYOIUYOUI^{2+\delta} v\Vert_{L^{2}(\Gamma_1)}    +
   \Vert \UIPOIUPOIUPOOYIUIUYOIUYOIUHOIUOIUHIOPUHPOIJPOIJPOUHOIUHOILJHLIUHYOIUYOUI \tda\Vert_{L^\infty(\Gamma_1)}    \Vert \UIPOIUPOIUPOOYIUIUYOIUYOIUHOIUOIUHIOPUHPOIJPOIJPOUHOIUHOILJHLIUHYOIUYOUI^{1+\delta}q\Vert_{L^{2}(\Gamma_1)}    \Vert \UIPOIUPOIUPOOYIUIUYOIUYOIUHOIUOIUHIOPUHPOIJPOIJPOUHOIUHOILJHLIUHYOIUYOUI^{2+\delta} v\Vert_{L^{2}(\Gamma_1)}    \\&    \dlkjfhlaskdhjflkasdjhflkasjhdflkasjhdflkasjhdfls    \Vert \tda\Vert_{H^{2+\delta}(\Gamma_1)}    \Vert q\Vert_{H^{1+\delta}(\Gamma_1)}    \Vert  v\Vert_{H^{2+\delta}(\Gamma_1)}    +    \Vert \tda\Vert_{H^{2+\delta}(\Gamma_1)}    \Vert q\Vert_{H^{1+\delta}(\Gamma_1)}    \Vert v\Vert_{H^{2+\delta}(\Gamma_1)}    \\&    \leq
   P(      \Vert v\Vert_{H^{2.5+\delta}},      \Vert q\Vert_{H^{1.5+\delta}},      \Vert w\Vert_{H^{4+\delta}(\Gamma_1)}     )    .    \end{split}    \llabel{8ThswELzXU3X7Ebd1KdZ7v1rN3GiirRXGKWK099ovBM0FDJCvkopYNQ2aN94Z7k0UnUKamE3OjU8DFYFFokbSI2J9V9gVlM8ALWThDPnPu3EL7HPD2VDaZTggzcCCmbvc70qqPcC9mt60ogcrTiA3HEjwTK8ymKeuJMc4q6dVz200XnYUtLR9GYjPXvFOVr6W1zUK1WbPToaWJJuKnxBLnd0ftDEbMmj4loHYyhZyMjM91zQS4p7z8eKa9h0JrbacekcirexG0z4n374}   \end{align} Now, we add \eqref{8ThswELzXU3X7Ebd1KdZ7v1rN3GiirRXGKWK099ovBM0FDJCvkopYNQ2aN94Z7k0UnUKamE3OjU8DFYFFokbSI2J9V9gVlM8ALWThDPnPu3EL7HPD2VDaZTggzcCCmbvc70qqPcC9mt60ogcrTiA3HEjwTK8ymKeuJMc4q6dVz200XnYUtLR9GYjPXvFOVr6W1zUK1WbPToaWJJuKnxBLnd0ftDEbMmj4loHYyhZyMjM91zQS4p7z8eKa9h0JrbacekcirexG0z4n353} and \eqref{8ThswELzXU3X7Ebd1KdZ7v1rN3GiirRXGKWK099ovBM0FDJCvkopYNQ2aN94Z7k0UnUKamE3OjU8DFYFFokbSI2J9V9gVlM8ALWThDPnPu3EL7HPD2VDaZTggzcCCmbvc70qqPcC9mt60ogcrTiA3HEjwTK8ymKeuJMc4q6dVz200XnYUtLR9GYjPXvFOVr6W1zUK1WbPToaWJJuKnxBLnd0ftDEbMmj4loHYyhZyMjM91zQS4p7z8eKa9h0JrbacekcirexG0z4n356}, integrated in time, with all the estimates above on the terms $I_1$, $I_2$, $I_3$, $I_4$, $I_5$, and $\bar I$ obtaining   \begin{align}\thelt{ X9 8 ZVQ N3s duZ qc n9IX ozWh Fd16IB 0K 9 JeB Hvi 364 kQ lFMM JOn0 OUBrnv pY y jUB Ofs Pzx l4 zcMn JHdq OjSi6N Mn 8 bR6 kPe klT Fd VlwD SrhT 8Qr0sC hN h 88j 8ZA vvW VD 03wt ETKK NUdr7W EK 1 jKS IHF Kh2 sr 1RRV Ra8J mBtkWI 1u k uZT F2B 4p8 E7 Y3p0 DX20 JM3XzQ tZ 3 bMC vM4 DEA wB Fp8q YKpL So1a5s dR P fTg 5R6 7v1 T4 eCJ1 qg14 CTK7u7 ag j Q0A tZ1 Nh6 hk Sys5 CWon IOqgCL 3u 7 feR BHz odS Jp 7JH8 u6Rw sYE0mc P4 r LaW Atl yRw kH F3ei UyhI iA19ZB u8 m }
   \begin{split}    &     \Vert  \Delta_2\UIPOIUPOIUPOOYIUIUYOIUYOIUHOIUOIUHIOPUHPOIJPOIJPOUHOIUHOILJHLIUHYOIUYOUI^{2+\delta} w\Vert_{L^2(\Gamma_1)}^2    +   \Vert \UIPOIUPOIUPOOYIUIUYOIUYOIUHOIUOIUHIOPUHPOIJPOIJPOUHOIUHOILJHLIUHYOIUYOUI^{2+\delta} w_{t}\Vert_{L^2(\Gamma_1)}^2    +   \nu \OIUYJHUGFAJKLDHFKJLSDHFLKSDJFHLKSDJHFLKSDJHFLKDJFHLLDKHFLKSDHJFALKJHLJLHGLKHHLKJHLKGKHGJKHGKJHLKHJLKJH_{0}^{t}   \Vert  \nabla_2  \UIPOIUPOIUPOOYIUIUYOIUYOIUHOIUOIUHIOPUHPOIJPOIJPOUHOIUHOILJHLIUHYOIUYOUI^{2+\delta} w_{t} \Vert_{L^2(\Gamma_1)}^2 \, ds    \\&\indeq    \dlkjfhlaskdhjflkasdjhflkasjhdflkasjhdflkasjhdfls     \Vert \UIPOIUPOIUPOOYIUIUYOIUYOIUHOIUOIUHIOPUHPOIJPOIJPOUHOIUHOILJHLIUHYOIUYOUI^{2+\delta} w_{t}(0)\Vert_{L^2(\Gamma_1)}^2     - \OIUYJHUGFAJKLDHFKJLSDHFLKSDJFHLKSDJHFLKSDJHFLKDJFHLLDKHFLKSDHJFALKJHLJLHGLKHHLKJHLKGKHGJKHGKJHLKHJLKJH  J \UIPOIUPOIUPOOYIUIUYOIUYOIUHOIUOIUHIOPUHPOIJPOIJPOUHOIUHOILJHLIUHYOIUYOUI^{1.5+\delta} v \UIPOIUPOIUPOOYIUIUYOIUYOIUHOIUOIUHIOPUHPOIJPOIJPOUHOIUHOILJHLIUHYOIUYOUI^{2.5+\delta} v     + \OIUYJHUGFAJKLDHFKJLSDHFLKSDJFHLKSDJHFLKSDJHFLKDJFHLLDKHFLKSDHJFALKJHLJLHGLKHHLKJHLKGKHGJKHGKJHLKHJLKJH  J \UIPOIUPOIUPOOYIUIUYOIUYOIUHOIUOIUHIOPUHPOIJPOIJPOUHOIUHOILJHLIUHYOIUYOUI^{1.5+\delta} v_0 \UIPOIUPOIUPOOYIUIUYOIUYOIUHOIUOIUHIOPUHPOIJPOIJPOUHOIUHOILJHLIUHYOIUYOUI^{2.5+\delta} v_0    \\&\indeq\indeq    +\OIUYJHUGFAJKLDHFKJLSDHFLKSDJFHLKSDJHFLKSDJHFLKDJFHLLDKHFLKSDHJFALKJHLJLHGLKHHLKJHLKGKHGJKHGKJHLKHJLKJH_{0}^{t}        P(         \Vert v\Vert_{H^{2.5+\delta}}, 
        \Vert q\Vert_{H^{1.5+\delta}},         \Vert w\Vert_{H^{4+\delta}(\Gamma_1)},         \Vert w_{t}\Vert_{H^{2+\delta}(\Gamma_1)}        )\,ds    .    \end{split}    \llabel{8ThswELzXU3X7Ebd1KdZ7v1rN3GiirRXGKWK099ovBM0FDJCvkopYNQ2aN94Z7k0UnUKamE3OjU8DFYFFokbSI2J9V9gVlM8ALWThDPnPu3EL7HPD2VDaZTggzcCCmbvc70qqPcC9mt60ogcrTiA3HEjwTK8ymKeuJMc4q6dVz200XnYUtLR9GYjPXvFOVr6W1zUK1WbPToaWJJuKnxBLnd0ftDEbMmj4loHYyhZyMjM91zQS4p7z8eKa9h0JrbacekcirexG0z4n375}   \end{align} Next, we estimate the second term on the right-hand side as   \begin{align}\thelt{Udr7W EK 1 jKS IHF Kh2 sr 1RRV Ra8J mBtkWI 1u k uZT F2B 4p8 E7 Y3p0 DX20 JM3XzQ tZ 3 bMC vM4 DEA wB Fp8q YKpL So1a5s dR P fTg 5R6 7v1 T4 eCJ1 qg14 CTK7u7 ag j Q0A tZ1 Nh6 hk Sys5 CWon IOqgCL 3u 7 feR BHz odS Jp 7JH8 u6Rw sYE0mc P4 r LaW Atl yRw kH F3ei UyhI iA19ZB u8 m ywf 42n uyX 0e ljCt 3Lkd 1eUQEZ oO Z rA2 Oqf oQ5 Ca hrBy KzFg DOseim 0j Y BmX csL Ayc cC JBTZ PEjy zPb5hZ KW O xT6 dyt u82 Ia htpD m75Y DktQvd Nj W jIQ H1B Ace SZ KVVP 136v L8XhMm }    \begin{split}     - \OIUYJHUGFAJKLDHFKJLSDHFLKSDJFHLKSDJHFLKSDJHFLKDJFHLLDKHFLKSDHJFALKJHLJLHGLKHHLKJHLKGKHGJKHGKJHLKHJLKJH  J \UIPOIUPOIUPOOYIUIUYOIUYOIUHOIUOIUHIOPUHPOIJPOIJPOUHOIUHOILJHLIUHYOIUYOUI^{1.5+\delta} v_i \UIPOIUPOIUPOOYIUIUYOIUYOIUHOIUOIUHIOPUHPOIJPOIJPOUHOIUHOILJHLIUHYOIUYOUI^{2.5+\delta} v_i     &     \dlkjfhlaskdhjflkasdjhflkasjhdflkasjhdflkasjhdfls
    \Vert J\Vert_{L^\infty}     \Vert \UIPOIUPOIUPOOYIUIUYOIUYOIUHOIUOIUHIOPUHPOIJPOIJPOUHOIUHOILJHLIUHYOIUYOUI^{1.5+\delta}v\Vert_{L^2}     \Vert \UIPOIUPOIUPOOYIUIUYOIUYOIUHOIUOIUHIOPUHPOIJPOIJPOUHOIUHOILJHLIUHYOIUYOUI^{2.5+\delta}v\Vert_{L^2}     \dlkjfhlaskdhjflkasdjhflkasjhdflkasjhdflkasjhdfls     \Vert \UIPOIUPOIUPOOYIUIUYOIUYOIUHOIUOIUHIOPUHPOIJPOIJPOUHOIUHOILJHLIUHYOIUYOUI^{1.5+\delta}v\Vert_{L^2}     \Vert \UIPOIUPOIUPOOYIUIUYOIUYOIUHOIUOIUHIOPUHPOIJPOIJPOUHOIUHOILJHLIUHYOIUYOUI^{2.5+\delta}v\Vert_{L^2}    \\&    \dlkjfhlaskdhjflkasdjhflkasjhdflkasjhdflkasjhdfls     \Vert v\Vert_{L^2}^{1/(2.5+\delta)}     \Vert v\Vert_{H^{2.5+\delta}}^{(4+2\delta)/(2.5+\delta)}    .    \end{split}    \label{8ThswELzXU3X7Ebd1KdZ7v1rN3GiirRXGKWK099ovBM0FDJCvkopYNQ2aN94Z7k0UnUKamE3OjU8DFYFFokbSI2J9V9gVlM8ALWThDPnPu3EL7HPD2VDaZTggzcCCmbvc70qqPcC9mt60ogcrTiA3HEjwTK8ymKeuJMc4q6dVz200XnYUtLR9GYjPXvFOVr6W1zUK1WbPToaWJJuKnxBLnd0ftDEbMmj4loHYyhZyMjM91zQS4p7z8eKa9h0JrbacekcirexG0z4n376}   \end{align}
Using  the  equality   \begin{equation}      \Delta_2\UIPOIUPOIUPOOYIUIUYOIUYOIUHOIUOIUHIOPUHPOIJPOIJPOUHOIUHOILJHLIUHYOIUYOUI^{2+\delta} w      =      \UIPOIUPOIUPOOYIUIUYOIUYOIUHOIUOIUHIOPUHPOIJPOIJPOUHOIUHOILJHLIUHYOIUYOUI^{2+\delta} w(0)      -\UIPOIUPOIUPOOYIUIUYOIUYOIUHOIUOIUHIOPUHPOIJPOIJPOUHOIUHOILJHLIUHYOIUYOUI^{4+\delta} w(t)          + \OIUYJHUGFAJKLDHFKJLSDHFLKSDJFHLKSDJHFLKSDJHFLKDJFHLLDKHFLKSDHJFALKJHLJLHGLKHHLKJHLKGKHGJKHGKJHLKHJLKJH_{0}^{t}         \UIPOIUPOIUPOOYIUIUYOIUYOIUHOIUOIUHIOPUHPOIJPOIJPOUHOIUHOILJHLIUHYOIUYOUI^{2+\delta}w_{t}        \,ds    \llabel{8ThswELzXU3X7Ebd1KdZ7v1rN3GiirRXGKWK099ovBM0FDJCvkopYNQ2aN94Z7k0UnUKamE3OjU8DFYFFokbSI2J9V9gVlM8ALWThDPnPu3EL7HPD2VDaZTggzcCCmbvc70qqPcC9mt60ogcrTiA3HEjwTK8ymKeuJMc4q6dVz200XnYUtLR9GYjPXvFOVr6W1zUK1WbPToaWJJuKnxBLnd0ftDEbMmj4loHYyhZyMjM91zQS4p7z8eKa9h0JrbacekcirexG0z4n377}   \end{equation} on the first term on the left-hand side of \eqref{8ThswELzXU3X7Ebd1KdZ7v1rN3GiirRXGKWK099ovBM0FDJCvkopYNQ2aN94Z7k0UnUKamE3OjU8DFYFFokbSI2J9V9gVlM8ALWThDPnPu3EL7HPD2VDaZTggzcCCmbvc70qqPcC9mt60ogcrTiA3HEjwTK8ymKeuJMc4q6dVz200XnYUtLR9GYjPXvFOVr6W1zUK1WbPToaWJJuKnxBLnd0ftDEbMmj4loHYyhZyMjM91zQS4p7z8eKa9h0JrbacekcirexG0z4n353}, we conclude the 
proof of~\eqref{8ThswELzXU3X7Ebd1KdZ7v1rN3GiirRXGKWK099ovBM0FDJCvkopYNQ2aN94Z7k0UnUKamE3OjU8DFYFFokbSI2J9V9gVlM8ALWThDPnPu3EL7HPD2VDaZTggzcCCmbvc70qqPcC9mt60ogcrTiA3HEjwTK8ymKeuJMc4q6dVz200XnYUtLR9GYjPXvFOVr6W1zUK1WbPToaWJJuKnxBLnd0ftDEbMmj4loHYyhZyMjM91zQS4p7z8eKa9h0JrbacekcirexG0z4n3326}. \end{proof} \par \subsection{Pressure estimates} \label{sec04} In this section, we prove the following pressure estimate. \par \cole \begin{Lemma} \label{L03} Under the conditions of Theorem~\ref{T01}, we have   \begin{align}\thelt{Won IOqgCL 3u 7 feR BHz odS Jp 7JH8 u6Rw sYE0mc P4 r LaW Atl yRw kH F3ei UyhI iA19ZB u8 m ywf 42n uyX 0e ljCt 3Lkd 1eUQEZ oO Z rA2 Oqf oQ5 Ca hrBy KzFg DOseim 0j Y BmX csL Ayc cC JBTZ PEjy zPb5hZ KW O xT6 dyt u82 Ia htpD m75Y DktQvd Nj W jIQ H1B Ace SZ KVVP 136v L8XhMm 1O H Kn2 gUy kFU wN 8JML Bqmn vGuwGR oW U oNZ Y2P nmS 5g QMcR YHxL yHuDo8 ba w aqM NYt onW u2 YIOz eB6R wHuGcn fi o 47U PM5 tOj sz QBNq 7mco fCNjou 83 e mcY 81s vsI 2Y DS3S yloB Nx}    \begin{split}      \Vert q\Vert_{H^{1.5+\delta}}
     \leq       P(           \Vert v\Vert_{H^{2.5+\delta}},           \Vert w\Vert_{H^{4+\delta}(\Gamma_1)},           \Vert w_{t} \Vert_{H^{2+\delta}(\Gamma_1)}        )    .    \end{split}    \label{8ThswELzXU3X7Ebd1KdZ7v1rN3GiirRXGKWK099ovBM0FDJCvkopYNQ2aN94Z7k0UnUKamE3OjU8DFYFFokbSI2J9V9gVlM8ALWThDPnPu3EL7HPD2VDaZTggzcCCmbvc70qqPcC9mt60ogcrTiA3HEjwTK8ymKeuJMc4q6dVz200XnYUtLR9GYjPXvFOVr6W1zUK1WbPToaWJJuKnxBLnd0ftDEbMmj4loHYyhZyMjM91zQS4p7z8eKa9h0JrbacekcirexG0z4n378}   \end{align} \end{Lemma}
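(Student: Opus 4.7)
The plan is to derive an elliptic boundary value problem for $q$ with Robin data on $\Gamma_1$ and Neumann data on $\Gamma_0$, and then apply a variable-coefficient elliptic regularity estimate. First, I would obtain the interior equation by multiplying \eqref{8ThswELzXU3X7Ebd1KdZ7v1rN3GiirRXGKWK099ovBM0FDJCvkopYNQ2aN94Z7k0UnUKamE3OjU8DFYFFokbSI2J9V9gVlM8ALWThDPnPu3EL7HPD2VDaZTggzcCCmbvc70qqPcC9mt60ogcrTiA3HEjwTK8ymKeuJMc4q6dVz200XnYUtLR9GYjPXvFOVr6W1zUK1WbPToaWJJuKnxBLnd0ftDEbMmj4loHYyhZyMjM91zQS4p7z8eKa9h0JrbacekcirexG0z4n317}$_1$ by $\tda_{mi}$ and taking $\partial_m$, invoking the Piola identity \eqref{8ThswELzXU3X7Ebd1KdZ7v1rN3GiirRXGKWK099ovBM0FDJCvkopYNQ2aN94Z7k0UnUKamE3OjU8DFYFFokbSI2J9V9gVlM8ALWThDPnPu3EL7HPD2VDaZTggzcCCmbvc70qqPcC9mt60ogcrTiA3HEjwTK8ymKeuJMc4q6dVz200XnYUtLR9GYjPXvFOVr6W1zUK1WbPToaWJJuKnxBLnd0ftDEbMmj4loHYyhZyMjM91zQS4p7z8eKa9h0JrbacekcirexG0z4n315} together with the ALE divergence constraint $\tda_{mi}\partial_m v_i=0$, leading to a divergence-form PDE
\begin{equation*}
  \partial_m(\tda_{mi} a_{ki}\partial_k q) = F(v,\psi,\tda,a),
\end{equation*}
in which $F$ collects first-order terms of the form $\partial_t\tda_{mi}\,\partial_m v_i$ and $\partial_m(\tda_{mi}\,\text{adv}_i)$ obtainable from the advection.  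The coefficient matrix $\tda_{mi}a_{ki}=Ja_{mi}a_{ki}$ is uniformly positive definite on $[0,T_0]$ by Lemma~\ref{L01}.

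Next, I derive the Robin boundary condition on $\Gamma_1$.  Differentiating the kinematic identity $\tda_{3i}v_i=w_t$ in time gives $w_{tt}=\partial_t\tda_{3i}\,v_i+\tda_{3i}\partial_t v_i$, and then \eqref{8ThswELzXU3X7Ebd1KdZ7v1rN3GiirRXGKWK099ovBM0FDJCvkopYNQ2aN94Z7k0UnUKamE3OjU8DFYFFokbSI2J9V9gVlM8ALWThDPnPu3EL7HPD2VDaZTggzcCCmbvc70qqPcC9mt60ogcrTiA3HEjwTK8ymKeuJMc4q6dVz200XnYUtLR9GYjPXvFOVr6W1zUK1WbPToaWJJuKnxBLnd0ftDEbMmj4loHYyhZyMjM91zQS4p7z8eKa9h0JrbacekcirexG0z4n317}$_1$ replaces $\partial_t v_i$ with $-\text{adv}_i-a_{ki}\partial_k q$.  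Combining with the plate equation \eqref{8ThswELzXU3X7Ebd1KdZ7v1rN3GiirRXGKWK099ovBM0FDJCvkopYNQ2aN94Z7k0UnUKamE3OjU8DFYFFokbSI2J9V9gVlM8ALWThDPnPu3EL7HPD2VDaZTggzcCCmbvc70qqPcC9mt60ogcrTiA3HEjwTK8ymKeuJMc4q6dVz200XnYUtLR9GYjPXvFOVr6W1zUK1WbPToaWJJuKnxBLnd0ftDEbMmj4loHYyhZyMjM91zQS4p7z8eKa9h0JrbacekcirexG0z4n322} (which says $q=w_{tt}+\Delta_2^2 w-\nu\Delta_2 w_t$ on $\Gamma_1$) to eliminate $w_{tt}$ produces the Robin condition
\begin{equation*}
  q+\tda_{3i}a_{ki}\partial_k q = \Delta_2^2 w-\nu\Delta_2 w_t+\partial_t\tda_{3i}\,v_i-\tda_{3i}\,\text{adv}_i
  \quad\text{on }\Gamma_1.
\end{equation*}
The coefficient $\tda_{3i}a_{3i}=Ja_{3i}a_{3i}$ is strictly positive, so this is the correct (Taylor-stable) sign.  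On $\Gamma_0$, differentiating $v_3=0$ in $t$ and using \eqref{8ThswELzXU3X7Ebd1KdZ7v1rN3GiirRXGKWK099ovBM0FDJCvkopYNQ2aN94Z7k0UnUKamE3OjU8DFYFFokbSI2J9V9gVlM8ALWThDPnPu3EL7HPD2VDaZTggzcCCmbvc70qqPcC9mt60ogcrTiA3HEjwTK8ymKeuJMc4q6dVz200XnYUtLR9GYjPXvFOVr6W1zUK1WbPToaWJJuKnxBLnd0ftDEbMmj4loHYyhZyMjM91zQS4p7z8eKa9h0JrbacekcirexG0z4n317}$_1$ yields a pure Neumann condition $a_{k3}\partial_k q=-\text{adv}_3$ on $\Gamma_0$, and the mean value of $q$ on $\Gamma_1$ is fixed by \eqref{8ThswELzXU3X7Ebd1KdZ7v1rN3GiirRXGKWK099ovBM0FDJCvkopYNQ2aN94Z7k0UnUKamE3OjU8DFYFFokbSI2J9V9gVlM8ALWThDPnPu3EL7HPD2VDaZTggzcCCmbvc70qqPcC9mt60ogcrTiA3HEjwTK8ymKeuJMc4q6dVz200XnYUtLR9GYjPXvFOVr6W1zUK1WbPToaWJJuKnxBLnd0ftDEbMmj4loHYyhZyMjM91zQS4p7z8eKa9h0JrbacekcirexG0z4n326}.

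Given this Robin--Neumann problem, an elliptic regularity estimate of the form
\begin{equation*}
  \Vert q\Vert_{H^{1.5+\delta}}\les \Vert F\Vert_{H^{-0.5+\delta}}+\Vert\text{Robin data}\Vert_{H^{\delta}(\Gamma_1)}+\Vert\text{Neumann data}\Vert_{H^{\delta}(\Gamma_0)}+C
\end{equation*}
applies, since by Lemma~\ref{L01} the coefficients are $C^1$ perturbations of the Laplacian.  I would then bound the data using the Sobolev multiplicative inequality \eqref{8ThswELzXU3X7Ebd1KdZ7v1rN3GiirRXGKWK099ovBM0FDJCvkopYNQ2aN94Z7k0UnUKamE3OjU8DFYFFokbSI2J9V9gVlM8ALWThDPnPu3EL7HPD2VDaZTggzcCCmbvc70qqPcC9mt60ogcrTiA3HEjwTK8ymKeuJMc4q6dVz200XnYUtLR9GYjPXvFOVr6W1zUK1WbPToaWJJuKnxBLnd0ftDEbMmj4loHYyhZyMjM91zQS4p7z8eKa9h0JrbacekcirexG0z4n360} and the controls \eqref{8ThswELzXU3X7Ebd1KdZ7v1rN3GiirRXGKWK099ovBM0FDJCvkopYNQ2aN94Z7k0UnUKamE3OjU8DFYFFokbSI2J9V9gVlM8ALWThDPnPu3EL7HPD2VDaZTggzcCCmbvc70qqPcC9mt60ogcrTiA3HEjwTK8ymKeuJMc4q6dVz200XnYUtLR9GYjPXvFOVr6W1zUK1WbPToaWJJuKnxBLnd0ftDEbMmj4loHYyhZyMjM91zQS4p7z8eKa9h0JrbacekcirexG0z4n348}--\eqref{8ThswELzXU3X7Ebd1KdZ7v1rN3GiirRXGKWK099ovBM0FDJCvkopYNQ2aN94Z7k0UnUKamE3OjU8DFYFFokbSI2J9V9gVlM8ALWThDPnPu3EL7HPD2VDaZTggzcCCmbvc70qqPcC9mt60ogcrTiA3HEjwTK8ymKeuJMc4q6dVz200XnYUtLR9GYjPXvFOVr6W1zUK1WbPToaWJJuKnxBLnd0ftDEbMmj4loHYyhZyMjM91zQS4p7z8eKa9h0JrbacekcirexG0z4n349}.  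The biquharmonic term $\Delta_2^2 w\in H^{\delta}(\Gamma_1)$ since $w\in H^{4+\delta}(\Gamma_1)$; the damping term $\nu\Delta_2 w_t\in H^{\delta}(\Gamma_1)$ since $w_t\in H^{2+\delta}(\Gamma_1)$; the coefficient $\tda_{3i}$ has trace $H^{2+\delta}(\Gamma_1)$ and $\partial_t\tda_{3i}$ has trace $H^{1+\delta}$, so $\partial_t\tda_{3i}\,v_i$ and $\tda_{3i}\,\text{adv}_i$ belong to $H^{1+\delta}\supset H^{\delta}$ on $\Gamma_1$.  All interior source terms lie in $L^2\subset H^{-0.5+\delta}$ by similar products.

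The main obstacle is the appearance of $\Delta_2^2 w$ on the boundary, which sits exactly at the $H^{\delta}(\Gamma_1)$ borderline: the Robin structure is essential because replacing $w_{tt}$ by this lower-order expression for $q$ is what places the high-order boundary term at the level of elliptic regularity, rather than producing an illegal trace of $q$ at order $H^{2+\delta}$.  A secondary delicate point is that the smallness condition on $\epsilon$ in Lemma~\ref{L01} must be taken small enough (in terms of the Poincar\'e-type constants for the Robin problem) that the perturbed elliptic operator and its perturbed Robin coefficient remain coercive with constants independent of $\nu$; this is indicated by the remark below \eqref{8ThswELzXU3X7Ebd1KdZ7v1rN3GiirRXGKWK099ovBM0FDJCvkopYNQ2aN94Z7k0UnUKamE3OjU8DFYFFokbSI2J9V9gVlM8ALWThDPnPu3EL7HPD2VDaZTggzcCCmbvc70qqPcC9mt60ogcrTiA3HEjwTK8ymKeuJMc4q6dVz200XnYUtLR9GYjPXvFOVr6W1zUK1WbPToaWJJuKnxBLnd0ftDEbMmj4loHYyhZyMjM91zQS4p7z8eKa9h0JrbacekcirexG0z4n340} in the paper, and drives the choice $\epsilon=1/(CM)$.
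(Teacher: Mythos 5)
Your proposal follows essentially the same route as the paper's proof: apply $b_{ji}\partial_j$ to the momentum equation and use the Piola identity together with the ALE divergence constraint to obtain the divergence-form equation for $q$, convert the $\Gamma_1$ condition into a Robin condition by eliminating $w_{tt}$ through the kinematic identity $b_{3i}v_i=w_t$ and the plate equation (with the Neumann condition on $\Gamma_0$ coming from $v_3=0$), and then invoke the perturbative Robin/Neumann elliptic estimate of Lemma~\ref{L08} under the closeness-to-identity bounds of Lemma~\ref{L01}, finally bounding the interior and boundary data by Sobolev products. The only imprecision is your closing claim that the interior sources lie in ``$L^2\subset H^{-0.5+\delta}$'' (an inclusion that fails for $\delta>1/2$); what is actually needed, and what the paper verifies, is that the fluxes $\partial_t b_{ji}v_i$, $b_{ji}v_m a_{km}\partial_k v_i$, and $a_{ji}(v_3-\psi_t)\partial_3 v_i$ belong to $H^{0.5+\delta}$, which the product estimates you already cite do provide.
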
 \colb \par Applying $\tda_{ji}\UIOIUYOIUyHJGKHJLOIUYOIUOIUYOIYIOUYTIUYIOOOIUYOIUYPOIUPOIUPOIUYOIUYOIUYOIUHOUHOHIOUHOIHOIUHOIUHIOUH_{j}$ to the Euler equations \eqref{8ThswELzXU3X7Ebd1KdZ7v1rN3GiirRXGKWK099ovBM0FDJCvkopYNQ2aN94Z7k0UnUKamE3OjU8DFYFFokbSI2J9V9gVlM8ALWThDPnPu3EL7HPD2VDaZTggzcCCmbvc70qqPcC9mt60ogcrTiA3HEjwTK8ymKeuJMc4q6dVz200XnYUtLR9GYjPXvFOVr6W1zUK1WbPToaWJJuKnxBLnd0ftDEbMmj4loHYyhZyMjM91zQS4p7z8eKa9h0JrbacekcirexG0z4n317}$_{1}$ and using the Piola identity \eqref{8ThswELzXU3X7Ebd1KdZ7v1rN3GiirRXGKWK099ovBM0FDJCvkopYNQ2aN94Z7k0UnUKamE3OjU8DFYFFokbSI2J9V9gVlM8ALWThDPnPu3EL7HPD2VDaZTggzcCCmbvc70qqPcC9mt60ogcrTiA3HEjwTK8ymKeuJMc4q6dVz200XnYUtLR9GYjPXvFOVr6W1zUK1WbPToaWJJuKnxBLnd0ftDEbMmj4loHYyhZyMjM91zQS4p7z8eKa9h0JrbacekcirexG0z4n315}, we get
  \begin{align}\thelt{BTZ PEjy zPb5hZ KW O xT6 dyt u82 Ia htpD m75Y DktQvd Nj W jIQ H1B Ace SZ KVVP 136v L8XhMm 1O H Kn2 gUy kFU wN 8JML Bqmn vGuwGR oW U oNZ Y2P nmS 5g QMcR YHxL yHuDo8 ba w aqM NYt onW u2 YIOz eB6R wHuGcn fi o 47U PM5 tOj sz QBNq 7mco fCNjou 83 e mcY 81s vsI 2Y DS3S yloB Nx5FBV Bc 9 6HZ EOX UO3 W1 fIF5 jtEM W6KW7D 63 t H0F CVT Zup Pl A9aI oN2s f1Bw31 gg L FoD O0M x18 oo heEd KgZB Cqdqpa sa H Fhx BrE aRg Au I5dq mWWB MuHfv9 0y S PtG hFF dYJ JL f3Ap k5}   \begin{split}    &\UIOIUYOIUyHJGKHJLOIUYOIUOIUYOIYIOUYTIUYIOOOIUYOIUYPOIUPOIUPOIUYOIUYOIUYOIUHOUHOHIOUHOIHOIUHOIUHIOUH_{j}(\tda_{ji} a_{ki}\UIOIUYOIUyHJGKHJLOIUYOIUOIUYOIYIOUYTIUYIOOOIUYOIUYPOIUPOIUPOIUYOIUYOIUYOIUHOUHOHIOUHOIHOIUHOIUHIOUH_{k}q)    = - \UIOIUYOIUyHJGKHJLOIUYOIUOIUYOIYIOUYTIUYIOOOIUYOIUYPOIUPOIUPOIUYOIUYOIUYOIUHOUHOHIOUHOIHOIUHOIUHIOUH_{j}(\tda_{ji}\UIOIUYOIUyHJGKHJLOIUYOIUOIUYOIYIOUYTIUYIOOOIUYOIUYPOIUPOIUPOIUYOIUYOIUYOIUHOUHOHIOUHOIHOIUHOIUHIOUH_{t}v_i)      -        \UIOIUYOIUyHJGKHJLOIUYOIUOIUYOIYIOUYTIUYIOOOIUYOIUYPOIUPOIUPOIUYOIUYOIUYOIUHOUHOHIOUHOIHOIUHOIUHIOUH_{j}         \biggl(           \sum_{m=1}^{2}            \tda_{ji} v_m a_{km} \UIOIUYOIUyHJGKHJLOIUYOIUOIUYOIYIOUYTIUYIOOOIUYOIUYPOIUPOIUPOIUYOIUYOIUYOIUHOUHOHIOUHOIHOIUHOIUHIOUH_{k} v_i         \biggr)      - \UIOIUYOIUyHJGKHJLOIUYOIUOIUYOIYIOUYTIUYIOOOIUYOIUYPOIUPOIUPOIUYOIUYOIUYOIUHOUHOHIOUHOIHOIUHOIUHIOUH_{j}(a_{ji} (v_3-\psi_t)\UIOIUYOIUyHJGKHJLOIUYOIUOIUYOIYIOUYTIUYIOOOIUYOIUYPOIUPOIUPOIUYOIUYOIUYOIUHOUHOHIOUHOIHOIUHOIUHIOUH_{3}v_i)     ,    \end{split}    \label{8ThswELzXU3X7Ebd1KdZ7v1rN3GiirRXGKWK099ovBM0FDJCvkopYNQ2aN94Z7k0UnUKamE3OjU8DFYFFokbSI2J9V9gVlM8ALWThDPnPu3EL7HPD2VDaZTggzcCCmbvc70qqPcC9mt60ogcrTiA3HEjwTK8ymKeuJMc4q6dVz200XnYUtLR9GYjPXvFOVr6W1zUK1WbPToaWJJuKnxBLnd0ftDEbMmj4loHYyhZyMjM91zQS4p7z8eKa9h0JrbacekcirexG0z4n379}
   \end{align} where we used $b_{ji} / \UIOIUYOIUyHJGKHJLOIUYOIUOIUYOIYIOUYTIUYIOOOIUYOIUYPOIUPOIUPOIUYOIUYOIUYOIUHOUHOHIOUHOIHOIUHOIUHIOUH_{3} \psi=a_{ji}$ in the last term. Recall that we use the summation convention over repeated indices unless indicated otherwise (as, for example, in \eqref{8ThswELzXU3X7Ebd1KdZ7v1rN3GiirRXGKWK099ovBM0FDJCvkopYNQ2aN94Z7k0UnUKamE3OjU8DFYFFokbSI2J9V9gVlM8ALWThDPnPu3EL7HPD2VDaZTggzcCCmbvc70qqPcC9mt60ogcrTiA3HEjwTK8ymKeuJMc4q6dVz200XnYUtLR9GYjPXvFOVr6W1zUK1WbPToaWJJuKnxBLnd0ftDEbMmj4loHYyhZyMjM91zQS4p7z8eKa9h0JrbacekcirexG0z4n379}). By $\UIOIUYOIUyHJGKHJLOIUYOIUOIUYOIYIOUYTIUYIOOOIUYOIUYPOIUPOIUPOIUYOIUYOIUYOIUHOUHOHIOUHOIHOIUHOIUHIOUH_{j}(\tda_{ji}\UIOIUYOIUyHJGKHJLOIUYOIUOIUYOIYIOUYTIUYIOOOIUYOIUYPOIUPOIUPOIUYOIUYOIUYOIUHOUHOHIOUHOIHOIUHOIUHIOUH_{t}v_i)=-\UIOIUYOIUyHJGKHJLOIUYOIUOIUYOIYIOUYTIUYIOOOIUYOIUYPOIUPOIUPOIUYOIUYOIUYOIUHOUHOHIOUHOIHOIUHOIUHIOUH_{j}(\UIOIUYOIUyHJGKHJLOIUYOIUOIUYOIYIOUYTIUYIOOOIUYOIUYPOIUPOIUPOIUYOIUYOIUYOIUHOUHOHIOUHOIHOIUHOIUHIOUH_{t}\tda_{ji} v_i)$, which follows from \eqref{8ThswELzXU3X7Ebd1KdZ7v1rN3GiirRXGKWK099ovBM0FDJCvkopYNQ2aN94Z7k0UnUKamE3OjU8DFYFFokbSI2J9V9gVlM8ALWThDPnPu3EL7HPD2VDaZTggzcCCmbvc70qqPcC9mt60ogcrTiA3HEjwTK8ymKeuJMc4q6dVz200XnYUtLR9GYjPXvFOVr6W1zUK1WbPToaWJJuKnxBLnd0ftDEbMmj4loHYyhZyMjM91zQS4p7z8eKa9h0JrbacekcirexG0z4n354}$_2$, we get   \begin{align}\thelt{ u2 YIOz eB6R wHuGcn fi o 47U PM5 tOj sz QBNq 7mco fCNjou 83 e mcY 81s vsI 2Y DS3S yloB Nx5FBV Bc 9 6HZ EOX UO3 W1 fIF5 jtEM W6KW7D 63 t H0F CVT Zup Pl A9aI oN2s f1Bw31 gg L FoD O0M x18 oo heEd KgZB Cqdqpa sa H Fhx BrE aRg Au I5dq mWWB MuHfv9 0y S PtG hFF dYJ JL f3Ap k5Ck Szr0Kb Vd i sQk uSA JEn DT YkjP AEMu a0VCtC Ff z 9R6 Vht 8Ua cB e7op AnGa 7AbLWj Hc s nAR GMb n7a 9n paMf lftM 7jvb20 0T W xUC 4lt e92 9j oZrA IuIa o1Zqdr oC L 55L T4Q 8kN yv sI}    \begin{split}    &\UIOIUYOIUyHJGKHJLOIUYOIUOIUYOIYIOUYTIUYIOOOIUYOIUYPOIUPOIUPOIUYOIUYOIUYOIUHOUHOHIOUHOIHOIUHOIUHIOUH_{j}(\tda_{ji} a_{ki}\UIOIUYOIUyHJGKHJLOIUYOIUOIUYOIYIOUYTIUYIOOOIUYOIUYPOIUPOIUPOIUYOIUYOIUYOIUHOUHOHIOUHOIHOIUHOIUHIOUH_{k}q)       =  \UIOIUYOIUyHJGKHJLOIUYOIUOIUYOIYIOUYTIUYIOOOIUYOIUYPOIUPOIUPOIUYOIUYOIUYOIUHOUHOHIOUHOIHOIUHOIUHIOUH_{j}(\UIOIUYOIUyHJGKHJLOIUYOIUOIUYOIYIOUYTIUYIOOOIUYOIUYPOIUPOIUPOIUYOIUYOIUYOIUHOUHOHIOUHOIHOIUHOIUHIOUH_{t}\tda_{ji} v_i)      -       \UIOIUYOIUyHJGKHJLOIUYOIUOIUYOIYIOUYTIUYIOOOIUYOIUYPOIUPOIUPOIUYOIUYOIUYOIUHOUHOHIOUHOIHOIUHOIUHIOUH_{j}         \biggl(           \sum_{m=1}^{2}
           \tda_{ji} v_m a_{km} \UIOIUYOIUyHJGKHJLOIUYOIUOIUYOIYIOUYTIUYIOOOIUYOIUYPOIUPOIUPOIUYOIUYOIUYOIUHOUHOHIOUHOIHOIUHOIUHIOUH_{k} v_i         \biggr)      - \UIOIUYOIUyHJGKHJLOIUYOIUOIUYOIYIOUYTIUYIOOOIUYOIUYPOIUPOIUPOIUYOIUYOIUYOIUHOUHOHIOUHOIHOIUHOIUHIOUH_{j}(a_{ji}            (v_3-\psi_t)\UIOIUYOIUyHJGKHJLOIUYOIUOIUYOIYIOUYTIUYIOOOIUYOIUYPOIUPOIUPOIUYOIUYOIUYOIUHOUHOHIOUHOIHOIUHOIUHIOUH_{3}v_i)     =\UIOIUYOIUyHJGKHJLOIUYOIUOIUYOIYIOUYTIUYIOOOIUYOIUYPOIUPOIUPOIUYOIUYOIUYOIUHOUHOHIOUHOIHOIUHOIUHIOUH_{j} f_j    \inon{in $\Omega$}    .   \end{split}    \label{8ThswELzXU3X7Ebd1KdZ7v1rN3GiirRXGKWK099ovBM0FDJCvkopYNQ2aN94Z7k0UnUKamE3OjU8DFYFFokbSI2J9V9gVlM8ALWThDPnPu3EL7HPD2VDaZTggzcCCmbvc70qqPcC9mt60ogcrTiA3HEjwTK8ymKeuJMc4q6dVz200XnYUtLR9GYjPXvFOVr6W1zUK1WbPToaWJJuKnxBLnd0ftDEbMmj4loHYyhZyMjM91zQS4p7z8eKa9h0JrbacekcirexG0z4n380}   \end{align} To obtain the boundary condition for the pressure on $\Gamma_0\cup\Gamma_1$, we test \eqref{8ThswELzXU3X7Ebd1KdZ7v1rN3GiirRXGKWK099ovBM0FDJCvkopYNQ2aN94Z7k0UnUKamE3OjU8DFYFFokbSI2J9V9gVlM8ALWThDPnPu3EL7HPD2VDaZTggzcCCmbvc70qqPcC9mt60ogcrTiA3HEjwTK8ymKeuJMc4q6dVz200XnYUtLR9GYjPXvFOVr6W1zUK1WbPToaWJJuKnxBLnd0ftDEbMmj4loHYyhZyMjM91zQS4p7z8eKa9h0JrbacekcirexG0z4n317} with $\tda_{3i}$ obtaining   \begin{align}\thelt{M x18 oo heEd KgZB Cqdqpa sa H Fhx BrE aRg Au I5dq mWWB MuHfv9 0y S PtG hFF dYJ JL f3Ap k5Ck Szr0Kb Vd i sQk uSA JEn DT YkjP AEMu a0VCtC Ff z 9R6 Vht 8Ua cB e7op AnGa 7AbLWj Hc s nAR GMb n7a 9n paMf lftM 7jvb20 0T W xUC 4lt e92 9j oZrA IuIa o1Zqdr oC L 55L T4Q 8kN yv sIzP x4i5 9lKTq2 JB B sZb QCE Ctw ar VBMT H1QR 6v5srW hR r D4r wf8 ik7 KH Egee rFVT ErONml Q5 L R8v XNZ LB3 9U DzRH ZbH9 fTBhRw kA 2 n3p g4I grH xd fEFu z6RE tDqPdw N7 H TVt cE1 8hW }    \begin{split}
    &   \tda_{3i}a_{ki}\UIOIUYOIUyHJGKHJLOIUYOIUOIUYOIYIOUYTIUYIOOOIUYOIUYPOIUPOIUPOIUYOIUYOIUYOIUHOUHOHIOUHOIHOIUHOIUHIOUH_{k}q     = - \tda_{3i}\UIOIUYOIUyHJGKHJLOIUYOIUOIUYOIYIOUYTIUYIOOOIUYOIUYPOIUPOIUPOIUYOIUYOIUYOIUHOUHOHIOUHOIHOIUHOIUHIOUH_{t}v_i        - \tda_{3i} v_1 a_{j1} \UIOIUYOIUyHJGKHJLOIUYOIUOIUYOIYIOUYTIUYIOOOIUYOIUYPOIUPOIUPOIUYOIUYOIUYOIUHOUHOHIOUHOIHOIUHOIUHIOUH_{j}v_i        - \tda_{3i} v_2 a_{j2} \UIOIUYOIUyHJGKHJLOIUYOIUOIUYOIYIOUYTIUYIOOOIUYOIUYPOIUPOIUPOIUYOIUYOIUYOIUHOUHOHIOUHOIHOIUHOIUHIOUH_{j}v_i        - a_{3i} (v_3-\psi_t) \UIOIUYOIUyHJGKHJLOIUYOIUOIUYOIYIOUYTIUYIOOOIUYOIUYPOIUPOIUPOIUYOIUYOIUYOIUHOUHOHIOUHOIHOIUHOIUHIOUH_{3} v_i      \inon{on $\Gamma_0\cup\Gamma_1$}         ,    \end{split}    \label{8ThswELzXU3X7Ebd1KdZ7v1rN3GiirRXGKWK099ovBM0FDJCvkopYNQ2aN94Z7k0UnUKamE3OjU8DFYFFokbSI2J9V9gVlM8ALWThDPnPu3EL7HPD2VDaZTggzcCCmbvc70qqPcC9mt60ogcrTiA3HEjwTK8ymKeuJMc4q6dVz200XnYUtLR9GYjPXvFOVr6W1zUK1WbPToaWJJuKnxBLnd0ftDEbMmj4loHYyhZyMjM91zQS4p7z8eKa9h0JrbacekcirexG0z4n381}   \end{align} where we again employed $b_{ji} / \UIOIUYOIUyHJGKHJLOIUYOIUOIUYOIYIOUYTIUYIOOOIUYOIUYPOIUPOIUPOIUYOIUYOIUYOIUHOUHOHIOUHOIHOIUHOIUHIOUH_{3} \psi=a_{ji}$ in the last term. On $\Gamma_1$, we use \eqref{8ThswELzXU3X7Ebd1KdZ7v1rN3GiirRXGKWK099ovBM0FDJCvkopYNQ2aN94Z7k0UnUKamE3OjU8DFYFFokbSI2J9V9gVlM8ALWThDPnPu3EL7HPD2VDaZTggzcCCmbvc70qqPcC9mt60ogcrTiA3HEjwTK8ymKeuJMc4q6dVz200XnYUtLR9GYjPXvFOVr6W1zUK1WbPToaWJJuKnxBLnd0ftDEbMmj4loHYyhZyMjM91zQS4p7z8eKa9h0JrbacekcirexG0z4n321} and \eqref{8ThswELzXU3X7Ebd1KdZ7v1rN3GiirRXGKWK099ovBM0FDJCvkopYNQ2aN94Z7k0UnUKamE3OjU8DFYFFokbSI2J9V9gVlM8ALWThDPnPu3EL7HPD2VDaZTggzcCCmbvc70qqPcC9mt60ogcrTiA3HEjwTK8ymKeuJMc4q6dVz200XnYUtLR9GYjPXvFOVr6W1zUK1WbPToaWJJuKnxBLnd0ftDEbMmj4loHYyhZyMjM91zQS4p7z8eKa9h0JrbacekcirexG0z4n322} and rewrite the first term on the right-hand side of \eqref{8ThswELzXU3X7Ebd1KdZ7v1rN3GiirRXGKWK099ovBM0FDJCvkopYNQ2aN94Z7k0UnUKamE3OjU8DFYFFokbSI2J9V9gVlM8ALWThDPnPu3EL7HPD2VDaZTggzcCCmbvc70qqPcC9mt60ogcrTiA3HEjwTK8ymKeuJMc4q6dVz200XnYUtLR9GYjPXvFOVr6W1zUK1WbPToaWJJuKnxBLnd0ftDEbMmj4loHYyhZyMjM91zQS4p7z8eKa9h0JrbacekcirexG0z4n381} by using \eqref{8ThswELzXU3X7Ebd1KdZ7v1rN3GiirRXGKWK099ovBM0FDJCvkopYNQ2aN94Z7k0UnUKamE3OjU8DFYFFokbSI2J9V9gVlM8ALWThDPnPu3EL7HPD2VDaZTggzcCCmbvc70qqPcC9mt60ogcrTiA3HEjwTK8ymKeuJMc4q6dVz200XnYUtLR9GYjPXvFOVr6W1zUK1WbPToaWJJuKnxBLnd0ftDEbMmj4loHYyhZyMjM91zQS4p7z8eKa9h0JrbacekcirexG0z4n321} as   \begin{align}\thelt{AR GMb n7a 9n paMf lftM 7jvb20 0T W xUC 4lt e92 9j oZrA IuIa o1Zqdr oC L 55L T4Q 8kN yv sIzP x4i5 9lKTq2 JB B sZb QCE Ctw ar VBMT H1QR 6v5srW hR r D4r wf8 ik7 KH Egee rFVT ErONml Q5 L R8v XNZ LB3 9U DzRH ZbH9 fTBhRw kA 2 n3p g4I grH xd fEFu z6RE tDqPdw N7 H TVt cE1 8hW 6y n4Gn nCE3 MEQ51i Ps G Z2G Lbt CSt hu zvPF eE28 MM23ug TC d j7z 7Av TLa 1A GLiJ 5JwW CiDPyM qa 8 tAK QZ9 cfP 42 kuUz V3h6 GsGFoW m9 h cfj 51d GtW yZ zC5D aVt2 Wi5IIs gD B 0cX LM1}   \begin{split}
   -\tda_{3i}\UIOIUYOIUyHJGKHJLOIUYOIUOIUYOIYIOUYTIUYIOOOIUYOIUYPOIUPOIUPOIUYOIUYOIUYOIUHOUHOHIOUHOIHOIUHOIUHIOUH_{t} v_i      = - \UIOIUYOIUyHJGKHJLOIUYOIUOIUYOIYIOUYTIUYIOOOIUYOIUYPOIUPOIUPOIUYOIUYOIUYOIUHOUHOHIOUHOIHOIUHOIUHIOUH_{t}(\tda_{3i} v_i) + \UIOIUYOIUyHJGKHJLOIUYOIUOIUYOIYIOUYTIUYIOOOIUYOIUYPOIUPOIUPOIUYOIUYOIUYOIUHOUHOHIOUHOIHOIUHOIUHIOUH_{t}\tda_{3i} v_i     = - w_{tt} + \UIOIUYOIUyHJGKHJLOIUYOIUOIUYOIYIOUYTIUYIOOOIUYOIUYPOIUPOIUPOIUYOIUYOIUYOIUHOUHOHIOUHOIHOIUHOIUHIOUH_{t}\tda_{3i} v_i     = \Delta_2^2 w     -   \nu   \Delta_2 w_{t}     - q + \UIOIUYOIUyHJGKHJLOIUYOIUOIUYOIYIOUYTIUYIOOOIUYOIUYPOIUPOIUPOIUYOIUYOIUYOIUHOUHOHIOUHOIHOIUHOIUHIOUH_{t}\tda_{3i} v_i   .   \end{split}    \llabel{8ThswELzXU3X7Ebd1KdZ7v1rN3GiirRXGKWK099ovBM0FDJCvkopYNQ2aN94Z7k0UnUKamE3OjU8DFYFFokbSI2J9V9gVlM8ALWThDPnPu3EL7HPD2VDaZTggzcCCmbvc70qqPcC9mt60ogcrTiA3HEjwTK8ymKeuJMc4q6dVz200XnYUtLR9GYjPXvFOVr6W1zUK1WbPToaWJJuKnxBLnd0ftDEbMmj4loHYyhZyMjM91zQS4p7z8eKa9h0JrbacekcirexG0z4n382}   \end{align} Thus, on $\Gamma_1$, the boundary condition \eqref{8ThswELzXU3X7Ebd1KdZ7v1rN3GiirRXGKWK099ovBM0FDJCvkopYNQ2aN94Z7k0UnUKamE3OjU8DFYFFokbSI2J9V9gVlM8ALWThDPnPu3EL7HPD2VDaZTggzcCCmbvc70qqPcC9mt60ogcrTiA3HEjwTK8ymKeuJMc4q6dVz200XnYUtLR9GYjPXvFOVr6W1zUK1WbPToaWJJuKnxBLnd0ftDEbMmj4loHYyhZyMjM91zQS4p7z8eKa9h0JrbacekcirexG0z4n381} becomes a Robin boundary condition   \begin{align}\thelt{5 L R8v XNZ LB3 9U DzRH ZbH9 fTBhRw kA 2 n3p g4I grH xd fEFu z6RE tDqPdw N7 H TVt cE1 8hW 6y n4Gn nCE3 MEQ51i Ps G Z2G Lbt CSt hu zvPF eE28 MM23ug TC d j7z 7Av TLa 1A GLiJ 5JwW CiDPyM qa 8 tAK QZ9 cfP 42 kuUz V3h6 GsGFoW m9 h cfj 51d GtW yZ zC5D aVt2 Wi5IIs gD B 0cX LM1 FtE xE RIZI Z0Rt QUtWcU Cm F mSj xvW pZc gl dopk 0D7a EouRku Id O ZdW FOR uqb PY 6HkW OVi7 FuVMLW nx p SaN omk rC5 uI ZK9C jpJy UIeO6k gb 7 tr2 SCY x5F 11 S6Xq OImr s7vv0u vA g rb}    \begin{split}
   \tda_{3i}a_{ki}\UIOIUYOIUyHJGKHJLOIUYOIUOIUYOIYIOUYTIUYIOOOIUYOIUYPOIUPOIUPOIUYOIUYOIUYOIUHOUHOHIOUHOIHOIUHOIUHIOUH_{k}q    + q    =\Delta_2^2 w     -  \nu   \Delta_2   w_{t}    + \UIOIUYOIUyHJGKHJLOIUYOIUOIUYOIYIOUYTIUYIOOOIUYOIUYPOIUPOIUPOIUYOIUYOIUYOIUHOUHOHIOUHOIHOIUHOIUHIOUH_{t}\tda_{3i}v_i        - \tda_{3i} v_1 a_{j1} \UIOIUYOIUyHJGKHJLOIUYOIUOIUYOIYIOUYTIUYIOOOIUYOIUYPOIUPOIUPOIUYOIUYOIUYOIUHOUHOHIOUHOIHOIUHOIUHIOUH_{j}v_i        - \tda_{3i} v_2 a_{j2} \UIOIUYOIUyHJGKHJLOIUYOIUOIUYOIYIOUYTIUYIOOOIUYOIUYPOIUPOIUPOIUYOIUYOIUYOIUHOUHOHIOUHOIHOIUHOIUHIOUH_{j}v_i        - a_{3i} (v_3-\psi_t) \UIOIUYOIUyHJGKHJLOIUYOIUOIUYOIYIOUYTIUYIOOOIUYOIUYPOIUPOIUPOIUYOIUYOIUYOIUHOUHOHIOUHOIHOIUHOIUHIOUH_{3} v_i        = g_1     \inon{on $\Gamma_1$}    .    \end{split}    \label{8ThswELzXU3X7Ebd1KdZ7v1rN3GiirRXGKWK099ovBM0FDJCvkopYNQ2aN94Z7k0UnUKamE3OjU8DFYFFokbSI2J9V9gVlM8ALWThDPnPu3EL7HPD2VDaZTggzcCCmbvc70qqPcC9mt60ogcrTiA3HEjwTK8ymKeuJMc4q6dVz200XnYUtLR9GYjPXvFOVr6W1zUK1WbPToaWJJuKnxBLnd0ftDEbMmj4loHYyhZyMjM91zQS4p7z8eKa9h0JrbacekcirexG0z4n383}   \end{align}
On $\Gamma_0$, we have $a=I$, and then the first term on the right hand side of \eqref{8ThswELzXU3X7Ebd1KdZ7v1rN3GiirRXGKWK099ovBM0FDJCvkopYNQ2aN94Z7k0UnUKamE3OjU8DFYFFokbSI2J9V9gVlM8ALWThDPnPu3EL7HPD2VDaZTggzcCCmbvc70qqPcC9mt60ogcrTiA3HEjwTK8ymKeuJMc4q6dVz200XnYUtLR9GYjPXvFOVr6W1zUK1WbPToaWJJuKnxBLnd0ftDEbMmj4loHYyhZyMjM91zQS4p7z8eKa9h0JrbacekcirexG0z4n381} vanishes, and we get   \begin{align}\thelt{PyM qa 8 tAK QZ9 cfP 42 kuUz V3h6 GsGFoW m9 h cfj 51d GtW yZ zC5D aVt2 Wi5IIs gD B 0cX LM1 FtE xE RIZI Z0Rt QUtWcU Cm F mSj xvW pZc gl dopk 0D7a EouRku Id O ZdW FOR uqb PY 6HkW OVi7 FuVMLW nx p SaN omk rC5 uI ZK9C jpJy UIeO6k gb 7 tr2 SCY x5F 11 S6Xq OImr s7vv0u vA g rb9 hGP Fnk RM j92H gczJ 660kHb BB l QSI OY7 FcX 0c uyDl LjbU 3F6vZk Gb a KaM ufj uxp n4 Mi45 7MoL NW3eIm cj 6 OOS e59 afA hg lt9S BOiF cYQipj 5u N 19N KZ5 Czc 23 1wxG x1ut gJB4ue Mx}    \begin{split}    \tda_{3i}a_{ki}\UIOIUYOIUyHJGKHJLOIUYOIUOIUYOIYIOUYTIUYIOOOIUYOIUYPOIUPOIUPOIUYOIUYOIUYOIUHOUHOHIOUHOIHOIUHOIUHIOUH_{k}q    =        - \tda_{3i} v_1 a_{j1} \UIOIUYOIUyHJGKHJLOIUYOIUOIUYOIYIOUYTIUYIOOOIUYOIUYPOIUPOIUPOIUYOIUYOIUYOIUHOUHOHIOUHOIHOIUHOIUHIOUH_{j}v_i        - \tda_{3i} v_2 a_{j2} \UIOIUYOIUyHJGKHJLOIUYOIUOIUYOIYIOUYTIUYIOOOIUYOIUYPOIUPOIUPOIUYOIUYOIUYOIUHOUHOHIOUHOIHOIUHOIUHIOUH_{j}v_i        - a_{3i} (v_3-\psi_t) \UIOIUYOIUyHJGKHJLOIUYOIUOIUYOIYIOUYTIUYIOOOIUYOIUYPOIUPOIUPOIUYOIUYOIUYOIUHOUHOHIOUHOIHOIUHOIUHIOUH_{3} v_i        = g_0     \inon{on $\Gamma_0$}    .    \end{split}    \label{8ThswELzXU3X7Ebd1KdZ7v1rN3GiirRXGKWK099ovBM0FDJCvkopYNQ2aN94Z7k0UnUKamE3OjU8DFYFFokbSI2J9V9gVlM8ALWThDPnPu3EL7HPD2VDaZTggzcCCmbvc70qqPcC9mt60ogcrTiA3HEjwTK8ymKeuJMc4q6dVz200XnYUtLR9GYjPXvFOVr6W1zUK1WbPToaWJJuKnxBLnd0ftDEbMmj4loHYyhZyMjM91zQS4p7z8eKa9h0JrbacekcirexG0z4n384}   \end{align}
The boundary value problem for the pressure can be simplified, as we show in Remark~\ref{R01} below. The form of the equations above suffices for the purpose of obtaining the a~priori control, but it is adequate for the construction. \par To estimate the pressure, we need the following statement on the elliptic regularity for the Robin/Neumann problem. \par \cole \begin{Lemma} \label{L08} Assume that  $d\in W^{1,\infty}(\Omega)$. Let $1\leq l\leq 2$, and suppose that $\qqq$ is an $H^{l}$ solution of
  \begin{align}\thelt{7 FuVMLW nx p SaN omk rC5 uI ZK9C jpJy UIeO6k gb 7 tr2 SCY x5F 11 S6Xq OImr s7vv0u vA g rb9 hGP Fnk RM j92H gczJ 660kHb BB l QSI OY7 FcX 0c uyDl LjbU 3F6vZk Gb a KaM ufj uxp n4 Mi45 7MoL NW3eIm cj 6 OOS e59 afA hg lt9S BOiF cYQipj 5u N 19N KZ5 Czc 23 1wxG x1ut gJB4ue Mx x 5lr s8g VbZ s1 NEfI 02Rb pkfEOZ E4 e seo 9te NRU Ai nujf eJYa Ehns0Y 6X R UF1 PCf 5eE AL 9DL6 a2vm BAU5Au DD t yQN 5YL LWw PW GjMt 4hu4 FIoLCZ Lx e BVY 5lZ DCD 5Y yBwO IJeH VQsK}    \begin{split}    &\UIOIUYOIUyHJGKHJLOIUYOIUOIUYOIYIOUYTIUYIOOOIUYOIUYPOIUPOIUPOIUYOIUYOIUYOIUHOUHOHIOUHOIHOIUHOIUHIOUH_{i}(d_{ij}\UIOIUYOIUyHJGKHJLOIUYOIUOIUYOIYIOUYTIUYIOOOIUYOIUYPOIUPOIUPOIUYOIUYOIUYOIUHOUHOHIOUHOIHOIUHOIUHIOUH_{j}\qqq) = \div f    \inon{in $\Omega$}    ,    \\&    d_{mk}\UIOIUYOIUyHJGKHJLOIUYOIUOIUYOIYIOUYTIUYIOOOIUYOIUYPOIUPOIUPOIUYOIUYOIUYOIUHOUHOHIOUHOIHOIUHOIUHIOUH_{k}\qqq N_{m} + u=g_1    \inon{on $\Gamma_1$}    ,    \\&    d_{mk}\UIOIUYOIUyHJGKHJLOIUYOIUOIUYOIYIOUYTIUYIOOOIUYOIUYPOIUPOIUPOIUYOIUYOIUYOIUHOUHOHIOUHOIHOIUHOIUHIOUH_{k}\qqq N_{m}=g_0    \inon{on $\Gamma_0$}    .    \end{split}
   \label{8ThswELzXU3X7Ebd1KdZ7v1rN3GiirRXGKWK099ovBM0FDJCvkopYNQ2aN94Z7k0UnUKamE3OjU8DFYFFokbSI2J9V9gVlM8ALWThDPnPu3EL7HPD2VDaZTggzcCCmbvc70qqPcC9mt60ogcrTiA3HEjwTK8ymKeuJMc4q6dVz200XnYUtLR9GYjPXvFOVr6W1zUK1WbPToaWJJuKnxBLnd0ftDEbMmj4loHYyhZyMjM91zQS4p7z8eKa9h0JrbacekcirexG0z4n385}   \end{align} If   \begin{equation}    \Vert d-I\Vert_{L^\infty}    \leq \epsilon_0    ,    \label{8ThswELzXU3X7Ebd1KdZ7v1rN3GiirRXGKWK099ovBM0FDJCvkopYNQ2aN94Z7k0UnUKamE3OjU8DFYFFokbSI2J9V9gVlM8ALWThDPnPu3EL7HPD2VDaZTggzcCCmbvc70qqPcC9mt60ogcrTiA3HEjwTK8ymKeuJMc4q6dVz200XnYUtLR9GYjPXvFOVr6W1zUK1WbPToaWJJuKnxBLnd0ftDEbMmj4loHYyhZyMjM91zQS4p7z8eKa9h0JrbacekcirexG0z4n386}   \end{equation} where $\epsilon_0>0$  is sufficiently small, then   \begin{equation}    \Vert u\Vert_{H^{l}}    \dlkjfhlaskdhjflkasdjhflkasjhdflkasjhdflkasjhdfls    \Vert f\Vert_{H^{l-1}}
   +  \Vert g_1\Vert_{H^{l-3/2}(\Gamma_1)}    +  \Vert g_0\Vert_{H^{l-3/2}(\Gamma_0)}    .    \label{8ThswELzXU3X7Ebd1KdZ7v1rN3GiirRXGKWK099ovBM0FDJCvkopYNQ2aN94Z7k0UnUKamE3OjU8DFYFFokbSI2J9V9gVlM8ALWThDPnPu3EL7HPD2VDaZTggzcCCmbvc70qqPcC9mt60ogcrTiA3HEjwTK8ymKeuJMc4q6dVz200XnYUtLR9GYjPXvFOVr6W1zUK1WbPToaWJJuKnxBLnd0ftDEbMmj4loHYyhZyMjM91zQS4p7z8eKa9h0JrbacekcirexG0z4n387}   \end{equation} \end{Lemma}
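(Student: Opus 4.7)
My plan is to treat the equation as a small $W^{1,\infty}$ perturbation of the flat Laplacian with mixed Robin--Neumann boundary conditions on the slab $\Omega=\mathbb{T}^{2}\times[0,1]$, and then to bootstrap from the $H^{1}$ coercivity estimate to $H^{2}$, interpolating for the intermediate values $l\in(1,2)$.

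First, I would fix the weak formulation. Testing \eqref{8ThswELzXU3X7Ebd1KdZ7v1rN3GiirRXGKWK099ovBM0FDJCvkopYNQ2aN94Z7k0UnUKamE3OjU8DFYFFokbSI2J9V9gVlM8ALWThDPnPu3EL7HPD2VDaZTggzcCCmbvc70qqPcC9mt60ogcrTiA3HEjwTK8ymKeuJMc4q6dVz200XnYUtLR9GYjPXvFOVr6W1zUK1WbPToaWJJuKnxBLnd0ftDEbMmj4loHYyhZyMjM91zQS4p7z8eKa9h0JrbacekcirexG0z4n385} against $\phi\in H^{1}(\Omega)$, integrating by parts and using the Robin and Neumann traces, I obtain the bilinear identity
\begin{equation*}
B(u,\phi):=\int_{\Omega} d_{ij}\partial_{j} u\,\partial_{i}\phi+\int_{\Gamma_{1}} u\,\phi
=\int_{\Omega} f\cdot\nabla\phi+\int_{\Gamma_{1}} \tilde g_{1}\phi+\int_{\Gamma_{0}} \tilde g_{0}\phi,
\end{equation*}
where $\tilde g_{j}$ absorbs the trace $f\cdot N$ in the standard way. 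Writing $d=I+(d-I)$ and using \eqref{8ThswELzXU3X7Ebd1KdZ7v1rN3GiirRXGKWK099ovBM0FDJCvkopYNQ2aN94Z7k0UnUKamE3OjU8DFYFFokbSI2J9V9gVlM8ALWThDPnPu3EL7HPD2VDaZTggzcCCmbvc70qqPcC9mt60ogcrTiA3HEjwTK8ymKeuJMc4q6dVz200XnYUtLR9GYjPXvFOVr6W1zUK1WbPToaWJJuKnxBLnd0ftDEbMmj4loHYyhZyMjM91zQS4p7z8eKa9h0JrbacekcirexG0z4n386} with $\epsilon_{0}$ small, $B$ is coercive on $H^{1}(\Omega)$: the Robin boundary term on $\Gamma_{1}$, combined with the Poincar\'e inequality (valid on the slab since we control a nontrivial portion of the boundary), provides a full $H^{1}$ lower bound
\begin{equation*}
B(u,u)\ge c\bigl(\Vert\nabla u\Vert_{L^{2}}^{2}+\Vert u\Vert_{L^{2}(\Gamma_{1})}^{2}\bigr)\ge c'\Vert u\Vert_{H^{1}}^{2}.
\end{equation*}
Lax--Milgram then yields the $l=1$ estimate.

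Next, for the $l=2$ bound, I would rewrite the equation as $\Delta u=\operatorname{div} F$, where $F=f-(d-I)\nabla u$, and the boundary conditions as $\partial_{N}u+u=g_{1}+r_{1}$ on $\Gamma_{1}$ and $\partial_{N}u=g_{0}+r_{0}$ on $\Gamma_{0}$, with $r_{j}=-(d-I)_{mk}\partial_{k}u\, N_{m}|_{\Gamma_{j}}$. Since the boundaries $\Gamma_{0}$ and $\Gamma_{1}$ of $\Omega=\mathbb{T}^{2}\times[0,1]$ are flat and disjoint, standard elliptic regularity for the Laplacian with mixed Robin/Neumann data on the slab (which can be proved by difference quotients in the tangential variables $x_{1},x_{2}$ together with the equation to recover $\partial_{3}^{2}u$) gives
\begin{equation*}
\Vert u\Vert_{H^{2}}\lesssim \Vert F\Vert_{H^{1}}+\Vert g_{1}+r_{1}\Vert_{H^{1/2}(\Gamma_{1})}+\Vert g_{0}+r_{0}\Vert_{H^{1/2}(\Gamma_{0})}.
\end{equation*}
The perturbative terms are controlled by
\begin{equation*}
\Vert(d-I)\nabla u\Vert_{H^{1}}\le \Vert d-I\Vert_{L^{\infty}}\Vert u\Vert_{H^{2}}+\Vert\nabla d\Vert_{L^{\infty}}\Vert u\Vert_{H^{1}},
\end{equation*}
and the same for the boundary traces via the trace theorem. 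Choosing $\epsilon_{0}$ small enough to absorb the $\epsilon_{0}\Vert u\Vert_{H^{2}}$ term into the left-hand side, and using the $H^{1}$ bound from the previous step to handle the $\Vert\nabla d\Vert_{L^{\infty}}\Vert u\Vert_{H^{1}}$ contribution, I obtain \eqref{8ThswELzXU3X7Ebd1KdZ7v1rN3GiirRXGKWK099ovBM0FDJCvkopYNQ2aN94Z7k0UnUKamE3OjU8DFYFFokbSI2J9V9gVlM8ALWThDPnPu3EL7HPD2VDaZTggzcCCmbvc70qqPcC9mt60ogcrTiA3HEjwTK8ymKeuJMc4q6dVz200XnYUtLR9GYjPXvFOVr6W1zUK1WbPToaWJJuKnxBLnd0ftDEbMmj4loHYyhZyMjM91zQS4p7z8eKa9h0JrbacekcirexG0z4n387} for $l=2$. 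The intermediate values $l\in(1,2)$ follow by standard operator interpolation applied to the solution map.

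The main technical point I expect to be slightly delicate is the coupling of the pure Neumann condition on $\Gamma_{0}$ with the Robin condition on $\Gamma_{1}$: one has to verify that the Robin term at $\Gamma_{1}$ suffices to pin down the additive constant (so no compatibility condition on $(g_{0},g_{1},f)$ is needed), and that the $H^{2}$ regularity near $\Gamma_{0}$ is obtained correctly despite only Neumann data there. Both issues are handled by the slab geometry: tangential difference quotients are global since $\Gamma_{0}$ and $\Gamma_{1}$ are flat tori, and the Robin term at $\Gamma_{1}$ provides the required coercivity throughout $\Omega$.
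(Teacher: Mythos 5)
Your proposal follows essentially the same route as the paper: an energy estimate (testing with $u$ and using the Robin term on $\Gamma_1$ plus the smallness of $d-I$) for $l=1$, the classical constant-coefficient $H^2$ estimate combined with a perturbation argument absorbing the $\epsilon_0\Vert u\Vert_{H^2}$ term for $l=2$, and interpolation for intermediate $l$. The only difference is that you spell out the flat-slab difference-quotient argument behind the "classical" $l=2$ estimate, which the paper simply cites; your treatment is correct.
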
 \colb \par \begin{proof}[Proof of Lemma~\ref{L08}] By interpolation, it is sufficient to establish the inequality \eqref{8ThswELzXU3X7Ebd1KdZ7v1rN3GiirRXGKWK099ovBM0FDJCvkopYNQ2aN94Z7k0UnUKamE3OjU8DFYFFokbSI2J9V9gVlM8ALWThDPnPu3EL7HPD2VDaZTggzcCCmbvc70qqPcC9mt60ogcrTiA3HEjwTK8ymKeuJMc4q6dVz200XnYUtLR9GYjPXvFOVr6W1zUK1WbPToaWJJuKnxBLnd0ftDEbMmj4loHYyhZyMjM91zQS4p7z8eKa9h0JrbacekcirexG0z4n387} for $l=1$ and $l=2$. First let $l=1$. Testing \eqref{8ThswELzXU3X7Ebd1KdZ7v1rN3GiirRXGKWK099ovBM0FDJCvkopYNQ2aN94Z7k0UnUKamE3OjU8DFYFFokbSI2J9V9gVlM8ALWThDPnPu3EL7HPD2VDaZTggzcCCmbvc70qqPcC9mt60ogcrTiA3HEjwTK8ymKeuJMc4q6dVz200XnYUtLR9GYjPXvFOVr6W1zUK1WbPToaWJJuKnxBLnd0ftDEbMmj4loHYyhZyMjM91zQS4p7z8eKa9h0JrbacekcirexG0z4n385}$_1$ with $-u$ and integrating by parts, we obtain   \begin{align}\thelt{5 7MoL NW3eIm cj 6 OOS e59 afA hg lt9S BOiF cYQipj 5u N 19N KZ5 Czc 23 1wxG x1ut gJB4ue Mx x 5lr s8g VbZ s1 NEfI 02Rb pkfEOZ E4 e seo 9te NRU Ai nujf eJYa Ehns0Y 6X R UF1 PCf 5eE AL 9DL6 a2vm BAU5Au DD t yQN 5YL LWw PW GjMt 4hu4 FIoLCZ Lx e BVY 5lZ DCD 5Y yBwO IJeH VQsKob Yd q fCX 1to mCb Ej 5m1p Nx9p nLn5A3 g7 U v77 7YU gBR lN rTyj shaq BZXeAF tj y FlW jfc 57t 2f abx5 Ns4d clCMJc Tl q kfq uFD iSd DP eX6m YLQz JzUmH0 43 M lgF edN mXQ Pj Aoba 07MY}
  \begin{split}     \OIUYJHUGFAJKLDHFKJLSDHFLKSDJFHLKSDJHFLKSDJHFLKDJFHLLDKHFLKSDHJFALKJHLJLHGLKHHLKJHLKGKHGJKHGKJHLKHJLKJH      d_{ij}\UIOIUYOIUyHJGKHJLOIUYOIUOIUYOIYIOUYTIUYIOOOIUYOIUYPOIUPOIUPOIUYOIUYOIUYOIUHOUHOHIOUHOIHOIUHOIUHIOUH_{i}u \UIOIUYOIUyHJGKHJLOIUYOIUOIUYOIYIOUYTIUYIOOOIUYOIUYPOIUPOIUPOIUYOIUYOIUYOIUHOUHOHIOUHOIHOIUHOIUHIOUH_{j} u      + \OIUYJHUGFAJKLDHFKJLSDHFLKSDJFHLKSDJHFLKSDJHFLKDJFHLLDKHFLKSDHJFALKJHLJLHGLKHHLKJHLKGKHGJKHGKJHLKHJLKJH_{\Gamma_1}  u^2      =      - \OIUYJHUGFAJKLDHFKJLSDHFLKSDJFHLKSDJHFLKSDJHFLKDJFHLLDKHFLKSDHJFALKJHLJLHGLKHHLKJHLKGKHGJKHGKJHLKHJLKJH u\div f      + \OIUYJHUGFAJKLDHFKJLSDHFLKSDJFHLKSDJHFLKSDJHFLKDJFHLLDKHFLKSDHJFALKJHLJLHGLKHHLKJHLKGKHGJKHGKJHLKHJLKJH_{\Gamma_0} g_0 u       + \OIUYJHUGFAJKLDHFKJLSDHFLKSDJFHLKSDJHFLKSDJHFLKDJFHLLDKHFLKSDHJFALKJHLJLHGLKHHLKJHLKGKHGJKHGKJHLKHJLKJH_{\Gamma_1} g_1 u     .   \end{split}   \llabel{8ThswELzXU3X7Ebd1KdZ7v1rN3GiirRXGKWK099ovBM0FDJCvkopYNQ2aN94Z7k0UnUKamE3OjU8DFYFFokbSI2J9V9gVlM8ALWThDPnPu3EL7HPD2VDaZTggzcCCmbvc70qqPcC9mt60ogcrTiA3HEjwTK8ymKeuJMc4q6dVz200XnYUtLR9GYjPXvFOVr6W1zUK1WbPToaWJJuKnxBLnd0ftDEbMmj4loHYyhZyMjM91zQS4p7z8eKa9h0JrbacekcirexG0z4n388}   \end{align} Applying the $H^{-1/2}$-$H^{1/2}$ duality on the boundary, we obtain the result for $l=1$. The inequality \eqref{8ThswELzXU3X7Ebd1KdZ7v1rN3GiirRXGKWK099ovBM0FDJCvkopYNQ2aN94Z7k0UnUKamE3OjU8DFYFFokbSI2J9V9gVlM8ALWThDPnPu3EL7HPD2VDaZTggzcCCmbvc70qqPcC9mt60ogcrTiA3HEjwTK8ymKeuJMc4q6dVz200XnYUtLR9GYjPXvFOVr6W1zUK1WbPToaWJJuKnxBLnd0ftDEbMmj4loHYyhZyMjM91zQS4p7z8eKa9h0JrbacekcirexG0z4n387} 
for $l=2$ is classical for $d_{mk}=\delta_{mk}$, and then we simply use the perturbation argument and~\eqref{8ThswELzXU3X7Ebd1KdZ7v1rN3GiirRXGKWK099ovBM0FDJCvkopYNQ2aN94Z7k0UnUKamE3OjU8DFYFFokbSI2J9V9gVlM8ALWThDPnPu3EL7HPD2VDaZTggzcCCmbvc70qqPcC9mt60ogcrTiA3HEjwTK8ymKeuJMc4q6dVz200XnYUtLR9GYjPXvFOVr6W1zUK1WbPToaWJJuKnxBLnd0ftDEbMmj4loHYyhZyMjM91zQS4p7z8eKa9h0JrbacekcirexG0z4n386}. \end{proof} \par \begin{proof}[Proof of Lemma~\ref{L03}] We apply the elliptic estimate \eqref{8ThswELzXU3X7Ebd1KdZ7v1rN3GiirRXGKWK099ovBM0FDJCvkopYNQ2aN94Z7k0UnUKamE3OjU8DFYFFokbSI2J9V9gVlM8ALWThDPnPu3EL7HPD2VDaZTggzcCCmbvc70qqPcC9mt60ogcrTiA3HEjwTK8ymKeuJMc4q6dVz200XnYUtLR9GYjPXvFOVr6W1zUK1WbPToaWJJuKnxBLnd0ftDEbMmj4loHYyhZyMjM91zQS4p7z8eKa9h0JrbacekcirexG0z4n387} in $H^{1.5+\delta}$ for the equation \eqref{8ThswELzXU3X7Ebd1KdZ7v1rN3GiirRXGKWK099ovBM0FDJCvkopYNQ2aN94Z7k0UnUKamE3OjU8DFYFFokbSI2J9V9gVlM8ALWThDPnPu3EL7HPD2VDaZTggzcCCmbvc70qqPcC9mt60ogcrTiA3HEjwTK8ymKeuJMc4q6dVz200XnYUtLR9GYjPXvFOVr6W1zUK1WbPToaWJJuKnxBLnd0ftDEbMmj4loHYyhZyMjM91zQS4p7z8eKa9h0JrbacekcirexG0z4n380} with the boundary conditions \eqref{8ThswELzXU3X7Ebd1KdZ7v1rN3GiirRXGKWK099ovBM0FDJCvkopYNQ2aN94Z7k0UnUKamE3OjU8DFYFFokbSI2J9V9gVlM8ALWThDPnPu3EL7HPD2VDaZTggzcCCmbvc70qqPcC9mt60ogcrTiA3HEjwTK8ymKeuJMc4q6dVz200XnYUtLR9GYjPXvFOVr6W1zUK1WbPToaWJJuKnxBLnd0ftDEbMmj4loHYyhZyMjM91zQS4p7z8eKa9h0JrbacekcirexG0z4n383}--\eqref{8ThswELzXU3X7Ebd1KdZ7v1rN3GiirRXGKWK099ovBM0FDJCvkopYNQ2aN94Z7k0UnUKamE3OjU8DFYFFokbSI2J9V9gVlM8ALWThDPnPu3EL7HPD2VDaZTggzcCCmbvc70qqPcC9mt60ogcrTiA3HEjwTK8ymKeuJMc4q6dVz200XnYUtLR9GYjPXvFOVr6W1zUK1WbPToaWJJuKnxBLnd0ftDEbMmj4loHYyhZyMjM91zQS4p7z8eKa9h0JrbacekcirexG0z4n384}, leading to   \begin{align}\thelt{L 9DL6 a2vm BAU5Au DD t yQN 5YL LWw PW GjMt 4hu4 FIoLCZ Lx e BVY 5lZ DCD 5Y yBwO IJeH VQsKob Yd q fCX 1to mCb Ej 5m1p Nx9p nLn5A3 g7 U v77 7YU gBR lN rTyj shaq BZXeAF tj y FlW jfc 57t 2f abx5 Ns4d clCMJc Tl q kfq uFD iSd DP eX6m YLQz JzUmH0 43 M lgF edN mXQ Pj Aoba 07MY wBaC4C nj I 4dw KCZ PO9 wx 3en8 AoqX 7JjN8K lq j Q5c bMS dhR Fs tQ8Q r2ve 2HT0uO 5W j TAi iIW n1C Wr U1BH BMvJ 3ywmAd qN D LY8 lbx XMx 0D Dvco 3RL9 Qz5eqy wV Y qEN nO8 MH0 PY zeVN}    \begin{split}    \Vert q\Vert_{H^{1.5+\delta}}    \dlkjfhlaskdhjflkasdjhflkasjhdflkasjhdflkasjhdfls    \Vert f\Vert_{H^{0.5+\delta}}    + \Vert g_0\Vert_{H^{\delta}(\Gamma_0)}
   + \Vert g_1\Vert_{H^{\delta}(\Gamma_1)}    .    \end{split}    \label{8ThswELzXU3X7Ebd1KdZ7v1rN3GiirRXGKWK099ovBM0FDJCvkopYNQ2aN94Z7k0UnUKamE3OjU8DFYFFokbSI2J9V9gVlM8ALWThDPnPu3EL7HPD2VDaZTggzcCCmbvc70qqPcC9mt60ogcrTiA3HEjwTK8ymKeuJMc4q6dVz200XnYUtLR9GYjPXvFOVr6W1zUK1WbPToaWJJuKnxBLnd0ftDEbMmj4loHYyhZyMjM91zQS4p7z8eKa9h0JrbacekcirexG0z4n389}   \end{align} For the interior  term, we have   \begin{align}\thelt{57t 2f abx5 Ns4d clCMJc Tl q kfq uFD iSd DP eX6m YLQz JzUmH0 43 M lgF edN mXQ Pj Aoba 07MY wBaC4C nj I 4dw KCZ PO9 wx 3en8 AoqX 7JjN8K lq j Q5c bMS dhR Fs tQ8Q r2ve 2HT0uO 5W j TAi iIW n1C Wr U1BH BMvJ 3ywmAd qN D LY8 lbx XMx 0D Dvco 3RL9 Qz5eqy wV Y qEN nO8 MH0 PY zeVN i3yb 2msNYY Wz G 2DC PoG 1Vb Bx e9oZ GcTU 3AZuEK bk p 6rN eTX 0DS Mc zd91 nbSV DKEkVa zI q NKU Qap NBP 5B 32Ey prwP FLvuPi wR P l1G TdQ BZE Aw 3d90 v8P5 CPAnX4 Yo 2 q7s yr5 BW8 Hc}    \begin{split}    \Vert f\Vert_{H^{0.5+\delta}}    &\dlkjfhlaskdhjflkasdjhflkasjhdflkasjhdflkasjhdfls    \sum_{j=1}^{3}\Vert \UIOIUYOIUyHJGKHJLOIUYOIUOIUYOIYIOUYTIUYIOOOIUYOIUYPOIUPOIUPOIUYOIUYOIUYOIUHOUHOHIOUHOIHOIUHOIUHIOUH_{t}\tda_{ji} v_i\Vert_{H^{0.5+\delta}}    +   \sum_{j=1}^3\sum_{m=1}^{2}   \Vert \tda_{ji} v_m a_{km}\UIOIUYOIUyHJGKHJLOIUYOIUOIUYOIYIOUYTIUYIOOOIUYOIUYPOIUPOIUPOIUYOIUYOIUYOIUHOUHOHIOUHOIHOIUHOIUHIOUH_{k}v_i\Vert_{H^{0.5+\delta}}    +   \sum_{j=1}^{3}\Vert a_{ji} (v_3-\psi_t)\UIOIUYOIUyHJGKHJLOIUYOIUOIUYOIYIOUYTIUYIOOOIUYOIUYPOIUPOIUPOIUYOIUYOIUYOIUHOUHOHIOUHOIHOIUHOIUHIOUH_{3}v_i\Vert_{H^{0.5+\delta}}    \\&
   \leq P(           \Vert v\Vert_{H^{2.5+\delta}},           \Vert a\Vert_{H^{3.5+\delta}},           \Vert \tda\Vert_{H^{3.5+\delta}},           \Vert \tda_t\Vert_{H^{1.5+\delta}},           \Vert \psi_t\Vert_{H^{2.5+\delta}},          )    .    \end{split}    \label{8ThswELzXU3X7Ebd1KdZ7v1rN3GiirRXGKWK099ovBM0FDJCvkopYNQ2aN94Z7k0UnUKamE3OjU8DFYFFokbSI2J9V9gVlM8ALWThDPnPu3EL7HPD2VDaZTggzcCCmbvc70qqPcC9mt60ogcrTiA3HEjwTK8ymKeuJMc4q6dVz200XnYUtLR9GYjPXvFOVr6W1zUK1WbPToaWJJuKnxBLnd0ftDEbMmj4loHYyhZyMjM91zQS4p7z8eKa9h0JrbacekcirexG0z4n390}   \end{align} On the other hand, for the boundary terms, we have   \begin{align}\thelt{ iIW n1C Wr U1BH BMvJ 3ywmAd qN D LY8 lbx XMx 0D Dvco 3RL9 Qz5eqy wV Y qEN nO8 MH0 PY zeVN i3yb 2msNYY Wz G 2DC PoG 1Vb Bx e9oZ GcTU 3AZuEK bk p 6rN eTX 0DS Mc zd91 nbSV DKEkVa zI q NKU Qap NBP 5B 32Ey prwP FLvuPi wR P l1G TdQ BZE Aw 3d90 v8P5 CPAnX4 Yo 2 q7s yr5 BW8 Hc T7tM ioha BW9U4q rb u mEQ 6Xz MKR 2B REFX k3ZO MVMYSw 9S F 5ek q0m yNK Gn H0qi vlRA 18CbEz id O iuy ZZ6 kRo oJ kLQ0 Ewmz sKlld6 Kr K JmR xls 12K G2 bv8v LxfJ wrIcU6 Hx p q6p Fy7 O}    \begin{split}
    &\Vert g_0\Vert_{H^{\delta}(\Gamma_0)}      + \Vert g_1\Vert_{H^{\delta}(\Gamma_1)}      \\&\indeq      \dlkjfhlaskdhjflkasdjhflkasjhdflkasjhdflkasjhdfls      \Vert w\Vert_{H^{4+\delta}(\Gamma_1)}      +  \Vert     w_{t} \Vert_{H^{2+\delta}(\Gamma_1)}      + \Vert \tda_t\Vert_{H^{0.5+\delta}(\UIOIUYOIUyHJGKHJLOIUYOIUOIUYOIYIOUYTIUYIOOOIUYOIUYPOIUPOIUPOIUYOIUYOIUYOIUHOUHOHIOUHOIHOIUHOIUHIOUH\Omega)}        \Vert v\Vert_{H^{0.5+\delta}(\UIOIUYOIUyHJGKHJLOIUYOIUOIUYOIYIOUYTIUYIOOOIUYOIUYPOIUPOIUPOIUYOIUYOIUYOIUHOUHOHIOUHOIHOIUHOIUHIOUH\Omega)}      \\&\indeq\indeq      + \Vert \tda\Vert_{H^{1+\delta}(\UIOIUYOIUyHJGKHJLOIUYOIUOIUYOIYIOUYTIUYIOOOIUYOIUYPOIUPOIUPOIUYOIUYOIUYOIUHOUHOHIOUHOIHOIUHOIUHIOUH\Omega)}        \Vert v\Vert_{H^{1+\delta}(\UIOIUYOIUyHJGKHJLOIUYOIUOIUYOIYIOUYTIUYIOOOIUYOIUYPOIUPOIUPOIUYOIUYOIUYOIUHOUHOHIOUHOIHOIUHOIUHIOUH\Omega)}        \Vert a\Vert_{H^{1+\delta}(\UIOIUYOIUyHJGKHJLOIUYOIUOIUYOIYIOUYTIUYIOOOIUYOIUYPOIUPOIUPOIUYOIUYOIUYOIUHOUHOHIOUHOIHOIUHOIUHIOUH\Omega)}        \Vert \nabla v\Vert_{H^{\delta}(\UIOIUYOIUyHJGKHJLOIUYOIUOIUYOIYIOUYTIUYIOOOIUYOIUYPOIUPOIUPOIUYOIUYOIUYOIUHOUHOHIOUHOIHOIUHOIUHIOUH\Omega)}      \\&\indeq\indeq
     + \Vert a\Vert_{H^{1+\delta}(\UIOIUYOIUyHJGKHJLOIUYOIUOIUYOIYIOUYTIUYIOOOIUYOIUYPOIUPOIUPOIUYOIUYOIUYOIUHOUHOHIOUHOIHOIUHOIUHIOUH\Omega)}        (          \Vert v\Vert_{H^{1+\delta}(\UIOIUYOIUyHJGKHJLOIUYOIUOIUYOIYIOUYTIUYIOOOIUYOIUYPOIUPOIUPOIUYOIUYOIUYOIUHOUHOHIOUHOIHOIUHOIUHIOUH\Omega)}          + \Vert \psi_t\Vert_{H^{1+\delta}(\UIOIUYOIUyHJGKHJLOIUYOIUOIUYOIYIOUYTIUYIOOOIUYOIUYPOIUPOIUPOIUYOIUYOIUYOIUHOUHOHIOUHOIHOIUHOIUHIOUH\Omega)}        )       \Vert \nabla v\Vert_{H^{\delta}(\UIOIUYOIUyHJGKHJLOIUYOIUOIUYOIYIOUYTIUYIOOOIUYOIUYPOIUPOIUPOIUYOIUYOIUYOIUHOUHOHIOUHOIHOIUHOIUHIOUH\Omega)}     \\&\indeq      \leq P(           \Vert w\Vert_{H^{4+\delta}(\Gamma_1)},           \Vert     w_{t} \Vert_{H^{2+\delta}(\Gamma_1)},           \Vert \tda\Vert_{H^{3.5+\delta}},           \Vert  \tda_t\Vert_{H^{1.5+\delta}},           \Vert \psi_t\Vert_{H^{2.5+\delta}},           \Vert v\Vert_{H^{2.5+\delta}}
         )     ,    \end{split}    \label{8ThswELzXU3X7Ebd1KdZ7v1rN3GiirRXGKWK099ovBM0FDJCvkopYNQ2aN94Z7k0UnUKamE3OjU8DFYFFokbSI2J9V9gVlM8ALWThDPnPu3EL7HPD2VDaZTggzcCCmbvc70qqPcC9mt60ogcrTiA3HEjwTK8ymKeuJMc4q6dVz200XnYUtLR9GYjPXvFOVr6W1zUK1WbPToaWJJuKnxBLnd0ftDEbMmj4loHYyhZyMjM91zQS4p7z8eKa9h0JrbacekcirexG0z4n391}   \end{align} where $\UIOIUYOIUyHJGKHJLOIUYOIUOIUYOIYIOUYTIUYIOOOIUYOIUYPOIUPOIUPOIUYOIUYOIUYOIUHOUHOHIOUHOIHOIUHOIUHIOUH\Omega=\Gamma_0\cup \Gamma_1$. By combining  \eqref{8ThswELzXU3X7Ebd1KdZ7v1rN3GiirRXGKWK099ovBM0FDJCvkopYNQ2aN94Z7k0UnUKamE3OjU8DFYFFokbSI2J9V9gVlM8ALWThDPnPu3EL7HPD2VDaZTggzcCCmbvc70qqPcC9mt60ogcrTiA3HEjwTK8ymKeuJMc4q6dVz200XnYUtLR9GYjPXvFOVr6W1zUK1WbPToaWJJuKnxBLnd0ftDEbMmj4loHYyhZyMjM91zQS4p7z8eKa9h0JrbacekcirexG0z4n389}--\eqref{8ThswELzXU3X7Ebd1KdZ7v1rN3GiirRXGKWK099ovBM0FDJCvkopYNQ2aN94Z7k0UnUKamE3OjU8DFYFFokbSI2J9V9gVlM8ALWThDPnPu3EL7HPD2VDaZTggzcCCmbvc70qqPcC9mt60ogcrTiA3HEjwTK8ymKeuJMc4q6dVz200XnYUtLR9GYjPXvFOVr6W1zUK1WbPToaWJJuKnxBLnd0ftDEbMmj4loHYyhZyMjM91zQS4p7z8eKa9h0JrbacekcirexG0z4n391} and using \eqref{8ThswELzXU3X7Ebd1KdZ7v1rN3GiirRXGKWK099ovBM0FDJCvkopYNQ2aN94Z7k0UnUKamE3OjU8DFYFFokbSI2J9V9gVlM8ALWThDPnPu3EL7HPD2VDaZTggzcCCmbvc70qqPcC9mt60ogcrTiA3HEjwTK8ymKeuJMc4q6dVz200XnYUtLR9GYjPXvFOVr6W1zUK1WbPToaWJJuKnxBLnd0ftDEbMmj4loHYyhZyMjM91zQS4p7z8eKa9h0JrbacekcirexG0z4n348}--\eqref{8ThswELzXU3X7Ebd1KdZ7v1rN3GiirRXGKWK099ovBM0FDJCvkopYNQ2aN94Z7k0UnUKamE3OjU8DFYFFokbSI2J9V9gVlM8ALWThDPnPu3EL7HPD2VDaZTggzcCCmbvc70qqPcC9mt60ogcrTiA3HEjwTK8ymKeuJMc4q6dVz200XnYUtLR9GYjPXvFOVr6W1zUK1WbPToaWJJuKnxBLnd0ftDEbMmj4loHYyhZyMjM91zQS4p7z8eKa9h0JrbacekcirexG0z4n349}, we obtain~\eqref{8ThswELzXU3X7Ebd1KdZ7v1rN3GiirRXGKWK099ovBM0FDJCvkopYNQ2aN94Z7k0UnUKamE3OjU8DFYFFokbSI2J9V9gVlM8ALWThDPnPu3EL7HPD2VDaZTggzcCCmbvc70qqPcC9mt60ogcrTiA3HEjwTK8ymKeuJMc4q6dVz200XnYUtLR9GYjPXvFOVr6W1zUK1WbPToaWJJuKnxBLnd0ftDEbMmj4loHYyhZyMjM91zQS4p7z8eKa9h0JrbacekcirexG0z4n378}. \end{proof} \par \begin{Remark} \label{R01} {\rm
Note that in the boundary value problem for the pressure,  the situation is very different from the pressure estimates for the classical Euler equations. It is crucial in the construction of solutions below that the equations \eqref{8ThswELzXU3X7Ebd1KdZ7v1rN3GiirRXGKWK099ovBM0FDJCvkopYNQ2aN94Z7k0UnUKamE3OjU8DFYFFokbSI2J9V9gVlM8ALWThDPnPu3EL7HPD2VDaZTggzcCCmbvc70qqPcC9mt60ogcrTiA3HEjwTK8ymKeuJMc4q6dVz200XnYUtLR9GYjPXvFOVr6W1zUK1WbPToaWJJuKnxBLnd0ftDEbMmj4loHYyhZyMjM91zQS4p7z8eKa9h0JrbacekcirexG0z4n380}, \eqref{8ThswELzXU3X7Ebd1KdZ7v1rN3GiirRXGKWK099ovBM0FDJCvkopYNQ2aN94Z7k0UnUKamE3OjU8DFYFFokbSI2J9V9gVlM8ALWThDPnPu3EL7HPD2VDaZTggzcCCmbvc70qqPcC9mt60ogcrTiA3HEjwTK8ymKeuJMc4q6dVz200XnYUtLR9GYjPXvFOVr6W1zUK1WbPToaWJJuKnxBLnd0ftDEbMmj4loHYyhZyMjM91zQS4p7z8eKa9h0JrbacekcirexG0z4n383}, and \eqref{8ThswELzXU3X7Ebd1KdZ7v1rN3GiirRXGKWK099ovBM0FDJCvkopYNQ2aN94Z7k0UnUKamE3OjU8DFYFFokbSI2J9V9gVlM8ALWThDPnPu3EL7HPD2VDaZTggzcCCmbvc70qqPcC9mt60ogcrTiA3HEjwTK8ymKeuJMc4q6dVz200XnYUtLR9GYjPXvFOVr6W1zUK1WbPToaWJJuKnxBLnd0ftDEbMmj4loHYyhZyMjM91zQS4p7z8eKa9h0JrbacekcirexG0z4n384} may be simplified so that the highest order terms in $v$ are by one degree more regular. \par First, the right hand side of the PDE for the pressure, \eqref{8ThswELzXU3X7Ebd1KdZ7v1rN3GiirRXGKWK099ovBM0FDJCvkopYNQ2aN94Z7k0UnUKamE3OjU8DFYFFokbSI2J9V9gVlM8ALWThDPnPu3EL7HPD2VDaZTggzcCCmbvc70qqPcC9mt60ogcrTiA3HEjwTK8ymKeuJMc4q6dVz200XnYUtLR9GYjPXvFOVr6W1zUK1WbPToaWJJuKnxBLnd0ftDEbMmj4loHYyhZyMjM91zQS4p7z8eKa9h0JrbacekcirexG0z4n380}, may be rewritten as   \begin{align}\thelt{q NKU Qap NBP 5B 32Ey prwP FLvuPi wR P l1G TdQ BZE Aw 3d90 v8P5 CPAnX4 Yo 2 q7s yr5 BW8 Hc T7tM ioha BW9U4q rb u mEQ 6Xz MKR 2B REFX k3ZO MVMYSw 9S F 5ek q0m yNK Gn H0qi vlRA 18CbEz id O iuy ZZ6 kRo oJ kLQ0 Ewmz sKlld6 Kr K JmR xls 12K G2 bv8v LxfJ wrIcU6 Hx p q6p Fy7 Oim mo dXYt Kt0V VH22OC Aj f deT BAP vPl oK QzLE OQlq dpzxJ6 JI z Ujn TqY sQ4 BD QPW6 784x NUfsk0 aM 7 8qz MuL 9Mr Ac uVVK Y55n M7WqnB 2R C pGZ vHh WUN g9 3F2e RT8U umC62V H3 Z dJX }    \begin{split}  &    \UIOIUYOIUyHJGKHJLOIUYOIUOIUYOIYIOUYTIUYIOOOIUYOIUYPOIUPOIUPOIUYOIUYOIUYOIUHOUHOHIOUHOIHOIUHOIUHIOUH_{j}(\UIOIUYOIUyHJGKHJLOIUYOIUOIUYOIYIOUYTIUYIOOOIUYOIUYPOIUPOIUPOIUYOIUYOIUYOIUHOUHOHIOUHOIHOIUHOIUHIOUH_{t}\tda_{ji} v_i)
     -       \UIOIUYOIUyHJGKHJLOIUYOIUOIUYOIYIOUYTIUYIOOOIUYOIUYPOIUPOIUPOIUYOIUYOIUYOIUHOUHOHIOUHOIHOIUHOIUHIOUH_{j}         \left(           \sum_{m=1}^{2}            \tda_{ji} v_m a_{km} \UIOIUYOIUyHJGKHJLOIUYOIUOIUYOIYIOUYTIUYIOOOIUYOIUYPOIUPOIUPOIUYOIUYOIUYOIUHOUHOHIOUHOIHOIUHOIUHIOUH_{k} v_i         \right)      - \UIOIUYOIUyHJGKHJLOIUYOIUOIUYOIYIOUYTIUYIOOOIUYOIUYPOIUPOIUPOIUYOIUYOIUYOIUHOUHOHIOUHOIHOIUHOIUHIOUH_{j}(J^{-1}            (v_3-\psi_t)b_{ji}\UIOIUYOIUyHJGKHJLOIUYOIUOIUYOIYIOUYTIUYIOOOIUYOIUYPOIUPOIUPOIUYOIUYOIUYOIUHOUHOHIOUHOIHOIUHOIUHIOUH_{3}v_i)     \\&\indeq     =    \UIOIUYOIUyHJGKHJLOIUYOIUOIUYOIYIOUYTIUYIOOOIUYOIUYPOIUPOIUPOIUYOIUYOIUYOIUHOUHOHIOUHOIHOIUHOIUHIOUH_{j}(\UIOIUYOIUyHJGKHJLOIUYOIUOIUYOIYIOUYTIUYIOOOIUYOIUYPOIUPOIUPOIUYOIUYOIUYOIUHOUHOHIOUHOIHOIUHOIUHIOUH_{t}\tda_{ji} v_i)      -           \sum_{m=1}^{2}            \tda_{ji} \UIOIUYOIUyHJGKHJLOIUYOIUOIUYOIYIOUYTIUYIOOOIUYOIUYPOIUPOIUPOIUYOIUYOIUYOIUHOUHOHIOUHOIHOIUHOIUHIOUH_{j}(v_m a_{km}) \UIOIUYOIUyHJGKHJLOIUYOIUOIUYOIYIOUYTIUYIOOOIUYOIUYPOIUPOIUPOIUYOIUYOIUYOIUHOUHOHIOUHOIHOIUHOIUHIOUH_{k} v_i
     - b_{ji}           \UIOIUYOIUyHJGKHJLOIUYOIUOIUYOIYIOUYTIUYIOOOIUYOIUYPOIUPOIUPOIUYOIUYOIUYOIUHOUHOHIOUHOIHOIUHOIUHIOUH_{j}(J^{-1}(v_3-\psi_t))\UIOIUYOIUyHJGKHJLOIUYOIUOIUYOIYIOUYTIUYIOOOIUYOIUYPOIUPOIUPOIUYOIUYOIUYOIUHOUHOHIOUHOIHOIUHOIUHIOUH_{3}v_i    \\&\indeq\indeq       -           \sum_{m=1}^{2}             v_m a_{km} \tda_{ji} \UIOIUYOIUyHJGKHJLOIUYOIUOIUYOIYIOUYTIUYIOOOIUYOIUYPOIUPOIUPOIUYOIUYOIUYOIUHOUHOHIOUHOIHOIUHOIUHIOUH_{jk}v_i      -             J^{-1}(v_3-\psi_t)b_{ji}\UIOIUYOIUyHJGKHJLOIUYOIUOIUYOIYIOUYTIUYIOOOIUYOIUYPOIUPOIUPOIUYOIUYOIUYOIUHOUHOHIOUHOIHOIUHOIUHIOUH_{3}\UIOIUYOIUyHJGKHJLOIUYOIUOIUYOIYIOUYTIUYIOOOIUYOIUYPOIUPOIUPOIUYOIUYOIUYOIUHOUHOHIOUHOIHOIUHOIUHIOUH_{j}v_i     ,   \end{split}    \label{8ThswELzXU3X7Ebd1KdZ7v1rN3GiirRXGKWK099ovBM0FDJCvkopYNQ2aN94Z7k0UnUKamE3OjU8DFYFFokbSI2J9V9gVlM8ALWThDPnPu3EL7HPD2VDaZTggzcCCmbvc70qqPcC9mt60ogcrTiA3HEjwTK8ymKeuJMc4q6dVz200XnYUtLR9GYjPXvFOVr6W1zUK1WbPToaWJJuKnxBLnd0ftDEbMmj4loHYyhZyMjM91zQS4p7z8eKa9h0JrbacekcirexG0z4n3270}   \end{align} which holds in $\Omega$. Therefore,  using also the divergence-free condition,
we obtain   \begin{align}\thelt{z id O iuy ZZ6 kRo oJ kLQ0 Ewmz sKlld6 Kr K JmR xls 12K G2 bv8v LxfJ wrIcU6 Hx p q6p Fy7 Oim mo dXYt Kt0V VH22OC Aj f deT BAP vPl oK QzLE OQlq dpzxJ6 JI z Ujn TqY sQ4 BD QPW6 784x NUfsk0 aM 7 8qz MuL 9Mr Ac uVVK Y55n M7WqnB 2R C pGZ vHh WUN g9 3F2e RT8U umC62V H3 Z dJX LMS cca 1m xoOO 6oOL OVzfpO BO X 5Ev KuL z5s EW 8a9y otqk cKbDJN Us l pYM JpJ jOW Uy 2U4Y VKH6 kVC1Vx 1u v ykO yDs zo5 bz d36q WH1k J7Jtkg V1 J xqr Fnq mcU yZ JTp9 oFIc FAk0IT A9 3}    \begin{split}    \UIOIUYOIUyHJGKHJLOIUYOIUOIUYOIYIOUYTIUYIOOOIUYOIUYPOIUPOIUPOIUYOIUYOIUYOIUHOUHOHIOUHOIHOIUHOIUHIOUH_{j}(\tda_{ji} a_{ki}\UIOIUYOIUyHJGKHJLOIUYOIUOIUYOIYIOUYTIUYIOOOIUYOIUYPOIUPOIUPOIUYOIUYOIUYOIUHOUHOHIOUHOIHOIUHOIUHIOUH_{k}q)        &       =    \UIOIUYOIUyHJGKHJLOIUYOIUOIUYOIYIOUYTIUYIOOOIUYOIUYPOIUPOIUPOIUYOIUYOIUYOIUHOUHOHIOUHOIHOIUHOIUHIOUH_{j}(\UIOIUYOIUyHJGKHJLOIUYOIUOIUYOIYIOUYTIUYIOOOIUYOIUYPOIUPOIUPOIUYOIUYOIUYOIUHOUHOHIOUHOIHOIUHOIUHIOUH_{t}\tda_{ji} v_i)      -           \sum_{m=1}^{2}            \tda_{ji} \UIOIUYOIUyHJGKHJLOIUYOIUOIUYOIYIOUYTIUYIOOOIUYOIUYPOIUPOIUPOIUYOIUYOIUYOIUHOUHOHIOUHOIHOIUHOIUHIOUH_{j}(v_m a_{km}) \UIOIUYOIUyHJGKHJLOIUYOIUOIUYOIYIOUYTIUYIOOOIUYOIUYPOIUPOIUPOIUYOIUYOIUYOIUHOUHOHIOUHOIHOIUHOIUHIOUH_{k} v_i      - b_{ji}           \UIOIUYOIUyHJGKHJLOIUYOIUOIUYOIYIOUYTIUYIOOOIUYOIUYPOIUPOIUPOIUYOIUYOIUYOIUHOUHOHIOUHOIHOIUHOIUHIOUH_{j}(J^{-1}(v_3-\psi_t))\UIOIUYOIUyHJGKHJLOIUYOIUOIUYOIYIOUYTIUYIOOOIUYOIUYPOIUPOIUPOIUYOIUYOIUYOIUHOUHOHIOUHOIHOIUHOIUHIOUH_{3}v_i    \\&\indeq\indeq      +
          \sum_{m=1}^{2}             v_m a_{km} \UIOIUYOIUyHJGKHJLOIUYOIUOIUYOIYIOUYTIUYIOOOIUYOIUYPOIUPOIUPOIUYOIUYOIUYOIUHOUHOHIOUHOIHOIUHOIUHIOUH_{k}\tda_{ji} \UIOIUYOIUyHJGKHJLOIUYOIUOIUYOIYIOUYTIUYIOOOIUYOIUYPOIUPOIUPOIUYOIUYOIUYOIUHOUHOHIOUHOIHOIUHOIUHIOUH_{j}v_i      +             J^{-1}(v_3-\psi_t)\UIOIUYOIUyHJGKHJLOIUYOIUOIUYOIYIOUYTIUYIOOOIUYOIUYPOIUPOIUPOIUYOIUYOIUYOIUHOUHOHIOUHOIHOIUHOIUHIOUH_{3}b_{ji}\UIOIUYOIUyHJGKHJLOIUYOIUOIUYOIYIOUYTIUYIOOOIUYOIUYPOIUPOIUPOIUYOIUYOIUYOIUHOUHOHIOUHOIHOIUHOIUHIOUH_{j}v_i    \inon{in $\Omega$}    .   \end{split}    \label{8ThswELzXU3X7Ebd1KdZ7v1rN3GiirRXGKWK099ovBM0FDJCvkopYNQ2aN94Z7k0UnUKamE3OjU8DFYFFokbSI2J9V9gVlM8ALWThDPnPu3EL7HPD2VDaZTggzcCCmbvc70qqPcC9mt60ogcrTiA3HEjwTK8ymKeuJMc4q6dVz200XnYUtLR9GYjPXvFOVr6W1zUK1WbPToaWJJuKnxBLnd0ftDEbMmj4loHYyhZyMjM91zQS4p7z8eKa9h0JrbacekcirexG0z4n3341}   \end{align} Next, on $\Gamma_0$, we have $\psi=0$  from where $b_{3i}=\delta_{3i}$, and by \eqref{8ThswELzXU3X7Ebd1KdZ7v1rN3GiirRXGKWK099ovBM0FDJCvkopYNQ2aN94Z7k0UnUKamE3OjU8DFYFFokbSI2J9V9gVlM8ALWThDPnPu3EL7HPD2VDaZTggzcCCmbvc70qqPcC9mt60ogcrTiA3HEjwTK8ymKeuJMc4q6dVz200XnYUtLR9GYjPXvFOVr6W1zUK1WbPToaWJJuKnxBLnd0ftDEbMmj4loHYyhZyMjM91zQS4p7z8eKa9h0JrbacekcirexG0z4n320},  the boundary condition \eqref{8ThswELzXU3X7Ebd1KdZ7v1rN3GiirRXGKWK099ovBM0FDJCvkopYNQ2aN94Z7k0UnUKamE3OjU8DFYFFokbSI2J9V9gVlM8ALWThDPnPu3EL7HPD2VDaZTggzcCCmbvc70qqPcC9mt60ogcrTiA3HEjwTK8ymKeuJMc4q6dVz200XnYUtLR9GYjPXvFOVr6W1zUK1WbPToaWJJuKnxBLnd0ftDEbMmj4loHYyhZyMjM91zQS4p7z8eKa9h0JrbacekcirexG0z4n384} reduces to
  \begin{align}\thelt{NUfsk0 aM 7 8qz MuL 9Mr Ac uVVK Y55n M7WqnB 2R C pGZ vHh WUN g9 3F2e RT8U umC62V H3 Z dJX LMS cca 1m xoOO 6oOL OVzfpO BO X 5Ev KuL z5s EW 8a9y otqk cKbDJN Us l pYM JpJ jOW Uy 2U4Y VKH6 kVC1Vx 1u v ykO yDs zo5 bz d36q WH1k J7Jtkg V1 J xqr Fnq mcU yZ JTp9 oFIc FAk0IT A9 3 SrL axO 9oU Z3 jG6f BRL1 iZ7ZE6 zj 8 G3M Hu8 6Ay jt 3flY cmTk jiTSYv CF t JLq cJP tN7 E3 POqG OKe0 3K3WV0 ep W XDQ C97 YSb AD ZUNp 81GF fCPbj3 iq E t0E NXy pLv fo Iz6z oFoF 9lkIun}    \begin{split}    \tda_{3i}a_{ki}\UIOIUYOIUyHJGKHJLOIUYOIUOIUYOIYIOUYTIUYIOOOIUYOIUYPOIUPOIUPOIUYOIUYOIUYOIUHOUHOHIOUHOIHOIUHOIUHIOUH_{k}q    = 0     \inon{on $\Gamma_0$}     .    \end{split}    \label{8ThswELzXU3X7Ebd1KdZ7v1rN3GiirRXGKWK099ovBM0FDJCvkopYNQ2aN94Z7k0UnUKamE3OjU8DFYFFokbSI2J9V9gVlM8ALWThDPnPu3EL7HPD2VDaZTggzcCCmbvc70qqPcC9mt60ogcrTiA3HEjwTK8ymKeuJMc4q6dVz200XnYUtLR9GYjPXvFOVr6W1zUK1WbPToaWJJuKnxBLnd0ftDEbMmj4loHYyhZyMjM91zQS4p7z8eKa9h0JrbacekcirexG0z4n3334}   \end{align} Finally, we simplify the condition \eqref{8ThswELzXU3X7Ebd1KdZ7v1rN3GiirRXGKWK099ovBM0FDJCvkopYNQ2aN94Z7k0UnUKamE3OjU8DFYFFokbSI2J9V9gVlM8ALWThDPnPu3EL7HPD2VDaZTggzcCCmbvc70qqPcC9mt60ogcrTiA3HEjwTK8ymKeuJMc4q6dVz200XnYUtLR9GYjPXvFOVr6W1zUK1WbPToaWJJuKnxBLnd0ftDEbMmj4loHYyhZyMjM91zQS4p7z8eKa9h0JrbacekcirexG0z4n381} on~$\Gamma_1$. First, we have   \begin{align}\thelt{VKH6 kVC1Vx 1u v ykO yDs zo5 bz d36q WH1k J7Jtkg V1 J xqr Fnq mcU yZ JTp9 oFIc FAk0IT A9 3 SrL axO 9oU Z3 jG6f BRL1 iZ7ZE6 zj 8 G3M Hu8 6Ay jt 3flY cmTk jiTSYv CF t JLq cJP tN7 E3 POqG OKe0 3K3WV0 ep W XDQ C97 YSb AD ZUNp 81GF fCPbj3 iq E t0E NXy pLv fo Iz6z oFoF 9lkIun Xj Y yYL 52U bRB jx kQUS U9mm XtzIHO Cz 1 KH4 9ez 6Pz qW F223 C0Iz 3CsvuT R9 s VtQ CcM 1eo pD Py2l EEzL U0USJt Jb 9 zgy Gyf iQ4 fo Cx26 k4jL E0ula6 aS I rZQ HER 5HV CE BL55 WCtB 2}    \begin{split}     &
    a_{3i} v_3 \UIOIUYOIUyHJGKHJLOIUYOIUOIUYOIYIOUYTIUYIOOOIUYOIUYPOIUPOIUPOIUYOIUYOIUYOIUHOUHOHIOUHOIHOIUHOIUHIOUH_{3}v_i     =     b_{3i} v_3 \frac{1}{\UIOIUYOIUyHJGKHJLOIUYOIUOIUYOIYIOUYTIUYIOOOIUYOIUYPOIUPOIUPOIUYOIUYOIUYOIUHOUHOHIOUHOIHOIUHOIUHIOUH_{3}\psi}\UIOIUYOIUyHJGKHJLOIUYOIUOIUYOIYIOUYTIUYIOOOIUYOIUYPOIUPOIUPOIUYOIUYOIUYOIUHOUHOHIOUHOIHOIUHOIUHIOUH_{3}v_i    =    b_{3i}v_3 a_{33} \UIOIUYOIUyHJGKHJLOIUYOIUOIUYOIYIOUYTIUYIOOOIUYOIUYPOIUPOIUPOIUYOIUYOIUYOIUHOUHOHIOUHOIHOIUHOIUHIOUH_{3}v_i    =     b_{3i}v_3 a_{j3}\UIOIUYOIUyHJGKHJLOIUYOIUOIUYOIYIOUYTIUYIOOOIUYOIUYPOIUPOIUPOIUYOIUYOIUYOIUHOUHOHIOUHOIHOIUHOIUHIOUH_{j}v_i    ,    \end{split}    \label{8ThswELzXU3X7Ebd1KdZ7v1rN3GiirRXGKWK099ovBM0FDJCvkopYNQ2aN94Z7k0UnUKamE3OjU8DFYFFokbSI2J9V9gVlM8ALWThDPnPu3EL7HPD2VDaZTggzcCCmbvc70qqPcC9mt60ogcrTiA3HEjwTK8ymKeuJMc4q6dVz200XnYUtLR9GYjPXvFOVr6W1zUK1WbPToaWJJuKnxBLnd0ftDEbMmj4loHYyhZyMjM91zQS4p7z8eKa9h0JrbacekcirexG0z4n3338}   \end{align} and thus we have, on $\Gamma_1$, using \eqref{8ThswELzXU3X7Ebd1KdZ7v1rN3GiirRXGKWK099ovBM0FDJCvkopYNQ2aN94Z7k0UnUKamE3OjU8DFYFFokbSI2J9V9gVlM8ALWThDPnPu3EL7HPD2VDaZTggzcCCmbvc70qqPcC9mt60ogcrTiA3HEjwTK8ymKeuJMc4q6dVz200XnYUtLR9GYjPXvFOVr6W1zUK1WbPToaWJJuKnxBLnd0ftDEbMmj4loHYyhZyMjM91zQS4p7z8eKa9h0JrbacekcirexG0z4n3338},   \begin{align}\thelt{POqG OKe0 3K3WV0 ep W XDQ C97 YSb AD ZUNp 81GF fCPbj3 iq E t0E NXy pLv fo Iz6z oFoF 9lkIun Xj Y yYL 52U bRB jx kQUS U9mm XtzIHO Cz 1 KH4 9ez 6Pz qW F223 C0Iz 3CsvuT R9 s VtQ CcM 1eo pD Py2l EEzL U0USJt Jb 9 zgy Gyf iQ4 fo Cx26 k4jL E0ula6 aS I rZQ HER 5HV CE BL55 WCtB 2LCmve TD z Vcp 7UR gI7 Qu FbFw 9VTx JwGrzs VW M 9sM JeJ Nd2 VG GFsi WuqC 3YxXoJ GK w Io7 1fg sGm 0P YFBz X8eX 7pf9GJ b1 o XUs 1q0 6KP Ls MucN ytQb L0Z0Qq m1 l SPj 9MT etk L6 KfsC 6}    \begin{split}
     &       - \tda_{3i} v_1 a_{j1} \UIOIUYOIUyHJGKHJLOIUYOIUOIUYOIYIOUYTIUYIOOOIUYOIUYPOIUPOIUPOIUYOIUYOIUYOIUHOUHOHIOUHOIHOIUHOIUHIOUH_{j}v_i       - \tda_{3i} v_2 a_{j2} \UIOIUYOIUyHJGKHJLOIUYOIUOIUYOIYIOUYTIUYIOOOIUYOIUYPOIUPOIUPOIUYOIUYOIUYOIUHOUHOHIOUHOIHOIUHOIUHIOUH_{j}v_i       - a_{3i} (v_3-\psi_t) \UIOIUYOIUyHJGKHJLOIUYOIUOIUYOIYIOUYTIUYIOOOIUYOIUYPOIUPOIUPOIUYOIUYOIUYOIUHOUHOHIOUHOIHOIUHOIUHIOUH_{3} v_i        \\&\indeq     =       - \tda_{3i} v_k a_{jk} \UIOIUYOIUyHJGKHJLOIUYOIUOIUYOIYIOUYTIUYIOOOIUYOIUYPOIUPOIUPOIUYOIUYOIUYOIUHOUHOHIOUHOIHOIUHOIUHIOUH_{j}v_i       + a_{3i} \psi_t \UIOIUYOIUyHJGKHJLOIUYOIUOIUYOIYIOUYTIUYIOOOIUYOIUYPOIUPOIUPOIUYOIUYOIUYOIUHOUHOHIOUHOIHOIUHOIUHIOUH_{3} v_i    =     \frac{1}{\UIOIUYOIUyHJGKHJLOIUYOIUOIUYOIYIOUYTIUYIOOOIUYOIUYPOIUPOIUPOIUYOIUYOIUYOIUHOUHOHIOUHOIHOIUHOIUHIOUH_{3}\psi}     (       - \tda_{3i} v_k b_{jk} \UIOIUYOIUyHJGKHJLOIUYOIUOIUYOIYIOUYTIUYIOOOIUYOIUYPOIUPOIUPOIUYOIUYOIUYOIUHOUHOHIOUHOIHOIUHOIUHIOUH_{j}v_i       + b_{3i} \psi_t \UIOIUYOIUyHJGKHJLOIUYOIUOIUYOIYIOUYTIUYIOOOIUYOIUYPOIUPOIUPOIUYOIUYOIUYOIUHOUHOHIOUHOIHOIUHOIUHIOUH_{3} v_i     )    
    .    \end{split}    \label{8ThswELzXU3X7Ebd1KdZ7v1rN3GiirRXGKWK099ovBM0FDJCvkopYNQ2aN94Z7k0UnUKamE3OjU8DFYFFokbSI2J9V9gVlM8ALWThDPnPu3EL7HPD2VDaZTggzcCCmbvc70qqPcC9mt60ogcrTiA3HEjwTK8ymKeuJMc4q6dVz200XnYUtLR9GYjPXvFOVr6W1zUK1WbPToaWJJuKnxBLnd0ftDEbMmj4loHYyhZyMjM91zQS4p7z8eKa9h0JrbacekcirexG0z4n3337}   \end{align} The negative of the first term inside the parenthesis equals   \begin{align}\thelt{o pD Py2l EEzL U0USJt Jb 9 zgy Gyf iQ4 fo Cx26 k4jL E0ula6 aS I rZQ HER 5HV CE BL55 WCtB 2LCmve TD z Vcp 7UR gI7 Qu FbFw 9VTx JwGrzs VW M 9sM JeJ Nd2 VG GFsi WuqC 3YxXoJ GK w Io7 1fg sGm 0P YFBz X8eX 7pf9GJ b1 o XUs 1q0 6KP Ls MucN ytQb L0Z0Qq m1 l SPj 9MT etk L6 KfsC 6Zob Yhc2qu Xy 9 GPm ZYj 1Go ei feJ3 pRAf n6Ypy6 jN s 4Y5 nSE pqN 4m Rmam AGfY HhSaBr Ls D THC SEl UyR Mh 66XU 7hNz pZVC5V nV 7 VjL 7kv WKf 7P 5hj6 t1vu gkLGdN X8 b gOX HWm 6W4 YE m}    \begin{split}       b_{3i} v_k b_{jk} \UIOIUYOIUyHJGKHJLOIUYOIUOIUYOIYIOUYTIUYIOOOIUYOIUYPOIUPOIUPOIUYOIUYOIUYOIUHOUHOHIOUHOIHOIUHOIUHIOUH_{j}v_i      &=       b_{3i} \UIOIUYOIUyHJGKHJLOIUYOIUOIUYOIYIOUYTIUYIOOOIUYOIUYPOIUPOIUPOIUYOIUYOIUYOIUHOUHOHIOUHOIHOIUHOIUHIOUH_{j}(v_k b_{jk} v_i)      =       \UIOIUYOIUyHJGKHJLOIUYOIUOIUYOIYIOUYTIUYIOOOIUYOIUYPOIUPOIUPOIUYOIUYOIUYOIUHOUHOHIOUHOIHOIUHOIUHIOUH_{j}(      b_{3i} v_k b_{jk} v_i)        -       \UIOIUYOIUyHJGKHJLOIUYOIUOIUYOIYIOUYTIUYIOOOIUYOIUYPOIUPOIUPOIUYOIUYOIUYOIUHOUHOHIOUHOIHOIUHOIUHIOUH_{j}      b_{3i} v_k b_{jk} v_i     \\&
    =      \sum_{j=1}^{2}       \UIOIUYOIUyHJGKHJLOIUYOIUOIUYOIYIOUYTIUYIOOOIUYOIUYPOIUPOIUPOIUYOIUYOIUYOIUHOUHOHIOUHOIHOIUHOIUHIOUH_{j}(      b_{3i} v_k b_{jk} v_i )       +       \UIOIUYOIUyHJGKHJLOIUYOIUOIUYOIYIOUYTIUYIOOOIUYOIUYPOIUPOIUPOIUYOIUYOIUYOIUHOUHOHIOUHOIHOIUHOIUHIOUH_{3}(      b_{3i} v_i b_{3k} v_k )       -       \UIOIUYOIUyHJGKHJLOIUYOIUOIUYOIYIOUYTIUYIOOOIUYOIUYPOIUPOIUPOIUYOIUYOIUYOIUHOUHOHIOUHOIHOIUHOIUHIOUH_{j}      b_{3i} v_k b_{jk} v_i      \\&     =      \sum_{j=1}^{2}       \UIOIUYOIUyHJGKHJLOIUYOIUOIUYOIYIOUYTIUYIOOOIUYOIUYPOIUPOIUPOIUYOIUYOIUYOIUHOUHOHIOUHOIHOIUHOIUHIOUH_{j}(       v_k b_{jk} w_t)       +       2 v_k b_{3k} \UIOIUYOIUyHJGKHJLOIUYOIUOIUYOIYIOUYTIUYIOOOIUYOIUYPOIUPOIUPOIUYOIUYOIUYOIUHOUHOHIOUHOIHOIUHOIUHIOUH_{3}(  v_i    b_{3i} )       -       \UIOIUYOIUyHJGKHJLOIUYOIUOIUYOIYIOUYTIUYIOOOIUYOIUYPOIUPOIUPOIUYOIUYOIUYOIUHOUHOHIOUHOIHOIUHOIUHIOUH_{j}      b_{3i} v_k b_{jk} v_i    ,    \end{split}    \llabel{8ThswELzXU3X7Ebd1KdZ7v1rN3GiirRXGKWK099ovBM0FDJCvkopYNQ2aN94Z7k0UnUKamE3OjU8DFYFFokbSI2J9V9gVlM8ALWThDPnPu3EL7HPD2VDaZTggzcCCmbvc70qqPcC9mt60ogcrTiA3HEjwTK8ymKeuJMc4q6dVz200XnYUtLR9GYjPXvFOVr6W1zUK1WbPToaWJJuKnxBLnd0ftDEbMmj4loHYyhZyMjM91zQS4p7z8eKa9h0JrbacekcirexG0z4n3342}
  \end{align} where we used \eqref{8ThswELzXU3X7Ebd1KdZ7v1rN3GiirRXGKWK099ovBM0FDJCvkopYNQ2aN94Z7k0UnUKamE3OjU8DFYFFokbSI2J9V9gVlM8ALWThDPnPu3EL7HPD2VDaZTggzcCCmbvc70qqPcC9mt60ogcrTiA3HEjwTK8ymKeuJMc4q6dVz200XnYUtLR9GYjPXvFOVr6W1zUK1WbPToaWJJuKnxBLnd0ftDEbMmj4loHYyhZyMjM91zQS4p7z8eKa9h0JrbacekcirexG0z4n321} in the third equality, and thus   \begin{align}\thelt{fg sGm 0P YFBz X8eX 7pf9GJ b1 o XUs 1q0 6KP Ls MucN ytQb L0Z0Qq m1 l SPj 9MT etk L6 KfsC 6Zob Yhc2qu Xy 9 GPm ZYj 1Go ei feJ3 pRAf n6Ypy6 jN s 4Y5 nSE pqN 4m Rmam AGfY HhSaBr Ls D THC SEl UyR Mh 66XU 7hNz pZVC5V nV 7 VjL 7kv WKf 7P 5hj6 t1vu gkLGdN X8 b gOX HWm 6W4 YE mxFG 4WaN EbGKsv 0p 4 OG0 Nrd uTe Za xNXq V4Bp mOdXIq 9a b PeD PbU Z4N Xt ohbY egCf xBNttE wc D YSD 637 jJ2 ms 6Ta1 J2xZ PtKnPw AX A tJA Rc8 n5d 93 TZi7 q6Wo nEDLwW Sz e Sue YFX 8cM}    \begin{split}       b_{3i} v_k b_{jk} \UIOIUYOIUyHJGKHJLOIUYOIUOIUYOIYIOUYTIUYIOOOIUYOIUYPOIUPOIUPOIUYOIUYOIUYOIUHOUHOHIOUHOIHOIUHOIUHIOUH_{j}v_i      &=      \sum_{j=1}^{2}       \UIOIUYOIUyHJGKHJLOIUYOIUOIUYOIYIOUYTIUYIOOOIUYOIUYPOIUPOIUPOIUYOIUYOIUYOIUHOUHOHIOUHOIHOIUHOIUHIOUH_{j}(       v_k b_{jk} w_t)       +       2 w_t \UIOIUYOIUyHJGKHJLOIUYOIUOIUYOIYIOUYTIUYIOOOIUYOIUYPOIUPOIUPOIUYOIUYOIUYOIUHOUHOHIOUHOIHOIUHOIUHIOUH_{3}(  v_i    b_{3i} )       -       \UIOIUYOIUyHJGKHJLOIUYOIUOIUYOIYIOUYTIUYIOOOIUYOIUYPOIUPOIUPOIUYOIUYOIUYOIUHOUHOHIOUHOIHOIUHOIUHIOUH_{j}      b_{3i} v_k b_{jk} v_i     \\&     =      \sum_{j=1}^{3}
      \UIOIUYOIUyHJGKHJLOIUYOIUOIUYOIYIOUYTIUYIOOOIUYOIUYPOIUPOIUPOIUYOIUYOIUYOIUHOUHOHIOUHOIHOIUHOIUHIOUH_{j}(       v_k b_{jk} w_t)       -       \UIOIUYOIUyHJGKHJLOIUYOIUOIUYOIYIOUYTIUYIOOOIUYOIUYPOIUPOIUPOIUYOIUYOIUYOIUHOUHOHIOUHOIHOIUHOIUHIOUH_{3}(       v_k b_{3k} w_t)       +       2 w_t \UIOIUYOIUyHJGKHJLOIUYOIUOIUYOIYIOUYTIUYIOOOIUYOIUYPOIUPOIUPOIUYOIUYOIUYOIUHOUHOHIOUHOIHOIUHOIUHIOUH_{3}(  v_i    b_{3i} )       -       \UIOIUYOIUyHJGKHJLOIUYOIUOIUYOIYIOUYTIUYIOOOIUYOIUYPOIUPOIUPOIUYOIUYOIUYOIUHOUHOHIOUHOIHOIUHOIUHIOUH_{j}      b_{3i} v_k b_{jk} v_i     \\&     =       \sum_{j=1}^{3}  v_k b_{jk} \UIOIUYOIUyHJGKHJLOIUYOIUOIUYOIYIOUYTIUYIOOOIUYOIUYPOIUPOIUPOIUYOIUYOIUYOIUHOUHOHIOUHOIHOIUHOIUHIOUH_{j}w_t         - \UIOIUYOIUyHJGKHJLOIUYOIUOIUYOIYIOUYTIUYIOOOIUYOIUYPOIUPOIUPOIUYOIUYOIUYOIUHOUHOHIOUHOIHOIUHOIUHIOUH_{3}v_k b_{3k} w_t         - v_k \UIOIUYOIUyHJGKHJLOIUYOIUOIUYOIYIOUYTIUYIOOOIUYOIUYPOIUPOIUPOIUYOIUYOIUYOIUHOUHOHIOUHOIHOIUHOIUHIOUH_{3}(b_{3k} w_t)         + 2 w_t b_{3i} \UIOIUYOIUyHJGKHJLOIUYOIUOIUYOIYIOUYTIUYIOOOIUYOIUYPOIUPOIUPOIUYOIUYOIUYOIUHOUHOHIOUHOIHOIUHOIUHIOUH_{3} v_i         + 2 w_t \UIOIUYOIUyHJGKHJLOIUYOIUOIUYOIYIOUYTIUYIOOOIUYOIUYPOIUPOIUPOIUYOIUYOIUYOIUHOUHOHIOUHOIHOIUHOIUHIOUH_{3}b_{3i} v_i         -       \UIOIUYOIUyHJGKHJLOIUYOIUOIUYOIYIOUYTIUYIOOOIUYOIUYPOIUPOIUPOIUYOIUYOIUYOIUHOUHOHIOUHOIHOIUHOIUHIOUH_{j}      b_{3i} v_k b_{jk} v_i     \\&     =    
     w_t b_{3i} \UIOIUYOIUyHJGKHJLOIUYOIUOIUYOIYIOUYTIUYIOOOIUYOIUYPOIUPOIUPOIUYOIUYOIUYOIUHOUHOHIOUHOIHOIUHOIUHIOUH_{3} v_i        +      \sum_{j=1}^{3}  v_k b_{jk} \UIOIUYOIUyHJGKHJLOIUYOIUOIUYOIYIOUYTIUYIOOOIUYOIUYPOIUPOIUPOIUYOIUYOIUYOIUHOUHOHIOUHOIHOIUHOIUHIOUH_{j}w_t       - v_k \UIOIUYOIUyHJGKHJLOIUYOIUOIUYOIYIOUYTIUYIOOOIUYOIUYPOIUPOIUPOIUYOIUYOIUYOIUHOUHOHIOUHOIHOIUHOIUHIOUH_{3}(b_{3k} w_t)       + 2 w_t \UIOIUYOIUyHJGKHJLOIUYOIUOIUYOIYIOUYTIUYIOOOIUYOIUYPOIUPOIUPOIUYOIUYOIUYOIUHOUHOHIOUHOIHOIUHOIUHIOUH_{3}b_{3i} v_i       -       \UIOIUYOIUyHJGKHJLOIUYOIUOIUYOIYIOUYTIUYIOOOIUYOIUYPOIUPOIUPOIUYOIUYOIUYOIUHOUHOHIOUHOIHOIUHOIUHIOUH_{j}      b_{3i} v_k b_{jk} v_i     \\&     =          w_t b_{3i} \UIOIUYOIUyHJGKHJLOIUYOIUOIUYOIYIOUYTIUYIOOOIUYOIUYPOIUPOIUPOIUYOIUYOIUYOIUHOUHOHIOUHOIHOIUHOIUHIOUH_{3} v_i        +      \sum_{j=1}^{2}  v_k b_{jk} \UIOIUYOIUyHJGKHJLOIUYOIUOIUYOIYIOUYTIUYIOOOIUYOIUYPOIUPOIUPOIUYOIUYOIUYOIUHOUHOHIOUHOIHOIUHOIUHIOUH_{j}w_t       + w_t \UIOIUYOIUyHJGKHJLOIUYOIUOIUYOIYIOUYTIUYIOOOIUYOIUYPOIUPOIUPOIUYOIUYOIUYOIUHOUHOHIOUHOIHOIUHOIUHIOUH_{3}b_{3i} v_i       -       \UIOIUYOIUyHJGKHJLOIUYOIUOIUYOIYIOUYTIUYIOOOIUYOIUYPOIUPOIUPOIUYOIUYOIUYOIUHOUHOHIOUHOIHOIUHOIUHIOUH_{j}      b_{3i} v_k b_{jk} v_i    ,    \end{split}    \label{8ThswELzXU3X7Ebd1KdZ7v1rN3GiirRXGKWK099ovBM0FDJCvkopYNQ2aN94Z7k0UnUKamE3OjU8DFYFFokbSI2J9V9gVlM8ALWThDPnPu3EL7HPD2VDaZTggzcCCmbvc70qqPcC9mt60ogcrTiA3HEjwTK8ymKeuJMc4q6dVz200XnYUtLR9GYjPXvFOVr6W1zUK1WbPToaWJJuKnxBLnd0ftDEbMmj4loHYyhZyMjM91zQS4p7z8eKa9h0JrbacekcirexG0z4n3339}
  \end{align} where we used $2w_t b_{3i} \UIOIUYOIUyHJGKHJLOIUYOIUOIUYOIYIOUYTIUYIOOOIUYOIUYPOIUPOIUPOIUYOIUYOIUYOIUHOUHOHIOUHOIHOIUHOIUHIOUH_{3}v_i-\UIOIUYOIUyHJGKHJLOIUYOIUOIUYOIYIOUYTIUYIOOOIUYOIUYPOIUPOIUPOIUYOIUYOIUYOIUHOUHOHIOUHOIHOIUHOIUHIOUH_{3}v_k b_{3k}w_t=w_t b_{3i}\UIOIUYOIUyHJGKHJLOIUYOIUOIUYOIYIOUYTIUYIOOOIUYOIUYPOIUPOIUPOIUYOIUYOIUYOIUHOUHOHIOUHOIHOIUHOIUHIOUH_{3}v_i$. We conclude that \eqref{8ThswELzXU3X7Ebd1KdZ7v1rN3GiirRXGKWK099ovBM0FDJCvkopYNQ2aN94Z7k0UnUKamE3OjU8DFYFFokbSI2J9V9gVlM8ALWThDPnPu3EL7HPD2VDaZTggzcCCmbvc70qqPcC9mt60ogcrTiA3HEjwTK8ymKeuJMc4q6dVz200XnYUtLR9GYjPXvFOVr6W1zUK1WbPToaWJJuKnxBLnd0ftDEbMmj4loHYyhZyMjM91zQS4p7z8eKa9h0JrbacekcirexG0z4n381} may be written as   \begin{align}\thelt{THC SEl UyR Mh 66XU 7hNz pZVC5V nV 7 VjL 7kv WKf 7P 5hj6 t1vu gkLGdN X8 b gOX HWm 6W4 YE mxFG 4WaN EbGKsv 0p 4 OG0 Nrd uTe Za xNXq V4Bp mOdXIq 9a b PeD PbU Z4N Xt ohbY egCf xBNttE wc D YSD 637 jJ2 ms 6Ta1 J2xZ PtKnPw AX A tJA Rc8 n5d 93 TZi7 q6Wo nEDLwW Sz e Sue YFX 8cM hm Y6is 15pX aOYBbV fS C haL kBR Ks6 UO qG4j DVab fbdtny fi D BFI 7uh B39 FJ 6mYr CUUT f2X38J 43 K yZg 87i gFR 5R z1t3 jH9x lOg1h7 P7 W w8w jMJ qH3 l5 J5wU 8eH0 OogRCv L7 f JJg 1u}    \begin{split}    &    \tda_{3i}a_{ki}\UIOIUYOIUyHJGKHJLOIUYOIUOIUYOIYIOUYTIUYIOOOIUYOIUYPOIUPOIUPOIUYOIUYOIUYOIUHOUHOHIOUHOIHOIUHOIUHIOUH_{k}q    = - b_{3i}\UIOIUYOIUyHJGKHJLOIUYOIUOIUYOIYIOUYTIUYIOOOIUYOIUYPOIUPOIUPOIUYOIUYOIUYOIUHOUHOHIOUHOIHOIUHOIUHIOUH_{t}v_i    + \UIOIUYOIUyHJGKHJLOIUYOIUOIUYOIYIOUYTIUYIOOOIUYOIUYPOIUPOIUPOIUYOIUYOIUYOIUHOUHOHIOUHOIHOIUHOIUHIOUH_{t}\tda_{3i}v_i    -      \frac{1}{\UIOIUYOIUyHJGKHJLOIUYOIUOIUYOIYIOUYTIUYIOOOIUYOIUYPOIUPOIUPOIUYOIUYOIUYOIUHOUHOHIOUHOIHOIUHOIUHIOUH_{3}\psi}        \biggl(            \sum_{j=1}^{2}  v_k b_{jk} \UIOIUYOIUyHJGKHJLOIUYOIUOIUYOIYIOUYTIUYIOOOIUYOIUYPOIUPOIUPOIUYOIUYOIUYOIUHOUHOHIOUHOIHOIUHOIUHIOUH_{j}w_t
      + w_t \UIOIUYOIUyHJGKHJLOIUYOIUOIUYOIYIOUYTIUYIOOOIUYOIUYPOIUPOIUPOIUYOIUYOIUYOIUHOUHOHIOUHOIHOIUHOIUHIOUH_{3}b_{3i} v_i       -       \UIOIUYOIUyHJGKHJLOIUYOIUOIUYOIYIOUYTIUYIOOOIUYOIUYPOIUPOIUPOIUYOIUYOIUYOIUHOUHOHIOUHOIHOIUHOIUHIOUH_{j}      b_{3i} v_k b_{jk} v_i        \biggr)     \inon{on $\Gamma_1$}    .    \end{split}    \llabel{8ThswELzXU3X7Ebd1KdZ7v1rN3GiirRXGKWK099ovBM0FDJCvkopYNQ2aN94Z7k0UnUKamE3OjU8DFYFFokbSI2J9V9gVlM8ALWThDPnPu3EL7HPD2VDaZTggzcCCmbvc70qqPcC9mt60ogcrTiA3HEjwTK8ymKeuJMc4q6dVz200XnYUtLR9GYjPXvFOVr6W1zUK1WbPToaWJJuKnxBLnd0ftDEbMmj4loHYyhZyMjM91zQS4p7z8eKa9h0JrbacekcirexG0z4n3356}   \end{align} while \eqref{8ThswELzXU3X7Ebd1KdZ7v1rN3GiirRXGKWK099ovBM0FDJCvkopYNQ2aN94Z7k0UnUKamE3OjU8DFYFFokbSI2J9V9gVlM8ALWThDPnPu3EL7HPD2VDaZTggzcCCmbvc70qqPcC9mt60ogcrTiA3HEjwTK8ymKeuJMc4q6dVz200XnYUtLR9GYjPXvFOVr6W1zUK1WbPToaWJJuKnxBLnd0ftDEbMmj4loHYyhZyMjM91zQS4p7z8eKa9h0JrbacekcirexG0z4n383} may be rewritten as   \begin{align}\thelt{wc D YSD 637 jJ2 ms 6Ta1 J2xZ PtKnPw AX A tJA Rc8 n5d 93 TZi7 q6Wo nEDLwW Sz e Sue YFX 8cM hm Y6is 15pX aOYBbV fS C haL kBR Ks6 UO qG4j DVab fbdtny fi D BFI 7uh B39 FJ 6mYr CUUT f2X38J 43 K yZg 87i gFR 5R z1t3 jH9x lOg1h7 P7 W w8w jMJ qH3 l5 J5wU 8eH0 OogRCv L7 f JJg 1ug RfM XI GSuE Efbh 3hdNY3 x1 9 7jR qeP cdu sb fkuJ hEpw MvNBZV zL u qxJ 9b1 BTf Yk RJLj Oo1a EPIXvZ Aj v Xne fhK GsJ Ga wqjt U7r6 MPoydE H2 6 203 mGi JhF nT NCDB YlnP oKO6Pu XU 3 u}    \begin{split}    &    \tda_{3i}a_{ki}\UIOIUYOIUyHJGKHJLOIUYOIUOIUYOIYIOUYTIUYIOOOIUYOIUYPOIUPOIUPOIUYOIUYOIUYOIUHOUHOHIOUHOIHOIUHOIUHIOUH_{k}q    + q
   =\Delta_2^2 w     -  \nu   \Delta_2   w_{t}    + \UIOIUYOIUyHJGKHJLOIUYOIUOIUYOIYIOUYTIUYIOOOIUYOIUYPOIUPOIUPOIUYOIUYOIUYOIUHOUHOHIOUHOIHOIUHOIUHIOUH_{t}\tda_{3i}v_i    -      \frac{1}{\UIOIUYOIUyHJGKHJLOIUYOIUOIUYOIYIOUYTIUYIOOOIUYOIUYPOIUPOIUPOIUYOIUYOIUYOIUHOUHOHIOUHOIHOIUHOIUHIOUH_{3}\psi}        \biggl(            \sum_{j=1}^{2}  v_k b_{jk} \UIOIUYOIUyHJGKHJLOIUYOIUOIUYOIYIOUYTIUYIOOOIUYOIUYPOIUPOIUPOIUYOIUYOIUYOIUHOUHOHIOUHOIHOIUHOIUHIOUH_{j}w_t       + w_t \UIOIUYOIUyHJGKHJLOIUYOIUOIUYOIYIOUYTIUYIOOOIUYOIUYPOIUPOIUPOIUYOIUYOIUYOIUHOUHOHIOUHOIHOIUHOIUHIOUH_{3}b_{3i} v_i       -       \UIOIUYOIUyHJGKHJLOIUYOIUOIUYOIYIOUYTIUYIOOOIUYOIUYPOIUPOIUPOIUYOIUYOIUYOIUHOUHOHIOUHOIHOIUHOIUHIOUH_{j}      b_{3i} v_k b_{jk} v_i        \biggr)     \inon{on $\Gamma_1$}    .    \end{split}    \llabel{8ThswELzXU3X7Ebd1KdZ7v1rN3GiirRXGKWK099ovBM0FDJCvkopYNQ2aN94Z7k0UnUKamE3OjU8DFYFFokbSI2J9V9gVlM8ALWThDPnPu3EL7HPD2VDaZTggzcCCmbvc70qqPcC9mt60ogcrTiA3HEjwTK8ymKeuJMc4q6dVz200XnYUtLR9GYjPXvFOVr6W1zUK1WbPToaWJJuKnxBLnd0ftDEbMmj4loHYyhZyMjM91zQS4p7z8eKa9h0JrbacekcirexG0z4n3340}
  \end{align} } \end{Remark} \par \subsection{The vorticity estimate} \label{sec05} Recall that the Eulerian vorticity $\omega_{i}=\epsilon_{ijk}\UIOIUYOIUyHJGKHJLOIUYOIUOIUYOIYIOUYTIUYIOOOIUYOIUYPOIUPOIUPOIUYOIUYOIUYOIUHOUHOHIOUHOIHOIUHOIUHIOUH_{j}u_k$, for $i=1,2,3$, solves   \begin{equation}    \UIOIUYOIUyHJGKHJLOIUYOIUOIUYOIYIOUYTIUYIOOOIUYOIUYPOIUPOIUPOIUYOIUYOIUYOIUHOUHOHIOUHOIHOIUHOIUHIOUH_{t}\omega_i       + u_j\UIOIUYOIUyHJGKHJLOIUYOIUOIUYOIYIOUYTIUYIOOOIUYOIUYPOIUPOIUPOIUYOIUYOIUYOIUHOUHOHIOUHOIHOIUHOIUHIOUH_{j} \omega = \omega_j\UIOIUYOIUyHJGKHJLOIUYOIUOIUYOIYIOUYTIUYIOOOIUYOIUYPOIUPOIUPOIUYOIUYOIUYOIUHOUHOHIOUHOIHOIUHOIUHIOUH_{j}u_i    \comma i=1,2,3    .    \llabel{8ThswELzXU3X7Ebd1KdZ7v1rN3GiirRXGKWK099ovBM0FDJCvkopYNQ2aN94Z7k0UnUKamE3OjU8DFYFFokbSI2J9V9gVlM8ALWThDPnPu3EL7HPD2VDaZTggzcCCmbvc70qqPcC9mt60ogcrTiA3HEjwTK8ymKeuJMc4q6dVz200XnYUtLR9GYjPXvFOVr6W1zUK1WbPToaWJJuKnxBLnd0ftDEbMmj4loHYyhZyMjM91zQS4p7z8eKa9h0JrbacekcirexG0z4n392}
  \end{equation} Therefore, the ALE vorticity   \begin{equation}    \zeta(x,t)=\omega(\eta(x,t),t)    \llabel{8ThswELzXU3X7Ebd1KdZ7v1rN3GiirRXGKWK099ovBM0FDJCvkopYNQ2aN94Z7k0UnUKamE3OjU8DFYFFokbSI2J9V9gVlM8ALWThDPnPu3EL7HPD2VDaZTggzcCCmbvc70qqPcC9mt60ogcrTiA3HEjwTK8ymKeuJMc4q6dVz200XnYUtLR9GYjPXvFOVr6W1zUK1WbPToaWJJuKnxBLnd0ftDEbMmj4loHYyhZyMjM91zQS4p7z8eKa9h0JrbacekcirexG0z4n393}   \end{equation} satisfies the equation   \begin{align}\thelt{X38J 43 K yZg 87i gFR 5R z1t3 jH9x lOg1h7 P7 W w8w jMJ qH3 l5 J5wU 8eH0 OogRCv L7 f JJg 1ug RfM XI GSuE Efbh 3hdNY3 x1 9 7jR qeP cdu sb fkuJ hEpw MvNBZV zL u qxJ 9b1 BTf Yk RJLj Oo1a EPIXvZ Aj v Xne fhK GsJ Ga wqjt U7r6 MPoydE H2 6 203 mGi JhF nT NCDB YlnP oKO6Pu XU 3 uu9 mSg 41v ma kk0E WUpS UtGBtD e6 d Kdx ZNT FuT i1 fMcM hq7P Ovf0hg Hl 8 fqv I3R K39 fn 9MaC Zgow 6e1iXj KC 5 lHO lpG pkK Xd Dxtz 0HxE fSMjXY L8 F vh7 dmJ kE8 QA KDo1 FqML HOZ2iL 9}    \begin{split}    \UIOIUYOIUyHJGKHJLOIUYOIUOIUYOIYIOUYTIUYIOOOIUYOIUYPOIUPOIUPOIUYOIUYOIUYOIUHOUHOHIOUHOIHOIUHOIUHIOUH_{t}\zeta_i     + v_1 a_{j1}\UIOIUYOIUyHJGKHJLOIUYOIUOIUYOIYIOUYTIUYIOOOIUYOIUYPOIUPOIUPOIUYOIUYOIUYOIUHOUHOHIOUHOIHOIUHOIUHIOUH_{j} \zeta_i     + v_2 a_{j2}\UIOIUYOIUyHJGKHJLOIUYOIUOIUYOIYIOUYTIUYIOOOIUYOIUYPOIUPOIUPOIUYOIUYOIUYOIUHOUHOHIOUHOIHOIUHOIUHIOUH_{j} \zeta_i     + (v_3-\psi_t) a_{j3} \UIOIUYOIUyHJGKHJLOIUYOIUOIUYOIYIOUYTIUYIOOOIUYOIUYPOIUPOIUPOIUYOIUYOIUYOIUHOUHOHIOUHOIHOIUHOIUHIOUH_{j} \zeta_i     = \zeta_k a_{mk}\UIOIUYOIUyHJGKHJLOIUYOIUOIUYOIYIOUYTIUYIOOOIUYOIUYPOIUPOIUPOIUYOIUYOIUYOIUHOUHOHIOUHOIHOIUHOIUHIOUH_{m}v_i
   \comma i=1,2,3    .    \end{split}    \label{8ThswELzXU3X7Ebd1KdZ7v1rN3GiirRXGKWK099ovBM0FDJCvkopYNQ2aN94Z7k0UnUKamE3OjU8DFYFFokbSI2J9V9gVlM8ALWThDPnPu3EL7HPD2VDaZTggzcCCmbvc70qqPcC9mt60ogcrTiA3HEjwTK8ymKeuJMc4q6dVz200XnYUtLR9GYjPXvFOVr6W1zUK1WbPToaWJJuKnxBLnd0ftDEbMmj4loHYyhZyMjM91zQS4p7z8eKa9h0JrbacekcirexG0z4n394}   \end{align} Note that in the ALE variables, the vorticity reads   \begin{equation}    \zeta_i    =     \epsilon_{ijk} \UIOIUYOIUyHJGKHJLOIUYOIUOIUYOIYIOUYTIUYIOOOIUYOIUYPOIUPOIUPOIUYOIUYOIUYOIUHOUHOHIOUHOIHOIUHOIUHIOUH_{m} v_k a_{mj}    .    \label{8ThswELzXU3X7Ebd1KdZ7v1rN3GiirRXGKWK099ovBM0FDJCvkopYNQ2aN94Z7k0UnUKamE3OjU8DFYFFokbSI2J9V9gVlM8ALWThDPnPu3EL7HPD2VDaZTggzcCCmbvc70qqPcC9mt60ogcrTiA3HEjwTK8ymKeuJMc4q6dVz200XnYUtLR9GYjPXvFOVr6W1zUK1WbPToaWJJuKnxBLnd0ftDEbMmj4loHYyhZyMjM91zQS4p7z8eKa9h0JrbacekcirexG0z4n395}   \end{equation} Since we do not use the Eulerian variables in estimates, we denote the ALE variable, for simplicity of notation,
with~$x$. By multiplying \eqref{8ThswELzXU3X7Ebd1KdZ7v1rN3GiirRXGKWK099ovBM0FDJCvkopYNQ2aN94Z7k0UnUKamE3OjU8DFYFFokbSI2J9V9gVlM8ALWThDPnPu3EL7HPD2VDaZTggzcCCmbvc70qqPcC9mt60ogcrTiA3HEjwTK8ymKeuJMc4q6dVz200XnYUtLR9GYjPXvFOVr6W1zUK1WbPToaWJJuKnxBLnd0ftDEbMmj4loHYyhZyMjM91zQS4p7z8eKa9h0JrbacekcirexG0z4n394} with $J$, we obtain   \begin{align}\thelt{1a EPIXvZ Aj v Xne fhK GsJ Ga wqjt U7r6 MPoydE H2 6 203 mGi JhF nT NCDB YlnP oKO6Pu XU 3 uu9 mSg 41v ma kk0E WUpS UtGBtD e6 d Kdx ZNT FuT i1 fMcM hq7P Ovf0hg Hl 8 fqv I3R K39 fn 9MaC Zgow 6e1iXj KC 5 lHO lpG pkK Xd Dxtz 0HxE fSMjXY L8 F vh7 dmJ kE8 QA KDo1 FqML HOZ2iL 9i I m3L Kva YiN K9 sb48 NxwY NR0nx2 t5 b WCk x2a 31k a8 fUIa RGzr 7oigRX 5s m 9PQ 7Sr 5St ZE Ymp8 VIWS hdzgDI 9v R F5J 81x 33n Ne fjBT VvGP vGsxQh Al G Fbe 1bQ i6J ap OJJa ceGq 1vv}    \begin{split}    J\UIOIUYOIUyHJGKHJLOIUYOIUOIUYOIYIOUYTIUYIOOOIUYOIUYPOIUPOIUPOIUYOIUYOIUYOIUHOUHOHIOUHOIHOIUHOIUHIOUH_{t}\zeta_i     + v_1 \tda_{j1}\UIOIUYOIUyHJGKHJLOIUYOIUOIUYOIYIOUYTIUYIOOOIUYOIUYPOIUPOIUPOIUYOIUYOIUYOIUHOUHOHIOUHOIHOIUHOIUHIOUH_{j} \zeta_i     + v_2 \tda_{j2}\UIOIUYOIUyHJGKHJLOIUYOIUOIUYOIYIOUYTIUYIOOOIUYOIUYPOIUPOIUPOIUYOIUYOIUYOIUHOUHOHIOUHOIHOIUHOIUHIOUH_{j} \zeta_i     + (v_3-\psi_t) \tda_{j3} \UIOIUYOIUyHJGKHJLOIUYOIUOIUYOIYIOUYTIUYIOOOIUYOIUYPOIUPOIUPOIUYOIUYOIUYOIUHOUHOHIOUHOIHOIUHOIUHIOUH_{j} \zeta_i     = \zeta_k \tda_{mk}\UIOIUYOIUyHJGKHJLOIUYOIUOIUYOIYIOUYTIUYIOOOIUYOIUYPOIUPOIUPOIUYOIUYOIUYOIUHOUHOHIOUHOIHOIUHOIUHIOUH_{m}v_i    \comma i=1,2,3    .    \end{split}    \label{8ThswELzXU3X7Ebd1KdZ7v1rN3GiirRXGKWK099ovBM0FDJCvkopYNQ2aN94Z7k0UnUKamE3OjU8DFYFFokbSI2J9V9gVlM8ALWThDPnPu3EL7HPD2VDaZTggzcCCmbvc70qqPcC9mt60ogcrTiA3HEjwTK8ymKeuJMc4q6dVz200XnYUtLR9GYjPXvFOVr6W1zUK1WbPToaWJJuKnxBLnd0ftDEbMmj4loHYyhZyMjM91zQS4p7z8eKa9h0JrbacekcirexG0z4n396}   \end{align}
In order to perform non-tangential estimates, we need to  extend functions to ${\mathbb R}^{3}$ using the classical Sobolev extension operator $f\mapsto \tilde f$, which is a continuous operator $H^{k}(\Omega)\to H^{k}(\Omega_0)$ for all $k\in[0,5]$, where   \begin{equation}    \Omega_0={\mathbb T}^2\times[0,2]    .    \llabel{8ThswELzXU3X7Ebd1KdZ7v1rN3GiirRXGKWK099ovBM0FDJCvkopYNQ2aN94Z7k0UnUKamE3OjU8DFYFFokbSI2J9V9gVlM8ALWThDPnPu3EL7HPD2VDaZTggzcCCmbvc70qqPcC9mt60ogcrTiA3HEjwTK8ymKeuJMc4q6dVz200XnYUtLR9GYjPXvFOVr6W1zUK1WbPToaWJJuKnxBLnd0ftDEbMmj4loHYyhZyMjM91zQS4p7z8eKa9h0JrbacekcirexG0z4n397}   \end{equation} The extension is designed so that $\supp f$ vanishes in a neighborhood of $[3/2,\infty)$. For the Jacobian $J$, we need to modify the extension operator to $\bar{~}\colon H^{k}(\Omega)\to H^{k}(\Omega_0)$ so that
we have    \begin{equation}    \frac14 \leq \bar J(x) \leq 2    \comma x_3\leq \frac43    \label{8ThswELzXU3X7Ebd1KdZ7v1rN3GiirRXGKWK099ovBM0FDJCvkopYNQ2aN94Z7k0UnUKamE3OjU8DFYFFokbSI2J9V9gVlM8ALWThDPnPu3EL7HPD2VDaZTggzcCCmbvc70qqPcC9mt60ogcrTiA3HEjwTK8ymKeuJMc4q6dVz200XnYUtLR9GYjPXvFOVr6W1zUK1WbPToaWJJuKnxBLnd0ftDEbMmj4loHYyhZyMjM91zQS4p7z8eKa9h0JrbacekcirexG0z4n3220}   \end{equation} and $\bar J \equiv 0$ for $x_3\geq 2$. \par Now, consider the solution $\theta=(\theta_1,\theta_2,\theta_3)$ of the problem   \begin{align}\thelt{aC Zgow 6e1iXj KC 5 lHO lpG pkK Xd Dxtz 0HxE fSMjXY L8 F vh7 dmJ kE8 QA KDo1 FqML HOZ2iL 9i I m3L Kva YiN K9 sb48 NxwY NR0nx2 t5 b WCk x2a 31k a8 fUIa RGzr 7oigRX 5s m 9PQ 7Sr 5St ZE Ymp8 VIWS hdzgDI 9v R F5J 81x 33n Ne fjBT VvGP vGsxQh Al G Fbe 1bQ i6J ap OJJa ceGq 1vvb8r F2 F 3M6 8eD lzG tX tVm5 y14v mwIXa2 OG Y hxU sXJ 0qg l5 ZGAt HPZd oDWrSb BS u NKi 6KW gr3 9s 9tc7 WM4A ws1PzI 5c C O7Z 8y9 lMT LA dwhz Mxz9 hjlWHj bJ 5 CqM jht y9l Mn 4rc7 6Am}    \begin{split}    \bar J\UIOIUYOIUyHJGKHJLOIUYOIUOIUYOIYIOUYTIUYIOOOIUYOIUYPOIUPOIUPOIUYOIUYOIUYOIUHOUHOHIOUHOIHOIUHOIUHIOUH_{t}\theta_i
    + \tilde v_1 \tilde \tda_{j1}\UIOIUYOIUyHJGKHJLOIUYOIUOIUYOIYIOUYTIUYIOOOIUYOIUYPOIUPOIUPOIUYOIUYOIUYOIUHOUHOHIOUHOIHOIUHOIUHIOUH_{j} \theta_i     + \tilde v_2 \tilde \tda_{j2}\UIOIUYOIUyHJGKHJLOIUYOIUOIUYOIYIOUYTIUYIOOOIUYOIUYPOIUPOIUPOIUYOIUYOIUYOIUHOUHOHIOUHOIHOIUHOIUHIOUH_{j} \theta_i     + (\tilde v_3-\tilde \psi_t) \tilde \tda_{j3} \UIOIUYOIUyHJGKHJLOIUYOIUOIUYOIYIOUYTIUYIOOOIUYOIUYPOIUPOIUPOIUYOIUYOIUYOIUHOUHOHIOUHOIHOIUHOIUHIOUH_{j} \theta_i     = \theta_k \tilde \tda_{mk}\UIOIUYOIUyHJGKHJLOIUYOIUOIUYOIYIOUYTIUYIOOOIUYOIUYPOIUPOIUPOIUYOIUYOIUYOIUHOUHOHIOUHOIHOIUHOIUHIOUH_{m}\tilde v_i    \comma i=1,2,3    ,    \end{split}    \label{8ThswELzXU3X7Ebd1KdZ7v1rN3GiirRXGKWK099ovBM0FDJCvkopYNQ2aN94Z7k0UnUKamE3OjU8DFYFFokbSI2J9V9gVlM8ALWThDPnPu3EL7HPD2VDaZTggzcCCmbvc70qqPcC9mt60ogcrTiA3HEjwTK8ymKeuJMc4q6dVz200XnYUtLR9GYjPXvFOVr6W1zUK1WbPToaWJJuKnxBLnd0ftDEbMmj4loHYyhZyMjM91zQS4p7z8eKa9h0JrbacekcirexG0z4n398}   \end{align} with the initial condition   \begin{equation}    \theta(0)=\tilde \zeta(0)     .    \llabel{8ThswELzXU3X7Ebd1KdZ7v1rN3GiirRXGKWK099ovBM0FDJCvkopYNQ2aN94Z7k0UnUKamE3OjU8DFYFFokbSI2J9V9gVlM8ALWThDPnPu3EL7HPD2VDaZTggzcCCmbvc70qqPcC9mt60ogcrTiA3HEjwTK8ymKeuJMc4q6dVz200XnYUtLR9GYjPXvFOVr6W1zUK1WbPToaWJJuKnxBLnd0ftDEbMmj4loHYyhZyMjM91zQS4p7z8eKa9h0JrbacekcirexG0z4n3236}
  \end{equation} \par First, we prove the following uniqueness result. \par \cole \begin{Lemma} \label{L08a} We have $\zeta=\theta$ on $\Omega\times [0,T]$, provided   \begin{equation}      \OIUYJHUGFAJKLDHFKJLSDHFLKSDJFHLKSDJHFLKSDJHFLKDJFHLLDKHFLKSDHJFALKJHLJLHGLKHHLKJHLKGKHGJKHGKJHLKHJLKJH_{0}^{T}       P(         \Vert v\Vert_{H^{2.5+\delta}},          \Vert w\Vert_{H^{4+\delta}(\Gamma_1)},
        \Vert w_{t}\Vert_{H^{2+\delta}(\Gamma_1)})      \,ds      <\infty     ,    \llabel{8ThswELzXU3X7Ebd1KdZ7v1rN3GiirRXGKWK099ovBM0FDJCvkopYNQ2aN94Z7k0UnUKamE3OjU8DFYFFokbSI2J9V9gVlM8ALWThDPnPu3EL7HPD2VDaZTggzcCCmbvc70qqPcC9mt60ogcrTiA3HEjwTK8ymKeuJMc4q6dVz200XnYUtLR9GYjPXvFOVr6W1zUK1WbPToaWJJuKnxBLnd0ftDEbMmj4loHYyhZyMjM91zQS4p7z8eKa9h0JrbacekcirexG0z4n399}   \end{equation} where $P$ is a polynomial. \end{Lemma}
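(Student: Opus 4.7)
The plan is to prove the lemma by an $L^{2}$ energy estimate on the difference $\phi = \zeta - \theta|_{\Omega}$, which turns out to solve a homogeneous transport equation whose characteristic flow preserves $\Omega$.

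The first step is to observe that the Sobolev extension operator $f \mapsto \tilde f$ and the modified extension $J \mapsto \bar J$ satisfy $\tilde f|_{\Omega} = f$ and $\bar J|_{\Omega} = J$. Restricting \eqref{8ThswELzXU3X7Ebd1KdZ7v1rN3GiirRXGKWK099ovBM0FDJCvkopYNQ2aN94Z7k0UnUKamE3OjU8DFYFFokbSI2J9V9gVlM8ALWThDPnPu3EL7HPD2VDaZTggzcCCmbvc70qqPcC9mt60ogcrTiA3HEjwTK8ymKeuJMc4q6dVz200XnYUtLR9GYjPXvFOVr6W1zUK1WbPToaWJJuKnxBLnd0ftDEbMmj4loHYyhZyMjM91zQS4p7z8eKa9h0JrbacekcirexG0z4n398} to $\Omega$, the function $\theta|_{\Omega}$ satisfies the very same transport equation \eqref{8ThswELzXU3X7Ebd1KdZ7v1rN3GiirRXGKWK099ovBM0FDJCvkopYNQ2aN94Z7k0UnUKamE3OjU8DFYFFokbSI2J9V9gVlM8ALWThDPnPu3EL7HPD2VDaZTggzcCCmbvc70qqPcC9mt60ogcrTiA3HEjwTK8ymKeuJMc4q6dVz200XnYUtLR9GYjPXvFOVr6W1zUK1WbPToaWJJuKnxBLnd0ftDEbMmj4loHYyhZyMjM91zQS4p7z8eKa9h0JrbacekcirexG0z4n396} that $\zeta$ does, with the same initial data $\zeta(0) = \tilde\zeta(0)|_{\Omega} = \theta(0)|_{\Omega}$. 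Subtracting, the difference $\phi$ solves
\begin{equation*}
  J\partial_{t}\phi_{i}
  + (v_{1}b_{j1} + v_{2}b_{j2} + (v_{3}-\psi_{t})b_{j3})\partial_{j}\phi_{i}
  = \phi_{k}b_{mk}\partial_{m}v_{i}
  \inin{\Omega\times[0,T]},
\end{equation*}
with $\phi(0) = 0$.

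Next I would perform an $L^{2}(\Omega)$ energy estimate by multiplying by $\phi_{i}$ and integrating. The term $J\partial_{t}\phi_{i}\,\phi_{i}$ integrates to $(d/dt)\tfrac{1}{2}\int J|\phi|^{2}$ up to a contribution from $J_{t}$ that is bounded by $\|J_{t}\|_{L^{\infty}}\|\phi\|_{L^{2}}^{2}$, where the $L^{\infty}$ norm is controlled via \eqref{8ThswELzXU3X7Ebd1KdZ7v1rN3GiirRXGKWK099ovBM0FDJCvkopYNQ2aN94Z7k0UnUKamE3OjU8DFYFFokbSI2J9V9gVlM8ALWThDPnPu3EL7HPD2VDaZTggzcCCmbvc70qqPcC9mt60ogcrTiA3HEjwTK8ymKeuJMc4q6dVz200XnYUtLR9GYjPXvFOVr6W1zUK1WbPToaWJJuKnxBLnd0ftDEbMmj4loHYyhZyMjM91zQS4p7z8eKa9h0JrbacekcirexG0z4n345}. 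Set $U_{j} = v_{1}b_{j1} + v_{2}b_{j2} + (v_{3}-\psi_{t})b_{j3}$; then $U_{j}\partial_{j}\phi_{i}\,\phi_{i} = \tfrac{1}{2}U_{j}\partial_{j}|\phi|^{2}$. Integrating by parts and using the Piola identity \eqref{8ThswELzXU3X7Ebd1KdZ7v1rN3GiirRXGKWK099ovBM0FDJCvkopYNQ2aN94Z7k0UnUKamE3OjU8DFYFFokbSI2J9V9gVlM8ALWThDPnPu3EL7HPD2VDaZTggzcCCmbvc70qqPcC9mt60ogcrTiA3HEjwTK8ymKeuJMc4q6dVz200XnYUtLR9GYjPXvFOVr6W1zUK1WbPToaWJJuKnxBLnd0ftDEbMmj4loHYyhZyMjM91zQS4p7z8eKa9h0JrbacekcirexG0z4n315} on each column of $b$ yields $\partial_{j}U_{j} = b_{j1}\partial_{j}v_{1} + b_{j2}\partial_{j}v_{2} + b_{j3}\partial_{j}(v_{3}-\psi_{t}) = -b_{j3}\partial_{j}\psi_{t}$, which is $L^{\infty}$-bounded by the assumed regularity. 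The right-hand side $\phi_{k}b_{mk}\partial_{m}v_{i}\,\phi_{i}$ is likewise bounded by $\|b\|_{L^{\infty}}\|\nabla v\|_{L^{\infty}}\|\phi\|_{L^{2}}^{2}$, controlled by the polynomial in \eqref{8ThswELzXU3X7Ebd1KdZ7v1rN3GiirRXGKWK099ovBM0FDJCvkopYNQ2aN94Z7k0UnUKamE3OjU8DFYFFokbSI2J9V9gVlM8ALWThDPnPu3EL7HPD2VDaZTggzcCCmbvc70qqPcC9mt60ogcrTiA3HEjwTK8ymKeuJMc4q6dVz200XnYUtLR9GYjPXvFOVr6W1zUK1WbPToaWJJuKnxBLnd0ftDEbMmj4loHYyhZyMjM91zQS4p7z8eKa9h0JrbacekcirexG0z4n348} in view of the Sobolev embedding $H^{1.5+\delta}\hookrightarrow L^{\infty}$.

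The crucial step, and the main obstacle, is to show that the boundary integral $\int_{\partial\Omega}U_{j}N_{j}|\phi|^{2}$ coming from the integration by parts vanishes. On $\Gamma_{0}$, we have $\psi\equiv 0$, so $\partial_{1}\psi = \partial_{2}\psi = 0 = \psi_{t}$, which combined with $b_{31} = -\partial_{1}\psi$, $b_{32} = -\partial_{2}\psi$, $b_{33}=1$ and the slip condition $v_{3}=0$ from \eqref{8ThswELzXU3X7Ebd1KdZ7v1rN3GiirRXGKWK099ovBM0FDJCvkopYNQ2aN94Z7k0UnUKamE3OjU8DFYFFokbSI2J9V9gVlM8ALWThDPnPu3EL7HPD2VDaZTggzcCCmbvc70qqPcC9mt60ogcrTiA3HEjwTK8ymKeuJMc4q6dVz200XnYUtLR9GYjPXvFOVr6W1zUK1WbPToaWJJuKnxBLnd0ftDEbMmj4loHYyhZyMjM91zQS4p7z8eKa9h0JrbacekcirexG0z4n320} gives $U_{3} = v_{3} = 0$. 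On $\Gamma_{1}$, the kinematic condition \eqref{8ThswELzXU3X7Ebd1KdZ7v1rN3GiirRXGKWK099ovBM0FDJCvkopYNQ2aN94Z7k0UnUKamE3OjU8DFYFFokbSI2J9V9gVlM8ALWThDPnPu3EL7HPD2VDaZTggzcCCmbvc70qqPcC9mt60ogcrTiA3HEjwTK8ymKeuJMc4q6dVz200XnYUtLR9GYjPXvFOVr6W1zUK1WbPToaWJJuKnxBLnd0ftDEbMmj4loHYyhZyMjM91zQS4p7z8eKa9h0JrbacekcirexG0z4n321} reads $b_{3i}v_{i} = -v_{1}\partial_{1}\psi - v_{2}\partial_{2}\psi + v_{3} = w_{t}$, while $\psi_{t}|_{\Gamma_{1}} = w_{t}$, so $U_{3} = -v_{1}\partial_{1}\psi - v_{2}\partial_{2}\psi + v_{3} - \psi_{t} = w_{t} - w_{t} = 0$. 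Hence $U\cdot N \equiv 0$ on $\partial\Omega$, the boundary integral vanishes, and collecting estimates we obtain
\begin{equation*}
  \frac{d}{dt}\int J|\phi|^{2}
  \leq P(\|v\|_{H^{2.5+\delta}},\|w\|_{H^{4+\delta}(\Gamma_{1})},\|w_{t}\|_{H^{2+\delta}(\Gamma_{1})})\int|\phi|^{2}.
\end{equation*}
Combined with $J \geq 1/2$ from \eqref{8ThswELzXU3X7Ebd1KdZ7v1rN3GiirRXGKWK099ovBM0FDJCvkopYNQ2aN94Z7k0UnUKamE3OjU8DFYFFokbSI2J9V9gVlM8ALWThDPnPu3EL7HPD2VDaZTggzcCCmbvc70qqPcC9mt60ogcrTiA3HEjwTK8ymKeuJMc4q6dVz200XnYUtLR9GYjPXvFOVr6W1zUK1WbPToaWJJuKnxBLnd0ftDEbMmj4loHYyhZyMjM91zQS4p7z8eKa9h0JrbacekcirexG0z4n338} and the integrability-in-time hypothesis, Grönwall's inequality applied to $\int J|\phi|^{2}$ (which starts at zero) yields $\phi \equiv 0$, i.e., $\zeta = \theta$ on $\Omega\times[0,T]$.
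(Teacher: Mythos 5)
Your proposal is correct and follows essentially the same route as the paper: the difference satisfies a homogeneous transport equation on $\Omega$ (since the extensions act as the identity there), the key point in both arguments is the identity $v_1 b_{31}+v_2 b_{32}+(v_3-\psi_t)b_{33}=0$ on $\Gamma_0\cup\Gamma_1$ (verified exactly as you do, from $\psi=0$, $v_3=0$ on $\Gamma_0$ and from $b_{3i}v_i=w_t=\psi_t$ on $\Gamma_1$), which kills the boundary terms in the $L^2$ estimate, and Gr\"onwall with $J\ge 1/2$ finishes the proof. The only cosmetic difference is that you invoke the ALE divergence-free condition to simplify $\partial_j U_j$, whereas the paper simply bounds the resulting interior terms directly; both are valid.
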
 \colb \par \begin{proof}[Proof of Lemma~\ref{L08a}] The main point of this approach is that in the equation \eqref{8ThswELzXU3X7Ebd1KdZ7v1rN3GiirRXGKWK099ovBM0FDJCvkopYNQ2aN94Z7k0UnUKamE3OjU8DFYFFokbSI2J9V9gVlM8ALWThDPnPu3EL7HPD2VDaZTggzcCCmbvc70qqPcC9mt60ogcrTiA3HEjwTK8ymKeuJMc4q6dVz200XnYUtLR9GYjPXvFOVr6W1zUK1WbPToaWJJuKnxBLnd0ftDEbMmj4loHYyhZyMjM91zQS4p7z8eKa9h0JrbacekcirexG0z4n396} we have    \begin{equation}
   v_1 \tda_{31}     + v_2 \tda_{32}     + (v_3-\psi_t) \tda_{33}    = 0    \inon{on $\UIOIUYOIUyHJGKHJLOIUYOIUOIUYOIYIOUYTIUYIOOOIUYOIUYPOIUPOIUPOIUYOIUYOIUYOIUHOUHOHIOUHOIHOIUHOIUHIOUH\Omega$}    .        \label{8ThswELzXU3X7Ebd1KdZ7v1rN3GiirRXGKWK099ovBM0FDJCvkopYNQ2aN94Z7k0UnUKamE3OjU8DFYFFokbSI2J9V9gVlM8ALWThDPnPu3EL7HPD2VDaZTggzcCCmbvc70qqPcC9mt60ogcrTiA3HEjwTK8ymKeuJMc4q6dVz200XnYUtLR9GYjPXvFOVr6W1zUK1WbPToaWJJuKnxBLnd0ftDEbMmj4loHYyhZyMjM91zQS4p7z8eKa9h0JrbacekcirexG0z4n3100}   \end{equation} First, we verify \eqref{8ThswELzXU3X7Ebd1KdZ7v1rN3GiirRXGKWK099ovBM0FDJCvkopYNQ2aN94Z7k0UnUKamE3OjU8DFYFFokbSI2J9V9gVlM8ALWThDPnPu3EL7HPD2VDaZTggzcCCmbvc70qqPcC9mt60ogcrTiA3HEjwTK8ymKeuJMc4q6dVz200XnYUtLR9GYjPXvFOVr6W1zUK1WbPToaWJJuKnxBLnd0ftDEbMmj4loHYyhZyMjM91zQS4p7z8eKa9h0JrbacekcirexG0z4n3100} on $\Gamma_0$. By \eqref{8ThswELzXU3X7Ebd1KdZ7v1rN3GiirRXGKWK099ovBM0FDJCvkopYNQ2aN94Z7k0UnUKamE3OjU8DFYFFokbSI2J9V9gVlM8ALWThDPnPu3EL7HPD2VDaZTggzcCCmbvc70qqPcC9mt60ogcrTiA3HEjwTK8ymKeuJMc4q6dVz200XnYUtLR9GYjPXvFOVr6W1zUK1WbPToaWJJuKnxBLnd0ftDEbMmj4loHYyhZyMjM91zQS4p7z8eKa9h0JrbacekcirexG0z4n309}$_3$ and \eqref{8ThswELzXU3X7Ebd1KdZ7v1rN3GiirRXGKWK099ovBM0FDJCvkopYNQ2aN94Z7k0UnUKamE3OjU8DFYFFokbSI2J9V9gVlM8ALWThDPnPu3EL7HPD2VDaZTggzcCCmbvc70qqPcC9mt60ogcrTiA3HEjwTK8ymKeuJMc4q6dVz200XnYUtLR9GYjPXvFOVr6W1zUK1WbPToaWJJuKnxBLnd0ftDEbMmj4loHYyhZyMjM91zQS4p7z8eKa9h0JrbacekcirexG0z4n314}, we have $\tda_{31}=\tda_{32}=0$ and  $\psi_t=0$, so the left side of \eqref{8ThswELzXU3X7Ebd1KdZ7v1rN3GiirRXGKWK099ovBM0FDJCvkopYNQ2aN94Z7k0UnUKamE3OjU8DFYFFokbSI2J9V9gVlM8ALWThDPnPu3EL7HPD2VDaZTggzcCCmbvc70qqPcC9mt60ogcrTiA3HEjwTK8ymKeuJMc4q6dVz200XnYUtLR9GYjPXvFOVr6W1zUK1WbPToaWJJuKnxBLnd0ftDEbMmj4loHYyhZyMjM91zQS4p7z8eKa9h0JrbacekcirexG0z4n3100}  reduces to $v_3$, which vanishes by the boundary condition~\eqref{8ThswELzXU3X7Ebd1KdZ7v1rN3GiirRXGKWK099ovBM0FDJCvkopYNQ2aN94Z7k0UnUKamE3OjU8DFYFFokbSI2J9V9gVlM8ALWThDPnPu3EL7HPD2VDaZTggzcCCmbvc70qqPcC9mt60ogcrTiA3HEjwTK8ymKeuJMc4q6dVz200XnYUtLR9GYjPXvFOVr6W1zUK1WbPToaWJJuKnxBLnd0ftDEbMmj4loHYyhZyMjM91zQS4p7z8eKa9h0JrbacekcirexG0z4n320}.
On $\Gamma_1$, the left side of \eqref{8ThswELzXU3X7Ebd1KdZ7v1rN3GiirRXGKWK099ovBM0FDJCvkopYNQ2aN94Z7k0UnUKamE3OjU8DFYFFokbSI2J9V9gVlM8ALWThDPnPu3EL7HPD2VDaZTggzcCCmbvc70qqPcC9mt60ogcrTiA3HEjwTK8ymKeuJMc4q6dVz200XnYUtLR9GYjPXvFOVr6W1zUK1WbPToaWJJuKnxBLnd0ftDEbMmj4loHYyhZyMjM91zQS4p7z8eKa9h0JrbacekcirexG0z4n3100} vanishes by \eqref{8ThswELzXU3X7Ebd1KdZ7v1rN3GiirRXGKWK099ovBM0FDJCvkopYNQ2aN94Z7k0UnUKamE3OjU8DFYFFokbSI2J9V9gVlM8ALWThDPnPu3EL7HPD2VDaZTggzcCCmbvc70qqPcC9mt60ogcrTiA3HEjwTK8ymKeuJMc4q6dVz200XnYUtLR9GYjPXvFOVr6W1zUK1WbPToaWJJuKnxBLnd0ftDEbMmj4loHYyhZyMjM91zQS4p7z8eKa9h0JrbacekcirexG0z4n321}. Thus \eqref{8ThswELzXU3X7Ebd1KdZ7v1rN3GiirRXGKWK099ovBM0FDJCvkopYNQ2aN94Z7k0UnUKamE3OjU8DFYFFokbSI2J9V9gVlM8ALWThDPnPu3EL7HPD2VDaZTggzcCCmbvc70qqPcC9mt60ogcrTiA3HEjwTK8ymKeuJMc4q6dVz200XnYUtLR9GYjPXvFOVr6W1zUK1WbPToaWJJuKnxBLnd0ftDEbMmj4loHYyhZyMjM91zQS4p7z8eKa9h0JrbacekcirexG0z4n3100} indeed holds.  Now, the difference $\sigma=\zeta-\theta$ satisfies   \begin{align}\thelt{ZE Ymp8 VIWS hdzgDI 9v R F5J 81x 33n Ne fjBT VvGP vGsxQh Al G Fbe 1bQ i6J ap OJJa ceGq 1vvb8r F2 F 3M6 8eD lzG tX tVm5 y14v mwIXa2 OG Y hxU sXJ 0qg l5 ZGAt HPZd oDWrSb BS u NKi 6KW gr3 9s 9tc7 WM4A ws1PzI 5c C O7Z 8y9 lMT LA dwhz Mxz9 hjlWHj bJ 5 CqM jht y9l Mn 4rc7 6Amk KJimvH 9r O tbc tCK rsi B0 4cFV Dl1g cvfWh6 5n x y9Z S4W Pyo QB yr3v fBkj TZKtEZ 7r U fdM icd yCV qn D036 HJWM tYfL9f yX x O7m IcF E1O uL QsAQ NfWv 6kV8Im 7Q 6 GsX NCV 0YP oC jnW}    J\UIOIUYOIUyHJGKHJLOIUYOIUOIUYOIYIOUYTIUYIOOOIUYOIUYPOIUPOIUPOIUYOIUYOIUYOIUHOUHOHIOUHOIHOIUHOIUHIOUH_{t}\sigma_i     + v_1 \tda_{j1}\UIOIUYOIUyHJGKHJLOIUYOIUOIUYOIYIOUYTIUYIOOOIUYOIUYPOIUPOIUPOIUYOIUYOIUYOIUHOUHOHIOUHOIHOIUHOIUHIOUH_{j} \sigma_i     + v_2 \tda_{j2}\UIOIUYOIUyHJGKHJLOIUYOIUOIUYOIYIOUYTIUYIOOOIUYOIUYPOIUPOIUPOIUYOIUYOIUYOIUHOUHOHIOUHOIHOIUHOIUHIOUH_{j} \sigma_i     + (v_3-\psi_t) \tda_{j3} \UIOIUYOIUyHJGKHJLOIUYOIUOIUYOIYIOUYTIUYIOOOIUYOIUYPOIUPOIUPOIUYOIUYOIUYOIUHOUHOHIOUHOIHOIUHOIUHIOUH_{j} \sigma_i     = \sigma_k \tda_{mk}\UIOIUYOIUyHJGKHJLOIUYOIUOIUYOIYIOUYTIUYIOOOIUYOIUYPOIUPOIUPOIUYOIUYOIUYOIUHOUHOHIOUHOIHOIUHOIUHIOUH_{m} v_i    \inon{on $\Omega$}    \comma i=1,2,3    ,    \label{8ThswELzXU3X7Ebd1KdZ7v1rN3GiirRXGKWK099ovBM0FDJCvkopYNQ2aN94Z7k0UnUKamE3OjU8DFYFFokbSI2J9V9gVlM8ALWThDPnPu3EL7HPD2VDaZTggzcCCmbvc70qqPcC9mt60ogcrTiA3HEjwTK8ymKeuJMc4q6dVz200XnYUtLR9GYjPXvFOVr6W1zUK1WbPToaWJJuKnxBLnd0ftDEbMmj4loHYyhZyMjM91zQS4p7z8eKa9h0JrbacekcirexG0z4n3102}   \end{align}
using that the extension operators $\tilde{~}$ and $\bar{~}$ act as an identity in $\Omega$. \par We now test \eqref{8ThswELzXU3X7Ebd1KdZ7v1rN3GiirRXGKWK099ovBM0FDJCvkopYNQ2aN94Z7k0UnUKamE3OjU8DFYFFokbSI2J9V9gVlM8ALWThDPnPu3EL7HPD2VDaZTggzcCCmbvc70qqPcC9mt60ogcrTiA3HEjwTK8ymKeuJMc4q6dVz200XnYUtLR9GYjPXvFOVr6W1zUK1WbPToaWJJuKnxBLnd0ftDEbMmj4loHYyhZyMjM91zQS4p7z8eKa9h0JrbacekcirexG0z4n3102} with $\sigma_i$, on $\Omega$, which leads to   \begin{align}\thelt{ gr3 9s 9tc7 WM4A ws1PzI 5c C O7Z 8y9 lMT LA dwhz Mxz9 hjlWHj bJ 5 CqM jht y9l Mn 4rc7 6Amk KJimvH 9r O tbc tCK rsi B0 4cFV Dl1g cvfWh6 5n x y9Z S4W Pyo QB yr3v fBkj TZKtEZ 7r U fdM icd yCV qn D036 HJWM tYfL9f yX x O7m IcF E1O uL QsAQ NfWv 6kV8Im 7Q 6 GsX NCV 0YP oC jnWn 6L25 qUMTe7 1v a hnH DAo XAb Tc zhPc fjrj W5M5G0 nz N M5T nlJ WOP Lh M6U2 ZFxw pg4Nej P8 U Q09 JX9 n7S kE WixE Rwgy Fvttzp 4A s v5F Tnn MzL Vh FUn5 6tFY CxZ1Bz Q3 E TfD lCa d7V f}    \begin{split}    \frac12 \frac{d}{dt}     \OIUYJHUGFAJKLDHFKJLSDHFLKSDJFHLKSDJHFLKSDJHFLKDJFHLLDKHFLKSDHJFALKJHLJLHGLKHHLKJHLKGKHGJKHGKJHLKHJLKJH J|\sigma|^2     &=     -     \sum_{m=1}^{2}     \OIUYJHUGFAJKLDHFKJLSDHFLKSDJFHLKSDJHFLKSDJHFLKDJFHLLDKHFLKSDHJFALKJHLJLHGLKHHLKJHLKGKHGJKHGKJHLKHJLKJH v_m \tda_{jm}\sigma_i \UIOIUYOIUyHJGKHJLOIUYOIUOIUYOIYIOUYTIUYIOOOIUYOIUYPOIUPOIUPOIUYOIUYOIUYOIUHOUHOHIOUHOIHOIUHOIUHIOUH_{j} \sigma_i      - \OIUYJHUGFAJKLDHFKJLSDHFLKSDJFHLKSDJHFLKSDJHFLKDJFHLLDKHFLKSDHJFALKJHLJLHGLKHHLKJHLKGKHGJKHGKJHLKHJLKJH (v_3-\psi_t) \tda_{j3} \sigma_i \UIOIUYOIUyHJGKHJLOIUYOIUOIUYOIYIOUYTIUYIOOOIUYOIUYPOIUPOIUPOIUYOIUYOIUYOIUHOUHOHIOUHOIHOIUHOIUHIOUH_{j}  \sigma_i      + \OIUYJHUGFAJKLDHFKJLSDHFLKSDJFHLKSDJHFLKSDJHFLKDJFHLLDKHFLKSDHJFALKJHLJLHGLKHHLKJHLKGKHGJKHGKJHLKHJLKJH \sigma_k \tda_{mk}\UIOIUYOIUyHJGKHJLOIUYOIUOIUYOIYIOUYTIUYIOOOIUYOIUYPOIUPOIUPOIUYOIUYOIUYOIUHOUHOHIOUHOIHOIUHOIUHIOUH_{m}v_i \sigma_i      +  \frac12 \OIUYJHUGFAJKLDHFKJLSDHFLKSDJFHLKSDJHFLKSDJHFLKDJFHLLDKHFLKSDHJFALKJHLJLHGLKHHLKJHLKGKHGJKHGKJHLKHJLKJH J_t | \sigma|^2
   \\&    = I_1+I_2+I_3+I_4    .    \end{split}    \llabel{8ThswELzXU3X7Ebd1KdZ7v1rN3GiirRXGKWK099ovBM0FDJCvkopYNQ2aN94Z7k0UnUKamE3OjU8DFYFFokbSI2J9V9gVlM8ALWThDPnPu3EL7HPD2VDaZTggzcCCmbvc70qqPcC9mt60ogcrTiA3HEjwTK8ymKeuJMc4q6dVz200XnYUtLR9GYjPXvFOVr6W1zUK1WbPToaWJJuKnxBLnd0ftDEbMmj4loHYyhZyMjM91zQS4p7z8eKa9h0JrbacekcirexG0z4n3103}   \end{align} For the first two terms, we write $\sigma_i\UIOIUYOIUyHJGKHJLOIUYOIUOIUYOIYIOUYTIUYIOOOIUYOIUYPOIUPOIUPOIUYOIUYOIUYOIUHOUHOHIOUHOIHOIUHOIUHIOUH_{j}\sigma_i=(1/2)\UIOIUYOIUyHJGKHJLOIUYOIUOIUYOIYIOUYTIUYIOOOIUYOIUYPOIUPOIUPOIUYOIUYOIUYOIUHOUHOHIOUHOIHOIUHOIUHIOUH_{j}(|\sigma|^2)$ and integrate by parts, obtaining   \begin{align}\thelt{M icd yCV qn D036 HJWM tYfL9f yX x O7m IcF E1O uL QsAQ NfWv 6kV8Im 7Q 6 GsX NCV 0YP oC jnWn 6L25 qUMTe7 1v a hnH DAo XAb Tc zhPc fjrj W5M5G0 nz N M5T nlJ WOP Lh M6U2 ZFxw pg4Nej P8 U Q09 JX9 n7S kE WixE Rwgy Fvttzp 4A s v5F Tnn MzL Vh FUn5 6tFY CxZ1Bz Q3 E TfD lCa d7V fo MwPm ngrD HPfZV0 aY k Ojr ZUw 799 et oYuB MIC4 ovEY8D OL N URV Q5l ti1 iS NZAd wWr6 Q8oPFf ae 5 lAR 9gD RSi HO eJOW wxLv 20GoMt 2H z 7Yc aly PZx eR uFM0 7gaV 9UIz7S 43 k 5Tr ZiD }    \begin{split}    I_1+I_2     &=       \frac12     \sum_{m=1}^{2}
    \OIUYJHUGFAJKLDHFKJLSDHFLKSDJFHLKSDJHFLKSDJHFLKDJFHLLDKHFLKSDHJFALKJHLJLHGLKHHLKJHLKGKHGJKHGKJHLKHJLKJH \UIOIUYOIUyHJGKHJLOIUYOIUOIUYOIYIOUYTIUYIOOOIUYOIUYPOIUPOIUPOIUYOIUYOIUYOIUHOUHOHIOUHOIHOIUHOIUHIOUH_{j} v_m \tda_{jm} \sigma_i \sigma_i      + \frac12\OIUYJHUGFAJKLDHFKJLSDHFLKSDJFHLKSDJHFLKSDJHFLKDJFHLLDKHFLKSDHJFALKJHLJLHGLKHHLKJHLKGKHGJKHGKJHLKHJLKJH \UIOIUYOIUyHJGKHJLOIUYOIUOIUYOIYIOUYTIUYIOOOIUYOIUYPOIUPOIUPOIUYOIUYOIUYOIUHOUHOHIOUHOIHOIUHOIUHIOUH_{j}(v_3-\psi_t) \tda_{j3}   \sigma_i \sigma_i      \\&    -     \frac12     \sum_{m=1}^{2}     \OIUYJHUGFAJKLDHFKJLSDHFLKSDJFHLKSDJHFLKSDJHFLKDJFHLLDKHFLKSDHJFALKJHLJLHGLKHHLKJHLKGKHGJKHGKJHLKHJLKJH_{\UIOIUYOIUyHJGKHJLOIUYOIUOIUYOIYIOUYTIUYIOOOIUYOIUYPOIUPOIUPOIUYOIUYOIUYOIUHOUHOHIOUHOIHOIUHOIUHIOUH\Omega}  v_m \tda_{jm} \sigma_i \sigma_i  N_j     - \frac12\OIUYJHUGFAJKLDHFKJLSDHFLKSDJFHLKSDJHFLKSDJHFLKDJFHLLDKHFLKSDHJFALKJHLJLHGLKHHLKJHLKGKHGJKHGKJHLKHJLKJH_{\UIOIUYOIUyHJGKHJLOIUYOIUOIUYOIYIOUYTIUYIOOOIUYOIUYPOIUPOIUPOIUYOIUYOIUYOIUHOUHOHIOUHOIHOIUHOIUHIOUH\Omega} (v_3-\psi_t) \tda_{j3}   \sigma_i \sigma_i  N_j     ,    \end{split}    \llabel{8ThswELzXU3X7Ebd1KdZ7v1rN3GiirRXGKWK099ovBM0FDJCvkopYNQ2aN94Z7k0UnUKamE3OjU8DFYFFokbSI2J9V9gVlM8ALWThDPnPu3EL7HPD2VDaZTggzcCCmbvc70qqPcC9mt60ogcrTiA3HEjwTK8ymKeuJMc4q6dVz200XnYUtLR9GYjPXvFOVr6W1zUK1WbPToaWJJuKnxBLnd0ftDEbMmj4loHYyhZyMjM91zQS4p7z8eKa9h0JrbacekcirexG0z4n3104}   \end{align} where we used the Piola identity. The boundary terms vanish by \eqref{8ThswELzXU3X7Ebd1KdZ7v1rN3GiirRXGKWK099ovBM0FDJCvkopYNQ2aN94Z7k0UnUKamE3OjU8DFYFFokbSI2J9V9gVlM8ALWThDPnPu3EL7HPD2VDaZTggzcCCmbvc70qqPcC9mt60ogcrTiA3HEjwTK8ymKeuJMc4q6dVz200XnYUtLR9GYjPXvFOVr6W1zUK1WbPToaWJJuKnxBLnd0ftDEbMmj4loHYyhZyMjM91zQS4p7z8eKa9h0JrbacekcirexG0z4n3100} and
$N=(0,0,\pm 1)$. Therefore,   \begin{align}\thelt{ U Q09 JX9 n7S kE WixE Rwgy Fvttzp 4A s v5F Tnn MzL Vh FUn5 6tFY CxZ1Bz Q3 E TfD lCa d7V fo MwPm ngrD HPfZV0 aY k Ojr ZUw 799 et oYuB MIC4 ovEY8D OL N URV Q5l ti1 iS NZAd wWr6 Q8oPFf ae 5 lAR 9gD RSi HO eJOW wxLv 20GoMt 2H z 7Yc aly PZx eR uFM0 7gaV 9UIz7S 43 k 5Tr ZiD Mt7 pE NCYi uHL7 gac7Gq yN 6 Z1u x56 YZh 2d yJVx 9MeU OMWBQf l0 E mIc 5Zr yfy 3i rahC y9Pi MJ7ofo Op d enn sLi xZx Jt CjC9 M71v O0fxiR 51 m FIB QRo 1oW Iq 3gDP stD2 ntfoX7 YU o S5k}    \begin{split}    I_1+I_2    \leq P(           \Vert v\Vert_{H^{2.5+\delta}},     \Vert \tda\Vert_{H^{3.5+\delta}},     \Vert \psi_t\Vert_{H^{2.5+\delta}}    )       \Vert \sigma\Vert_{L^2}^2    .    \end{split}    \label{8ThswELzXU3X7Ebd1KdZ7v1rN3GiirRXGKWK099ovBM0FDJCvkopYNQ2aN94Z7k0UnUKamE3OjU8DFYFFokbSI2J9V9gVlM8ALWThDPnPu3EL7HPD2VDaZTggzcCCmbvc70qqPcC9mt60ogcrTiA3HEjwTK8ymKeuJMc4q6dVz200XnYUtLR9GYjPXvFOVr6W1zUK1WbPToaWJJuKnxBLnd0ftDEbMmj4loHYyhZyMjM91zQS4p7z8eKa9h0JrbacekcirexG0z4n3105}
  \end{align} Note that also $I_3$ and $I_4$ are bounded by the right side of~\eqref{8ThswELzXU3X7Ebd1KdZ7v1rN3GiirRXGKWK099ovBM0FDJCvkopYNQ2aN94Z7k0UnUKamE3OjU8DFYFFokbSI2J9V9gVlM8ALWThDPnPu3EL7HPD2VDaZTggzcCCmbvc70qqPcC9mt60ogcrTiA3HEjwTK8ymKeuJMc4q6dVz200XnYUtLR9GYjPXvFOVr6W1zUK1WbPToaWJJuKnxBLnd0ftDEbMmj4loHYyhZyMjM91zQS4p7z8eKa9h0JrbacekcirexG0z4n3105}.  Using \eqref{8ThswELzXU3X7Ebd1KdZ7v1rN3GiirRXGKWK099ovBM0FDJCvkopYNQ2aN94Z7k0UnUKamE3OjU8DFYFFokbSI2J9V9gVlM8ALWThDPnPu3EL7HPD2VDaZTggzcCCmbvc70qqPcC9mt60ogcrTiA3HEjwTK8ymKeuJMc4q6dVz200XnYUtLR9GYjPXvFOVr6W1zUK1WbPToaWJJuKnxBLnd0ftDEbMmj4loHYyhZyMjM91zQS4p7z8eKa9h0JrbacekcirexG0z4n338}, we get   \begin{align}\thelt{Ff ae 5 lAR 9gD RSi HO eJOW wxLv 20GoMt 2H z 7Yc aly PZx eR uFM0 7gaV 9UIz7S 43 k 5Tr ZiD Mt7 pE NCYi uHL7 gac7Gq yN 6 Z1u x56 YZh 2d yJVx 9MeU OMWBQf l0 E mIc 5Zr yfy 3i rahC y9Pi MJ7ofo Op d enn sLi xZx Jt CjC9 M71v O0fxiR 51 m FIB QRo 1oW Iq 3gDP stD2 ntfoX7 YU o S5k GuV IGM cf HZe3 7ZoG A1dDmk XO 2 KYR LpJ jII om M6Nu u8O0 jO5Nab Ub R nZn 15k hG9 4S 21V4 Ip45 7ooaiP u2 j hIz osW FDu O5 HdGr djvv tTLBjo vL L iCo 6L5 Lwa Pm vD6Z pal6 9Ljn11 re }    \begin{split}    \frac12 \frac{d}{dt} \OIUYJHUGFAJKLDHFKJLSDHFLKSDJFHLKSDJHFLKSDJHFLKDJFHLLDKHFLKSDHJFALKJHLJLHGLKHHLKJHLKGKHGJKHGKJHLKHJLKJH J |\sigma|^2    &\leq       P(            \Vert v\Vert_{H^{2.5+\delta}},      \Vert \tda\Vert_{H^{3.5+\delta}},      \Vert \psi_t\Vert_{H^{2.5+\delta}}        )    \OIUYJHUGFAJKLDHFKJLSDHFLKSDJFHLKSDJHFLKSDJHFLKDJFHLLDKHFLKSDHJFALKJHLJLHGLKHHLKJHLKGKHGJKHGKJHLKHJLKJH J |\sigma|^2    \\&
   \leq       P(         \Vert v\Vert_{H^{2.5+\delta}},          \Vert w\Vert_{H^{4+\delta}(\Gamma_1)},         \Vert w_{t}\Vert_{H^{2+\delta}(\Gamma_1)}        )    \OIUYJHUGFAJKLDHFKJLSDHFLKSDJFHLKSDJHFLKSDJHFLKDJFHLLDKHFLKSDHJFALKJHLJLHGLKHHLKJHLKGKHGJKHGKJHLKHJLKJH J |\sigma|^2    ,    \end{split}    \llabel{8ThswELzXU3X7Ebd1KdZ7v1rN3GiirRXGKWK099ovBM0FDJCvkopYNQ2aN94Z7k0UnUKamE3OjU8DFYFFokbSI2J9V9gVlM8ALWThDPnPu3EL7HPD2VDaZTggzcCCmbvc70qqPcC9mt60ogcrTiA3HEjwTK8ymKeuJMc4q6dVz200XnYUtLR9GYjPXvFOVr6W1zUK1WbPToaWJJuKnxBLnd0ftDEbMmj4loHYyhZyMjM91zQS4p7z8eKa9h0JrbacekcirexG0z4n3106}   \end{align} where we used \eqref{8ThswELzXU3X7Ebd1KdZ7v1rN3GiirRXGKWK099ovBM0FDJCvkopYNQ2aN94Z7k0UnUKamE3OjU8DFYFFokbSI2J9V9gVlM8ALWThDPnPu3EL7HPD2VDaZTggzcCCmbvc70qqPcC9mt60ogcrTiA3HEjwTK8ymKeuJMc4q6dVz200XnYUtLR9GYjPXvFOVr6W1zUK1WbPToaWJJuKnxBLnd0ftDEbMmj4loHYyhZyMjM91zQS4p7z8eKa9h0JrbacekcirexG0z4n345} and \eqref{8ThswELzXU3X7Ebd1KdZ7v1rN3GiirRXGKWK099ovBM0FDJCvkopYNQ2aN94Z7k0UnUKamE3OjU8DFYFFokbSI2J9V9gVlM8ALWThDPnPu3EL7HPD2VDaZTggzcCCmbvc70qqPcC9mt60ogcrTiA3HEjwTK8ymKeuJMc4q6dVz200XnYUtLR9GYjPXvFOVr6W1zUK1WbPToaWJJuKnxBLnd0ftDEbMmj4loHYyhZyMjM91zQS4p7z8eKa9h0JrbacekcirexG0z4n348} in the last step. The lemma then follows a standard Gronwall argument. \end{proof}
\par By the properties of the extension operator and since the equation for $\theta$ is of transport type (note that \eqref{8ThswELzXU3X7Ebd1KdZ7v1rN3GiirRXGKWK099ovBM0FDJCvkopYNQ2aN94Z7k0UnUKamE3OjU8DFYFFokbSI2J9V9gVlM8ALWThDPnPu3EL7HPD2VDaZTggzcCCmbvc70qqPcC9mt60ogcrTiA3HEjwTK8ymKeuJMc4q6dVz200XnYUtLR9GYjPXvFOVr6W1zUK1WbPToaWJJuKnxBLnd0ftDEbMmj4loHYyhZyMjM91zQS4p7z8eKa9h0JrbacekcirexG0z4n3220} holds), we have   \begin{equation}    \theta(x,t) = 0    \comma (x,t)\in (0,3/2)\times [0,T]    .    \label{8ThswELzXU3X7Ebd1KdZ7v1rN3GiirRXGKWK099ovBM0FDJCvkopYNQ2aN94Z7k0UnUKamE3OjU8DFYFFokbSI2J9V9gVlM8ALWThDPnPu3EL7HPD2VDaZTggzcCCmbvc70qqPcC9mt60ogcrTiA3HEjwTK8ymKeuJMc4q6dVz200XnYUtLR9GYjPXvFOVr6W1zUK1WbPToaWJJuKnxBLnd0ftDEbMmj4loHYyhZyMjM91zQS4p7z8eKa9h0JrbacekcirexG0z4n3221}   \end{equation} \par The main result of this section is the following estimate on the Sobolev norm of the vorticity. \par \cole \begin{Lemma}
\label{L04} Under the assumption \eqref{8ThswELzXU3X7Ebd1KdZ7v1rN3GiirRXGKWK099ovBM0FDJCvkopYNQ2aN94Z7k0UnUKamE3OjU8DFYFFokbSI2J9V9gVlM8ALWThDPnPu3EL7HPD2VDaZTggzcCCmbvc70qqPcC9mt60ogcrTiA3HEjwTK8ymKeuJMc4q6dVz200XnYUtLR9GYjPXvFOVr6W1zUK1WbPToaWJJuKnxBLnd0ftDEbMmj4loHYyhZyMjM91zQS4p7z8eKa9h0JrbacekcirexG0z4n333},  the quantity   \begin{equation}    \AA    = \OIUYJHUGFAJKLDHFKJLSDHFLKSDJFHLKSDJHFLKSDJHFLKDJFHLLDKHFLKSDHJFALKJHLJLHGLKHHLKJHLKGKHGJKHGKJHLKHJLKJH_{\Omega_0} \bar J|\UIPOIUPOIUPOOYIUIUYOIUYOIUHOIUOIUHIOPUHPOIJPOIJPOUHOIUHOILJHLIUHYOIUYOUI_3^{1.5+\delta}\theta|^2    \llabel{8ThswELzXU3X7Ebd1KdZ7v1rN3GiirRXGKWK099ovBM0FDJCvkopYNQ2aN94Z7k0UnUKamE3OjU8DFYFFokbSI2J9V9gVlM8ALWThDPnPu3EL7HPD2VDaZTggzcCCmbvc70qqPcC9mt60ogcrTiA3HEjwTK8ymKeuJMc4q6dVz200XnYUtLR9GYjPXvFOVr6W1zUK1WbPToaWJJuKnxBLnd0ftDEbMmj4loHYyhZyMjM91zQS4p7z8eKa9h0JrbacekcirexG0z4n3122}   \end{equation} satisfies   \begin{equation}    \Vert \zeta\Vert_{H^{1.5+\delta}} \dlkjfhlaskdhjflkasdjhflkasjhdflkasjhdflkasjhdfls  \AA     \llabel{8ThswELzXU3X7Ebd1KdZ7v1rN3GiirRXGKWK099ovBM0FDJCvkopYNQ2aN94Z7k0UnUKamE3OjU8DFYFFokbSI2J9V9gVlM8ALWThDPnPu3EL7HPD2VDaZTggzcCCmbvc70qqPcC9mt60ogcrTiA3HEjwTK8ymKeuJMc4q6dVz200XnYUtLR9GYjPXvFOVr6W1zUK1WbPToaWJJuKnxBLnd0ftDEbMmj4loHYyhZyMjM91zQS4p7z8eKa9h0JrbacekcirexG0z4n3123}   \end{equation} with
  \begin{equation}    \Vert \zeta(0)\Vert_{H^{1.5+\delta}}     \dlkjfhlaskdhjflkasdjhflkasjhdflkasjhdflkasjhdfls    Y(0)    \dlkjfhlaskdhjflkasdjhflkasjhdflkasjhdflkasjhdfls \Vert \zeta(0)\Vert_{H^{1.5+\delta}}     \label{8ThswELzXU3X7Ebd1KdZ7v1rN3GiirRXGKWK099ovBM0FDJCvkopYNQ2aN94Z7k0UnUKamE3OjU8DFYFFokbSI2J9V9gVlM8ALWThDPnPu3EL7HPD2VDaZTggzcCCmbvc70qqPcC9mt60ogcrTiA3HEjwTK8ymKeuJMc4q6dVz200XnYUtLR9GYjPXvFOVr6W1zUK1WbPToaWJJuKnxBLnd0ftDEbMmj4loHYyhZyMjM91zQS4p7z8eKa9h0JrbacekcirexG0z4n3124}   \end{equation} and   \begin{align}\thelt{ MJ7ofo Op d enn sLi xZx Jt CjC9 M71v O0fxiR 51 m FIB QRo 1oW Iq 3gDP stD2 ntfoX7 YU o S5k GuV IGM cf HZe3 7ZoG A1dDmk XO 2 KYR LpJ jII om M6Nu u8O0 jO5Nab Ub R nZn 15k hG9 4S 21V4 Ip45 7ooaiP u2 j hIz osW FDu O5 HdGr djvv tTLBjo vL L iCo 6L5 Lwa Pm vD6Z pal6 9Ljn11 re T 2CP mvj rL3 xH mDYK uv5T npC1fM oU R RTo Loi lk0 FE ghak m5M9 cOIPdQ lG D LnX erC ykJ C1 0FHh vvnY aTGuqU rf T QPv wEq iHO vO hD6A nXuv GlzVAv pz d Ok3 6ym yUo Fb AcAA BItO es52V}    \begin{split}     \frac{d}{dt}\AA        \dlkjfhlaskdhjflkasdjhflkasjhdflkasjhdflkasjhdfls      M        P(\Vert v\Vert_{H^{2.5+\delta}},
     \Vert w\Vert_{H^{4+\delta}(\Gamma_1)},      \Vert w_{t}\Vert_{H^{2+\delta}(\Gamma_1)}     )    \AA    ,    \end{split}    \label{8ThswELzXU3X7Ebd1KdZ7v1rN3GiirRXGKWK099ovBM0FDJCvkopYNQ2aN94Z7k0UnUKamE3OjU8DFYFFokbSI2J9V9gVlM8ALWThDPnPu3EL7HPD2VDaZTggzcCCmbvc70qqPcC9mt60ogcrTiA3HEjwTK8ymKeuJMc4q6dVz200XnYUtLR9GYjPXvFOVr6W1zUK1WbPToaWJJuKnxBLnd0ftDEbMmj4loHYyhZyMjM91zQS4p7z8eKa9h0JrbacekcirexG0z4n3107}   \end{align} for all $t\in[0,T]$, where $M$ is as in \eqref{8ThswELzXU3X7Ebd1KdZ7v1rN3GiirRXGKWK099ovBM0FDJCvkopYNQ2aN94Z7k0UnUKamE3OjU8DFYFFokbSI2J9V9gVlM8ALWThDPnPu3EL7HPD2VDaZTggzcCCmbvc70qqPcC9mt60ogcrTiA3HEjwTK8ymKeuJMc4q6dVz200XnYUtLR9GYjPXvFOVr6W1zUK1WbPToaWJJuKnxBLnd0ftDEbMmj4loHYyhZyMjM91zQS4p7z8eKa9h0JrbacekcirexG0z4n323}.  \end{Lemma}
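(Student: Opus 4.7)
The argument has three components: the comparability $\|\zeta\|_{H^{1.5+\delta}}\lesssim Y$, the initial-time equivalence, and the differential inequality for $Y$. For the first, I combine Lemma~\ref{L08a} (which gives $\theta=\zeta$ on $\Omega\times[0,T]$) with the support property $\theta\equiv 0$ for $x_3\geq 3/2$ and the lower bound $\bar J\geq 1/4$ for $x_3\leq 4/3$. These yield
\begin{equation*}
Y \;\gtrsim\; \int_{\mathbb T^2\times[0,4/3]} |\Lambda_3^{1.5+\delta}\theta|^2 \;\gtrsim\; \|\theta\|_{H^{1.5+\delta}(\mathbb R^3)}^2 \;\geq\; \|\zeta\|_{H^{1.5+\delta}(\Omega)}^2,
\end{equation*}
where the middle inequality uses that $\theta$ has $x_3$-compact support in $[0,3/2]$. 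At $t=0$, since $\theta(0)=\tilde\zeta(0)$ is the Sobolev extension of $\zeta(0)$, continuity of the extension operator (from $H^{1.5+\delta}(\Omega)$ to $H^{1.5+\delta}(\Omega_0)$) together with the lower bound on $\bar J$ yields $Y(0)\sim \|\zeta(0)\|_{H^{1.5+\delta}}^2$.

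For the differential inequality, I differentiate:
\begin{equation*}
\frac{d}{dt}Y \;=\; \int_{\Omega_0} \bar J_t\,|\Lambda_3^{1.5+\delta}\theta|^2 \;+\; 2\int_{\Omega_0} \bar J\,\Lambda_3^{1.5+\delta}\theta_t\cdot\Lambda_3^{1.5+\delta}\theta.
\end{equation*}
The first term is bounded by $\|\bar J_t\|_{L^\infty}Y$, which is controlled via extension bounds and the estimate on $\psi_t$ in terms of $\|w_t\|_{H^{2+\delta}(\Gamma_1)}$. For the second, I substitute the equation for $\theta$, writing $\bar J\theta_t$ as a sum of transport contributions $-\tilde v_m\tilde b_{jm}\partial_j\theta_i$ (for $m=1,2$), a vertical advection piece $-(\tilde v_3-\tilde\psi_t)\tilde b_{j3}\partial_j\theta_i$, and the stretching source $\theta_k\tilde b_{mk}\partial_m\tilde v_i$. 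The stretching term is estimated directly in $H^{1.5+\delta}$ using the multiplicative Sobolev inequality together with the bounds on $\|\tilde b\|_{H^{3.5+\delta}}$, $\|\tilde v\|_{H^{2.5+\delta}}$, giving a contribution bounded by $M\,P(\|v\|_{H^{2.5+\delta}},\|w\|_{H^{4+\delta}(\Gamma_1)})\,Y$.

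The transport contributions are handled by splitting each term into a symmetric part and a commutator via $\Lambda_3^{1.5+\delta}(f\partial_j\theta_i)=f\partial_j\Lambda_3^{1.5+\delta}\theta_i+[\Lambda_3^{1.5+\delta},f]\partial_j\theta_i$, where $f=\bar J\tilde v_m\tilde b_{jm}$ or the analogous vertical coefficient. The symmetric piece rewrites as $\frac12\partial_j(f\,|\Lambda_3^{1.5+\delta}\theta|^2)-\frac12(\partial_j f)|\Lambda_3^{1.5+\delta}\theta|^2$, and integration by parts produces a boundary term plus a volume term bounded by $\|\nabla f\|_{L^\infty}Y$. The boundary contributions at $x_3=2$ vanish by the support of $\theta$, while those at $x_3=0$ vanish because on $\Gamma_0$ we have $v_3=0$ (so $v_3-\psi_t=0$, since $\psi_t=0$ there) and $\tilde b_{31}=\tilde b_{32}=0$, giving the key kinematic cancellation $v_1b_{31}+v_2b_{32}+(v_3-\psi_t)b_{33}=0$. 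The commutator pieces are bounded by Kato--Ponce against $\|\tilde v\,\tilde b\|_{H^{2.5+\delta}}\|\theta\|_{H^{1.5+\delta}}^2\lesssim M\,\|v\|_{H^{2.5+\delta}}\,Y$.

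The main obstacle is the vertical transport term, where the coefficient $(\tilde v_3-\tilde\psi_t)\tilde b_{j3}$ involves the least-regular quantities, and the $\partial_3$ integration by parts is the one that hits the physical boundary. The proof is designed around the fact that this coefficient vanishes on $\partial\Omega$ by the kinematic boundary condition, so the symmetric-part boundary term is zero; inside $\Omega_0$ the extension does not preserve this pointwise cancellation, but the $L^\infty$ coefficient bounds and support of $\theta$ still yield the estimate $M\,P(\cdot)\,Y$. Assembling all contributions and absorbing the cofactor/Jacobian norms into $M$ via Lemma~\ref{L01} produces the claimed inequality $\frac{d}{dt}Y\lesssim M\,P(\|v\|_{H^{2.5+\delta}},\|w\|_{H^{4+\delta}(\Gamma_1)},\|w_t\|_{H^{2+\delta}(\Gamma_1)})\,Y$.
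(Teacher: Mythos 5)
Your plan follows the paper's route for the heart of the lemma (the differential inequality): differentiate the weighted quantity, use the transport equation for $\theta$, split the transported terms into a symmetric part plus Kato--Ponce commutators, integrate by parts so that the boundary contribution at $x_3=0$ is killed by the kinematic identity $v_1b_{31}+v_2b_{32}+(v_3-\psi_t)b_{33}=0$ and the one at $x_3=2$ by the supports of the extensions, and estimate the stretching term directly. However, there is a concrete missing step in your bookkeeping of $\frac{d}{dt}Y$. You write $\frac{d}{dt}Y=\int_{\Omega_0}\bar J_t\,|\Lambda_3^{1.5+\delta}\theta|^2+2\int_{\Omega_0}\bar J\,\Lambda_3^{1.5+\delta}\theta_t\cdot\Lambda_3^{1.5+\delta}\theta$ and then ``substitute the equation,'' but the equation only gives you $\bar J\,\partial_t\theta$, not $\bar J$ sitting outside an operator applied to $\partial_t\theta$ alone. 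To use the equation you must either commute $\Lambda_3^{1.5+\delta}$ past the non-constant weight $\bar J$, which produces the extra term $\int\bigl(\Lambda_3^{1.5+\delta}(\bar J\partial_t\theta_i)-\bar J\,\Lambda_3^{1.5+\delta}\partial_t\theta_i\bigr)\Lambda_3^{1.5+\delta}\theta_i$ (the paper's $I_8$), or divide the equation by $\bar J$. Either way one needs an auxiliary bound on $\theta_t$: the paper proves $\Vert\Lambda_3^{0.5+\delta}\theta_t\Vert_{L^2}\le P(\cdot)\,\Vert\Lambda_3^{1.5+\delta}\theta\Vert_{L^2}$ precisely by dividing the equation by $\bar J$, which is legitimate only because of the lower bound $\tfrac14\le\bar J\le 2$ for $x_3\le\tfrac43$ combined with the support property of $\theta$; this is the one place in the proof where those two facts are essential. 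Your proposal never accounts for this term, and the coefficient you display, $f=\bar J\tilde v_m\tilde b_{jm}$, matches neither consistent bookkeeping (with the commutation route the transport coefficient is $\tilde v_m\tilde b_{jm}$; with the division route it is $\bar J^{-1}\tilde v_m\tilde b_{jm}$ with $\bar J$ as the outer weight), which signals the slip. As written, the estimate of $dY/dt$ is therefore incomplete.

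A secondary issue concerns your justification of $\Vert\zeta\Vert_{H^{1.5+\delta}}\lesssim Y$. The middle inequality in your chain, $\int_{\mathbb T^2\times[0,4/3]}|\Lambda_3^{1.5+\delta}\theta|^2\gtrsim\Vert\theta\Vert^2_{H^{1.5+\delta}}$, does not follow from the support of $\theta$: $\Lambda_3^{1.5+\delta}$ is nonlocal, so $\Lambda_3^{1.5+\delta}\theta$ is not supported in $\{x_3\le 4/3\}$ (nor in $\{x_3\le 3/2\}$), and the $L^2$ norm of $\Lambda_3^{1.5+\delta}\theta$ on a strip does not control the global norm; note also that the support of $\theta$ reaches up to $3/2>4/3$, where the stated lower bound on $\bar J$ is unavailable. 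The paper leaves this equivalence implicit (its displayed proof only treats $dY/dt$), so you are not worse off than the source, but you should not present this as a consequence of a support argument; what is actually needed is that the extension and the weight are arranged so that $\bar J$ is uniformly positive on the region carrying essentially all of the $H^{1.5+\delta}$ mass of $\theta$, after which Lemma~\ref{L08a} gives $\Vert\zeta\Vert_{H^{1.5+\delta}(\Omega)}=\Vert\theta\Vert_{H^{1.5+\delta}(\Omega)}$ and the comparison with $Y$, both at time $0$ and for $t>0$.
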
 \colb \par \begin{proof}[Proof of Lemma~\ref{L04}] 
Denote   \begin{equation}    \UIPOIUPOIUPOOYIUIUYOIUYOIUHOIUOIUHIOPUHPOIJPOIJPOUHOIUHOILJHLIUHYOIUYOUI_3 =(I-\Delta)^{1/2}    \llabel{8ThswELzXU3X7Ebd1KdZ7v1rN3GiirRXGKWK099ovBM0FDJCvkopYNQ2aN94Z7k0UnUKamE3OjU8DFYFFokbSI2J9V9gVlM8ALWThDPnPu3EL7HPD2VDaZTggzcCCmbvc70qqPcC9mt60ogcrTiA3HEjwTK8ymKeuJMc4q6dVz200XnYUtLR9GYjPXvFOVr6W1zUK1WbPToaWJJuKnxBLnd0ftDEbMmj4loHYyhZyMjM91zQS4p7z8eKa9h0JrbacekcirexG0z4n3108}   \end{equation} on the domain ${\mathbb T}^2\times{\mathbb R}$. We apply $\UIPOIUPOIUPOOYIUIUYOIUYOIUHOIUOIUHIOPUHPOIJPOIJPOUHOIUHOILJHLIUHYOIUYOUI_3^{1.5+\delta}$ to the equation \eqref{8ThswELzXU3X7Ebd1KdZ7v1rN3GiirRXGKWK099ovBM0FDJCvkopYNQ2aN94Z7k0UnUKamE3OjU8DFYFFokbSI2J9V9gVlM8ALWThDPnPu3EL7HPD2VDaZTggzcCCmbvc70qqPcC9mt60ogcrTiA3HEjwTK8ymKeuJMc4q6dVz200XnYUtLR9GYjPXvFOVr6W1zUK1WbPToaWJJuKnxBLnd0ftDEbMmj4loHYyhZyMjM91zQS4p7z8eKa9h0JrbacekcirexG0z4n398} and test it with $\UIPOIUPOIUPOOYIUIUYOIUYOIUHOIUOIUHIOPUHPOIJPOIJPOUHOIUHOILJHLIUHYOIUYOUI_3^{1.5+\delta}\theta$, obtaining    \begin{align}\thelt{ Ip45 7ooaiP u2 j hIz osW FDu O5 HdGr djvv tTLBjo vL L iCo 6L5 Lwa Pm vD6Z pal6 9Ljn11 re T 2CP mvj rL3 xH mDYK uv5T npC1fM oU R RTo Loi lk0 FE ghak m5M9 cOIPdQ lG D LnX erC ykJ C1 0FHh vvnY aTGuqU rf T QPv wEq iHO vO hD6A nXuv GlzVAv pz d Ok3 6ym yUo Fb AcAA BItO es52Vq d0 Y c7U 2gB t0W fF VQZh rJHr lBLdCx 8I o dWp AlD S8C HB rNLz xWp6 ypjuwW mg X toy 1vP bra uH yMNb kUrZ D6Ee2f zI D tkZ Eti Lmg re 1woD juLB BSdasY Vc F Uhy ViC xB1 5y Ltql qoUh }    \begin{split}     \frac{d \AA}{dt}     &=     -     \sum_{m=1}^{2}
    \OIUYJHUGFAJKLDHFKJLSDHFLKSDJFHLKSDJHFLKSDJHFLKDJFHLLDKHFLKSDHJFALKJHLJLHGLKHHLKJHLKGKHGJKHGKJHLKHJLKJH_{\Omega_0}  \tilde v_m \tilde\tda_{jm}\UIOIUYOIUyHJGKHJLOIUYOIUOIUYOIYIOUYTIUYIOOOIUYOIUYPOIUPOIUPOIUYOIUYOIUYOIUHOUHOHIOUHOIHOIUHOIUHIOUH_{j} \UIPOIUPOIUPOOYIUIUYOIUYOIUHOIUOIUHIOPUHPOIJPOIJPOUHOIUHOILJHLIUHYOIUYOUI_3^{1.5+\delta}\theta_i \UIPOIUPOIUPOOYIUIUYOIUYOIUHOIUOIUHIOPUHPOIJPOIJPOUHOIUHOILJHLIUHYOIUYOUI_3^{1.5+\delta}\theta_i      - \OIUYJHUGFAJKLDHFKJLSDHFLKSDJFHLKSDJHFLKSDJHFLKDJFHLLDKHFLKSDHJFALKJHLJLHGLKHHLKJHLKGKHGJKHGKJHLKHJLKJH_{\Omega_0} (\tilde v_3-\tilde\psi_t) \tilde\tda_{j3} \UIOIUYOIUyHJGKHJLOIUYOIUOIUYOIYIOUYTIUYIOOOIUYOIUYPOIUPOIUPOIUYOIUYOIUYOIUHOUHOHIOUHOIHOIUHOIUHIOUH_{j}  \UIPOIUPOIUPOOYIUIUYOIUYOIUHOIUOIUHIOPUHPOIJPOIJPOUHOIUHOILJHLIUHYOIUYOUI_3^{1.5+\delta}\theta_i \UIPOIUPOIUPOOYIUIUYOIUYOIUHOIUOIUHIOPUHPOIJPOIJPOUHOIUHOILJHLIUHYOIUYOUI_3^{1.5+\delta}\theta_i      \\&\indeq     + \OIUYJHUGFAJKLDHFKJLSDHFLKSDJFHLKSDJHFLKSDJHFLKDJFHLLDKHFLKSDHJFALKJHLJLHGLKHHLKJHLKGKHGJKHGKJHLKHJLKJH_{\Omega_0} \theta_k \tilde\tda_{mk}\UIOIUYOIUyHJGKHJLOIUYOIUOIUYOIYIOUYTIUYIOOOIUYOIUYPOIUPOIUPOIUYOIUYOIUYOIUHOUHOHIOUHOIHOIUHOIUHIOUH_{m}\UIPOIUPOIUPOOYIUIUYOIUYOIUHOIUOIUHIOPUHPOIJPOIJPOUHOIUHOILJHLIUHYOIUYOUI_3^{1.5+\delta}\tilde v_i \UIPOIUPOIUPOOYIUIUYOIUYOIUHOIUOIUHIOPUHPOIJPOIJPOUHOIUHOILJHLIUHYOIUYOUI_3^{1.5+\delta}\theta_i      \\&\indeq     -  \sum_{m=1}^2  \OIUYJHUGFAJKLDHFKJLSDHFLKSDJFHLKSDJHFLKSDJHFLKDJFHLLDKHFLKSDHJFALKJHLJLHGLKHHLKJHLKGKHGJKHGKJHLKHJLKJH_{\Omega_0} \Bigl(\UIPOIUPOIUPOOYIUIUYOIUYOIUHOIUOIUHIOPUHPOIJPOIJPOUHOIUHOILJHLIUHYOIUYOUI_3^{1.5+\delta}(\tilde v_m \tilde\tda_{jm}\UIOIUYOIUyHJGKHJLOIUYOIUOIUYOIYIOUYTIUYIOOOIUYOIUYPOIUPOIUPOIUYOIUYOIUYOIUHOUHOHIOUHOIHOIUHOIUHIOUH_{j} \theta_i )                       - \tilde v_m \tilde\tda_{jm}\UIOIUYOIUyHJGKHJLOIUYOIUOIUYOIYIOUYTIUYIOOOIUYOIUYPOIUPOIUPOIUYOIUYOIUYOIUHOUHOHIOUHOIHOIUHOIUHIOUH_{j} \UIPOIUPOIUPOOYIUIUYOIUYOIUHOIUOIUHIOPUHPOIJPOIJPOUHOIUHOILJHLIUHYOIUYOUI_3^{1.5+\delta}\theta_i                       \Bigr)\UIPOIUPOIUPOOYIUIUYOIUYOIUHOIUOIUHIOPUHPOIJPOIJPOUHOIUHOILJHLIUHYOIUYOUI_3^{1.5+\delta}\theta_i      \\&\indeq     - \OIUYJHUGFAJKLDHFKJLSDHFLKSDJFHLKSDJHFLKSDJHFLKDJFHLLDKHFLKSDHJFALKJHLJLHGLKHHLKJHLKGKHGJKHGKJHLKHJLKJH_{\Omega_0} \Bigl(                 \UIPOIUPOIUPOOYIUIUYOIUYOIUHOIUOIUHIOPUHPOIJPOIJPOUHOIUHOILJHLIUHYOIUYOUI_3^{1.5+\delta}( (\tilde v_3-\tilde\psi_t) \tilde\tda_{j3} \UIOIUYOIUyHJGKHJLOIUYOIUOIUYOIYIOUYTIUYIOOOIUYOIUYPOIUPOIUPOIUYOIUYOIUYOIUHOUHOHIOUHOIHOIUHOIUHIOUH_{j}\theta_i )                 - (\tilde v_3-\tilde\psi_t) \tilde\tda_{j3} \UIOIUYOIUyHJGKHJLOIUYOIUOIUYOIYIOUYTIUYIOOOIUYOIUYPOIUPOIUPOIUYOIUYOIUYOIUHOUHOHIOUHOIHOIUHOIUHIOUH_{j}  \UIPOIUPOIUPOOYIUIUYOIUYOIUHOIUOIUHIOPUHPOIJPOIJPOUHOIUHOILJHLIUHYOIUYOUI_3^{1.5+\delta}\theta_i            \Bigr)             \UIPOIUPOIUPOOYIUIUYOIUYOIUHOIUOIUHIOPUHPOIJPOIJPOUHOIUHOILJHLIUHYOIUYOUI_3^{1.5+\delta}\theta_i 
   \\&\indeq    + \OIUYJHUGFAJKLDHFKJLSDHFLKSDJFHLKSDJHFLKSDJHFLKDJFHLLDKHFLKSDHJFALKJHLJLHGLKHHLKJHLKGKHGJKHGKJHLKHJLKJH_{\Omega_0} \Bigl(             \UIPOIUPOIUPOOYIUIUYOIUYOIUHOIUOIUHIOPUHPOIJPOIJPOUHOIUHOILJHLIUHYOIUYOUI_3^{1.5+\delta}(\theta_k \tilde\tda_{mk}\UIOIUYOIUyHJGKHJLOIUYOIUOIUYOIYIOUYTIUYIOOOIUYOIUYPOIUPOIUPOIUYOIUYOIUYOIUHOUHOHIOUHOIHOIUHOIUHIOUH_{m}\tilde v_i )               - \theta_k \tilde\tda_{mk}\UIOIUYOIUyHJGKHJLOIUYOIUOIUYOIYIOUYTIUYIOOOIUYOIUYPOIUPOIUPOIUYOIUYOIUYOIUHOUHOHIOUHOIHOIUHOIUHIOUH_{m}\UIPOIUPOIUPOOYIUIUYOIUYOIUHOIUOIUHIOPUHPOIJPOIJPOUHOIUHOILJHLIUHYOIUYOUI_3^{1.5+\delta}\tilde v_i             \Bigr)\UIPOIUPOIUPOOYIUIUYOIUYOIUHOIUOIUHIOPUHPOIJPOIJPOUHOIUHOILJHLIUHYOIUYOUI_3^{1.5+\delta}\theta_i    \\&\indeq      +  \frac12 \OIUYJHUGFAJKLDHFKJLSDHFLKSDJFHLKSDJHFLKSDJHFLKDJFHLLDKHFLKSDHJFALKJHLJLHGLKHHLKJHLKGKHGJKHGKJHLKHJLKJH_{\Omega_0} \bar J_t |\UIPOIUPOIUPOOYIUIUYOIUYOIUHOIUOIUHIOPUHPOIJPOIJPOUHOIUHOILJHLIUHYOIUYOUI_3^{1.5+\delta} \theta|^2      + \OIUYJHUGFAJKLDHFKJLSDHFLKSDJFHLKSDJHFLKSDJHFLKDJFHLLDKHFLKSDHJFALKJHLJLHGLKHHLKJHLKGKHGJKHGKJHLKHJLKJH_{\Omega_0}           \Bigl(           \UIPOIUPOIUPOOYIUIUYOIUYOIUHOIUOIUHIOPUHPOIJPOIJPOUHOIUHOILJHLIUHYOIUYOUI_3^{1.5+\delta}(\bar J\UIOIUYOIUyHJGKHJLOIUYOIUOIUYOIYIOUYTIUYIOOOIUYOIUYPOIUPOIUPOIUYOIUYOIUYOIUHOUHOHIOUHOIHOIUHOIUHIOUH_t \theta_i)                - \bar J \UIPOIUPOIUPOOYIUIUYOIUYOIUHOIUOIUHIOPUHPOIJPOIJPOUHOIUHOILJHLIUHYOIUYOUI_3^{1.5+\delta} (\UIOIUYOIUyHJGKHJLOIUYOIUOIUYOIYIOUYTIUYIOOOIUYOIUYPOIUPOIUPOIUYOIUYOIUYOIUHOUHOHIOUHOIHOIUHOIUHIOUH_{t}\theta_i)          \Bigr) \UIPOIUPOIUPOOYIUIUYOIUYOIUHOIUOIUHIOPUHPOIJPOIJPOUHOIUHOILJHLIUHYOIUYOUI_3^{1.5+\delta}\theta_i     \\&    = I_1+\cdots+I_8
    .    \end{split}    \label{8ThswELzXU3X7Ebd1KdZ7v1rN3GiirRXGKWK099ovBM0FDJCvkopYNQ2aN94Z7k0UnUKamE3OjU8DFYFFokbSI2J9V9gVlM8ALWThDPnPu3EL7HPD2VDaZTggzcCCmbvc70qqPcC9mt60ogcrTiA3HEjwTK8ymKeuJMc4q6dVz200XnYUtLR9GYjPXvFOVr6W1zUK1WbPToaWJJuKnxBLnd0ftDEbMmj4loHYyhZyMjM91zQS4p7z8eKa9h0JrbacekcirexG0z4n3109}   \end{align} For the first two terms on the right-hand side of \eqref{8ThswELzXU3X7Ebd1KdZ7v1rN3GiirRXGKWK099ovBM0FDJCvkopYNQ2aN94Z7k0UnUKamE3OjU8DFYFFokbSI2J9V9gVlM8ALWThDPnPu3EL7HPD2VDaZTggzcCCmbvc70qqPcC9mt60ogcrTiA3HEjwTK8ymKeuJMc4q6dVz200XnYUtLR9GYjPXvFOVr6W1zUK1WbPToaWJJuKnxBLnd0ftDEbMmj4loHYyhZyMjM91zQS4p7z8eKa9h0JrbacekcirexG0z4n3109}, we integrate by parts in $x_j$ obtaining   \begin{align}\thelt{ 0FHh vvnY aTGuqU rf T QPv wEq iHO vO hD6A nXuv GlzVAv pz d Ok3 6ym yUo Fb AcAA BItO es52Vq d0 Y c7U 2gB t0W fF VQZh rJHr lBLdCx 8I o dWp AlD S8C HB rNLz xWp6 ypjuwW mg X toy 1vP bra uH yMNb kUrZ D6Ee2f zI D tkZ Eti Lmg re 1woD juLB BSdasY Vc F Uhy ViC xB1 5y Ltql qoUh gL3bZN YV k orz wa3 650 qW hF22 epiX cAjA4Z V4 b cXx uB3 NQN p0 GxW2 Vs1z jtqe2p LE B iS3 0E0 NKH gY N50v XaK6 pNpwdB X2 Y v7V 0Ud dTc Pi dRNN CLG4 7Fc3PL Bx K 3Be x1X zyX cj 0Z6a }    \begin{split}    I_1+I_2    &=     \sum_{m=1}^{2}     \OIUYJHUGFAJKLDHFKJLSDHFLKSDJFHLKSDJHFLKSDJHFLKDJFHLLDKHFLKSDHJFALKJHLJLHGLKHHLKJHLKGKHGJKHGKJHLKHJLKJH_{\Omega_0}  \UIOIUYOIUyHJGKHJLOIUYOIUOIUYOIYIOUYTIUYIOOOIUYOIUYPOIUPOIUPOIUYOIUYOIUYOIUHOUHOHIOUHOIHOIUHOIUHIOUH_{j}(\tilde v_m \tilde\tda_{jm}) \UIPOIUPOIUPOOYIUIUYOIUYOIUHOIUOIUHIOPUHPOIJPOIJPOUHOIUHOILJHLIUHYOIUYOUI_3^{1.5+\delta}\theta_i \UIPOIUPOIUPOOYIUIUYOIUYOIUHOIUOIUHIOPUHPOIJPOIJPOUHOIUHOILJHLIUHYOIUYOUI_3^{1.5+\delta}\theta_i      + \OIUYJHUGFAJKLDHFKJLSDHFLKSDJFHLKSDJHFLKSDJHFLKDJFHLLDKHFLKSDHJFALKJHLJLHGLKHHLKJHLKGKHGJKHGKJHLKHJLKJH_{\Omega_0} \UIOIUYOIUyHJGKHJLOIUYOIUOIUYOIYIOUYTIUYIOOOIUYOIUYPOIUPOIUPOIUYOIUYOIUYOIUHOUHOHIOUHOIHOIUHOIUHIOUH_{j}( (\tilde v_3-\tilde\psi_t) \tilde\tda_{j3})  \UIPOIUPOIUPOOYIUIUYOIUYOIUHOIUOIUHIOPUHPOIJPOIJPOUHOIUHOILJHLIUHYOIUYOUI_3^{1.5+\delta}\theta_i \UIPOIUPOIUPOOYIUIUYOIUYOIUHOIUOIUHIOPUHPOIJPOIJPOUHOIUHOILJHLIUHYOIUYOUI_3^{1.5+\delta}\theta_i     \\&\indeq
    -     \sum_{m=1}^{2}     \OIUYJHUGFAJKLDHFKJLSDHFLKSDJFHLKSDJHFLKSDJHFLKDJFHLLDKHFLKSDHJFALKJHLJLHGLKHHLKJHLKGKHGJKHGKJHLKHJLKJH_{\UIOIUYOIUyHJGKHJLOIUYOIUOIUYOIYIOUYTIUYIOOOIUYOIUYPOIUPOIUPOIUYOIUYOIUYOIUHOUHOHIOUHOIHOIUHOIUHIOUH\Omega}  \tilde v_m \tilde\tda_{3m}N_{3} \UIPOIUPOIUPOOYIUIUYOIUYOIUHOIUOIUHIOPUHPOIJPOIJPOUHOIUHOILJHLIUHYOIUYOUI_3^{1.5+\delta}\theta_i \UIPOIUPOIUPOOYIUIUYOIUYOIUHOIUOIUHIOPUHPOIJPOIJPOUHOIUHOILJHLIUHYOIUYOUI_3^{1.5+\delta}\theta_i      - \OIUYJHUGFAJKLDHFKJLSDHFLKSDJFHLKSDJHFLKSDJHFLKDJFHLLDKHFLKSDHJFALKJHLJLHGLKHHLKJHLKGKHGJKHGKJHLKHJLKJH_{\UIOIUYOIUyHJGKHJLOIUYOIUOIUYOIYIOUYTIUYIOOOIUYOIUYPOIUPOIUPOIUYOIUYOIUYOIUHOUHOHIOUHOIHOIUHOIUHIOUH\Omega} (\tilde v_3-\tilde\psi_t) \tilde\tda_{33} N_{3}  \UIPOIUPOIUPOOYIUIUYOIUYOIUHOIUOIUHIOPUHPOIJPOIJPOUHOIUHOILJHLIUHYOIUYOUI_3^{1.5+\delta}\theta_i \UIPOIUPOIUPOOYIUIUYOIUYOIUHOIUOIUHIOPUHPOIJPOIJPOUHOIUHOILJHLIUHYOIUYOUI_3^{1.5+\delta}\theta_i     .    \end{split}    \label{8ThswELzXU3X7Ebd1KdZ7v1rN3GiirRXGKWK099ovBM0FDJCvkopYNQ2aN94Z7k0UnUKamE3OjU8DFYFFokbSI2J9V9gVlM8ALWThDPnPu3EL7HPD2VDaZTggzcCCmbvc70qqPcC9mt60ogcrTiA3HEjwTK8ymKeuJMc4q6dVz200XnYUtLR9GYjPXvFOVr6W1zUK1WbPToaWJJuKnxBLnd0ftDEbMmj4loHYyhZyMjM91zQS4p7z8eKa9h0JrbacekcirexG0z4n3110}   \end{align} Since the extension operators are the identity on $\Omega$, the last two terms in \eqref{8ThswELzXU3X7Ebd1KdZ7v1rN3GiirRXGKWK099ovBM0FDJCvkopYNQ2aN94Z7k0UnUKamE3OjU8DFYFFokbSI2J9V9gVlM8ALWThDPnPu3EL7HPD2VDaZTggzcCCmbvc70qqPcC9mt60ogcrTiA3HEjwTK8ymKeuJMc4q6dVz200XnYUtLR9GYjPXvFOVr6W1zUK1WbPToaWJJuKnxBLnd0ftDEbMmj4loHYyhZyMjM91zQS4p7z8eKa9h0JrbacekcirexG0z4n3110} equal   \begin{align}\thelt{ra uH yMNb kUrZ D6Ee2f zI D tkZ Eti Lmg re 1woD juLB BSdasY Vc F Uhy ViC xB1 5y Ltql qoUh gL3bZN YV k orz wa3 650 qW hF22 epiX cAjA4Z V4 b cXx uB3 NQN p0 GxW2 Vs1z jtqe2p LE B iS3 0E0 NKH gY N50v XaK6 pNpwdB X2 Y v7V 0Ud dTc Pi dRNN CLG4 7Fc3PL Bx K 3Be x1X zyX cj 0Z6a Jk0H KuQnwd Dh P Q1Q rwA 05v 9c 3pnz ttzt x2IirW CZ B oS5 xlO KCi D3 WFh4 dvCL QANAQJ Gg y vOD NTD FKj Mc 0RJP m4HU SQkLnT Q4 Y 6CC MvN jAR Zb lir7 RFsI NzHiJl cg f xSC Hts ZOG 1V }    \begin{split}     -     \sum_{m=1}^{2}
    \OIUYJHUGFAJKLDHFKJLSDHFLKSDJFHLKSDJHFLKSDJHFLKDJFHLLDKHFLKSDHJFALKJHLJLHGLKHHLKJHLKGKHGJKHGKJHLKHJLKJH_{\UIOIUYOIUyHJGKHJLOIUYOIUOIUYOIYIOUYTIUYIOOOIUYOIUYPOIUPOIUPOIUYOIUYOIUYOIUHOUHOHIOUHOIHOIUHOIUHIOUH\Omega}  v_m \tda_{3m}N_{3} \UIPOIUPOIUPOOYIUIUYOIUYOIUHOIUOIUHIOPUHPOIJPOIJPOUHOIUHOILJHLIUHYOIUYOUI_3^{1.5+\delta}\theta_i \UIPOIUPOIUPOOYIUIUYOIUYOIUHOIUOIUHIOPUHPOIJPOIJPOUHOIUHOILJHLIUHYOIUYOUI_3^{1.5+\delta}\theta_i      - \OIUYJHUGFAJKLDHFKJLSDHFLKSDJFHLKSDJHFLKSDJHFLKDJFHLLDKHFLKSDHJFALKJHLJLHGLKHHLKJHLKGKHGJKHGKJHLKHJLKJH_{\UIOIUYOIUyHJGKHJLOIUYOIUOIUYOIYIOUYTIUYIOOOIUYOIUYPOIUPOIUPOIUYOIUYOIUYOIUHOUHOHIOUHOIHOIUHOIUHIOUH\Omega} ( v_3-\psi_t) \tda_{33} N_{3}  \UIPOIUPOIUPOOYIUIUYOIUYOIUHOIUOIUHIOPUHPOIJPOIJPOUHOIUHOILJHLIUHYOIUYOUI_3^{1.5+\delta}\theta_i \UIPOIUPOIUPOOYIUIUYOIUYOIUHOIUOIUHIOPUHPOIJPOIJPOUHOIUHOILJHLIUHYOIUYOUI_3^{1.5+\delta}\theta_i     =0    ,    \end{split}    \llabel{8ThswELzXU3X7Ebd1KdZ7v1rN3GiirRXGKWK099ovBM0FDJCvkopYNQ2aN94Z7k0UnUKamE3OjU8DFYFFokbSI2J9V9gVlM8ALWThDPnPu3EL7HPD2VDaZTggzcCCmbvc70qqPcC9mt60ogcrTiA3HEjwTK8ymKeuJMc4q6dVz200XnYUtLR9GYjPXvFOVr6W1zUK1WbPToaWJJuKnxBLnd0ftDEbMmj4loHYyhZyMjM91zQS4p7z8eKa9h0JrbacekcirexG0z4n3143}   \end{align} where the last equality follows by \eqref{8ThswELzXU3X7Ebd1KdZ7v1rN3GiirRXGKWK099ovBM0FDJCvkopYNQ2aN94Z7k0UnUKamE3OjU8DFYFFokbSI2J9V9gVlM8ALWThDPnPu3EL7HPD2VDaZTggzcCCmbvc70qqPcC9mt60ogcrTiA3HEjwTK8ymKeuJMc4q6dVz200XnYUtLR9GYjPXvFOVr6W1zUK1WbPToaWJJuKnxBLnd0ftDEbMmj4loHYyhZyMjM91zQS4p7z8eKa9h0JrbacekcirexG0z4n3100}. Using that the sum of the last two terms in \eqref{8ThswELzXU3X7Ebd1KdZ7v1rN3GiirRXGKWK099ovBM0FDJCvkopYNQ2aN94Z7k0UnUKamE3OjU8DFYFFokbSI2J9V9gVlM8ALWThDPnPu3EL7HPD2VDaZTggzcCCmbvc70qqPcC9mt60ogcrTiA3HEjwTK8ymKeuJMc4q6dVz200XnYUtLR9GYjPXvFOVr6W1zUK1WbPToaWJJuKnxBLnd0ftDEbMmj4loHYyhZyMjM91zQS4p7z8eKa9h0JrbacekcirexG0z4n3110} vanishes, we get   \begin{align}\thelt{0E0 NKH gY N50v XaK6 pNpwdB X2 Y v7V 0Ud dTc Pi dRNN CLG4 7Fc3PL Bx K 3Be x1X zyX cj 0Z6a Jk0H KuQnwd Dh P Q1Q rwA 05v 9c 3pnz ttzt x2IirW CZ B oS5 xlO KCi D3 WFh4 dvCL QANAQJ Gg y vOD NTD FKj Mc 0RJP m4HU SQkLnT Q4 Y 6CC MvN jAR Zb lir7 RFsI NzHiJl cg f xSC Hts ZOG 1V uOzk 5G1C LtmRYI eD 3 5BB uxZ JdY LO CwS9 lokS NasDLj 5h 8 yni u7h u3c di zYh1 PdwE l3m8Xt yX Q RCA bwe aLi N8 qA9N 6DRE wy6gZe xs A 4fG EKH KQP PP KMbk sY1j M4h3Jj gS U One p1w Rq}    \begin{split}    I_1+I_2    &\dlkjfhlaskdhjflkasdjhflkasjhdflkasjhdflkasjhdfls    \Vert \tilde v\Vert_{H^{2.5+\delta}(\Omega_0)}
   \Vert \tilde\tda\Vert_{H^{2.5+\delta}(\Omega_0)}    \Vert \theta\Vert_{H^{1.5+\delta}(\Omega_0)}^2    \\&\indeq    + (       \Vert \tilde v\Vert_{H^{2.5+\delta}(\Omega_0)}       + \Vert \tilde\psi_t\Vert_{H^{2.5+\delta}(\Omega_0)}      )    \Vert \tilde\tda\Vert_{H^{2.5+\delta}(\Omega_0)}    \Vert \theta\Vert_{H^{1.5+\delta}(\Omega_0)}^2    \\&    \dlkjfhlaskdhjflkasdjhflkasjhdflkasjhdflkasjhdfls    \Vert v\Vert_{H^{2.5+\delta}}    \Vert \tda\Vert_{H^{2.5+\delta}}    \Vert \theta\Vert_{H^{1.5+\delta}}^2
   + (       \Vert v\Vert_{H^{2.5+\delta}}       +       \Vert \psi_t\Vert_{H^{2.5+\delta}}      )    \Vert \tda\Vert_{H^{2.5+\delta}}    \Vert \theta\Vert_{H^{1.5+\delta}}^2    \\&    \leq    P(      \Vert v\Vert_{H^{2.5+\delta}},      \Vert b\Vert_{H^{3.5+\delta}},      \Vert \psi_t\Vert_{H^{2.5+\delta}}     )    \Vert \theta\Vert_{H^{1.5+\delta}}^2
   ,    \end{split}    \label{8ThswELzXU3X7Ebd1KdZ7v1rN3GiirRXGKWK099ovBM0FDJCvkopYNQ2aN94Z7k0UnUKamE3OjU8DFYFFokbSI2J9V9gVlM8ALWThDPnPu3EL7HPD2VDaZTggzcCCmbvc70qqPcC9mt60ogcrTiA3HEjwTK8ymKeuJMc4q6dVz200XnYUtLR9GYjPXvFOVr6W1zUK1WbPToaWJJuKnxBLnd0ftDEbMmj4loHYyhZyMjM91zQS4p7z8eKa9h0JrbacekcirexG0z4n3111}   \end{align} where we used multiplicative Sobolev inequalities  in the first step, the continuity properties of the Sobolev extension operator in the second, and \eqref{8ThswELzXU3X7Ebd1KdZ7v1rN3GiirRXGKWK099ovBM0FDJCvkopYNQ2aN94Z7k0UnUKamE3OjU8DFYFFokbSI2J9V9gVlM8ALWThDPnPu3EL7HPD2VDaZTggzcCCmbvc70qqPcC9mt60ogcrTiA3HEjwTK8ymKeuJMc4q6dVz200XnYUtLR9GYjPXvFOVr6W1zUK1WbPToaWJJuKnxBLnd0ftDEbMmj4loHYyhZyMjM91zQS4p7z8eKa9h0JrbacekcirexG0z4n395} in the last. Therefore,   \begin{align}\thelt{ vOD NTD FKj Mc 0RJP m4HU SQkLnT Q4 Y 6CC MvN jAR Zb lir7 RFsI NzHiJl cg f xSC Hts ZOG 1V uOzk 5G1C LtmRYI eD 3 5BB uxZ JdY LO CwS9 lokS NasDLj 5h 8 yni u7h u3c di zYh1 PdwE l3m8Xt yX Q RCA bwe aLi N8 qA9N 6DRE wy6gZe xs A 4fG EKH KQP PP KMbk sY1j M4h3Jj gS U One p1w RqN GA grL4 c18W v4kchD gR x 7Gj jIB zcK QV f7gA TrZx Oy6FF7 y9 3 iuu AQt 9TK Rx S5GO TFGx 4Xx1U3 R4 s 7U1 mpa bpD Hg kicx aCjk hnobr0 p4 c ody xTC kVj 8t W4iP 2OhT RF6kU2 k2 o oZJ F}    \begin{split}    I_1+I_2    &\leq
   P(\Vert v\Vert_{H^{2.5+\delta}},      \Vert w\Vert_{H^{4+\delta}(\Gamma_1)},      \Vert w_{t}\Vert_{H^{2+\delta}(\Gamma_1)}     )    \Vert \theta\Vert_{H^{1.5+\delta}}^2    .    \end{split}    \label{8ThswELzXU3X7Ebd1KdZ7v1rN3GiirRXGKWK099ovBM0FDJCvkopYNQ2aN94Z7k0UnUKamE3OjU8DFYFFokbSI2J9V9gVlM8ALWThDPnPu3EL7HPD2VDaZTggzcCCmbvc70qqPcC9mt60ogcrTiA3HEjwTK8ymKeuJMc4q6dVz200XnYUtLR9GYjPXvFOVr6W1zUK1WbPToaWJJuKnxBLnd0ftDEbMmj4loHYyhZyMjM91zQS4p7z8eKa9h0JrbacekcirexG0z4n3112}   \end{align} For the third term in \eqref{8ThswELzXU3X7Ebd1KdZ7v1rN3GiirRXGKWK099ovBM0FDJCvkopYNQ2aN94Z7k0UnUKamE3OjU8DFYFFokbSI2J9V9gVlM8ALWThDPnPu3EL7HPD2VDaZTggzcCCmbvc70qqPcC9mt60ogcrTiA3HEjwTK8ymKeuJMc4q6dVz200XnYUtLR9GYjPXvFOVr6W1zUK1WbPToaWJJuKnxBLnd0ftDEbMmj4loHYyhZyMjM91zQS4p7z8eKa9h0JrbacekcirexG0z4n3109}, we have   \begin{align}\thelt{ yX Q RCA bwe aLi N8 qA9N 6DRE wy6gZe xs A 4fG EKH KQP PP KMbk sY1j M4h3Jj gS U One p1w RqN GA grL4 c18W v4kchD gR x 7Gj jIB zcK QV f7gA TrZx Oy6FF7 y9 3 iuu AQt 9TK Rx S5GO TFGx 4Xx1U3 R4 s 7U1 mpa bpD Hg kicx aCjk hnobr0 p4 c ody xTC kVj 8t W4iP 2OhT RF6kU2 k2 o oZJ Fsq Y4B FS NI3u W2fj OMFf7x Jv e ilb UVT ArC Tv qWLi vbRp g2wpAJ On l RUE PKh j9h dG M0Mi gcqQ wkyunB Jr T LDc Pgn OSC HO sSgQ sR35 MB7Bgk Pk 6 nJh 01P Cxd Ds w514 O648 VD8iJ5 4F W }    \begin{split}    I_3    &\dlkjfhlaskdhjflkasdjhflkasjhdflkasjhdflkasjhdfls
   \Vert \tilde \tda\Vert_{H^{1.5+\delta}}    \Vert \tilde v\Vert_{H^{2.5+\delta}}    \Vert \theta\Vert_{H^{1.5+\delta}}^2    \dlkjfhlaskdhjflkasdjhflkasjhdflkasjhdflkasjhdfls    \Vert \tda\Vert_{H^{1.5+\delta}}    \Vert v\Vert_{H^{2.5+\delta}}    \Vert \theta\Vert_{H^{1.5+\delta}}^2    \\&    \leq    P(\Vert v\Vert_{H^{2.5+\delta}},      \Vert w\Vert_{H^{4+\delta}(\Gamma_1)},      \Vert w_{t}\Vert_{H^{2+\delta}(\Gamma_1)}     )    \Vert \theta\Vert_{H^{1.5+\delta}}^2
   ,    \end{split}    \label{8ThswELzXU3X7Ebd1KdZ7v1rN3GiirRXGKWK099ovBM0FDJCvkopYNQ2aN94Z7k0UnUKamE3OjU8DFYFFokbSI2J9V9gVlM8ALWThDPnPu3EL7HPD2VDaZTggzcCCmbvc70qqPcC9mt60ogcrTiA3HEjwTK8ymKeuJMc4q6dVz200XnYUtLR9GYjPXvFOVr6W1zUK1WbPToaWJJuKnxBLnd0ftDEbMmj4loHYyhZyMjM91zQS4p7z8eKa9h0JrbacekcirexG0z4n3113}   \end{align} which is bounded by the right-hand side of~\eqref{8ThswELzXU3X7Ebd1KdZ7v1rN3GiirRXGKWK099ovBM0FDJCvkopYNQ2aN94Z7k0UnUKamE3OjU8DFYFFokbSI2J9V9gVlM8ALWThDPnPu3EL7HPD2VDaZTggzcCCmbvc70qqPcC9mt60ogcrTiA3HEjwTK8ymKeuJMc4q6dVz200XnYUtLR9GYjPXvFOVr6W1zUK1WbPToaWJJuKnxBLnd0ftDEbMmj4loHYyhZyMjM91zQS4p7z8eKa9h0JrbacekcirexG0z4n3112}. For the next term, we use Kato-Ponce type estimate to write   \begin{align}\thelt{Xx1U3 R4 s 7U1 mpa bpD Hg kicx aCjk hnobr0 p4 c ody xTC kVj 8t W4iP 2OhT RF6kU2 k2 o oZJ Fsq Y4B FS NI3u W2fj OMFf7x Jv e ilb UVT ArC Tv qWLi vbRp g2wpAJ On l RUE PKh j9h dG M0Mi gcqQ wkyunB Jr T LDc Pgn OSC HO sSgQ sR35 MB7Bgk Pk 6 nJh 01P Cxd Ds w514 O648 VD8iJ5 4F W 6rs 6Sy qGz MK fXop oe4e o52UNB 4Q 8 f8N Uz8 u2n GO AXHW gKtG AtGGJs bm z 2qj vSv GBu 5e 4JgL Aqrm gMmS08 ZF s xQm 28M 3z4 Ho 1xxj j8Uk bMbm8M 0c L PL5 TS2 kIQ jZ Kb9Q Ux2U i5Aflw }    \begin{split}    I_4    &\dlkjfhlaskdhjflkasdjhflkasjhdflkasjhdflkasjhdfls    \Bigl(      \Vert \UIPOIUPOIUPOOYIUIUYOIUYOIUHOIUOIUHIOPUHPOIJPOIJPOUHOIUHOILJHLIUHYOIUYOUI_3^{1.5+\delta}(\tilde v_m\tilde\tda_{jm})\Vert_{L^{6}}      \Vert \UIPOIUPOIUPOOYIUIUYOIUYOIUHOIUOIUHIOPUHPOIJPOIJPOUHOIUHOILJHLIUHYOIUYOUI_3 \theta_i\Vert_{L^{3}}      +
     \Vert \UIPOIUPOIUPOOYIUIUYOIUYOIUHOIUOIUHIOPUHPOIJPOIJPOUHOIUHOILJHLIUHYOIUYOUI_3(\tilde v_m\tilde \tda_{jm})\Vert_{L^{\infty}}      \Vert \UIPOIUPOIUPOOYIUIUYOIUYOIUHOIUOIUHIOPUHPOIJPOIJPOUHOIUHOILJHLIUHYOIUYOUI_3^{1.5+\delta} \theta_i\Vert_{L^{2}}    \Bigr)    \Vert \UIPOIUPOIUPOOYIUIUYOIUYOIUHOIUOIUHIOPUHPOIJPOIJPOUHOIUHOILJHLIUHYOIUYOUI_3^{1.5+\delta}\theta_i\Vert_{L^2}    \\&    \dlkjfhlaskdhjflkasdjhflkasjhdflkasjhdflkasjhdfls    \Bigl(      \Vert \tilde v_m\tilde\tda_{jm}\Vert_{H^{2.5+\delta}}      \Vert  \theta_i\Vert_{H^{1.5}}      +      \Vert \tilde v_m\tilde\tda_{jm}\Vert_{H^{2.5+\delta}}      \Vert \UIPOIUPOIUPOOYIUIUYOIUYOIUHOIUOIUHIOPUHPOIJPOIJPOUHOIUHOILJHLIUHYOIUYOUI_3^{1.5+\delta} \theta_i\Vert_{L^{2}}         \Bigr)    \Vert \UIPOIUPOIUPOOYIUIUYOIUYOIUHOIUOIUHIOPUHPOIJPOIJPOUHOIUHOILJHLIUHYOIUYOUI_3^{1.5+\delta}\theta\Vert_{L^2}
   \\&    \leq    P(     \Vert v\Vert_{H^{2.5+\delta}},      \Vert \tda\Vert_{H^{3.5+\delta}})     \Vert \UIPOIUPOIUPOOYIUIUYOIUYOIUHOIUOIUHIOPUHPOIJPOIJPOUHOIUHOILJHLIUHYOIUYOUI_3^{1.5+\delta}\theta\Vert_{L^2}^2    ,    \end{split}    \label{8ThswELzXU3X7Ebd1KdZ7v1rN3GiirRXGKWK099ovBM0FDJCvkopYNQ2aN94Z7k0UnUKamE3OjU8DFYFFokbSI2J9V9gVlM8ALWThDPnPu3EL7HPD2VDaZTggzcCCmbvc70qqPcC9mt60ogcrTiA3HEjwTK8ymKeuJMc4q6dVz200XnYUtLR9GYjPXvFOVr6W1zUK1WbPToaWJJuKnxBLnd0ftDEbMmj4loHYyhZyMjM91zQS4p7z8eKa9h0JrbacekcirexG0z4n3114}   \end{align} which is also bounded by the right-hand side of~\eqref{8ThswELzXU3X7Ebd1KdZ7v1rN3GiirRXGKWK099ovBM0FDJCvkopYNQ2aN94Z7k0UnUKamE3OjU8DFYFFokbSI2J9V9gVlM8ALWThDPnPu3EL7HPD2VDaZTggzcCCmbvc70qqPcC9mt60ogcrTiA3HEjwTK8ymKeuJMc4q6dVz200XnYUtLR9GYjPXvFOVr6W1zUK1WbPToaWJJuKnxBLnd0ftDEbMmj4loHYyhZyMjM91zQS4p7z8eKa9h0JrbacekcirexG0z4n3112}. The terms $I_5$ and $I_6$ are treated similarly, and following the Kato-Ponce  and
Sobolev inequalities, we get   \begin{align}\thelt{cqQ wkyunB Jr T LDc Pgn OSC HO sSgQ sR35 MB7Bgk Pk 6 nJh 01P Cxd Ds w514 O648 VD8iJ5 4F W 6rs 6Sy qGz MK fXop oe4e o52UNB 4Q 8 f8N Uz8 u2n GO AXHW gKtG AtGGJs bm z 2qj vSv GBu 5e 4JgL Aqrm gMmS08 ZF s xQm 28M 3z4 Ho 1xxj j8Uk bMbm8M 0c L PL5 TS2 kIQ jZ Kb9Q Ux2U i5Aflw 1S L DGI uWU dCP jy wVVM 2ct8 cmgOBS 7d Q ViX R8F bta 1m tEFj TO0k owcK2d 6M Z iW8 PrK PI1 sX WJNB cREV Y4H5QQ GH b plP bwd Txp OI 5OQZ AKyi ix7Qey YI 9 1Ea 16r KXK L2 ifQX QPdP NL}    \begin{split}    I_5 + I_6    &\leq    P(\Vert v\Vert_{H^{2.5+\delta}},      \Vert w\Vert_{H^{4+\delta}(\Gamma_1)},      \Vert w_{t}\Vert_{H^{2+\delta}(\Gamma_1)}     )    \Vert \theta\Vert_{H^{1.5+\delta}}^2    .    \end{split}    \label{8ThswELzXU3X7Ebd1KdZ7v1rN3GiirRXGKWK099ovBM0FDJCvkopYNQ2aN94Z7k0UnUKamE3OjU8DFYFFokbSI2J9V9gVlM8ALWThDPnPu3EL7HPD2VDaZTggzcCCmbvc70qqPcC9mt60ogcrTiA3HEjwTK8ymKeuJMc4q6dVz200XnYUtLR9GYjPXvFOVr6W1zUK1WbPToaWJJuKnxBLnd0ftDEbMmj4loHYyhZyMjM91zQS4p7z8eKa9h0JrbacekcirexG0z4n3115}
  \end{align} For the seventh term in \eqref{8ThswELzXU3X7Ebd1KdZ7v1rN3GiirRXGKWK099ovBM0FDJCvkopYNQ2aN94Z7k0UnUKamE3OjU8DFYFFokbSI2J9V9gVlM8ALWThDPnPu3EL7HPD2VDaZTggzcCCmbvc70qqPcC9mt60ogcrTiA3HEjwTK8ymKeuJMc4q6dVz200XnYUtLR9GYjPXvFOVr6W1zUK1WbPToaWJJuKnxBLnd0ftDEbMmj4loHYyhZyMjM91zQS4p7z8eKa9h0JrbacekcirexG0z4n3109}, we also have   \begin{align}\thelt{JgL Aqrm gMmS08 ZF s xQm 28M 3z4 Ho 1xxj j8Uk bMbm8M 0c L PL5 TS2 kIQ jZ Kb9Q Ux2U i5Aflw 1S L DGI uWU dCP jy wVVM 2ct8 cmgOBS 7d Q ViX R8F bta 1m tEFj TO0k owcK2d 6M Z iW8 PrK PI1 sX WJNB cREV Y4H5QQ GH b plP bwd Txp OI 5OQZ AKyi ix7Qey YI 9 1Ea 16r KXK L2 ifQX QPdP NL6EJi Hc K rBs 2qG tQb aq edOj Lixj GiNWr1 Pb Y SZe Sxx Fin aK 9Eki CHV2 a13f7G 3G 3 oDK K0i bKV y4 53E2 nFQS 8Hnqg0 E3 2 ADd dEV nmJ 7H Bc1t 2K2i hCzZuy 9k p sHn 8Ko uAR kv sHKP y8}    \begin{split}    I_7    \leq    P(\Vert v\Vert_{H^{2.5+\delta}},      \Vert w\Vert_{H^{4+\delta}(\Gamma_1)},      \Vert w_{t}\Vert_{H^{2+\delta}(\Gamma_1)}     )    \Vert \theta\Vert_{H^{1.5+\delta}}^2    .    \end{split}    \label{8ThswELzXU3X7Ebd1KdZ7v1rN3GiirRXGKWK099ovBM0FDJCvkopYNQ2aN94Z7k0UnUKamE3OjU8DFYFFokbSI2J9V9gVlM8ALWThDPnPu3EL7HPD2VDaZTggzcCCmbvc70qqPcC9mt60ogcrTiA3HEjwTK8ymKeuJMc4q6dVz200XnYUtLR9GYjPXvFOVr6W1zUK1WbPToaWJJuKnxBLnd0ftDEbMmj4loHYyhZyMjM91zQS4p7z8eKa9h0JrbacekcirexG0z4n3116}
  \end{align} Finally, for the eight term, we write   \begin{align}\thelt{ sX WJNB cREV Y4H5QQ GH b plP bwd Txp OI 5OQZ AKyi ix7Qey YI 9 1Ea 16r KXK L2 ifQX QPdP NL6EJi Hc K rBs 2qG tQb aq edOj Lixj GiNWr1 Pb Y SZe Sxx Fin aK 9Eki CHV2 a13f7G 3G 3 oDK K0i bKV y4 53E2 nFQS 8Hnqg0 E3 2 ADd dEV nmJ 7H Bc1t 2K2i hCzZuy 9k p sHn 8Ko uAR kv sHKP y8Yo dOOqBi hF 1 Z3C vUF hmj gB muZq 7ggW Lg5dQB 1k p Fxk k35 GFo dk 00YD 13qI qqbLwy QC c yZR wHA fp7 9o imtC c5CV 8cEuwU w7 k 8Q7 nCq WkM gY rtVR IySM tZUGCH XV 9 mr9 GHZ ol0 VE eI}    \begin{split}     I_8     &\dlkjfhlaskdhjflkasdjhflkasjhdflkasjhdflkasjhdfls     \Bigl(      \Vert \UIPOIUPOIUPOOYIUIUYOIUYOIUHOIUOIUHIOPUHPOIJPOIJPOUHOIUHOILJHLIUHYOIUYOUI_3^{1.5+\delta} \bar J \Vert_{L^6}      \Vert  \theta_t\Vert_{L^3}      +      \Vert \UIPOIUPOIUPOOYIUIUYOIUYOIUHOIUOIUHIOPUHPOIJPOIJPOUHOIUHOILJHLIUHYOIUYOUI_3 \bar J\Vert_{L^{\infty}}      \Vert \UIPOIUPOIUPOOYIUIUYOIUYOIUHOIUOIUHIOPUHPOIJPOIJPOUHOIUHOILJHLIUHYOIUYOUI_3^{0.5+\delta} \theta_t\Vert_{L^{2}}    \Bigr)    \Vert \UIPOIUPOIUPOOYIUIUYOIUYOIUHOIUOIUHIOPUHPOIJPOIJPOUHOIUHOILJHLIUHYOIUYOUI_3^{1.5+\delta}\theta\Vert_{L^2}
   \\&    \dlkjfhlaskdhjflkasdjhflkasjhdflkasjhdflkasjhdfls     \Vert J\Vert_{H^{3.5+\delta}}     \Vert \UIPOIUPOIUPOOYIUIUYOIUYOIUHOIUOIUHIOPUHPOIJPOIJPOUHOIUHOILJHLIUHYOIUYOUI_3^{1.5+\delta}\theta\Vert_{L^2}     \Vert \UIPOIUPOIUPOOYIUIUYOIUYOIUHOIUOIUHIOPUHPOIJPOIJPOUHOIUHOILJHLIUHYOIUYOUI_3^{0.5+\delta}\theta_t\Vert_{L^2}     .    \end{split}    \label{8ThswELzXU3X7Ebd1KdZ7v1rN3GiirRXGKWK099ovBM0FDJCvkopYNQ2aN94Z7k0UnUKamE3OjU8DFYFFokbSI2J9V9gVlM8ALWThDPnPu3EL7HPD2VDaZTggzcCCmbvc70qqPcC9mt60ogcrTiA3HEjwTK8ymKeuJMc4q6dVz200XnYUtLR9GYjPXvFOVr6W1zUK1WbPToaWJJuKnxBLnd0ftDEbMmj4loHYyhZyMjM91zQS4p7z8eKa9h0JrbacekcirexG0z4n3117}   \end{align} In order to treat the last factor $    \Vert \UIPOIUPOIUPOOYIUIUYOIUYOIUHOIUOIUHIOPUHPOIJPOIJPOUHOIUHOILJHLIUHYOIUYOUI_3^{0.5+\delta}\theta_t\Vert_{L^2}$, we divide \eqref{8ThswELzXU3X7Ebd1KdZ7v1rN3GiirRXGKWK099ovBM0FDJCvkopYNQ2aN94Z7k0UnUKamE3OjU8DFYFFokbSI2J9V9gVlM8ALWThDPnPu3EL7HPD2VDaZTggzcCCmbvc70qqPcC9mt60ogcrTiA3HEjwTK8ymKeuJMc4q6dVz200XnYUtLR9GYjPXvFOVr6W1zUK1WbPToaWJJuKnxBLnd0ftDEbMmj4loHYyhZyMjM91zQS4p7z8eKa9h0JrbacekcirexG0z4n398}  by $\bar J$ and use the fractional Leibniz rule to estimate   \begin{align}\thelt{i bKV y4 53E2 nFQS 8Hnqg0 E3 2 ADd dEV nmJ 7H Bc1t 2K2i hCzZuy 9k p sHn 8Ko uAR kv sHKP y8Yo dOOqBi hF 1 Z3C vUF hmj gB muZq 7ggW Lg5dQB 1k p Fxk k35 GFo dk 00YD 13qI qqbLwy QC c yZR wHA fp7 9o imtC c5CV 8cEuwU w7 k 8Q7 nCq WkM gY rtVR IySM tZUGCH XV 9 mr9 GHZ ol0 VE eIjQ vwgw 17pDhX JS F UcY bqU gnG V8 IFWb S1GX az0ZTt 81 w 7En IhF F72 v2 PkWO Xlkr w6IPu5 67 9 vcW 1f6 z99 lM 2LI1 Y6Na axfl18 gT 0 gDp tVl CN4 jf GSbC ro5D v78Cxa uk Y iUI WWy YDR }    \begin{split}
   \Vert \UIPOIUPOIUPOOYIUIUYOIUYOIUHOIUOIUHIOPUHPOIJPOIJPOUHOIUHOILJHLIUHYOIUYOUI_3^{0.5+\delta}\theta_t\Vert_{L^2}    &\leq    P(      \Vert v\Vert_{H^{2.5+\delta}},       \Vert J\Vert_{H^{3.5+\delta}},       \Vert \tda\Vert_{H^{3.5+\delta}}      \Vert \psi_t\Vert_{H^{2.5+\delta}}     )     \Vert \UIPOIUPOIUPOOYIUIUYOIUYOIUHOIUOIUHIOPUHPOIJPOIJPOUHOIUHOILJHLIUHYOIUYOUI_3^{1.5+\delta}\theta\Vert_{L^2}       ,    \end{split}    \label{8ThswELzXU3X7Ebd1KdZ7v1rN3GiirRXGKWK099ovBM0FDJCvkopYNQ2aN94Z7k0UnUKamE3OjU8DFYFFokbSI2J9V9gVlM8ALWThDPnPu3EL7HPD2VDaZTggzcCCmbvc70qqPcC9mt60ogcrTiA3HEjwTK8ymKeuJMc4q6dVz200XnYUtLR9GYjPXvFOVr6W1zUK1WbPToaWJJuKnxBLnd0ftDEbMmj4loHYyhZyMjM91zQS4p7z8eKa9h0JrbacekcirexG0z4n3118}   \end{align} where we also used \eqref{8ThswELzXU3X7Ebd1KdZ7v1rN3GiirRXGKWK099ovBM0FDJCvkopYNQ2aN94Z7k0UnUKamE3OjU8DFYFFokbSI2J9V9gVlM8ALWThDPnPu3EL7HPD2VDaZTggzcCCmbvc70qqPcC9mt60ogcrTiA3HEjwTK8ymKeuJMc4q6dVz200XnYUtLR9GYjPXvFOVr6W1zUK1WbPToaWJJuKnxBLnd0ftDEbMmj4loHYyhZyMjM91zQS4p7z8eKa9h0JrbacekcirexG0z4n3220} and \eqref{8ThswELzXU3X7Ebd1KdZ7v1rN3GiirRXGKWK099ovBM0FDJCvkopYNQ2aN94Z7k0UnUKamE3OjU8DFYFFokbSI2J9V9gVlM8ALWThDPnPu3EL7HPD2VDaZTggzcCCmbvc70qqPcC9mt60ogcrTiA3HEjwTK8ymKeuJMc4q6dVz200XnYUtLR9GYjPXvFOVr6W1zUK1WbPToaWJJuKnxBLnd0ftDEbMmj4loHYyhZyMjM91zQS4p7z8eKa9h0JrbacekcirexG0z4n3221}.
Employing \eqref{8ThswELzXU3X7Ebd1KdZ7v1rN3GiirRXGKWK099ovBM0FDJCvkopYNQ2aN94Z7k0UnUKamE3OjU8DFYFFokbSI2J9V9gVlM8ALWThDPnPu3EL7HPD2VDaZTggzcCCmbvc70qqPcC9mt60ogcrTiA3HEjwTK8ymKeuJMc4q6dVz200XnYUtLR9GYjPXvFOVr6W1zUK1WbPToaWJJuKnxBLnd0ftDEbMmj4loHYyhZyMjM91zQS4p7z8eKa9h0JrbacekcirexG0z4n3118} in \eqref{8ThswELzXU3X7Ebd1KdZ7v1rN3GiirRXGKWK099ovBM0FDJCvkopYNQ2aN94Z7k0UnUKamE3OjU8DFYFFokbSI2J9V9gVlM8ALWThDPnPu3EL7HPD2VDaZTggzcCCmbvc70qqPcC9mt60ogcrTiA3HEjwTK8ymKeuJMc4q6dVz200XnYUtLR9GYjPXvFOVr6W1zUK1WbPToaWJJuKnxBLnd0ftDEbMmj4loHYyhZyMjM91zQS4p7z8eKa9h0JrbacekcirexG0z4n3117}, we then obtain   \begin{align}\thelt{ZR wHA fp7 9o imtC c5CV 8cEuwU w7 k 8Q7 nCq WkM gY rtVR IySM tZUGCH XV 9 mr9 GHZ ol0 VE eIjQ vwgw 17pDhX JS F UcY bqU gnG V8 IFWb S1GX az0ZTt 81 w 7En IhF F72 v2 PkWO Xlkr w6IPu5 67 9 vcW 1f6 z99 lM 2LI1 Y6Na axfl18 gT 0 gDp tVl CN4 jf GSbC ro5D v78Cxa uk Y iUI WWy YDR w8 z7Kj Px7C hC7zJv b1 b 0rF d7n Mxk 09 1wHv y4u5 vLLsJ8 Nm A kWt xuf 4P5 Nw P23b 06sF NQ6xgD hu R GbK 7j2 O4g y4 p4BL top3 h2kfyI 9w O 4Aa EWb 36Y yH YiI1 S3CO J7aN1r 0s Q OrC AC4}    \begin{split}    I_8    &\leq    P(      \Vert v\Vert_{H^{2.5+\delta}},       \Vert J\Vert_{H^{3.5+\delta}},       \Vert \tda\Vert_{H^{3.5+\delta}}      \Vert \psi_t\Vert_{H^{2.5+\delta}}     )     \Vert \UIPOIUPOIUPOOYIUIUYOIUYOIUHOIUOIUHIOPUHPOIJPOIJPOUHOIUHOILJHLIUHYOIUYOUI_3^{1.5+\delta}\theta\Vert_{L^2}       .
   \end{split}    \label{8ThswELzXU3X7Ebd1KdZ7v1rN3GiirRXGKWK099ovBM0FDJCvkopYNQ2aN94Z7k0UnUKamE3OjU8DFYFFokbSI2J9V9gVlM8ALWThDPnPu3EL7HPD2VDaZTggzcCCmbvc70qqPcC9mt60ogcrTiA3HEjwTK8ymKeuJMc4q6dVz200XnYUtLR9GYjPXvFOVr6W1zUK1WbPToaWJJuKnxBLnd0ftDEbMmj4loHYyhZyMjM91zQS4p7z8eKa9h0JrbacekcirexG0z4n3119}   \end{align} Combining \eqref{8ThswELzXU3X7Ebd1KdZ7v1rN3GiirRXGKWK099ovBM0FDJCvkopYNQ2aN94Z7k0UnUKamE3OjU8DFYFFokbSI2J9V9gVlM8ALWThDPnPu3EL7HPD2VDaZTggzcCCmbvc70qqPcC9mt60ogcrTiA3HEjwTK8ymKeuJMc4q6dVz200XnYUtLR9GYjPXvFOVr6W1zUK1WbPToaWJJuKnxBLnd0ftDEbMmj4loHYyhZyMjM91zQS4p7z8eKa9h0JrbacekcirexG0z4n3109} and the upper bounds  \eqref{8ThswELzXU3X7Ebd1KdZ7v1rN3GiirRXGKWK099ovBM0FDJCvkopYNQ2aN94Z7k0UnUKamE3OjU8DFYFFokbSI2J9V9gVlM8ALWThDPnPu3EL7HPD2VDaZTggzcCCmbvc70qqPcC9mt60ogcrTiA3HEjwTK8ymKeuJMc4q6dVz200XnYUtLR9GYjPXvFOVr6W1zUK1WbPToaWJJuKnxBLnd0ftDEbMmj4loHYyhZyMjM91zQS4p7z8eKa9h0JrbacekcirexG0z4n3111}, \eqref{8ThswELzXU3X7Ebd1KdZ7v1rN3GiirRXGKWK099ovBM0FDJCvkopYNQ2aN94Z7k0UnUKamE3OjU8DFYFFokbSI2J9V9gVlM8ALWThDPnPu3EL7HPD2VDaZTggzcCCmbvc70qqPcC9mt60ogcrTiA3HEjwTK8ymKeuJMc4q6dVz200XnYUtLR9GYjPXvFOVr6W1zUK1WbPToaWJJuKnxBLnd0ftDEbMmj4loHYyhZyMjM91zQS4p7z8eKa9h0JrbacekcirexG0z4n3112}, \eqref{8ThswELzXU3X7Ebd1KdZ7v1rN3GiirRXGKWK099ovBM0FDJCvkopYNQ2aN94Z7k0UnUKamE3OjU8DFYFFokbSI2J9V9gVlM8ALWThDPnPu3EL7HPD2VDaZTggzcCCmbvc70qqPcC9mt60ogcrTiA3HEjwTK8ymKeuJMc4q6dVz200XnYUtLR9GYjPXvFOVr6W1zUK1WbPToaWJJuKnxBLnd0ftDEbMmj4loHYyhZyMjM91zQS4p7z8eKa9h0JrbacekcirexG0z4n3113}, \eqref{8ThswELzXU3X7Ebd1KdZ7v1rN3GiirRXGKWK099ovBM0FDJCvkopYNQ2aN94Z7k0UnUKamE3OjU8DFYFFokbSI2J9V9gVlM8ALWThDPnPu3EL7HPD2VDaZTggzcCCmbvc70qqPcC9mt60ogcrTiA3HEjwTK8ymKeuJMc4q6dVz200XnYUtLR9GYjPXvFOVr6W1zUK1WbPToaWJJuKnxBLnd0ftDEbMmj4loHYyhZyMjM91zQS4p7z8eKa9h0JrbacekcirexG0z4n3114}, \eqref{8ThswELzXU3X7Ebd1KdZ7v1rN3GiirRXGKWK099ovBM0FDJCvkopYNQ2aN94Z7k0UnUKamE3OjU8DFYFFokbSI2J9V9gVlM8ALWThDPnPu3EL7HPD2VDaZTggzcCCmbvc70qqPcC9mt60ogcrTiA3HEjwTK8ymKeuJMc4q6dVz200XnYUtLR9GYjPXvFOVr6W1zUK1WbPToaWJJuKnxBLnd0ftDEbMmj4loHYyhZyMjM91zQS4p7z8eKa9h0JrbacekcirexG0z4n3115}, \eqref{8ThswELzXU3X7Ebd1KdZ7v1rN3GiirRXGKWK099ovBM0FDJCvkopYNQ2aN94Z7k0UnUKamE3OjU8DFYFFokbSI2J9V9gVlM8ALWThDPnPu3EL7HPD2VDaZTggzcCCmbvc70qqPcC9mt60ogcrTiA3HEjwTK8ymKeuJMc4q6dVz200XnYUtLR9GYjPXvFOVr6W1zUK1WbPToaWJJuKnxBLnd0ftDEbMmj4loHYyhZyMjM91zQS4p7z8eKa9h0JrbacekcirexG0z4n3116}, and \eqref{8ThswELzXU3X7Ebd1KdZ7v1rN3GiirRXGKWK099ovBM0FDJCvkopYNQ2aN94Z7k0UnUKamE3OjU8DFYFFokbSI2J9V9gVlM8ALWThDPnPu3EL7HPD2VDaZTggzcCCmbvc70qqPcC9mt60ogcrTiA3HEjwTK8ymKeuJMc4q6dVz200XnYUtLR9GYjPXvFOVr6W1zUK1WbPToaWJJuKnxBLnd0ftDEbMmj4loHYyhZyMjM91zQS4p7z8eKa9h0JrbacekcirexG0z4n3119},  we get   \begin{align}\thelt{7 9 vcW 1f6 z99 lM 2LI1 Y6Na axfl18 gT 0 gDp tVl CN4 jf GSbC ro5D v78Cxa uk Y iUI WWy YDR w8 z7Kj Px7C hC7zJv b1 b 0rF d7n Mxk 09 1wHv y4u5 vLLsJ8 Nm A kWt xuf 4P5 Nw P23b 06sF NQ6xgD hu R GbK 7j2 O4g y4 p4BL top3 h2kfyI 9w O 4Aa EWb 36Y yH YiI1 S3CO J7aN1r 0s Q OrC AC4 vL7 yr CGkI RlNu GbOuuk 1a w LDK 2zl Ka4 0h yJnD V4iF xsqO00 1r q CeO AO2 es7 DR aCpU G54F 2i97xS Qr c bPZ 6K8 Kud n9 e6SY o396 Fr8LUx yX O jdF sMr l54 Eh T8vr xxF2 phKPbs zr l pM}    \begin{split}     \frac{d \AA}{dt}     \leq    P(\Vert v\Vert_{H^{2.5+\delta}},      \Vert w\Vert_{H^{4+\delta}(\Gamma_1)},      \Vert w_{t}\Vert_{H^{2+\delta}(\Gamma_1)}
    )     \Vert \UIPOIUPOIUPOOYIUIUYOIUYOIUHOIUOIUHIOPUHPOIJPOIJPOUHOIUHOILJHLIUHYOIUYOUI_3^{1.5+\delta}\theta\Vert_{L^2}       ,    \end{split}    \llabel{8ThswELzXU3X7Ebd1KdZ7v1rN3GiirRXGKWK099ovBM0FDJCvkopYNQ2aN94Z7k0UnUKamE3OjU8DFYFFokbSI2J9V9gVlM8ALWThDPnPu3EL7HPD2VDaZTggzcCCmbvc70qqPcC9mt60ogcrTiA3HEjwTK8ymKeuJMc4q6dVz200XnYUtLR9GYjPXvFOVr6W1zUK1WbPToaWJJuKnxBLnd0ftDEbMmj4loHYyhZyMjM91zQS4p7z8eKa9h0JrbacekcirexG0z4n3121}    \end{align}   and then, using \eqref{8ThswELzXU3X7Ebd1KdZ7v1rN3GiirRXGKWK099ovBM0FDJCvkopYNQ2aN94Z7k0UnUKamE3OjU8DFYFFokbSI2J9V9gVlM8ALWThDPnPu3EL7HPD2VDaZTggzcCCmbvc70qqPcC9mt60ogcrTiA3HEjwTK8ymKeuJMc4q6dVz200XnYUtLR9GYjPXvFOVr6W1zUK1WbPToaWJJuKnxBLnd0ftDEbMmj4loHYyhZyMjM91zQS4p7z8eKa9h0JrbacekcirexG0z4n3220} and \eqref{8ThswELzXU3X7Ebd1KdZ7v1rN3GiirRXGKWK099ovBM0FDJCvkopYNQ2aN94Z7k0UnUKamE3OjU8DFYFFokbSI2J9V9gVlM8ALWThDPnPu3EL7HPD2VDaZTggzcCCmbvc70qqPcC9mt60ogcrTiA3HEjwTK8ymKeuJMc4q6dVz200XnYUtLR9GYjPXvFOVr6W1zUK1WbPToaWJJuKnxBLnd0ftDEbMmj4loHYyhZyMjM91zQS4p7z8eKa9h0JrbacekcirexG0z4n3221}, we obtain \eqref{8ThswELzXU3X7Ebd1KdZ7v1rN3GiirRXGKWK099ovBM0FDJCvkopYNQ2aN94Z7k0UnUKamE3OjU8DFYFFokbSI2J9V9gVlM8ALWThDPnPu3EL7HPD2VDaZTggzcCCmbvc70qqPcC9mt60ogcrTiA3HEjwTK8ymKeuJMc4q6dVz200XnYUtLR9GYjPXvFOVr6W1zUK1WbPToaWJJuKnxBLnd0ftDEbMmj4loHYyhZyMjM91zQS4p7z8eKa9h0JrbacekcirexG0z4n3107}. \end{proof} \par \subsection{The conclusion of a~priori bounds} \label{sec06} \par Now, we are ready to conclude the proof of the main statement on a~priori estimates for the system.
\par \begin{proof}[Proof of Theorem~\ref{T01}] Using the pressure estimate \eqref{8ThswELzXU3X7Ebd1KdZ7v1rN3GiirRXGKWK099ovBM0FDJCvkopYNQ2aN94Z7k0UnUKamE3OjU8DFYFFokbSI2J9V9gVlM8ALWThDPnPu3EL7HPD2VDaZTggzcCCmbvc70qqPcC9mt60ogcrTiA3HEjwTK8ymKeuJMc4q6dVz200XnYUtLR9GYjPXvFOVr6W1zUK1WbPToaWJJuKnxBLnd0ftDEbMmj4loHYyhZyMjM91zQS4p7z8eKa9h0JrbacekcirexG0z4n378} in  the tangential bound \eqref{8ThswELzXU3X7Ebd1KdZ7v1rN3GiirRXGKWK099ovBM0FDJCvkopYNQ2aN94Z7k0UnUKamE3OjU8DFYFFokbSI2J9V9gVlM8ALWThDPnPu3EL7HPD2VDaZTggzcCCmbvc70qqPcC9mt60ogcrTiA3HEjwTK8ymKeuJMc4q6dVz200XnYUtLR9GYjPXvFOVr6W1zUK1WbPToaWJJuKnxBLnd0ftDEbMmj4loHYyhZyMjM91zQS4p7z8eKa9h0JrbacekcirexG0z4n351}, we get   \begin{align}\thelt{xgD hu R GbK 7j2 O4g y4 p4BL top3 h2kfyI 9w O 4Aa EWb 36Y yH YiI1 S3CO J7aN1r 0s Q OrC AC4 vL7 yr CGkI RlNu GbOuuk 1a w LDK 2zl Ka4 0h yJnD V4iF xsqO00 1r q CeO AO2 es7 DR aCpU G54F 2i97xS Qr c bPZ 6K8 Kud n9 e6SY o396 Fr8LUx yX O jdF sMr l54 Eh T8vr xxF2 phKPbs zr l pMA ubE RMG QA aCBu 2Lqw Gasprf IZ O iKV Vbu Vae 6a bauf y9Kc Fk6cBl Z5 r KUj htW E1C nt 9Rmd whJR ySGVSO VT v 9FY 4uz yAH Sp 6yT9 s6R6 oOi3aq Zl L 7bI vWZ 18c Fa iwpt C1nd Fyp4oK xD}    \begin{split}    &     \Vert  \UIPOIUPOIUPOOYIUIUYOIUYOIUHOIUOIUHIOPUHPOIJPOIJPOUHOIUHOILJHLIUHYOIUYOUI^{4+\delta} w\Vert_{L^2(\Gamma_1)}^2    +  \Vert \UIPOIUPOIUPOOYIUIUYOIUYOIUHOIUOIUHIOPUHPOIJPOIJPOUHOIUHOILJHLIUHYOIUYOUI^{2+\delta} w_{t}\Vert_{L^2(\Gamma_1)}^2    +  \nu \OIUYJHUGFAJKLDHFKJLSDHFLKSDJFHLKSDJHFLKSDJHFLKDJFHLLDKHFLKSDHJFALKJHLJLHGLKHHLKJHLKGKHGJKHGKJHLKHJLKJH_{0}^{t}   \Vert  \nabla_2  \UIPOIUPOIUPOOYIUIUYOIUYOIUHOIUOIUHIOPUHPOIJPOIJPOUHOIUHOILJHLIUHYOIUYOUI^{2+\delta} w_{t} \Vert_{L^2(\Gamma_1)}^2 \, ds    \\&\indeq    \dlkjfhlaskdhjflkasdjhflkasjhdflkasjhdflkasjhdfls        \Vert w_{t}(0)\Vert_{H^{4+\delta}(\Gamma_1)}^2
    +     \Vert  v(0) \Vert_{H^{2.5+\delta}}^2     +   \Vert v\Vert_{L^2}^{(1+\delta)/(2.5+\delta)}          \Vert v\Vert_{H^{2.5+\delta}}^{(4+\delta)/(2.5+\delta)}    \\&\indeq\indeq    +\OIUYJHUGFAJKLDHFKJLSDHFLKSDJFHLKSDJHFLKSDJHFLKDJFHLLDKHFLKSDHJFALKJHLJLHGLKHHLKJHLKGKHGJKHGKJHLKHJLKJH_{0}^{t}        P(          \Vert v\Vert_{H^{2.5+\delta}},           \Vert w\Vert_{H^{4+\delta}(\Gamma_1)},          \Vert w_{t}\Vert_{H^{2+\delta}(\Gamma_1)}        )\,ds     .    \end{split}    \label{8ThswELzXU3X7Ebd1KdZ7v1rN3GiirRXGKWK099ovBM0FDJCvkopYNQ2aN94Z7k0UnUKamE3OjU8DFYFFokbSI2J9V9gVlM8ALWThDPnPu3EL7HPD2VDaZTggzcCCmbvc70qqPcC9mt60ogcrTiA3HEjwTK8ymKeuJMc4q6dVz200XnYUtLR9GYjPXvFOVr6W1zUK1WbPToaWJJuKnxBLnd0ftDEbMmj4loHYyhZyMjM91zQS4p7z8eKa9h0JrbacekcirexG0z4n3127}   \end{align}
By the div-curl elliptic estimate (see \cite{BB}), we have   \begin{equation}    \Vert v\Vert_{H^{2.5+\delta}}    \dlkjfhlaskdhjflkasdjhflkasjhdflkasjhdflkasjhdfls    \Vert \curl v\Vert_{H^{1.5+\delta}}    +    \Vert \div v\Vert_{H^{1.5+\delta}}    +    \Vert v\cdot N\Vert_{H^{2+\delta}(\Gamma_0\cup\Gamma_1)}    +    \Vert v\Vert_{L^2}    .    \label{8ThswELzXU3X7Ebd1KdZ7v1rN3GiirRXGKWK099ovBM0FDJCvkopYNQ2aN94Z7k0UnUKamE3OjU8DFYFFokbSI2J9V9gVlM8ALWThDPnPu3EL7HPD2VDaZTggzcCCmbvc70qqPcC9mt60ogcrTiA3HEjwTK8ymKeuJMc4q6dVz200XnYUtLR9GYjPXvFOVr6W1zUK1WbPToaWJJuKnxBLnd0ftDEbMmj4loHYyhZyMjM91zQS4p7z8eKa9h0JrbacekcirexG0z4n3128}   \end{equation}
We bound the terms on the right-hand side in order. Using the formula \eqref{8ThswELzXU3X7Ebd1KdZ7v1rN3GiirRXGKWK099ovBM0FDJCvkopYNQ2aN94Z7k0UnUKamE3OjU8DFYFFokbSI2J9V9gVlM8ALWThDPnPu3EL7HPD2VDaZTggzcCCmbvc70qqPcC9mt60ogcrTiA3HEjwTK8ymKeuJMc4q6dVz200XnYUtLR9GYjPXvFOVr6W1zUK1WbPToaWJJuKnxBLnd0ftDEbMmj4loHYyhZyMjM91zQS4p7z8eKa9h0JrbacekcirexG0z4n395} for the  ALE vorticity $\zeta$, we may estimate   \begin{align}\thelt{F 2i97xS Qr c bPZ 6K8 Kud n9 e6SY o396 Fr8LUx yX O jdF sMr l54 Eh T8vr xxF2 phKPbs zr l pMA ubE RMG QA aCBu 2Lqw Gasprf IZ O iKV Vbu Vae 6a bauf y9Kc Fk6cBl Z5 r KUj htW E1C nt 9Rmd whJR ySGVSO VT v 9FY 4uz yAH Sp 6yT9 s6R6 oOi3aq Zl L 7bI vWZ 18c Fa iwpt C1nd Fyp4oK xD f Qz2 813 6a8 zX wsGl Ysh9 Gp3Tal nr R UKt tBK eFr 45 43qU 2hh3 WbYw09 g2 W LIX zvQ zMk j5 f0xL seH9 dscinG wu P JLP 1gE N5W qY sSoW Peqj MimTyb Hj j cbn 0NO 5hz P9 W40r 2w77 TAoz}    \begin{split}    \Vert (\curl v)_i\Vert_{H^{1.5+\delta}}    &\dlkjfhlaskdhjflkasdjhflkasjhdflkasjhdflkasjhdfls    \Vert \epsilon_{ijk} \UIOIUYOIUyHJGKHJLOIUYOIUOIUYOIYIOUYTIUYIOOOIUYOIUYPOIUPOIUPOIUYOIUYOIUYOIUHOUHOHIOUHOIHOIUHOIUHIOUH_{m} v_k (a_{mj}-\delta_{mj}) \Vert_{H^{1.5+\delta}}    +    \Vert \zeta_i \Vert_{H^{1.5+\delta}}    \dlkjfhlaskdhjflkasdjhflkasjhdflkasjhdflkasjhdfls    \epsilon    \Vert v\Vert_{H^{2.5+\delta}}    +   
   \Vert \zeta \Vert_{H^{1.5+\delta}}    ,    \end{split}    \label{8ThswELzXU3X7Ebd1KdZ7v1rN3GiirRXGKWK099ovBM0FDJCvkopYNQ2aN94Z7k0UnUKamE3OjU8DFYFFokbSI2J9V9gVlM8ALWThDPnPu3EL7HPD2VDaZTggzcCCmbvc70qqPcC9mt60ogcrTiA3HEjwTK8ymKeuJMc4q6dVz200XnYUtLR9GYjPXvFOVr6W1zUK1WbPToaWJJuKnxBLnd0ftDEbMmj4loHYyhZyMjM91zQS4p7z8eKa9h0JrbacekcirexG0z4n3129}   \end{align} for $i=1,2,3$, where we used \eqref{8ThswELzXU3X7Ebd1KdZ7v1rN3GiirRXGKWK099ovBM0FDJCvkopYNQ2aN94Z7k0UnUKamE3OjU8DFYFFokbSI2J9V9gVlM8ALWThDPnPu3EL7HPD2VDaZTggzcCCmbvc70qqPcC9mt60ogcrTiA3HEjwTK8ymKeuJMc4q6dVz200XnYUtLR9GYjPXvFOVr6W1zUK1WbPToaWJJuKnxBLnd0ftDEbMmj4loHYyhZyMjM91zQS4p7z8eKa9h0JrbacekcirexG0z4n334} in the last step. Therefore, applying the vorticity bound \eqref{8ThswELzXU3X7Ebd1KdZ7v1rN3GiirRXGKWK099ovBM0FDJCvkopYNQ2aN94Z7k0UnUKamE3OjU8DFYFFokbSI2J9V9gVlM8ALWThDPnPu3EL7HPD2VDaZTggzcCCmbvc70qqPcC9mt60ogcrTiA3HEjwTK8ymKeuJMc4q6dVz200XnYUtLR9GYjPXvFOVr6W1zUK1WbPToaWJJuKnxBLnd0ftDEbMmj4loHYyhZyMjM91zQS4p7z8eKa9h0JrbacekcirexG0z4n3107}, integrated in time, along with \eqref{8ThswELzXU3X7Ebd1KdZ7v1rN3GiirRXGKWK099ovBM0FDJCvkopYNQ2aN94Z7k0UnUKamE3OjU8DFYFFokbSI2J9V9gVlM8ALWThDPnPu3EL7HPD2VDaZTggzcCCmbvc70qqPcC9mt60ogcrTiA3HEjwTK8ymKeuJMc4q6dVz200XnYUtLR9GYjPXvFOVr6W1zUK1WbPToaWJJuKnxBLnd0ftDEbMmj4loHYyhZyMjM91zQS4p7z8eKa9h0JrbacekcirexG0z4n3124}, in \eqref{8ThswELzXU3X7Ebd1KdZ7v1rN3GiirRXGKWK099ovBM0FDJCvkopYNQ2aN94Z7k0UnUKamE3OjU8DFYFFokbSI2J9V9gVlM8ALWThDPnPu3EL7HPD2VDaZTggzcCCmbvc70qqPcC9mt60ogcrTiA3HEjwTK8ymKeuJMc4q6dVz200XnYUtLR9GYjPXvFOVr6W1zUK1WbPToaWJJuKnxBLnd0ftDEbMmj4loHYyhZyMjM91zQS4p7z8eKa9h0JrbacekcirexG0z4n3129}, we get   \begin{align}\thelt{d whJR ySGVSO VT v 9FY 4uz yAH Sp 6yT9 s6R6 oOi3aq Zl L 7bI vWZ 18c Fa iwpt C1nd Fyp4oK xD f Qz2 813 6a8 zX wsGl Ysh9 Gp3Tal nr R UKt tBK eFr 45 43qU 2hh3 WbYw09 g2 W LIX zvQ zMk j5 f0xL seH9 dscinG wu P JLP 1gE N5W qY sSoW Peqj MimTyb Hj j cbn 0NO 5hz P9 W40r 2w77 TAoz70 N1 a u09 boc DSx Gc 3tvK LXaC 1dKgw9 H3 o 2kE oul In9 TS PyL2 HXO7 tSZse0 1Z 9 Hds lDq 0tm SO AVqt A1FQ zEMKSb ak z nw8 39w nH1 Dp CjGI k5X3 B6S6UI 7H I gAa f9E V33 Bk kuo3 FyEi}    \begin{split}
      \Vert \curl v\Vert_{H^{1.5+\delta}}        \dlkjfhlaskdhjflkasdjhflkasjhdflkasjhdflkasjhdfls       \Vert \zeta(0)\Vert_{H^{1.5+\delta}}       + \epsilon   \Vert v\Vert_{H^{1.5+\delta}}       + \OIUYJHUGFAJKLDHFKJLSDHFLKSDJFHLKSDJHFLKSDJHFLKDJFHLLDKHFLKSDHJFALKJHLJLHGLKHHLKJHLKGKHGJKHGKJHLKHJLKJH_{0}^{t}        P(           \Vert v\Vert_{H^{2.5+\delta}},           \Vert w\Vert_{H^{4+\delta}(\Gamma_1)},           \Vert w_{t}\Vert_{H^{2+\delta}(\Gamma_1)}         )      \AA      \,ds    .    \end{split}
   \label{8ThswELzXU3X7Ebd1KdZ7v1rN3GiirRXGKWK099ovBM0FDJCvkopYNQ2aN94Z7k0UnUKamE3OjU8DFYFFokbSI2J9V9gVlM8ALWThDPnPu3EL7HPD2VDaZTggzcCCmbvc70qqPcC9mt60ogcrTiA3HEjwTK8ymKeuJMc4q6dVz200XnYUtLR9GYjPXvFOVr6W1zUK1WbPToaWJJuKnxBLnd0ftDEbMmj4loHYyhZyMjM91zQS4p7z8eKa9h0JrbacekcirexG0z4n3343}   \end{align} For the divergence term in \eqref{8ThswELzXU3X7Ebd1KdZ7v1rN3GiirRXGKWK099ovBM0FDJCvkopYNQ2aN94Z7k0UnUKamE3OjU8DFYFFokbSI2J9V9gVlM8ALWThDPnPu3EL7HPD2VDaZTggzcCCmbvc70qqPcC9mt60ogcrTiA3HEjwTK8ymKeuJMc4q6dVz200XnYUtLR9GYjPXvFOVr6W1zUK1WbPToaWJJuKnxBLnd0ftDEbMmj4loHYyhZyMjM91zQS4p7z8eKa9h0JrbacekcirexG0z4n3128}, we use  \eqref{8ThswELzXU3X7Ebd1KdZ7v1rN3GiirRXGKWK099ovBM0FDJCvkopYNQ2aN94Z7k0UnUKamE3OjU8DFYFFokbSI2J9V9gVlM8ALWThDPnPu3EL7HPD2VDaZTggzcCCmbvc70qqPcC9mt60ogcrTiA3HEjwTK8ymKeuJMc4q6dVz200XnYUtLR9GYjPXvFOVr6W1zUK1WbPToaWJJuKnxBLnd0ftDEbMmj4loHYyhZyMjM91zQS4p7z8eKa9h0JrbacekcirexG0z4n317}$_2$ to estimate   \begin{align}\thelt{5 f0xL seH9 dscinG wu P JLP 1gE N5W qY sSoW Peqj MimTyb Hj j cbn 0NO 5hz P9 W40r 2w77 TAoz70 N1 a u09 boc DSx Gc 3tvK LXaC 1dKgw9 H3 o 2kE oul In9 TS PyL2 HXO7 tSZse0 1Z 9 Hds lDq 0tm SO AVqt A1FQ zEMKSb ak z nw8 39w nH1 Dp CjGI k5X3 B6S6UI 7H I gAa f9E V33 Bk kuo3 FyEi 8Ty2AB PY z SWj Pj5 tYZ ET Yzg6 Ix5t ATPMdl Gk e 67X b7F ktE sz yFyc mVhG JZ29aP gz k Yj4 cEr HCd P7 XFHU O9zo y4AZai SR O pIn 0tp 7kZ zU VHQt m3ip 3xEd41 By 7 2ux IiY 8BC Lb OYGo}    \begin{split}    \Vert \div v\Vert_{H^{1.5+\delta}}    &=    \Vert (a_{ki}-\delta_{ki})\UIOIUYOIUyHJGKHJLOIUYOIUOIUYOIYIOUYTIUYIOOOIUYOIUYPOIUPOIUPOIUYOIUYOIUYOIUHOUHOHIOUHOIHOIUHOIUHIOUH_{k} v_i \Vert_{H^{1.5+\delta}}    \dlkjfhlaskdhjflkasdjhflkasjhdflkasjhdflkasjhdfls    \epsilon    \Vert v\Vert_{H^{2.5+\delta}}    .    \end{split}
   \label{8ThswELzXU3X7Ebd1KdZ7v1rN3GiirRXGKWK099ovBM0FDJCvkopYNQ2aN94Z7k0UnUKamE3OjU8DFYFFokbSI2J9V9gVlM8ALWThDPnPu3EL7HPD2VDaZTggzcCCmbvc70qqPcC9mt60ogcrTiA3HEjwTK8ymKeuJMc4q6dVz200XnYUtLR9GYjPXvFOVr6W1zUK1WbPToaWJJuKnxBLnd0ftDEbMmj4loHYyhZyMjM91zQS4p7z8eKa9h0JrbacekcirexG0z4n3131}   \end{align} The part on $\Gamma_0$ of the final term in \eqref{8ThswELzXU3X7Ebd1KdZ7v1rN3GiirRXGKWK099ovBM0FDJCvkopYNQ2aN94Z7k0UnUKamE3OjU8DFYFFokbSI2J9V9gVlM8ALWThDPnPu3EL7HPD2VDaZTggzcCCmbvc70qqPcC9mt60ogcrTiA3HEjwTK8ymKeuJMc4q6dVz200XnYUtLR9GYjPXvFOVr6W1zUK1WbPToaWJJuKnxBLnd0ftDEbMmj4loHYyhZyMjM91zQS4p7z8eKa9h0JrbacekcirexG0z4n3128}  vanishes, while on $\Gamma_1$, we use~\eqref{8ThswELzXU3X7Ebd1KdZ7v1rN3GiirRXGKWK099ovBM0FDJCvkopYNQ2aN94Z7k0UnUKamE3OjU8DFYFFokbSI2J9V9gVlM8ALWThDPnPu3EL7HPD2VDaZTggzcCCmbvc70qqPcC9mt60ogcrTiA3HEjwTK8ymKeuJMc4q6dVz200XnYUtLR9GYjPXvFOVr6W1zUK1WbPToaWJJuKnxBLnd0ftDEbMmj4loHYyhZyMjM91zQS4p7z8eKa9h0JrbacekcirexG0z4n321}. We get   \begin{align}\thelt{0tm SO AVqt A1FQ zEMKSb ak z nw8 39w nH1 Dp CjGI k5X3 B6S6UI 7H I gAa f9E V33 Bk kuo3 FyEi 8Ty2AB PY z SWj Pj5 tYZ ET Yzg6 Ix5t ATPMdl Gk e 67X b7F ktE sz yFyc mVhG JZ29aP gz k Yj4 cEr HCd P7 XFHU O9zo y4AZai SR O pIn 0tp 7kZ zU VHQt m3ip 3xEd41 By 7 2ux IiY 8BC Lb OYGo LDwp juza6i Pa k Zdh aD3 xSX yj pdOw oqQq Jl6RFg lO t X67 nm7 s1l ZJ mGUr dIdX Q7jps7 rc d ACY ZMs BKA Nx tkqf Nhkt sbBf2O BN Z 5pf oqS Xtd 3c HFLN tLgR oHrnNl wR n ylZ NWV NfH vO}   \begin{split}    \Vert v\cdot N\Vert_{H^{2+\delta}(\Gamma_1)}    &\dlkjfhlaskdhjflkasdjhflkasjhdflkasjhdflkasjhdfls    \Vert (b_{3i} - \delta_{3i})v_i\Vert_{H^{2+\delta}(\Gamma_1)}    +    \Vert w_{t}\Vert_{H^{2+\delta}(\Gamma_1)}    \\&    \dlkjfhlaskdhjflkasdjhflkasjhdflkasjhdflkasjhdfls    \Vert b-I\Vert_{H^{2+\delta}(\Gamma_1)}    \Vert v\Vert_{H^{2+\delta}(\Gamma_1)}
   +  \Vert w_{t}\Vert_{H^{2+\delta}(\Gamma_1)}    .   \end{split}    \label{8ThswELzXU3X7Ebd1KdZ7v1rN3GiirRXGKWK099ovBM0FDJCvkopYNQ2aN94Z7k0UnUKamE3OjU8DFYFFokbSI2J9V9gVlM8ALWThDPnPu3EL7HPD2VDaZTggzcCCmbvc70qqPcC9mt60ogcrTiA3HEjwTK8ymKeuJMc4q6dVz200XnYUtLR9GYjPXvFOVr6W1zUK1WbPToaWJJuKnxBLnd0ftDEbMmj4loHYyhZyMjM91zQS4p7z8eKa9h0JrbacekcirexG0z4n3132}   \end{align} Note that   \begin{align}\thelt{ cEr HCd P7 XFHU O9zo y4AZai SR O pIn 0tp 7kZ zU VHQt m3ip 3xEd41 By 7 2ux IiY 8BC Lb OYGo LDwp juza6i Pa k Zdh aD3 xSX yj pdOw oqQq Jl6RFg lO t X67 nm7 s1l ZJ mGUr dIdX Q7jps7 rc d ACY ZMs BKA Nx tkqf Nhkt sbBf2O BN Z 5pf oqS Xtd 3c HFLN tLgR oHrnNl wR n ylZ NWV NfH vO B1nU Ayjt xTWW4o Cq P Rtu Vua nMk Lv qbxp Ni0x YnOkcd FB d rw1 Nu7 cKy bL jCF7 P4dx j0Sbz9 fa V CWk VFo s9t 2a QIPK ORuE jEMtbS Hs Y eG5 Z7u MWW Aw RnR8 FwFC zXVVxn FU f yKL Nk4 e}    \begin{split}    \Vert b-I\Vert_{H^{2+\delta}(\Gamma_1)}    &\dlkjfhlaskdhjflkasdjhflkasjhdflkasjhdflkasjhdfls    \Vert b-I\Vert_{H^{2.5+\delta}}    \dlkjfhlaskdhjflkasdjhflkasjhdflkasjhdflkasjhdfls    \Vert b-I\Vert_{H^{1.5+\delta}}^{1-\alpha}    \Vert b-I\Vert_{H^{3.5+\delta}}^{\alpha}
   \\&    \dlkjfhlaskdhjflkasdjhflkasjhdflkasjhdflkasjhdfls     \epsilon      (      1+      \Vert b\Vert_{H^{3.5+\delta}}^{\alpha}     )    \dlkjfhlaskdhjflkasdjhflkasjhdflkasjhdflkasjhdfls     \epsilon      (      1+      \Vert w\Vert_{H^{4+\delta}(\Gamma_1)}^{\alpha}     )    ,
   \end{split}    \label{8ThswELzXU3X7Ebd1KdZ7v1rN3GiirRXGKWK099ovBM0FDJCvkopYNQ2aN94Z7k0UnUKamE3OjU8DFYFFokbSI2J9V9gVlM8ALWThDPnPu3EL7HPD2VDaZTggzcCCmbvc70qqPcC9mt60ogcrTiA3HEjwTK8ymKeuJMc4q6dVz200XnYUtLR9GYjPXvFOVr6W1zUK1WbPToaWJJuKnxBLnd0ftDEbMmj4loHYyhZyMjM91zQS4p7z8eKa9h0JrbacekcirexG0z4n3133}   \end{align} where $\alpha\in(0,1)$ is the exponent  determined by $2.5+\delta=(1-\alpha)(1.5+\delta)+\alpha(3.5+\delta)$. Using \eqref{8ThswELzXU3X7Ebd1KdZ7v1rN3GiirRXGKWK099ovBM0FDJCvkopYNQ2aN94Z7k0UnUKamE3OjU8DFYFFokbSI2J9V9gVlM8ALWThDPnPu3EL7HPD2VDaZTggzcCCmbvc70qqPcC9mt60ogcrTiA3HEjwTK8ymKeuJMc4q6dVz200XnYUtLR9GYjPXvFOVr6W1zUK1WbPToaWJJuKnxBLnd0ftDEbMmj4loHYyhZyMjM91zQS4p7z8eKa9h0JrbacekcirexG0z4n3133} in \eqref{8ThswELzXU3X7Ebd1KdZ7v1rN3GiirRXGKWK099ovBM0FDJCvkopYNQ2aN94Z7k0UnUKamE3OjU8DFYFFokbSI2J9V9gVlM8ALWThDPnPu3EL7HPD2VDaZTggzcCCmbvc70qqPcC9mt60ogcrTiA3HEjwTK8ymKeuJMc4q6dVz200XnYUtLR9GYjPXvFOVr6W1zUK1WbPToaWJJuKnxBLnd0ftDEbMmj4loHYyhZyMjM91zQS4p7z8eKa9h0JrbacekcirexG0z4n3132}, we get   \begin{align}\thelt{d ACY ZMs BKA Nx tkqf Nhkt sbBf2O BN Z 5pf oqS Xtd 3c HFLN tLgR oHrnNl wR n ylZ NWV NfH vO B1nU Ayjt xTWW4o Cq P Rtu Vua nMk Lv qbxp Ni0x YnOkcd FB d rw1 Nu7 cKy bL jCF7 P4dx j0Sbz9 fa V CWk VFo s9t 2a QIPK ORuE jEMtbS Hs Y eG5 Z7u MWW Aw RnR8 FwFC zXVVxn FU f yKL Nk4 eOI ly n3Cl I5HP 8XP6S4 KF f Il6 2Vl bXg ca uth8 61pU WUx2aQ TW g rZw cAx 52T kq oZXV g0QG rBrrpe iw u WyJ td9 ooD 8t UzAd LSnI tarmhP AW B mnm nsb xLI qX 4RQS TyoF DIikpe IL h WZZ }    \begin{split}      \Vert v\cdot N\Vert_{H^{2+\delta}(\Gamma_1)}      &\dlkjfhlaskdhjflkasdjhflkasjhdflkasjhdflkasjhdfls      \epsilon      (       1 
       + \Vert w\Vert_{H^{4+\delta}(\Gamma_1)}^{\alpha}      ) \Vert v\Vert_{H^{2.5+\delta}}    +  \Vert w_{t}\Vert_{H^{2+\delta}(\Gamma_1)}     ,    \end{split}    \llabel{8ThswELzXU3X7Ebd1KdZ7v1rN3GiirRXGKWK099ovBM0FDJCvkopYNQ2aN94Z7k0UnUKamE3OjU8DFYFFokbSI2J9V9gVlM8ALWThDPnPu3EL7HPD2VDaZTggzcCCmbvc70qqPcC9mt60ogcrTiA3HEjwTK8ymKeuJMc4q6dVz200XnYUtLR9GYjPXvFOVr6W1zUK1WbPToaWJJuKnxBLnd0ftDEbMmj4loHYyhZyMjM91zQS4p7z8eKa9h0JrbacekcirexG0z4n3134}   \end{align} and thus   \begin{align}\thelt{9 fa V CWk VFo s9t 2a QIPK ORuE jEMtbS Hs Y eG5 Z7u MWW Aw RnR8 FwFC zXVVxn FU f yKL Nk4 eOI ly n3Cl I5HP 8XP6S4 KF f Il6 2Vl bXg ca uth8 61pU WUx2aQ TW g rZw cAx 52T kq oZXV g0QG rBrrpe iw u WyJ td9 ooD 8t UzAd LSnI tarmhP AW B mnm nsb xLI qX 4RQS TyoF DIikpe IL h WZZ 8ic JGa 91 HxRb 97kn Whp9sA Vz P o85 60p RN2 PS MGMM FK5X W52OnW Iy o Yng xWn o86 8S Kbbu 1Iq1 SyPkHJ VC v seV GWr hUd ew Xw6C SY1b e3hD9P Kh a 1y0 SRw yxi AG zdCM VMmi JaemmP 8x r}    \begin{split}      \Vert v\cdot N\Vert_{H^{2+\delta}(\Gamma_1)}      &\dlkjfhlaskdhjflkasdjhflkasjhdflkasjhdflkasjhdfls       \epsilon         \Vert w\Vert_{H^{4+\delta}(\Gamma_1)}
       \Vert v\Vert_{H^{2.5+\delta}}         + \epsilon        \Vert v\Vert_{H^{2.5+\delta}}         +  \Vert w_{t}\Vert_{H^{2+\delta}(\Gamma_1)}     .    \end{split}    \label{8ThswELzXU3X7Ebd1KdZ7v1rN3GiirRXGKWK099ovBM0FDJCvkopYNQ2aN94Z7k0UnUKamE3OjU8DFYFFokbSI2J9V9gVlM8ALWThDPnPu3EL7HPD2VDaZTggzcCCmbvc70qqPcC9mt60ogcrTiA3HEjwTK8ymKeuJMc4q6dVz200XnYUtLR9GYjPXvFOVr6W1zUK1WbPToaWJJuKnxBLnd0ftDEbMmj4loHYyhZyMjM91zQS4p7z8eKa9h0JrbacekcirexG0z4n3135}   \end{align} Now, we use \eqref{8ThswELzXU3X7Ebd1KdZ7v1rN3GiirRXGKWK099ovBM0FDJCvkopYNQ2aN94Z7k0UnUKamE3OjU8DFYFFokbSI2J9V9gVlM8ALWThDPnPu3EL7HPD2VDaZTggzcCCmbvc70qqPcC9mt60ogcrTiA3HEjwTK8ymKeuJMc4q6dVz200XnYUtLR9GYjPXvFOVr6W1zUK1WbPToaWJJuKnxBLnd0ftDEbMmj4loHYyhZyMjM91zQS4p7z8eKa9h0JrbacekcirexG0z4n3343}, \eqref{8ThswELzXU3X7Ebd1KdZ7v1rN3GiirRXGKWK099ovBM0FDJCvkopYNQ2aN94Z7k0UnUKamE3OjU8DFYFFokbSI2J9V9gVlM8ALWThDPnPu3EL7HPD2VDaZTggzcCCmbvc70qqPcC9mt60ogcrTiA3HEjwTK8ymKeuJMc4q6dVz200XnYUtLR9GYjPXvFOVr6W1zUK1WbPToaWJJuKnxBLnd0ftDEbMmj4loHYyhZyMjM91zQS4p7z8eKa9h0JrbacekcirexG0z4n3131}, and \eqref{8ThswELzXU3X7Ebd1KdZ7v1rN3GiirRXGKWK099ovBM0FDJCvkopYNQ2aN94Z7k0UnUKamE3OjU8DFYFFokbSI2J9V9gVlM8ALWThDPnPu3EL7HPD2VDaZTggzcCCmbvc70qqPcC9mt60ogcrTiA3HEjwTK8ymKeuJMc4q6dVz200XnYUtLR9GYjPXvFOVr6W1zUK1WbPToaWJJuKnxBLnd0ftDEbMmj4loHYyhZyMjM91zQS4p7z8eKa9h0JrbacekcirexG0z4n3135} in \eqref{8ThswELzXU3X7Ebd1KdZ7v1rN3GiirRXGKWK099ovBM0FDJCvkopYNQ2aN94Z7k0UnUKamE3OjU8DFYFFokbSI2J9V9gVlM8ALWThDPnPu3EL7HPD2VDaZTggzcCCmbvc70qqPcC9mt60ogcrTiA3HEjwTK8ymKeuJMc4q6dVz200XnYUtLR9GYjPXvFOVr6W1zUK1WbPToaWJJuKnxBLnd0ftDEbMmj4loHYyhZyMjM91zQS4p7z8eKa9h0JrbacekcirexG0z4n3128}, while also absorbing the term $\epsilon\Vert v\Vert_{H^{2.5+\delta}}$, obtaining   \begin{align}\thelt{rBrrpe iw u WyJ td9 ooD 8t UzAd LSnI tarmhP AW B mnm nsb xLI qX 4RQS TyoF DIikpe IL h WZZ 8ic JGa 91 HxRb 97kn Whp9sA Vz P o85 60p RN2 PS MGMM FK5X W52OnW Iy o Yng xWn o86 8S Kbbu 1Iq1 SyPkHJ VC v seV GWr hUd ew Xw6C SY1b e3hD9P Kh a 1y0 SRw yxi AG zdCM VMmi JaemmP 8x r bJX bKL DYE 1F pXUK ADtF 9ewhNe fd 2 XRu tTl 1HY JV p5cA hM1J fK7UIc pk d TbE ndM 6FW HA 72Pg LHzX lUo39o W9 0 BuD eJS lnV Rv z8VD V48t Id4Dtg FO O a47 LEH 8Qw nR GNBM 0RRU LluASz}   \begin{split}    \Vert v\Vert_{H^{2.5+\delta}}^2    &\dlkjfhlaskdhjflkasdjhflkasjhdflkasjhdflkasjhdfls
      \Vert \zeta(0)\Vert_{H^{1.5+\delta}}^2       +  \Vert w_{t}\Vert_{H^{2+\delta}(\Gamma_1)}^2       +  \epsilon^2        \Vert w\Vert_{H^{4+\delta}(\Gamma_1)}^{2}        \Vert v\Vert_{H^{2.5+\delta}}^2     \\&\indeq       + \OIUYJHUGFAJKLDHFKJLSDHFLKSDJFHLKSDJHFLKSDJHFLKDJFHLLDKHFLKSDHJFALKJHLJLHGLKHHLKJHLKGKHGJKHGKJHLKHJLKJH_{0}^{t}        P(           \Vert v\Vert_{H^{2.5+\delta}},           \Vert w\Vert_{H^{4+\delta}(\Gamma_1)},           \Vert w_{t}\Vert_{H^{2+\delta}(\Gamma_1)}        )\AA      \,ds     .
  \end{split}    \label{8ThswELzXU3X7Ebd1KdZ7v1rN3GiirRXGKWK099ovBM0FDJCvkopYNQ2aN94Z7k0UnUKamE3OjU8DFYFFokbSI2J9V9gVlM8ALWThDPnPu3EL7HPD2VDaZTggzcCCmbvc70qqPcC9mt60ogcrTiA3HEjwTK8ymKeuJMc4q6dVz200XnYUtLR9GYjPXvFOVr6W1zUK1WbPToaWJJuKnxBLnd0ftDEbMmj4loHYyhZyMjM91zQS4p7z8eKa9h0JrbacekcirexG0z4n3136}   \end{align} Next, we combine \eqref{8ThswELzXU3X7Ebd1KdZ7v1rN3GiirRXGKWK099ovBM0FDJCvkopYNQ2aN94Z7k0UnUKamE3OjU8DFYFFokbSI2J9V9gVlM8ALWThDPnPu3EL7HPD2VDaZTggzcCCmbvc70qqPcC9mt60ogcrTiA3HEjwTK8ymKeuJMc4q6dVz200XnYUtLR9GYjPXvFOVr6W1zUK1WbPToaWJJuKnxBLnd0ftDEbMmj4loHYyhZyMjM91zQS4p7z8eKa9h0JrbacekcirexG0z4n3136} with the tangential estimate \eqref{8ThswELzXU3X7Ebd1KdZ7v1rN3GiirRXGKWK099ovBM0FDJCvkopYNQ2aN94Z7k0UnUKamE3OjU8DFYFFokbSI2J9V9gVlM8ALWThDPnPu3EL7HPD2VDaZTggzcCCmbvc70qqPcC9mt60ogcrTiA3HEjwTK8ymKeuJMc4q6dVz200XnYUtLR9GYjPXvFOVr6W1zUK1WbPToaWJJuKnxBLnd0ftDEbMmj4loHYyhZyMjM91zQS4p7z8eKa9h0JrbacekcirexG0z4n3127}. Multiplying \eqref{8ThswELzXU3X7Ebd1KdZ7v1rN3GiirRXGKWK099ovBM0FDJCvkopYNQ2aN94Z7k0UnUKamE3OjU8DFYFFokbSI2J9V9gVlM8ALWThDPnPu3EL7HPD2VDaZTggzcCCmbvc70qqPcC9mt60ogcrTiA3HEjwTK8ymKeuJMc4q6dVz200XnYUtLR9GYjPXvFOVr6W1zUK1WbPToaWJJuKnxBLnd0ftDEbMmj4loHYyhZyMjM91zQS4p7z8eKa9h0JrbacekcirexG0z4n3136} with a small constant  $\epsilon_0\in(0,1]$ and adding the resulting inequality to  \eqref{8ThswELzXU3X7Ebd1KdZ7v1rN3GiirRXGKWK099ovBM0FDJCvkopYNQ2aN94Z7k0UnUKamE3OjU8DFYFFokbSI2J9V9gVlM8ALWThDPnPu3EL7HPD2VDaZTggzcCCmbvc70qqPcC9mt60ogcrTiA3HEjwTK8ymKeuJMc4q6dVz200XnYUtLR9GYjPXvFOVr6W1zUK1WbPToaWJJuKnxBLnd0ftDEbMmj4loHYyhZyMjM91zQS4p7z8eKa9h0JrbacekcirexG0z4n3127},  we obtain   \begin{align}\thelt{1Iq1 SyPkHJ VC v seV GWr hUd ew Xw6C SY1b e3hD9P Kh a 1y0 SRw yxi AG zdCM VMmi JaemmP 8x r bJX bKL DYE 1F pXUK ADtF 9ewhNe fd 2 XRu tTl 1HY JV p5cA hM1J fK7UIc pk d TbE ndM 6FW HA 72Pg LHzX lUo39o W9 0 BuD eJS lnV Rv z8VD V48t Id4Dtg FO O a47 LEH 8Qw nR GNBM 0RRU LluASz jx x wGI BHm Vyy Ld kGww 5eEg HFvsFU nz l 0vg OaQ DCV Ez 64r8 UvVH TtDykr Eu F aS3 5p5 yn6 QZ UcX3 mfET Exz1kv qE p OVV EFP IVp zQ lMOI Z2yT TxIUOm 0f W L1W oxC tlX Ws 9HU4 EF0I Z}   \begin{split}    &    \epsilon_0    \Vert v\Vert_{H^{2.5+\delta}}^2    +
    \Vert  w\Vert_{H^{4+\delta}(\Gamma_1)}^2    +   \Vert w_{t}\Vert_{H^{2+\delta}(\Gamma_1)}^2     \\&\indeq    \dlkjfhlaskdhjflkasdjhflkasjhdflkasjhdflkasjhdfls     \Vert w_{t}(0)\Vert_{H^{2+\delta}(\Gamma_1)}^2     +     \Vert  v(0) \Vert_{H^{2.5+\delta}}^2     +    {\Vert v\Vert_{L^2}^{(1+\delta)/(2.5+\delta)}          \Vert v\Vert_{H^{2.5}}^{(4+\delta)/(2.5+2\delta)}}     \\&\indeq\indeq     + \epsilon_0       \Vert w_{t}\Vert_{H^{2+\delta}(\Gamma_1)}^2     +  \epsilon_0\epsilon^2        \Vert w\Vert_{H^{4+\delta}(\Gamma_1)}^2        \Vert v\Vert_{H^{2.5+\delta}}^2     \\&\indeq\indeq
      + \OIUYJHUGFAJKLDHFKJLSDHFLKSDJFHLKSDJHFLKSDJHFLKDJFHLLDKHFLKSDHJFALKJHLJLHGLKHHLKJHLKGKHGJKHGKJHLKHJLKJH_{0}^{t}        P(           \Vert v\Vert_{H^{2.5+\delta}},           \Vert w\Vert_{H^{4+\delta}(\Gamma_1)},           \Vert w_{t}\Vert_{H^{2+\delta}(\Gamma_1)}        )      \AA\,ds     ,   \end{split}    \label{8ThswELzXU3X7Ebd1KdZ7v1rN3GiirRXGKWK099ovBM0FDJCvkopYNQ2aN94Z7k0UnUKamE3OjU8DFYFFokbSI2J9V9gVlM8ALWThDPnPu3EL7HPD2VDaZTggzcCCmbvc70qqPcC9mt60ogcrTiA3HEjwTK8ymKeuJMc4q6dVz200XnYUtLR9GYjPXvFOVr6W1zUK1WbPToaWJJuKnxBLnd0ftDEbMmj4loHYyhZyMjM91zQS4p7z8eKa9h0JrbacekcirexG0z4n3138}   \end{align} with the implicit constant independent of $\nu$. Now, first choose and fix $\epsilon_0$ so small that the fourth term on the right-hand side is absorbed in the third term on the left.
Then choose $\epsilon$ as in \eqref{8ThswELzXU3X7Ebd1KdZ7v1rN3GiirRXGKWK099ovBM0FDJCvkopYNQ2aN94Z7k0UnUKamE3OjU8DFYFFokbSI2J9V9gVlM8ALWThDPnPu3EL7HPD2VDaZTggzcCCmbvc70qqPcC9mt60ogcrTiA3HEjwTK8ymKeuJMc4q6dVz200XnYUtLR9GYjPXvFOVr6W1zUK1WbPToaWJJuKnxBLnd0ftDEbMmj4loHYyhZyMjM91zQS4p7z8eKa9h0JrbacekcirexG0z4n340} with a sufficiently large constant $C$ so that the fifth term in \eqref{8ThswELzXU3X7Ebd1KdZ7v1rN3GiirRXGKWK099ovBM0FDJCvkopYNQ2aN94Z7k0UnUKamE3OjU8DFYFFokbSI2J9V9gVlM8ALWThDPnPu3EL7HPD2VDaZTggzcCCmbvc70qqPcC9mt60ogcrTiA3HEjwTK8ymKeuJMc4q6dVz200XnYUtLR9GYjPXvFOVr6W1zUK1WbPToaWJJuKnxBLnd0ftDEbMmj4loHYyhZyMjM91zQS4p7z8eKa9h0JrbacekcirexG0z4n3138} is absorbed in the second term on the left.  This choice as requires \eqref{8ThswELzXU3X7Ebd1KdZ7v1rN3GiirRXGKWK099ovBM0FDJCvkopYNQ2aN94Z7k0UnUKamE3OjU8DFYFFokbSI2J9V9gVlM8ALWThDPnPu3EL7HPD2VDaZTggzcCCmbvc70qqPcC9mt60ogcrTiA3HEjwTK8ymKeuJMc4q6dVz200XnYUtLR9GYjPXvFOVr6W1zUK1WbPToaWJJuKnxBLnd0ftDEbMmj4loHYyhZyMjM91zQS4p7z8eKa9h0JrbacekcirexG0z4n341} for $T_0$. For the third term on the right-hand side of \eqref{8ThswELzXU3X7Ebd1KdZ7v1rN3GiirRXGKWK099ovBM0FDJCvkopYNQ2aN94Z7k0UnUKamE3OjU8DFYFFokbSI2J9V9gVlM8ALWThDPnPu3EL7HPD2VDaZTggzcCCmbvc70qqPcC9mt60ogcrTiA3HEjwTK8ymKeuJMc4q6dVz200XnYUtLR9GYjPXvFOVr6W1zUK1WbPToaWJJuKnxBLnd0ftDEbMmj4loHYyhZyMjM91zQS4p7z8eKa9h0JrbacekcirexG0z4n3127}, we use   \begin{align}\thelt{72Pg LHzX lUo39o W9 0 BuD eJS lnV Rv z8VD V48t Id4Dtg FO O a47 LEH 8Qw nR GNBM 0RRU LluASz jx x wGI BHm Vyy Ld kGww 5eEg HFvsFU nz l 0vg OaQ DCV Ez 64r8 UvVH TtDykr Eu F aS3 5p5 yn6 QZ UcX3 mfET Exz1kv qE p OVV EFP IVp zQ lMOI Z2yT TxIUOm 0f W L1W oxC tlX Ws 9HU4 EF0I Z1WDv3 TP 4 2LN 7Tr SuR 8u Mv1t Lepv ZoeoKL xf 9 zMJ 6PU In1 S8 I4KY 13wJ TACh5X l8 O 5g0 ZGw Ddt u6 8wvr vnDC oqYjJ3 nF K WMA K8V OeG o4 DKxn EOyB wgmttc ES 8 dmT oAD 0YB Fl yGRB p}   \begin{split}    &    \Vert v\Vert_{L^2}^{1/(2.5+\delta)}          \Vert v\Vert_{H^{2.5+2\delta}}^{(4+\delta)/(2.5+\delta)}     \leq     \epsilon_1 \Vert v\Vert_{H^{2.5+\delta}}^2     + C_{\epsilon_1} \Vert v\Vert_{L^2}^2    \\&\indeq
    \leq     \epsilon_1 \Vert v\Vert_{H^{2.5+\delta}}^2     + C_{\epsilon_1} \Vert v_0\Vert_{L^2}^2     + C_{\epsilon_1} \Vert v_t\Vert_{L^2}^2    \\&\indeq     \leq     \epsilon_1 \Vert v\Vert_{H^{2.5+\delta}}^2     + C_{\epsilon_1} \Vert v_0\Vert_{L^2}^2     + C_{\epsilon_1} \OIUYJHUGFAJKLDHFKJLSDHFLKSDJFHLKSDJHFLKSDJHFLKDJFHLLDKHFLKSDHJFALKJHLJLHGLKHHLKJHLKGKHGJKHGKJHLKHJLKJH_{0}^{t}\Vert v_t\Vert_{L^2}^2\,ds    \\&\indeq     \leq     \epsilon_1 \Vert v\Vert_{H^{2.5+\delta}}^2     + C_{\epsilon_1} \Vert v_0\Vert_{L^2}^2     + C_{\epsilon_1} \OIUYJHUGFAJKLDHFKJLSDHFLKSDJFHLKSDJHFLKSDJHFLKDJFHLLDKHFLKSDHJFALKJHLJLHGLKHHLKJHLKGKHGJKHGKJHLKHJLKJH_{0}^{t}
      P(        \Vert v\Vert_{H^{2.5+\delta}},        \Vert w\Vert_{H^{4+\delta}(\Gamma_1)},        \Vert w_{t}\Vert_{H^{2+\delta}(\Gamma_1)}        )     \AA     \,ds   ,   \end{split}    \label{8ThswELzXU3X7Ebd1KdZ7v1rN3GiirRXGKWK099ovBM0FDJCvkopYNQ2aN94Z7k0UnUKamE3OjU8DFYFFokbSI2J9V9gVlM8ALWThDPnPu3EL7HPD2VDaZTggzcCCmbvc70qqPcC9mt60ogcrTiA3HEjwTK8ymKeuJMc4q6dVz200XnYUtLR9GYjPXvFOVr6W1zUK1WbPToaWJJuKnxBLnd0ftDEbMmj4loHYyhZyMjM91zQS4p7z8eKa9h0JrbacekcirexG0z4n3139}   \end{align} by \eqref{8ThswELzXU3X7Ebd1KdZ7v1rN3GiirRXGKWK099ovBM0FDJCvkopYNQ2aN94Z7k0UnUKamE3OjU8DFYFFokbSI2J9V9gVlM8ALWThDPnPu3EL7HPD2VDaZTggzcCCmbvc70qqPcC9mt60ogcrTiA3HEjwTK8ymKeuJMc4q6dVz200XnYUtLR9GYjPXvFOVr6W1zUK1WbPToaWJJuKnxBLnd0ftDEbMmj4loHYyhZyMjM91zQS4p7z8eKa9h0JrbacekcirexG0z4n358} and \eqref{8ThswELzXU3X7Ebd1KdZ7v1rN3GiirRXGKWK099ovBM0FDJCvkopYNQ2aN94Z7k0UnUKamE3OjU8DFYFFokbSI2J9V9gVlM8ALWThDPnPu3EL7HPD2VDaZTggzcCCmbvc70qqPcC9mt60ogcrTiA3HEjwTK8ymKeuJMc4q6dVz200XnYUtLR9GYjPXvFOVr6W1zUK1WbPToaWJJuKnxBLnd0ftDEbMmj4loHYyhZyMjM91zQS4p7z8eKa9h0JrbacekcirexG0z4n378}, where $\epsilon_1\in(0,1]$ is a small constant to be determined. Using \eqref{8ThswELzXU3X7Ebd1KdZ7v1rN3GiirRXGKWK099ovBM0FDJCvkopYNQ2aN94Z7k0UnUKamE3OjU8DFYFFokbSI2J9V9gVlM8ALWThDPnPu3EL7HPD2VDaZTggzcCCmbvc70qqPcC9mt60ogcrTiA3HEjwTK8ymKeuJMc4q6dVz200XnYUtLR9GYjPXvFOVr6W1zUK1WbPToaWJJuKnxBLnd0ftDEbMmj4loHYyhZyMjM91zQS4p7z8eKa9h0JrbacekcirexG0z4n3139} in \eqref{8ThswELzXU3X7Ebd1KdZ7v1rN3GiirRXGKWK099ovBM0FDJCvkopYNQ2aN94Z7k0UnUKamE3OjU8DFYFFokbSI2J9V9gVlM8ALWThDPnPu3EL7HPD2VDaZTggzcCCmbvc70qqPcC9mt60ogcrTiA3HEjwTK8ymKeuJMc4q6dVz200XnYUtLR9GYjPXvFOVr6W1zUK1WbPToaWJJuKnxBLnd0ftDEbMmj4loHYyhZyMjM91zQS4p7z8eKa9h0JrbacekcirexG0z4n3138}, 
and choosing $\epsilon_1$ sufficiently small, we obtain   \begin{align}\thelt{6 QZ UcX3 mfET Exz1kv qE p OVV EFP IVp zQ lMOI Z2yT TxIUOm 0f W L1W oxC tlX Ws 9HU4 EF0I Z1WDv3 TP 4 2LN 7Tr SuR 8u Mv1t Lepv ZoeoKL xf 9 zMJ 6PU In1 S8 I4KY 13wJ TACh5X l8 O 5g0 ZGw Ddt u6 8wvr vnDC oqYjJ3 nF K WMA K8V OeG o4 DKxn EOyB wgmttc ES 8 dmT oAD 0YB Fl yGRB pBbo 8tQYBw bS X 2lc YnU 0fh At myR3 CKcU AQzzET Ng b ghH T64 KdO fL qFWu k07t DkzfQ1 dg B cw0 LSY lr7 9U 81QP qrdf H1tb8k Kn D l52 FhC j7T Xi P7GF C7HJ KfXgrP 4K O Og1 8BM 001 mJ P}   \begin{split}    &    \Vert v\Vert_{H^{2.5+\delta}}^2    +    \Vert  w\Vert_{H^{4+\delta}(\Gamma_1)}^2    +    \Vert w_{t}\Vert_{H^{2+\delta}(\Gamma_1)}^2    +   \nu \OIUYJHUGFAJKLDHFKJLSDHFLKSDJFHLKSDJHFLKSDJHFLKDJFHLLDKHFLKSDHJFALKJHLJLHGLKHHLKJHLKGKHGJKHGKJHLKHJLKJH_{0}^{t}   \Vert  \nabla_2  \UIPOIUPOIUPOOYIUIUYOIUYOIUHOIUOIUHIOPUHPOIJPOIJPOUHOIUHOILJHLIUHYOIUYOUI^{2+\delta} w_{t}   \Vert_{L^2(\Gamma_1)}^2 \, ds     \\&\indeq    \dlkjfhlaskdhjflkasdjhflkasjhdflkasjhdflkasjhdfls     \Vert w_{t}(0)\Vert_{H^{2+\delta}(\Gamma_1)}^2     +     \Vert  v(0) \Vert_{H^{2.5+\delta}}^2
      + \OIUYJHUGFAJKLDHFKJLSDHFLKSDJFHLKSDJHFLKSDJHFLKDJFHLLDKHFLKSDHJFALKJHLJLHGLKHHLKJHLKGKHGJKHGKJHLKHJLKJH_{0}^{t}        P(           \Vert v\Vert_{H^{2.5+\delta}},           \Vert w\Vert_{H^{4+\delta}(\Gamma_1)},           \Vert w_{t}\Vert_{H^{2+\delta}(\Gamma_1)}         )     \AA     \,ds    .   \end{split}    \label{8ThswELzXU3X7Ebd1KdZ7v1rN3GiirRXGKWK099ovBM0FDJCvkopYNQ2aN94Z7k0UnUKamE3OjU8DFYFFokbSI2J9V9gVlM8ALWThDPnPu3EL7HPD2VDaZTggzcCCmbvc70qqPcC9mt60ogcrTiA3HEjwTK8ymKeuJMc4q6dVz200XnYUtLR9GYjPXvFOVr6W1zUK1WbPToaWJJuKnxBLnd0ftDEbMmj4loHYyhZyMjM91zQS4p7z8eKa9h0JrbacekcirexG0z4n3140}   \end{align} A standard Gronwall argument  on \eqref{8ThswELzXU3X7Ebd1KdZ7v1rN3GiirRXGKWK099ovBM0FDJCvkopYNQ2aN94Z7k0UnUKamE3OjU8DFYFFokbSI2J9V9gVlM8ALWThDPnPu3EL7HPD2VDaZTggzcCCmbvc70qqPcC9mt60ogcrTiA3HEjwTK8ymKeuJMc4q6dVz200XnYUtLR9GYjPXvFOVr6W1zUK1WbPToaWJJuKnxBLnd0ftDEbMmj4loHYyhZyMjM91zQS4p7z8eKa9h0JrbacekcirexG0z4n3140} and \eqref{8ThswELzXU3X7Ebd1KdZ7v1rN3GiirRXGKWK099ovBM0FDJCvkopYNQ2aN94Z7k0UnUKamE3OjU8DFYFFokbSI2J9V9gVlM8ALWThDPnPu3EL7HPD2VDaZTggzcCCmbvc70qqPcC9mt60ogcrTiA3HEjwTK8ymKeuJMc4q6dVz200XnYUtLR9GYjPXvFOVr6W1zUK1WbPToaWJJuKnxBLnd0ftDEbMmj4loHYyhZyMjM91zQS4p7z8eKa9h0JrbacekcirexG0z4n3107}
then implies a uniform in $\nu$ estimate   \begin{equation}       \Vert v\Vert_{H^{2.5+\delta}}    + \Vert w\Vert_{H^{4+\delta}(\Gamma_1)}    + \Vert w_{t}\Vert_{H^{2+\delta}(\Gamma_1)}    + \AA    \dlkjfhlaskdhjflkasdjhflkasjhdflkasjhdflkasjhdfls M    \llabel{8ThswELzXU3X7Ebd1KdZ7v1rN3GiirRXGKWK099ovBM0FDJCvkopYNQ2aN94Z7k0UnUKamE3OjU8DFYFFokbSI2J9V9gVlM8ALWThDPnPu3EL7HPD2VDaZTggzcCCmbvc70qqPcC9mt60ogcrTiA3HEjwTK8ymKeuJMc4q6dVz200XnYUtLR9GYjPXvFOVr6W1zUK1WbPToaWJJuKnxBLnd0ftDEbMmj4loHYyhZyMjM91zQS4p7z8eKa9h0JrbacekcirexG0z4n3141}   \end{equation} on $[0,T_0]$, where $T_{0}$ is independent of $ 0\leq \nu \leq 1$, and the proof of Theorem~\ref{T01} is concluded. \end{proof}
\par \startnewsection{Compatibility conditions}{sec07} From \eqref{8ThswELzXU3X7Ebd1KdZ7v1rN3GiirRXGKWK099ovBM0FDJCvkopYNQ2aN94Z7k0UnUKamE3OjU8DFYFFokbSI2J9V9gVlM8ALWThDPnPu3EL7HPD2VDaZTggzcCCmbvc70qqPcC9mt60ogcrTiA3HEjwTK8ymKeuJMc4q6dVz200XnYUtLR9GYjPXvFOVr6W1zUK1WbPToaWJJuKnxBLnd0ftDEbMmj4loHYyhZyMjM91zQS4p7z8eKa9h0JrbacekcirexG0z4n321}, we obtain  $    b_{3i}(0) v_i(0) = w_1(0) $, and since $    b_{3i}(0) = (0,0,1) $, we get the compatibility condition   \begin{equation}     v_3(0) = w_1(0)     \inon{on $\Gamma_1$}
   \colb    .    \label{8ThswELzXU3X7Ebd1KdZ7v1rN3GiirRXGKWK099ovBM0FDJCvkopYNQ2aN94Z7k0UnUKamE3OjU8DFYFFokbSI2J9V9gVlM8ALWThDPnPu3EL7HPD2VDaZTggzcCCmbvc70qqPcC9mt60ogcrTiA3HEjwTK8ymKeuJMc4q6dVz200XnYUtLR9GYjPXvFOVr6W1zUK1WbPToaWJJuKnxBLnd0ftDEbMmj4loHYyhZyMjM91zQS4p7z8eKa9h0JrbacekcirexG0z4n3144}   \end{equation} Next, the divergence-free boundary condition gives the compatibility condition   \begin{equation}      \OIUYJHUGFAJKLDHFKJLSDHFLKSDJFHLKSDJHFLKSDJHFLKDJFHLLDKHFLKSDHJFALKJHLJLHGLKHHLKJHLKGKHGJKHGKJHLKHJLKJH_{\Gamma_1} v_3(0) = 0    ,    \label{8ThswELzXU3X7Ebd1KdZ7v1rN3GiirRXGKWK099ovBM0FDJCvkopYNQ2aN94Z7k0UnUKamE3OjU8DFYFFokbSI2J9V9gVlM8ALWThDPnPu3EL7HPD2VDaZTggzcCCmbvc70qqPcC9mt60ogcrTiA3HEjwTK8ymKeuJMc4q6dVz200XnYUtLR9GYjPXvFOVr6W1zUK1WbPToaWJJuKnxBLnd0ftDEbMmj4loHYyhZyMjM91zQS4p7z8eKa9h0JrbacekcirexG0z4n3145}   \end{equation} \colb which results from integrating the divergence-free condition $\tda_{ij}\UIOIUYOIUyHJGKHJLOIUYOIUOIUYOIYIOUYTIUYIOOOIUYOIUYPOIUPOIUPOIUYOIUYOIUYOIUHOUHOHIOUHOIHOIUHOIUHIOUH_{i}v_j=0$ and evaluating it at $t=0$.
By the condition \eqref{8ThswELzXU3X7Ebd1KdZ7v1rN3GiirRXGKWK099ovBM0FDJCvkopYNQ2aN94Z7k0UnUKamE3OjU8DFYFFokbSI2J9V9gVlM8ALWThDPnPu3EL7HPD2VDaZTggzcCCmbvc70qqPcC9mt60ogcrTiA3HEjwTK8ymKeuJMc4q6dVz200XnYUtLR9GYjPXvFOVr6W1zUK1WbPToaWJJuKnxBLnd0ftDEbMmj4loHYyhZyMjM91zQS4p7z8eKa9h0JrbacekcirexG0z4n321}, we also obtain   \begin{equation}      \OIUYJHUGFAJKLDHFKJLSDHFLKSDJFHLKSDJHFLKSDJHFLKDJFHLLDKHFLKSDHJFALKJHLJLHGLKHHLKJHLKGKHGJKHGKJHLKHJLKJH_{\Gamma_1} w_1(0) = 0    ,    \llabel{8ThswELzXU3X7Ebd1KdZ7v1rN3GiirRXGKWK099ovBM0FDJCvkopYNQ2aN94Z7k0UnUKamE3OjU8DFYFFokbSI2J9V9gVlM8ALWThDPnPu3EL7HPD2VDaZTggzcCCmbvc70qqPcC9mt60ogcrTiA3HEjwTK8ymKeuJMc4q6dVz200XnYUtLR9GYjPXvFOVr6W1zUK1WbPToaWJJuKnxBLnd0ftDEbMmj4loHYyhZyMjM91zQS4p7z8eKa9h0JrbacekcirexG0z4n3146}   \end{equation} but this also follows from \eqref{8ThswELzXU3X7Ebd1KdZ7v1rN3GiirRXGKWK099ovBM0FDJCvkopYNQ2aN94Z7k0UnUKamE3OjU8DFYFFokbSI2J9V9gVlM8ALWThDPnPu3EL7HPD2VDaZTggzcCCmbvc70qqPcC9mt60ogcrTiA3HEjwTK8ymKeuJMc4q6dVz200XnYUtLR9GYjPXvFOVr6W1zUK1WbPToaWJJuKnxBLnd0ftDEbMmj4loHYyhZyMjM91zQS4p7z8eKa9h0JrbacekcirexG0z4n3144} and~\eqref{8ThswELzXU3X7Ebd1KdZ7v1rN3GiirRXGKWK099ovBM0FDJCvkopYNQ2aN94Z7k0UnUKamE3OjU8DFYFFokbSI2J9V9gVlM8ALWThDPnPu3EL7HPD2VDaZTggzcCCmbvc70qqPcC9mt60ogcrTiA3HEjwTK8ymKeuJMc4q6dVz200XnYUtLR9GYjPXvFOVr6W1zUK1WbPToaWJJuKnxBLnd0ftDEbMmj4loHYyhZyMjM91zQS4p7z8eKa9h0JrbacekcirexG0z4n3145}. \par Assume  that $u=q$ solves   \begin{align}\thelt{Gw Ddt u6 8wvr vnDC oqYjJ3 nF K WMA K8V OeG o4 DKxn EOyB wgmttc ES 8 dmT oAD 0YB Fl yGRB pBbo 8tQYBw bS X 2lc YnU 0fh At myR3 CKcU AQzzET Ng b ghH T64 KdO fL qFWu k07t DkzfQ1 dg B cw0 LSY lr7 9U 81QP qrdf H1tb8k Kn D l52 FhC j7T Xi P7GF C7HJ KfXgrP 4K O Og1 8BM 001 mJ PTpu bQr6 1JQu6o Gr 4 baj 60k zdX oD gAOX 2DBk LymrtN 6T 7 us2 Cp6 eZm 1a VJTY 8vYP OzMnsA qs 3 RL6 xHu mXN AB 5eXn ZRHa iECOaa MB w Ab1 5iF WGu cZ lU8J niDN KiPGWz q4 1 iBj 1kq bak}    \begin{split}    &\UIOIUYOIUyHJGKHJLOIUYOIUOIUYOIYIOUYTIUYIOOOIUYOIUYPOIUPOIUPOIUYOIUYOIUYOIUHOUHOHIOUHOIHOIUHOIUHIOUH_{i}(d_{ij}\UIOIUYOIUyHJGKHJLOIUYOIUOIUYOIYIOUYTIUYIOOOIUYOIUYPOIUPOIUPOIUYOIUYOIUYOIUHOUHOHIOUHOIHOIUHOIUHIOUH_{j}\qqq) = \div f    \inon{in $\Omega$}    ,
   \\&    d_{mk}\UIOIUYOIUyHJGKHJLOIUYOIUOIUYOIYIOUYTIUYIOOOIUYOIUYPOIUPOIUPOIUYOIUYOIUYOIUHOUHOHIOUHOIHOIUHOIUHIOUH_{k}\qqq N_{m} + u=g_1    \inon{on $\Gamma_1$}    ,    \\&    d_{mk}\UIOIUYOIUyHJGKHJLOIUYOIUOIUYOIYIOUYTIUYIOOOIUYOIUYPOIUPOIUPOIUYOIUYOIUYOIUHOUHOHIOUHOIHOIUHOIUHIOUH_{k}\qqq N_{m}=g_0    \inon{on $\Gamma_0$}    .    \end{split}    \label{8ThswELzXU3X7Ebd1KdZ7v1rN3GiirRXGKWK099ovBM0FDJCvkopYNQ2aN94Z7k0UnUKamE3OjU8DFYFFokbSI2J9V9gVlM8ALWThDPnPu3EL7HPD2VDaZTggzcCCmbvc70qqPcC9mt60ogcrTiA3HEjwTK8ymKeuJMc4q6dVz200XnYUtLR9GYjPXvFOVr6W1zUK1WbPToaWJJuKnxBLnd0ftDEbMmj4loHYyhZyMjM91zQS4p7z8eKa9h0JrbacekcirexG0z4n3147}   \end{align} Integrating \eqref{8ThswELzXU3X7Ebd1KdZ7v1rN3GiirRXGKWK099ovBM0FDJCvkopYNQ2aN94Z7k0UnUKamE3OjU8DFYFFokbSI2J9V9gVlM8ALWThDPnPu3EL7HPD2VDaZTggzcCCmbvc70qqPcC9mt60ogcrTiA3HEjwTK8ymKeuJMc4q6dVz200XnYUtLR9GYjPXvFOVr6W1zUK1WbPToaWJJuKnxBLnd0ftDEbMmj4loHYyhZyMjM91zQS4p7z8eKa9h0JrbacekcirexG0z4n3147}$_1$ over $\Omega$, we get   \begin{equation}     \OIUYJHUGFAJKLDHFKJLSDHFLKSDJFHLKSDJHFLKSDJHFLKDJFHLLDKHFLKSDHJFALKJHLJLHGLKHHLKJHLKGKHGJKHGKJHLKHJLKJH_{\Gamma_0} d_{ij}\UIOIUYOIUyHJGKHJLOIUYOIUOIUYOIYIOUYTIUYIOOOIUYOIUYPOIUPOIUPOIUYOIUYOIUYOIUHOUHOHIOUHOIHOIUHOIUHIOUH_{j}\qqq N_i
    +\OIUYJHUGFAJKLDHFKJLSDHFLKSDJFHLKSDJHFLKSDJHFLKDJFHLLDKHFLKSDHJFALKJHLJLHGLKHHLKJHLKGKHGJKHGKJHLKHJLKJH_{\Gamma_1} d_{ij}\UIOIUYOIUyHJGKHJLOIUYOIUOIUYOIYIOUYTIUYIOOOIUYOIUYPOIUPOIUPOIUYOIUYOIUYOIUHOUHOHIOUHOIHOIUHOIUHIOUH_{j}\qqq N_i      =      \OIUYJHUGFAJKLDHFKJLSDHFLKSDJFHLKSDJHFLKSDJHFLKDJFHLLDKHFLKSDHJFALKJHLJLHGLKHHLKJHLKGKHGJKHGKJHLKHJLKJH_{\Gamma_0\cup \Gamma_1} f_i N_i     ,    \llabel{8ThswELzXU3X7Ebd1KdZ7v1rN3GiirRXGKWK099ovBM0FDJCvkopYNQ2aN94Z7k0UnUKamE3OjU8DFYFFokbSI2J9V9gVlM8ALWThDPnPu3EL7HPD2VDaZTggzcCCmbvc70qqPcC9mt60ogcrTiA3HEjwTK8ymKeuJMc4q6dVz200XnYUtLR9GYjPXvFOVr6W1zUK1WbPToaWJJuKnxBLnd0ftDEbMmj4loHYyhZyMjM91zQS4p7z8eKa9h0JrbacekcirexG0z4n3148}   \end{equation} from where, using \eqref{8ThswELzXU3X7Ebd1KdZ7v1rN3GiirRXGKWK099ovBM0FDJCvkopYNQ2aN94Z7k0UnUKamE3OjU8DFYFFokbSI2J9V9gVlM8ALWThDPnPu3EL7HPD2VDaZTggzcCCmbvc70qqPcC9mt60ogcrTiA3HEjwTK8ymKeuJMc4q6dVz200XnYUtLR9GYjPXvFOVr6W1zUK1WbPToaWJJuKnxBLnd0ftDEbMmj4loHYyhZyMjM91zQS4p7z8eKa9h0JrbacekcirexG0z4n3147}$_2$ and \eqref{8ThswELzXU3X7Ebd1KdZ7v1rN3GiirRXGKWK099ovBM0FDJCvkopYNQ2aN94Z7k0UnUKamE3OjU8DFYFFokbSI2J9V9gVlM8ALWThDPnPu3EL7HPD2VDaZTggzcCCmbvc70qqPcC9mt60ogcrTiA3HEjwTK8ymKeuJMc4q6dVz200XnYUtLR9GYjPXvFOVr6W1zUK1WbPToaWJJuKnxBLnd0ftDEbMmj4loHYyhZyMjM91zQS4p7z8eKa9h0JrbacekcirexG0z4n3147}$_3$,   \begin{equation}     \OIUYJHUGFAJKLDHFKJLSDHFLKSDJFHLKSDJHFLKSDJHFLKDJFHLLDKHFLKSDHJFALKJHLJLHGLKHHLKJHLKGKHGJKHGKJHLKHJLKJH_{\Gamma_0} g_0       +\OIUYJHUGFAJKLDHFKJLSDHFLKSDJFHLKSDJHFLKSDJHFLKDJFHLLDKHFLKSDHJFALKJHLJLHGLKHHLKJHLKGKHGJKHGKJHLKHJLKJH_{\Gamma_1} (g_1-u)       =      \OIUYJHUGFAJKLDHFKJLSDHFLKSDJFHLKSDJHFLKSDJHFLKDJFHLLDKHFLKSDHJFALKJHLJLHGLKHHLKJHLKGKHGJKHGKJHLKHJLKJH_{\Gamma_0\cup \Gamma_1} f_i N_i     ,
   \llabel{8ThswELzXU3X7Ebd1KdZ7v1rN3GiirRXGKWK099ovBM0FDJCvkopYNQ2aN94Z7k0UnUKamE3OjU8DFYFFokbSI2J9V9gVlM8ALWThDPnPu3EL7HPD2VDaZTggzcCCmbvc70qqPcC9mt60ogcrTiA3HEjwTK8ymKeuJMc4q6dVz200XnYUtLR9GYjPXvFOVr6W1zUK1WbPToaWJJuKnxBLnd0ftDEbMmj4loHYyhZyMjM91zQS4p7z8eKa9h0JrbacekcirexG0z4n3149}   \end{equation} and thus   \begin{equation}      \OIUYJHUGFAJKLDHFKJLSDHFLKSDJFHLKSDJHFLKSDJHFLKDJFHLLDKHFLKSDHJFALKJHLJLHGLKHHLKJHLKGKHGJKHGKJHLKHJLKJH_{\Gamma_1} u      =      \OIUYJHUGFAJKLDHFKJLSDHFLKSDJFHLKSDJHFLKSDJHFLKDJFHLLDKHFLKSDHJFALKJHLJLHGLKHHLKJHLKGKHGJKHGKJHLKHJLKJH_{\Gamma_0} g_0     + \OIUYJHUGFAJKLDHFKJLSDHFLKSDJFHLKSDJHFLKSDJHFLKDJFHLLDKHFLKSDHJFALKJHLJLHGLKHHLKJHLKGKHGJKHGKJHLKHJLKJH_{\Gamma_1} g_1     -     \OIUYJHUGFAJKLDHFKJLSDHFLKSDJFHLKSDJHFLKSDJHFLKDJFHLLDKHFLKSDHJFALKJHLJLHGLKHHLKJHLKGKHGJKHGKJHLKHJLKJH_{\Gamma_0\cup \Gamma_1} f_i N_i    .    \label{8ThswELzXU3X7Ebd1KdZ7v1rN3GiirRXGKWK099ovBM0FDJCvkopYNQ2aN94Z7k0UnUKamE3OjU8DFYFFokbSI2J9V9gVlM8ALWThDPnPu3EL7HPD2VDaZTggzcCCmbvc70qqPcC9mt60ogcrTiA3HEjwTK8ymKeuJMc4q6dVz200XnYUtLR9GYjPXvFOVr6W1zUK1WbPToaWJJuKnxBLnd0ftDEbMmj4loHYyhZyMjM91zQS4p7z8eKa9h0JrbacekcirexG0z4n3150}   \end{equation} Therefore, every solution of \eqref{8ThswELzXU3X7Ebd1KdZ7v1rN3GiirRXGKWK099ovBM0FDJCvkopYNQ2aN94Z7k0UnUKamE3OjU8DFYFFokbSI2J9V9gVlM8ALWThDPnPu3EL7HPD2VDaZTggzcCCmbvc70qqPcC9mt60ogcrTiA3HEjwTK8ymKeuJMc4q6dVz200XnYUtLR9GYjPXvFOVr6W1zUK1WbPToaWJJuKnxBLnd0ftDEbMmj4loHYyhZyMjM91zQS4p7z8eKa9h0JrbacekcirexG0z4n3147} satisfies~\eqref{8ThswELzXU3X7Ebd1KdZ7v1rN3GiirRXGKWK099ovBM0FDJCvkopYNQ2aN94Z7k0UnUKamE3OjU8DFYFFokbSI2J9V9gVlM8ALWThDPnPu3EL7HPD2VDaZTggzcCCmbvc70qqPcC9mt60ogcrTiA3HEjwTK8ymKeuJMc4q6dVz200XnYUtLR9GYjPXvFOVr6W1zUK1WbPToaWJJuKnxBLnd0ftDEbMmj4loHYyhZyMjM91zQS4p7z8eKa9h0JrbacekcirexG0z4n3150}.
We apply this to the equation \eqref{8ThswELzXU3X7Ebd1KdZ7v1rN3GiirRXGKWK099ovBM0FDJCvkopYNQ2aN94Z7k0UnUKamE3OjU8DFYFFokbSI2J9V9gVlM8ALWThDPnPu3EL7HPD2VDaZTggzcCCmbvc70qqPcC9mt60ogcrTiA3HEjwTK8ymKeuJMc4q6dVz200XnYUtLR9GYjPXvFOVr6W1zUK1WbPToaWJJuKnxBLnd0ftDEbMmj4loHYyhZyMjM91zQS4p7z8eKa9h0JrbacekcirexG0z4n380} with the boundary conditions \eqref{8ThswELzXU3X7Ebd1KdZ7v1rN3GiirRXGKWK099ovBM0FDJCvkopYNQ2aN94Z7k0UnUKamE3OjU8DFYFFokbSI2J9V9gVlM8ALWThDPnPu3EL7HPD2VDaZTggzcCCmbvc70qqPcC9mt60ogcrTiA3HEjwTK8ymKeuJMc4q6dVz200XnYUtLR9GYjPXvFOVr6W1zUK1WbPToaWJJuKnxBLnd0ftDEbMmj4loHYyhZyMjM91zQS4p7z8eKa9h0JrbacekcirexG0z4n383} and~\eqref{8ThswELzXU3X7Ebd1KdZ7v1rN3GiirRXGKWK099ovBM0FDJCvkopYNQ2aN94Z7k0UnUKamE3OjU8DFYFFokbSI2J9V9gVlM8ALWThDPnPu3EL7HPD2VDaZTggzcCCmbvc70qqPcC9mt60ogcrTiA3HEjwTK8ymKeuJMc4q6dVz200XnYUtLR9GYjPXvFOVr6W1zUK1WbPToaWJJuKnxBLnd0ftDEbMmj4loHYyhZyMjM91zQS4p7z8eKa9h0JrbacekcirexG0z4n384}. We have   \begin{equation}    d_{ij}    = b_{ik}a_{jk}    \llabel{8ThswELzXU3X7Ebd1KdZ7v1rN3GiirRXGKWK099ovBM0FDJCvkopYNQ2aN94Z7k0UnUKamE3OjU8DFYFFokbSI2J9V9gVlM8ALWThDPnPu3EL7HPD2VDaZTggzcCCmbvc70qqPcC9mt60ogcrTiA3HEjwTK8ymKeuJMc4q6dVz200XnYUtLR9GYjPXvFOVr6W1zUK1WbPToaWJJuKnxBLnd0ftDEbMmj4loHYyhZyMjM91zQS4p7z8eKa9h0JrbacekcirexG0z4n3151}   \end{equation} and   \begin{align}\thelt{cw0 LSY lr7 9U 81QP qrdf H1tb8k Kn D l52 FhC j7T Xi P7GF C7HJ KfXgrP 4K O Og1 8BM 001 mJ PTpu bQr6 1JQu6o Gr 4 baj 60k zdX oD gAOX 2DBk LymrtN 6T 7 us2 Cp6 eZm 1a VJTY 8vYP OzMnsA qs 3 RL6 xHu mXN AB 5eXn ZRHa iECOaa MB w Ab1 5iF WGu cZ lU8J niDN KiPGWz q4 1 iBj 1kq bak ZF SvXq vSiR bLTriS y8 Q YOa mQU ZhO rG HYHW guPB zlAhua o5 9 RKU trF 5Kb js KseT PXhU qRgnNA LV t aw4 YJB tK9 fN 7bN9 IEwK LTYGtn Cc c 2nf Mcx 7Vo Bt 1IC5 teMH X4g3JK 4J s deo Dl}    \begin{split}    f_j    &=      \UIOIUYOIUyHJGKHJLOIUYOIUOIUYOIYIOUYTIUYIOOOIUYOIUYPOIUPOIUPOIUYOIUYOIUYOIUHOUHOHIOUHOIHOIUHOIUHIOUH_{j}(\UIOIUYOIUyHJGKHJLOIUYOIUOIUYOIYIOUYTIUYIOOOIUYOIUYPOIUPOIUPOIUYOIUYOIUYOIUHOUHOHIOUHOIHOIUHOIUHIOUH_{t}\tda_{ji} v_i)
     - \UIOIUYOIUyHJGKHJLOIUYOIUOIUYOIYIOUYTIUYIOOOIUYOIUYPOIUPOIUPOIUYOIUYOIUYOIUHOUHOHIOUHOIHOIUHOIUHIOUH_{j}(\tda_{ji}            (v_3-\psi_t)\UIOIUYOIUyHJGKHJLOIUYOIUOIUYOIYIOUYTIUYIOOOIUYOIUYPOIUPOIUPOIUYOIUYOIUYOIUHOUHOHIOUHOIHOIUHOIUHIOUH_{3}v_i)      - \UIOIUYOIUyHJGKHJLOIUYOIUOIUYOIYIOUYTIUYIOOOIUYOIUYPOIUPOIUPOIUYOIUYOIUYOIUHOUHOHIOUHOIHOIUHOIUHIOUH_{j}\left(\sum_{m=1}^{2}           \tda_{jm} v_m \right)        \inon{in $\Omega$}    \\    g_0     &=          - \tda_{3i} v_1 a_{j1} \UIOIUYOIUyHJGKHJLOIUYOIUOIUYOIYIOUYTIUYIOOOIUYOIUYPOIUPOIUPOIUYOIUYOIUYOIUHOUHOHIOUHOIHOIUHOIUHIOUH_{j}v_i        - \tda_{3i} v_2 a_{j2} \UIOIUYOIUyHJGKHJLOIUYOIUOIUYOIYIOUYTIUYIOOOIUYOIUYPOIUPOIUPOIUYOIUYOIUYOIUHOUHOHIOUHOIHOIUHOIUHIOUH_{j}v_i        - \tda_{3i} (v_3-\psi_t) \UIOIUYOIUyHJGKHJLOIUYOIUOIUYOIYIOUYTIUYIOOOIUYOIUYPOIUPOIUPOIUYOIUYOIUYOIUHOUHOHIOUHOIHOIUHOIUHIOUH_{3} v_i        \inon{on $\Gamma_0$}    \\    g_1
    &=\Delta_2^2 w        -\nu\Delta_2 w_{t}        + \UIOIUYOIUyHJGKHJLOIUYOIUOIUYOIYIOUYTIUYIOOOIUYOIUYPOIUPOIUPOIUYOIUYOIUYOIUHOUHOHIOUHOIHOIUHOIUHIOUH_{t}\tda_{3i}v_i        - \tda_{3i} v_1 a_{j1} \UIOIUYOIUyHJGKHJLOIUYOIUOIUYOIYIOUYTIUYIOOOIUYOIUYPOIUPOIUPOIUYOIUYOIUYOIUHOUHOHIOUHOIHOIUHOIUHIOUH_{j}v_i        - \tda_{3i} v_2 a_{j2} \UIOIUYOIUyHJGKHJLOIUYOIUOIUYOIYIOUYTIUYIOOOIUYOIUYPOIUPOIUPOIUYOIUYOIUYOIUHOUHOHIOUHOIHOIUHOIUHIOUH_{j}v_i        - \tda_{3i} (v_3-\psi_t) \UIOIUYOIUyHJGKHJLOIUYOIUOIUYOIYIOUYTIUYIOOOIUYOIUYPOIUPOIUPOIUYOIUYOIUYOIUHOUHOHIOUHOIHOIUHOIUHIOUH_{3} v_i     \inon{on $\Gamma_1$}     .    \end{split}    \llabel{8ThswELzXU3X7Ebd1KdZ7v1rN3GiirRXGKWK099ovBM0FDJCvkopYNQ2aN94Z7k0UnUKamE3OjU8DFYFFokbSI2J9V9gVlM8ALWThDPnPu3EL7HPD2VDaZTggzcCCmbvc70qqPcC9mt60ogcrTiA3HEjwTK8ymKeuJMc4q6dVz200XnYUtLR9GYjPXvFOVr6W1zUK1WbPToaWJJuKnxBLnd0ftDEbMmj4loHYyhZyMjM91zQS4p7z8eKa9h0JrbacekcirexG0z4n3152}   \end{align} Thus the equation \eqref{8ThswELzXU3X7Ebd1KdZ7v1rN3GiirRXGKWK099ovBM0FDJCvkopYNQ2aN94Z7k0UnUKamE3OjU8DFYFFokbSI2J9V9gVlM8ALWThDPnPu3EL7HPD2VDaZTggzcCCmbvc70qqPcC9mt60ogcrTiA3HEjwTK8ymKeuJMc4q6dVz200XnYUtLR9GYjPXvFOVr6W1zUK1WbPToaWJJuKnxBLnd0ftDEbMmj4loHYyhZyMjM91zQS4p7z8eKa9h0JrbacekcirexG0z4n3150} reads   \begin{align}\thelt{qs 3 RL6 xHu mXN AB 5eXn ZRHa iECOaa MB w Ab1 5iF WGu cZ lU8J niDN KiPGWz q4 1 iBj 1kq bak ZF SvXq vSiR bLTriS y8 Q YOa mQU ZhO rG HYHW guPB zlAhua o5 9 RKU trF 5Kb js KseT PXhU qRgnNA LV t aw4 YJB tK9 fN 7bN9 IEwK LTYGtn Cc c 2nf Mcx 7Vo Bt 1IC5 teMH X4g3JK 4J s deo Dl1 Xgb m9 xWDg Z31P chRS1R 8W 1 hap 5Rh 6Jj yT NXSC Uscx K4275D 72 g pRW xcf AbZ Y7 Apto 5SpT zO1dPA Vy Z JiW Clu OjO tE wxUB 7cTt EDqcAb YG d ZQZ fsQ 1At Hy xnPL 5K7D 91u03s 8K 2 0}    \begin{split}
   \OIUYJHUGFAJKLDHFKJLSDHFLKSDJFHLKSDJHFLKSDJHFLKDJFHLLDKHFLKSDHJFALKJHLJLHGLKHHLKJHLKGKHGJKHGKJHLKHJLKJH_{\Gamma_1} q    &=      - \OIUYJHUGFAJKLDHFKJLSDHFLKSDJFHLKSDJHFLKSDJHFLKDJFHLLDKHFLKSDHJFALKJHLJLHGLKHHLKJHLKGKHGJKHGKJHLKHJLKJH_{\Gamma_0}\tda_{3i} v_1 a_{j1} \UIOIUYOIUyHJGKHJLOIUYOIUOIUYOIYIOUYTIUYIOOOIUYOIUYPOIUPOIUPOIUYOIUYOIUYOIUHOUHOHIOUHOIHOIUHOIUHIOUH_{j}v_i     - \OIUYJHUGFAJKLDHFKJLSDHFLKSDJFHLKSDJHFLKSDJHFLKDJFHLLDKHFLKSDHJFALKJHLJLHGLKHHLKJHLKGKHGJKHGKJHLKHJLKJH_{\Gamma_0}\tda_{3i} v_2 a_{j2} \UIOIUYOIUyHJGKHJLOIUYOIUOIUYOIYIOUYTIUYIOOOIUYOIUYPOIUPOIUPOIUYOIUYOIUYOIUHOUHOHIOUHOIHOIUHOIUHIOUH_{j}v_i     - \OIUYJHUGFAJKLDHFKJLSDHFLKSDJFHLKSDJHFLKSDJHFLKDJFHLLDKHFLKSDHJFALKJHLJLHGLKHHLKJHLKGKHGJKHGKJHLKHJLKJH_{\Gamma_0}\tda_{3i} (v_3-\psi_t) \UIOIUYOIUyHJGKHJLOIUYOIUOIUYOIYIOUYTIUYIOOOIUYOIUYPOIUPOIUPOIUYOIUYOIUYOIUHOUHOHIOUHOIHOIUHOIUHIOUH_{3} v_i       \\&\indeq     + \OIUYJHUGFAJKLDHFKJLSDHFLKSDJFHLKSDJHFLKSDJHFLKDJFHLLDKHFLKSDHJFALKJHLJLHGLKHHLKJHLKGKHGJKHGKJHLKHJLKJH_{\Gamma_1}\Delta_2^2 w      - \nu\OIUYJHUGFAJKLDHFKJLSDHFLKSDJFHLKSDJHFLKSDJHFLKDJFHLLDKHFLKSDHJFALKJHLJLHGLKHHLKJHLKGKHGJKHGKJHLKHJLKJH_{\Gamma_1} \Delta_2 w_{t}      + \OIUYJHUGFAJKLDHFKJLSDHFLKSDJFHLKSDJHFLKSDJHFLKDJFHLLDKHFLKSDHJFALKJHLJLHGLKHHLKJHLKGKHGJKHGKJHLKHJLKJH_{\Gamma_1}\UIOIUYOIUyHJGKHJLOIUYOIUOIUYOIYIOUYTIUYIOOOIUYOIUYPOIUPOIUPOIUYOIUYOIUYOIUHOUHOHIOUHOIHOIUHOIUHIOUH_{t}\tda_{3i}v_i        - \OIUYJHUGFAJKLDHFKJLSDHFLKSDJFHLKSDJHFLKSDJHFLKDJFHLLDKHFLKSDHJFALKJHLJLHGLKHHLKJHLKGKHGJKHGKJHLKHJLKJH_{\Gamma_1}\tda_{3i} v_1 a_{j1} \UIOIUYOIUyHJGKHJLOIUYOIUOIUYOIYIOUYTIUYIOOOIUYOIUYPOIUPOIUPOIUYOIUYOIUYOIUHOUHOHIOUHOIHOIUHOIUHIOUH_{j}v_i        - \OIUYJHUGFAJKLDHFKJLSDHFLKSDJFHLKSDJHFLKSDJHFLKDJFHLLDKHFLKSDHJFALKJHLJLHGLKHHLKJHLKGKHGJKHGKJHLKHJLKJH_{\Gamma_1}\tda_{3i} v_2 a_{j2} \UIOIUYOIUyHJGKHJLOIUYOIUOIUYOIYIOUYTIUYIOOOIUYOIUYPOIUPOIUPOIUYOIUYOIUYOIUHOUHOHIOUHOIHOIUHOIUHIOUH_{j}v_i        - \OIUYJHUGFAJKLDHFKJLSDHFLKSDJFHLKSDJHFLKSDJHFLKDJFHLLDKHFLKSDHJFALKJHLJLHGLKHHLKJHLKGKHGJKHGKJHLKHJLKJH_{\Gamma_1}\tda_{3i} (v_3-\psi_t) \UIOIUYOIUyHJGKHJLOIUYOIUOIUYOIYIOUYTIUYIOOOIUYOIUYPOIUPOIUPOIUYOIUYOIUYOIUHOUHOHIOUHOIHOIUHOIUHIOUH_{3} v_i       \\&\indeq      -\OIUYJHUGFAJKLDHFKJLSDHFLKSDJFHLKSDJHFLKSDJHFLKDJFHLLDKHFLKSDHJFALKJHLJLHGLKHHLKJHLKGKHGJKHGKJHLKHJLKJH_{\Gamma_0\cup\Gamma_1}\UIOIUYOIUyHJGKHJLOIUYOIUOIUYOIYIOUYTIUYIOOOIUYOIUYPOIUPOIUPOIUYOIUYOIUYOIUHOUHOHIOUHOIHOIUHOIUHIOUH_{t}\tda_{ji} v_i N_j
    + \OIUYJHUGFAJKLDHFKJLSDHFLKSDJFHLKSDJHFLKSDJHFLKDJFHLLDKHFLKSDHJFALKJHLJLHGLKHHLKJHLKGKHGJKHGKJHLKHJLKJH_{\Gamma_0\cup\Gamma_1}(\tda_{ji}            v_3-\psi_t)\UIOIUYOIUyHJGKHJLOIUYOIUOIUYOIYIOUYTIUYIOOOIUYOIUYPOIUPOIUPOIUYOIUYOIUYOIUHOUHOHIOUHOIHOIUHOIUHIOUH_{3}v_i N_j      + \OIUYJHUGFAJKLDHFKJLSDHFLKSDJFHLKSDJHFLKSDJHFLKDJFHLLDKHFLKSDHJFALKJHLJLHGLKHHLKJHLKGKHGJKHGKJHLKHJLKJH_{\Gamma_0\cup\Gamma_1}\sum_{m=1}^{2}           \tda_{ji} v_m a_{km}\UIOIUYOIUyHJGKHJLOIUYOIUOIUYOIYIOUYTIUYIOOOIUYOIUYPOIUPOIUPOIUYOIUYOIUYOIUHOUHOHIOUHOIHOIUHOIUHIOUH_{k}v_i N_j    ,    \end{split}    \llabel{8ThswELzXU3X7Ebd1KdZ7v1rN3GiirRXGKWK099ovBM0FDJCvkopYNQ2aN94Z7k0UnUKamE3OjU8DFYFFokbSI2J9V9gVlM8ALWThDPnPu3EL7HPD2VDaZTggzcCCmbvc70qqPcC9mt60ogcrTiA3HEjwTK8ymKeuJMc4q6dVz200XnYUtLR9GYjPXvFOVr6W1zUK1WbPToaWJJuKnxBLnd0ftDEbMmj4loHYyhZyMjM91zQS4p7z8eKa9h0JrbacekcirexG0z4n3153}   \end{align} from where   \begin{align}\thelt{gnNA LV t aw4 YJB tK9 fN 7bN9 IEwK LTYGtn Cc c 2nf Mcx 7Vo Bt 1IC5 teMH X4g3JK 4J s deo Dl1 Xgb m9 xWDg Z31P chRS1R 8W 1 hap 5Rh 6Jj yT NXSC Uscx K4275D 72 g pRW xcf AbZ Y7 Apto 5SpT zO1dPA Vy Z JiW Clu OjO tE wxUB 7cTt EDqcAb YG d ZQZ fsQ 1At Hy xnPL 5K7D 91u03s 8K 2 0ro fZ9 w7T jx yG7q bCAh ssUZQu PK 7 xUe K7F 4HK fr CEPJ rgWH DZQpvR kO 8 Xve aSB OXS ee XV5j kgzL UTmMbo ma J fxu 8gA rnd zS IB0Y QSXv cZW8vo CO o OHy rEu GnS 2f nGEj jaLz ZIocQe g}    \begin{split}    \OIUYJHUGFAJKLDHFKJLSDHFLKSDJFHLKSDJHFLKSDJHFLKDJFHLLDKHFLKSDHJFALKJHLJLHGLKHHLKJHLKGKHGJKHGKJHLKHJLKJH_{\Gamma_1} q     =     \OIUYJHUGFAJKLDHFKJLSDHFLKSDJFHLKSDJHFLKSDJHFLKDJFHLLDKHFLKSDHJFALKJHLJLHGLKHHLKJHLKGKHGJKHGKJHLKHJLKJH_{\Gamma_1} \Delta_2^2 w
    -\nu\OIUYJHUGFAJKLDHFKJLSDHFLKSDJFHLKSDJHFLKSDJHFLKDJFHLLDKHFLKSDHJFALKJHLJLHGLKHHLKJHLKGKHGJKHGKJHLKHJLKJH_{\Gamma_1} \Delta_2 w_{t}     = 0    ,    \end{split}    \llabel{8ThswELzXU3X7Ebd1KdZ7v1rN3GiirRXGKWK099ovBM0FDJCvkopYNQ2aN94Z7k0UnUKamE3OjU8DFYFFokbSI2J9V9gVlM8ALWThDPnPu3EL7HPD2VDaZTggzcCCmbvc70qqPcC9mt60ogcrTiA3HEjwTK8ymKeuJMc4q6dVz200XnYUtLR9GYjPXvFOVr6W1zUK1WbPToaWJJuKnxBLnd0ftDEbMmj4loHYyhZyMjM91zQS4p7z8eKa9h0JrbacekcirexG0z4n3154}   \end{align} where the last equality follows from the periodic boundary conditions imposed on $w$ and $w_{t}$  in $x_{1}$ and $x_{2}$. \par \startnewsection{Uniqueness}{sec08} For simplicity, we only consider the case $\nu=0$; the uniqueness result is the same for other values of $\nu$. To obtain uniqueness, we need to assume   \begin{equation}    \delta\geq \frac12
   .    \label{8ThswELzXU3X7Ebd1KdZ7v1rN3GiirRXGKWK099ovBM0FDJCvkopYNQ2aN94Z7k0UnUKamE3OjU8DFYFFokbSI2J9V9gVlM8ALWThDPnPu3EL7HPD2VDaZTggzcCCmbvc70qqPcC9mt60ogcrTiA3HEjwTK8ymKeuJMc4q6dVz200XnYUtLR9GYjPXvFOVr6W1zUK1WbPToaWJJuKnxBLnd0ftDEbMmj4loHYyhZyMjM91zQS4p7z8eKa9h0JrbacekcirexG0z4n3155}   \end{equation} The main reason for this restriction is that when we apply the elliptic estimate \eqref{8ThswELzXU3X7Ebd1KdZ7v1rN3GiirRXGKWK099ovBM0FDJCvkopYNQ2aN94Z7k0UnUKamE3OjU8DFYFFokbSI2J9V9gVlM8ALWThDPnPu3EL7HPD2VDaZTggzcCCmbvc70qqPcC9mt60ogcrTiA3HEjwTK8ymKeuJMc4q6dVz200XnYUtLR9GYjPXvFOVr6W1zUK1WbPToaWJJuKnxBLnd0ftDEbMmj4loHYyhZyMjM91zQS4p7z8eKa9h0JrbacekcirexG0z4n387} to \eqref{8ThswELzXU3X7Ebd1KdZ7v1rN3GiirRXGKWK099ovBM0FDJCvkopYNQ2aN94Z7k0UnUKamE3OjU8DFYFFokbSI2J9V9gVlM8ALWThDPnPu3EL7HPD2VDaZTggzcCCmbvc70qqPcC9mt60ogcrTiA3HEjwTK8ymKeuJMc4q6dVz200XnYUtLR9GYjPXvFOVr6W1zUK1WbPToaWJJuKnxBLnd0ftDEbMmj4loHYyhZyMjM91zQS4p7z8eKa9h0JrbacekcirexG0z4n3175}--\eqref{8ThswELzXU3X7Ebd1KdZ7v1rN3GiirRXGKWK099ovBM0FDJCvkopYNQ2aN94Z7k0UnUKamE3OjU8DFYFFokbSI2J9V9gVlM8ALWThDPnPu3EL7HPD2VDaZTggzcCCmbvc70qqPcC9mt60ogcrTiA3HEjwTK8ymKeuJMc4q6dVz200XnYUtLR9GYjPXvFOVr6W1zUK1WbPToaWJJuKnxBLnd0ftDEbMmj4loHYyhZyMjM91zQS4p7z8eKa9h0JrbacekcirexG0z4n3177} below: Since  we use it with $k=0.5+\delta$ and Lemma~\ref{L08} requires $k\ge1$, this imposes the condition~\eqref{8ThswELzXU3X7Ebd1KdZ7v1rN3GiirRXGKWK099ovBM0FDJCvkopYNQ2aN94Z7k0UnUKamE3OjU8DFYFFokbSI2J9V9gVlM8ALWThDPnPu3EL7HPD2VDaZTggzcCCmbvc70qqPcC9mt60ogcrTiA3HEjwTK8ymKeuJMc4q6dVz200XnYUtLR9GYjPXvFOVr6W1zUK1WbPToaWJJuKnxBLnd0ftDEbMmj4loHYyhZyMjM91zQS4p7z8eKa9h0JrbacekcirexG0z4n3155}. \par \begin{proof}[Proof of Theorem~\ref{T02}] Assume that $(v,q,w,a,\eta)$  and $(\tilde v,\tilde q,\tilde w,\tilde \eta,\tilde a)$ 
are solutions of the system on an interval $[0,T_0]$, both satisfying the bounds in Theorem~\ref{T01}. Denote by   \begin{equation}    (W,V,Q,E,A,\Psi)    = (w,v,q,\eta,a,\psi)        - (\tilde w, \tilde v,\tilde q,\tilde\eta,\tilde a,\tilde\psi)    \llabel{8ThswELzXU3X7Ebd1KdZ7v1rN3GiirRXGKWK099ovBM0FDJCvkopYNQ2aN94Z7k0UnUKamE3OjU8DFYFFokbSI2J9V9gVlM8ALWThDPnPu3EL7HPD2VDaZTggzcCCmbvc70qqPcC9mt60ogcrTiA3HEjwTK8ymKeuJMc4q6dVz200XnYUtLR9GYjPXvFOVr6W1zUK1WbPToaWJJuKnxBLnd0ftDEbMmj4loHYyhZyMjM91zQS4p7z8eKa9h0JrbacekcirexG0z4n3156}   \end{equation} the difference, and assume that   \begin{equation}       (W,V,Q,E,A,\Psi)(0)=0     .    \label{8ThswELzXU3X7Ebd1KdZ7v1rN3GiirRXGKWK099ovBM0FDJCvkopYNQ2aN94Z7k0UnUKamE3OjU8DFYFFokbSI2J9V9gVlM8ALWThDPnPu3EL7HPD2VDaZTggzcCCmbvc70qqPcC9mt60ogcrTiA3HEjwTK8ymKeuJMc4q6dVz200XnYUtLR9GYjPXvFOVr6W1zUK1WbPToaWJJuKnxBLnd0ftDEbMmj4loHYyhZyMjM91zQS4p7z8eKa9h0JrbacekcirexG0z4n3120}
  \end{equation} We start with tangential estimates by claiming that   \begin{align}\thelt{pT zO1dPA Vy Z JiW Clu OjO tE wxUB 7cTt EDqcAb YG d ZQZ fsQ 1At Hy xnPL 5K7D 91u03s 8K 2 0ro fZ9 w7T jx yG7q bCAh ssUZQu PK 7 xUe K7F 4HK fr CEPJ rgWH DZQpvR kO 8 Xve aSB OXS ee XV5j kgzL UTmMbo ma J fxu 8gA rnd zS IB0Y QSXv cZW8vo CO o OHy rEu GnS 2f nGEj jaLz ZIocQe gw H fSF KjW 2Lb KS nIcG 9Wnq Zya6qA YM S h2M mEA sw1 8n sJFY Anbr xZT45Z wB s BvK 9gS Ugy Bk 3dHq dvYU LhWgGK aM f Fk7 8mP 20m eV aQp2 NWIb 6hVBSe SV w nEq bq6 ucn X8 JLkI RJbJ Ebw}      \begin{split}        &\Vert  \UIPOIUPOIUPOOYIUIUYOIUYOIUHOIUOIUHIOPUHPOIJPOIJPOUHOIUHOILJHLIUHYOIUYOUI^{3+\delta} W\Vert_{L^2(\Gamma_1)}^2        +       \Vert \UIPOIUPOIUPOOYIUIUYOIUYOIUHOIUOIUHIOPUHPOIJPOIJPOUHOIUHOILJHLIUHYOIUYOUI^{1+\delta} W_{t}\Vert_{L^2(\Gamma_1)}^2        \\&\indeq        \dlkjfhlaskdhjflkasdjhflkasjhdflkasjhdflkasjhdfls        \Vert V\Vert_{L^2}^{1/(1.5+\delta)}               \Vert V\Vert_{H^{1.5+\delta}}^{(2+2\delta)/(1.5+\delta)}        +        \OIUYJHUGFAJKLDHFKJLSDHFLKSDJFHLKSDJHFLKSDJHFLKDJFHLLDKHFLKSDHJFALKJHLJLHGLKHHLKJHLKGKHGJKHGKJHLKHJLKJH_{0}^{t}           (\Vert V\Vert_{H^{1.5+\delta}}             + \Vert Q\Vert_{H^{0.5+\delta}}
            + \Vert W\Vert_{H^{3+\delta}(\Gamma_1)}             + \Vert W_t\Vert_{H^{1+\delta}(\Gamma_1)}            )^2\,ds      ,      \end{split}     \label{8ThswELzXU3X7Ebd1KdZ7v1rN3GiirRXGKWK099ovBM0FDJCvkopYNQ2aN94Z7k0UnUKamE3OjU8DFYFFokbSI2J9V9gVlM8ALWThDPnPu3EL7HPD2VDaZTggzcCCmbvc70qqPcC9mt60ogcrTiA3HEjwTK8ymKeuJMc4q6dVz200XnYUtLR9GYjPXvFOVr6W1zUK1WbPToaWJJuKnxBLnd0ftDEbMmj4loHYyhZyMjM91zQS4p7z8eKa9h0JrbacekcirexG0z4n3157}    \end{align} where, in this section, we allow all the implicit constants to depend on the norms of $(v,q,w)$ and $(\tilde v, \tilde q, \tilde w)$. To prove this, we start by  subtracting the equation \eqref{8ThswELzXU3X7Ebd1KdZ7v1rN3GiirRXGKWK099ovBM0FDJCvkopYNQ2aN94Z7k0UnUKamE3OjU8DFYFFokbSI2J9V9gVlM8ALWThDPnPu3EL7HPD2VDaZTggzcCCmbvc70qqPcC9mt60ogcrTiA3HEjwTK8ymKeuJMc4q6dVz200XnYUtLR9GYjPXvFOVr6W1zUK1WbPToaWJJuKnxBLnd0ftDEbMmj4loHYyhZyMjM91zQS4p7z8eKa9h0JrbacekcirexG0z4n322} and the analogous equation for $\tilde w$ and get   \begin{equation}     W_{tt}      +\Delta_2^2 W = Q
   .    \llabel{8ThswELzXU3X7Ebd1KdZ7v1rN3GiirRXGKWK099ovBM0FDJCvkopYNQ2aN94Z7k0UnUKamE3OjU8DFYFFokbSI2J9V9gVlM8ALWThDPnPu3EL7HPD2VDaZTggzcCCmbvc70qqPcC9mt60ogcrTiA3HEjwTK8ymKeuJMc4q6dVz200XnYUtLR9GYjPXvFOVr6W1zUK1WbPToaWJJuKnxBLnd0ftDEbMmj4loHYyhZyMjM91zQS4p7z8eKa9h0JrbacekcirexG0z4n3158}   \end{equation} We test this equation with $\UIPOIUPOIUPOOYIUIUYOIUYOIUHOIUOIUHIOPUHPOIJPOIJPOUHOIUHOILJHLIUHYOIUYOUI^{2(1+\delta)}W_{t}$ obtaining   \begin{equation}    \frac12 \frac{d}{dt}       \Bigl(\Vert  \Delta_2 \UIPOIUPOIUPOOYIUIUYOIUYOIUHOIUOIUHIOPUHPOIJPOIJPOUHOIUHOILJHLIUHYOIUYOUI^{1+\delta} W\Vert_{L^2(\Gamma_1)}^2                +  \Vert \UIPOIUPOIUPOOYIUIUYOIUYOIUHOIUOIUHIOPUHPOIJPOIJPOUHOIUHOILJHLIUHYOIUYOUI^{1+\delta} W_{t}\Vert_{L^2(\Gamma_1)}^2       \Bigr)    =     \OIUYJHUGFAJKLDHFKJLSDHFLKSDJFHLKSDJHFLKSDJHFLKDJFHLLDKHFLKSDHJFALKJHLJLHGLKHHLKJHLKGKHGJKHGKJHLKHJLKJH_{\Gamma_1}  Q \UIPOIUPOIUPOOYIUIUYOIUYOIUHOIUOIUHIOPUHPOIJPOIJPOUHOIUHOILJHLIUHYOIUYOUI^{2(1+\delta)} W_{t}    ,
   \label{8ThswELzXU3X7Ebd1KdZ7v1rN3GiirRXGKWK099ovBM0FDJCvkopYNQ2aN94Z7k0UnUKamE3OjU8DFYFFokbSI2J9V9gVlM8ALWThDPnPu3EL7HPD2VDaZTggzcCCmbvc70qqPcC9mt60ogcrTiA3HEjwTK8ymKeuJMc4q6dVz200XnYUtLR9GYjPXvFOVr6W1zUK1WbPToaWJJuKnxBLnd0ftDEbMmj4loHYyhZyMjM91zQS4p7z8eKa9h0JrbacekcirexG0z4n3159}   \end{equation} where we used~\eqref{8ThswELzXU3X7Ebd1KdZ7v1rN3GiirRXGKWK099ovBM0FDJCvkopYNQ2aN94Z7k0UnUKamE3OjU8DFYFFokbSI2J9V9gVlM8ALWThDPnPu3EL7HPD2VDaZTggzcCCmbvc70qqPcC9mt60ogcrTiA3HEjwTK8ymKeuJMc4q6dVz200XnYUtLR9GYjPXvFOVr6W1zUK1WbPToaWJJuKnxBLnd0ftDEbMmj4loHYyhZyMjM91zQS4p7z8eKa9h0JrbacekcirexG0z4n3120}. Subtracting the velocity equation \eqref{8ThswELzXU3X7Ebd1KdZ7v1rN3GiirRXGKWK099ovBM0FDJCvkopYNQ2aN94Z7k0UnUKamE3OjU8DFYFFokbSI2J9V9gVlM8ALWThDPnPu3EL7HPD2VDaZTggzcCCmbvc70qqPcC9mt60ogcrTiA3HEjwTK8ymKeuJMc4q6dVz200XnYUtLR9GYjPXvFOVr6W1zUK1WbPToaWJJuKnxBLnd0ftDEbMmj4loHYyhZyMjM91zQS4p7z8eKa9h0JrbacekcirexG0z4n354}$_1$ and its analog for $\tilde v$, we get   \begin{align}\thelt{5j kgzL UTmMbo ma J fxu 8gA rnd zS IB0Y QSXv cZW8vo CO o OHy rEu GnS 2f nGEj jaLz ZIocQe gw H fSF KjW 2Lb KS nIcG 9Wnq Zya6qA YM S h2M mEA sw1 8n sJFY Anbr xZT45Z wB s BvK 9gS Ugy Bk 3dHq dvYU LhWgGK aM f Fk7 8mP 20m eV aQp2 NWIb 6hVBSe SV w nEq bq6 ucn X8 JLkI RJbJ EbwEYw nv L BgM 94G plc lu 2s3U m15E YAjs1G Ln h zG8 vmh ghs Qc EDE1 KnaH wtuxOg UD L BE5 9FL xIp vu KfJE UTQS EaZ6hu BC a KXr lni r1X mL KH3h VPrq ixmTkR zh 0 OGp Obo N6K LC E0Ga Udt}    \begin{split}       &    J\UIOIUYOIUyHJGKHJLOIUYOIUOIUYOIYIOUYTIUYIOOOIUYOIUYPOIUPOIUPOIUYOIUYOIUYOIUHOUHOHIOUHOIHOIUHOIUHIOUH_{t} V_i    +    (J-\tilde J)\UIOIUYOIUyHJGKHJLOIUYOIUOIUYOIYIOUYTIUYIOOOIUYOIUYPOIUPOIUPOIUYOIUYOIUYOIUHOUHOHIOUHOIHOIUHOIUHIOUH_{t} v_i     + V_1 \tda_{j1} \UIOIUYOIUyHJGKHJLOIUYOIUOIUYOIYIOUYTIUYIOOOIUYOIUYPOIUPOIUPOIUYOIUYOIUYOIUHOUHOHIOUHOIHOIUHOIUHIOUH_{j}v_i     + \tilde v_1 B_{j1} \UIOIUYOIUyHJGKHJLOIUYOIUOIUYOIYIOUYTIUYIOOOIUYOIUYPOIUPOIUPOIUYOIUYOIUYOIUHOUHOHIOUHOIHOIUHOIUHIOUH_{j}v_i     + \tilde v_1 \tilde \tda_{j1} \UIOIUYOIUyHJGKHJLOIUYOIUOIUYOIYIOUYTIUYIOOOIUYOIUYPOIUPOIUPOIUYOIUYOIUYOIUHOUHOHIOUHOIHOIUHOIUHIOUH_{j}V_i     + V_2 \tda_{j2} \UIOIUYOIUyHJGKHJLOIUYOIUOIUYOIYIOUYTIUYIOOOIUYOIUYPOIUPOIUPOIUYOIUYOIUYOIUHOUHOHIOUHOIHOIUHOIUHIOUH_{j}v_i     + \tilde v_2 B_{j2} \UIOIUYOIUyHJGKHJLOIUYOIUOIUYOIYIOUYTIUYIOOOIUYOIUYPOIUPOIUPOIUYOIUYOIUYOIUHOUHOHIOUHOIHOIUHOIUHIOUH_{j}v_i
    + \tilde v_2 \tilde\tda_{j2} \UIOIUYOIUyHJGKHJLOIUYOIUOIUYOIYIOUYTIUYIOOOIUYOIUYPOIUPOIUPOIUYOIUYOIUYOIUHOUHOHIOUHOIHOIUHOIUHIOUH_{j}V_i     \\&\indeq     + (V_3-\Psi_t)\tda_{j3}\UIOIUYOIUyHJGKHJLOIUYOIUOIUYOIYIOUYTIUYIOOOIUYOIUYPOIUPOIUPOIUYOIUYOIUYOIUHOUHOHIOUHOIHOIUHOIUHIOUH_{j} v_i     + (\tilde v_3-\tilde\psi_t)B_{j3}\UIOIUYOIUyHJGKHJLOIUYOIUOIUYOIYIOUYTIUYIOOOIUYOIUYPOIUPOIUPOIUYOIUYOIUYOIUHOUHOHIOUHOIHOIUHOIUHIOUH_{j} v_i     + (\tilde v_3-\tilde\psi_t)\tilde\tda_{j3}\UIOIUYOIUyHJGKHJLOIUYOIUOIUYOIYIOUYTIUYIOOOIUYOIUYPOIUPOIUPOIUYOIUYOIUYOIUHOUHOHIOUHOIHOIUHOIUHIOUH_{j} V_i     + B_{ki}\UIOIUYOIUyHJGKHJLOIUYOIUOIUYOIYIOUYTIUYIOOOIUYOIUYPOIUPOIUPOIUYOIUYOIUYOIUHOUHOHIOUHOIHOIUHOIUHIOUH_{k}\tilde q     + \tda_{ki}\UIOIUYOIUyHJGKHJLOIUYOIUOIUYOIYIOUYTIUYIOOOIUYOIUYPOIUPOIUPOIUYOIUYOIUYOIUHOUHOHIOUHOIHOIUHOIUHIOUH_{k}Q=0     ,    \end{split}    \label{8ThswELzXU3X7Ebd1KdZ7v1rN3GiirRXGKWK099ovBM0FDJCvkopYNQ2aN94Z7k0UnUKamE3OjU8DFYFFokbSI2J9V9gVlM8ALWThDPnPu3EL7HPD2VDaZTggzcCCmbvc70qqPcC9mt60ogcrTiA3HEjwTK8ymKeuJMc4q6dVz200XnYUtLR9GYjPXvFOVr6W1zUK1WbPToaWJJuKnxBLnd0ftDEbMmj4loHYyhZyMjM91zQS4p7z8eKa9h0JrbacekcirexG0z4n3160}   \end{align} while the difference of divergence-free conditions gives   \begin{align}\thelt{Bk 3dHq dvYU LhWgGK aM f Fk7 8mP 20m eV aQp2 NWIb 6hVBSe SV w nEq bq6 ucn X8 JLkI RJbJ EbwEYw nv L BgM 94G plc lu 2s3U m15E YAjs1G Ln h zG8 vmh ghs Qc EDE1 KnaH wtuxOg UD L BE5 9FL xIp vu KfJE UTQS EaZ6hu BC a KXr lni r1X mL KH3h VPrq ixmTkR zh 0 OGp Obo N6K LC E0Ga Udta nZ9Lvt 1K Z eN5 GQc LQL L0 P9GX uakH m6kqk7 qm X UVH 2bU Hga v0 Wp6Q 8JyI TzlpqW 0Y k 1fX 8gj Gci bR arme Si8l w03Win NX w 1gv vcD eDP Sa bsVw Zu4h aO1V2D qw k JoR Shj MBg ry glA}   \begin{split}
    \tda_{ki} \UIOIUYOIUyHJGKHJLOIUYOIUOIUYOIYIOUYTIUYIOOOIUYOIUYPOIUPOIUPOIUYOIUYOIUYOIUHOUHOHIOUHOIHOIUHOIUHIOUH_{k}V_i=     -     B_{ki} \UIOIUYOIUyHJGKHJLOIUYOIUOIUYOIYIOUYTIUYIOOOIUYOIUYPOIUPOIUPOIUYOIUYOIUYOIUHOUHOHIOUHOIHOIUHOIUHIOUH_{k} \tilde v_i    .    \end{split}    \label{8ThswELzXU3X7Ebd1KdZ7v1rN3GiirRXGKWK099ovBM0FDJCvkopYNQ2aN94Z7k0UnUKamE3OjU8DFYFFokbSI2J9V9gVlM8ALWThDPnPu3EL7HPD2VDaZTggzcCCmbvc70qqPcC9mt60ogcrTiA3HEjwTK8ymKeuJMc4q6dVz200XnYUtLR9GYjPXvFOVr6W1zUK1WbPToaWJJuKnxBLnd0ftDEbMmj4loHYyhZyMjM91zQS4p7z8eKa9h0JrbacekcirexG0z4n3161}   \end{align} As in \eqref{8ThswELzXU3X7Ebd1KdZ7v1rN3GiirRXGKWK099ovBM0FDJCvkopYNQ2aN94Z7k0UnUKamE3OjU8DFYFFokbSI2J9V9gVlM8ALWThDPnPu3EL7HPD2VDaZTggzcCCmbvc70qqPcC9mt60ogcrTiA3HEjwTK8ymKeuJMc4q6dVz200XnYUtLR9GYjPXvFOVr6W1zUK1WbPToaWJJuKnxBLnd0ftDEbMmj4loHYyhZyMjM91zQS4p7z8eKa9h0JrbacekcirexG0z4n3328}--\eqref{8ThswELzXU3X7Ebd1KdZ7v1rN3GiirRXGKWK099ovBM0FDJCvkopYNQ2aN94Z7k0UnUKamE3OjU8DFYFFokbSI2J9V9gVlM8ALWThDPnPu3EL7HPD2VDaZTggzcCCmbvc70qqPcC9mt60ogcrTiA3HEjwTK8ymKeuJMc4q6dVz200XnYUtLR9GYjPXvFOVr6W1zUK1WbPToaWJJuKnxBLnd0ftDEbMmj4loHYyhZyMjM91zQS4p7z8eKa9h0JrbacekcirexG0z4n3330}, we have   \begin{align}\thelt{ xIp vu KfJE UTQS EaZ6hu BC a KXr lni r1X mL KH3h VPrq ixmTkR zh 0 OGp Obo N6K LC E0Ga Udta nZ9Lvt 1K Z eN5 GQc LQL L0 P9GX uakH m6kqk7 qm X UVH 2bU Hga v0 Wp6Q 8JyI TzlpqW 0Y k 1fX 8gj Gci bR arme Si8l w03Win NX w 1gv vcD eDP Sa bsVw Zu4h aO1V2D qw k JoR Shj MBg ry glA9 3DBd S0mYAc El 5 aEd pII DT5 mb SVuX o8Nl Y24WCA 6d f CVF 6Al a6i Ns 7GCh OvFA hbxw9Q 71 Z RC8 yRi 1zZ dM rpt7 3dou ogkAkG GE 4 87V ii4 Ofw Je sXUR dzVL HU0zms 8W 2 Ztz iY5 mw9 a}    \begin{split}    &    \frac12    \frac{d}{dt}    \OIUYJHUGFAJKLDHFKJLSDHFLKSDJFHLKSDJHFLKSDJHFLKDJFHLLDKHFLKSDHJFALKJHLJLHGLKHHLKJHLKGKHGJKHGKJHLKHJLKJH  J \UIPOIUPOIUPOOYIUIUYOIUYOIUHOIUOIUHIOPUHPOIJPOIJPOUHOIUHOILJHLIUHYOIUYOUI^{0.5+\delta} V_i \UIPOIUPOIUPOOYIUIUYOIUYOIUHOIUOIUHIOPUHPOIJPOIJPOUHOIUHOILJHLIUHYOIUYOUI^{1.5+\delta} V_i
     =       \frac12 \OIUYJHUGFAJKLDHFKJLSDHFLKSDJFHLKSDJHFLKSDJHFLKDJFHLLDKHFLKSDHJFALKJHLJLHGLKHHLKJHLKGKHGJKHGKJHLKHJLKJH J_t \UIPOIUPOIUPOOYIUIUYOIUYOIUHOIUOIUHIOPUHPOIJPOIJPOUHOIUHOILJHLIUHYOIUYOUI^{0.5+\delta} V_i \UIPOIUPOIUPOOYIUIUYOIUYOIUHOIUOIUHIOPUHPOIJPOIJPOUHOIUHOILJHLIUHYOIUYOUI^{1.5+\delta} V_i      + \OIUYJHUGFAJKLDHFKJLSDHFLKSDJFHLKSDJHFLKSDJHFLKDJFHLLDKHFLKSDHJFALKJHLJLHGLKHHLKJHLKGKHGJKHGKJHLKHJLKJH            J \UIPOIUPOIUPOOYIUIUYOIUYOIUHOIUOIUHIOPUHPOIJPOIJPOUHOIUHOILJHLIUHYOIUYOUI^{0.5+\delta} \UIOIUYOIUyHJGKHJLOIUYOIUOIUYOIYIOUYTIUYIOOOIUYOIUYPOIUPOIUPOIUYOIUYOIUYOIUHOUHOHIOUHOIHOIUHOIUHIOUH_{t}V_i             \UIPOIUPOIUPOOYIUIUYOIUYOIUHOIUOIUHIOPUHPOIJPOIJPOUHOIUHOILJHLIUHYOIUYOUI^{1.5+\delta} V_i      + \bar I     ,    \end{split}    \llabel{8ThswELzXU3X7Ebd1KdZ7v1rN3GiirRXGKWK099ovBM0FDJCvkopYNQ2aN94Z7k0UnUKamE3OjU8DFYFFokbSI2J9V9gVlM8ALWThDPnPu3EL7HPD2VDaZTggzcCCmbvc70qqPcC9mt60ogcrTiA3HEjwTK8ymKeuJMc4q6dVz200XnYUtLR9GYjPXvFOVr6W1zUK1WbPToaWJJuKnxBLnd0ftDEbMmj4loHYyhZyMjM91zQS4p7z8eKa9h0JrbacekcirexG0z4n3331}   \end{align} where   \begin{equation}   {  \bar I    \dlkjfhlaskdhjflkasdjhflkasjhdflkasjhdflkasjhdfls 
    \Vert V\Vert_{H^{1.5+\delta}}     \Vert V_t\Vert_{H^{-0.5+\delta}} }     ;    \label{8ThswELzXU3X7Ebd1KdZ7v1rN3GiirRXGKWK099ovBM0FDJCvkopYNQ2aN94Z7k0UnUKamE3OjU8DFYFFokbSI2J9V9gVlM8ALWThDPnPu3EL7HPD2VDaZTggzcCCmbvc70qqPcC9mt60ogcrTiA3HEjwTK8ymKeuJMc4q6dVz200XnYUtLR9GYjPXvFOVr6W1zUK1WbPToaWJJuKnxBLnd0ftDEbMmj4loHYyhZyMjM91zQS4p7z8eKa9h0JrbacekcirexG0z4n3332}   \end{equation} recall that $\delta\geq0.5$ and that the constants depend on  the norms of $(v,q,w)$ and $(\tilde v, \tilde q, \tilde w)$. Note that \eqref{8ThswELzXU3X7Ebd1KdZ7v1rN3GiirRXGKWK099ovBM0FDJCvkopYNQ2aN94Z7k0UnUKamE3OjU8DFYFFokbSI2J9V9gVlM8ALWThDPnPu3EL7HPD2VDaZTggzcCCmbvc70qqPcC9mt60ogcrTiA3HEjwTK8ymKeuJMc4q6dVz200XnYUtLR9GYjPXvFOVr6W1zUK1WbPToaWJJuKnxBLnd0ftDEbMmj4loHYyhZyMjM91zQS4p7z8eKa9h0JrbacekcirexG0z4n3332} is obtained analogously to \eqref{8ThswELzXU3X7Ebd1KdZ7v1rN3GiirRXGKWK099ovBM0FDJCvkopYNQ2aN94Z7k0UnUKamE3OjU8DFYFFokbSI2J9V9gVlM8ALWThDPnPu3EL7HPD2VDaZTggzcCCmbvc70qqPcC9mt60ogcrTiA3HEjwTK8ymKeuJMc4q6dVz200XnYUtLR9GYjPXvFOVr6W1zUK1WbPToaWJJuKnxBLnd0ftDEbMmj4loHYyhZyMjM91zQS4p7z8eKa9h0JrbacekcirexG0z4n3330} by writing   \begin{align}\thelt{X 8gj Gci bR arme Si8l w03Win NX w 1gv vcD eDP Sa bsVw Zu4h aO1V2D qw k JoR Shj MBg ry glA9 3DBd S0mYAc El 5 aEd pII DT5 mb SVuX o8Nl Y24WCA 6d f CVF 6Al a6i Ns 7GCh OvFA hbxw9Q 71 Z RC8 yRi 1zZ dM rpt7 3dou ogkAkG GE 4 87V ii4 Ofw Je sXUR dzVL HU0zms 8W 2 Ztz iY5 mw9 aB ZIwk 5WNm vNM2Hd jn e wMR 8qp 2Vv up cV4P cjOG eu35u5 cQ X NTy kfT ZXA JH UnSs 4zxf Hwf10r it J Yox Rto 5OM FP hakR gzDY Pm02mG 18 v mfV 11N n87 zS X59D E0cN 99uEUz 2r T h1F P8x }    \begin{split}    \bar I    &=
    \frac12      \OIUYJHUGFAJKLDHFKJLSDHFLKSDJFHLKSDJHFLKSDJHFLKDJFHLLDKHFLKSDHJFALKJHLJLHGLKHHLKJHLKGKHGJKHGKJHLKHJLKJH        \Bigl(        \UIPOIUPOIUPOOYIUIUYOIUYOIUHOIUOIUHIOPUHPOIJPOIJPOUHOIUHOILJHLIUHYOIUYOUI^2 (J\UIPOIUPOIUPOOYIUIUYOIUYOIUHOIUOIUHIOPUHPOIJPOIJPOUHOIUHOILJHLIUHYOIUYOUI^{0.5+\delta}V_i)        - \UIPOIUPOIUPOOYIUIUYOIUYOIUHOIUOIUHIOPUHPOIJPOIJPOUHOIUHOILJHLIUHYOIUYOUI (J \UIPOIUPOIUPOOYIUIUYOIUYOIUHOIUOIUHIOPUHPOIJPOIJPOUHOIUHOILJHLIUHYOIUYOUI^{1.5+\delta}V_i)       \Bigr)       \UIPOIUPOIUPOOYIUIUYOIUYOIUHOIUOIUHIOPUHPOIJPOIJPOUHOIUHOILJHLIUHYOIUYOUI^{-0.5+\delta} \UIOIUYOIUyHJGKHJLOIUYOIUOIUYOIYIOUYTIUYIOOOIUYOIUYPOIUPOIUPOIUYOIUYOIUYOIUHOUHOHIOUHOIHOIUHOIUHIOUH_{t}V_i     \\&      =      \frac12      \OIUYJHUGFAJKLDHFKJLSDHFLKSDJFHLKSDJHFLKSDJHFLKDJFHLLDKHFLKSDHJFALKJHLJLHGLKHHLKJHLKGKHGJKHGKJHLKHJLKJH        \Bigl(        \UIPOIUPOIUPOOYIUIUYOIUYOIUHOIUOIUHIOPUHPOIJPOIJPOUHOIUHOILJHLIUHYOIUYOUI^2 (J\UIPOIUPOIUPOOYIUIUYOIUYOIUHOIUOIUHIOPUHPOIJPOIJPOUHOIUHOILJHLIUHYOIUYOUI^{0.5+\delta}V_i)        - J \UIPOIUPOIUPOOYIUIUYOIUYOIUHOIUOIUHIOPUHPOIJPOIJPOUHOIUHOILJHLIUHYOIUYOUI^{2.5+\delta}V_i
      \Bigr)       \UIPOIUPOIUPOOYIUIUYOIUYOIUHOIUOIUHIOPUHPOIJPOIJPOUHOIUHOILJHLIUHYOIUYOUI^{-0.5+\delta} \UIOIUYOIUyHJGKHJLOIUYOIUOIUYOIYIOUYTIUYIOOOIUYOIUYPOIUPOIUPOIUYOIUYOIUYOIUHOUHOHIOUHOIHOIUHOIUHIOUH_{t}V_i       +      \frac12      \OIUYJHUGFAJKLDHFKJLSDHFLKSDJFHLKSDJHFLKSDJHFLKDJFHLLDKHFLKSDHJFALKJHLJLHGLKHHLKJHLKGKHGJKHGKJHLKHJLKJH        \Bigl(        J \UIPOIUPOIUPOOYIUIUYOIUYOIUHOIUOIUHIOPUHPOIJPOIJPOUHOIUHOILJHLIUHYOIUYOUI^{2.5+\delta}V_i        - \UIPOIUPOIUPOOYIUIUYOIUYOIUHOIUOIUHIOPUHPOIJPOIJPOUHOIUHOILJHLIUHYOIUYOUI (J \UIPOIUPOIUPOOYIUIUYOIUYOIUHOIUOIUHIOPUHPOIJPOIJPOUHOIUHOILJHLIUHYOIUYOUI^{1.5+\delta}V_i)       \Bigr)       \UIPOIUPOIUPOOYIUIUYOIUYOIUHOIUOIUHIOPUHPOIJPOIJPOUHOIUHOILJHLIUHYOIUYOUI^{-0.5+\delta} \UIOIUYOIUyHJGKHJLOIUYOIUOIUYOIYIOUYTIUYIOOOIUYOIUYPOIUPOIUPOIUYOIUYOIUYOIUHOUHOHIOUHOIHOIUHOIUHIOUH_{t}V_i    ,    \end{split}    \llabel{8ThswELzXU3X7Ebd1KdZ7v1rN3GiirRXGKWK099ovBM0FDJCvkopYNQ2aN94Z7k0UnUKamE3OjU8DFYFFokbSI2J9V9gVlM8ALWThDPnPu3EL7HPD2VDaZTggzcCCmbvc70qqPcC9mt60ogcrTiA3HEjwTK8ymKeuJMc4q6dVz200XnYUtLR9GYjPXvFOVr6W1zUK1WbPToaWJJuKnxBLnd0ftDEbMmj4loHYyhZyMjM91zQS4p7z8eKa9h0JrbacekcirexG0z4n3347}   \end{align}
and estimating the commutators by employing Kato-Ponce inequalities. \par Next, we apply $\UIPOIUPOIUPOOYIUIUYOIUYOIUHOIUOIUHIOPUHPOIJPOIJPOUHOIUHOILJHLIUHYOIUYOUI^{0.5+\delta}$ to \eqref{8ThswELzXU3X7Ebd1KdZ7v1rN3GiirRXGKWK099ovBM0FDJCvkopYNQ2aN94Z7k0UnUKamE3OjU8DFYFFokbSI2J9V9gVlM8ALWThDPnPu3EL7HPD2VDaZTggzcCCmbvc70qqPcC9mt60ogcrTiA3HEjwTK8ymKeuJMc4q6dVz200XnYUtLR9GYjPXvFOVr6W1zUK1WbPToaWJJuKnxBLnd0ftDEbMmj4loHYyhZyMjM91zQS4p7z8eKa9h0JrbacekcirexG0z4n3160} and test with $\UIPOIUPOIUPOOYIUIUYOIUYOIUHOIUOIUHIOPUHPOIJPOIJPOUHOIUHOILJHLIUHYOIUYOUI^{1.5+\delta}V$ obtaining   \begin{align}\thelt{ Z RC8 yRi 1zZ dM rpt7 3dou ogkAkG GE 4 87V ii4 Ofw Je sXUR dzVL HU0zms 8W 2 Ztz iY5 mw9 aB ZIwk 5WNm vNM2Hd jn e wMR 8qp 2Vv up cV4P cjOG eu35u5 cQ X NTy kfT ZXA JH UnSs 4zxf Hwf10r it J Yox Rto 5OM FP hakR gzDY Pm02mG 18 v mfV 11N n87 zS X59D E0cN 99uEUz 2r T h1F P8x jrm q2 Z7ut pdRJ 2DdYkj y9 J Yko c38 Kdu Z9 vydO wkO0 djhXSx Sv H wJo XE7 9f8 qh iBr8 KYTx OfcYYF sM y j0H vK3 ayU wt 4nA5 H76b wUqyJQ od O u8U Gjb t6v lc xYZt 6AUx wpYr18 uO v 62v}    \begin{split}       &    \frac12    \frac{d}{dt}    \OIUYJHUGFAJKLDHFKJLSDHFLKSDJFHLKSDJHFLKSDJHFLKDJFHLLDKHFLKSDHJFALKJHLJLHGLKHHLKJHLKGKHGJKHGKJHLKHJLKJH  J \UIPOIUPOIUPOOYIUIUYOIUYOIUHOIUOIUHIOPUHPOIJPOIJPOUHOIUHOILJHLIUHYOIUYOUI^{0.5+\delta} V_i  \UIPOIUPOIUPOOYIUIUYOIUYOIUHOIUOIUHIOPUHPOIJPOIJPOUHOIUHOILJHLIUHYOIUYOUI^{1.5+\delta} V_i      \\&\indeq      =    \frac12
   \OIUYJHUGFAJKLDHFKJLSDHFLKSDJFHLKSDJHFLKSDJHFLKDJFHLLDKHFLKSDHJFALKJHLJLHGLKHHLKJHLKGKHGJKHGKJHLKHJLKJH  J_t \UIPOIUPOIUPOOYIUIUYOIUYOIUHOIUOIUHIOPUHPOIJPOIJPOUHOIUHOILJHLIUHYOIUYOUI^{0.5+\delta} V_i  \UIPOIUPOIUPOOYIUIUYOIUYOIUHOIUOIUHIOPUHPOIJPOIJPOUHOIUHOILJHLIUHYOIUYOUI^{1.5+\delta} V_i      - \OIUYJHUGFAJKLDHFKJLSDHFLKSDJFHLKSDJHFLKSDJHFLKDJFHLLDKHFLKSDHJFALKJHLJLHGLKHHLKJHLKGKHGJKHGKJHLKHJLKJH           \Bigl(           \UIPOIUPOIUPOOYIUIUYOIUYOIUHOIUOIUHIOPUHPOIJPOIJPOUHOIUHOILJHLIUHYOIUYOUI^{0.5+\delta}(J\UIOIUYOIUyHJGKHJLOIUYOIUOIUYOIYIOUYTIUYIOOOIUYOIUYPOIUPOIUPOIUYOIUYOIUYOIUHOUHOHIOUHOIHOIUHOIUHIOUH_t V_i) - J \UIPOIUPOIUPOOYIUIUYOIUYOIUHOIUOIUHIOPUHPOIJPOIJPOUHOIUHOILJHLIUHYOIUYOUI^{0.5+\delta} (\UIOIUYOIUyHJGKHJLOIUYOIUOIUYOIYIOUYTIUYIOOOIUYOIUYPOIUPOIUPOIUYOIUYOIUYOIUHOUHOHIOUHOIHOIUHOIUHIOUH_{t}V_i)          \Bigr) \UIPOIUPOIUPOOYIUIUYOIUYOIUHOIUOIUHIOPUHPOIJPOIJPOUHOIUHOILJHLIUHYOIUYOUI^{1.5+\delta}V_i     \\&\indeq\indeq     - \OIUYJHUGFAJKLDHFKJLSDHFLKSDJFHLKSDJHFLKSDJHFLKDJFHLLDKHFLKSDHJFALKJHLJLHGLKHHLKJHLKGKHGJKHGKJHLKHJLKJH \UIPOIUPOIUPOOYIUIUYOIUYOIUHOIUOIUHIOPUHPOIJPOIJPOUHOIUHOILJHLIUHYOIUYOUI^{0.5+\delta}((J-\tilde J) \UIOIUYOIUyHJGKHJLOIUYOIUOIUYOIYIOUYTIUYIOOOIUYOIUYPOIUPOIUPOIUYOIUYOIUYOIUHOUHOHIOUHOIHOIUHOIUHIOUH_{t} v_i) \UIPOIUPOIUPOOYIUIUYOIUYOIUHOIUOIUHIOPUHPOIJPOIJPOUHOIUHOILJHLIUHYOIUYOUI^{1.5+\delta} V_i     - \sum_{m=1}^{2}\OIUYJHUGFAJKLDHFKJLSDHFLKSDJFHLKSDJHFLKSDJHFLKDJFHLLDKHFLKSDHJFALKJHLJLHGLKHHLKJHLKGKHGJKHGKJHLKHJLKJH \UIPOIUPOIUPOOYIUIUYOIUYOIUHOIUOIUHIOPUHPOIJPOIJPOUHOIUHOILJHLIUHYOIUYOUI^{0.5+\delta}(V_m\tda_{jm}\UIOIUYOIUyHJGKHJLOIUYOIUOIUYOIYIOUYTIUYIOOOIUYOIUYPOIUPOIUPOIUYOIUYOIUYOIUHOUHOHIOUHOIHOIUHOIUHIOUH_{j}v_i) \UIPOIUPOIUPOOYIUIUYOIUYOIUHOIUOIUHIOPUHPOIJPOIJPOUHOIUHOILJHLIUHYOIUYOUI^{1.5+\delta} V_i     \\&\indeq\indeq     - \sum_{m=1}^{2}\OIUYJHUGFAJKLDHFKJLSDHFLKSDJFHLKSDJHFLKSDJHFLKDJFHLLDKHFLKSDHJFALKJHLJLHGLKHHLKJHLKGKHGJKHGKJHLKHJLKJH \UIPOIUPOIUPOOYIUIUYOIUYOIUHOIUOIUHIOPUHPOIJPOIJPOUHOIUHOILJHLIUHYOIUYOUI^{0.5+\delta}(\tilde v_m B_{jm}\UIOIUYOIUyHJGKHJLOIUYOIUOIUYOIYIOUYTIUYIOOOIUYOIUYPOIUPOIUPOIUYOIUYOIUYOIUHOUHOHIOUHOIHOIUHOIUHIOUH_{j}v_i) \UIPOIUPOIUPOOYIUIUYOIUYOIUHOIUOIUHIOPUHPOIJPOIJPOUHOIUHOILJHLIUHYOIUYOUI^{1.5+\delta} V_i     - \sum_{m=1}^{2}\OIUYJHUGFAJKLDHFKJLSDHFLKSDJFHLKSDJHFLKSDJHFLKDJFHLLDKHFLKSDHJFALKJHLJLHGLKHHLKJHLKGKHGJKHGKJHLKHJLKJH \UIPOIUPOIUPOOYIUIUYOIUYOIUHOIUOIUHIOPUHPOIJPOIJPOUHOIUHOILJHLIUHYOIUYOUI^{0.5+\delta}(\tilde v_m\tilde \tda_{jm}\UIOIUYOIUyHJGKHJLOIUYOIUOIUYOIYIOUYTIUYIOOOIUYOIUYPOIUPOIUPOIUYOIUYOIUYOIUHOUHOHIOUHOIHOIUHOIUHIOUH_{j}V_i) \UIPOIUPOIUPOOYIUIUYOIUYOIUHOIUOIUHIOPUHPOIJPOIJPOUHOIUHOILJHLIUHYOIUYOUI^{1.5+\delta} V_i    \\&\indeq\indeq     - \OIUYJHUGFAJKLDHFKJLSDHFLKSDJFHLKSDJHFLKSDJHFLKDJFHLLDKHFLKSDHJFALKJHLJLHGLKHHLKJHLKGKHGJKHGKJHLKHJLKJH \UIPOIUPOIUPOOYIUIUYOIUYOIUHOIUOIUHIOPUHPOIJPOIJPOUHOIUHOILJHLIUHYOIUYOUI^{0.5+\delta}             \bigl(
              (V_3-\Psi_t)\UIOIUYOIUyHJGKHJLOIUYOIUOIUYOIYIOUYTIUYIOOOIUYOIUYPOIUPOIUPOIUYOIUYOIUYOIUHOUHOHIOUHOIHOIUHOIUHIOUH_{3}v_i                                                                       \bigr) \UIPOIUPOIUPOOYIUIUYOIUYOIUHOIUOIUHIOPUHPOIJPOIJPOUHOIUHOILJHLIUHYOIUYOUI^{1.5+\delta} V_i     - \OIUYJHUGFAJKLDHFKJLSDHFLKSDJFHLKSDJHFLKSDJHFLKDJFHLLDKHFLKSDHJFALKJHLJLHGLKHHLKJHLKGKHGJKHGKJHLKHJLKJH \UIPOIUPOIUPOOYIUIUYOIUYOIUHOIUOIUHIOPUHPOIJPOIJPOUHOIUHOILJHLIUHYOIUYOUI^{0.5+\delta}             \bigl(               (\tilde v_3-\tilde \psi_t)\UIOIUYOIUyHJGKHJLOIUYOIUOIUYOIYIOUYTIUYIOOOIUYOIUYPOIUPOIUPOIUYOIUYOIUYOIUHOUHOHIOUHOIHOIUHOIUHIOUH_{3}V_i             \bigr) \UIPOIUPOIUPOOYIUIUYOIUYOIUHOIUOIUHIOPUHPOIJPOIJPOUHOIUHOILJHLIUHYOIUYOUI^{1.5+\delta} V_i     \\&\indeq\indeq     -\OIUYJHUGFAJKLDHFKJLSDHFLKSDJFHLKSDJHFLKSDJHFLKDJFHLLDKHFLKSDHJFALKJHLJLHGLKHHLKJHLKGKHGJKHGKJHLKHJLKJH \UIPOIUPOIUPOOYIUIUYOIUYOIUHOIUOIUHIOPUHPOIJPOIJPOUHOIUHOILJHLIUHYOIUYOUI^{0.5+\delta}(B_{ki}\UIOIUYOIUyHJGKHJLOIUYOIUOIUYOIYIOUYTIUYIOOOIUYOIUYPOIUPOIUPOIUYOIUYOIUYOIUHOUHOHIOUHOIHOIUHOIUHIOUH_{k} \tilde q)\UIPOIUPOIUPOOYIUIUYOIUYOIUHOIUOIUHIOPUHPOIJPOIJPOUHOIUHOILJHLIUHYOIUYOUI^{1.5+\delta} V_i      -\OIUYJHUGFAJKLDHFKJLSDHFLKSDJFHLKSDJHFLKSDJHFLKDJFHLLDKHFLKSDHJFALKJHLJLHGLKHHLKJHLKGKHGJKHGKJHLKHJLKJH \UIPOIUPOIUPOOYIUIUYOIUYOIUHOIUOIUHIOPUHPOIJPOIJPOUHOIUHOILJHLIUHYOIUYOUI^{0.5+\delta}(b_{ki}\UIOIUYOIUyHJGKHJLOIUYOIUOIUYOIYIOUYTIUYIOOOIUYOIUYPOIUPOIUPOIUYOIUYOIUYOIUHOUHOHIOUHOIHOIUHOIUHIOUH_{k}Q)\UIPOIUPOIUPOOYIUIUYOIUYOIUHOIUOIUHIOPUHPOIJPOIJPOUHOIUHOILJHLIUHYOIUYOUI^{1.5+\delta} V_i      + \bar I     \\&     = I_1 + \cdots + I_{10} + \bar I    .    \end{split}
   \label{8ThswELzXU3X7Ebd1KdZ7v1rN3GiirRXGKWK099ovBM0FDJCvkopYNQ2aN94Z7k0UnUKamE3OjU8DFYFFokbSI2J9V9gVlM8ALWThDPnPu3EL7HPD2VDaZTggzcCCmbvc70qqPcC9mt60ogcrTiA3HEjwTK8ymKeuJMc4q6dVz200XnYUtLR9GYjPXvFOVr6W1zUK1WbPToaWJJuKnxBLnd0ftDEbMmj4loHYyhZyMjM91zQS4p7z8eKa9h0JrbacekcirexG0z4n3162}   \end{align} All the terms are treated similarly as those in~\eqref{8ThswELzXU3X7Ebd1KdZ7v1rN3GiirRXGKWK099ovBM0FDJCvkopYNQ2aN94Z7k0UnUKamE3OjU8DFYFFokbSI2J9V9gVlM8ALWThDPnPu3EL7HPD2VDaZTggzcCCmbvc70qqPcC9mt60ogcrTiA3HEjwTK8ymKeuJMc4q6dVz200XnYUtLR9GYjPXvFOVr6W1zUK1WbPToaWJJuKnxBLnd0ftDEbMmj4loHYyhZyMjM91zQS4p7z8eKa9h0JrbacekcirexG0z4n356}. We show a detailed treatment of the tenth (and the most essential) term $I_{10}$. We first rewrite it as   \begin{align}\thelt{0r it J Yox Rto 5OM FP hakR gzDY Pm02mG 18 v mfV 11N n87 zS X59D E0cN 99uEUz 2r T h1F P8x jrm q2 Z7ut pdRJ 2DdYkj y9 J Yko c38 Kdu Z9 vydO wkO0 djhXSx Sv H wJo XE7 9f8 qh iBr8 KYTx OfcYYF sM y j0H vK3 ayU wt 4nA5 H76b wUqyJQ od O u8U Gjb t6v lc xYZt 6AUx wpYr18 uO v 62v jnw FrC rf Z4nl vJuh 2SpVLO vp O lZn PTG 07V Re ixBm XBxO BzpFW5 iB I O7R Vmo GnJ u8 Axol YAxl JUrYKV Kk p aIk VCu PiD O8 IHPU ndze LPTILB P5 B qYy DLZ DZa db jcJA T644 Vp6byb 1g }    \begin{split}    I_{10}    &=     \OIUYJHUGFAJKLDHFKJLSDHFLKSDJFHLKSDJHFLKSDJHFLKDJFHLLDKHFLKSDHJFALKJHLJLHGLKHHLKJHLKGKHGJKHGKJHLKHJLKJH \UIPOIUPOIUPOOYIUIUYOIUYOIUHOIUOIUHIOPUHPOIJPOIJPOUHOIUHOILJHLIUHYOIUYOUI^{0.5+\delta}(\tda_{ki}Q)\UIPOIUPOIUPOOYIUIUYOIUYOIUHOIUOIUHIOPUHPOIJPOIJPOUHOIUHOILJHLIUHYOIUYOUI^{1.5+\delta} \UIOIUYOIUyHJGKHJLOIUYOIUOIUYOIYIOUYTIUYIOOOIUYOIUYPOIUPOIUPOIUYOIUYOIUYOIUHOUHOHIOUHOIHOIUHOIUHIOUH_{k} V_i     -    \OIUYJHUGFAJKLDHFKJLSDHFLKSDJFHLKSDJHFLKSDJHFLKDJFHLLDKHFLKSDHJFALKJHLJLHGLKHHLKJHLKGKHGJKHGKJHLKHJLKJH_{\Gamma_1} \UIPOIUPOIUPOOYIUIUYOIUYOIUHOIUOIUHIOPUHPOIJPOIJPOUHOIUHOILJHLIUHYOIUYOUI^{1+\delta}(\tda_{3i}Q)\UIPOIUPOIUPOOYIUIUYOIUYOIUHOIUOIUHIOPUHPOIJPOIJPOUHOIUHOILJHLIUHYOIUYOUI^{1+\delta} V_i    =    J_1+J_2
   .    \end{split}    \label{8ThswELzXU3X7Ebd1KdZ7v1rN3GiirRXGKWK099ovBM0FDJCvkopYNQ2aN94Z7k0UnUKamE3OjU8DFYFFokbSI2J9V9gVlM8ALWThDPnPu3EL7HPD2VDaZTggzcCCmbvc70qqPcC9mt60ogcrTiA3HEjwTK8ymKeuJMc4q6dVz200XnYUtLR9GYjPXvFOVr6W1zUK1WbPToaWJJuKnxBLnd0ftDEbMmj4loHYyhZyMjM91zQS4p7z8eKa9h0JrbacekcirexG0z4n3163}   \end{align} For the first term in \eqref{8ThswELzXU3X7Ebd1KdZ7v1rN3GiirRXGKWK099ovBM0FDJCvkopYNQ2aN94Z7k0UnUKamE3OjU8DFYFFokbSI2J9V9gVlM8ALWThDPnPu3EL7HPD2VDaZTggzcCCmbvc70qqPcC9mt60ogcrTiA3HEjwTK8ymKeuJMc4q6dVz200XnYUtLR9GYjPXvFOVr6W1zUK1WbPToaWJJuKnxBLnd0ftDEbMmj4loHYyhZyMjM91zQS4p7z8eKa9h0JrbacekcirexG0z4n3163}, we proceed as in \eqref{8ThswELzXU3X7Ebd1KdZ7v1rN3GiirRXGKWK099ovBM0FDJCvkopYNQ2aN94Z7k0UnUKamE3OjU8DFYFFokbSI2J9V9gVlM8ALWThDPnPu3EL7HPD2VDaZTggzcCCmbvc70qqPcC9mt60ogcrTiA3HEjwTK8ymKeuJMc4q6dVz200XnYUtLR9GYjPXvFOVr6W1zUK1WbPToaWJJuKnxBLnd0ftDEbMmj4loHYyhZyMjM91zQS4p7z8eKa9h0JrbacekcirexG0z4n365} and write   \begin{align}\thelt{ OfcYYF sM y j0H vK3 ayU wt 4nA5 H76b wUqyJQ od O u8U Gjb t6v lc xYZt 6AUx wpYr18 uO v 62v jnw FrC rf Z4nl vJuh 2SpVLO vp O lZn PTG 07V Re ixBm XBxO BzpFW5 iB I O7R Vmo GnJ u8 Axol YAxl JUrYKV Kk p aIk VCu PiD O8 IHPU ndze LPTILB P5 B qYy DLZ DZa db jcJA T644 Vp6byb 1g 4 dE7 Ydz keO YL hCRe Ommx F9zsu0 rp 8 Ajz d2v Heo 7L 5zVn L8IQ WnYATK KV 1 f14 s2J geC b3 v9UJ djNN VBINix 1q 5 oyr SBM 2Xt gr v8RQ MaXk a4AN9i Ni n zfH xGp A57 uA E4jM fg6S 6eNGK}    \begin{split}    J_1    &=     \OIUYJHUGFAJKLDHFKJLSDHFLKSDJFHLKSDJHFLKSDJHFLKDJFHLLDKHFLKSDHJFALKJHLJLHGLKHHLKJHLKGKHGJKHGKJHLKHJLKJH \UIPOIUPOIUPOOYIUIUYOIUYOIUHOIUOIUHIOPUHPOIJPOIJPOUHOIUHOILJHLIUHYOIUYOUI^{0.5+\delta}Q\UIPOIUPOIUPOOYIUIUYOIUYOIUHOIUOIUHIOPUHPOIJPOIJPOUHOIUHOILJHLIUHYOIUYOUI^{1.5+\delta} (\tda_{ki} \UIOIUYOIUyHJGKHJLOIUYOIUOIUYOIYIOUYTIUYIOOOIUYOIUYPOIUPOIUPOIUYOIUYOIUYOIUHOUHOHIOUHOIHOIUHOIUHIOUH_{k}V_i )     -      \OIUYJHUGFAJKLDHFKJLSDHFLKSDJFHLKSDJHFLKSDJHFLKDJFHLLDKHFLKSDHJFALKJHLJLHGLKHHLKJHLKGKHGJKHGKJHLKHJLKJH          \Bigl(
           \UIPOIUPOIUPOOYIUIUYOIUYOIUHOIUOIUHIOPUHPOIJPOIJPOUHOIUHOILJHLIUHYOIUYOUI^{1.5+\delta} \UIOIUYOIUyHJGKHJLOIUYOIUOIUYOIYIOUYTIUYIOOOIUYOIUYPOIUPOIUPOIUYOIUYOIUYOIUHOUHOHIOUHOIHOIUHOIUHIOUH_{k}(    \tda_{ki}  V_i )          -           \tda_{ki} \UIPOIUPOIUPOOYIUIUYOIUYOIUHOIUOIUHIOPUHPOIJPOIJPOUHOIUHOILJHLIUHYOIUYOUI^{1.5+\delta} \UIOIUYOIUyHJGKHJLOIUYOIUOIUYOIYIOUYTIUYIOOOIUYOIUYPOIUPOIUPOIUYOIUYOIUYOIUHOUHOHIOUHOIHOIUHOIUHIOUH_{k}V_i          \Bigr)     \UIPOIUPOIUPOOYIUIUYOIUYOIUHOIUOIUHIOPUHPOIJPOIJPOUHOIUHOILJHLIUHYOIUYOUI^{0.5+\delta}Q     \\&\indeq     +\OIUYJHUGFAJKLDHFKJLSDHFLKSDJFHLKSDJHFLKSDJHFLKDJFHLLDKHFLKSDHJFALKJHLJLHGLKHHLKJHLKGKHGJKHGKJHLKHJLKJH \Bigl(\UIPOIUPOIUPOOYIUIUYOIUYOIUHOIUOIUHIOPUHPOIJPOIJPOUHOIUHOILJHLIUHYOIUYOUI^{0.5+\delta}( \tda_{ki}Q)                  - \tda_{ki}\UIPOIUPOIUPOOYIUIUYOIUYOIUHOIUOIUHIOPUHPOIJPOIJPOUHOIUHOILJHLIUHYOIUYOUI^{0.5+\delta}Q           \Bigr)\UIPOIUPOIUPOOYIUIUYOIUYOIUHOIUOIUHIOPUHPOIJPOIJPOUHOIUHOILJHLIUHYOIUYOUI^{1.5+\delta} \UIOIUYOIUyHJGKHJLOIUYOIUOIUYOIYIOUYTIUYIOOOIUYOIUYPOIUPOIUPOIUYOIUYOIUYOIUHOUHOHIOUHOIHOIUHOIUHIOUH_{k}V_i      \\&     =J_{11}+J_{12}+J_{13}     .    \end{split}    \label{8ThswELzXU3X7Ebd1KdZ7v1rN3GiirRXGKWK099ovBM0FDJCvkopYNQ2aN94Z7k0UnUKamE3OjU8DFYFFokbSI2J9V9gVlM8ALWThDPnPu3EL7HPD2VDaZTggzcCCmbvc70qqPcC9mt60ogcrTiA3HEjwTK8ymKeuJMc4q6dVz200XnYUtLR9GYjPXvFOVr6W1zUK1WbPToaWJJuKnxBLnd0ftDEbMmj4loHYyhZyMjM91zQS4p7z8eKa9h0JrbacekcirexG0z4n3164}
  \end{align} Note that $J_{11}=    -\OIUYJHUGFAJKLDHFKJLSDHFLKSDJFHLKSDJHFLKSDJHFLKDJFHLLDKHFLKSDHJFALKJHLJLHGLKHHLKJHLKGKHGJKHGKJHLKHJLKJH \UIPOIUPOIUPOOYIUIUYOIUYOIUHOIUOIUHIOPUHPOIJPOIJPOUHOIUHOILJHLIUHYOIUYOUI^{0.5+\delta}Q\UIPOIUPOIUPOOYIUIUYOIUYOIUHOIUOIUHIOPUHPOIJPOIJPOUHOIUHOILJHLIUHYOIUYOUI^{1.5+\delta} (B_{ki} \UIOIUYOIUyHJGKHJLOIUYOIUOIUYOIYIOUYTIUYIOOOIUYOIUYPOIUPOIUPOIUYOIUYOIUYOIUHOUHOHIOUHOIHOIUHOIUHIOUH_{k} \tilde v_i )$, due to~\eqref{8ThswELzXU3X7Ebd1KdZ7v1rN3GiirRXGKWK099ovBM0FDJCvkopYNQ2aN94Z7k0UnUKamE3OjU8DFYFFokbSI2J9V9gVlM8ALWThDPnPu3EL7HPD2VDaZTggzcCCmbvc70qqPcC9mt60ogcrTiA3HEjwTK8ymKeuJMc4q6dVz200XnYUtLR9GYjPXvFOVr6W1zUK1WbPToaWJJuKnxBLnd0ftDEbMmj4loHYyhZyMjM91zQS4p7z8eKa9h0JrbacekcirexG0z4n3161}. Since $0.5+\delta\geq 1$ by \eqref{8ThswELzXU3X7Ebd1KdZ7v1rN3GiirRXGKWK099ovBM0FDJCvkopYNQ2aN94Z7k0UnUKamE3OjU8DFYFFokbSI2J9V9gVlM8ALWThDPnPu3EL7HPD2VDaZTggzcCCmbvc70qqPcC9mt60ogcrTiA3HEjwTK8ymKeuJMc4q6dVz200XnYUtLR9GYjPXvFOVr6W1zUK1WbPToaWJJuKnxBLnd0ftDEbMmj4loHYyhZyMjM91zQS4p7z8eKa9h0JrbacekcirexG0z4n3155}, we have   \begin{align}\thelt{ YAxl JUrYKV Kk p aIk VCu PiD O8 IHPU ndze LPTILB P5 B qYy DLZ DZa db jcJA T644 Vp6byb 1g 4 dE7 Ydz keO YL hCRe Ommx F9zsu0 rp 8 Ajz d2v Heo 7L 5zVn L8IQ WnYATK KV 1 f14 s2J geC b3 v9UJ djNN VBINix 1q 5 oyr SBM 2Xt gr v8RQ MaXk a4AN9i Ni n zfH xGp A57 uA E4jM fg6S 6eNGKv JL 3 tyH 3qw dPr x2 jFXW 2Wih pSSxDr aA 7 PXg jK6 GGl Og 5PkR d2n5 3eEx4N yG h d8Z RkO NMQ qL q4sE RG0C ssQkdZ Ua O vWr pla BOW rS wSG1 SM8I z9qkpd v0 C RMs GcZ LAz 4G k70e O7k6 }    \begin{split}      J_{11}       + J_{12}      &\dlkjfhlaskdhjflkasdjhflkasjhdflkasjhdflkasjhdfls      \Vert Q\Vert_{H^{0.5+\delta}}      \Vert B\Vert_{H^{1.5+\delta}}      \Vert \tilde v\Vert_{H^{2.5+\delta}}      +
     \Vert b\Vert_{H^{3.5+\delta}}      \Vert V\Vert_{H^{1.5+\delta}}      \Vert Q\Vert_{H^{0.5+\delta}}      \\&      \dlkjfhlaskdhjflkasdjhflkasjhdflkasjhdflkasjhdfls      \Vert Q\Vert_{H^{0.5+\delta}}      \Vert W\Vert_{H^{3.5+\delta}(\Gamma_1)}      +      \Vert V\Vert_{H^{1.5+\delta}}      \Vert Q\Vert_{H^{0.5+\delta}}     ,    \end{split}    \llabel{8ThswELzXU3X7Ebd1KdZ7v1rN3GiirRXGKWK099ovBM0FDJCvkopYNQ2aN94Z7k0UnUKamE3OjU8DFYFFokbSI2J9V9gVlM8ALWThDPnPu3EL7HPD2VDaZTggzcCCmbvc70qqPcC9mt60ogcrTiA3HEjwTK8ymKeuJMc4q6dVz200XnYUtLR9GYjPXvFOVr6W1zUK1WbPToaWJJuKnxBLnd0ftDEbMmj4loHYyhZyMjM91zQS4p7z8eKa9h0JrbacekcirexG0z4n3165}   \end{align}
recalling the agreement on constants. For the third term in \eqref{8ThswELzXU3X7Ebd1KdZ7v1rN3GiirRXGKWK099ovBM0FDJCvkopYNQ2aN94Z7k0UnUKamE3OjU8DFYFFokbSI2J9V9gVlM8ALWThDPnPu3EL7HPD2VDaZTggzcCCmbvc70qqPcC9mt60ogcrTiA3HEjwTK8ymKeuJMc4q6dVz200XnYUtLR9GYjPXvFOVr6W1zUK1WbPToaWJJuKnxBLnd0ftDEbMmj4loHYyhZyMjM91zQS4p7z8eKa9h0JrbacekcirexG0z4n3164}, we write   \begin{equation}    \UIPOIUPOIUPOOYIUIUYOIUYOIUHOIUOIUHIOPUHPOIJPOIJPOUHOIUHOILJHLIUHYOIUYOUI^{1.5+\delta}    = \UIPOIUPOIUPOOYIUIUYOIUYOIUHOIUOIUHIOPUHPOIJPOIJPOUHOIUHOILJHLIUHYOIUYOUI^{\delta-0.5}-\UIOIUYOIUyHJGKHJLOIUYOIUOIUYOIYIOUYTIUYIOOOIUYOIUYPOIUPOIUPOIUYOIUYOIUYOIUHOUHOHIOUHOIHOIUHOIUHIOUH_{1} T_1 - \UIOIUYOIUyHJGKHJLOIUYOIUOIUYOIYIOUYTIUYIOOOIUYOIUYPOIUPOIUPOIUYOIUYOIUYOIUHOUHOHIOUHOIHOIUHOIUHIOUH_{2} T_2    ,    \label{8ThswELzXU3X7Ebd1KdZ7v1rN3GiirRXGKWK099ovBM0FDJCvkopYNQ2aN94Z7k0UnUKamE3OjU8DFYFFokbSI2J9V9gVlM8ALWThDPnPu3EL7HPD2VDaZTggzcCCmbvc70qqPcC9mt60ogcrTiA3HEjwTK8ymKeuJMc4q6dVz200XnYUtLR9GYjPXvFOVr6W1zUK1WbPToaWJJuKnxBLnd0ftDEbMmj4loHYyhZyMjM91zQS4p7z8eKa9h0JrbacekcirexG0z4n3166}   \end{equation} where   \begin{equation}    T_j    = \UIOIUYOIUyHJGKHJLOIUYOIUOIUYOIYIOUYTIUYIOOOIUYOIUYPOIUPOIUPOIUYOIUYOIUYOIUHOUHOHIOUHOIHOIUHOIUHIOUH_{j} (I-\Delta_2)^{\delta/2-0.25}    \comma j=1,2    \llabel{8ThswELzXU3X7Ebd1KdZ7v1rN3GiirRXGKWK099ovBM0FDJCvkopYNQ2aN94Z7k0UnUKamE3OjU8DFYFFokbSI2J9V9gVlM8ALWThDPnPu3EL7HPD2VDaZTggzcCCmbvc70qqPcC9mt60ogcrTiA3HEjwTK8ymKeuJMc4q6dVz200XnYUtLR9GYjPXvFOVr6W1zUK1WbPToaWJJuKnxBLnd0ftDEbMmj4loHYyhZyMjM91zQS4p7z8eKa9h0JrbacekcirexG0z4n3167}
  \end{equation} are tangential operators of order $0.5+\delta$. Using \eqref{8ThswELzXU3X7Ebd1KdZ7v1rN3GiirRXGKWK099ovBM0FDJCvkopYNQ2aN94Z7k0UnUKamE3OjU8DFYFFokbSI2J9V9gVlM8ALWThDPnPu3EL7HPD2VDaZTggzcCCmbvc70qqPcC9mt60ogcrTiA3HEjwTK8ymKeuJMc4q6dVz200XnYUtLR9GYjPXvFOVr6W1zUK1WbPToaWJJuKnxBLnd0ftDEbMmj4loHYyhZyMjM91zQS4p7z8eKa9h0JrbacekcirexG0z4n3166} and integrating by parts, we have   \begin{align}\thelt{ v9UJ djNN VBINix 1q 5 oyr SBM 2Xt gr v8RQ MaXk a4AN9i Ni n zfH xGp A57 uA E4jM fg6S 6eNGKv JL 3 tyH 3qw dPr x2 jFXW 2Wih pSSxDr aA 7 PXg jK6 GGl Og 5PkR d2n5 3eEx4N yG h d8Z RkO NMQ qL q4sE RG0C ssQkdZ Ua O vWr pla BOW rS wSG1 SM8I z9qkpd v0 C RMs GcZ LAz 4G k70e O7k6 df4uYn R6 T 5Du KOT say 0D awWQ vn2U OOPNqQ T7 H 4Hf iKY Jcl Rq M2g9 lcQZ cvCNBP 2B b tjv VYj ojr rh 78tW R886 ANdxeA SV P hK3 uPr QRs 6O SW1B wWM0 yNG9iB RI 7 opG CXk hZp Eo 2JNt }   \begin{split}    J_{13}    &=    \sum_{j=1}^{2}    \OIUYJHUGFAJKLDHFKJLSDHFLKSDJFHLKSDJHFLKSDJHFLKDJFHLLDKHFLKSDHJFALKJHLJLHGLKHHLKJHLKGKHGJKHGKJHLKHJLKJH \Bigl(\UIOIUYOIUyHJGKHJLOIUYOIUOIUYOIYIOUYTIUYIOOOIUYOIUYPOIUPOIUPOIUYOIUYOIUYOIUHOUHOHIOUHOIHOIUHOIUHIOUH_{j}\UIPOIUPOIUPOOYIUIUYOIUYOIUHOIUOIUHIOPUHPOIJPOIJPOUHOIUHOILJHLIUHYOIUYOUI^{0.5+\delta}(\tda_{ki}Q)                  -  \tda_{ki}\UIOIUYOIUyHJGKHJLOIUYOIUOIUYOIYIOUYTIUYIOOOIUYOIUYPOIUPOIUPOIUYOIUYOIUYOIUHOUHOHIOUHOIHOIUHOIUHIOUH_{j}\UIPOIUPOIUPOOYIUIUYOIUYOIUHOIUOIUHIOPUHPOIJPOIJPOUHOIUHOILJHLIUHYOIUYOUI^{0.5+\delta}Q           \Bigr)T_j\UIOIUYOIUyHJGKHJLOIUYOIUOIUYOIYIOUYTIUYIOOOIUYOIUYPOIUPOIUPOIUYOIUYOIUYOIUHOUHOHIOUHOIHOIUHOIUHIOUH_{k}V_i     +    \sum_{j=1}^{2}    \OIUYJHUGFAJKLDHFKJLSDHFLKSDJFHLKSDJHFLKSDJHFLKDJFHLLDKHFLKSDHJFALKJHLJLHGLKHHLKJHLKGKHGJKHGKJHLKHJLKJH \UIOIUYOIUyHJGKHJLOIUYOIUOIUYOIYIOUYTIUYIOOOIUYOIUYPOIUPOIUPOIUYOIUYOIUYOIUHOUHOHIOUHOIHOIUHOIUHIOUH_{j}  \tda_{ki}\UIPOIUPOIUPOOYIUIUYOIUYOIUHOIUOIUHIOPUHPOIJPOIJPOUHOIUHOILJHLIUHYOIUYOUI^{0.5+\delta}Q T_j\UIOIUYOIUyHJGKHJLOIUYOIUOIUYOIYIOUYTIUYIOOOIUYOIUYPOIUPOIUPOIUYOIUYOIUYOIUHOUHOHIOUHOIHOIUHOIUHIOUH_{k}V_i 
   \\&\indeq    +    \sum_{j=1}^{2}    \OIUYJHUGFAJKLDHFKJLSDHFLKSDJFHLKSDJHFLKSDJHFLKDJFHLLDKHFLKSDHJFALKJHLJLHGLKHHLKJHLKGKHGJKHGKJHLKHJLKJH \Bigl(\UIPOIUPOIUPOOYIUIUYOIUYOIUHOIUOIUHIOPUHPOIJPOIJPOUHOIUHOILJHLIUHYOIUYOUI^{0.5+\delta}(  \tda_{ki}Q)                  -   \tda_{ki}\UIPOIUPOIUPOOYIUIUYOIUYOIUHOIUOIUHIOPUHPOIJPOIJPOUHOIUHOILJHLIUHYOIUYOUI^{0.5+\delta}Q           \Bigr)\UIPOIUPOIUPOOYIUIUYOIUYOIUHOIUOIUHIOPUHPOIJPOIJPOUHOIUHOILJHLIUHYOIUYOUI^{\delta-0.5}\UIOIUYOIUyHJGKHJLOIUYOIUOIUYOIYIOUYTIUYIOOOIUYOIUYPOIUPOIUPOIUYOIUYOIUYOIUHOUHOHIOUHOIHOIUHOIUHIOUH_{k}V_i     \\&    \dlkjfhlaskdhjflkasdjhflkasjhdflkasjhdflkasjhdfls    \Vert   b\Vert_{H^{3.5+\delta}}    \Vert Q\Vert_{H^{0.5+\delta}}    \Vert V\Vert_{H^{1.5+\delta}}    \dlkjfhlaskdhjflkasdjhflkasjhdflkasjhdflkasjhdfls    \Vert Q\Vert_{H^{0.5+\delta}}    \Vert V\Vert_{H^{1.5+\delta}}
   .   \end{split}    \llabel{8ThswELzXU3X7Ebd1KdZ7v1rN3GiirRXGKWK099ovBM0FDJCvkopYNQ2aN94Z7k0UnUKamE3OjU8DFYFFokbSI2J9V9gVlM8ALWThDPnPu3EL7HPD2VDaZTggzcCCmbvc70qqPcC9mt60ogcrTiA3HEjwTK8ymKeuJMc4q6dVz200XnYUtLR9GYjPXvFOVr6W1zUK1WbPToaWJJuKnxBLnd0ftDEbMmj4loHYyhZyMjM91zQS4p7z8eKa9h0JrbacekcirexG0z4n3168}   \end{align} The boundary term $J_{2}=-\OIUYJHUGFAJKLDHFKJLSDHFLKSDJFHLKSDJHFLKSDJHFLKDJFHLLDKHFLKSDHJFALKJHLJLHGLKHHLKJHLKGKHGJKHGKJHLKHJLKJH_{\Gamma_1} \UIPOIUPOIUPOOYIUIUYOIUYOIUHOIUOIUHIOPUHPOIJPOIJPOUHOIUHOILJHLIUHYOIUYOUI^{1+\delta} ( \tda_{3i} Q) \UIPOIUPOIUPOOYIUIUYOIUYOIUHOIUOIUHIOPUHPOIJPOIJPOUHOIUHOILJHLIUHYOIUYOUI^{1+\delta} V_i$  in \eqref{8ThswELzXU3X7Ebd1KdZ7v1rN3GiirRXGKWK099ovBM0FDJCvkopYNQ2aN94Z7k0UnUKamE3OjU8DFYFFokbSI2J9V9gVlM8ALWThDPnPu3EL7HPD2VDaZTggzcCCmbvc70qqPcC9mt60ogcrTiA3HEjwTK8ymKeuJMc4q6dVz200XnYUtLR9GYjPXvFOVr6W1zUK1WbPToaWJJuKnxBLnd0ftDEbMmj4loHYyhZyMjM91zQS4p7z8eKa9h0JrbacekcirexG0z4n3163} is rewritten as   \begin{align}\thelt{MQ qL q4sE RG0C ssQkdZ Ua O vWr pla BOW rS wSG1 SM8I z9qkpd v0 C RMs GcZ LAz 4G k70e O7k6 df4uYn R6 T 5Du KOT say 0D awWQ vn2U OOPNqQ T7 H 4Hf iKY Jcl Rq M2g9 lcQZ cvCNBP 2B b tjv VYj ojr rh 78tW R886 ANdxeA SV P hK3 uPr QRs 6O SW1B wWM0 yNG9iB RI 7 opG CXk hZp Eo 2JNt kyYO pCY9HL 3o 7 Zu0 J9F Tz6 tZ GLn8 HAes o9umpy uc s 4l3 CA6 DCQ 0m 0llF Pbc8 z5Ad2l GN w SgA XeN HTN pw dS6e 3ila 2tlbXN 7c 1 itX aDZ Fak df Jkz7 TzaO 4kbVhn YH f Tda 9C3 WCb tw }    \begin{split}    J_{2}    &=    -    \OIUYJHUGFAJKLDHFKJLSDHFLKSDJFHLKSDJHFLKSDJHFLKDJFHLLDKHFLKSDHJFALKJHLJLHGLKHHLKJHLKGKHGJKHGKJHLKHJLKJH_{\Gamma_1}\tda_{3i} \UIPOIUPOIUPOOYIUIUYOIUYOIUHOIUOIUHIOPUHPOIJPOIJPOUHOIUHOILJHLIUHYOIUYOUI^{1+\delta} Q \UIPOIUPOIUPOOYIUIUYOIUYOIUHOIUOIUHIOPUHPOIJPOIJPOUHOIUHOILJHLIUHYOIUYOUI^{1+\delta} V_i    -
   \OIUYJHUGFAJKLDHFKJLSDHFLKSDJFHLKSDJHFLKSDJHFLKDJFHLLDKHFLKSDHJFALKJHLJLHGLKHHLKJHLKGKHGJKHGKJHLKHJLKJH_{\Gamma_1} \Bigl(               \UIPOIUPOIUPOOYIUIUYOIUYOIUHOIUOIUHIOPUHPOIJPOIJPOUHOIUHOILJHLIUHYOIUYOUI^{1+\delta}( \tda_{3i}Q) -  \tda_{3i} \UIPOIUPOIUPOOYIUIUYOIUYOIUHOIUOIUHIOPUHPOIJPOIJPOUHOIUHOILJHLIUHYOIUYOUI^{1+\delta}Q                    \Bigr)                    \UIPOIUPOIUPOOYIUIUYOIUYOIUHOIUOIUHIOPUHPOIJPOIJPOUHOIUHOILJHLIUHYOIUYOUI^{1+\delta} V_i     \\&    =    -    \OIUYJHUGFAJKLDHFKJLSDHFLKSDJFHLKSDJHFLKSDJHFLKDJFHLLDKHFLKSDHJFALKJHLJLHGLKHHLKJHLKGKHGJKHGKJHLKHJLKJH_{\Gamma_1} \UIPOIUPOIUPOOYIUIUYOIUYOIUHOIUOIUHIOPUHPOIJPOIJPOUHOIUHOILJHLIUHYOIUYOUI^{\delta} Q\UIPOIUPOIUPOOYIUIUYOIUYOIUHOIUOIUHIOPUHPOIJPOIJPOUHOIUHOILJHLIUHYOIUYOUI(  \tda_{3i}\UIPOIUPOIUPOOYIUIUYOIUYOIUHOIUOIUHIOPUHPOIJPOIJPOUHOIUHOILJHLIUHYOIUYOUI^{1+\delta} V_i)    -    \OIUYJHUGFAJKLDHFKJLSDHFLKSDJFHLKSDJHFLKSDJHFLKDJFHLLDKHFLKSDHJFALKJHLJLHGLKHHLKJHLKGKHGJKHGKJHLKHJLKJH_{\Gamma_1} \Bigl(               \UIPOIUPOIUPOOYIUIUYOIUYOIUHOIUOIUHIOPUHPOIJPOIJPOUHOIUHOILJHLIUHYOIUYOUI^{1+\delta}(\tda_{3i}Q) - \tda_{3i} \UIPOIUPOIUPOOYIUIUYOIUYOIUHOIUOIUHIOPUHPOIJPOIJPOUHOIUHOILJHLIUHYOIUYOUI^{1+\delta}Q                    \Bigr)                    \UIPOIUPOIUPOOYIUIUYOIUYOIUHOIUOIUHIOPUHPOIJPOIJPOUHOIUHOILJHLIUHYOIUYOUI^{1+\delta} V_i    ,
   \end{split}    \llabel{8ThswELzXU3X7Ebd1KdZ7v1rN3GiirRXGKWK099ovBM0FDJCvkopYNQ2aN94Z7k0UnUKamE3OjU8DFYFFokbSI2J9V9gVlM8ALWThDPnPu3EL7HPD2VDaZTggzcCCmbvc70qqPcC9mt60ogcrTiA3HEjwTK8ymKeuJMc4q6dVz200XnYUtLR9GYjPXvFOVr6W1zUK1WbPToaWJJuKnxBLnd0ftDEbMmj4loHYyhZyMjM91zQS4p7z8eKa9h0JrbacekcirexG0z4n3169}   \end{align} and thus   \begin{align}\thelt{VYj ojr rh 78tW R886 ANdxeA SV P hK3 uPr QRs 6O SW1B wWM0 yNG9iB RI 7 opG CXk hZp Eo 2JNt kyYO pCY9HL 3o 7 Zu0 J9F Tz6 tZ GLn8 HAes o9umpy uc s 4l3 CA6 DCQ 0m 0llF Pbc8 z5Ad2l GN w SgA XeN HTN pw dS6e 3ila 2tlbXN 7c 1 itX aDZ Fak df Jkz7 TzaO 4kbVhn YH f Tda 9C3 WCb tw MXHW xoCC c4Ws2C UH B sNL FEf jS4 SG I4I4 hqHh 2nCaQ4 nM p nzY oYE 5fD sX hCHJ zTQO cbKmvE pl W Und VUo rrq iJ zRqT dIWS QBL96D FU d 64k 5gv Qh0 dj rGlw 795x V6KzhT l5 Y FtC rpy bH}    \begin{split}    J_{2}      &=    -    \OIUYJHUGFAJKLDHFKJLSDHFLKSDJFHLKSDJHFLKSDJHFLKDJFHLLDKHFLKSDHJFALKJHLJLHGLKHHLKJHLKGKHGJKHGKJHLKHJLKJH_{\Gamma_1} \UIPOIUPOIUPOOYIUIUYOIUYOIUHOIUOIUHIOPUHPOIJPOIJPOUHOIUHOILJHLIUHYOIUYOUI^{\delta} Q    \tda_{3i}\UIPOIUPOIUPOOYIUIUYOIUYOIUHOIUOIUHIOPUHPOIJPOIJPOUHOIUHOILJHLIUHYOIUYOUI^{2+\delta} V_i    -     \OIUYJHUGFAJKLDHFKJLSDHFLKSDJFHLKSDJHFLKSDJHFLKDJFHLLDKHFLKSDHJFALKJHLJLHGLKHHLKJHLKGKHGJKHGKJHLKHJLKJH_{\Gamma_1} \UIPOIUPOIUPOOYIUIUYOIUYOIUHOIUOIUHIOPUHPOIJPOIJPOUHOIUHOILJHLIUHYOIUYOUI^{\delta} Q             \Bigl(               \UIPOIUPOIUPOOYIUIUYOIUYOIUHOIUOIUHIOPUHPOIJPOIJPOUHOIUHOILJHLIUHYOIUYOUI( \tda_{3i} \UIPOIUPOIUPOOYIUIUYOIUYOIUHOIUOIUHIOPUHPOIJPOIJPOUHOIUHOILJHLIUHYOIUYOUI^{1+\delta}V_i)          
                    - \tda_{3i} \UIPOIUPOIUPOOYIUIUYOIUYOIUHOIUOIUHIOPUHPOIJPOIJPOUHOIUHOILJHLIUHYOIUYOUI^{2+\delta}V_i            \Bigr)    \\&\indeq    -    \OIUYJHUGFAJKLDHFKJLSDHFLKSDJFHLKSDJHFLKSDJHFLKDJFHLLDKHFLKSDHJFALKJHLJLHGLKHHLKJHLKGKHGJKHGKJHLKHJLKJH_{\Gamma_1} \Bigl(               \UIPOIUPOIUPOOYIUIUYOIUYOIUHOIUOIUHIOPUHPOIJPOIJPOUHOIUHOILJHLIUHYOIUYOUI^{1+\delta}(\tda_{3i}Q) - \tda_{3i} \UIPOIUPOIUPOOYIUIUYOIUYOIUHOIUOIUHIOPUHPOIJPOIJPOUHOIUHOILJHLIUHYOIUYOUI^{1+\delta}Q                    \Bigr)                    \UIPOIUPOIUPOOYIUIUYOIUYOIUHOIUOIUHIOPUHPOIJPOIJPOUHOIUHOILJHLIUHYOIUYOUI^{1+\delta} V_i    \\&    =    -    \OIUYJHUGFAJKLDHFKJLSDHFLKSDJFHLKSDJHFLKSDJHFLKDJFHLLDKHFLKSDHJFALKJHLJLHGLKHHLKJHLKGKHGJKHGKJHLKHJLKJH_{\Gamma_1} \UIPOIUPOIUPOOYIUIUYOIUYOIUHOIUOIUHIOPUHPOIJPOIJPOUHOIUHOILJHLIUHYOIUYOUI^{\delta} Q   \UIPOIUPOIUPOOYIUIUYOIUYOIUHOIUOIUHIOPUHPOIJPOIJPOUHOIUHOILJHLIUHYOIUYOUI^{2+\delta}(\tda_{3i} V_i)    +     \OIUYJHUGFAJKLDHFKJLSDHFLKSDJFHLKSDJHFLKSDJHFLKDJFHLLDKHFLKSDHJFALKJHLJLHGLKHHLKJHLKGKHGJKHGKJHLKHJLKJH_{\Gamma_1} \UIPOIUPOIUPOOYIUIUYOIUYOIUHOIUOIUHIOPUHPOIJPOIJPOUHOIUHOILJHLIUHYOIUYOUI^{\delta} Q   \Bigl(\UIPOIUPOIUPOOYIUIUYOIUYOIUHOIUOIUHIOPUHPOIJPOIJPOUHOIUHOILJHLIUHYOIUYOUI^{2+\delta}(\tda_{3i} V_i)
                                                 -  \tda_{3i} \UIPOIUPOIUPOOYIUIUYOIUYOIUHOIUOIUHIOPUHPOIJPOIJPOUHOIUHOILJHLIUHYOIUYOUI^{2+\delta}V_i                                             \Bigr)    \\&\indeq    -     \OIUYJHUGFAJKLDHFKJLSDHFLKSDJFHLKSDJHFLKSDJHFLKDJFHLLDKHFLKSDHJFALKJHLJLHGLKHHLKJHLKGKHGJKHGKJHLKHJLKJH_{\Gamma_1} \UIPOIUPOIUPOOYIUIUYOIUYOIUHOIUOIUHIOPUHPOIJPOIJPOUHOIUHOILJHLIUHYOIUYOUI^{\delta} Q             \Bigl(               \UIPOIUPOIUPOOYIUIUYOIUYOIUHOIUOIUHIOPUHPOIJPOIJPOUHOIUHOILJHLIUHYOIUYOUI( \tda_{3i} \UIPOIUPOIUPOOYIUIUYOIUYOIUHOIUOIUHIOPUHPOIJPOIJPOUHOIUHOILJHLIUHYOIUYOUI^{1+\delta}V_i)                     -  \tda_{3i} \UIPOIUPOIUPOOYIUIUYOIUYOIUHOIUOIUHIOPUHPOIJPOIJPOUHOIUHOILJHLIUHYOIUYOUI^{2+\delta}V_i            \Bigr)    -    \OIUYJHUGFAJKLDHFKJLSDHFLKSDJFHLKSDJHFLKSDJHFLKDJFHLLDKHFLKSDHJFALKJHLJLHGLKHHLKJHLKGKHGJKHGKJHLKHJLKJH_{\Gamma_1} \Bigl(               \UIPOIUPOIUPOOYIUIUYOIUYOIUHOIUOIUHIOPUHPOIJPOIJPOUHOIUHOILJHLIUHYOIUYOUI^{1+\delta}( \tda_{3i}Q) -  \tda_{3i} \UIPOIUPOIUPOOYIUIUYOIUYOIUHOIUOIUHIOPUHPOIJPOIJPOUHOIUHOILJHLIUHYOIUYOUI^{1+\delta}Q                    \Bigr)                    \UIPOIUPOIUPOOYIUIUYOIUYOIUHOIUOIUHIOPUHPOIJPOIJPOUHOIUHOILJHLIUHYOIUYOUI^{1+\delta} V_i
    \\&     = J_{21} + J_{22} + J_{23} + J_{24}     .    \end{split}    \label{8ThswELzXU3X7Ebd1KdZ7v1rN3GiirRXGKWK099ovBM0FDJCvkopYNQ2aN94Z7k0UnUKamE3OjU8DFYFFokbSI2J9V9gVlM8ALWThDPnPu3EL7HPD2VDaZTggzcCCmbvc70qqPcC9mt60ogcrTiA3HEjwTK8ymKeuJMc4q6dVz200XnYUtLR9GYjPXvFOVr6W1zUK1WbPToaWJJuKnxBLnd0ftDEbMmj4loHYyhZyMjM91zQS4p7z8eKa9h0JrbacekcirexG0z4n3170}   \end{align} For the first term, we use \eqref{8ThswELzXU3X7Ebd1KdZ7v1rN3GiirRXGKWK099ovBM0FDJCvkopYNQ2aN94Z7k0UnUKamE3OjU8DFYFFokbSI2J9V9gVlM8ALWThDPnPu3EL7HPD2VDaZTggzcCCmbvc70qqPcC9mt60ogcrTiA3HEjwTK8ymKeuJMc4q6dVz200XnYUtLR9GYjPXvFOVr6W1zUK1WbPToaWJJuKnxBLnd0ftDEbMmj4loHYyhZyMjM91zQS4p7z8eKa9h0JrbacekcirexG0z4n321}, which for the differences of solutions reads as   \begin{equation}     \tda_{3i} V_i    = W_{t} - B_{3i}  \tilde v_i    .    \llabel{8ThswELzXU3X7Ebd1KdZ7v1rN3GiirRXGKWK099ovBM0FDJCvkopYNQ2aN94Z7k0UnUKamE3OjU8DFYFFokbSI2J9V9gVlM8ALWThDPnPu3EL7HPD2VDaZTggzcCCmbvc70qqPcC9mt60ogcrTiA3HEjwTK8ymKeuJMc4q6dVz200XnYUtLR9GYjPXvFOVr6W1zUK1WbPToaWJJuKnxBLnd0ftDEbMmj4loHYyhZyMjM91zQS4p7z8eKa9h0JrbacekcirexG0z4n3171}   \end{equation}
We obtain   \begin{align}\thelt{ SgA XeN HTN pw dS6e 3ila 2tlbXN 7c 1 itX aDZ Fak df Jkz7 TzaO 4kbVhn YH f Tda 9C3 WCb tw MXHW xoCC c4Ws2C UH B sNL FEf jS4 SG I4I4 hqHh 2nCaQ4 nM p nzY oYE 5fD sX hCHJ zTQO cbKmvE pl W Und VUo rrq iJ zRqT dIWS QBL96D FU d 64k 5gv Qh0 dj rGlw 795x V6KzhT l5 Y FtC rpy bHH 86 h3qn Lyzy ycGoqm Cb f h9h prB CQp Fe CxhU Z2oJ F3aKgQ H8 R yIm F9t Eks gP FMMJ TAIy z3ohWj Hx M R86 KJO NKT c3 uyRN nSKH lhb11Q 9C w rf8 iiX qyY L4 zh9s 8NTE ve539G zL g vhD N}    \begin{split}    J_{21}     &=    -    \OIUYJHUGFAJKLDHFKJLSDHFLKSDJFHLKSDJHFLKSDJHFLKDJFHLLDKHFLKSDHJFALKJHLJLHGLKHHLKJHLKGKHGJKHGKJHLKHJLKJH_{\Gamma_1} \UIPOIUPOIUPOOYIUIUYOIUYOIUHOIUOIUHIOPUHPOIJPOIJPOUHOIUHOILJHLIUHYOIUYOUI^{\delta} Q   \UIPOIUPOIUPOOYIUIUYOIUYOIUHOIUOIUHIOPUHPOIJPOIJPOUHOIUHOILJHLIUHYOIUYOUI^{2+\delta}W_{t}    +    \OIUYJHUGFAJKLDHFKJLSDHFLKSDJFHLKSDJHFLKSDJHFLKDJFHLLDKHFLKSDHJFALKJHLJLHGLKHHLKJHLKGKHGJKHGKJHLKHJLKJH_{\Gamma_1} \UIPOIUPOIUPOOYIUIUYOIUYOIUHOIUOIUHIOPUHPOIJPOIJPOUHOIUHOILJHLIUHYOIUYOUI^{\delta} Q   \UIPOIUPOIUPOOYIUIUYOIUYOIUHOIUOIUHIOPUHPOIJPOIJPOUHOIUHOILJHLIUHYOIUYOUI^{2+\delta}(B_{3i}   \tilde v_{i})     =     -    \OIUYJHUGFAJKLDHFKJLSDHFLKSDJFHLKSDJHFLKSDJHFLKDJFHLLDKHFLKSDHJFALKJHLJLHGLKHHLKJHLKGKHGJKHGKJHLKHJLKJH_{\Gamma_1} Q\UIPOIUPOIUPOOYIUIUYOIUYOIUHOIUOIUHIOPUHPOIJPOIJPOUHOIUHOILJHLIUHYOIUYOUI^{2(1+\delta)} W_{t}    +    \OIUYJHUGFAJKLDHFKJLSDHFLKSDJFHLKSDJHFLKSDJHFLKDJFHLLDKHFLKSDHJFALKJHLJLHGLKHHLKJHLKGKHGJKHGKJHLKHJLKJH_{\Gamma_1} \UIPOIUPOIUPOOYIUIUYOIUYOIUHOIUOIUHIOPUHPOIJPOIJPOUHOIUHOILJHLIUHYOIUYOUI^{\delta} Q   \UIPOIUPOIUPOOYIUIUYOIUYOIUHOIUOIUHIOPUHPOIJPOIJPOUHOIUHOILJHLIUHYOIUYOUI^{2+\delta}(B_{3i} \tilde v_{i})    \\&     = 
   J_{211}+J_{212}    .    \end{split}    \llabel{8ThswELzXU3X7Ebd1KdZ7v1rN3GiirRXGKWK099ovBM0FDJCvkopYNQ2aN94Z7k0UnUKamE3OjU8DFYFFokbSI2J9V9gVlM8ALWThDPnPu3EL7HPD2VDaZTggzcCCmbvc70qqPcC9mt60ogcrTiA3HEjwTK8ymKeuJMc4q6dVz200XnYUtLR9GYjPXvFOVr6W1zUK1WbPToaWJJuKnxBLnd0ftDEbMmj4loHYyhZyMjM91zQS4p7z8eKa9h0JrbacekcirexG0z4n3172}   \end{align} The first term $J_{211}$ cancels with the right side of \eqref{8ThswELzXU3X7Ebd1KdZ7v1rN3GiirRXGKWK099ovBM0FDJCvkopYNQ2aN94Z7k0UnUKamE3OjU8DFYFFokbSI2J9V9gVlM8ALWThDPnPu3EL7HPD2VDaZTggzcCCmbvc70qqPcC9mt60ogcrTiA3HEjwTK8ymKeuJMc4q6dVz200XnYUtLR9GYjPXvFOVr6W1zUK1WbPToaWJJuKnxBLnd0ftDEbMmj4loHYyhZyMjM91zQS4p7z8eKa9h0JrbacekcirexG0z4n3159} after adding \eqref{8ThswELzXU3X7Ebd1KdZ7v1rN3GiirRXGKWK099ovBM0FDJCvkopYNQ2aN94Z7k0UnUKamE3OjU8DFYFFokbSI2J9V9gVlM8ALWThDPnPu3EL7HPD2VDaZTggzcCCmbvc70qqPcC9mt60ogcrTiA3HEjwTK8ymKeuJMc4q6dVz200XnYUtLR9GYjPXvFOVr6W1zUK1WbPToaWJJuKnxBLnd0ftDEbMmj4loHYyhZyMjM91zQS4p7z8eKa9h0JrbacekcirexG0z4n3159} and \eqref{8ThswELzXU3X7Ebd1KdZ7v1rN3GiirRXGKWK099ovBM0FDJCvkopYNQ2aN94Z7k0UnUKamE3OjU8DFYFFokbSI2J9V9gVlM8ALWThDPnPu3EL7HPD2VDaZTggzcCCmbvc70qqPcC9mt60ogcrTiA3HEjwTK8ymKeuJMc4q6dVz200XnYUtLR9GYjPXvFOVr6W1zUK1WbPToaWJJuKnxBLnd0ftDEbMmj4loHYyhZyMjM91zQS4p7z8eKa9h0JrbacekcirexG0z4n3162}, while the second term $J_{212}$ may be bounded as   \begin{align}\thelt{ pl W Und VUo rrq iJ zRqT dIWS QBL96D FU d 64k 5gv Qh0 dj rGlw 795x V6KzhT l5 Y FtC rpy bHH 86 h3qn Lyzy ycGoqm Cb f h9h prB CQp Fe CxhU Z2oJ F3aKgQ H8 R yIm F9t Eks gP FMMJ TAIy z3ohWj Hx M R86 KJO NKT c3 uyRN nSKH lhb11Q 9C w rf8 iiX qyY L4 zh9s 8NTE ve539G zL g vhD N7F eXo 5k AWAT 6Vrw htDQwy tu H Oa5 UIO Exb Mp V2AH puuC HWItfO ru x YfF qsa P8u fH F16C EBXK tj6ohs uv T 8BB PDN gGf KQ g6MB K2x9 jqRbHm jI U EKB Im0 bbK ac wqIX ijrF uq9906 Vy m }   \begin{split}    J_{212}    &\dlkjfhlaskdhjflkasdjhflkasjhdflkasjhdflkasjhdfls    \Vert Q\Vert_{H^{\delta}(\Gamma_1)}    \Vert B\Vert_{H^{2+\delta}(\Gamma_1)}
   \Vert  \tilde v\Vert_{H^{2+\delta}(\Gamma_1)}    \dlkjfhlaskdhjflkasdjhflkasjhdflkasjhdflkasjhdfls    \Vert Q\Vert_{H^{0.5+\delta}}    \Vert B\Vert_{H^{2.5+\delta}}    \Vert  \tilde v\Vert_{H^{2.5+\delta}}    \\&    \dlkjfhlaskdhjflkasdjhflkasjhdflkasjhdflkasjhdfls    \Vert Q\Vert_{H^{0.5+\delta}}    \Vert W\Vert_{H^{3.5+\delta}(\Gamma_1)}    ,   \end{split}    \llabel{8ThswELzXU3X7Ebd1KdZ7v1rN3GiirRXGKWK099ovBM0FDJCvkopYNQ2aN94Z7k0UnUKamE3OjU8DFYFFokbSI2J9V9gVlM8ALWThDPnPu3EL7HPD2VDaZTggzcCCmbvc70qqPcC9mt60ogcrTiA3HEjwTK8ymKeuJMc4q6dVz200XnYUtLR9GYjPXvFOVr6W1zUK1WbPToaWJJuKnxBLnd0ftDEbMmj4loHYyhZyMjM91zQS4p7z8eKa9h0JrbacekcirexG0z4n3173}   \end{align} using the agreement on constants.
The last three terms in \eqref{8ThswELzXU3X7Ebd1KdZ7v1rN3GiirRXGKWK099ovBM0FDJCvkopYNQ2aN94Z7k0UnUKamE3OjU8DFYFFokbSI2J9V9gVlM8ALWThDPnPu3EL7HPD2VDaZTggzcCCmbvc70qqPcC9mt60ogcrTiA3HEjwTK8ymKeuJMc4q6dVz200XnYUtLR9GYjPXvFOVr6W1zUK1WbPToaWJJuKnxBLnd0ftDEbMmj4loHYyhZyMjM91zQS4p7z8eKa9h0JrbacekcirexG0z4n3170} are commutators and the sum is estimated easily as   \begin{align}\thelt{3ohWj Hx M R86 KJO NKT c3 uyRN nSKH lhb11Q 9C w rf8 iiX qyY L4 zh9s 8NTE ve539G zL g vhD N7F eXo 5k AWAT 6Vrw htDQwy tu H Oa5 UIO Exb Mp V2AH puuC HWItfO ru x YfF qsa P8u fH F16C EBXK tj6ohs uv T 8BB PDN gGf KQ g6MB K2x9 jqRbHm jI U EKB Im0 bbK ac wqIX ijrF uq9906 Vy m 3Ve 1gB dMy 9i hnbA 3gBo 5aBKK5 gf J SmN eCW wOM t9 xutz wDkX IY7nNh Wd D ppZ UOq 2Ae 0a W7A6 XoIc TSLNDZ yf 2 XjB cUw eQT Zt cuXI DYsD hdAu3V MB B BKW IcF NWQ dO u3Fb c6F8 VN77Da }    \begin{split}    J_{22}+J_{23}+J_{24}     &\dlkjfhlaskdhjflkasdjhflkasjhdflkasjhdflkasjhdfls    \Vert Q\Vert_{H^{0.5+\delta}}    \Vert V\Vert_{H^{1.5+\delta}}    ,    \end{split}    \llabel{8ThswELzXU3X7Ebd1KdZ7v1rN3GiirRXGKWK099ovBM0FDJCvkopYNQ2aN94Z7k0UnUKamE3OjU8DFYFFokbSI2J9V9gVlM8ALWThDPnPu3EL7HPD2VDaZTggzcCCmbvc70qqPcC9mt60ogcrTiA3HEjwTK8ymKeuJMc4q6dVz200XnYUtLR9GYjPXvFOVr6W1zUK1WbPToaWJJuKnxBLnd0ftDEbMmj4loHYyhZyMjM91zQS4p7z8eKa9h0JrbacekcirexG0z4n3174}   \end{align} employing the Kato-Ponce and trace inequalities. Finally, we add \eqref{8ThswELzXU3X7Ebd1KdZ7v1rN3GiirRXGKWK099ovBM0FDJCvkopYNQ2aN94Z7k0UnUKamE3OjU8DFYFFokbSI2J9V9gVlM8ALWThDPnPu3EL7HPD2VDaZTggzcCCmbvc70qqPcC9mt60ogcrTiA3HEjwTK8ymKeuJMc4q6dVz200XnYUtLR9GYjPXvFOVr6W1zUK1WbPToaWJJuKnxBLnd0ftDEbMmj4loHYyhZyMjM91zQS4p7z8eKa9h0JrbacekcirexG0z4n3159} and \eqref{8ThswELzXU3X7Ebd1KdZ7v1rN3GiirRXGKWK099ovBM0FDJCvkopYNQ2aN94Z7k0UnUKamE3OjU8DFYFFokbSI2J9V9gVlM8ALWThDPnPu3EL7HPD2VDaZTggzcCCmbvc70qqPcC9mt60ogcrTiA3HEjwTK8ymKeuJMc4q6dVz200XnYUtLR9GYjPXvFOVr6W1zUK1WbPToaWJJuKnxBLnd0ftDEbMmj4loHYyhZyMjM91zQS4p7z8eKa9h0JrbacekcirexG0z4n3162}, observing that $J_{211}$ and the right-hand side of \eqref{8ThswELzXU3X7Ebd1KdZ7v1rN3GiirRXGKWK099ovBM0FDJCvkopYNQ2aN94Z7k0UnUKamE3OjU8DFYFFokbSI2J9V9gVlM8ALWThDPnPu3EL7HPD2VDaZTggzcCCmbvc70qqPcC9mt60ogcrTiA3HEjwTK8ymKeuJMc4q6dVz200XnYUtLR9GYjPXvFOVr6W1zUK1WbPToaWJJuKnxBLnd0ftDEbMmj4loHYyhZyMjM91zQS4p7z8eKa9h0JrbacekcirexG0z4n3159}
cancel, we obtain~\eqref{8ThswELzXU3X7Ebd1KdZ7v1rN3GiirRXGKWK099ovBM0FDJCvkopYNQ2aN94Z7k0UnUKamE3OjU8DFYFFokbSI2J9V9gVlM8ALWThDPnPu3EL7HPD2VDaZTggzcCCmbvc70qqPcC9mt60ogcrTiA3HEjwTK8ymKeuJMc4q6dVz200XnYUtLR9GYjPXvFOVr6W1zUK1WbPToaWJJuKnxBLnd0ftDEbMmj4loHYyhZyMjM91zQS4p7z8eKa9h0JrbacekcirexG0z4n3157}. \par With the tangential estimates completed, we now estimate the difference of the pressures, $Q=q-\tilde q$. Subtracting the pressure equation \eqref{8ThswELzXU3X7Ebd1KdZ7v1rN3GiirRXGKWK099ovBM0FDJCvkopYNQ2aN94Z7k0UnUKamE3OjU8DFYFFokbSI2J9V9gVlM8ALWThDPnPu3EL7HPD2VDaZTggzcCCmbvc70qqPcC9mt60ogcrTiA3HEjwTK8ymKeuJMc4q6dVz200XnYUtLR9GYjPXvFOVr6W1zUK1WbPToaWJJuKnxBLnd0ftDEbMmj4loHYyhZyMjM91zQS4p7z8eKa9h0JrbacekcirexG0z4n380} and its analog for $\tilde q$, we have   \begin{align}\thelt{BXK tj6ohs uv T 8BB PDN gGf KQ g6MB K2x9 jqRbHm jI U EKB Im0 bbK ac wqIX ijrF uq9906 Vy m 3Ve 1gB dMy 9i hnbA 3gBo 5aBKK5 gf J SmN eCW wOM t9 xutz wDkX IY7nNh Wd D ppZ UOq 2Ae 0a W7A6 XoIc TSLNDZ yf 2 XjB cUw eQT Zt cuXI DYsD hdAu3V MB B BKW IcF NWQ dO u3Fb c6F8 VN77Da IH E 3MZ luL YvB mN Z2wE auXX DGpeKR nw o UVB 2oM VVe hW 0ejG gbgz Iw9FwQ hN Y rFI 4pT lqr Wn Xzz2 qBba lv3snl 2j a vzU Snc pwh cG J0Di 3Lr3 rs6F23 6o b LtD vN9 KqA pO uold 3sec xq}    \begin{split}    \UIOIUYOIUyHJGKHJLOIUYOIUOIUYOIYIOUYTIUYIOOOIUYOIUYPOIUPOIUPOIUYOIUYOIUYOIUHOUHOHIOUHOIHOIUHOIUHIOUH_{j}(\tda_{ji} a_{ki}\UIOIUYOIUyHJGKHJLOIUYOIUOIUYOIYIOUYTIUYIOOOIUYOIUYPOIUPOIUPOIUYOIUYOIUYOIUHOUHOHIOUHOIHOIUHOIUHIOUH_{k}Q)       &=    -    \UIOIUYOIUyHJGKHJLOIUYOIUOIUYOIYIOUYTIUYIOOOIUYOIUYPOIUPOIUPOIUYOIUYOIUYOIUHOUHOHIOUHOIHOIUHOIUHIOUH_{j}(B_{ji} a_{ki}\UIOIUYOIUyHJGKHJLOIUYOIUOIUYOIYIOUYTIUYIOOOIUYOIUYPOIUPOIUPOIUYOIUYOIUYOIUHOUHOHIOUHOIHOIUHOIUHIOUH_{k}\tilde q)    -    \UIOIUYOIUyHJGKHJLOIUYOIUOIUYOIYIOUYTIUYIOOOIUYOIUYPOIUPOIUPOIUYOIUYOIUYOIUHOUHOHIOUHOIHOIUHOIUHIOUH_{j}(\tilde b_{ji} A_{ki}\UIOIUYOIUyHJGKHJLOIUYOIUOIUYOIYIOUYTIUYIOOOIUYOIUYPOIUPOIUPOIUYOIUYOIUYOIUHOUHOHIOUHOIHOIUHOIUHIOUH_{k}\tilde q)    +  \UIOIUYOIUyHJGKHJLOIUYOIUOIUYOIYIOUYTIUYIOOOIUYOIUYPOIUPOIUPOIUYOIUYOIUYOIUHOUHOHIOUHOIHOIUHOIUHIOUH_{j}(\UIOIUYOIUyHJGKHJLOIUYOIUOIUYOIYIOUYTIUYIOOOIUYOIUYPOIUPOIUPOIUYOIUYOIUYOIUHOUHOHIOUHOIHOIUHOIUHIOUH_{t}\tda_{ji} V_i)    +  \UIOIUYOIUyHJGKHJLOIUYOIUOIUYOIYIOUYTIUYIOOOIUYOIUYPOIUPOIUPOIUYOIUYOIUYOIUHOUHOHIOUHOIHOIUHOIUHIOUH_{j}(\UIOIUYOIUyHJGKHJLOIUYOIUOIUYOIYIOUYTIUYIOOOIUYOIUYPOIUPOIUPOIUYOIUYOIUYOIUHOUHOHIOUHOIHOIUHOIUHIOUH_{t}B_{ji} \tilde v_i)
     \\&\indeq      -       \UIOIUYOIUyHJGKHJLOIUYOIUOIUYOIYIOUYTIUYIOOOIUYOIUYPOIUPOIUPOIUYOIUYOIUYOIUHOUHOHIOUHOIHOIUHOIUHIOUH_{j}           \sum_{m=1}^{2}            B_{ji} v_m a_{km} \UIOIUYOIUyHJGKHJLOIUYOIUOIUYOIYIOUYTIUYIOOOIUYOIUYPOIUPOIUPOIUYOIUYOIUYOIUHOUHOHIOUHOIHOIUHOIUHIOUH_{k} v_i      -       \UIOIUYOIUyHJGKHJLOIUYOIUOIUYOIYIOUYTIUYIOOOIUYOIUYPOIUPOIUPOIUYOIUYOIUYOIUHOUHOHIOUHOIHOIUHOIUHIOUH_{j}           \sum_{m=1}^{2}            \tilde \tda_{ji} V_m a_{km} \UIOIUYOIUyHJGKHJLOIUYOIUOIUYOIYIOUYTIUYIOOOIUYOIUYPOIUPOIUPOIUYOIUYOIUYOIUHOUHOHIOUHOIHOIUHOIUHIOUH_{k} v_i      \\&\indeq      -       \UIOIUYOIUyHJGKHJLOIUYOIUOIUYOIYIOUYTIUYIOOOIUYOIUYPOIUPOIUPOIUYOIUYOIUYOIUHOUHOHIOUHOIHOIUHOIUHIOUH_{j}           \sum_{m=1}^{2}            \tilde\tda_{ji} \tilde v_m A_{km} \UIOIUYOIUyHJGKHJLOIUYOIUOIUYOIYIOUYTIUYIOOOIUYOIUYPOIUPOIUPOIUYOIUYOIUYOIUHOUHOHIOUHOIHOIUHOIUHIOUH_{k} v_i
     -       \UIOIUYOIUyHJGKHJLOIUYOIUOIUYOIYIOUYTIUYIOOOIUYOIUYPOIUPOIUPOIUYOIUYOIUYOIUHOUHOHIOUHOIHOIUHOIUHIOUH_{j}           \sum_{m=1}^{2}            \tilde\tda_{ji} \tilde v_m \tilde a_{km} \UIOIUYOIUyHJGKHJLOIUYOIUOIUYOIYIOUYTIUYIOOOIUYOIUYPOIUPOIUPOIUYOIUYOIUYOIUHOUHOHIOUHOIHOIUHOIUHIOUH_{k} V_i    \\&\indeq      - \UIOIUYOIUyHJGKHJLOIUYOIUOIUYOIYIOUYTIUYIOOOIUYOIUYPOIUPOIUPOIUYOIUYOIUYOIUHOUHOHIOUHOIHOIUHOIUHIOUH_{j}(A_{ji}            (v_3-\UIOIUYOIUyHJGKHJLOIUYOIUOIUYOIYIOUYTIUYIOOOIUYOIUYPOIUPOIUPOIUYOIUYOIUYOIUHOUHOHIOUHOIHOIUHOIUHIOUH_{t}\psi)\UIOIUYOIUyHJGKHJLOIUYOIUOIUYOIYIOUYTIUYIOOOIUYOIUYPOIUPOIUPOIUYOIUYOIUYOIUHOUHOHIOUHOIHOIUHOIUHIOUH_{3}v_i)      - \UIOIUYOIUyHJGKHJLOIUYOIUOIUYOIYIOUYTIUYIOOOIUYOIUYPOIUPOIUPOIUYOIUYOIUYOIUHOUHOHIOUHOIHOIUHOIUHIOUH_{j}(\tilde a_{ji}            (V_3-\Psi_t)\UIOIUYOIUyHJGKHJLOIUYOIUOIUYOIYIOUYTIUYIOOOIUYOIUYPOIUPOIUPOIUYOIUYOIUYOIUHOUHOHIOUHOIHOIUHOIUHIOUH_{3}v_i)      - \UIOIUYOIUyHJGKHJLOIUYOIUOIUYOIYIOUYTIUYIOOOIUYOIUYPOIUPOIUPOIUYOIUYOIUYOIUHOUHOHIOUHOIHOIUHOIUHIOUH_{j}(\tilde a_{ji}            (\tilde v_3-\tilde\psi_t)\UIOIUYOIUyHJGKHJLOIUYOIUOIUYOIYIOUYTIUYIOOOIUYOIUYPOIUPOIUPOIUYOIUYOIUYOIUHOUHOHIOUHOIHOIUHOIUHIOUH_{3}V_i)    \\&     =\UIOIUYOIUyHJGKHJLOIUYOIUOIUYOIYIOUYTIUYIOOOIUYOIUYPOIUPOIUPOIUYOIUYOIUYOIUHOUHOHIOUHOIHOIUHOIUHIOUH_{j} f_j    \inon{in $\Omega$}
   .   \end{split}    \label{8ThswELzXU3X7Ebd1KdZ7v1rN3GiirRXGKWK099ovBM0FDJCvkopYNQ2aN94Z7k0UnUKamE3OjU8DFYFFokbSI2J9V9gVlM8ALWThDPnPu3EL7HPD2VDaZTggzcCCmbvc70qqPcC9mt60ogcrTiA3HEjwTK8ymKeuJMc4q6dVz200XnYUtLR9GYjPXvFOVr6W1zUK1WbPToaWJJuKnxBLnd0ftDEbMmj4loHYyhZyMjM91zQS4p7z8eKa9h0JrbacekcirexG0z4n3175}   \end{align} Subtracting \eqref{8ThswELzXU3X7Ebd1KdZ7v1rN3GiirRXGKWK099ovBM0FDJCvkopYNQ2aN94Z7k0UnUKamE3OjU8DFYFFokbSI2J9V9gVlM8ALWThDPnPu3EL7HPD2VDaZTggzcCCmbvc70qqPcC9mt60ogcrTiA3HEjwTK8ymKeuJMc4q6dVz200XnYUtLR9GYjPXvFOVr6W1zUK1WbPToaWJJuKnxBLnd0ftDEbMmj4loHYyhZyMjM91zQS4p7z8eKa9h0JrbacekcirexG0z4n383} and the same equation for $\tilde q$ gives   \begin{align}\thelt{7A6 XoIc TSLNDZ yf 2 XjB cUw eQT Zt cuXI DYsD hdAu3V MB B BKW IcF NWQ dO u3Fb c6F8 VN77Da IH E 3MZ luL YvB mN Z2wE auXX DGpeKR nw o UVB 2oM VVe hW 0ejG gbgz Iw9FwQ hN Y rFI 4pT lqr Wn Xzz2 qBba lv3snl 2j a vzU Snc pwh cG J0Di 3Lr3 rs6F23 6o b LtD vN9 KqA pO uold 3sec xqgSQN ZN f w5t BGX Pdv W0 k6G4 Byh9 V3IicO nR 2 obf x3j rwt 37 u82f wxwj SmOQq0 pq 4 qfv rN4 kFW hP HRmy lxBx 1zCUhs DN Y INv Ldt VDG 35 kTMT 0ChP EdjSG4 rW N 6v5 IIM TVB 5y cWuY Oo}    \begin{split}    &    \tda_{3i}a_{ki}\UIOIUYOIUyHJGKHJLOIUYOIUOIUYOIYIOUYTIUYIOOOIUYOIUYPOIUPOIUPOIUYOIUYOIUYOIUHOUHOHIOUHOIHOIUHOIUHIOUH_{k}Q    + Q    \\&\indeq    =     -    B_{3i}a_{ki}\UIOIUYOIUyHJGKHJLOIUYOIUOIUYOIYIOUYTIUYIOOOIUYOIUYPOIUPOIUPOIUYOIUYOIUYOIUHOUHOHIOUHOIHOIUHOIUHIOUH_{k}\tilde q     -    \tilde \tda_{3i}A_{ki}\UIOIUYOIUyHJGKHJLOIUYOIUOIUYOIYIOUYTIUYIOOOIUYOIUYPOIUPOIUPOIUYOIUYOIUYOIUHOUHOHIOUHOIHOIUHOIUHIOUH_{k}\tilde q
   +\Delta_2^2 W    + \UIOIUYOIUyHJGKHJLOIUYOIUOIUYOIYIOUYTIUYIOOOIUYOIUYPOIUPOIUPOIUYOIUYOIUYOIUHOUHOHIOUHOIHOIUHOIUHIOUH_{t}B_{3i}v_i    + \UIOIUYOIUyHJGKHJLOIUYOIUOIUYOIYIOUYTIUYIOOOIUYOIUYPOIUPOIUPOIUYOIUYOIUYOIUHOUHOHIOUHOIHOIUHOIUHIOUH_{t}\tilde\tda_{3i}V_i    \\&\indeq\indeq        - B_{3i} v_1 a_{j1} \UIOIUYOIUyHJGKHJLOIUYOIUOIUYOIYIOUYTIUYIOOOIUYOIUYPOIUPOIUPOIUYOIUYOIUYOIUHOUHOHIOUHOIHOIUHOIUHIOUH_{j}v_i        - \tilde \tda_{3i} V_1 a_{j1} \UIOIUYOIUyHJGKHJLOIUYOIUOIUYOIYIOUYTIUYIOOOIUYOIUYPOIUPOIUPOIUYOIUYOIUYOIUHOUHOHIOUHOIHOIUHOIUHIOUH_{j}v_i        - \tilde \tda_{3i} \tilde v_1 A_{j1} \UIOIUYOIUyHJGKHJLOIUYOIUOIUYOIYIOUYTIUYIOOOIUYOIUYPOIUPOIUPOIUYOIUYOIUYOIUHOUHOHIOUHOIHOIUHOIUHIOUH_{j}v_i        - \tilde\tda_{3i} \tilde v_1 \tilde a_{j1} \UIOIUYOIUyHJGKHJLOIUYOIUOIUYOIYIOUYTIUYIOOOIUYOIUYPOIUPOIUPOIUYOIUYOIUYOIUHOUHOHIOUHOIHOIUHOIUHIOUH_{j}V_i    \\&\indeq\indeq        - B_{3i} v_2 a_{j2} \UIOIUYOIUyHJGKHJLOIUYOIUOIUYOIYIOUYTIUYIOOOIUYOIUYPOIUPOIUPOIUYOIUYOIUYOIUHOUHOHIOUHOIHOIUHOIUHIOUH_{j}v_i        - \tilde\tda_{3i} V_2 a_{j2} \UIOIUYOIUyHJGKHJLOIUYOIUOIUYOIYIOUYTIUYIOOOIUYOIUYPOIUPOIUPOIUYOIUYOIUYOIUHOUHOHIOUHOIHOIUHOIUHIOUH_{j}v_i        - \tilde\tda_{3i} \tilde v_2 A_{j2} \UIOIUYOIUyHJGKHJLOIUYOIUOIUYOIYIOUYTIUYIOOOIUYOIUYPOIUPOIUPOIUYOIUYOIUYOIUHOUHOHIOUHOIHOIUHOIUHIOUH_{j}v_i        - \tilde \tda_{3i} \tilde v_2 \tilde a_{j2} \UIOIUYOIUyHJGKHJLOIUYOIUOIUYOIYIOUYTIUYIOOOIUYOIUYPOIUPOIUPOIUYOIUYOIUYOIUHOUHOHIOUHOIHOIUHOIUHIOUH_{j}V_i    \\&\indeq\indeq
       - A_{3i} (v_3-\psi_t) \UIOIUYOIUyHJGKHJLOIUYOIUOIUYOIYIOUYTIUYIOOOIUYOIUYPOIUPOIUPOIUYOIUYOIUYOIUHOUHOHIOUHOIHOIUHOIUHIOUH_{3} v_i           - \tilde a_{3i} (V_3-\Psi_t) \UIOIUYOIUyHJGKHJLOIUYOIUOIUYOIYIOUYTIUYIOOOIUYOIUYPOIUPOIUPOIUYOIUYOIUYOIUHOUHOHIOUHOIHOIUHOIUHIOUH_{3} v_i           - \tilde a_{3i} (\tilde v_3-\tilde \psi_t) \UIOIUYOIUyHJGKHJLOIUYOIUOIUYOIYIOUYTIUYIOOOIUYOIUYPOIUPOIUPOIUYOIUYOIUYOIUHOUHOHIOUHOIHOIUHOIUHIOUH_{3} V_i        = g_1     \inon{on $\Gamma_1$}    ,    \end{split}    \label{8ThswELzXU3X7Ebd1KdZ7v1rN3GiirRXGKWK099ovBM0FDJCvkopYNQ2aN94Z7k0UnUKamE3OjU8DFYFFokbSI2J9V9gVlM8ALWThDPnPu3EL7HPD2VDaZTggzcCCmbvc70qqPcC9mt60ogcrTiA3HEjwTK8ymKeuJMc4q6dVz200XnYUtLR9GYjPXvFOVr6W1zUK1WbPToaWJJuKnxBLnd0ftDEbMmj4loHYyhZyMjM91zQS4p7z8eKa9h0JrbacekcirexG0z4n3176}   \end{align} while from \eqref{8ThswELzXU3X7Ebd1KdZ7v1rN3GiirRXGKWK099ovBM0FDJCvkopYNQ2aN94Z7k0UnUKamE3OjU8DFYFFokbSI2J9V9gVlM8ALWThDPnPu3EL7HPD2VDaZTggzcCCmbvc70qqPcC9mt60ogcrTiA3HEjwTK8ymKeuJMc4q6dVz200XnYUtLR9GYjPXvFOVr6W1zUK1WbPToaWJJuKnxBLnd0ftDEbMmj4loHYyhZyMjM91zQS4p7z8eKa9h0JrbacekcirexG0z4n384}, we get   \begin{align}\thelt{ Wn Xzz2 qBba lv3snl 2j a vzU Snc pwh cG J0Di 3Lr3 rs6F23 6o b LtD vN9 KqA pO uold 3sec xqgSQN ZN f w5t BGX Pdv W0 k6G4 Byh9 V3IicO nR 2 obf x3j rwt 37 u82f wxwj SmOQq0 pq 4 qfv rN4 kFW hP HRmy lxBx 1zCUhs DN Y INv Ldt VDG 35 kTMT 0ChP EdjSG4 rW N 6v5 IIM TVB 5y cWuY OoU6 Sevyec OT f ZJv BjS ZZk M6 8vq4 NOpj X0oQ7r vM v myK ftb ioR l5 c4ID 72iF H0VbQz hj H U5Z 9EV MX8 1P GJss Wedm hBXKDA iq w UJV Gj2 rIS 92 AntB n1QP R3tTJr Z1 e lVo iKU stz A8 fC}    \begin{split}    \tda_{3i}a_{ki}\UIOIUYOIUyHJGKHJLOIUYOIUOIUYOIYIOUYTIUYIOOOIUYOIUYPOIUPOIUPOIUYOIUYOIUYOIUHOUHOHIOUHOIHOIUHOIUHIOUH_{k}Q    &=
   -   B_{3i}a_{ki}\UIOIUYOIUyHJGKHJLOIUYOIUOIUYOIYIOUYTIUYIOOOIUYOIUYPOIUPOIUPOIUYOIUYOIUYOIUHOUHOHIOUHOIHOIUHOIUHIOUH_{k}\tilde q    -   \tilde \tda_{3i}A_{ki}\UIOIUYOIUyHJGKHJLOIUYOIUOIUYOIYIOUYTIUYIOOOIUYOIUYPOIUPOIUPOIUYOIUYOIUYOIUHOUHOHIOUHOIHOIUHOIUHIOUH_{k}\tilde q    \\&\indeq        - B_{3i} v_1 a_{j1} \UIOIUYOIUyHJGKHJLOIUYOIUOIUYOIYIOUYTIUYIOOOIUYOIUYPOIUPOIUPOIUYOIUYOIUYOIUHOUHOHIOUHOIHOIUHOIUHIOUH_{j}v_i        - \tilde \tda_{3i} V_1 a_{j1} \UIOIUYOIUyHJGKHJLOIUYOIUOIUYOIYIOUYTIUYIOOOIUYOIUYPOIUPOIUPOIUYOIUYOIUYOIUHOUHOHIOUHOIHOIUHOIUHIOUH_{j}v_i        - \tilde \tda_{3i} \tilde v_1 A_{j1} \UIOIUYOIUyHJGKHJLOIUYOIUOIUYOIYIOUYTIUYIOOOIUYOIUYPOIUPOIUPOIUYOIUYOIUYOIUHOUHOHIOUHOIHOIUHOIUHIOUH_{j}v_i        - \tilde\tda_{3i} \tilde v_1 \tilde a_{j1} \UIOIUYOIUyHJGKHJLOIUYOIUOIUYOIYIOUYTIUYIOOOIUYOIUYPOIUPOIUPOIUYOIUYOIUYOIUHOUHOHIOUHOIHOIUHOIUHIOUH_{j}V_i   \\&\indeq        - B_{3i} v_2 a_{j2} \UIOIUYOIUyHJGKHJLOIUYOIUOIUYOIYIOUYTIUYIOOOIUYOIUYPOIUPOIUPOIUYOIUYOIUYOIUHOUHOHIOUHOIHOIUHOIUHIOUH_{j}v_i        - \tilde\tda_{3i} V_2 a_{j2} \UIOIUYOIUyHJGKHJLOIUYOIUOIUYOIYIOUYTIUYIOOOIUYOIUYPOIUPOIUPOIUYOIUYOIUYOIUHOUHOHIOUHOIHOIUHOIUHIOUH_{j}v_i        - \tilde\tda_{3i} \tilde v_2 A_{j2} \UIOIUYOIUyHJGKHJLOIUYOIUOIUYOIYIOUYTIUYIOOOIUYOIUYPOIUPOIUPOIUYOIUYOIUYOIUHOUHOHIOUHOIHOIUHOIUHIOUH_{j}v_i        - \tilde\tda_{3i} \tilde v_2 \tilde a_{j2} \UIOIUYOIUyHJGKHJLOIUYOIUOIUYOIYIOUYTIUYIOOOIUYOIUYPOIUPOIUPOIUYOIUYOIUYOIUHOUHOHIOUHOIHOIUHOIUHIOUH_{j}V_i   \\&\indeq        - A_{3i} (v_3-\psi_t) \UIOIUYOIUyHJGKHJLOIUYOIUOIUYOIYIOUYTIUYIOOOIUYOIUYPOIUPOIUPOIUYOIUYOIUYOIUHOUHOHIOUHOIHOIUHOIUHIOUH_{3} v_i   
       - \tilde a_{3i} ( V_3-\Psi_t) \UIOIUYOIUyHJGKHJLOIUYOIUOIUYOIYIOUYTIUYIOOOIUYOIUYPOIUPOIUPOIUYOIUYOIUYOIUHOUHOHIOUHOIHOIUHOIUHIOUH_{3} v_i           - \tilde a_{3i} (\tilde v_3-\tilde \psi_t) \UIOIUYOIUyHJGKHJLOIUYOIUOIUYOIYIOUYTIUYIOOOIUYOIUYPOIUPOIUPOIUYOIUYOIUYOIUHOUHOHIOUHOIHOIUHOIUHIOUH_{3} V_i        = g_0     \inon{on $\Gamma_0$}     .    \end{split}    \label{8ThswELzXU3X7Ebd1KdZ7v1rN3GiirRXGKWK099ovBM0FDJCvkopYNQ2aN94Z7k0UnUKamE3OjU8DFYFFokbSI2J9V9gVlM8ALWThDPnPu3EL7HPD2VDaZTggzcCCmbvc70qqPcC9mt60ogcrTiA3HEjwTK8ymKeuJMc4q6dVz200XnYUtLR9GYjPXvFOVr6W1zUK1WbPToaWJJuKnxBLnd0ftDEbMmj4loHYyhZyMjM91zQS4p7z8eKa9h0JrbacekcirexG0z4n3177}   \end{align} Applying the elliptic estimate \eqref{8ThswELzXU3X7Ebd1KdZ7v1rN3GiirRXGKWK099ovBM0FDJCvkopYNQ2aN94Z7k0UnUKamE3OjU8DFYFFokbSI2J9V9gVlM8ALWThDPnPu3EL7HPD2VDaZTggzcCCmbvc70qqPcC9mt60ogcrTiA3HEjwTK8ymKeuJMc4q6dVz200XnYUtLR9GYjPXvFOVr6W1zUK1WbPToaWJJuKnxBLnd0ftDEbMmj4loHYyhZyMjM91zQS4p7z8eKa9h0JrbacekcirexG0z4n387} with $l=0.5+\delta$, we get   \begin{align}\thelt{4 kFW hP HRmy lxBx 1zCUhs DN Y INv Ldt VDG 35 kTMT 0ChP EdjSG4 rW N 6v5 IIM TVB 5y cWuY OoU6 Sevyec OT f ZJv BjS ZZk M6 8vq4 NOpj X0oQ7r vM v myK ftb ioR l5 c4ID 72iF H0VbQz hj H U5Z 9EV MX8 1P GJss Wedm hBXKDA iq w UJV Gj2 rIS 92 AntB n1QP R3tTJr Z1 e lVo iKU stz A8 fCCg Mwfw 4jKbDb er B Rt6 T8O Zyn NO qXc5 3Pgf LK9oKe 1p P rYB BZY uui Cw XzA6 kaGb twGpmR Tm K viw HEz Rjh Te frip vLAX k3PkLN Dg 5 odc omQ j9L YI VawV mLpK rto0F6 Ns 7 Mmk cTL 9Tr }    \begin{split}    \Vert Q\Vert_{H^{0.5+\delta}}
   \dlkjfhlaskdhjflkasdjhflkasjhdflkasjhdflkasjhdfls    \Vert V\Vert_{H^{1.5+\delta}}    + \Vert W\Vert_{H^{3+\delta}(\Gamma_1)}    + \Vert W_{t}\Vert_{H^{1+\delta}(\Gamma_1)}    .    \end{split}    \label{8ThswELzXU3X7Ebd1KdZ7v1rN3GiirRXGKWK099ovBM0FDJCvkopYNQ2aN94Z7k0UnUKamE3OjU8DFYFFokbSI2J9V9gVlM8ALWThDPnPu3EL7HPD2VDaZTggzcCCmbvc70qqPcC9mt60ogcrTiA3HEjwTK8ymKeuJMc4q6dVz200XnYUtLR9GYjPXvFOVr6W1zUK1WbPToaWJJuKnxBLnd0ftDEbMmj4loHYyhZyMjM91zQS4p7z8eKa9h0JrbacekcirexG0z4n3178}   \end{align} This concludes the pressure estimates. \par Next, we obtain the vorticity bound for the difference $Z=\zeta-\tilde \zeta$.  We use the approach from Section~\ref{sec05} by extending $\zeta$ and $\tilde\zeta$ to $\theta$ and $\tilde\theta$,
respectively, with the extensions defined on ${\mathbb T}^2\times{\mathbb R}$. For simplicity of notation, we do not distinguish between functions defined in $\Omega$ and their extension, i.e., we assume that the quantities $b$, $\tilde b$, $J$, $\tilde J$, $v$, and $\tilde v$ are already extended to ${\mathbb T}^2\times{\mathbb R}$ and that the Jacobian~$J$ is bounded as in~\eqref{8ThswELzXU3X7Ebd1KdZ7v1rN3GiirRXGKWK099ovBM0FDJCvkopYNQ2aN94Z7k0UnUKamE3OjU8DFYFFokbSI2J9V9gVlM8ALWThDPnPu3EL7HPD2VDaZTggzcCCmbvc70qqPcC9mt60ogcrTiA3HEjwTK8ymKeuJMc4q6dVz200XnYUtLR9GYjPXvFOVr6W1zUK1WbPToaWJJuKnxBLnd0ftDEbMmj4loHYyhZyMjM91zQS4p7z8eKa9h0JrbacekcirexG0z4n3220}. \par The equation for $\Theta=\theta-\tilde\theta$ then reads   \begin{align}\thelt{5Z 9EV MX8 1P GJss Wedm hBXKDA iq w UJV Gj2 rIS 92 AntB n1QP R3tTJr Z1 e lVo iKU stz A8 fCCg Mwfw 4jKbDb er B Rt6 T8O Zyn NO qXc5 3Pgf LK9oKe 1p P rYB BZY uui Cw XzA6 kaGb twGpmR Tm K viw HEz Rjh Te frip vLAX k3PkLN Dg 5 odc omQ j9L YI VawV mLpK rto0F6 Ns 7 Mmk cTL 9Tr 8f OT4u NNJv ZThOQw CO C RBH RTx hSB Na Iizz bKIB EcWSMY Eh D kRt PWG KtU mo 26ac LbBn I4t2P1 1e R iPP 99n j4q Q3 62UN AQaH JPPY1O gL h N8s ta9 eJz Pg mE4z QgB0 mlAWBa 4E m u7m nfY}    \begin{split}     &     J\UIOIUYOIUyHJGKHJLOIUYOIUOIUYOIYIOUYTIUYIOOOIUYOIUYPOIUPOIUPOIUYOIUYOIUYOIUHOUHOHIOUHOIHOIUHOIUHIOUH_{t}\Theta_i     +  v_1  \tda_{j1}\UIOIUYOIUyHJGKHJLOIUYOIUOIUYOIYIOUYTIUYIOOOIUYOIUYPOIUPOIUPOIUYOIUYOIUYOIUHOUHOHIOUHOIHOIUHOIUHIOUH_{j} \Theta_i     +  v_2  \tda_{j2}\UIOIUYOIUyHJGKHJLOIUYOIUOIUYOIYIOUYTIUYIOOOIUYOIUYPOIUPOIUPOIUYOIUYOIUYOIUHOUHOHIOUHOIHOIUHOIUHIOUH_{j} \Theta_i     + ( v_3- \psi_t)  \tda_{j3} \UIOIUYOIUyHJGKHJLOIUYOIUOIUYOIYIOUYTIUYIOOOIUYOIUYPOIUPOIUPOIUYOIUYOIUYOIUHOUHOHIOUHOIHOIUHOIUHIOUH_{j} \Theta_i
    = F_i    ,    \end{split}    \label{8ThswELzXU3X7Ebd1KdZ7v1rN3GiirRXGKWK099ovBM0FDJCvkopYNQ2aN94Z7k0UnUKamE3OjU8DFYFFokbSI2J9V9gVlM8ALWThDPnPu3EL7HPD2VDaZTggzcCCmbvc70qqPcC9mt60ogcrTiA3HEjwTK8ymKeuJMc4q6dVz200XnYUtLR9GYjPXvFOVr6W1zUK1WbPToaWJJuKnxBLnd0ftDEbMmj4loHYyhZyMjM91zQS4p7z8eKa9h0JrbacekcirexG0z4n3179}   \end{align} where   \begin{align}\thelt{m K viw HEz Rjh Te frip vLAX k3PkLN Dg 5 odc omQ j9L YI VawV mLpK rto0F6 Ns 7 Mmk cTL 9Tr 8f OT4u NNJv ZThOQw CO C RBH RTx hSB Na Iizz bKIB EcWSMY Eh D kRt PWG KtU mo 26ac LbBn I4t2P1 1e R iPP 99n j4q Q3 62UN AQaH JPPY1O gL h N8s ta9 eJz Pg mE4z QgB0 mlAWBa 4E m u7m nfY gbN Lz ddGp hhJV 9hyAOG CN j xJ8 3Hg 6CA UT nusW 9pQr Wv1DfV lG n WxM Bbe 9Ww Lt OdwD ERml xJ8LTq KW T tsR 0cD XAf hR X1zX lAUu wzqnO2 o7 r toi SMr OKL Cq joq1 tUGG iIxusp oi i tj}    \begin{split}     F_i     &=      - (J-\tilde J)\UIOIUYOIUyHJGKHJLOIUYOIUOIUYOIYIOUYTIUYIOOOIUYOIUYPOIUPOIUPOIUYOIUYOIUYOIUHOUHOHIOUHOIHOIUHOIUHIOUH_{t}\tilde \theta_i     -   V_1  \tda_{j1}\UIOIUYOIUyHJGKHJLOIUYOIUOIUYOIYIOUYTIUYIOOOIUYOIUYPOIUPOIUPOIUYOIUYOIUYOIUHOUHOHIOUHOIHOIUHOIUHIOUH_{j} \tilde\theta_i     -   \tilde v_1  B_{j1}\UIOIUYOIUyHJGKHJLOIUYOIUOIUYOIYIOUYTIUYIOOOIUYOIUYPOIUPOIUPOIUYOIUYOIUYOIUHOUHOHIOUHOIHOIUHOIUHIOUH_{j}\tilde  \theta_i     -   V_2  \tda_{j2}\UIOIUYOIUyHJGKHJLOIUYOIUOIUYOIYIOUYTIUYIOOOIUYOIUYPOIUPOIUPOIUYOIUYOIUYOIUHOUHOHIOUHOIHOIUHOIUHIOUH_{j} \tilde\theta_i
    -   \tilde v_2  B_{j2}\UIOIUYOIUyHJGKHJLOIUYOIUOIUYOIYIOUYTIUYIOOOIUYOIUYPOIUPOIUPOIUYOIUYOIUYOIUHOUHOHIOUHOIHOIUHOIUHIOUH_{j}\tilde  \theta_i     \\&\indeq\indeq     - ( V_3- \Psi_t)  \tda_{j3} \UIOIUYOIUyHJGKHJLOIUYOIUOIUYOIYIOUYTIUYIOOOIUYOIUYPOIUPOIUPOIUYOIUYOIUYOIUHOUHOHIOUHOIHOIUHOIUHIOUH_{j} \tilde \theta_i     - ( \tilde v_3- \tilde \psi_t)  B_{j3} \UIOIUYOIUyHJGKHJLOIUYOIUOIUYOIYIOUYTIUYIOOOIUYOIUYPOIUPOIUPOIUYOIUYOIUYOIUHOUHOHIOUHOIHOIUHOIUHIOUH_{j} \tilde\theta_i     + \theta_k  \tda_{mk}\UIOIUYOIUyHJGKHJLOIUYOIUOIUYOIYIOUYTIUYIOOOIUYOIUYPOIUPOIUPOIUYOIUYOIUYOIUHOUHOHIOUHOIHOIUHOIUHIOUH_{m} V_i     + \Theta_k  \tda_{mk}\UIOIUYOIUyHJGKHJLOIUYOIUOIUYOIYIOUYTIUYIOOOIUYOIUYPOIUPOIUPOIUYOIUYOIUYOIUHOUHOHIOUHOIHOIUHOIUHIOUH_{m} \tilde v_i     + \tilde \theta_k  B_{mk}\UIOIUYOIUyHJGKHJLOIUYOIUOIUYOIYIOUYTIUYIOOOIUYOIUYPOIUPOIUPOIUYOIUYOIUYOIUHOUHOHIOUHOIHOIUHOIUHIOUH_{m} \tilde v_i    \comma i=1,2,3    .    \end{split}    \llabel{8ThswELzXU3X7Ebd1KdZ7v1rN3GiirRXGKWK099ovBM0FDJCvkopYNQ2aN94Z7k0UnUKamE3OjU8DFYFFokbSI2J9V9gVlM8ALWThDPnPu3EL7HPD2VDaZTggzcCCmbvc70qqPcC9mt60ogcrTiA3HEjwTK8ymKeuJMc4q6dVz200XnYUtLR9GYjPXvFOVr6W1zUK1WbPToaWJJuKnxBLnd0ftDEbMmj4loHYyhZyMjM91zQS4p7z8eKa9h0JrbacekcirexG0z4n3180}   \end{align} We proceed as in \eqref{8ThswELzXU3X7Ebd1KdZ7v1rN3GiirRXGKWK099ovBM0FDJCvkopYNQ2aN94Z7k0UnUKamE3OjU8DFYFFokbSI2J9V9gVlM8ALWThDPnPu3EL7HPD2VDaZTggzcCCmbvc70qqPcC9mt60ogcrTiA3HEjwTK8ymKeuJMc4q6dVz200XnYUtLR9GYjPXvFOVr6W1zUK1WbPToaWJJuKnxBLnd0ftDEbMmj4loHYyhZyMjM91zQS4p7z8eKa9h0JrbacekcirexG0z4n3109}, except that we use $\UIPOIUPOIUPOOYIUIUYOIUYOIUHOIUOIUHIOPUHPOIJPOIJPOUHOIUHOILJHLIUHYOIUYOUI_3^{0.5+\delta}$ instead of 
$\UIPOIUPOIUPOOYIUIUYOIUYOIUHOIUOIUHIOPUHPOIJPOIJPOUHOIUHOILJHLIUHYOIUYOUI_3^{1.5+\delta}$. We get   \begin{align}\thelt{2P1 1e R iPP 99n j4q Q3 62UN AQaH JPPY1O gL h N8s ta9 eJz Pg mE4z QgB0 mlAWBa 4E m u7m nfY gbN Lz ddGp hhJV 9hyAOG CN j xJ8 3Hg 6CA UT nusW 9pQr Wv1DfV lG n WxM Bbe 9Ww Lt OdwD ERml xJ8LTq KW T tsR 0cD XAf hR X1zX lAUu wzqnO2 o7 r toi SMr OKL Cq joq1 tUGG iIxusp oi i tja NRn gtx S0 r98r wXF7 GNiepz Ef A O2s Ykt Idg H1 AGcR rd2w 89xoOK yN n LaL RU0 3su U3 JbS8 dok8 tw9NQS Y4 j XY6 25K CcP Ly FRlS p759 DeVbY5 b6 9 jYO mdf b99 j1 5lvL vjsk K2gEwl Rx}    \begin{split}    \frac12 \frac{d}{dt}     \OIUYJHUGFAJKLDHFKJLSDHFLKSDJFHLKSDJHFLKSDJHFLKDJFHLLDKHFLKSDHJFALKJHLJLHGLKHHLKJHLKGKHGJKHGKJHLKHJLKJH_{\Omega_0} \tilde J|\UIPOIUPOIUPOOYIUIUYOIUYOIUHOIUOIUHIOPUHPOIJPOIJPOUHOIUHOILJHLIUHYOIUYOUI_3^{0.5+\delta}\Theta|^2     &=     -     \sum_{m=1}^{2}     \OIUYJHUGFAJKLDHFKJLSDHFLKSDJFHLKSDJHFLKSDJHFLKDJFHLLDKHFLKSDHJFALKJHLJLHGLKHHLKJHLKGKHGJKHGKJHLKHJLKJH_{\Omega_0}  \tilde v_m \tilde\tda_{jm}\UIOIUYOIUyHJGKHJLOIUYOIUOIUYOIYIOUYTIUYIOOOIUYOIUYPOIUPOIUPOIUYOIUYOIUYOIUHOUHOHIOUHOIHOIUHOIUHIOUH_{j} \UIPOIUPOIUPOOYIUIUYOIUYOIUHOIUOIUHIOPUHPOIJPOIJPOUHOIUHOILJHLIUHYOIUYOUI_3^{0.5+\delta}\Theta_i \UIPOIUPOIUPOOYIUIUYOIUYOIUHOIUOIUHIOPUHPOIJPOIJPOUHOIUHOILJHLIUHYOIUYOUI_3^{0.5+\delta}\Theta_i      - \OIUYJHUGFAJKLDHFKJLSDHFLKSDJFHLKSDJHFLKSDJHFLKDJFHLLDKHFLKSDHJFALKJHLJLHGLKHHLKJHLKGKHGJKHGKJHLKHJLKJH_{\Omega_0} (\tilde v_3-\tilde\psi_t) \tilde\tda_{j3} \UIOIUYOIUyHJGKHJLOIUYOIUOIUYOIYIOUYTIUYIOOOIUYOIUYPOIUPOIUPOIUYOIUYOIUYOIUHOUHOHIOUHOIHOIUHOIUHIOUH_{j}  \UIPOIUPOIUPOOYIUIUYOIUYOIUHOIUOIUHIOPUHPOIJPOIJPOUHOIUHOILJHLIUHYOIUYOUI_3^{0.5+\delta}\Theta_i \UIPOIUPOIUPOOYIUIUYOIUYOIUHOIUOIUHIOPUHPOIJPOIJPOUHOIUHOILJHLIUHYOIUYOUI_3^{1.5+\delta}\Theta_i      \\&\indeq     -  \sum_{m=1}^{2}  \OIUYJHUGFAJKLDHFKJLSDHFLKSDJFHLKSDJHFLKSDJHFLKDJFHLLDKHFLKSDHJFALKJHLJLHGLKHHLKJHLKGKHGJKHGKJHLKHJLKJH_{\Omega_0}                    \Bigl(\UIPOIUPOIUPOOYIUIUYOIUYOIUHOIUOIUHIOPUHPOIJPOIJPOUHOIUHOILJHLIUHYOIUYOUI_3^{0.5+\delta}(\tilde v_m \tilde\tda_{jm}\UIOIUYOIUyHJGKHJLOIUYOIUOIUYOIYIOUYTIUYIOOOIUYOIUYPOIUPOIUPOIUYOIUYOIUYOIUHOUHOHIOUHOIHOIUHOIUHIOUH_{j} \Theta_i )                       - \tilde v_m \tilde\tda_{jm}\UIOIUYOIUyHJGKHJLOIUYOIUOIUYOIYIOUYTIUYIOOOIUYOIUYPOIUPOIUPOIUYOIUYOIUYOIUHOUHOHIOUHOIHOIUHOIUHIOUH_{j} \UIPOIUPOIUPOOYIUIUYOIUYOIUHOIUOIUHIOPUHPOIJPOIJPOUHOIUHOILJHLIUHYOIUYOUI_3^{0.5+\delta}\Theta_i
                      \Bigr)            \UIPOIUPOIUPOOYIUIUYOIUYOIUHOIUOIUHIOPUHPOIJPOIJPOUHOIUHOILJHLIUHYOIUYOUI_3^{0.5+\delta}\Theta_i      \\&\indeq     - \OIUYJHUGFAJKLDHFKJLSDHFLKSDJFHLKSDJHFLKSDJHFLKDJFHLLDKHFLKSDHJFALKJHLJLHGLKHHLKJHLKGKHGJKHGKJHLKHJLKJH_{\Omega_0} \Bigl(                 \UIPOIUPOIUPOOYIUIUYOIUYOIUHOIUOIUHIOPUHPOIJPOIJPOUHOIUHOILJHLIUHYOIUYOUI_3^{0.5+\delta}( (\tilde v_3-\tilde\psi_t) \tilde\tda_{j3} \UIOIUYOIUyHJGKHJLOIUYOIUOIUYOIYIOUYTIUYIOOOIUYOIUYPOIUPOIUPOIUYOIUYOIUYOIUHOUHOHIOUHOIHOIUHOIUHIOUH_{j}\Theta_i )                 - (\tilde v_3-\tilde\psi_t) \tilde\tda_{j3} \UIOIUYOIUyHJGKHJLOIUYOIUOIUYOIYIOUYTIUYIOOOIUYOIUYPOIUPOIUPOIUYOIUYOIUYOIUHOUHOHIOUHOIHOIUHOIUHIOUH_{j}  \UIPOIUPOIUPOOYIUIUYOIUYOIUHOIUOIUHIOPUHPOIJPOIJPOUHOIUHOILJHLIUHYOIUYOUI_3^{0.5+\delta}\Theta_i            \Bigr)             \UIPOIUPOIUPOOYIUIUYOIUYOIUHOIUOIUHIOPUHPOIJPOIJPOUHOIUHOILJHLIUHYOIUYOUI_3^{0.5+\delta}\Theta_i     \\&\indeq      +  \frac12 \OIUYJHUGFAJKLDHFKJLSDHFLKSDJFHLKSDJHFLKSDJHFLKDJFHLLDKHFLKSDHJFALKJHLJLHGLKHHLKJHLKGKHGJKHGKJHLKHJLKJH_{\Omega_0} \tilde J_t |\UIPOIUPOIUPOOYIUIUYOIUYOIUHOIUOIUHIOPUHPOIJPOIJPOUHOIUHOILJHLIUHYOIUYOUI_3^{1+\delta} \Theta|^2      + \OIUYJHUGFAJKLDHFKJLSDHFLKSDJFHLKSDJHFLKSDJHFLKDJFHLLDKHFLKSDHJFALKJHLJLHGLKHHLKJHLKGKHGJKHGKJHLKHJLKJH_{\Omega_0}           \Bigl(           \UIPOIUPOIUPOOYIUIUYOIUYOIUHOIUOIUHIOPUHPOIJPOIJPOUHOIUHOILJHLIUHYOIUYOUI_3^{0.5+\delta}(\tilde J\UIOIUYOIUyHJGKHJLOIUYOIUOIUYOIYIOUYTIUYIOOOIUYOIUYPOIUPOIUPOIUYOIUYOIUYOIUHOUHOHIOUHOIHOIUHOIUHIOUH_t \Theta_i) - \tilde J \UIPOIUPOIUPOOYIUIUYOIUYOIUHOIUOIUHIOPUHPOIJPOIJPOUHOIUHOILJHLIUHYOIUYOUI_3^{0.5+\delta} (\UIOIUYOIUyHJGKHJLOIUYOIUOIUYOIYIOUYTIUYIOOOIUYOIUYPOIUPOIUPOIUYOIUYOIUYOIUHOUHOHIOUHOIHOIUHOIUHIOUH_{t}\Theta_i)          \Bigr) \UIPOIUPOIUPOOYIUIUYOIUYOIUHOIUOIUHIOPUHPOIJPOIJPOUHOIUHOILJHLIUHYOIUYOUI_3^{0.5+\delta}\Theta_i
   \\&\indeq     + \OIUYJHUGFAJKLDHFKJLSDHFLKSDJFHLKSDJHFLKSDJHFLKDJFHLLDKHFLKSDHJFALKJHLJLHGLKHHLKJHLKGKHGJKHGKJHLKHJLKJH_{\Omega_0}           \UIPOIUPOIUPOOYIUIUYOIUYOIUHOIUOIUHIOPUHPOIJPOIJPOUHOIUHOILJHLIUHYOIUYOUI_3^{0.5+\delta} F_i \UIPOIUPOIUPOOYIUIUYOIUYOIUHOIUOIUHIOPUHPOIJPOIJPOUHOIUHOILJHLIUHYOIUYOUI_3^{0.5+\delta}\Theta_i      \\&    = I_1+\cdots +I_7     .    \end{split}    \label{8ThswELzXU3X7Ebd1KdZ7v1rN3GiirRXGKWK099ovBM0FDJCvkopYNQ2aN94Z7k0UnUKamE3OjU8DFYFFokbSI2J9V9gVlM8ALWThDPnPu3EL7HPD2VDaZTggzcCCmbvc70qqPcC9mt60ogcrTiA3HEjwTK8ymKeuJMc4q6dVz200XnYUtLR9GYjPXvFOVr6W1zUK1WbPToaWJJuKnxBLnd0ftDEbMmj4loHYyhZyMjM91zQS4p7z8eKa9h0JrbacekcirexG0z4n3181}   \end{align} For the first five terms in \eqref{8ThswELzXU3X7Ebd1KdZ7v1rN3GiirRXGKWK099ovBM0FDJCvkopYNQ2aN94Z7k0UnUKamE3OjU8DFYFFokbSI2J9V9gVlM8ALWThDPnPu3EL7HPD2VDaZTggzcCCmbvc70qqPcC9mt60ogcrTiA3HEjwTK8ymKeuJMc4q6dVz200XnYUtLR9GYjPXvFOVr6W1zUK1WbPToaWJJuKnxBLnd0ftDEbMmj4loHYyhZyMjM91zQS4p7z8eKa9h0JrbacekcirexG0z4n3181}, we have   \begin{equation}        I_1 + \cdots + I_5        \dlkjfhlaskdhjflkasdjhflkasjhdflkasjhdflkasjhdfls       \Vert \Theta\Vert_{H^{0.5+\delta}}^2
     ,    \label{8ThswELzXU3X7Ebd1KdZ7v1rN3GiirRXGKWK099ovBM0FDJCvkopYNQ2aN94Z7k0UnUKamE3OjU8DFYFFokbSI2J9V9gVlM8ALWThDPnPu3EL7HPD2VDaZTggzcCCmbvc70qqPcC9mt60ogcrTiA3HEjwTK8ymKeuJMc4q6dVz200XnYUtLR9GYjPXvFOVr6W1zUK1WbPToaWJJuKnxBLnd0ftDEbMmj4loHYyhZyMjM91zQS4p7z8eKa9h0JrbacekcirexG0z4n3182}   \end{equation} where, as above, the constant depends on  $       \Vert v\Vert_{H^{2.5+\delta}}$, $       \Vert \tilde v\Vert_{H^{2.5+\delta}}$, $       \Vert w\Vert_{H^{4+\delta}(\Gamma_1)}$, $       \Vert \tilde w\Vert_{H^{4+\delta}(\Gamma_1)}$, $       \Vert w_{t}\Vert_{H^{2+\delta}(\Gamma_1)}$, and  $       \Vert \tilde w_{t}\Vert_{H^{2+\delta}(\Gamma_1)}$. For the sixth term, which involves the time derivative of the vorticity, we have
  \begin{align}\thelt{l xJ8LTq KW T tsR 0cD XAf hR X1zX lAUu wzqnO2 o7 r toi SMr OKL Cq joq1 tUGG iIxusp oi i tja NRn gtx S0 r98r wXF7 GNiepz Ef A O2s Ykt Idg H1 AGcR rd2w 89xoOK yN n LaL RU0 3su U3 JbS8 dok8 tw9NQS Y4 j XY6 25K CcP Ly FRlS p759 DeVbY5 b6 9 jYO mdf b99 j1 5lvL vjsk K2gEwl Rx O tWL ytZ J1y Z5 Pit3 5SOi ivz4F8 tq M JIg QQi Oob Sp eprt 2vBV qhvzkL lf 7 HXA 4so MXj Wd MS7L eRDi ktUifL JH u kes trv rl7 mY cSOB 7nKW MD0xBq kb x FgT TNI wey VI G6Uy 3dL0 C3Mz}    \begin{split}    I_6    \dlkjfhlaskdhjflkasdjhflkasjhdflkasjhdflkasjhdfls    \Vert \tilde J \Vert_{H^{3.5+\delta}}    \Vert \Theta_t \Vert_{H^{-0.5+\delta}}    \Vert \Theta \Vert_{H^{0.5+\delta}}     ,    \end{split}    \llabel{8ThswELzXU3X7Ebd1KdZ7v1rN3GiirRXGKWK099ovBM0FDJCvkopYNQ2aN94Z7k0UnUKamE3OjU8DFYFFokbSI2J9V9gVlM8ALWThDPnPu3EL7HPD2VDaZTggzcCCmbvc70qqPcC9mt60ogcrTiA3HEjwTK8ymKeuJMc4q6dVz200XnYUtLR9GYjPXvFOVr6W1zUK1WbPToaWJJuKnxBLnd0ftDEbMmj4loHYyhZyMjM91zQS4p7z8eKa9h0JrbacekcirexG0z4n3183}   \end{align} since we assumed \eqref{8ThswELzXU3X7Ebd1KdZ7v1rN3GiirRXGKWK099ovBM0FDJCvkopYNQ2aN94Z7k0UnUKamE3OjU8DFYFFokbSI2J9V9gVlM8ALWThDPnPu3EL7HPD2VDaZTggzcCCmbvc70qqPcC9mt60ogcrTiA3HEjwTK8ymKeuJMc4q6dVz200XnYUtLR9GYjPXvFOVr6W1zUK1WbPToaWJJuKnxBLnd0ftDEbMmj4loHYyhZyMjM91zQS4p7z8eKa9h0JrbacekcirexG0z4n3155}. To estimate the right-hand side, we use \eqref{8ThswELzXU3X7Ebd1KdZ7v1rN3GiirRXGKWK099ovBM0FDJCvkopYNQ2aN94Z7k0UnUKamE3OjU8DFYFFokbSI2J9V9gVlM8ALWThDPnPu3EL7HPD2VDaZTggzcCCmbvc70qqPcC9mt60ogcrTiA3HEjwTK8ymKeuJMc4q6dVz200XnYUtLR9GYjPXvFOVr6W1zUK1WbPToaWJJuKnxBLnd0ftDEbMmj4loHYyhZyMjM91zQS4p7z8eKa9h0JrbacekcirexG0z4n3179}, obtaining
  \begin{align}\thelt{8 dok8 tw9NQS Y4 j XY6 25K CcP Ly FRlS p759 DeVbY5 b6 9 jYO mdf b99 j1 5lvL vjsk K2gEwl Rx O tWL ytZ J1y Z5 Pit3 5SOi ivz4F8 tq M JIg QQi Oob Sp eprt 2vBV qhvzkL lf 7 HXA 4so MXj Wd MS7L eRDi ktUifL JH u kes trv rl7 mY cSOB 7nKW MD0xBq kb x FgT TNI wey VI G6Uy 3dL0 C3MzFx sB E 7zU hSe tBQ cX 7jn2 2rr0 yL1Erb pL R m3i da5 MdP ic dnMO iZCy Gd2MdK Ub x saI 9Tt nHX qA QBju N5I4 Q6zz4d SW Y Urh xTC uBg BU T992 uczE mkqK1o uC a HJB R0Q nv1 ar tFie kBu4}    \begin{split}    I_6    &\dlkjfhlaskdhjflkasdjhflkasjhdflkasjhdflkasjhdfls    \Vert \tilde J \Vert_{H^{3.5+\delta}}    \Vert \Theta \Vert_{H^{0.5+\delta}}    \Vert \Theta \Vert_{H^{0.5+\delta}}    +    \Vert \tilde J \Vert_{H^{3.5+\delta}}    \Vert F \Vert_{H^{-0.5+\delta}}    \Vert \Theta \Vert_{H^{0.5+\delta}}    \\&    \dlkjfhlaskdhjflkasdjhflkasjhdflkasjhdflkasjhdfls    (
    \Vert V\Vert_{H^{1.5+\delta}}     +     \Vert \Theta\Vert_{H^{0.5+\delta}}     +     \Vert W\Vert_{H^{3+\delta}(\Gamma_1)}     +     \Vert W_{t}\Vert_{H^{1+\delta}(\Gamma_1)}    )    \Vert \Theta\Vert_{H^{0.5+\delta}}       ,    \end{split}    \label{8ThswELzXU3X7Ebd1KdZ7v1rN3GiirRXGKWK099ovBM0FDJCvkopYNQ2aN94Z7k0UnUKamE3OjU8DFYFFokbSI2J9V9gVlM8ALWThDPnPu3EL7HPD2VDaZTggzcCCmbvc70qqPcC9mt60ogcrTiA3HEjwTK8ymKeuJMc4q6dVz200XnYUtLR9GYjPXvFOVr6W1zUK1WbPToaWJJuKnxBLnd0ftDEbMmj4loHYyhZyMjM91zQS4p7z8eKa9h0JrbacekcirexG0z4n3184}   \end{align} where we also used \eqref{8ThswELzXU3X7Ebd1KdZ7v1rN3GiirRXGKWK099ovBM0FDJCvkopYNQ2aN94Z7k0UnUKamE3OjU8DFYFFokbSI2J9V9gVlM8ALWThDPnPu3EL7HPD2VDaZTggzcCCmbvc70qqPcC9mt60ogcrTiA3HEjwTK8ymKeuJMc4q6dVz200XnYUtLR9GYjPXvFOVr6W1zUK1WbPToaWJJuKnxBLnd0ftDEbMmj4loHYyhZyMjM91zQS4p7z8eKa9h0JrbacekcirexG0z4n338}. The last term in \eqref{8ThswELzXU3X7Ebd1KdZ7v1rN3GiirRXGKWK099ovBM0FDJCvkopYNQ2aN94Z7k0UnUKamE3OjU8DFYFFokbSI2J9V9gVlM8ALWThDPnPu3EL7HPD2VDaZTggzcCCmbvc70qqPcC9mt60ogcrTiA3HEjwTK8ymKeuJMc4q6dVz200XnYUtLR9GYjPXvFOVr6W1zUK1WbPToaWJJuKnxBLnd0ftDEbMmj4loHYyhZyMjM91zQS4p7z8eKa9h0JrbacekcirexG0z4n3181} is estimated similarly to the first five, using the fractional product rule, leading to   \begin{equation}
   I_7    \dlkjfhlaskdhjflkasdjhflkasjhdflkasjhdflkasjhdfls    (     \Vert V\Vert_{H^{1.5+\delta}}     +     \Vert \Theta\Vert_{H^{0.5+\delta}}     +     \Vert W\Vert_{H^{3+\delta}(\Gamma_1)}     +     \Vert W_{t}\Vert_{H^{1+\delta}(\Gamma_1)}    )       \Vert \Theta\Vert_{H^{0.5+\delta}}       .    \label{8ThswELzXU3X7Ebd1KdZ7v1rN3GiirRXGKWK099ovBM0FDJCvkopYNQ2aN94Z7k0UnUKamE3OjU8DFYFFokbSI2J9V9gVlM8ALWThDPnPu3EL7HPD2VDaZTggzcCCmbvc70qqPcC9mt60ogcrTiA3HEjwTK8ymKeuJMc4q6dVz200XnYUtLR9GYjPXvFOVr6W1zUK1WbPToaWJJuKnxBLnd0ftDEbMmj4loHYyhZyMjM91zQS4p7z8eKa9h0JrbacekcirexG0z4n3185}   \end{equation} Using the estimates \eqref{8ThswELzXU3X7Ebd1KdZ7v1rN3GiirRXGKWK099ovBM0FDJCvkopYNQ2aN94Z7k0UnUKamE3OjU8DFYFFokbSI2J9V9gVlM8ALWThDPnPu3EL7HPD2VDaZTggzcCCmbvc70qqPcC9mt60ogcrTiA3HEjwTK8ymKeuJMc4q6dVz200XnYUtLR9GYjPXvFOVr6W1zUK1WbPToaWJJuKnxBLnd0ftDEbMmj4loHYyhZyMjM91zQS4p7z8eKa9h0JrbacekcirexG0z4n3182}, \eqref{8ThswELzXU3X7Ebd1KdZ7v1rN3GiirRXGKWK099ovBM0FDJCvkopYNQ2aN94Z7k0UnUKamE3OjU8DFYFFokbSI2J9V9gVlM8ALWThDPnPu3EL7HPD2VDaZTggzcCCmbvc70qqPcC9mt60ogcrTiA3HEjwTK8ymKeuJMc4q6dVz200XnYUtLR9GYjPXvFOVr6W1zUK1WbPToaWJJuKnxBLnd0ftDEbMmj4loHYyhZyMjM91zQS4p7z8eKa9h0JrbacekcirexG0z4n3184}, and   \eqref{8ThswELzXU3X7Ebd1KdZ7v1rN3GiirRXGKWK099ovBM0FDJCvkopYNQ2aN94Z7k0UnUKamE3OjU8DFYFFokbSI2J9V9gVlM8ALWThDPnPu3EL7HPD2VDaZTggzcCCmbvc70qqPcC9mt60ogcrTiA3HEjwTK8ymKeuJMc4q6dVz200XnYUtLR9GYjPXvFOVr6W1zUK1WbPToaWJJuKnxBLnd0ftDEbMmj4loHYyhZyMjM91zQS4p7z8eKa9h0JrbacekcirexG0z4n3185}
in \eqref{8ThswELzXU3X7Ebd1KdZ7v1rN3GiirRXGKWK099ovBM0FDJCvkopYNQ2aN94Z7k0UnUKamE3OjU8DFYFFokbSI2J9V9gVlM8ALWThDPnPu3EL7HPD2VDaZTggzcCCmbvc70qqPcC9mt60ogcrTiA3HEjwTK8ymKeuJMc4q6dVz200XnYUtLR9GYjPXvFOVr6W1zUK1WbPToaWJJuKnxBLnd0ftDEbMmj4loHYyhZyMjM91zQS4p7z8eKa9h0JrbacekcirexG0z4n3181}, we get   \begin{align}\thelt{d MS7L eRDi ktUifL JH u kes trv rl7 mY cSOB 7nKW MD0xBq kb x FgT TNI wey VI G6Uy 3dL0 C3MzFx sB E 7zU hSe tBQ cX 7jn2 2rr0 yL1Erb pL R m3i da5 MdP ic dnMO iZCy Gd2MdK Ub x saI 9Tt nHX qA QBju N5I4 Q6zz4d SW Y Urh xTC uBg BU T992 uczE mkqK1o uC a HJB R0Q nv1 ar tFie kBu4 9ND9kK 9e K BOg PGz qfK J6 7NsK z3By wIwYxE oW Y f6A Kuy VPj 8B 9D6q uBkF CsKHUD Ck s DYK 3vs 0Ep 3g M2Ew lPGj RVX6cx lb V OfA ll7 g6y L9 PWyo 58h0 e07HO0 qz 8 kbe 85Z BVC YO KxNN}     \begin{split}    \frac{d}{dt}    \OIUYJHUGFAJKLDHFKJLSDHFLKSDJFHLKSDJHFLKSDJHFLKDJFHLLDKHFLKSDHJFALKJHLJLHGLKHHLKJHLKGKHGJKHGKJHLKHJLKJH_{\Omega_0} \tilde J|\UIPOIUPOIUPOOYIUIUYOIUYOIUHOIUOIUHIOPUHPOIJPOIJPOUHOIUHOILJHLIUHYOIUYOUI_3^{0.5+\delta}\Theta|^2    \dlkjfhlaskdhjflkasdjhflkasjhdflkasjhdflkasjhdfls       (     \Vert V\Vert_{H^{1.5+\delta}}     +     \Vert \Theta\Vert_{H^{0.5+\delta}}     +     \Vert W\Vert_{H^{3+\delta}(\Gamma_1)}     +     \Vert W_{t}\Vert_{H^{1+\delta}(\Gamma_1)}    )       \Vert \Theta\Vert_{H^{0.5+\delta}}       ,
  \end{split}    \llabel{8ThswELzXU3X7Ebd1KdZ7v1rN3GiirRXGKWK099ovBM0FDJCvkopYNQ2aN94Z7k0UnUKamE3OjU8DFYFFokbSI2J9V9gVlM8ALWThDPnPu3EL7HPD2VDaZTggzcCCmbvc70qqPcC9mt60ogcrTiA3HEjwTK8ymKeuJMc4q6dVz200XnYUtLR9GYjPXvFOVr6W1zUK1WbPToaWJJuKnxBLnd0ftDEbMmj4loHYyhZyMjM91zQS4p7z8eKa9h0JrbacekcirexG0z4n3186}   \end{align} and then, using $1/4\leq \tilde J\leq 2$, we  obtain   \begin{align}\thelt{nHX qA QBju N5I4 Q6zz4d SW Y Urh xTC uBg BU T992 uczE mkqK1o uC a HJB R0Q nv1 ar tFie kBu4 9ND9kK 9e K BOg PGz qfK J6 7NsK z3By wIwYxE oW Y f6A Kuy VPj 8B 9D6q uBkF CsKHUD Ck s DYK 3vs 0Ep 3g M2Ew lPGj RVX6cx lb V OfA ll7 g6y L9 PWyo 58h0 e07HO0 qz 8 kbe 85Z BVC YO KxNN La4a FZ7mw7 mo A CU1 q1l pfm E5 qXTA 0QqV MnRsbK zH o 5vX 1tp MVZ XC znmS OM73 CRHwQP Tl v VN7 lKX I06 KT 6MTj O3Yb 87pgoz ox y dVJ HPL 3k2 KR yx3b 0yPB sJmNjE TP J i4k m2f xMh 35}     \begin{split}    &    \frac{d}{dt}    \OIUYJHUGFAJKLDHFKJLSDHFLKSDJFHLKSDJHFLKSDJHFLKDJFHLLDKHFLKSDHJFALKJHLJLHGLKHHLKJHLKGKHGJKHGKJHLKHJLKJH_{\Omega_0} \tilde J|\UIPOIUPOIUPOOYIUIUYOIUYOIUHOIUOIUHIOPUHPOIJPOIJPOUHOIUHOILJHLIUHYOIUYOUI_3^{0.5+\delta}\Theta|^2    \\&\indeq    \dlkjfhlaskdhjflkasdjhflkasjhdflkasjhdflkasjhdfls       (     \Vert V\Vert_{H^{1.5+\delta}}
    +     \Vert \Theta\Vert_{H^{0.5+\delta}}     +     \Vert W\Vert_{H^{3+\delta}(\Gamma_1)}     +     \Vert W_{t}\Vert_{H^{1+\delta}(\Gamma_1)}    )       \left(\OIUYJHUGFAJKLDHFKJLSDHFLKSDJFHLKSDJHFLKSDJHFLKDJFHLLDKHFLKSDHJFALKJHLJLHGLKHHLKJHLKGKHGJKHGKJHLKHJLKJH_{\Omega_0} \tilde J|\UIPOIUPOIUPOOYIUIUYOIUYOIUHOIUOIUHIOPUHPOIJPOIJPOUHOIUHOILJHLIUHYOIUYOUI_3^{0.5+\delta}\Theta|^2\right)^{1/2}    ,   \end{split}    \label{8ThswELzXU3X7Ebd1KdZ7v1rN3GiirRXGKWK099ovBM0FDJCvkopYNQ2aN94Z7k0UnUKamE3OjU8DFYFFokbSI2J9V9gVlM8ALWThDPnPu3EL7HPD2VDaZTggzcCCmbvc70qqPcC9mt60ogcrTiA3HEjwTK8ymKeuJMc4q6dVz200XnYUtLR9GYjPXvFOVr6W1zUK1WbPToaWJJuKnxBLnd0ftDEbMmj4loHYyhZyMjM91zQS4p7z8eKa9h0JrbacekcirexG0z4n3187}   \end{align} concluding the vorticity estimates. \par Finally, we apply a standard barrier argument to
\eqref{8ThswELzXU3X7Ebd1KdZ7v1rN3GiirRXGKWK099ovBM0FDJCvkopYNQ2aN94Z7k0UnUKamE3OjU8DFYFFokbSI2J9V9gVlM8ALWThDPnPu3EL7HPD2VDaZTggzcCCmbvc70qqPcC9mt60ogcrTiA3HEjwTK8ymKeuJMc4q6dVz200XnYUtLR9GYjPXvFOVr6W1zUK1WbPToaWJJuKnxBLnd0ftDEbMmj4loHYyhZyMjM91zQS4p7z8eKa9h0JrbacekcirexG0z4n3157}, \eqref{8ThswELzXU3X7Ebd1KdZ7v1rN3GiirRXGKWK099ovBM0FDJCvkopYNQ2aN94Z7k0UnUKamE3OjU8DFYFFokbSI2J9V9gVlM8ALWThDPnPu3EL7HPD2VDaZTggzcCCmbvc70qqPcC9mt60ogcrTiA3HEjwTK8ymKeuJMc4q6dVz200XnYUtLR9GYjPXvFOVr6W1zUK1WbPToaWJJuKnxBLnd0ftDEbMmj4loHYyhZyMjM91zQS4p7z8eKa9h0JrbacekcirexG0z4n3178},  and \eqref{8ThswELzXU3X7Ebd1KdZ7v1rN3GiirRXGKWK099ovBM0FDJCvkopYNQ2aN94Z7k0UnUKamE3OjU8DFYFFokbSI2J9V9gVlM8ALWThDPnPu3EL7HPD2VDaZTggzcCCmbvc70qqPcC9mt60ogcrTiA3HEjwTK8ymKeuJMc4q6dVz200XnYUtLR9GYjPXvFOVr6W1zUK1WbPToaWJJuKnxBLnd0ftDEbMmj4loHYyhZyMjM91zQS4p7z8eKa9h0JrbacekcirexG0z4n3187}. \end{proof} \par \startnewsection{The local existence}{secle} In this section, we construct a solution to the Euler-plate model, thus proving Theorem~\ref{T03}. \par \subsection{Euler equations with given variable coefficients} \label{sec09} \par We start by assuming that the function $w$ on the top boundary $\Gamma_{1}$
is given, and  consider the Euler equations with given variable coefficients    \begin{align}\thelt{ 3vs 0Ep 3g M2Ew lPGj RVX6cx lb V OfA ll7 g6y L9 PWyo 58h0 e07HO0 qz 8 kbe 85Z BVC YO KxNN La4a FZ7mw7 mo A CU1 q1l pfm E5 qXTA 0QqV MnRsbK zH o 5vX 1tp MVZ XC znmS OM73 CRHwQP Tl v VN7 lKX I06 KT 6MTj O3Yb 87pgoz ox y dVJ HPL 3k2 KR yx3b 0yPB sJmNjE TP J i4k m2f xMh 35 MtRo irNE 9bU7lM o4 b nj9 GgY A6v sE sONR tNmD FJej96 ST n 3lJ U2u 16o TE Xogv Mqwh D0BKr1 Ci s VYb A2w kfX 0n 4hD5 Lbr8 l7Erfu N8 O cUj qeq zCC yx 6hPA yMrL eB8Cwl kT h ixd Izv i}    \begin{split}    &     \UIOIUYOIUyHJGKHJLOIUYOIUOIUYOIYIOUYTIUYIOOOIUYOIUYPOIUPOIUPOIUYOIUYOIUYOIUHOUHOHIOUHOIHOIUHOIUHIOUH_{t} v_i     + v_1 \tilde{a}_{j1} \UIOIUYOIUyHJGKHJLOIUYOIUOIUYOIYIOUYTIUYIOOOIUYOIUYPOIUPOIUPOIUYOIUYOIUYOIUHOUHOHIOUHOIHOIUHOIUHIOUH_{j} v_i     + v_2 \tilde{a}_{j2} \UIOIUYOIUyHJGKHJLOIUYOIUOIUYOIYIOUYTIUYIOOOIUYOIUYPOIUPOIUPOIUYOIUYOIUYOIUHOUHOHIOUHOIHOIUHOIUHIOUH_{j} v_i     + (v_3-\psi_t)\tilde{a}_{33} \UIOIUYOIUyHJGKHJLOIUYOIUOIUYOIYIOUYTIUYIOOOIUYOIUYPOIUPOIUPOIUYOIUYOIUYOIUHOUHOHIOUHOIHOIUHOIUHIOUH_{3} v_i     + \tilde{a}_{ki}\UIOIUYOIUyHJGKHJLOIUYOIUOIUYOIYIOUYTIUYIOOOIUYOIUYPOIUPOIUPOIUYOIUYOIUYOIUHOUHOHIOUHOIHOIUHOIUHIOUH_{k}q=0     ,     \\&     \tilde{a}_{ki} \UIOIUYOIUyHJGKHJLOIUYOIUOIUYOIYIOUYTIUYIOOOIUYOIUYPOIUPOIUPOIUYOIUYOIUYOIUHOUHOHIOUHOIHOIUHOIUHIOUH_{k}v_i=0      .
   \end{split}    \label{8ThswELzXU3X7Ebd1KdZ7v1rN3GiirRXGKWK099ovBM0FDJCvkopYNQ2aN94Z7k0UnUKamE3OjU8DFYFFokbSI2J9V9gVlM8ALWThDPnPu3EL7HPD2VDaZTggzcCCmbvc70qqPcC9mt60ogcrTiA3HEjwTK8ymKeuJMc4q6dVz200XnYUtLR9GYjPXvFOVr6W1zUK1WbPToaWJJuKnxBLnd0ftDEbMmj4loHYyhZyMjM91zQS4p7z8eKa9h0JrbacekcirexG0z4n3188}   \end{align} Here $\tilde{a}$ is  defined  as the inverse of the matrix $\nabla \tilde{\eta}$ where $\tilde{\eta}= (x_{1}, x_{2}, \psi)$, and $\psi$ is a harmonic function satisfying the boundary value problem  \begin{align}\thelt{v VN7 lKX I06 KT 6MTj O3Yb 87pgoz ox y dVJ HPL 3k2 KR yx3b 0yPB sJmNjE TP J i4k m2f xMh 35 MtRo irNE 9bU7lM o4 b nj9 GgY A6v sE sONR tNmD FJej96 ST n 3lJ U2u 16o TE Xogv Mqwh D0BKr1 Ci s VYb A2w kfX 0n 4hD5 Lbr8 l7Erfu N8 O cUj qeq zCC yx 6hPA yMrL eB8Cwl kT h ixd Izv iEW uw I8qK a0VZ EqOroD UP G phf IOF SKZ 3i cda7 Vh3y wUSzkk W8 S fU1 yHN 0A1 4z nyPU Ll6h pzlkq7 SK N aFq g9Y hj2 hJ 3pWS mi9X gjapmM Z6 H V8y jig pSN lI 9T8e Lhc1 eRRgZ8 85 e NJ8 }    \begin{split}    &\Delta \psi = 0      \inon{on $\Omega$}    \\&    \psi(x_1,x_2,1,t)=1+w(x_1,x_2,t)      \inon{on $\Gamma_1\times [0,T]$}    \\&
   \psi(x_1,x_2,0,t)=0      \inon{on $\Gamma_0\times [0,T]$}    .    \end{split}    \label{8ThswELzXU3X7Ebd1KdZ7v1rN3GiirRXGKWK099ovBM0FDJCvkopYNQ2aN94Z7k0UnUKamE3OjU8DFYFFokbSI2J9V9gVlM8ALWThDPnPu3EL7HPD2VDaZTggzcCCmbvc70qqPcC9mt60ogcrTiA3HEjwTK8ymKeuJMc4q6dVz200XnYUtLR9GYjPXvFOVr6W1zUK1WbPToaWJJuKnxBLnd0ftDEbMmj4loHYyhZyMjM91zQS4p7z8eKa9h0JrbacekcirexG0z4n3189}   \end{align} More explicitly, we have   \begin{align}\thelt{1 Ci s VYb A2w kfX 0n 4hD5 Lbr8 l7Erfu N8 O cUj qeq zCC yx 6hPA yMrL eB8Cwl kT h ixd Izv iEW uw I8qK a0VZ EqOroD UP G phf IOF SKZ 3i cda7 Vh3y wUSzkk W8 S fU1 yHN 0A1 4z nyPU Ll6h pzlkq7 SK N aFq g9Y hj2 hJ 3pWS mi9X gjapmM Z6 H V8y jig pSN lI 9T8e Lhc1 eRRgZ8 85 e NJ8 w3s ecl 5i lCdo zV1B oOIk9g DZ N Y5q gVQ cFe TD VxhP mwPh EU41Lq 35 g CzP tc2 oPu gV KOp5 Gsf7 DFBlek to b d2y uDt ElX xm j1us DJJ6 hj0HBV Fa n Tva bFA VwM 51 nUH6 0GvT 9fAjTO 4M Q}   \begin{split}       \tilde{a}=         \left( \begin{matrix} 1 & 0 & 0 \\           0 & 1& 0\\          -\fractext{\UIOIUYOIUyHJGKHJLOIUYOIUOIUYOIYIOUYTIUYIOOOIUYOIUYPOIUPOIUPOIUYOIUYOIUYOIUHOUHOHIOUHOIHOIUHOIUHIOUH_{1} \psi} {\UIOIUYOIUyHJGKHJLOIUYOIUOIUYOIYIOUYTIUYIOOOIUYOIUYPOIUPOIUPOIUYOIUYOIUYOIUHOUHOHIOUHOIHOIUHOIUHIOUH_{3} \psi}               &       -\fractext{\UIOIUYOIUyHJGKHJLOIUYOIUOIUYOIYIOUYTIUYIOOOIUYOIUYPOIUPOIUPOIUYOIUYOIUYOIUHOUHOHIOUHOIHOIUHOIUHIOUH_{2} \psi} {\UIOIUYOIUyHJGKHJLOIUYOIUOIUYOIYIOUYTIUYIOOOIUYOIUYPOIUPOIUPOIUYOIUYOIUYOIUHOUHOHIOUHOIHOIUHOIUHIOUH_{3} \psi}  
            & \fractext{1} {\UIOIUYOIUyHJGKHJLOIUYOIUOIUYOIYIOUYTIUYIOOOIUYOIUYPOIUPOIUPOIUYOIUYOIUYOIUHOUHOHIOUHOIHOIUHOIUHIOUH_{3} \psi}\end{matrix}\right)        ,   \end{split}   \label{8ThswELzXU3X7Ebd1KdZ7v1rN3GiirRXGKWK099ovBM0FDJCvkopYNQ2aN94Z7k0UnUKamE3OjU8DFYFFokbSI2J9V9gVlM8ALWThDPnPu3EL7HPD2VDaZTggzcCCmbvc70qqPcC9mt60ogcrTiA3HEjwTK8ymKeuJMc4q6dVz200XnYUtLR9GYjPXvFOVr6W1zUK1WbPToaWJJuKnxBLnd0ftDEbMmj4loHYyhZyMjM91zQS4p7z8eKa9h0JrbacekcirexG0z4n3190}   \end{align} and $\tilde{b}$ is the cofactor matrix   \begin{align}\thelt{pzlkq7 SK N aFq g9Y hj2 hJ 3pWS mi9X gjapmM Z6 H V8y jig pSN lI 9T8e Lhc1 eRRgZ8 85 e NJ8 w3s ecl 5i lCdo zV1B oOIk9g DZ N Y5q gVQ cFe TD VxhP mwPh EU41Lq 35 g CzP tc2 oPu gV KOp5 Gsf7 DFBlek to b d2y uDt ElX xm j1us DJJ6 hj0HBV Fa n Tva bFA VwM 51 nUH6 0GvT 9fAjTO 4M Q VzN NAQ iwS lS xf2p Q8qv tdjnvu pL A TIw ym4 nEY ES fMav UgZo yehtoe 9R T N15 EI1 aKJ SC nr4M jiYh B0A7vn SA Y nZ1 cXO I1V 7y ja0R 9jCT wxMUiM I5 l 2sT XnN RnV i1 KczL G3Mg JoEktl}   \begin{split}    \tilde{b}       = (\UIOIUYOIUyHJGKHJLOIUYOIUOIUYOIYIOUYTIUYIOOOIUYOIUYPOIUPOIUPOIUYOIUYOIUYOIUHOUHOHIOUHOIHOIUHOIUHIOUH_{3} \psi)\tilde{a}       =       \begin{pmatrix}        \UIOIUYOIUyHJGKHJLOIUYOIUOIUYOIYIOUYTIUYIOOOIUYOIUYPOIUPOIUPOIUYOIUYOIUYOIUHOUHOHIOUHOIHOIUHOIUHIOUH_{3}\psi &  0 & 0 \\        0 & \UIOIUYOIUyHJGKHJLOIUYOIUOIUYOIYIOUYTIUYIOOOIUYOIUYPOIUPOIUPOIUYOIUYOIUYOIUHOUHOHIOUHOIHOIUHOIUHIOUH_{3} \psi & 0 \\
       -\UIOIUYOIUyHJGKHJLOIUYOIUOIUYOIYIOUYTIUYIOOOIUYOIUYPOIUPOIUPOIUYOIUYOIUYOIUHOUHOHIOUHOIHOIUHOIUHIOUH_{1}\psi & -\UIOIUYOIUyHJGKHJLOIUYOIUOIUYOIYIOUYTIUYIOOOIUYOIUYPOIUPOIUPOIUYOIUYOIUYOIUHOUHOHIOUHOIHOIUHOIUHIOUH_{2}\psi & 1      \end{pmatrix}      .   \end{split}      \label{8ThswELzXU3X7Ebd1KdZ7v1rN3GiirRXGKWK099ovBM0FDJCvkopYNQ2aN94Z7k0UnUKamE3OjU8DFYFFokbSI2J9V9gVlM8ALWThDPnPu3EL7HPD2VDaZTggzcCCmbvc70qqPcC9mt60ogcrTiA3HEjwTK8ymKeuJMc4q6dVz200XnYUtLR9GYjPXvFOVr6W1zUK1WbPToaWJJuKnxBLnd0ftDEbMmj4loHYyhZyMjM91zQS4p7z8eKa9h0JrbacekcirexG0z4n3191}    \end{align} Note that, since $\tilde b$ is a cofactor matrix (or by a direct verification), it satisfies the Piola identity   \begin{equation}    \UIOIUYOIUyHJGKHJLOIUYOIUOIUYOIYIOUYTIUYIOOOIUYOIUYPOIUPOIUPOIUYOIUYOIUYOIUHOUHOHIOUHOIHOIUHOIUHIOUH_i \tilde b_{ij}    =0    \comma j=1,2,3    .
   \llabel{8ThswELzXU3X7Ebd1KdZ7v1rN3GiirRXGKWK099ovBM0FDJCvkopYNQ2aN94Z7k0UnUKamE3OjU8DFYFFokbSI2J9V9gVlM8ALWThDPnPu3EL7HPD2VDaZTggzcCCmbvc70qqPcC9mt60ogcrTiA3HEjwTK8ymKeuJMc4q6dVz200XnYUtLR9GYjPXvFOVr6W1zUK1WbPToaWJJuKnxBLnd0ftDEbMmj4loHYyhZyMjM91zQS4p7z8eKa9h0JrbacekcirexG0z4n3192}   \end{equation} We impose the boundary condition   \begin{equation}    v_3=0    \inon{on $\Gamma_0$}    \label{8ThswELzXU3X7Ebd1KdZ7v1rN3GiirRXGKWK099ovBM0FDJCvkopYNQ2aN94Z7k0UnUKamE3OjU8DFYFFokbSI2J9V9gVlM8ALWThDPnPu3EL7HPD2VDaZTggzcCCmbvc70qqPcC9mt60ogcrTiA3HEjwTK8ymKeuJMc4q6dVz200XnYUtLR9GYjPXvFOVr6W1zUK1WbPToaWJJuKnxBLnd0ftDEbMmj4loHYyhZyMjM91zQS4p7z8eKa9h0JrbacekcirexG0z4n3193}   \end{equation} on the bottom boundary $\Gamma_0$ and   \begin{equation}      \tilde{b}_{3i}v_i = w_{t}      \inon{on $\Gamma_1$}    \label{8ThswELzXU3X7Ebd1KdZ7v1rN3GiirRXGKWK099ovBM0FDJCvkopYNQ2aN94Z7k0UnUKamE3OjU8DFYFFokbSI2J9V9gVlM8ALWThDPnPu3EL7HPD2VDaZTggzcCCmbvc70qqPcC9mt60ogcrTiA3HEjwTK8ymKeuJMc4q6dVz200XnYUtLR9GYjPXvFOVr6W1zUK1WbPToaWJJuKnxBLnd0ftDEbMmj4loHYyhZyMjM91zQS4p7z8eKa9h0JrbacekcirexG0z4n3194}   \end{equation}
on~$\Gamma_{1}$. Assume that we have   \begin{equation}    (w,w_{t},w_{tt}) \in L^{\infty}([0,T]; H^{4+\delta}(\Gamma_{1}) \times H^{2+\delta}(\Gamma_{1}) \times H^{\delta}(\Gamma_{1}))    \label{8ThswELzXU3X7Ebd1KdZ7v1rN3GiirRXGKWK099ovBM0FDJCvkopYNQ2aN94Z7k0UnUKamE3OjU8DFYFFokbSI2J9V9gVlM8ALWThDPnPu3EL7HPD2VDaZTggzcCCmbvc70qqPcC9mt60ogcrTiA3HEjwTK8ymKeuJMc4q6dVz200XnYUtLR9GYjPXvFOVr6W1zUK1WbPToaWJJuKnxBLnd0ftDEbMmj4loHYyhZyMjM91zQS4p7z8eKa9h0JrbacekcirexG0z4n3195}   \end{equation} with   \begin{align}\thelt{Gsf7 DFBlek to b d2y uDt ElX xm j1us DJJ6 hj0HBV Fa n Tva bFA VwM 51 nUH6 0GvT 9fAjTO 4M Q VzN NAQ iwS lS xf2p Q8qv tdjnvu pL A TIw ym4 nEY ES fMav UgZo yehtoe 9R T N15 EI1 aKJ SC nr4M jiYh B0A7vn SA Y nZ1 cXO I1V 7y ja0R 9jCT wxMUiM I5 l 2sT XnN RnV i1 KczL G3Mg JoEktl Ko U 13t saq jrH YV zfb1 yyxu npbRA5 6b r W45 Iqh fKo 0z j04I cGrH irwyH2 tJ b Fr3 leR dcp st vXe2 yJle kGVFCe 2a D 4XP OuI mtV oa zCKO 3uRI m2KFjt m5 R GWC vko zi7 5Y WNsb hORn x}    \OIUYJHUGFAJKLDHFKJLSDHFLKSDJFHLKSDJHFLKSDJHFLKDJFHLLDKHFLKSDHJFALKJHLJLHGLKHHLKJHLKGKHGJKHGKJHLKHJLKJH_{\Gamma_{1}}w_{t} =0   \label{8ThswELzXU3X7Ebd1KdZ7v1rN3GiirRXGKWK099ovBM0FDJCvkopYNQ2aN94Z7k0UnUKamE3OjU8DFYFFokbSI2J9V9gVlM8ALWThDPnPu3EL7HPD2VDaZTggzcCCmbvc70qqPcC9mt60ogcrTiA3HEjwTK8ymKeuJMc4q6dVz200XnYUtLR9GYjPXvFOVr6W1zUK1WbPToaWJJuKnxBLnd0ftDEbMmj4loHYyhZyMjM91zQS4p7z8eKa9h0JrbacekcirexG0z4n3196}   \end{align}  and $w_{0} =0$, so that $\psi(0,x) =x_{3}$ and $\tilde{a}(0)=I$. We further assume that  the matrix $\nabla \tilde{\eta}$ is non-singular 
on $[0,T]$ with a well-defined inverse $\tilde{a}$ (i.e., $\UIOIUYOIUyHJGKHJLOIUYOIUOIUYOIYIOUYTIUYIOOOIUYOIUYPOIUPOIUPOIUYOIUYOIUYOIUHOUHOHIOUHOIHOIUHOIUHIOUH_{3} \psi \neq 0$) and is such that    \begin{align}\thelt{nr4M jiYh B0A7vn SA Y nZ1 cXO I1V 7y ja0R 9jCT wxMUiM I5 l 2sT XnN RnV i1 KczL G3Mg JoEktl Ko U 13t saq jrH YV zfb1 yyxu npbRA5 6b r W45 Iqh fKo 0z j04I cGrH irwyH2 tJ b Fr3 leR dcp st vXe2 yJle kGVFCe 2a D 4XP OuI mtV oa zCKO 3uRI m2KFjt m5 R GWC vko zi7 5Y WNsb hORn xzRzw9 9T r Fhj hKb fqL Ab e2v5 n9mD 2VpNzl Mn n toi FZB 2Zj XB hhsK 8K6c GiSbRk kw f WeY JXd RBB xy qjEV F5lr 3dFrxG lT c sby AEN cqA 98 1IQ4 UGpB k0gBeJ 6D n 9Jh kne 5f5 18 umOu L}    \Vert \tilde{a} -I \Vert_{L^{\infty}([0,T];H^{1.5+\delta}(\Omega))}     \leq \epsilon     \label{8ThswELzXU3X7Ebd1KdZ7v1rN3GiirRXGKWK099ovBM0FDJCvkopYNQ2aN94Z7k0UnUKamE3OjU8DFYFFokbSI2J9V9gVlM8ALWThDPnPu3EL7HPD2VDaZTggzcCCmbvc70qqPcC9mt60ogcrTiA3HEjwTK8ymKeuJMc4q6dVz200XnYUtLR9GYjPXvFOVr6W1zUK1WbPToaWJJuKnxBLnd0ftDEbMmj4loHYyhZyMjM91zQS4p7z8eKa9h0JrbacekcirexG0z4n3197}   \end{align}  and    \begin{align}\thelt{p st vXe2 yJle kGVFCe 2a D 4XP OuI mtV oa zCKO 3uRI m2KFjt m5 R GWC vko zi7 5Y WNsb hORn xzRzw9 9T r Fhj hKb fqL Ab e2v5 n9mD 2VpNzl Mn n toi FZB 2Zj XB hhsK 8K6c GiSbRk kw f WeY JXd RBB xy qjEV F5lr 3dFrxG lT c sby AEN cqA 98 1IQ4 UGpB k0gBeJ 6D n 9Jh kne 5f5 18 umOu LnIa spzcRf oC 0 StS y0D F8N Nz F2Up PtNG 50tqKT k2 e 51y Ubr szn Qb eIui Y5qa SGjcXi El 4 5B5 Pny Qtn UO MHis kTC2 KsWkjh a6 l oMf gZK G3n Hp h0gn NQ7q 0QxsQk gQ w Kwy hfP 5qF Ww N}    \Vert \tilde{b} -I \Vert_{L^{\infty}([0,T];H^{1.5+\delta}(\Omega))} \leq \epsilon     ,    \label{8ThswELzXU3X7Ebd1KdZ7v1rN3GiirRXGKWK099ovBM0FDJCvkopYNQ2aN94Z7k0UnUKamE3OjU8DFYFFokbSI2J9V9gVlM8ALWThDPnPu3EL7HPD2VDaZTggzcCCmbvc70qqPcC9mt60ogcrTiA3HEjwTK8ymKeuJMc4q6dVz200XnYUtLR9GYjPXvFOVr6W1zUK1WbPToaWJJuKnxBLnd0ftDEbMmj4loHYyhZyMjM91zQS4p7z8eKa9h0JrbacekcirexG0z4n3198}   \end{align}  for some $\epsilon >0$ sufficiently small. 
Note that we have the estimate   \begin{align}\thelt{Xd RBB xy qjEV F5lr 3dFrxG lT c sby AEN cqA 98 1IQ4 UGpB k0gBeJ 6D n 9Jh kne 5f5 18 umOu LnIa spzcRf oC 0 StS y0D F8N Nz F2Up PtNG 50tqKT k2 e 51y Ubr szn Qb eIui Y5qa SGjcXi El 4 5B5 Pny Qtn UO MHis kTC2 KsWkjh a6 l oMf gZK G3n Hp h0gn NQ7q 0QxsQk gQ w Kwy hfP 5qF Ww NaHx SKTA 63ClhG Bg a ruj HnG Kf4 6F QtVt SPgE gTeY6f JG m B3q gXx tR8 RT CPB1 8kQa jtt6GD rK b 1VY LV3 RgW Ir AyZf 69V8 VM7jHO b7 z Lva XTT VI0 ON KMBA HOwO Z7dPky Cg U S74 Hln FZM}    \Vert \tilde{a}  \Vert_{L^{\infty}([0,T];H^{s-1/2}(\Omega))}     \leq     \Vert w  \Vert_{L^{\infty}([0,T];H^{s}(\Gamma_{1}))}     ,    \label{8ThswELzXU3X7Ebd1KdZ7v1rN3GiirRXGKWK099ovBM0FDJCvkopYNQ2aN94Z7k0UnUKamE3OjU8DFYFFokbSI2J9V9gVlM8ALWThDPnPu3EL7HPD2VDaZTggzcCCmbvc70qqPcC9mt60ogcrTiA3HEjwTK8ymKeuJMc4q6dVz200XnYUtLR9GYjPXvFOVr6W1zUK1WbPToaWJJuKnxBLnd0ftDEbMmj4loHYyhZyMjM91zQS4p7z8eKa9h0JrbacekcirexG0z4n3199}   \end{align}  for $t\in [0,T]$ and $s >1/2$. \par We prove the following theorem pertaining to the above Euler system with given coefficients. \par \cole \begin{Theorem}
\label{T04} Assume that $v_{0}\in H^{2.5 +\delta}$, where $\delta \geq 0.5$, satisfies \eqref{8ThswELzXU3X7Ebd1KdZ7v1rN3GiirRXGKWK099ovBM0FDJCvkopYNQ2aN94Z7k0UnUKamE3OjU8DFYFFokbSI2J9V9gVlM8ALWThDPnPu3EL7HPD2VDaZTggzcCCmbvc70qqPcC9mt60ogcrTiA3HEjwTK8ymKeuJMc4q6dVz200XnYUtLR9GYjPXvFOVr6W1zUK1WbPToaWJJuKnxBLnd0ftDEbMmj4loHYyhZyMjM91zQS4p7z8eKa9h0JrbacekcirexG0z4n3193}--\eqref{8ThswELzXU3X7Ebd1KdZ7v1rN3GiirRXGKWK099ovBM0FDJCvkopYNQ2aN94Z7k0UnUKamE3OjU8DFYFFokbSI2J9V9gVlM8ALWThDPnPu3EL7HPD2VDaZTggzcCCmbvc70qqPcC9mt60ogcrTiA3HEjwTK8ymKeuJMc4q6dVz200XnYUtLR9GYjPXvFOVr6W1zUK1WbPToaWJJuKnxBLnd0ftDEbMmj4loHYyhZyMjM91zQS4p7z8eKa9h0JrbacekcirexG0z4n3194}, and suppose    \begin{equation}    (w, w_{t},w_{tt} ) \in L^{\infty}([0,T]; H^{4+\delta}(\Gamma_{1}) \times  H^{2+\delta}       (\Gamma_{1}) \times H^{\delta} (\Gamma_{1}))    \llabel{8ThswELzXU3X7Ebd1KdZ7v1rN3GiirRXGKWK099ovBM0FDJCvkopYNQ2aN94Z7k0UnUKamE3OjU8DFYFFokbSI2J9V9gVlM8ALWThDPnPu3EL7HPD2VDaZTggzcCCmbvc70qqPcC9mt60ogcrTiA3HEjwTK8ymKeuJMc4q6dVz200XnYUtLR9GYjPXvFOVr6W1zUK1WbPToaWJJuKnxBLnd0ftDEbMmj4loHYyhZyMjM91zQS4p7z8eKa9h0JrbacekcirexG0z4n3200}   \end{equation} with  $w_0=0$ and the compatibility condition \eqref{8ThswELzXU3X7Ebd1KdZ7v1rN3GiirRXGKWK099ovBM0FDJCvkopYNQ2aN94Z7k0UnUKamE3OjU8DFYFFokbSI2J9V9gVlM8ALWThDPnPu3EL7HPD2VDaZTggzcCCmbvc70qqPcC9mt60ogcrTiA3HEjwTK8ymKeuJMc4q6dVz200XnYUtLR9GYjPXvFOVr6W1zUK1WbPToaWJJuKnxBLnd0ftDEbMmj4loHYyhZyMjM91zQS4p7z8eKa9h0JrbacekcirexG0z4n3196}, as well as  \eqref{8ThswELzXU3X7Ebd1KdZ7v1rN3GiirRXGKWK099ovBM0FDJCvkopYNQ2aN94Z7k0UnUKamE3OjU8DFYFFokbSI2J9V9gVlM8ALWThDPnPu3EL7HPD2VDaZTggzcCCmbvc70qqPcC9mt60ogcrTiA3HEjwTK8ymKeuJMc4q6dVz200XnYUtLR9GYjPXvFOVr6W1zUK1WbPToaWJJuKnxBLnd0ftDEbMmj4loHYyhZyMjM91zQS4p7z8eKa9h0JrbacekcirexG0z4n327}--\eqref{8ThswELzXU3X7Ebd1KdZ7v1rN3GiirRXGKWK099ovBM0FDJCvkopYNQ2aN94Z7k0UnUKamE3OjU8DFYFFokbSI2J9V9gVlM8ALWThDPnPu3EL7HPD2VDaZTggzcCCmbvc70qqPcC9mt60ogcrTiA3HEjwTK8ymKeuJMc4q6dVz200XnYUtLR9GYjPXvFOVr6W1zUK1WbPToaWJJuKnxBLnd0ftDEbMmj4loHYyhZyMjM91zQS4p7z8eKa9h0JrbacekcirexG0z4n329}.
Then, there exists a local-in-time solution $(v,q)$ to the system \eqref{8ThswELzXU3X7Ebd1KdZ7v1rN3GiirRXGKWK099ovBM0FDJCvkopYNQ2aN94Z7k0UnUKamE3OjU8DFYFFokbSI2J9V9gVlM8ALWThDPnPu3EL7HPD2VDaZTggzcCCmbvc70qqPcC9mt60ogcrTiA3HEjwTK8ymKeuJMc4q6dVz200XnYUtLR9GYjPXvFOVr6W1zUK1WbPToaWJJuKnxBLnd0ftDEbMmj4loHYyhZyMjM91zQS4p7z8eKa9h0JrbacekcirexG0z4n3188} with the boundary conditions \eqref{8ThswELzXU3X7Ebd1KdZ7v1rN3GiirRXGKWK099ovBM0FDJCvkopYNQ2aN94Z7k0UnUKamE3OjU8DFYFFokbSI2J9V9gVlM8ALWThDPnPu3EL7HPD2VDaZTggzcCCmbvc70qqPcC9mt60ogcrTiA3HEjwTK8ymKeuJMc4q6dVz200XnYUtLR9GYjPXvFOVr6W1zUK1WbPToaWJJuKnxBLnd0ftDEbMmj4loHYyhZyMjM91zQS4p7z8eKa9h0JrbacekcirexG0z4n3193} and \eqref{8ThswELzXU3X7Ebd1KdZ7v1rN3GiirRXGKWK099ovBM0FDJCvkopYNQ2aN94Z7k0UnUKamE3OjU8DFYFFokbSI2J9V9gVlM8ALWThDPnPu3EL7HPD2VDaZTggzcCCmbvc70qqPcC9mt60ogcrTiA3HEjwTK8ymKeuJMc4q6dVz200XnYUtLR9GYjPXvFOVr6W1zUK1WbPToaWJJuKnxBLnd0ftDEbMmj4loHYyhZyMjM91zQS4p7z8eKa9h0JrbacekcirexG0z4n3194} such that   \begin{align}\thelt{5B5 Pny Qtn UO MHis kTC2 KsWkjh a6 l oMf gZK G3n Hp h0gn NQ7q 0QxsQk gQ w Kwy hfP 5qF Ww NaHx SKTA 63ClhG Bg a ruj HnG Kf4 6F QtVt SPgE gTeY6f JG m B3q gXx tR8 RT CPB1 8kQa jtt6GD rK b 1VY LV3 RgW Ir AyZf 69V8 VM7jHO b7 z Lva XTT VI0 ON KMBA HOwO Z7dPky Cg U S74 Hln FZM Ha br8m lHbQ NSwwdo mO L 6q5 wvR exV ej vVHk CEdX m3cU54 ju Z SKn g8w cj6 hR 1FnZ Jbkm gKXJgF m5 q Z5S ubX vPK DB OCGf 4srh 1a5FL0 vY f RjJ wUm 2sf Co gRha bxyc 0Rgava Rb k jzl te}   \begin{split}    &v \in L^{\infty}([0,T];H^{2.5+ \delta}(\Omega))      \\&       v_{t} \in L^{\infty}([0,T];H^{0.5 +\delta}(\Omega))      \\&      q \in L^{\infty}([0,T];H^{1.5 + \delta}(\Omega)),   \end{split}    \llabel{8ThswELzXU3X7Ebd1KdZ7v1rN3GiirRXGKWK099ovBM0FDJCvkopYNQ2aN94Z7k0UnUKamE3OjU8DFYFFokbSI2J9V9gVlM8ALWThDPnPu3EL7HPD2VDaZTggzcCCmbvc70qqPcC9mt60ogcrTiA3HEjwTK8ymKeuJMc4q6dVz200XnYUtLR9GYjPXvFOVr6W1zUK1WbPToaWJJuKnxBLnd0ftDEbMmj4loHYyhZyMjM91zQS4p7z8eKa9h0JrbacekcirexG0z4n3201}   \end{align} for some time $T>0$ depending on the initial data and $(w, w_{t})$. The solution is unique up to  an additive function of time for the pressure $q$.
Moreover, the solution  $(v,q)$ satisfies the estimate   \begin{align}\thelt{rK b 1VY LV3 RgW Ir AyZf 69V8 VM7jHO b7 z Lva XTT VI0 ON KMBA HOwO Z7dPky Cg U S74 Hln FZM Ha br8m lHbQ NSwwdo mO L 6q5 wvR exV ej vVHk CEdX m3cU54 ju Z SKn g8w cj6 hR 1FnZ Jbkm gKXJgF m5 q Z5S ubX vPK DB OCGf 4srh 1a5FL0 vY f RjJ wUm 2sf Co gRha bxyc 0Rgava Rb k jzl teR GEx bE MMhL Zbh3 axosCq u7 k Z1P t6Y 8zJ Xt vmvP vAr3 LSWDjb VP N 7eN u20 r8B w2 ivnk zMda 93zWWi UB H wQz ahU iji 2T rXI8 v2HN ShbTKL eK W 83W rQK O4T Zm 57yz oVYZ JytSg2 Wx 4 Y}   \begin{split}    \Vert v(t) \Vert_{H^{2.5+\delta}}     + \Vert \nabla q(t) \Vert_{H^{0.5+\delta}}     \leq    \Vert v_{0} \Vert_{H^{2.5+\delta}}      + \OIUYJHUGFAJKLDHFKJLSDHFLKSDJFHLKSDJHFLKSDJHFLKDJFHLLDKHFLKSDHJFALKJHLJLHGLKHHLKJHLKGKHGJKHGKJHLKHJLKJH_{0}^{t} P(                       \Vert w(s) \Vert_{H^{4+\delta}(\Gamma_{1})},                       \Vert w_{t}(s) \Vert_{H^{2+\delta}(\Gamma_{1})}         )\, ds   ,
  \end{split}   \label{8ThswELzXU3X7Ebd1KdZ7v1rN3GiirRXGKWK099ovBM0FDJCvkopYNQ2aN94Z7k0UnUKamE3OjU8DFYFFokbSI2J9V9gVlM8ALWThDPnPu3EL7HPD2VDaZTggzcCCmbvc70qqPcC9mt60ogcrTiA3HEjwTK8ymKeuJMc4q6dVz200XnYUtLR9GYjPXvFOVr6W1zUK1WbPToaWJJuKnxBLnd0ftDEbMmj4loHYyhZyMjM91zQS4p7z8eKa9h0JrbacekcirexG0z4n3202}    \end{align} for $t\in[0,T)$. \end{Theorem}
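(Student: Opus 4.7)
The plan is to follow the five-stage construction sketched in the introduction. The essential obstructions to a straightforward fixed point scheme on the original system are the inhomogeneous inflow condition $\tilde b_{3i}v_i = w_t$ on $\Gamma_1$, which the transport characteristics do not naturally see, and the low regularity of the pressure Robin-type boundary condition, whose right-hand side through the plate equation inherits the $w_{tt}\in H^\delta(\Gamma_1)$ term. I first work with a smoothed boundary datum $w^{(n)}$ such that $w^{(n)}_t\in H^{3+\delta}$ and $w^{(n)}_{tt}\in H^{1+\delta}$, construct a solution at this higher regularity, and then pass to the limit using the a~priori bounds from Section~\ref{secap}.

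For the linear step, I freeze a transport velocity $\bar v$ in the nonlinear terms of the momentum equation and solve simultaneously for $(v,q)$. Applying $\tilde a_{ji}\partial_j$ to the momentum equation and using the Piola identity yields a divergence-form elliptic problem for $q$ on $\Omega$, with Robin-type boundary data on $\Gamma_1$ coming from the plate equation. The structural cancellation identified in Remark~\ref{R01} is the heart of the argument: it reinterprets the $w_{tt}$ contribution as a combination of tangentially differentiated quantities of $v$ and $w_t$, allowing the Robin data to live in $H^{\delta}(\Gamma_1)$ so that Lemma~\ref{L08} delivers $q\in H^{1.5+\delta}$. With the pressure gradient in hand, I extend $\tilde a$, $\tilde b$, $\psi_t$, $\bar v$, and $\nabla q$ to ${\mathbb T}^2\times {\mathbb R}$ via the Sobolev extension used in Section~\ref{sec05}. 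The momentum equation on the whole space then becomes a linear transport equation with smooth coefficients and no imposed boundary conditions, for which existence at the required regularity is standard.

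The nonlinear step is a Banach fixed point argument for the map $\Phi\colon \bar v\mapsto v$ on a ball in $L^\infty([0,T]; H^{2.5+\delta})$ for short time $T$, with contractivity verified in a weaker Sobolev norm in the spirit of the uniqueness proof in Section~\ref{sec08}. Once the fixed point $v$ is obtained for the regularized data, it is necessary to verify that it satisfies the ALE divergence-free constraint $\tilde a_{ki}\partial_k v_i = 0$ in $\Omega$ and the inflow condition $\tilde b_{3i}v_i = w_t$ on $\Gamma_1$, since neither was enforced directly in the whole-space transport formulation. Combining the Piola identity with the compatibility conditions on $(v_0, w_1)$ shows that the scalars $\tilde a_{ki}\partial_k v_i$ in $\Omega$ and $\tilde b_{3i}v_i - w_t$ on $\Gamma_1$ solve homogeneous linear transport problems with vanishing initial data, so they vanish identically on $[0,T]$.

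For the final step, I combine the vorticity formulation and the div-curl bound of Section~\ref{sec05} with the pressure estimate of Lemma~\ref{L03} to obtain an a~priori bound uniform in the regularization parameter; a standard compactness argument then produces a limit solution at the stated regularity satisfying the claimed estimate, while uniqueness up to an additive function of time in $q$ comes from a straightforward differencing energy argument carried out in the norms appearing in Section~\ref{sec08}. The main obstacle I expect is the verification described in the previous paragraph: guaranteeing that the whole-space transport solution produced by the fixed point actually respects the boundary and divergence constraints. This depends delicately on the interplay between the Piola identity, the compatibility conditions at $t=0$, and a propagation-of-zero argument for the relevant scalar quantities, and has no direct analogue in the classical fixed-domain Euler fixed-point constructions.
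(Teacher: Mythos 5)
Your overall architecture matches the paper's proof of Theorem~\ref{T04}: regularize the boundary data, solve a linear transport problem on an extended domain with a separately constructed pressure, run a fixed point, recover the divergence and inflow constraints by showing the scalar defects satisfy homogeneous transport problems, and finally use the vorticity/div-curl machinery plus compactness to reach the stated data class. However, the mechanism you propose for the pressure step is misidentified, and this is not cosmetic. In Theorem~\ref{T04} the function $w$ is \emph{given data}; there is no plate equation coupling the pressure to $w_{tt}$. The boundary condition for $q$ on $\Gamma_1$, obtained by contracting the momentum equation with $\tilde b_{3i}$ and using $\tilde b_{3i}v_i=w_t$, is a \emph{Neumann} condition whose datum contains $-w_{tt}$ explicitly; Remark~\ref{R01} only makes the velocity terms one derivative more regular, it does not remove or ``reinterpret'' the $w_{tt}$ contribution. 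If you instead impose the Robin condition ``coming from the plate equation,'' you are solving a different boundary value problem for $q$, and your own reconstruction step then breaks: the homogeneous transport equation for $\tilde b_{3i}v_i-w_t$ on $\Gamma_1$ arises precisely because the Neumann datum contains $-w_{tt}+\partial_t\tilde b_{3i}\,v_i$, so that the substitution produces $\partial_t(\tilde b_{3i}v_i-w_t)$; with the Robin datum this cancellation would require the plate equation to hold with the very pressure being constructed, which is not among the hypotheses, and the defect would not propagate as zero. Moreover, with the Neumann formulation a solvability condition (the integral of the interior source must match the integral of the boundary datum) has to be engineered — the paper inserts the time-dependent correction $\mathcal{E}$ and verifies the compatibility in the Appendix — and your proposal does not address solvability at all.

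There is also a gap in the regularity bookkeeping of the fixed point. The transport equation for $v$ carries $E(\tilde a)E(\nabla q)$ as a forcing term, so to keep the iterates in a ball of $H^{2.5+\delta}$ you need $\nabla q\in H^{2.5+\delta}$, which through the elliptic estimate of Lemma~\ref{L08} forces $w_{tt}\in H^{2+\delta}(\Gamma_1)$; this is exactly why the paper regularizes the boundary data all the way to $(w,w_t,w_{tt})\in H^{6+\delta}\times H^{4+\delta}\times H^{2+\delta}$ in its first two steps. Your smoothing to $w^{(n)}_t\in H^{3+\delta}$, $w^{(n)}_{tt}\in H^{1+\delta}$ yields at best $\nabla q\in H^{1.5+\delta}$, and the assertion that boundary data in $H^{\delta}$ with $q\in H^{1.5+\delta}$ suffices to run a Banach fixed point on an $H^{2.5+\delta}$ ball does not close: the map will not preserve the ball. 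The vorticity/div-curl estimate cannot be postponed to ``the final step'' as a mere source of uniform bounds; in the paper it is the essential bridge (Step~4) that converts the highly regular constructed solutions into solutions whose $H^{2.5+\delta}$ norm is controlled only by the lower norms of $(w,w_t)$, and it is this estimate — not the fixed point itself — that makes the passage to the limit toward the stated data class possible. Without first constructing solutions at the higher regularity level, there is nothing to which that estimate can be applied.
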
 \colb \par In the proof of the theorem, we shall employ the generalized vorticity  corresponding to a given velocity (see \eqref{8ThswELzXU3X7Ebd1KdZ7v1rN3GiirRXGKWK099ovBM0FDJCvkopYNQ2aN94Z7k0UnUKamE3OjU8DFYFFokbSI2J9V9gVlM8ALWThDPnPu3EL7HPD2VDaZTggzcCCmbvc70qqPcC9mt60ogcrTiA3HEjwTK8ymKeuJMc4q6dVz200XnYUtLR9GYjPXvFOVr6W1zUK1WbPToaWJJuKnxBLnd0ftDEbMmj4loHYyhZyMjM91zQS4p7z8eKa9h0JrbacekcirexG0z4n3227} below). In  order to estimate the velocity from the vorticity, we use the following div-curl theorem. \par \cole
\begin{lemma} \label{L06} For a fixed time $t\in[0,T]$, consider the system   \begin{align}\thelt{XJgF m5 q Z5S ubX vPK DB OCGf 4srh 1a5FL0 vY f RjJ wUm 2sf Co gRha bxyc 0Rgava Rb k jzl teR GEx bE MMhL Zbh3 axosCq u7 k Z1P t6Y 8zJ Xt vmvP vAr3 LSWDjb VP N 7eN u20 r8B w2 ivnk zMda 93zWWi UB H wQz ahU iji 2T rXI8 v2HN ShbTKL eK W 83W rQK O4T Zm 57yz oVYZ JytSg2 Wx 4 Yaf THA xS7 ka cIPQ JGYd Dk0531 u2 Q IKf REW YcM KM UT7f dT9E kIfUJ3 pM W 59Q LFm u02 YH Jaa2 Er6K SIwTBG DJ Y Zwv fSJ Qby 7f dFWd fT9z U27ws5 oU 5 MUT DJz KFN oj dXRy BaYy bTvnhh 2}    \begin{split}    &     \epsilon_{ijk} \tilde{b}_{mj}\UIOIUYOIUyHJGKHJLOIUYOIUOIUYOIYIOUYTIUYIOOOIUYOIUYPOIUPOIUPOIUYOIUYOIUYOIUHOUHOHIOUHOIHOIUHOIUHIOUH_{m}  v_{k} =\zeta_{i}     \inon{in $\Omega$}    \comma i=1,2,3     \\&      \tilde{b}_{mj}\UIOIUYOIUyHJGKHJLOIUYOIUOIUYOIYIOUYTIUYIOOOIUYOIUYPOIUPOIUPOIUYOIUYOIUYOIUHOUHOHIOUHOIHOIUHOIUHIOUH_{m}       v_{j} =0       \inon{in $\Omega$}     \\& 
    \tilde{b}_{3j} v_{j} = \psi_t     \inon{on $\Gamma_0 \cup \Gamma_1$}    ,    \end{split}    \llabel{8ThswELzXU3X7Ebd1KdZ7v1rN3GiirRXGKWK099ovBM0FDJCvkopYNQ2aN94Z7k0UnUKamE3OjU8DFYFFokbSI2J9V9gVlM8ALWThDPnPu3EL7HPD2VDaZTggzcCCmbvc70qqPcC9mt60ogcrTiA3HEjwTK8ymKeuJMc4q6dVz200XnYUtLR9GYjPXvFOVr6W1zUK1WbPToaWJJuKnxBLnd0ftDEbMmj4loHYyhZyMjM91zQS4p7z8eKa9h0JrbacekcirexG0z4n3203}   \end{align}  where $\zeta\in H^{1.5+\delta}$ with $\tilde{b} \in H^{2.5+\delta}$ and $\Vert \tilde{b}-I\Vert_{L^{\infty}}\leq \epsilon_0$. If $\epsilon_0>0$ is sufficiently small,  then $v$ satisfies the estimate   \begin{equation}    \Vert v\Vert_{H^{2.5+\delta}}    \dlkjfhlaskdhjflkasdjhflkasjhdflkasjhdflkasjhdfls    \Vert \zeta\Vert_{H^{1.5+\delta}}
   + \Vert w_t\Vert_{H^{2+\delta}(\Gamma_1)}    + \Vert v\Vert_{L^2}    ,    \label{8ThswELzXU3X7Ebd1KdZ7v1rN3GiirRXGKWK099ovBM0FDJCvkopYNQ2aN94Z7k0UnUKamE3OjU8DFYFFokbSI2J9V9gVlM8ALWThDPnPu3EL7HPD2VDaZTggzcCCmbvc70qqPcC9mt60ogcrTiA3HEjwTK8ymKeuJMc4q6dVz200XnYUtLR9GYjPXvFOVr6W1zUK1WbPToaWJJuKnxBLnd0ftDEbMmj4loHYyhZyMjM91zQS4p7z8eKa9h0JrbacekcirexG0z4n3204}   \end{equation} where the implicit constant depends on the bound on~$\tilde{b}$. \end{lemma}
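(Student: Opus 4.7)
The plan is to reduce the problem to the standard flat-coefficient div-curl estimate from \cite{BB}, which was already invoked in \eqref{8ThswELzXU3X7Ebd1KdZ7v1rN3GiirRXGKWK099ovBM0FDJCvkopYNQ2aN94Z7k0UnUKamE3OjU8DFYFFokbSI2J9V9gVlM8ALWThDPnPu3EL7HPD2VDaZTggzcCCmbvc70qqPcC9mt60ogcrTiA3HEjwTK8ymKeuJMc4q6dVz200XnYUtLR9GYjPXvFOVr6W1zUK1WbPToaWJJuKnxBLnd0ftDEbMmj4loHYyhZyMjM91zQS4p7z8eKa9h0JrbacekcirexG0z4n3128}, namely
\begin{equation*}
\Vert v\Vert_{H^{2.5+\delta}}
\lesssim \Vert \curl v\Vert_{H^{1.5+\delta}} + \Vert \div v\Vert_{H^{1.5+\delta}} + \Vert v\cdot N\Vert_{H^{2+\delta}(\Gamma_0\cup\Gamma_1)} + \Vert v\Vert_{L^2}.
\end{equation*}
The whole task is then to express each of the three flat objects on the right in terms of the ALE data $\zeta$, $\psi_t$, together with remainders involving the perturbation $I-\tilde{b}$, and to absorb the remainders by the smallness hypothesis.

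First I would treat the interior. Write $\epsilon_{ijk}\partial_j v_k = \zeta_i + \epsilon_{ijk}(\delta_{mj}-\tilde{b}_{mj})\partial_m v_k$ and $\partial_j v_j = (\delta_{mj}-\tilde{b}_{mj})\partial_m v_j$, using the second equation of the system for the divergence identity. The remainder terms are products of the form $(I-\tilde{b})\cdot \nabla v$, and by the multiplicative Sobolev/Kato--Ponce inequalities (compare \eqref{8ThswELzXU3X7Ebd1KdZ7v1rN3GiirRXGKWK099ovBM0FDJCvkopYNQ2aN94Z7k0UnUKamE3OjU8DFYFFokbSI2J9V9gVlM8ALWThDPnPu3EL7HPD2VDaZTggzcCCmbvc70qqPcC9mt60ogcrTiA3HEjwTK8ymKeuJMc4q6dVz200XnYUtLR9GYjPXvFOVr6W1zUK1WbPToaWJJuKnxBLnd0ftDEbMmj4loHYyhZyMjM91zQS4p7z8eKa9h0JrbacekcirexG0z4n360}) I would obtain
\begin{equation*}
\Vert (I-\tilde{b})\cdot \nabla v\Vert_{H^{1.5+\delta}}
\lesssim \Vert I-\tilde b\Vert_{H^{1.5+\delta}}\Vert \nabla v\Vert_{H^{1.5+\delta}}
\lesssim \epsilon_0 \Vert v\Vert_{H^{2.5+\delta}},
\end{equation*}
so the curl and divergence reduce to $\Vert\zeta\Vert_{H^{1.5+\delta}}+\epsilon_0\Vert v\Vert_{H^{2.5+\delta}}$.

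For the boundary contribution, note that $N=(0,0,\pm 1)$ on $\Gamma_0\cup\Gamma_1$, so $v\cdot N = \pm v_3$. On $\Gamma_1$ I would decompose
\begin{equation*}
v_3 = \tilde{b}_{3j}v_j + (\delta_{3j}-\tilde{b}_{3j})v_j = \psi_t + (\delta_{3j}-\tilde{b}_{3j})v_j,
\end{equation*}
so that $\Vert v\cdot N\Vert_{H^{2+\delta}(\Gamma_1)} \lesssim \Vert w_t\Vert_{H^{2+\delta}(\Gamma_1)} + \Vert(I-\tilde b)v\Vert_{H^{2+\delta}(\Gamma_1)}$, using $\psi_t|_{\Gamma_1}=w_t$. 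A trace inequality together with the multiplicative Sobolev bound will give $\Vert(I-\tilde b)v\Vert_{H^{2+\delta}(\Gamma_1)} \lesssim \Vert I-\tilde b\Vert_{H^{2.5+\delta}}\Vert v\Vert_{H^{2.5+\delta}}$; since $\tilde b\in H^{2.5+\delta}$ by hypothesis, by interpolation with the $L^\infty$ smallness (see \eqref{8ThswELzXU3X7Ebd1KdZ7v1rN3GiirRXGKWK099ovBM0FDJCvkopYNQ2aN94Z7k0UnUKamE3OjU8DFYFFokbSI2J9V9gVlM8ALWThDPnPu3EL7HPD2VDaZTggzcCCmbvc70qqPcC9mt60ogcrTiA3HEjwTK8ymKeuJMc4q6dVz200XnYUtLR9GYjPXvFOVr6W1zUK1WbPToaWJJuKnxBLnd0ftDEbMmj4loHYyhZyMjM91zQS4p7z8eKa9h0JrbacekcirexG0z4n3133}) this gives a bound by a small multiple of $\Vert v\Vert_{H^{2.5+\delta}}$. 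On $\Gamma_0$ the same decomposition together with $\psi_t|_{\Gamma_0}=0$ and $\tilde b_{31}=\tilde b_{32}=0$ (since $\psi$ vanishes on $\Gamma_0$) gives $v\cdot N=\mp v_3=0$.

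Combining the three estimates, the flat div-curl inequality yields
\begin{equation*}
\Vert v\Vert_{H^{2.5+\delta}}
\lesssim \Vert \zeta\Vert_{H^{1.5+\delta}} + \Vert w_t\Vert_{H^{2+\delta}(\Gamma_1)} + \Vert v\Vert_{L^2} + C\epsilon_0\Vert v\Vert_{H^{2.5+\delta}},
\end{equation*}
and choosing $\epsilon_0$ sufficiently small (depending only on the constant in the flat div-curl estimate) the last term is absorbed into the left-hand side, giving \eqref{8ThswELzXU3X7Ebd1KdZ7v1rN3GiirRXGKWK099ovBM0FDJCvkopYNQ2aN94Z7k0UnUKamE3OjU8DFYFFokbSI2J9V9gVlM8ALWThDPnPu3EL7HPD2VDaZTggzcCCmbvc70qqPcC9mt60ogcrTiA3HEjwTK8ymKeuJMc4q6dVz200XnYUtLR9GYjPXvFOVr6W1zUK1WbPToaWJJuKnxBLnd0ftDEbMmj4loHYyhZyMjM91zQS4p7z8eKa9h0JrbacekcirexG0z4n3204}. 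The main technical obstacle is the boundary term: one must be careful to gain a full $H^{2+\delta}$ of regularity on the boundary from the identity $\tilde b_{3j}v_j=\psi_t$, which forces the smallness hypothesis to be formulated at a slightly higher Sobolev index than what appears in the remaining interior estimates, and explains why both \eqref{8ThswELzXU3X7Ebd1KdZ7v1rN3GiirRXGKWK099ovBM0FDJCvkopYNQ2aN94Z7k0UnUKamE3OjU8DFYFFokbSI2J9V9gVlM8ALWThDPnPu3EL7HPD2VDaZTggzcCCmbvc70qqPcC9mt60ogcrTiA3HEjwTK8ymKeuJMc4q6dVz200XnYUtLR9GYjPXvFOVr6W1zUK1WbPToaWJJuKnxBLnd0ftDEbMmj4loHYyhZyMjM91zQS4p7z8eKa9h0JrbacekcirexG0z4n3197} and \eqref{8ThswELzXU3X7Ebd1KdZ7v1rN3GiirRXGKWK099ovBM0FDJCvkopYNQ2aN94Z7k0UnUKamE3OjU8DFYFFokbSI2J9V9gVlM8ALWThDPnPu3EL7HPD2VDaZTggzcCCmbvc70qqPcC9mt60ogcrTiA3HEjwTK8ymKeuJMc4q6dVz200XnYUtLR9GYjPXvFOVr6W1zUK1WbPToaWJJuKnxBLnd0ftDEbMmj4loHYyhZyMjM91zQS4p7z8eKa9h0JrbacekcirexG0z4n3198} are needed.
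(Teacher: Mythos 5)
Your proposal follows exactly the paper's own (sketched) proof of this lemma: rewrite the system as a flat curl/divergence/normal-trace problem with perturbation terms carrying $\delta_{mj}-\tilde b_{mj}$, apply the classical div--curl estimate of \cite{BB}, and absorb the perturbations using the smallness of $\tilde b-I$; your boundary reductions ($\psi_t=w_t$ on $\Gamma_1$, $v_3=0$ on $\Gamma_0$) are also the intended ones.

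The only point to tighten is where the smallness actually sits. The lemma assumes only $\Vert \tilde b-I\Vert_{L^\infty}\leq \epsilon_0$ together with $\tilde b\in H^{2.5+\delta}$; it does not give $\Vert \tilde b-I\Vert_{H^{1.5+\delta}}\lesssim \epsilon_0$, which you invoke for the interior remainders, and $\Vert \tilde b-I\Vert_{H^{2.5+\delta}}$ cannot be made small by interpolation, which is what your boundary estimate implicitly requires (the top interpolation index coincides with the available regularity of $\tilde b$). The standard repair, consistent with the paper's sketch, is to split each product asymmetrically (Kato--Ponce): the part in which the top derivatives fall on $v$ carries the factor $\Vert \tilde b-I\Vert_{L^\infty}\leq\epsilon_0$ and is absorbed directly, while the part in which the top derivatives fall on $\tilde b-I$ is bounded by $\Vert \tilde b\Vert_{H^{2.5+\delta}}$ times a strictly lower norm of $v$ (for instance $\Vert\nabla v\Vert_{L^p}$ in the interior and $\Vert v\Vert_{L^\infty(\Gamma_1)}$ on the boundary), which is then interpolated between $\Vert v\Vert_{L^2}$ and $\Vert v\Vert_{H^{2.5+\delta}}$ and absorbed by Young, using the $\Vert v\Vert_{L^2}$ term already present on the right of the conclusion. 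With this splitting the $L^\infty$ smallness alone suffices, so your closing remark that the $H^{1.5+\delta}$-level smallness assumptions on $\tilde a$ and $\tilde b$ are needed for the lemma conflates the lemma itself with the setting in which it is later applied in the construction.
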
 \colb \par \begin{proof}[Proof of Lemma~\ref{L06}] (sketch) The proof is standard and is obtained by rewriting the system as   \begin{align}\thelt{da 93zWWi UB H wQz ahU iji 2T rXI8 v2HN ShbTKL eK W 83W rQK O4T Zm 57yz oVYZ JytSg2 Wx 4 Yaf THA xS7 ka cIPQ JGYd Dk0531 u2 Q IKf REW YcM KM UT7f dT9E kIfUJ3 pM W 59Q LFm u02 YH Jaa2 Er6K SIwTBG DJ Y Zwv fSJ Qby 7f dFWd fT9z U27ws5 oU 5 MUT DJz KFN oj dXRy BaYy bTvnhh 2d V 77o FFl t4H 0R NZjV J5BJ pyIqAO WW c efd R27 nGk jm oEFH janX f1ONEc yt o INt D90 ONa nd awDR Ki2D JzAqYH GC T B0p zdB a3O ot Pq1Q VFva YNTVz2 sZ J 6ey Ig2 N7P gi lKLF 9Nzc rhu}    \begin{split}    &
    \epsilon_{ijk} \delta_{mj}\UIOIUYOIUyHJGKHJLOIUYOIUOIUYOIYIOUYTIUYIOOOIUYOIUYPOIUPOIUPOIUYOIUYOIUYOIUHOUHOHIOUHOIHOIUHOIUHIOUH_{m}  v_{k}       = \zeta_{i}          +  \epsilon_{ijk} (\delta_{mj}-\tilde{b}_{mj}) \UIOIUYOIUyHJGKHJLOIUYOIUOIUYOIYIOUYTIUYIOOOIUYOIUYPOIUPOIUPOIUYOIUYOIUYOIUHOUHOHIOUHOIHOIUHOIUHIOUH_{m} v_{k}     \inon{in $\Omega\times[0,T]$}     \\&      \delta_{mj}\UIOIUYOIUyHJGKHJLOIUYOIUOIUYOIYIOUYTIUYIOOOIUYOIUYPOIUPOIUPOIUYOIUYOIUYOIUHOUHOHIOUHOIHOIUHOIUHIOUH_{m}       v_{j} =        (\delta_{mj}-\tilde{b}_{mj})\UIOIUYOIUyHJGKHJLOIUYOIUOIUYOIYIOUYTIUYIOOOIUYOIUYPOIUPOIUPOIUYOIUYOIUYOIUHOUHOHIOUHOIHOIUHOIUHIOUH_{m}      v_{j}     \inon{in $\Omega\times [0,T]$}     \\&      v_{3}= \psi_t +  ({\delta}_{3j} - \tilde{b}_{3j}) v_{j}     \inon{on $(\Gamma_0 \cup \Gamma_1) \times [0,T]$}     .    \end{split}
   \llabel{8ThswELzXU3X7Ebd1KdZ7v1rN3GiirRXGKWK099ovBM0FDJCvkopYNQ2aN94Z7k0UnUKamE3OjU8DFYFFokbSI2J9V9gVlM8ALWThDPnPu3EL7HPD2VDaZTggzcCCmbvc70qqPcC9mt60ogcrTiA3HEjwTK8ymKeuJMc4q6dVz200XnYUtLR9GYjPXvFOVr6W1zUK1WbPToaWJJuKnxBLnd0ftDEbMmj4loHYyhZyMjM91zQS4p7z8eKa9h0JrbacekcirexG0z4n3205}   \end{align}  The rest depends on the classical div-curl estimates as in \cite{BB}  and the smallness assumption $\Vert \tilde{b}-I\Vert_{L^{\infty}}\leq \epsilon_0$.  \end{proof} \colb \par \colb \begin{proof}[Proof of Theorem~\ref{T04}] We prove the theorem in three steps. \par \emph{Step 1: Linear Problem.}
Assume that $w,w_{t}, w_{tt}$ satisfy the assumptions in the theorem, but with the additional regularity    \begin{equation}    (w, w_{t},w_{tt} ) \in L^{\infty}([0,T]; H^{6+\delta}(\Gamma_{1}) \times  H^{4+\delta}       (\Gamma_{1}) \times H^{2+\delta} (\Gamma_{1})),    \label{8ThswELzXU3X7Ebd1KdZ7v1rN3GiirRXGKWK099ovBM0FDJCvkopYNQ2aN94Z7k0UnUKamE3OjU8DFYFFokbSI2J9V9gVlM8ALWThDPnPu3EL7HPD2VDaZTggzcCCmbvc70qqPcC9mt60ogcrTiA3HEjwTK8ymKeuJMc4q6dVz200XnYUtLR9GYjPXvFOVr6W1zUK1WbPToaWJJuKnxBLnd0ftDEbMmj4loHYyhZyMjM91zQS4p7z8eKa9h0JrbacekcirexG0z4n3206}   \end{equation}  and $\tilde{a}$ as defined above. Denote by $E$ the Sobolev extension $H^{k}(\Omega)\to H^{k}(\Omega_0)$ for all $k\in[0,5]$, where $\Omega_0=\mathbb{T}^2\times [-1,2]$ (which is different than in Section~\ref{sec05}). We consider the linear transport equation   \begin{align}\thelt{a2 Er6K SIwTBG DJ Y Zwv fSJ Qby 7f dFWd fT9z U27ws5 oU 5 MUT DJz KFN oj dXRy BaYy bTvnhh 2d V 77o FFl t4H 0R NZjV J5BJ pyIqAO WW c efd R27 nGk jm oEFH janX f1ONEc yt o INt D90 ONa nd awDR Ki2D JzAqYH GC T B0p zdB a3O ot Pq1Q VFva YNTVz2 sZ J 6ey Ig2 N7P gi lKLF 9Nzc rhuLeC eX w b6c MFE xfl JS E8Ev 9WHg Q1Brp7 RO M ACw vAn ATq GZ Hwkd HA5f bABXo6 EW H soW 6HQ Yvv jc ZgRk OWAb VA0zBf Ba W wlI V05 Z6E 2J QjOe HcZG Juq90a c5 J h9h 0rL KfI Ht l8tP rtR}
   \begin{split}     &     \UIOIUYOIUyHJGKHJLOIUYOIUOIUYOIYIOUYTIUYIOOOIUYOIUYPOIUPOIUPOIUYOIUYOIUYOIUHOUHOHIOUHOIHOIUHOIUHIOUH_{t} v_i     + E(\tilde{v}_1) E(\tilde{a}_{j1}) \UIOIUYOIUyHJGKHJLOIUYOIUOIUYOIYIOUYTIUYIOOOIUYOIUYPOIUPOIUPOIUYOIUYOIUYOIUHOUHOHIOUHOIHOIUHOIUHIOUH_{j} v_i     + E(\tilde{v}_2) E(\tilde{a}_{j2}) \UIOIUYOIUyHJGKHJLOIUYOIUOIUYOIYIOUYTIUYIOOOIUYOIUYPOIUPOIUPOIUYOIUYOIUYOIUHOUHOHIOUHOIHOIUHOIUHIOUH_{j} v_i     + E(\tilde{v}_3-\psi_t)E(\tilde{a}_{33}) \UIOIUYOIUyHJGKHJLOIUYOIUOIUYOIYIOUYTIUYIOOOIUYOIUYPOIUPOIUPOIUYOIUYOIUYOIUHOUHOHIOUHOIHOIUHOIUHIOUH_{3} v_i     + E(\tilde{a}_{ki})E(\UIOIUYOIUyHJGKHJLOIUYOIUOIUYOIYIOUYTIUYIOOOIUYOIUYPOIUPOIUPOIUYOIUYOIUYOIUHOUHOHIOUHOIHOIUHOIUHIOUH_{k}\tilde{q})=0     \inon{in $\mathbb{T}^2\times\mathbb{R}$}     \end{split}    \label{8ThswELzXU3X7Ebd1KdZ7v1rN3GiirRXGKWK099ovBM0FDJCvkopYNQ2aN94Z7k0UnUKamE3OjU8DFYFFokbSI2J9V9gVlM8ALWThDPnPu3EL7HPD2VDaZTggzcCCmbvc70qqPcC9mt60ogcrTiA3HEjwTK8ymKeuJMc4q6dVz200XnYUtLR9GYjPXvFOVr6W1zUK1WbPToaWJJuKnxBLnd0ftDEbMmj4loHYyhZyMjM91zQS4p7z8eKa9h0JrbacekcirexG0z4n3207}   \end{align} with $\tilde{v} \in L^{\infty}([0,T];H^{2.5+ \delta}(\Omega))$ a given periodic function in the $x_{1}, x_{2}$ directions. In \eqref{8ThswELzXU3X7Ebd1KdZ7v1rN3GiirRXGKWK099ovBM0FDJCvkopYNQ2aN94Z7k0UnUKamE3OjU8DFYFFokbSI2J9V9gVlM8ALWThDPnPu3EL7HPD2VDaZTggzcCCmbvc70qqPcC9mt60ogcrTiA3HEjwTK8ymKeuJMc4q6dVz200XnYUtLR9GYjPXvFOVr6W1zUK1WbPToaWJJuKnxBLnd0ftDEbMmj4loHYyhZyMjM91zQS4p7z8eKa9h0JrbacekcirexG0z4n3207}, the pressure function 
$\tilde{q}$ is given as the solution to the elliptic problem   \begin{align}\thelt{nd awDR Ki2D JzAqYH GC T B0p zdB a3O ot Pq1Q VFva YNTVz2 sZ J 6ey Ig2 N7P gi lKLF 9Nzc rhuLeC eX w b6c MFE xfl JS E8Ev 9WHg Q1Brp7 RO M ACw vAn ATq GZ Hwkd HA5f bABXo6 EW H soW 6HQ Yvv jc ZgRk OWAb VA0zBf Ba W wlI V05 Z6E 2J QjOe HcZG Juq90a c5 J h9h 0rL KfI Ht l8tP rtRd qql8TZ GU g dNy SBH oNr QC sxtg zuGA wHvyNx pM m wKQ uJF Kjt Zr 6Y4H dmrC bnF52g A0 3 28a Vuz Ebp lX Zd7E JEEC 939HQt ha M sup Tcx VaZ 32 pPdb PIj2 x8Azxj YX S q8L sof qmg Sq jm8}    \begin{split}    \UIOIUYOIUyHJGKHJLOIUYOIUOIUYOIYIOUYTIUYIOOOIUYOIUYPOIUPOIUPOIUYOIUYOIUYOIUHOUHOHIOUHOIHOIUHOIUHIOUH_{j}(\tdb_{ji} \tilde{a}_{ki}\UIOIUYOIUyHJGKHJLOIUYOIUOIUYOIYIOUYTIUYIOOOIUYOIUYPOIUPOIUPOIUYOIUYOIUYOIUHOUHOHIOUHOIHOIUHOIUHIOUH_{k}\tilde{q})       &=   \UIOIUYOIUyHJGKHJLOIUYOIUOIUYOIYIOUYTIUYIOOOIUYOIUYPOIUPOIUPOIUYOIUYOIUYOIUHOUHOHIOUHOIHOIUHOIUHIOUH_{j}(\UIOIUYOIUyHJGKHJLOIUYOIUOIUYOIYIOUYTIUYIOOOIUYOIUYPOIUPOIUPOIUYOIUYOIUYOIUHOUHOHIOUHOIHOIUHOIUHIOUH_{t}\tdb_{ji} \tilde{v}_i)      -           \sum_{m=1}^{2}            \tdb_{ji} \UIOIUYOIUyHJGKHJLOIUYOIUOIUYOIYIOUYTIUYIOOOIUYOIUYPOIUPOIUPOIUYOIUYOIUYOIUHOUHOHIOUHOIHOIUHOIUHIOUH_{j}(\tilde{v}_m \tilde{a}_{km}) \UIOIUYOIUyHJGKHJLOIUYOIUOIUYOIYIOUYTIUYIOOOIUYOIUYPOIUPOIUPOIUYOIUYOIUYOIUHOUHOHIOUHOIHOIUHOIUHIOUH_{k} \tilde{v}_i      - \tdb_{ji}           \UIOIUYOIUyHJGKHJLOIUYOIUOIUYOIYIOUYTIUYIOOOIUYOIUYPOIUPOIUPOIUYOIUYOIUYOIUHOUHOHIOUHOIHOIUHOIUHIOUH_{j}(J^{-1}(\tilde{v}_3-\psi_t))\UIOIUYOIUyHJGKHJLOIUYOIUOIUYOIYIOUYTIUYIOOOIUYOIUYPOIUPOIUPOIUYOIUYOIUYOIUHOUHOHIOUHOIHOIUHOIUHIOUH_{3}\tilde{v}_i    \\&\indeq\indeq       +           \sum_{m=1}^{2}             \tilde{v}_m \tilde{a}_{km} \UIOIUYOIUyHJGKHJLOIUYOIUOIUYOIYIOUYTIUYIOOOIUYOIUYPOIUPOIUPOIUYOIUYOIUYOIUHOUHOHIOUHOIHOIUHOIUHIOUH_{k}\tdb_{ji} \UIOIUYOIUyHJGKHJLOIUYOIUOIUYOIYIOUYTIUYIOOOIUYOIUYPOIUPOIUPOIUYOIUYOIUYOIUHOUHOHIOUHOIHOIUHOIUHIOUH_{j}\tilde{v}_i
     +             J^{-1}(\tilde{v}_3-\psi_t)\UIOIUYOIUyHJGKHJLOIUYOIUOIUYOIYIOUYTIUYIOOOIUYOIUYPOIUPOIUPOIUYOIUYOIUYOIUHOUHOHIOUHOIHOIUHOIUHIOUH_{3}\tdb_{ji}\UIOIUYOIUyHJGKHJLOIUYOIUOIUYOIYIOUYTIUYIOOOIUYOIUYPOIUPOIUPOIUYOIUYOIUYOIUHOUHOHIOUHOIHOIUHOIUHIOUH_{j}\tilde{v}_i + \mathcal{E} = \tilde{f}    \inon{in $\Omega$}    ,     \end{split}    \label{8ThswELzXU3X7Ebd1KdZ7v1rN3GiirRXGKWK099ovBM0FDJCvkopYNQ2aN94Z7k0UnUKamE3OjU8DFYFFokbSI2J9V9gVlM8ALWThDPnPu3EL7HPD2VDaZTggzcCCmbvc70qqPcC9mt60ogcrTiA3HEjwTK8ymKeuJMc4q6dVz200XnYUtLR9GYjPXvFOVr6W1zUK1WbPToaWJJuKnxBLnd0ftDEbMmj4loHYyhZyMjM91zQS4p7z8eKa9h0JrbacekcirexG0z4n3210}   \end{align} with the Neumann boundary conditions  \begin{align}\thelt{ Yvv jc ZgRk OWAb VA0zBf Ba W wlI V05 Z6E 2J QjOe HcZG Juq90a c5 J h9h 0rL KfI Ht l8tP rtRd qql8TZ GU g dNy SBH oNr QC sxtg zuGA wHvyNx pM m wKQ uJF Kjt Zr 6Y4H dmrC bnF52g A0 3 28a Vuz Ebp lX Zd7E JEEC 939HQt ha M sup Tcx VaZ 32 pPdb PIj2 x8Azxj YX S q8L sof qmg Sq jm8G 4wUb Q28LuA ab w I0c FWN fGn zp VzsU eHsL 9zoBLl g5 j XQX nR0 giR mC LErq lDIP YeYXdu UJ E 0Bs bkK bjp dc PLie k8NW rIjsfa pH h 4GY vMF bA6 7q yex7 sHgH G3GlW0 y1 W D35 mIo 5gE U}    \begin{split}     &   \tdb_{3i}\tilde{a}_{ki}\UIOIUYOIUyHJGKHJLOIUYOIUOIUYOIYIOUYTIUYIOOOIUYOIUYPOIUPOIUPOIUYOIUYOIUYOIUHOUHOHIOUHOIHOIUHOIUHIOUH_{k}\tilde{q}     = 0        = \tilde{g_0}      \inon{on $\Gamma_0$}    
    ,    \end{split}    \label{8ThswELzXU3X7Ebd1KdZ7v1rN3GiirRXGKWK099ovBM0FDJCvkopYNQ2aN94Z7k0UnUKamE3OjU8DFYFFokbSI2J9V9gVlM8ALWThDPnPu3EL7HPD2VDaZTggzcCCmbvc70qqPcC9mt60ogcrTiA3HEjwTK8ymKeuJMc4q6dVz200XnYUtLR9GYjPXvFOVr6W1zUK1WbPToaWJJuKnxBLnd0ftDEbMmj4loHYyhZyMjM91zQS4p7z8eKa9h0JrbacekcirexG0z4n3211}   \end{align} and   \begin{align}\thelt{a Vuz Ebp lX Zd7E JEEC 939HQt ha M sup Tcx VaZ 32 pPdb PIj2 x8Azxj YX S q8L sof qmg Sq jm8G 4wUb Q28LuA ab w I0c FWN fGn zp VzsU eHsL 9zoBLl g5 j XQX nR0 giR mC LErq lDIP YeYXdu UJ E 0Bs bkK bjp dc PLie k8NW rIjsfa pH h 4GY vMF bA6 7q yex7 sHgH G3GlW0 y1 W D35 mIo 5gE Ub Obrb knjg UQyko7 g2 y rEO fov QfA k6 UVDH Gl7G V3LvQm ra d EUO Jpu uzt BB nrme filt 1sGSf5 O0 a w2D c0h RaH Ga lEqI pfgP yNQoLH p2 L AIU p77 Fyg rj C8qB buxB kYX8NT mU v yT7 YnB }    \begin{split}     &   \tdb_{3i}\tilde{a}_{ki}\UIOIUYOIUyHJGKHJLOIUYOIUOIUYOIYIOUYTIUYIOOOIUYOIUYPOIUPOIUPOIUYOIUYOIUYOIUHOUHOHIOUHOIHOIUHOIUHIOUH_{k}\tilde{q}      = -w_{tt} +\UIOIUYOIUyHJGKHJLOIUYOIUOIUYOIYIOUYTIUYIOOOIUYOIUYPOIUPOIUPOIUYOIUYOIUYOIUHOUHOHIOUHOIHOIUHOIUHIOUH_{t}\tdb_{3i}\tilde{v}_i        -  \frac{1}{\UIOIUYOIUyHJGKHJLOIUYOIUOIUYOIYIOUYTIUYIOOOIUYOIUYPOIUPOIUPOIUYOIUYOIUYOIUHOUHOHIOUHOIHOIUHOIUHIOUH_{3} \psi} \biggl(\sum_{j=1}^{2}  \tilde{v}_k \tdb_{jk}  \UIOIUYOIUyHJGKHJLOIUYOIUOIUYOIYIOUYTIUYIOOOIUYOIUYPOIUPOIUPOIUYOIUYOIUYOIUHOUHOHIOUHOIHOIUHOIUHIOUH_{j} ( w_{t})   +w_{t}  \UIOIUYOIUyHJGKHJLOIUYOIUOIUYOIYIOUYTIUYIOOOIUYOIUYPOIUPOIUPOIUYOIUYOIUYOIUHOUHOHIOUHOIHOIUHOIUHIOUH_{3} ( \tdb_{3i})\tilde{v}_{i}        - \tilde{v}_k \tdb_{jk}  \UIOIUYOIUyHJGKHJLOIUYOIUOIUYOIYIOUYTIUYIOOOIUYOIUYPOIUPOIUPOIUYOIUYOIUYOIUHOUHOHIOUHOIHOIUHOIUHIOUH_{j} ( \tdb_{3i})  \tilde{v}_i\biggr)= \tilde{g}_1      \inon{on $\Gamma_1$}         .    \end{split}
   \label{8ThswELzXU3X7Ebd1KdZ7v1rN3GiirRXGKWK099ovBM0FDJCvkopYNQ2aN94Z7k0UnUKamE3OjU8DFYFFokbSI2J9V9gVlM8ALWThDPnPu3EL7HPD2VDaZTggzcCCmbvc70qqPcC9mt60ogcrTiA3HEjwTK8ymKeuJMc4q6dVz200XnYUtLR9GYjPXvFOVr6W1zUK1WbPToaWJJuKnxBLnd0ftDEbMmj4loHYyhZyMjM91zQS4p7z8eKa9h0JrbacekcirexG0z4n3212}   \end{align} Note that \eqref{8ThswELzXU3X7Ebd1KdZ7v1rN3GiirRXGKWK099ovBM0FDJCvkopYNQ2aN94Z7k0UnUKamE3OjU8DFYFFokbSI2J9V9gVlM8ALWThDPnPu3EL7HPD2VDaZTggzcCCmbvc70qqPcC9mt60ogcrTiA3HEjwTK8ymKeuJMc4q6dVz200XnYUtLR9GYjPXvFOVr6W1zUK1WbPToaWJJuKnxBLnd0ftDEbMmj4loHYyhZyMjM91zQS4p7z8eKa9h0JrbacekcirexG0z4n3210} and \eqref{8ThswELzXU3X7Ebd1KdZ7v1rN3GiirRXGKWK099ovBM0FDJCvkopYNQ2aN94Z7k0UnUKamE3OjU8DFYFFokbSI2J9V9gVlM8ALWThDPnPu3EL7HPD2VDaZTggzcCCmbvc70qqPcC9mt60ogcrTiA3HEjwTK8ymKeuJMc4q6dVz200XnYUtLR9GYjPXvFOVr6W1zUK1WbPToaWJJuKnxBLnd0ftDEbMmj4loHYyhZyMjM91zQS4p7z8eKa9h0JrbacekcirexG0z4n3212} are suggested by Remark~\ref{R01}. The function of time  \begin{align}\thelt{ E 0Bs bkK bjp dc PLie k8NW rIjsfa pH h 4GY vMF bA6 7q yex7 sHgH G3GlW0 y1 W D35 mIo 5gE Ub Obrb knjg UQyko7 g2 y rEO fov QfA k6 UVDH Gl7G V3LvQm ra d EUO Jpu uzt BB nrme filt 1sGSf5 O0 a w2D c0h RaH Ga lEqI pfgP yNQoLH p2 L AIU p77 Fyg rj C8qB buxB kYX8NT mU v yT7 YnB gv5 K7 vq5N efB5 ye4TMu Cf m E2J F7h gqw I7 dmNx 2CqZ uLFthz Il B 1sj KA8 WGD Kc DKva bk9y p28TFP 0r g 0iA 9CB D36 c8 HLkZ nO2S 6Zoafv LX b 8go pYa 085 EM RbAb QjGt urIXlT E0 G z0t}   \begin{split}      \mathcal{E}          &=         \frac{1}{|\Omega|} \OIUYJHUGFAJKLDHFKJLSDHFLKSDJFHLKSDJHFLKSDJHFLKDJFHLLDKHFLKSDHJFALKJHLJLHGLKHHLKJHLKGKHGJKHGKJHLKHJLKJH \UIOIUYOIUyHJGKHJLOIUYOIUOIUYOIYIOUYTIUYIOOOIUYOIUYPOIUPOIUPOIUYOIUYOIUYOIUHOUHOHIOUHOIHOIUHOIUHIOUH_{k} (\tilde{a}_{km} \tilde{v}_{m}) \tdb_{ji} \UIOIUYOIUyHJGKHJLOIUYOIUOIUYOIYIOUYTIUYIOOOIUYOIUYPOIUPOIUPOIUYOIUYOIUYOIUHOUHOHIOUHOIHOIUHOIUHIOUH_{j} \tilde{v}_{i}      - \frac{1}{|\Omega|}      \OIUYJHUGFAJKLDHFKJLSDHFLKSDJFHLKSDJHFLKSDJHFLKDJFHLLDKHFLKSDHJFALKJHLJLHGLKHHLKJHLKGKHGJKHGKJHLKHJLKJH_{\Gamma}  \tilde{v}_{m} \tilde{a}_{3m} \tdb_{ji}\UIOIUYOIUyHJGKHJLOIUYOIUOIUYOIYIOUYTIUYIOOOIUYOIUYPOIUPOIUPOIUYOIUYOIUYOIUHOUHOHIOUHOIHOIUHOIUHIOUH_{j} \tilde{v}_{i}      -\frac{1}{|\Omega|} \OIUYJHUGFAJKLDHFKJLSDHFLKSDJFHLKSDJHFLKSDJHFLKDJFHLLDKHFLKSDHJFALKJHLJLHGLKHHLKJHLKGKHGJKHGKJHLKHJLKJH \UIOIUYOIUyHJGKHJLOIUYOIUOIUYOIYIOUYTIUYIOOOIUYOIUYPOIUPOIUPOIUYOIUYOIUYOIUHOUHOHIOUHOIHOIUHOIUHIOUH_{3} (\tilde{a}_{33} \psi_{t}) \tdb_{ji} \UIOIUYOIUyHJGKHJLOIUYOIUOIUYOIYIOUYTIUYIOOOIUYOIUYPOIUPOIUPOIUYOIUYOIUYOIUHOUHOHIOUHOIHOIUHOIUHIOUH_{j} \tilde{v}_{i}      \\&\indeq      + \frac{1}{|\Omega|}       \OIUYJHUGFAJKLDHFKJLSDHFLKSDJFHLKSDJHFLKSDJHFLKDJFHLLDKHFLKSDHJFALKJHLJLHGLKHHLKJHLKGKHGJKHGKJHLKHJLKJH_{\Gamma}  \psi_{t} \tilde{a}_{33}\tdb_{ji} \UIOIUYOIUyHJGKHJLOIUYOIUOIUYOIYIOUYTIUYIOOOIUYOIUYPOIUPOIUPOIUYOIUYOIUYOIUHOUHOHIOUHOIHOIUHOIUHIOUH_{j} \tilde{v}_{i}
     + \frac{1}{|\Omega|} \OIUYJHUGFAJKLDHFKJLSDHFLKSDJFHLKSDJHFLKSDJHFLKDJFHLLDKHFLKSDHJFALKJHLJLHGLKHHLKJHLKGKHGJKHGKJHLKHJLKJH_{\Gamma}   \frac{1}{\UIOIUYOIUyHJGKHJLOIUYOIUOIUYOIYIOUYTIUYIOOOIUYOIUYPOIUPOIUPOIUYOIUYOIUYOIUHOUHOHIOUHOIHOIUHOIUHIOUH_{3} \psi} ( \tilde{b}_{3k} \tilde{v}_{k}- \psi_{t}) \tdb_{3i} \UIOIUYOIUyHJGKHJLOIUYOIUOIUYOIYIOUYTIUYIOOOIUYOIUYPOIUPOIUPOIUYOIUYOIUYOIUHOUHOHIOUHOIHOIUHOIUHIOUH_{3} \tilde{v}_i       + \frac{1}{|\Omega|} \sum_{k=1}^{2}  \OIUYJHUGFAJKLDHFKJLSDHFLKSDJFHLKSDJHFLKSDJHFLKDJFHLLDKHFLKSDHJFALKJHLJLHGLKHHLKJHLKGKHGJKHGKJHLKHJLKJH_{\Gamma} \tilde{a}_{km} \tilde{v}_{m}  \UIOIUYOIUyHJGKHJLOIUYOIUOIUYOIYIOUYTIUYIOOOIUYOIUYPOIUPOIUPOIUYOIUYOIUYOIUHOUHOHIOUHOIHOIUHOIUHIOUH_{k} (\tdb_{3i} \tilde{v}_{i} -\psi_{t})       \\ & \indeq      + \frac{1}{|\Omega|}      \OIUYJHUGFAJKLDHFKJLSDHFLKSDJFHLKSDJHFLKSDJHFLKDJFHLLDKHFLKSDHJFALKJHLJLHGLKHHLKJHLKGKHGJKHGKJHLKHJLKJH_{\Gamma}  \frac{1}{\UIOIUYOIUyHJGKHJLOIUYOIUOIUYOIYIOUYTIUYIOOOIUYOIUYPOIUPOIUPOIUYOIUYOIUYOIUHOUHOHIOUHOIHOIUHOIUHIOUH_{3} \psi} (\tdb_{3i} \tilde{v}_{i} - \psi_{t} ) \UIOIUYOIUyHJGKHJLOIUYOIUOIUYOIYIOUYTIUYIOOOIUYOIUYPOIUPOIUPOIUYOIUYOIUYOIUHOUHOHIOUHOIHOIUHOIUHIOUH_{3}  \tdb_{3i}\tilde{v}_{i}      -\frac{1}{|\Omega|}       \OIUYJHUGFAJKLDHFKJLSDHFLKSDJFHLKSDJHFLKSDJHFLKDJFHLLDKHFLKSDHJFALKJHLJLHGLKHHLKJHLKGKHGJKHGKJHLKHJLKJH_{\Gamma_{0}} \tilde{v}_k \tilde{a}_{jk}  \UIOIUYOIUyHJGKHJLOIUYOIUOIUYOIYIOUYTIUYIOOOIUYOIUYPOIUPOIUPOIUYOIUYOIUYOIUHOUHOHIOUHOIHOIUHOIUHIOUH_{j} \tdb_{3i} \tilde{v}_i      ,   \end{split}    \label{8ThswELzXU3X7Ebd1KdZ7v1rN3GiirRXGKWK099ovBM0FDJCvkopYNQ2aN94Z7k0UnUKamE3OjU8DFYFFokbSI2J9V9gVlM8ALWThDPnPu3EL7HPD2VDaZTggzcCCmbvc70qqPcC9mt60ogcrTiA3HEjwTK8ymKeuJMc4q6dVz200XnYUtLR9GYjPXvFOVr6W1zUK1WbPToaWJJuKnxBLnd0ftDEbMmj4loHYyhZyMjM91zQS4p7z8eKa9h0JrbacekcirexG0z4n3348}   \end{align} where    \begin{equation}    \Gamma=\Gamma_0 \cup \Gamma_1    ,
  \llabel{8ThswELzXU3X7Ebd1KdZ7v1rN3GiirRXGKWK099ovBM0FDJCvkopYNQ2aN94Z7k0UnUKamE3OjU8DFYFFokbSI2J9V9gVlM8ALWThDPnPu3EL7HPD2VDaZTggzcCCmbvc70qqPcC9mt60ogcrTiA3HEjwTK8ymKeuJMc4q6dVz200XnYUtLR9GYjPXvFOVr6W1zUK1WbPToaWJJuKnxBLnd0ftDEbMmj4loHYyhZyMjM91zQS4p7z8eKa9h0JrbacekcirexG0z4n3351}   \end{equation} is introduced to insure the validity of the compatibility condition    \begin{equation}    \OIUYJHUGFAJKLDHFKJLSDHFLKSDJFHLKSDJHFLKSDJHFLKDJFHLLDKHFLKSDHJFALKJHLJLHGLKHHLKJHLKGKHGJKHGKJHLKHJLKJH_{\Omega} \tilde{f}       =          \OIUYJHUGFAJKLDHFKJLSDHFLKSDJFHLKSDJHFLKSDJHFLKDJFHLLDKHFLKSDHJFALKJHLJLHGLKHHLKJHLKGKHGJKHGKJHLKHJLKJH_{\Gamma_{1}} \tilde{g_{1}}       ;    \label{8ThswELzXU3X7Ebd1KdZ7v1rN3GiirRXGKWK099ovBM0FDJCvkopYNQ2aN94Z7k0UnUKamE3OjU8DFYFFokbSI2J9V9gVlM8ALWThDPnPu3EL7HPD2VDaZTggzcCCmbvc70qqPcC9mt60ogcrTiA3HEjwTK8ymKeuJMc4q6dVz200XnYUtLR9GYjPXvFOVr6W1zUK1WbPToaWJJuKnxBLnd0ftDEbMmj4loHYyhZyMjM91zQS4p7z8eKa9h0JrbacekcirexG0z4n3130}   \end{equation} see Appendix for the verification of~\eqref{8ThswELzXU3X7Ebd1KdZ7v1rN3GiirRXGKWK099ovBM0FDJCvkopYNQ2aN94Z7k0UnUKamE3OjU8DFYFFokbSI2J9V9gVlM8ALWThDPnPu3EL7HPD2VDaZTggzcCCmbvc70qqPcC9mt60ogcrTiA3HEjwTK8ymKeuJMc4q6dVz200XnYUtLR9GYjPXvFOVr6W1zUK1WbPToaWJJuKnxBLnd0ftDEbMmj4loHYyhZyMjM91zQS4p7z8eKa9h0JrbacekcirexG0z4n3130}. The condition~\eqref{8ThswELzXU3X7Ebd1KdZ7v1rN3GiirRXGKWK099ovBM0FDJCvkopYNQ2aN94Z7k0UnUKamE3OjU8DFYFFokbSI2J9V9gVlM8ALWThDPnPu3EL7HPD2VDaZTggzcCCmbvc70qqPcC9mt60ogcrTiA3HEjwTK8ymKeuJMc4q6dVz200XnYUtLR9GYjPXvFOVr6W1zUK1WbPToaWJJuKnxBLnd0ftDEbMmj4loHYyhZyMjM91zQS4p7z8eKa9h0JrbacekcirexG0z4n3130} is necessary and sufficient for the existence of the solution  $\tilde q$ to the Neumann boundary value problem 
\eqref{8ThswELzXU3X7Ebd1KdZ7v1rN3GiirRXGKWK099ovBM0FDJCvkopYNQ2aN94Z7k0UnUKamE3OjU8DFYFFokbSI2J9V9gVlM8ALWThDPnPu3EL7HPD2VDaZTggzcCCmbvc70qqPcC9mt60ogcrTiA3HEjwTK8ymKeuJMc4q6dVz200XnYUtLR9GYjPXvFOVr6W1zUK1WbPToaWJJuKnxBLnd0ftDEbMmj4loHYyhZyMjM91zQS4p7z8eKa9h0JrbacekcirexG0z4n3210}--\eqref{8ThswELzXU3X7Ebd1KdZ7v1rN3GiirRXGKWK099ovBM0FDJCvkopYNQ2aN94Z7k0UnUKamE3OjU8DFYFFokbSI2J9V9gVlM8ALWThDPnPu3EL7HPD2VDaZTggzcCCmbvc70qqPcC9mt60ogcrTiA3HEjwTK8ymKeuJMc4q6dVz200XnYUtLR9GYjPXvFOVr6W1zUK1WbPToaWJJuKnxBLnd0ftDEbMmj4loHYyhZyMjM91zQS4p7z8eKa9h0JrbacekcirexG0z4n3212}, which satisfies the estimate      \begin{align}\thelt{f5 O0 a w2D c0h RaH Ga lEqI pfgP yNQoLH p2 L AIU p77 Fyg rj C8qB buxB kYX8NT mU v yT7 YnB gv5 K7 vq5N efB5 ye4TMu Cf m E2J F7h gqw I7 dmNx 2CqZ uLFthz Il B 1sj KA8 WGD Kc DKva bk9y p28TFP 0r g 0iA 9CB D36 c8 HLkZ nO2S 6Zoafv LX b 8go pYa 085 EM RbAb QjGt urIXlT E0 G z0t YSV Use Cj DvrQ 2bvf iIJCdf CA c WyI O7m lyc s5 Rjio IZt7 qyB7pL 9p y G8X DTz JxH s0 yhVV Ar8Z QRqsZC HH A DFT wvJ HeH OG vLJH uTfN a5j12Z kT v GqO yS8 826 D2 rj7r HDTL N7Ggmt 9M }    \begin{split}    \Vert \nabla \tilde{q}\Vert_{H^{2.5+\delta}}    \dlkjfhlaskdhjflkasdjhflkasjhdflkasjhdflkasjhdfls    \Vert \tilde{f} \Vert_{H^{1.5+\delta}}    + \Vert \tilde{g}_0 \Vert_{H^{2+\delta}(\Gamma_0)}    + \Vert \tilde{g}_1\Vert_{H^{2+\delta}(\Gamma_1)}    \end{split}    \llabel{8ThswELzXU3X7Ebd1KdZ7v1rN3GiirRXGKWK099ovBM0FDJCvkopYNQ2aN94Z7k0UnUKamE3OjU8DFYFFokbSI2J9V9gVlM8ALWThDPnPu3EL7HPD2VDaZTggzcCCmbvc70qqPcC9mt60ogcrTiA3HEjwTK8ymKeuJMc4q6dVz200XnYUtLR9GYjPXvFOVr6W1zUK1WbPToaWJJuKnxBLnd0ftDEbMmj4loHYyhZyMjM91zQS4p7z8eKa9h0JrbacekcirexG0z4n3213}   \end{align} and is determined up to a constant.
\par Estimating $\tilde{f}$  defined in \eqref{8ThswELzXU3X7Ebd1KdZ7v1rN3GiirRXGKWK099ovBM0FDJCvkopYNQ2aN94Z7k0UnUKamE3OjU8DFYFFokbSI2J9V9gVlM8ALWThDPnPu3EL7HPD2VDaZTggzcCCmbvc70qqPcC9mt60ogcrTiA3HEjwTK8ymKeuJMc4q6dVz200XnYUtLR9GYjPXvFOVr6W1zUK1WbPToaWJJuKnxBLnd0ftDEbMmj4loHYyhZyMjM91zQS4p7z8eKa9h0JrbacekcirexG0z4n3210} in $H^{1.5 +\delta}$, we have   \begin{align}\thelt{ p28TFP 0r g 0iA 9CB D36 c8 HLkZ nO2S 6Zoafv LX b 8go pYa 085 EM RbAb QjGt urIXlT E0 G z0t YSV Use Cj DvrQ 2bvf iIJCdf CA c WyI O7m lyc s5 Rjio IZt7 qyB7pL 9p y G8X DTz JxH s0 yhVV Ar8Z QRqsZC HH A DFT wvJ HeH OG vLJH uTfN a5j12Z kT v GqO yS8 826 D2 rj7r HDTL N7Ggmt 9M z cyg wxn j4J Je Qb7e MmwR nSuZLU 8q U NDL rdg C70 bh EPgp b7zk 5a32N1 Ib J hf8 XvG RmU Fd vIUk wPFb idJPLl NG e 1RQ RsK 2dV NP M7A3 Yhdh B1R6N5 MJ i 5S4 R49 8lw Y9 I8RH xQKL lAk8W}    \Vert \tilde{f} \Vert_{H^{1.5+ \delta}}      &\dlkjfhlaskdhjflkasdjhflkasjhdflkasjhdflkasjhdfls  \Vert \UIOIUYOIUyHJGKHJLOIUYOIUOIUYOIYIOUYTIUYIOOOIUYOIUYPOIUPOIUPOIUYOIUYOIUYOIUHOUHOHIOUHOIHOIUHOIUHIOUH_{t}\tdb\Vert_{H^{2.5+ \delta}}   \Vert \tilde{v}\Vert_{H^{2.5+ \delta}}      + \Vert  \tdb \Vert_{H^{2.5+\delta}}  \Vert \tilde{v}\Vert^{2}_{H^{2.5+\delta}}\Vert \tilde{a} \Vert_{H^{2.5+ \delta}} + \Vert \tilde{v}\Vert_{H^{2.5+\delta}}      \Vert \tilde{a} \Vert_{H^{2.5+ \delta}}  \Vert \psi_{t}\Vert_{H^{2.5+\delta}}     ,    \llabel{8ThswELzXU3X7Ebd1KdZ7v1rN3GiirRXGKWK099ovBM0FDJCvkopYNQ2aN94Z7k0UnUKamE3OjU8DFYFFokbSI2J9V9gVlM8ALWThDPnPu3EL7HPD2VDaZTggzcCCmbvc70qqPcC9mt60ogcrTiA3HEjwTK8ymKeuJMc4q6dVz200XnYUtLR9GYjPXvFOVr6W1zUK1WbPToaWJJuKnxBLnd0ftDEbMmj4loHYyhZyMjM91zQS4p7z8eKa9h0JrbacekcirexG0z4n3350} \end{align} while $\tilde{g}_{1}$ from \eqref{8ThswELzXU3X7Ebd1KdZ7v1rN3GiirRXGKWK099ovBM0FDJCvkopYNQ2aN94Z7k0UnUKamE3OjU8DFYFFokbSI2J9V9gVlM8ALWThDPnPu3EL7HPD2VDaZTggzcCCmbvc70qqPcC9mt60ogcrTiA3HEjwTK8ymKeuJMc4q6dVz200XnYUtLR9GYjPXvFOVr6W1zUK1WbPToaWJJuKnxBLnd0ftDEbMmj4loHYyhZyMjM91zQS4p7z8eKa9h0JrbacekcirexG0z4n3212} may be bounded as \begin{align}\thelt{ Ar8Z QRqsZC HH A DFT wvJ HeH OG vLJH uTfN a5j12Z kT v GqO yS8 826 D2 rj7r HDTL N7Ggmt 9M z cyg wxn j4J Je Qb7e MmwR nSuZLU 8q U NDL rdg C70 bh EPgp b7zk 5a32N1 Ib J hf8 XvG RmU Fd vIUk wPFb idJPLl NG e 1RQ RsK 2dV NP M7A3 Yhdh B1R6N5 MJ i 5S4 R49 8lw Y9 I8RH xQKL lAk8W3 Ts 7 WFU oNw I9K Wn ztPx rZLv NwZ28E YO n ouf xz6 ip9 aS WnNQ ASri wYC1sO tS q Xzo t8k 4KO z7 8LG6 GMNC ExoMh9 wl 5 vbs mnn q6H g6 WToJ un74 JxyNBX yV p vxN B0N 8wy mK 3reR eEzF }
  \begin{split}   \Vert \tilde{g}_{1} \Vert_{H^{2+ \delta}(\Gamma_{1})}  	&\dlkjfhlaskdhjflkasdjhflkasjhdflkasjhdflkasjhdfls  \Vert w_{tt} \Vert_{H^{2+ \delta}(\Gamma_{1})}  			+ \Vert \UIOIUYOIUyHJGKHJLOIUYOIUOIUYOIYIOUYTIUYIOOOIUYOIUYPOIUPOIUPOIUYOIUYOIUYOIUHOUHOHIOUHOIHOIUHOIUHIOUH_{t}\tdb \Vert_{H^{2+ \delta}(\Gamma_{1})}  \Vert\tilde{v}\Vert_{H^{2+ \delta}(\Gamma_{1})}      \\&\indeq     +   \Vert\tilde{v}\Vert_{H^{2+ \delta}(\Gamma_{1})}  \Vert\tdb \Vert_{H^{2+ \delta}(\Gamma_{1})}    \Vert w_{t}\Vert_{H^{3+ \delta}(\Gamma_{1})}        + \Vert \tilde{v} \Vert^{2}_{H^{2+ \delta}(\Gamma_{1})}   \Vert\tdb \Vert^{2}_{H^{3+ \delta}(\Gamma_{1})}     .   \end{split}    \llabel{8ThswELzXU3X7Ebd1KdZ7v1rN3GiirRXGKWK099ovBM0FDJCvkopYNQ2aN94Z7k0UnUKamE3OjU8DFYFFokbSI2J9V9gVlM8ALWThDPnPu3EL7HPD2VDaZTggzcCCmbvc70qqPcC9mt60ogcrTiA3HEjwTK8ymKeuJMc4q6dVz200XnYUtLR9GYjPXvFOVr6W1zUK1WbPToaWJJuKnxBLnd0ftDEbMmj4loHYyhZyMjM91zQS4p7z8eKa9h0JrbacekcirexG0z4n3349} \end{align} Therefore,    \begin{align}\thelt{ vIUk wPFb idJPLl NG e 1RQ RsK 2dV NP M7A3 Yhdh B1R6N5 MJ i 5S4 R49 8lw Y9 I8RH xQKL lAk8W3 Ts 7 WFU oNw I9K Wn ztPx rZLv NwZ28E YO n ouf xz6 ip9 aS WnNQ ASri wYC1sO tS q Xzo t8k 4KO z7 8LG6 GMNC ExoMh9 wl 5 vbs mnn q6H g6 WToJ un74 JxyNBX yV p vxN B0N 8wy mK 3reR eEzF xbK92x EL s 950 SNg Lmv iR C1bF HjDC ke3Sgt Ud C 4cO Nb4 EF2 4D 1VDB HlWA Tyswjy DO W ibT HqX t3a G6 mkfG JVWv 40lexP nI c y5c kRM D3o wV BdxQ m6Cv LaAgxi Jt E sSl ZFw DoY P2 nRYb }         \begin{split}
   \Vert  \nabla \tilde{q}\Vert_{H^{2.5+\delta}}      \le        P(           \Vert w_{tt}\Vert_{H^{2+\delta}(\Gamma_{1})},           \Vert \tilde{a}\Vert_{H^{3.5+\delta}},           \Vert \tdb\Vert_{H^{3.5+\delta}},           \Vert \tdb_t\Vert_{H^{2.5+\delta}},           \Vert \psi_t\Vert_{H^{2.5+\delta}},           \Vert \tilde{v}\Vert_{H^{2.5+\delta}}          )     .    \end{split}    \label{8ThswELzXU3X7Ebd1KdZ7v1rN3GiirRXGKWK099ovBM0FDJCvkopYNQ2aN94Z7k0UnUKamE3OjU8DFYFFokbSI2J9V9gVlM8ALWThDPnPu3EL7HPD2VDaZTggzcCCmbvc70qqPcC9mt60ogcrTiA3HEjwTK8ymKeuJMc4q6dVz200XnYUtLR9GYjPXvFOVr6W1zUK1WbPToaWJJuKnxBLnd0ftDEbMmj4loHYyhZyMjM91zQS4p7z8eKa9h0JrbacekcirexG0z4n3214}   \end{align}
Since $\tilde q$ is given, the linear equation \eqref{8ThswELzXU3X7Ebd1KdZ7v1rN3GiirRXGKWK099ovBM0FDJCvkopYNQ2aN94Z7k0UnUKamE3OjU8DFYFFokbSI2J9V9gVlM8ALWThDPnPu3EL7HPD2VDaZTggzcCCmbvc70qqPcC9mt60ogcrTiA3HEjwTK8ymKeuJMc4q6dVz200XnYUtLR9GYjPXvFOVr6W1zUK1WbPToaWJJuKnxBLnd0ftDEbMmj4loHYyhZyMjM91zQS4p7z8eKa9h0JrbacekcirexG0z4n3207} has the structure of a transport system    \begin{align}\thelt{KO z7 8LG6 GMNC ExoMh9 wl 5 vbs mnn q6H g6 WToJ un74 JxyNBX yV p vxN B0N 8wy mK 3reR eEzF xbK92x EL s 950 SNg Lmv iR C1bF HjDC ke3Sgt Ud C 4cO Nb4 EF2 4D 1VDB HlWA Tyswjy DO W ibT HqX t3a G6 mkfG JVWv 40lexP nI c y5c kRM D3o wV BdxQ m6Cv LaAgxi Jt E sSl ZFw DoY P2 nRYb CdXR z5HboV TU 8 NPg NVi WeX GV QZ7b jOy1 LRy9fa j9 n 2iE 1S0 mci 0Y D3Hg UxzL atb92M hC p ZKL JqH TSF RM n3KV kpcF LUcF0X 66 i vdq 01c Vqk oQ qu1u 2Cpi p5EV7A gM O Rcf ZjL x7L cv }   \begin{split}   &   \UIOIUYOIUyHJGKHJLOIUYOIUOIUYOIYIOUYTIUYIOOOIUYOIUYPOIUPOIUPOIUYOIUYOIUYOIUHOUHOHIOUHOIHOIUHOIUHIOUH_{t} v_{i} + E(K)\cdot \nabla v_{i} = F_{i}    \inon{in $\mathbb{T}^2\times\mathbb{R}$}    \comma i=1,2,3    ,   \end{split}    \llabel{8ThswELzXU3X7Ebd1KdZ7v1rN3GiirRXGKWK099ovBM0FDJCvkopYNQ2aN94Z7k0UnUKamE3OjU8DFYFFokbSI2J9V9gVlM8ALWThDPnPu3EL7HPD2VDaZTggzcCCmbvc70qqPcC9mt60ogcrTiA3HEjwTK8ymKeuJMc4q6dVz200XnYUtLR9GYjPXvFOVr6W1zUK1WbPToaWJJuKnxBLnd0ftDEbMmj4loHYyhZyMjM91zQS4p7z8eKa9h0JrbacekcirexG0z4n3215}   \end{align} where $K \in L^{\infty}([0,T];H^{2.5+ \delta})$ and $F \in L^{\infty}([0,T];H^{2.5+ \delta})$.
The existence of a solution $v \in L^{\infty}([0,T];H^{2.5+ \delta})$ is standard,  and in addition we have the estimate   \begin{align}\thelt{HqX t3a G6 mkfG JVWv 40lexP nI c y5c kRM D3o wV BdxQ m6Cv LaAgxi Jt E sSl ZFw DoY P2 nRYb CdXR z5HboV TU 8 NPg NVi WeX GV QZ7b jOy1 LRy9fa j9 n 2iE 1S0 mci 0Y D3Hg UxzL atb92M hC p ZKL JqH TSF RM n3KV kpcF LUcF0X 66 i vdq 01c Vqk oQ qu1u 2Cpi p5EV7A gM O Rcf ZjL x7L cv 9lXn 6rS8 WeK3zT LD P B61 JVW wMi KE uUZZ 4qiK 1iQ8N0 83 2 TS4 eLW 4ze Uy onzT Sofn a74RQV Ki u 9W3 kEa 3gH 8x diOh AcHs IQCsEt 0Q i 2IH w9v q9r NP lh1y 3wOR qrJcxU 4i 5 5ZH TOo GP}    \begin{split}   &\Vert v \Vert_{L^{\infty}([0,T];H^{2.5+\delta})}      \\&\indeq     \dlkjfhlaskdhjflkasdjhflkasjhdflkasjhdflkasjhdfls  \Vert v_{0} \Vert_{H^{2.5+\delta}}      + \OIUYJHUGFAJKLDHFKJLSDHFLKSDJFHLKSDJHFLKSDJHFLKDJFHLLDKHFLKSDHJFALKJHLJLHGLKHHLKJHLKGKHGJKHGKJHLKHJLKJH_{0}^{T}            P(           \Vert w_{tt} \Vert_{H^{2+\delta}(\Gamma_{1})},                 \Vert \tilde{a}\Vert_{H^{3.5+\delta}},                 \Vert \tdb\Vert_{H^{3.5+\delta}},                 \Vert \tdb_t\Vert_{H^{1.5+\delta}},                 \Vert \psi_t\Vert_{H^{2.5+\delta}},
                \Vert \tilde{v}\Vert_{H^{2.5+\delta}}             )        \,ds     .    \end{split}    \llabel{8ThswELzXU3X7Ebd1KdZ7v1rN3GiirRXGKWK099ovBM0FDJCvkopYNQ2aN94Z7k0UnUKamE3OjU8DFYFFokbSI2J9V9gVlM8ALWThDPnPu3EL7HPD2VDaZTggzcCCmbvc70qqPcC9mt60ogcrTiA3HEjwTK8ymKeuJMc4q6dVz200XnYUtLR9GYjPXvFOVr6W1zUK1WbPToaWJJuKnxBLnd0ftDEbMmj4loHYyhZyMjM91zQS4p7z8eKa9h0JrbacekcirexG0z4n3217}   \end{align} \par \emph{Step 2: Local-in-time solution  of the nonlinear problem with more regular boundary data.}  In the second step we still assume~\eqref{8ThswELzXU3X7Ebd1KdZ7v1rN3GiirRXGKWK099ovBM0FDJCvkopYNQ2aN94Z7k0UnUKamE3OjU8DFYFFokbSI2J9V9gVlM8ALWThDPnPu3EL7HPD2VDaZTggzcCCmbvc70qqPcC9mt60ogcrTiA3HEjwTK8ymKeuJMc4q6dVz200XnYUtLR9GYjPXvFOVr6W1zUK1WbPToaWJJuKnxBLnd0ftDEbMmj4loHYyhZyMjM91zQS4p7z8eKa9h0JrbacekcirexG0z4n3206} and aim to  solve the nonlinear problem  \begin{align}\thelt{ ZKL JqH TSF RM n3KV kpcF LUcF0X 66 i vdq 01c Vqk oQ qu1u 2Cpi p5EV7A gM O Rcf ZjL x7L cv 9lXn 6rS8 WeK3zT LD P B61 JVW wMi KE uUZZ 4qiK 1iQ8N0 83 2 TS4 eLW 4ze Uy onzT Sofn a74RQV Ki u 9W3 kEa 3gH 8x diOh AcHs IQCsEt 0Q i 2IH w9v q9r NP lh1y 3wOR qrJcxU 4i 5 5ZH TOo GP0 zE qlB3 lkwG GRn7TO oK f GZu 5Bc zGK Fe oyIB tjNb 8xfQEK du O nJV OZh 8PU Va RonX BkIj BT9WWo r7 A 3Wf XxA 2f2 Vl XZS1 Ttsa b4n6R3 BK X 0XJ Tml kVt cW TMCs iFVy jfcrze Jk 5 MBx w}    \begin{split}
   &     \UIOIUYOIUyHJGKHJLOIUYOIUOIUYOIYIOUYTIUYIOOOIUYOIUYPOIUPOIUPOIUYOIUYOIUYOIUHOUHOHIOUHOIHOIUHOIUHIOUH_{t} v_i     + v_1 \tilde{a}_{j1} \UIOIUYOIUyHJGKHJLOIUYOIUOIUYOIYIOUYTIUYIOOOIUYOIUYPOIUPOIUPOIUYOIUYOIUYOIUHOUHOHIOUHOIHOIUHOIUHIOUH_{j} v_i     +v_2 \tilde{a}_{j2} \UIOIUYOIUyHJGKHJLOIUYOIUOIUYOIYIOUYTIUYIOOOIUYOIUYPOIUPOIUPOIUYOIUYOIUYOIUHOUHOHIOUHOIHOIUHOIUHIOUH_{j} v_i     + (v_3-\psi_t)\tilde{a}_{33} \UIOIUYOIUyHJGKHJLOIUYOIUOIUYOIYIOUYTIUYIOOOIUYOIUYPOIUPOIUPOIUYOIUYOIUYOIUHOUHOHIOUHOIHOIUHOIUHIOUH_{3} v_i     + \tilde{a}_{ki}\UIOIUYOIUyHJGKHJLOIUYOIUOIUYOIYIOUYTIUYIOOOIUYOIUYPOIUPOIUPOIUYOIUYOIUYOIUHOUHOHIOUHOIHOIUHOIUHIOUH_{k}q=0       \inon{in $\Omega$}    \comma i=1,2,3,   \\&    \tilde b_{ji}\UIOIUYOIUyHJGKHJLOIUYOIUOIUYOIYIOUYTIUYIOOOIUYOIUYPOIUPOIUPOIUYOIUYOIUYOIUHOUHOHIOUHOIHOIUHOIUHIOUH_{j}v_i=0     \inon{in $\Omega$}   \\&      v_3=0    \inon{on $\Gamma_0$},
  \\&   \tilde{b}_{3i}\tilde{v}_i  = w_{t}      \inon{on $\Gamma_1$}      ,     \end{split}    \label{8ThswELzXU3X7Ebd1KdZ7v1rN3GiirRXGKWK099ovBM0FDJCvkopYNQ2aN94Z7k0UnUKamE3OjU8DFYFFokbSI2J9V9gVlM8ALWThDPnPu3EL7HPD2VDaZTggzcCCmbvc70qqPcC9mt60ogcrTiA3HEjwTK8ymKeuJMc4q6dVz200XnYUtLR9GYjPXvFOVr6W1zUK1WbPToaWJJuKnxBLnd0ftDEbMmj4loHYyhZyMjM91zQS4p7z8eKa9h0JrbacekcirexG0z4n3218}   \end{align} using the iteration   \begin{align}\thelt{ Ki u 9W3 kEa 3gH 8x diOh AcHs IQCsEt 0Q i 2IH w9v q9r NP lh1y 3wOR qrJcxU 4i 5 5ZH TOo GP0 zE qlB3 lkwG GRn7TO oK f GZu 5Bc zGK Fe oyIB tjNb 8xfQEK du O nJV OZh 8PU Va RonX BkIj BT9WWo r7 A 3Wf XxA 2f2 Vl XZS1 Ttsa b4n6R3 BK X 0XJ Tml kVt cW TMCs iFVy jfcrze Jk 5 MBx wR7 zzV On jlLz Uz5u LeqWjD ul 7 OnY ICG G9i Ry bTsY JXfr Rnub3p 16 J BQd 0zQ OkK ZK 6DeV gpXR ceOExL Y3 W KrX YyI e7d qM qanC CTjF W71LQ8 9m Q w1g Asw nYS Me WlHz 7ud7 xBwxF3 m8 u }    \begin{split}    &     \UIOIUYOIUyHJGKHJLOIUYOIUOIUYOIYIOUYTIUYIOOOIUYOIUYPOIUPOIUPOIUYOIUYOIUYOIUHOUHOHIOUHOIHOIUHOIUHIOUH_{t} v^{(n+1)}_i     + E(v^{(n)}_1) E(\tilde{a}_{j1}) \UIOIUYOIUyHJGKHJLOIUYOIUOIUYOIYIOUYTIUYIOOOIUYOIUYPOIUPOIUPOIUYOIUYOIUYOIUHOUHOHIOUHOIHOIUHOIUHIOUH_{j} v^{(n+1)}_i     +E(v^{(n)}_2) E(\tilde{a}_{j2}) \UIOIUYOIUyHJGKHJLOIUYOIUOIUYOIYIOUYTIUYIOOOIUYOIUYPOIUPOIUPOIUYOIUYOIUYOIUHOUHOHIOUHOIHOIUHOIUHIOUH_{j} v^{(n+1)}_i
    \\&\indeq     + E(v^{(n)}_3-\psi_t)E(\tilde{a}_{33}) \UIOIUYOIUyHJGKHJLOIUYOIUOIUYOIYIOUYTIUYIOOOIUYOIUYPOIUPOIUPOIUYOIUYOIUYOIUHOUHOHIOUHOIHOIUHOIUHIOUH_{3} v^{(n+1)}_i     + E(\tilde{a}_{ki})E(\UIOIUYOIUyHJGKHJLOIUYOIUOIUYOIYIOUYTIUYIOOOIUYOIUYPOIUPOIUPOIUYOIUYOIUYOIUHOUHOHIOUHOIHOIUHOIUHIOUH_{k}q^{(n+1)})=0       \inon{in $\Omega_{0}$}    \comma i=1,2,3       ,     \end{split}    \label{8ThswELzXU3X7Ebd1KdZ7v1rN3GiirRXGKWK099ovBM0FDJCvkopYNQ2aN94Z7k0UnUKamE3OjU8DFYFFokbSI2J9V9gVlM8ALWThDPnPu3EL7HPD2VDaZTggzcCCmbvc70qqPcC9mt60ogcrTiA3HEjwTK8ymKeuJMc4q6dVz200XnYUtLR9GYjPXvFOVr6W1zUK1WbPToaWJJuKnxBLnd0ftDEbMmj4loHYyhZyMjM91zQS4p7z8eKa9h0JrbacekcirexG0z4n3219}   \end{align} where $q^{(n+1)}$ is obtained by solving the system \eqref{8ThswELzXU3X7Ebd1KdZ7v1rN3GiirRXGKWK099ovBM0FDJCvkopYNQ2aN94Z7k0UnUKamE3OjU8DFYFFokbSI2J9V9gVlM8ALWThDPnPu3EL7HPD2VDaZTggzcCCmbvc70qqPcC9mt60ogcrTiA3HEjwTK8ymKeuJMc4q6dVz200XnYUtLR9GYjPXvFOVr6W1zUK1WbPToaWJJuKnxBLnd0ftDEbMmj4loHYyhZyMjM91zQS4p7z8eKa9h0JrbacekcirexG0z4n3210}--\eqref{8ThswELzXU3X7Ebd1KdZ7v1rN3GiirRXGKWK099ovBM0FDJCvkopYNQ2aN94Z7k0UnUKamE3OjU8DFYFFokbSI2J9V9gVlM8ALWThDPnPu3EL7HPD2VDaZTggzcCCmbvc70qqPcC9mt60ogcrTiA3HEjwTK8ymKeuJMc4q6dVz200XnYUtLR9GYjPXvFOVr6W1zUK1WbPToaWJJuKnxBLnd0ftDEbMmj4loHYyhZyMjM91zQS4p7z8eKa9h0JrbacekcirexG0z4n3212} with $\tilde v$ is replaced by $v^{(n)}$. \par Note that, given $\tilde{v}=v^{(n)}$, we solve for $\tilde{q}=q^{(n+1)}$  and then  obtain $v^{(n+1)}$ as in Step~1. 
We now proceed by using a fixed point argument.  We first choose $M>0$ sufficiently large and a time $T$ sufficiently small  so that $  \Vert v^{(n)} \Vert_{L^{\infty}([0,T];H^{2.5+\delta})} \leq M$ for all $n$ and we establish that the mapping $v^{(n)} \mapsto v^{(n+1)}$ is  a contraction in the norm of $L^{\infty} ([0,T];L^2)$.  For $n\in{\mathbb N}_0$, denote  $V^{(n)}=v^{(n)} - v^{(n-1)}$ and  $Q^{(n)}=q^{(n)} - q^{(n-1)}$. Note that the function $V^{(n+1)}$ satisfies    \begin{align}\thelt{T9WWo r7 A 3Wf XxA 2f2 Vl XZS1 Ttsa b4n6R3 BK X 0XJ Tml kVt cW TMCs iFVy jfcrze Jk 5 MBx wR7 zzV On jlLz Uz5u LeqWjD ul 7 OnY ICG G9i Ry bTsY JXfr Rnub3p 16 J BQd 0zQ OkK ZK 6DeV gpXR ceOExL Y3 W KrX YyI e7d qM qanC CTjF W71LQ8 9m Q w1g Asw nYS Me WlHz 7ud7 xBwxF3 m8 u sa6 6yr 0nS ds Ywuq wXdD 0fRjFp eL O e0r csI uMG rS OqRE W5pl ybq3rF rk 7 YmL URU SSV YG ruD6 ksnL XBkvVS 2q 0 ljM PpI L27 Qd ZMUP baOo Lqt3bh n6 R X9h PAd QRp 9P I4fB kJ8u ILIArp }      \begin{split}     &     \UIOIUYOIUyHJGKHJLOIUYOIUOIUYOIYIOUYTIUYIOOOIUYOIUYPOIUPOIUPOIUYOIUYOIUYOIUHOUHOHIOUHOIHOIUHOIUHIOUH_{t} V^{(n+1)}_i     + E(v^{(n)}_k) E(\tilde{a}_{jk}) \UIOIUYOIUyHJGKHJLOIUYOIUOIUYOIYIOUYTIUYIOOOIUYOIUYPOIUPOIUPOIUYOIUYOIUYOIUHOUHOHIOUHOIHOIUHOIUHIOUH_{j} V^{(n+1)}_i      +  E(V^{(n)}_k) E(\tilde{a}_{jk}) \UIOIUYOIUyHJGKHJLOIUYOIUOIUYOIYIOUYTIUYIOOOIUYOIUYPOIUPOIUPOIUYOIUYOIUYOIUHOUHOHIOUHOIHOIUHOIUHIOUH_{j} v^{(n)}_i
   \\&\indeq    - E(\psi_t) E(\tilde{a}_{33}) \UIOIUYOIUyHJGKHJLOIUYOIUOIUYOIYIOUYTIUYIOOOIUYOIUYPOIUPOIUPOIUYOIUYOIUYOIUHOUHOHIOUHOIHOIUHOIUHIOUH_{3} V^{(n+1)}_i      + E(\tilde{a}_{ki}) E(\UIOIUYOIUyHJGKHJLOIUYOIUOIUYOIYIOUYTIUYIOOOIUYOIUYPOIUPOIUPOIUYOIUYOIUYOIUHOUHOHIOUHOIHOIUHOIUHIOUH_{k}Q^{(n+1)})       =0           \inon{in $\Omega_{0}$}     .     \end{split}    \llabel{8ThswELzXU3X7Ebd1KdZ7v1rN3GiirRXGKWK099ovBM0FDJCvkopYNQ2aN94Z7k0UnUKamE3OjU8DFYFFokbSI2J9V9gVlM8ALWThDPnPu3EL7HPD2VDaZTggzcCCmbvc70qqPcC9mt60ogcrTiA3HEjwTK8ymKeuJMc4q6dVz200XnYUtLR9GYjPXvFOVr6W1zUK1WbPToaWJJuKnxBLnd0ftDEbMmj4loHYyhZyMjM91zQS4p7z8eKa9h0JrbacekcirexG0z4n3222}    \end{align} Applying the differential operator $(I-\Delta)^{1/2}$, multiplying with $(I-\Delta)^{1/2}V^{(n+1)}$, and integrating in time and space,   we may then estimate the norm of $V$ in $H^{1}$ as   \begin{align}\thelt{pXR ceOExL Y3 W KrX YyI e7d qM qanC CTjF W71LQ8 9m Q w1g Asw nYS Me WlHz 7ud7 xBwxF3 m8 u sa6 6yr 0nS ds Ywuq wXdD 0fRjFp eL O e0r csI uMG rS OqRE W5pl ybq3rF rk 7 YmL URU SSV YG ruD6 ksnL XBkvVS 2q 0 ljM PpI L27 Qd ZMUP baOo Lqt3bh n6 R X9h PAd QRp 9P I4fB kJ8u ILIArp Tl 4 E6j rUY wuF Xi FYaD VvrD b2zVpv Gg 6 zFY ojS bMB hr 4pW8 OwDN Uao2mh DT S cei 90K rsm wa BnNU sHe6 RpIq1h XF N Pm0 iVs nGk bC Jr8V megl 416tU2 nn o llO tcF UM7 c4 GC8C lasl J0}   \begin{split}
     \Vert  V^{(n+1)}(t) \Vert^{2}_{H^{1}}      & \dlkjfhlaskdhjflkasdjhflkasjhdflkasjhdflkasjhdfls        \OIUYJHUGFAJKLDHFKJLSDHFLKSDJFHLKSDJHFLKSDJHFLKDJFHLLDKHFLKSDHJFALKJHLJLHGLKHHLKJHLKGKHGJKHGKJHLKHJLKJH_{0}^{t}   \Vert V^{(n+1)} \Vert^{2}_{H^{1}}  \Vert \tilde{a}\Vert_{H^{2.5+\delta}} ( \Vert  v^{(n)}\Vert_{H^{2.5+\delta}}+\Vert \psi_{t} \Vert_{H^{2.5+\delta}}) \,ds            \\ & \indeq \indeq       +  \OIUYJHUGFAJKLDHFKJLSDHFLKSDJFHLKSDJHFLKSDJHFLKDJFHLLDKHFLKSDHJFALKJHLJLHGLKHHLKJHLKGKHGJKHGKJHLKHJLKJH_{0}^{t} \Vert V^{(n)} \Vert_{H^{1}} \Vert V^{(n+1)} \Vert_{H^{1}} \Vert \tilde{a}\Vert_{H^{2.5+\delta}}  \Vert  v^{(n)}\Vert_{H^{2.5+\delta}} \,ds      \\ & \indeq \indeq    + \OIUYJHUGFAJKLDHFKJLSDHFLKSDJFHLKSDJHFLKSDJHFLKDJFHLLDKHFLKSDHJFALKJHLJLHGLKHHLKJHLKGKHGJKHGKJHLKHJLKJH_{0}^{t}   \Vert \nabla Q^{(n+1)} \Vert_{H^{1}} \Vert V^{(n+1)} \Vert_{H^{1}} \Vert \tilde{a}\Vert_{H^{2.5+\delta}} \,ds    .   \end{split}    \llabel{8ThswELzXU3X7Ebd1KdZ7v1rN3GiirRXGKWK099ovBM0FDJCvkopYNQ2aN94Z7k0UnUKamE3OjU8DFYFFokbSI2J9V9gVlM8ALWThDPnPu3EL7HPD2VDaZTggzcCCmbvc70qqPcC9mt60ogcrTiA3HEjwTK8ymKeuJMc4q6dVz200XnYUtLR9GYjPXvFOVr6W1zUK1WbPToaWJJuKnxBLnd0ftDEbMmj4loHYyhZyMjM91zQS4p7z8eKa9h0JrbacekcirexG0z4n3224}   \end{align} We now use a similar  elliptic estimate to~\eqref{8ThswELzXU3X7Ebd1KdZ7v1rN3GiirRXGKWK099ovBM0FDJCvkopYNQ2aN94Z7k0UnUKamE3OjU8DFYFFokbSI2J9V9gVlM8ALWThDPnPu3EL7HPD2VDaZTggzcCCmbvc70qqPcC9mt60ogcrTiA3HEjwTK8ymKeuJMc4q6dVz200XnYUtLR9GYjPXvFOVr6W1zUK1WbPToaWJJuKnxBLnd0ftDEbMmj4loHYyhZyMjM91zQS4p7z8eKa9h0JrbacekcirexG0z4n3214} to bound  the difference of two solutions $Q$ to the pressure equation.
Namely,   \begin{align}\thelt{uD6 ksnL XBkvVS 2q 0 ljM PpI L27 Qd ZMUP baOo Lqt3bh n6 R X9h PAd QRp 9P I4fB kJ8u ILIArp Tl 4 E6j rUY wuF Xi FYaD VvrD b2zVpv Gg 6 zFY ojS bMB hr 4pW8 OwDN Uao2mh DT S cei 90K rsm wa BnNU sHe6 RpIq1h XF N Pm0 iVs nGk bC Jr8V megl 416tU2 nn o llO tcF UM7 c4 GC8C lasl J0N8Xf Cu R aR2 sYe fjV ri JNj1 f2ty vqJyQN X1 F YmT l5N 17t kb BTPu F471 AH0Fo7 1R E ILJ p4V sqi WT TtkA d5Rk kJH3Ri RN K ePe sR0 xqF qn QjGU IniV gLGCl2 He 7 kmq hEV 4PF dC dGpE P9}    \Vert  \nabla Q^{(n+1)} \Vert_{H^{1}}      \le        P(           \Vert \tilde{a}\Vert_{H^{3.5+\delta}},           \Vert \tdb\Vert_{H^{3.5+\delta}},           \Vert \tdb_t\Vert_{H^{2.5+\delta}},           \Vert \psi_t\Vert_{H^{2.5+\delta}},M)           \Vert V^{(n)} \Vert_{H^{1}}             .    \llabel{8ThswELzXU3X7Ebd1KdZ7v1rN3GiirRXGKWK099ovBM0FDJCvkopYNQ2aN94Z7k0UnUKamE3OjU8DFYFFokbSI2J9V9gVlM8ALWThDPnPu3EL7HPD2VDaZTggzcCCmbvc70qqPcC9mt60ogcrTiA3HEjwTK8ymKeuJMc4q6dVz200XnYUtLR9GYjPXvFOVr6W1zUK1WbPToaWJJuKnxBLnd0ftDEbMmj4loHYyhZyMjM91zQS4p7z8eKa9h0JrbacekcirexG0z4n3354}   \end{align}    Note that we used a bound on the error term $\mathcal{E}^{(n)}-\mathcal{E}^{(n-1)}$  by $\Vert V^{n} \Vert_{H^{1}}$ 
with constants depending on $w$, $w_{t}$, and $M$. Hence,  we have     \begin{align}\thelt{ wa BnNU sHe6 RpIq1h XF N Pm0 iVs nGk bC Jr8V megl 416tU2 nn o llO tcF UM7 c4 GC8C lasl J0N8Xf Cu R aR2 sYe fjV ri JNj1 f2ty vqJyQN X1 F YmT l5N 17t kb BTPu F471 AH0Fo7 1R E ILJ p4V sqi WT TtkA d5Rk kJH3Ri RN K ePe sR0 xqF qn QjGU IniV gLGCl2 He 7 kmq hEV 4PF dC dGpE P9nB mcvZ0p LY G idf n65 qEu Df Mz2v cq4D MzN6mB FR t QP0 yDD Fxj uZ iZPE 3Jj4 hVc2zr rc R OnF PeO P1p Zg nsHA MRK4 ETNF23 Kt f Gem 2kr 5gf 5u 8Ncu wfJC av6SvQ 2n 1 8P8 RcI kmM SD 0w}      \Vert  V^{(n+1)}(t) \Vert^{2}_{H^{1}}      & \dlkjfhlaskdhjflkasdjhflkasjhdflkasjhdflkasjhdfls         \OIUYJHUGFAJKLDHFKJLSDHFLKSDJFHLKSDJHFLKSDJHFLKDJFHLLDKHFLKSDHJFALKJHLJLHGLKHHLKJHLKGKHGJKHGKJHLKHJLKJH_{0}^{t}   \Vert V^{(n+1)} \Vert^{2}_{H^{1}} \,ds +  \OIUYJHUGFAJKLDHFKJLSDHFLKSDJFHLKSDJHFLKSDJHFLKDJFHLLDKHFLKSDHJFALKJHLJLHGLKHHLKJHLKGKHGJKHGKJHLKHJLKJH_{0}^{t} \Vert V^{(n)} \Vert^{2}_{H^{1}}  \,ds     ,    \llabel{8ThswELzXU3X7Ebd1KdZ7v1rN3GiirRXGKWK099ovBM0FDJCvkopYNQ2aN94Z7k0UnUKamE3OjU8DFYFFokbSI2J9V9gVlM8ALWThDPnPu3EL7HPD2VDaZTggzcCCmbvc70qqPcC9mt60ogcrTiA3HEjwTK8ymKeuJMc4q6dVz200XnYUtLR9GYjPXvFOVr6W1zUK1WbPToaWJJuKnxBLnd0ftDEbMmj4loHYyhZyMjM91zQS4p7z8eKa9h0JrbacekcirexG0z4n3352}   \end{align} where all constants  are allowed to depend on the norms of $w$. Using Gronwall's inequality and taking $T$ sufficiently small, we obtain the desired contraction estimate  \begin{align}\thelt{V sqi WT TtkA d5Rk kJH3Ri RN K ePe sR0 xqF qn QjGU IniV gLGCl2 He 7 kmq hEV 4PF dC dGpE P9nB mcvZ0p LY G idf n65 qEu Df Mz2v cq4D MzN6mB FR t QP0 yDD Fxj uZ iZPE 3Jj4 hVc2zr rc R OnF PeO P1p Zg nsHA MRK4 ETNF23 Kt f Gem 2kr 5gf 5u 8Ncu wfJC av6SvQ 2n 1 8P8 RcI kmM SD 0wrV R1PY x7kEkZ Js J 7Wb 6XI WDE 0U nqtZ PAqE ETS3Eq NN f 38D Ek6 NhX V9 c3se vM32 WACSj3 eN X uq9 GhP OPC hd 7v1T 6gqR inehWk 8w L oaa wHV vbU 49 02yO bCT6 zm2aNf 8x U wPO ilr R3v }     \Vert V^{(n+1)} \Vert^{2}_{L^{\infty}([0,T];H^{1})}          \leq \frac12
         \Vert V^{(n)} \Vert^{2}_{L^{\infty}([0,T];H^{1})}      .    \llabel{8ThswELzXU3X7Ebd1KdZ7v1rN3GiirRXGKWK099ovBM0FDJCvkopYNQ2aN94Z7k0UnUKamE3OjU8DFYFFokbSI2J9V9gVlM8ALWThDPnPu3EL7HPD2VDaZTggzcCCmbvc70qqPcC9mt60ogcrTiA3HEjwTK8ymKeuJMc4q6dVz200XnYUtLR9GYjPXvFOVr6W1zUK1WbPToaWJJuKnxBLnd0ftDEbMmj4loHYyhZyMjM91zQS4p7z8eKa9h0JrbacekcirexG0z4n3353}   \end{align} Thus there exist $v \in L^{\infty}([0,T];H^{2.5+\delta})$  and a  pressure function $q \in L^{\infty}([0,T];H^{2.5+\delta})$, which is defined up to a constant, which are the fixed point for the iteration scheme.  The couple $(v,q)$ then satisfies the first equation of the nonlinear system~\eqref{8ThswELzXU3X7Ebd1KdZ7v1rN3GiirRXGKWK099ovBM0FDJCvkopYNQ2aN94Z7k0UnUKamE3OjU8DFYFFokbSI2J9V9gVlM8ALWThDPnPu3EL7HPD2VDaZTggzcCCmbvc70qqPcC9mt60ogcrTiA3HEjwTK8ymKeuJMc4q6dVz200XnYUtLR9GYjPXvFOVr6W1zUK1WbPToaWJJuKnxBLnd0ftDEbMmj4loHYyhZyMjM91zQS4p7z8eKa9h0JrbacekcirexG0z4n3218}.  We next show that the divergence condition and the boundary conditions in \eqref{8ThswELzXU3X7Ebd1KdZ7v1rN3GiirRXGKWK099ovBM0FDJCvkopYNQ2aN94Z7k0UnUKamE3OjU8DFYFFokbSI2J9V9gVlM8ALWThDPnPu3EL7HPD2VDaZTggzcCCmbvc70qqPcC9mt60ogcrTiA3HEjwTK8ymKeuJMc4q6dVz200XnYUtLR9GYjPXvFOVr6W1zUK1WbPToaWJJuKnxBLnd0ftDEbMmj4loHYyhZyMjM91zQS4p7z8eKa9h0JrbacekcirexG0z4n3218} are satisfied by the pair $(v,q)$. \\ \par \colb \emph{Step 3:  Reconstruction of Divergence and Boundary Conditions of the nonlinear problem \eqref{8ThswELzXU3X7Ebd1KdZ7v1rN3GiirRXGKWK099ovBM0FDJCvkopYNQ2aN94Z7k0UnUKamE3OjU8DFYFFokbSI2J9V9gVlM8ALWThDPnPu3EL7HPD2VDaZTggzcCCmbvc70qqPcC9mt60ogcrTiA3HEjwTK8ymKeuJMc4q6dVz200XnYUtLR9GYjPXvFOVr6W1zUK1WbPToaWJJuKnxBLnd0ftDEbMmj4loHYyhZyMjM91zQS4p7z8eKa9h0JrbacekcirexG0z4n3218}}. \par
It follows that the fixed point $v$ of the iteration scheme  defined in \eqref{8ThswELzXU3X7Ebd1KdZ7v1rN3GiirRXGKWK099ovBM0FDJCvkopYNQ2aN94Z7k0UnUKamE3OjU8DFYFFokbSI2J9V9gVlM8ALWThDPnPu3EL7HPD2VDaZTggzcCCmbvc70qqPcC9mt60ogcrTiA3HEjwTK8ymKeuJMc4q6dVz200XnYUtLR9GYjPXvFOVr6W1zUK1WbPToaWJJuKnxBLnd0ftDEbMmj4loHYyhZyMjM91zQS4p7z8eKa9h0JrbacekcirexG0z4n3219} solves the problem  \begin{align}\thelt{nF PeO P1p Zg nsHA MRK4 ETNF23 Kt f Gem 2kr 5gf 5u 8Ncu wfJC av6SvQ 2n 1 8P8 RcI kmM SD 0wrV R1PY x7kEkZ Js J 7Wb 6XI WDE 0U nqtZ PAqE ETS3Eq NN f 38D Ek6 NhX V9 c3se vM32 WACSj3 eN X uq9 GhP OPC hd 7v1T 6gqR inehWk 8w L oaa wHV vbU 49 02yO bCT6 zm2aNf 8x U wPO ilr R3v 8R cNWE k7Ev IAI8ok PA Y xPi UlZ 4mw zs Jo6r uPmY N6tylD Ee e oTm lBK mnV uB B7Hn U7qK n353Sn dt o L82 gDi fcm jL hHx3 gi0a kymhua FT z RnM ibF GU5 W5 x651 0NKi 85u8JT LY c bfO Mn0}    \begin{split}     &     \UIOIUYOIUyHJGKHJLOIUYOIUOIUYOIYIOUYTIUYIOOOIUYOIUYPOIUPOIUPOIUYOIUYOIUYOIUHOUHOHIOUHOIHOIUHOIUHIOUH_{t} v_i     + {v}_m\tilde{a}_{km} \UIOIUYOIUyHJGKHJLOIUYOIUOIUYOIYIOUYTIUYIOOOIUYOIUYPOIUPOIUPOIUYOIUYOIUYOIUHOUHOHIOUHOIHOIUHOIUHIOUH_{k} v_i     -\psi_t\tilde{a}_{33} \UIOIUYOIUyHJGKHJLOIUYOIUOIUYOIYIOUYTIUYIOOOIUYOIUYPOIUPOIUPOIUYOIUYOIUYOIUHOUHOHIOUHOIHOIUHOIUHIOUH_{3} v_i     + \tilde{a}_{ki}\UIOIUYOIUyHJGKHJLOIUYOIUOIUYOIYIOUYTIUYIOOOIUYOIUYPOIUPOIUPOIUYOIUYOIUYOIUHOUHOHIOUHOIHOIUHOIUHIOUH_{k}{q}=0     \inon{in $\Omega$}     .     \end{split}    \label{8ThswELzXU3X7Ebd1KdZ7v1rN3GiirRXGKWK099ovBM0FDJCvkopYNQ2aN94Z7k0UnUKamE3OjU8DFYFFokbSI2J9V9gVlM8ALWThDPnPu3EL7HPD2VDaZTggzcCCmbvc70qqPcC9mt60ogcrTiA3HEjwTK8ymKeuJMc4q6dVz200XnYUtLR9GYjPXvFOVr6W1zUK1WbPToaWJJuKnxBLnd0ftDEbMmj4loHYyhZyMjM91zQS4p7z8eKa9h0JrbacekcirexG0z4n3126}   \end{align} Using the equations~\eqref{8ThswELzXU3X7Ebd1KdZ7v1rN3GiirRXGKWK099ovBM0FDJCvkopYNQ2aN94Z7k0UnUKamE3OjU8DFYFFokbSI2J9V9gVlM8ALWThDPnPu3EL7HPD2VDaZTggzcCCmbvc70qqPcC9mt60ogcrTiA3HEjwTK8ymKeuJMc4q6dVz200XnYUtLR9GYjPXvFOVr6W1zUK1WbPToaWJJuKnxBLnd0ftDEbMmj4loHYyhZyMjM91zQS4p7z8eKa9h0JrbacekcirexG0z4n3210}--\eqref{8ThswELzXU3X7Ebd1KdZ7v1rN3GiirRXGKWK099ovBM0FDJCvkopYNQ2aN94Z7k0UnUKamE3OjU8DFYFFokbSI2J9V9gVlM8ALWThDPnPu3EL7HPD2VDaZTggzcCCmbvc70qqPcC9mt60ogcrTiA3HEjwTK8ymKeuJMc4q6dVz200XnYUtLR9GYjPXvFOVr6W1zUK1WbPToaWJJuKnxBLnd0ftDEbMmj4loHYyhZyMjM91zQS4p7z8eKa9h0JrbacekcirexG0z4n3212}, the corresponding pressure $q$ satisfies the elliptic boundary value problem
  \begin{align}\thelt{N X uq9 GhP OPC hd 7v1T 6gqR inehWk 8w L oaa wHV vbU 49 02yO bCT6 zm2aNf 8x U wPO ilr R3v 8R cNWE k7Ev IAI8ok PA Y xPi UlZ 4mw zs Jo6r uPmY N6tylD Ee e oTm lBK mnV uB B7Hn U7qK n353Sn dt o L82 gDi fcm jL hHx3 gi0a kymhua FT z RnM ibF GU5 W5 x651 0NKi 85u8JT LY c bfO Mn0 auD 0t vNHw SAWz E3HWcY TI d 2Hh XML iGi yk AjHC nRX4 uJJlct Q3 y Loq i9j u7K j8 4EFU 49ud eA93xZ fZ C BW4 bSK pyc f6 nncm vnhK b0HjuK Wp 6 b88 pGC 3U7 km CO1e Y8jv Ebu59z mG Z sZ}    \begin{split}    \UIOIUYOIUyHJGKHJLOIUYOIUOIUYOIYIOUYTIUYIOOOIUYOIUYPOIUPOIUPOIUYOIUYOIUYOIUHOUHOHIOUHOIHOIUHOIUHIOUH_{j}(\tdb_{ji} \tilde{a}_{ki}\UIOIUYOIUyHJGKHJLOIUYOIUOIUYOIYIOUYTIUYIOOOIUYOIUYPOIUPOIUPOIUYOIUYOIUYOIUHOUHOHIOUHOIHOIUHOIUHIOUH_{k}{q})       &=  \UIOIUYOIUyHJGKHJLOIUYOIUOIUYOIYIOUYTIUYIOOOIUYOIUYPOIUPOIUPOIUYOIUYOIUYOIUHOUHOHIOUHOIHOIUHOIUHIOUH_{j}(\UIOIUYOIUyHJGKHJLOIUYOIUOIUYOIYIOUYTIUYIOOOIUYOIUYPOIUPOIUPOIUYOIUYOIUYOIUHOUHOHIOUHOIHOIUHOIUHIOUH_{t}\tdb_{ji} {v}_i)      -              \UIOIUYOIUyHJGKHJLOIUYOIUOIUYOIYIOUYTIUYIOOOIUYOIUYPOIUPOIUPOIUYOIUYOIUYOIUHOUHOHIOUHOIHOIUHOIUHIOUH_{j}({v}_m \tilde{a}_{km}) \tdb_{ji}\UIOIUYOIUyHJGKHJLOIUYOIUOIUYOIYIOUYTIUYIOOOIUYOIUYPOIUPOIUPOIUYOIUYOIUYOIUHOUHOHIOUHOIHOIUHOIUHIOUH_{k} {v}_i           +  \UIOIUYOIUyHJGKHJLOIUYOIUOIUYOIYIOUYTIUYIOOOIUYOIUYPOIUPOIUPOIUYOIUYOIUYOIUHOUHOHIOUHOIHOIUHOIUHIOUH_{j}(\tilde{a}_{33}\psi_t)\tdb_{ji}\UIOIUYOIUyHJGKHJLOIUYOIUOIUYOIYIOUYTIUYIOOOIUYOIUYPOIUPOIUPOIUYOIUYOIUYOIUHOUHOHIOUHOIHOIUHOIUHIOUH_{3} v_i       \\&\indeq             +              {v}_m \tilde{a}_{km} \UIOIUYOIUyHJGKHJLOIUYOIUOIUYOIYIOUYTIUYIOOOIUYOIUYPOIUPOIUPOIUYOIUYOIUYOIUHOUHOHIOUHOIHOIUHOIUHIOUH_{k} \tdb_{ji}\UIOIUYOIUyHJGKHJLOIUYOIUOIUYOIYIOUYTIUYIOOOIUYOIUYPOIUPOIUPOIUYOIUYOIUYOIUHOUHOHIOUHOIHOIUHOIUHIOUH_{j}{v}_i           -  \tilde{a}_{33}\psi_t\UIOIUYOIUyHJGKHJLOIUYOIUOIUYOIYIOUYTIUYIOOOIUYOIUYPOIUPOIUPOIUYOIUYOIUYOIUHOUHOHIOUHOIHOIUHOIUHIOUH_{3} \tdb_{ji} \UIOIUYOIUyHJGKHJLOIUYOIUOIUYOIYIOUYTIUYIOOOIUYOIUYPOIUPOIUPOIUYOIUYOIUYOIUHOUHOHIOUHOIHOIUHOIUHIOUH_{j} {v}_i            +\mathcal{E}    \inon{in $\Omega$}    ,
  \end{split}    \label{8ThswELzXU3X7Ebd1KdZ7v1rN3GiirRXGKWK099ovBM0FDJCvkopYNQ2aN94Z7k0UnUKamE3OjU8DFYFFokbSI2J9V9gVlM8ALWThDPnPu3EL7HPD2VDaZTggzcCCmbvc70qqPcC9mt60ogcrTiA3HEjwTK8ymKeuJMc4q6dVz200XnYUtLR9GYjPXvFOVr6W1zUK1WbPToaWJJuKnxBLnd0ftDEbMmj4loHYyhZyMjM91zQS4p7z8eKa9h0JrbacekcirexG0z4n3269}   \end{align} with the Neumann type boundary conditions  \begin{align}\thelt{3Sn dt o L82 gDi fcm jL hHx3 gi0a kymhua FT z RnM ibF GU5 W5 x651 0NKi 85u8JT LY c bfO Mn0 auD 0t vNHw SAWz E3HWcY TI d 2Hh XML iGi yk AjHC nRX4 uJJlct Q3 y Loq i9j u7K j8 4EFU 49ud eA93xZ fZ C BW4 bSK pyc f6 nncm vnhK b0HjuK Wp 6 b88 pGC 3U7 km CO1e Y8jv Ebu59z mG Z sZh 93N wvJ Yb kEgD pJBj gQeQUH 9k C az6 ZGp cpg rH r79I eQvT Idp35m wW m afR gjD vXS 7a FgmN IWmj vopqUu xF r BYm oa4 5jq kR gTBP PKLg oMLjiw IZ 2 I4F 91C 6x9 ae W7Tq 9CeM 62kef7 MU}    \begin{split}     &   \tdb_{3i}\tilde{a}_{ki}\UIOIUYOIUyHJGKHJLOIUYOIUOIUYOIYIOUYTIUYIOOOIUYOIUYPOIUPOIUPOIUYOIUYOIUYOIUHOUHOHIOUHOIHOIUHOIUHIOUH_{k}{q}     = 0      \inon{on $\Gamma_0$}        \end{split}    \label{8ThswELzXU3X7Ebd1KdZ7v1rN3GiirRXGKWK099ovBM0FDJCvkopYNQ2aN94Z7k0UnUKamE3OjU8DFYFFokbSI2J9V9gVlM8ALWThDPnPu3EL7HPD2VDaZTggzcCCmbvc70qqPcC9mt60ogcrTiA3HEjwTK8ymKeuJMc4q6dVz200XnYUtLR9GYjPXvFOVr6W1zUK1WbPToaWJJuKnxBLnd0ftDEbMmj4loHYyhZyMjM91zQS4p7z8eKa9h0JrbacekcirexG0z4n3240}   \end{align} and  \begin{align}\thelt{d eA93xZ fZ C BW4 bSK pyc f6 nncm vnhK b0HjuK Wp 6 b88 pGC 3U7 km CO1e Y8jv Ebu59z mG Z sZh 93N wvJ Yb kEgD pJBj gQeQUH 9k C az6 ZGp cpg rH r79I eQvT Idp35m wW m afR gjD vXS 7a FgmN IWmj vopqUu xF r BYm oa4 5jq kR gTBP PKLg oMLjiw IZ 2 I4F 91C 6x9 ae W7Tq 9CeM 62kef7 MU b ovx Wyx gID cL 8Xsz u2pZ TcbjaK 0f K zEy znV 0WF Yx bFOZ JYzB CXtQ4u xU 9 6Tn N0C GBh WE FZr6 0rIg w2f9x0 fW 3 kUB 4AO fct vL 5I0A NOLd w7h8zK 12 S TKy 2Zd ewo XY PZLV Vvtr aCxA}
   \begin{split}     &   \tdb_{3i}\tilde{a}_{ki}\UIOIUYOIUyHJGKHJLOIUYOIUOIUYOIYIOUYTIUYIOOOIUYOIUYPOIUPOIUPOIUYOIUYOIUYOIUHOUHOHIOUHOIHOIUHOIUHIOUH_{k}{q}     =  - w_{tt} + \UIOIUYOIUyHJGKHJLOIUYOIUOIUYOIYIOUYTIUYIOOOIUYOIUYPOIUPOIUPOIUYOIUYOIUYOIUHOUHOHIOUHOIHOIUHOIUHIOUH_{t}\tdb_{3i} v_i        - \frac{1}{\UIOIUYOIUyHJGKHJLOIUYOIUOIUYOIYIOUYTIUYIOOOIUYOIUYPOIUPOIUPOIUYOIUYOIUYOIUHOUHOHIOUHOIHOIUHOIUHIOUH_{3} \psi} \biggl(\sum_{j=1}^{2}  v_k \tdb_{jk}  \UIOIUYOIUyHJGKHJLOIUYOIUOIUYOIYIOUYTIUYIOOOIUYOIUYPOIUPOIUPOIUYOIUYOIUYOIUHOUHOHIOUHOIHOIUHOIUHIOUH_{j}  w_{t}  + w_{t}  \UIOIUYOIUyHJGKHJLOIUYOIUOIUYOIYIOUYTIUYIOOOIUYOIUYPOIUPOIUPOIUYOIUYOIUYOIUHOUHOHIOUHOIHOIUHOIUHIOUH_{3}  \tdb_{3i}  v_{i}  - v_k \tdb_{jk}  \UIOIUYOIUyHJGKHJLOIUYOIUOIUYOIYIOUYTIUYIOOOIUYOIUYPOIUPOIUPOIUYOIUYOIUYOIUHOUHOHIOUHOIHOIUHOIUHIOUH_{j} \tdb_{3i}  v_i \biggr)    \inon{on $\Gamma_1$}         .    \end{split}    \label{8ThswELzXU3X7Ebd1KdZ7v1rN3GiirRXGKWK099ovBM0FDJCvkopYNQ2aN94Z7k0UnUKamE3OjU8DFYFFokbSI2J9V9gVlM8ALWThDPnPu3EL7HPD2VDaZTggzcCCmbvc70qqPcC9mt60ogcrTiA3HEjwTK8ymKeuJMc4q6dVz200XnYUtLR9GYjPXvFOVr6W1zUK1WbPToaWJJuKnxBLnd0ftDEbMmj4loHYyhZyMjM91zQS4p7z8eKa9h0JrbacekcirexG0z4n3239}   \end{align} Applying the variable divergence $\tilde{b}_{ji} \UIOIUYOIUyHJGKHJLOIUYOIUOIUYOIYIOUYTIUYIOOOIUYOIUYPOIUPOIUPOIUYOIUYOIUYOIUHOUHOHIOUHOIHOIUHOIUHIOUH_{j}$ to \eqref{8ThswELzXU3X7Ebd1KdZ7v1rN3GiirRXGKWK099ovBM0FDJCvkopYNQ2aN94Z7k0UnUKamE3OjU8DFYFFokbSI2J9V9gVlM8ALWThDPnPu3EL7HPD2VDaZTggzcCCmbvc70qqPcC9mt60ogcrTiA3HEjwTK8ymKeuJMc4q6dVz200XnYUtLR9GYjPXvFOVr6W1zUK1WbPToaWJJuKnxBLnd0ftDEbMmj4loHYyhZyMjM91zQS4p7z8eKa9h0JrbacekcirexG0z4n3126} and using the expression for $q$ from \eqref{8ThswELzXU3X7Ebd1KdZ7v1rN3GiirRXGKWK099ovBM0FDJCvkopYNQ2aN94Z7k0UnUKamE3OjU8DFYFFokbSI2J9V9gVlM8ALWThDPnPu3EL7HPD2VDaZTggzcCCmbvc70qqPcC9mt60ogcrTiA3HEjwTK8ymKeuJMc4q6dVz200XnYUtLR9GYjPXvFOVr6W1zUK1WbPToaWJJuKnxBLnd0ftDEbMmj4loHYyhZyMjM91zQS4p7z8eKa9h0JrbacekcirexG0z4n3269}, we obtain \begin{align}\thelt{N IWmj vopqUu xF r BYm oa4 5jq kR gTBP PKLg oMLjiw IZ 2 I4F 91C 6x9 ae W7Tq 9CeM 62kef7 MU b ovx Wyx gID cL 8Xsz u2pZ TcbjaK 0f K zEy znV 0WF Yx bFOZ JYzB CXtQ4u xU 9 6Tn N0C GBh WE FZr6 0rIg w2f9x0 fW 3 kUB 4AO fct vL 5I0A NOLd w7h8zK 12 S TKy 2Zd ewo XY PZLV Vvtr aCxAJm N7 M rmI arJ tfT dd DWE9 At6m hMPCVN UO O SZY tGk Pvx ps GeRg uDvt WTHMHf 3V y r6W 3xv cpi 0z 2wfw Q1DL 1wHedT qX l yoj GIQ AdE EK v7Ta k7cA ilRfvr lm 8 2Nj Ng9 KDS vN oQiN hng2} \UIOIUYOIUyHJGKHJLOIUYOIUOIUYOIYIOUYTIUYIOOOIUYOIUYPOIUPOIUPOIUYOIUYOIUYOIUHOUHOHIOUHOIHOIUHOIUHIOUH_{t} (\tilde{b}_{ji}  \UIOIUYOIUyHJGKHJLOIUYOIUOIUYOIYIOUYTIUYIOOOIUYOIUYPOIUPOIUPOIUYOIUYOIUYOIUHOUHOHIOUHOIHOIUHOIUHIOUH_{j} v_{i}) + v_m \tilde{a}_{km} \UIOIUYOIUyHJGKHJLOIUYOIUOIUYOIYIOUYTIUYIOOOIUYOIUYPOIUPOIUPOIUYOIUYOIUYOIUHOUHOHIOUHOIHOIUHOIUHIOUH_{k} (\tilde{b}_{ji}  \UIOIUYOIUyHJGKHJLOIUYOIUOIUYOIYIOUYTIUYIOOOIUYOIUYPOIUPOIUPOIUYOIUYOIUYOIUHOUHOHIOUHOIHOIUHOIUHIOUH_{j} v_i) - \psi_{t} \tilde{a}_{33} \UIOIUYOIUyHJGKHJLOIUYOIUOIUYOIYIOUYTIUYIOOOIUYOIUYPOIUPOIUPOIUYOIUYOIUYOIUHOUHOHIOUHOIHOIUHOIUHIOUH_{3} (\tilde{b}_{ji}  \UIOIUYOIUyHJGKHJLOIUYOIUOIUYOIYIOUYTIUYIOOOIUYOIUYPOIUPOIUPOIUYOIUYOIUYOIUHOUHOHIOUHOIHOIUHOIUHIOUH_{j} v_i) = -\mathcal{E} . \label{8ThswELzXU3X7Ebd1KdZ7v1rN3GiirRXGKWK099ovBM0FDJCvkopYNQ2aN94Z7k0UnUKamE3OjU8DFYFFokbSI2J9V9gVlM8ALWThDPnPu3EL7HPD2VDaZTggzcCCmbvc70qqPcC9mt60ogcrTiA3HEjwTK8ymKeuJMc4q6dVz200XnYUtLR9GYjPXvFOVr6W1zUK1WbPToaWJJuKnxBLnd0ftDEbMmj4loHYyhZyMjM91zQS4p7z8eKa9h0JrbacekcirexG0z4n3225}
\end{align}  Also, multiplying \eqref{8ThswELzXU3X7Ebd1KdZ7v1rN3GiirRXGKWK099ovBM0FDJCvkopYNQ2aN94Z7k0UnUKamE3OjU8DFYFFokbSI2J9V9gVlM8ALWThDPnPu3EL7HPD2VDaZTggzcCCmbvc70qqPcC9mt60ogcrTiA3HEjwTK8ymKeuJMc4q6dVz200XnYUtLR9GYjPXvFOVr6W1zUK1WbPToaWJJuKnxBLnd0ftDEbMmj4loHYyhZyMjM91zQS4p7z8eKa9h0JrbacekcirexG0z4n3126} by $\tdb_{3i}$, restricting to $\Gamma_{1}$, then substituting the expression for $\tdb_{3i}\tilde{a}_{ki}\UIOIUYOIUyHJGKHJLOIUYOIUOIUYOIYIOUYTIUYIOOOIUYOIUYPOIUPOIUPOIUYOIUYOIUYOIUHOUHOHIOUHOIHOIUHOIUHIOUH_{k}{q}$ from \eqref{8ThswELzXU3X7Ebd1KdZ7v1rN3GiirRXGKWK099ovBM0FDJCvkopYNQ2aN94Z7k0UnUKamE3OjU8DFYFFokbSI2J9V9gVlM8ALWThDPnPu3EL7HPD2VDaZTggzcCCmbvc70qqPcC9mt60ogcrTiA3HEjwTK8ymKeuJMc4q6dVz200XnYUtLR9GYjPXvFOVr6W1zUK1WbPToaWJJuKnxBLnd0ftDEbMmj4loHYyhZyMjM91zQS4p7z8eKa9h0JrbacekcirexG0z4n3239} into the equation we obtain   \begin{align}\thelt{E FZr6 0rIg w2f9x0 fW 3 kUB 4AO fct vL 5I0A NOLd w7h8zK 12 S TKy 2Zd ewo XY PZLV Vvtr aCxAJm N7 M rmI arJ tfT dd DWE9 At6m hMPCVN UO O SZY tGk Pvx ps GeRg uDvt WTHMHf 3V y r6W 3xv cpi 0z 2wfw Q1DL 1wHedT qX l yoj GIQ AdE EK v7Ta k7cA ilRfvr lm 8 2Nj Ng9 KDS vN oQiN hng2 tnBSVw d8 P 4o3 oLq rzP NH ZmkQ Itfj 61TcOQ PJ b lsB Yq3 Nul Nf rCon Z6kZ 2VbZ0p sQ A aUC iMa oRp FW fviT xmey zmc5Qs El 1 PNO Z4x otc iI nwc6 IFbp wsMeXx y8 l J4A 6OV 0qR zr St3P}   \UIOIUYOIUyHJGKHJLOIUYOIUOIUYOIYIOUYTIUYIOOOIUYOIUYPOIUPOIUPOIUYOIUYOIUYOIUHOUHOHIOUHOIHOIUHOIUHIOUH_{t} (\tilde{b}_{3i} v_{i}-w_{t}) + \sum_{j=1}^{2}  v_k \tilde{a}_{jk}  \UIOIUYOIUyHJGKHJLOIUYOIUOIUYOIYIOUYTIUYIOOOIUYOIUYPOIUPOIUPOIUYOIUYOIUYOIUHOUHOHIOUHOIHOIUHOIUHIOUH_{j} ( \tilde{b}_{3i} v_{i} - w_{t})         =-\frac{1}{\UIOIUYOIUyHJGKHJLOIUYOIUOIUYOIYIOUYTIUYIOOOIUYOIUYPOIUPOIUPOIUYOIUYOIUYOIUHOUHOHIOUHOIHOIUHOIUHIOUH_{3} \psi}\UIOIUYOIUyHJGKHJLOIUYOIUOIUYOIYIOUYTIUYIOOOIUYOIUYPOIUPOIUPOIUYOIUYOIUYOIUHOUHOHIOUHOIHOIUHOIUHIOUH_{3} ( \tdb_{3k}v_{k})  ( \tilde{b}_{3i} v_{i}-w_{t})   \inon{on $\Gamma_1$}         .    \llabel{8ThswELzXU3X7Ebd1KdZ7v1rN3GiirRXGKWK099ovBM0FDJCvkopYNQ2aN94Z7k0UnUKamE3OjU8DFYFFokbSI2J9V9gVlM8ALWThDPnPu3EL7HPD2VDaZTggzcCCmbvc70qqPcC9mt60ogcrTiA3HEjwTK8ymKeuJMc4q6dVz200XnYUtLR9GYjPXvFOVr6W1zUK1WbPToaWJJuKnxBLnd0ftDEbMmj4loHYyhZyMjM91zQS4p7z8eKa9h0JrbacekcirexG0z4n3363}    \end{align}    Note this equation on $\Gamma_{1}$ is a  transport equation on $\mathbb{R}^{2}$ with periodic boundary conditions, satisfied by $\tdb_{3i} v_{i}-w_{t}$. Since $\tdb_{3i} v_{i}-w_{t}=0$ at time $0$, this implies    $\tdb_{3i} v_{i}-w_{t}=0$ for all $t$.  Indeed, testing the equation $\tdb_{3i} v_{i}-w_{t}$ on $\Gamma_1$ leads to this conclusion. \par Similarly,  multiplying \eqref{8ThswELzXU3X7Ebd1KdZ7v1rN3GiirRXGKWK099ovBM0FDJCvkopYNQ2aN94Z7k0UnUKamE3OjU8DFYFFokbSI2J9V9gVlM8ALWThDPnPu3EL7HPD2VDaZTggzcCCmbvc70qqPcC9mt60ogcrTiA3HEjwTK8ymKeuJMc4q6dVz200XnYUtLR9GYjPXvFOVr6W1zUK1WbPToaWJJuKnxBLnd0ftDEbMmj4loHYyhZyMjM91zQS4p7z8eKa9h0JrbacekcirexG0z4n3126} by $\tdb_{3i}$, restricting to $\Gamma_{0}$, then using the fact that $\tdb=I$ on $\Gamma_{0}$ while $\psi_{t}=0$ and the boundary condition \eqref{8ThswELzXU3X7Ebd1KdZ7v1rN3GiirRXGKWK099ovBM0FDJCvkopYNQ2aN94Z7k0UnUKamE3OjU8DFYFFokbSI2J9V9gVlM8ALWThDPnPu3EL7HPD2VDaZTggzcCCmbvc70qqPcC9mt60ogcrTiA3HEjwTK8ymKeuJMc4q6dVz200XnYUtLR9GYjPXvFOVr6W1zUK1WbPToaWJJuKnxBLnd0ftDEbMmj4loHYyhZyMjM91zQS4p7z8eKa9h0JrbacekcirexG0z4n3240} for $q$ on $\Gamma_{0}$, we obtain the transport equation     \begin{align}\thelt{cpi 0z 2wfw Q1DL 1wHedT qX l yoj GIQ AdE EK v7Ta k7cA ilRfvr lm 8 2Nj Ng9 KDS vN oQiN hng2 tnBSVw d8 P 4o3 oLq rzP NH ZmkQ Itfj 61TcOQ PJ b lsB Yq3 Nul Nf rCon Z6kZ 2VbZ0p sQ A aUC iMa oRp FW fviT xmey zmc5Qs El 1 PNO Z4x otc iI nwc6 IFbp wsMeXx y8 l J4A 6OV 0qR zr St3P MbvR gOS5ob ka F U9p OdM Pdj Fz 1KRX RKDV UjveW3 d9 s hi3 jzK BTq Zk eSXq bzbo WTc5yR RM o BYQ PCa eZ2 3H Wk9x fdxJ YxHYuN MN G Y4X LVZ oPU Qx JAli DHOK ycMAcT pG H Ikt jlI V25 YY}
  \UIOIUYOIUyHJGKHJLOIUYOIUOIUYOIYIOUYTIUYIOOOIUYOIUYPOIUPOIUPOIUYOIUYOIUYOIUHOUHOHIOUHOIHOIUHOIUHIOUH_{t}  v_{3} +   v_k  \UIOIUYOIUyHJGKHJLOIUYOIUOIUYOIYIOUYTIUYIOOOIUYOIUYPOIUPOIUPOIUYOIUYOIUYOIUHOUHOHIOUHOIHOIUHOIUHIOUH_{k} v_{3} =0 \inon{on $\Gamma_0$}    ,    \llabel{8ThswELzXU3X7Ebd1KdZ7v1rN3GiirRXGKWK099ovBM0FDJCvkopYNQ2aN94Z7k0UnUKamE3OjU8DFYFFokbSI2J9V9gVlM8ALWThDPnPu3EL7HPD2VDaZTggzcCCmbvc70qqPcC9mt60ogcrTiA3HEjwTK8ymKeuJMc4q6dVz200XnYUtLR9GYjPXvFOVr6W1zUK1WbPToaWJJuKnxBLnd0ftDEbMmj4loHYyhZyMjM91zQS4p7z8eKa9h0JrbacekcirexG0z4n3357}   \end{align} which may be rewritten as    \begin{align}\thelt{ iMa oRp FW fviT xmey zmc5Qs El 1 PNO Z4x otc iI nwc6 IFbp wsMeXx y8 l J4A 6OV 0qR zr St3P MbvR gOS5ob ka F U9p OdM Pdj Fz 1KRX RKDV UjveW3 d9 s hi3 jzK BTq Zk eSXq bzbo WTc5yR RM o BYQ PCa eZ2 3H Wk9x fdxJ YxHYuN MN G Y4X LVZ oPU Qx JAli DHOK ycMAcT pG H Ikt jlI V25 YY oRC7 4thS sJClD7 6y x M6B Rhg fS0 UH 4wXV F0x1 M6Ibem sT K SWl sG9 pk9 5k ZSdH U31c 5BpQeF x5 z a7h WPl LjD Yd KH1p OkMo 1Tvhxx z5 F LLu 71D UNe UX tDFC 7CZ2 473sjE Re b aYt 2sE p}   \UIOIUYOIUyHJGKHJLOIUYOIUOIUYOIYIOUYTIUYIOOOIUYOIUYPOIUPOIUPOIUYOIUYOIUYOIUHOUHOHIOUHOIHOIUHOIUHIOUH_{t}  v_{3} + \sum_{k=1}^{2} v_k \UIOIUYOIUyHJGKHJLOIUYOIUOIUYOIYIOUYTIUYIOOOIUYOIUYPOIUPOIUPOIUYOIUYOIUYOIUHOUHOHIOUHOIHOIUHOIUHIOUH_{k} v_3  = - (\UIOIUYOIUyHJGKHJLOIUYOIUOIUYOIYIOUYTIUYIOOOIUYOIUYPOIUPOIUPOIUYOIUYOIUYOIUHOUHOHIOUHOIHOIUHOIUHIOUH_{3} v_3) v_3      \inon{on $\Gamma_0$}    .    \llabel{8ThswELzXU3X7Ebd1KdZ7v1rN3GiirRXGKWK099ovBM0FDJCvkopYNQ2aN94Z7k0UnUKamE3OjU8DFYFFokbSI2J9V9gVlM8ALWThDPnPu3EL7HPD2VDaZTggzcCCmbvc70qqPcC9mt60ogcrTiA3HEjwTK8ymKeuJMc4q6dVz200XnYUtLR9GYjPXvFOVr6W1zUK1WbPToaWJJuKnxBLnd0ftDEbMmj4loHYyhZyMjM91zQS4p7z8eKa9h0JrbacekcirexG0z4n3357}   \end{align} Since $v_{3}=0$ at time $0$, we conclude that   \begin{equation}     v_{3} =0 
    \inon{on $\Gamma_1$}   ,    \label{8ThswELzXU3X7Ebd1KdZ7v1rN3GiirRXGKWK099ovBM0FDJCvkopYNQ2aN94Z7k0UnUKamE3OjU8DFYFFokbSI2J9V9gVlM8ALWThDPnPu3EL7HPD2VDaZTggzcCCmbvc70qqPcC9mt60ogcrTiA3HEjwTK8ymKeuJMc4q6dVz200XnYUtLR9GYjPXvFOVr6W1zUK1WbPToaWJJuKnxBLnd0ftDEbMmj4loHYyhZyMjM91zQS4p7z8eKa9h0JrbacekcirexG0z4n3358}   \end{equation} and the boundary conditions satisfied by $v$ are recovered.  \par We next recover the divergence condition. Now, use that in \eqref{8ThswELzXU3X7Ebd1KdZ7v1rN3GiirRXGKWK099ovBM0FDJCvkopYNQ2aN94Z7k0UnUKamE3OjU8DFYFFokbSI2J9V9gVlM8ALWThDPnPu3EL7HPD2VDaZTggzcCCmbvc70qqPcC9mt60ogcrTiA3HEjwTK8ymKeuJMc4q6dVz200XnYUtLR9GYjPXvFOVr6W1zUK1WbPToaWJJuKnxBLnd0ftDEbMmj4loHYyhZyMjM91zQS4p7z8eKa9h0JrbacekcirexG0z4n3348} all the integrals over $\Gamma$ vanish and integrating by parts in the remaining two, we get   \begin{equation}    \mathcal{E}    =      \frac{1}{|\Omega|}\OIUYJHUGFAJKLDHFKJLSDHFLKSDJFHLKSDJHFLKSDJHFLKDJFHLLDKHFLKSDHJFALKJHLJLHGLKHHLKJHLKGKHGJKHGKJHLKHJLKJH           \sum_{m=1}^{2}             v_m a_{km} \UIOIUYOIUyHJGKHJLOIUYOIUOIUYOIYIOUYTIUYIOOOIUYOIUYPOIUPOIUPOIUYOIUYOIUYOIUHOUHOHIOUHOIHOIUHOIUHIOUH_{k}(\tilde\tda_{ji} \UIOIUYOIUyHJGKHJLOIUYOIUOIUYOIYIOUYTIUYIOOOIUYOIUYPOIUPOIUPOIUYOIUYOIUYOIUHOUHOHIOUHOIHOIUHOIUHIOUH_{j}v_i)
     +  \frac{1}{|\Omega|}\OIUYJHUGFAJKLDHFKJLSDHFLKSDJFHLKSDJHFLKSDJHFLKDJFHLLDKHFLKSDHJFALKJHLJLHGLKHHLKJHLKGKHGJKHGKJHLKHJLKJH            \frac{1}{\UIOIUYOIUyHJGKHJLOIUYOIUOIUYOIYIOUYTIUYIOOOIUYOIUYPOIUPOIUPOIUYOIUYOIUYOIUHOUHOHIOUHOIHOIUHOIUHIOUH_3\psi}(v_3-\psi_t)\UIOIUYOIUyHJGKHJLOIUYOIUOIUYOIYIOUYTIUYIOOOIUYOIUYPOIUPOIUPOIUYOIUYOIUYOIUHOUHOHIOUHOIHOIUHOIUHIOUH_{3}(\tilde b_{ji}\UIOIUYOIUyHJGKHJLOIUYOIUOIUYOIYIOUYTIUYIOOOIUYOIUYPOIUPOIUPOIUYOIUYOIUYOIUHOUHOHIOUHOIHOIUHOIUHIOUH_{j}v_i)     .    \label{8ThswELzXU3X7Ebd1KdZ7v1rN3GiirRXGKWK099ovBM0FDJCvkopYNQ2aN94Z7k0UnUKamE3OjU8DFYFFokbSI2J9V9gVlM8ALWThDPnPu3EL7HPD2VDaZTggzcCCmbvc70qqPcC9mt60ogcrTiA3HEjwTK8ymKeuJMc4q6dVz200XnYUtLR9GYjPXvFOVr6W1zUK1WbPToaWJJuKnxBLnd0ftDEbMmj4loHYyhZyMjM91zQS4p7z8eKa9h0JrbacekcirexG0z4n3359}   \end{equation} By \eqref{8ThswELzXU3X7Ebd1KdZ7v1rN3GiirRXGKWK099ovBM0FDJCvkopYNQ2aN94Z7k0UnUKamE3OjU8DFYFFokbSI2J9V9gVlM8ALWThDPnPu3EL7HPD2VDaZTggzcCCmbvc70qqPcC9mt60ogcrTiA3HEjwTK8ymKeuJMc4q6dVz200XnYUtLR9GYjPXvFOVr6W1zUK1WbPToaWJJuKnxBLnd0ftDEbMmj4loHYyhZyMjM91zQS4p7z8eKa9h0JrbacekcirexG0z4n3225} and \eqref{8ThswELzXU3X7Ebd1KdZ7v1rN3GiirRXGKWK099ovBM0FDJCvkopYNQ2aN94Z7k0UnUKamE3OjU8DFYFFokbSI2J9V9gVlM8ALWThDPnPu3EL7HPD2VDaZTggzcCCmbvc70qqPcC9mt60ogcrTiA3HEjwTK8ymKeuJMc4q6dVz200XnYUtLR9GYjPXvFOVr6W1zUK1WbPToaWJJuKnxBLnd0ftDEbMmj4loHYyhZyMjM91zQS4p7z8eKa9h0JrbacekcirexG0z4n3359}, the ALE divergence   \begin{equation}    \mathcal{D}    = \tilde b_{ji}\UIOIUYOIUyHJGKHJLOIUYOIUOIUYOIYIOUYTIUYIOOOIUYOIUYPOIUPOIUPOIUYOIUYOIUYOIUHOUHOHIOUHOIHOIUHOIUHIOUH_{j} v_i    \label{8ThswELzXU3X7Ebd1KdZ7v1rN3GiirRXGKWK099ovBM0FDJCvkopYNQ2aN94Z7k0UnUKamE3OjU8DFYFFokbSI2J9V9gVlM8ALWThDPnPu3EL7HPD2VDaZTggzcCCmbvc70qqPcC9mt60ogcrTiA3HEjwTK8ymKeuJMc4q6dVz200XnYUtLR9GYjPXvFOVr6W1zUK1WbPToaWJJuKnxBLnd0ftDEbMmj4loHYyhZyMjM91zQS4p7z8eKa9h0JrbacekcirexG0z4n3360}   \end{equation} satisfies the PDE   \begin{equation}    \UIOIUYOIUyHJGKHJLOIUYOIUOIUYOIYIOUYTIUYIOOOIUYOIUYPOIUPOIUPOIUYOIUYOIUYOIUHOUHOHIOUHOIHOIUHOIUHIOUH_t \mathcal{D}
     + A \cdot \nabla \mathcal{D}      = \OIUYJHUGFAJKLDHFKJLSDHFLKSDJFHLKSDJHFLKSDJHFLKDJFHLLDKHFLKSDHJFALKJHLJLHGLKHHLKJHLKGKHGJKHGKJHLKHJLKJH B \cdot \nabla \mathcal{D}    ,    \label{8ThswELzXU3X7Ebd1KdZ7v1rN3GiirRXGKWK099ovBM0FDJCvkopYNQ2aN94Z7k0UnUKamE3OjU8DFYFFokbSI2J9V9gVlM8ALWThDPnPu3EL7HPD2VDaZTggzcCCmbvc70qqPcC9mt60ogcrTiA3HEjwTK8ymKeuJMc4q6dVz200XnYUtLR9GYjPXvFOVr6W1zUK1WbPToaWJJuKnxBLnd0ftDEbMmj4loHYyhZyMjM91zQS4p7z8eKa9h0JrbacekcirexG0z4n3361}   \end{equation} where $A\cdot N=0$ on $\Gamma$ and $A,B \in L^{\infty}([0,T];H^{2.5+\delta})$.  Using an $H^{1}$ estimate on $\mathcal{D}$ and employing $\mathcal{D}(0)=0$, we get $\mathcal{D}=0$ recovering the divergence condition   \begin{equation}    \tilde{b}_{ji} \UIOIUYOIUyHJGKHJLOIUYOIUOIUYOIYIOUYTIUYIOOOIUYOIUYPOIUPOIUPOIUYOIUYOIUYOIUHOUHOHIOUHOIHOIUHOIUHIOUH_{j} v_{i} =0       ,    \llabel{8ThswELzXU3X7Ebd1KdZ7v1rN3GiirRXGKWK099ovBM0FDJCvkopYNQ2aN94Z7k0UnUKamE3OjU8DFYFFokbSI2J9V9gVlM8ALWThDPnPu3EL7HPD2VDaZTggzcCCmbvc70qqPcC9mt60ogcrTiA3HEjwTK8ymKeuJMc4q6dVz200XnYUtLR9GYjPXvFOVr6W1zUK1WbPToaWJJuKnxBLnd0ftDEbMmj4loHYyhZyMjM91zQS4p7z8eKa9h0JrbacekcirexG0z4n3362}   \end{equation}
for all $t \in[0,T]$.\\ \par \emph{Step~4: Regularity of the vorticity with more regular boundary data.} Still under the assumption \eqref{8ThswELzXU3X7Ebd1KdZ7v1rN3GiirRXGKWK099ovBM0FDJCvkopYNQ2aN94Z7k0UnUKamE3OjU8DFYFFokbSI2J9V9gVlM8ALWThDPnPu3EL7HPD2VDaZTggzcCCmbvc70qqPcC9mt60ogcrTiA3HEjwTK8ymKeuJMc4q6dVz200XnYUtLR9GYjPXvFOVr6W1zUK1WbPToaWJJuKnxBLnd0ftDEbMmj4loHYyhZyMjM91zQS4p7z8eKa9h0JrbacekcirexG0z4n3206}, we  apply the variable  curl, $\epsilon_{ijk} \tilde b_{mj}\UIOIUYOIUyHJGKHJLOIUYOIUOIUYOIYIOUYTIUYIOOOIUYOIUYPOIUPOIUPOIUYOIUYOIUYOIUHOUHOHIOUHOIHOIUHOIUHIOUH_{m}  (.)_k $, to \eqref{8ThswELzXU3X7Ebd1KdZ7v1rN3GiirRXGKWK099ovBM0FDJCvkopYNQ2aN94Z7k0UnUKamE3OjU8DFYFFokbSI2J9V9gVlM8ALWThDPnPu3EL7HPD2VDaZTggzcCCmbvc70qqPcC9mt60ogcrTiA3HEjwTK8ymKeuJMc4q6dVz200XnYUtLR9GYjPXvFOVr6W1zUK1WbPToaWJJuKnxBLnd0ftDEbMmj4loHYyhZyMjM91zQS4p7z8eKa9h0JrbacekcirexG0z4n3218}, with  $k$ replaced by $m$ and $i$ replaced by $k$, and obtain the system       \begin{align}\thelt{o BYQ PCa eZ2 3H Wk9x fdxJ YxHYuN MN G Y4X LVZ oPU Qx JAli DHOK ycMAcT pG H Ikt jlI V25 YY oRC7 4thS sJClD7 6y x M6B Rhg fS0 UH 4wXV F0x1 M6Ibem sT K SWl sG9 pk9 5k ZSdH U31c 5BpQeF x5 z a7h WPl LjD Yd KH1p OkMo 1Tvhxx z5 F LLu 71D UNe UX tDFC 7CZ2 473sjE Re b aYt 2sE pV9 wD J8RG UqQm boXwJn HK F Mps XBv AsX 8N YRZM wmZQ ctltsq of i 8wx n6I W8j c6 8ANB wz8f 4gWowk mZ P Wlw fKp M1f pd o0yT RIKH MDgTl3 BU B Wr6 vHU zFZ bq xnwK kdmJ 3lXzIw kw 7 Jku }    \begin{split}    &     \UIOIUYOIUyHJGKHJLOIUYOIUOIUYOIYIOUYTIUYIOOOIUYOIUYPOIUPOIUPOIUYOIUYOIUYOIUHOUHOHIOUHOIHOIUHOIUHIOUH_{t} \zeta_i
    + v_1 \tilde{a}_{j1} \UIOIUYOIUyHJGKHJLOIUYOIUOIUYOIYIOUYTIUYIOOOIUYOIUYPOIUPOIUPOIUYOIUYOIUYOIUHOUHOHIOUHOIHOIUHOIUHIOUH_{j} \zeta_i     + v_2 \tilde{a}_{j2} \UIOIUYOIUyHJGKHJLOIUYOIUOIUYOIYIOUYTIUYIOOOIUYOIUYPOIUPOIUPOIUYOIUYOIUYOIUHOUHOHIOUHOIHOIUHOIUHIOUH_{j} \zeta_i     + (v_3-\psi_t) \tilde{a}_{33}  \UIOIUYOIUyHJGKHJLOIUYOIUOIUYOIYIOUYTIUYIOOOIUYOIUYPOIUPOIUPOIUYOIUYOIUYOIUHOUHOHIOUHOIHOIUHOIUHIOUH_{3} \zeta_i      - \zeta_1 \tilde{a}_{j1} \UIOIUYOIUyHJGKHJLOIUYOIUOIUYOIYIOUYTIUYIOOOIUYOIUYPOIUPOIUPOIUYOIUYOIUYOIUHOUHOHIOUHOIHOIUHOIUHIOUH_{j} v_i     - \zeta_2 \tilde{a}_{j2} \UIOIUYOIUyHJGKHJLOIUYOIUOIUYOIYIOUYTIUYIOOOIUYOIUYPOIUPOIUPOIUYOIUYOIUYOIUHOUHOHIOUHOIHOIUHOIUHIOUH_{j} v_i     - \zeta_3 \tilde{a}_{33}  \UIOIUYOIUyHJGKHJLOIUYOIUOIUYOIYIOUYTIUYIOOOIUYOIUYPOIUPOIUPOIUYOIUYOIUYOIUHOUHOHIOUHOIHOIUHOIUHIOUH_{3} v_i     =0     ,    \end{split}    \label{8ThswELzXU3X7Ebd1KdZ7v1rN3GiirRXGKWK099ovBM0FDJCvkopYNQ2aN94Z7k0UnUKamE3OjU8DFYFFokbSI2J9V9gVlM8ALWThDPnPu3EL7HPD2VDaZTggzcCCmbvc70qqPcC9mt60ogcrTiA3HEjwTK8ymKeuJMc4q6dVz200XnYUtLR9GYjPXvFOVr6W1zUK1WbPToaWJJuKnxBLnd0ftDEbMmj4loHYyhZyMjM91zQS4p7z8eKa9h0JrbacekcirexG0z4n3226}  \end{align} for $i=1,2,3$, in $\Omega$, for $i=1,2,3$, where the ALE vorticity $\zeta$ is given by
  \begin{equation}    \zeta_{i}    =    \epsilon_{ijk} \tilde a_{mj}\UIOIUYOIUyHJGKHJLOIUYOIUOIUYOIYIOUYTIUYIOOOIUYOIUYPOIUPOIUPOIUYOIUYOIUYOIUHOUHOHIOUHOIHOIUHOIUHIOUH_{m}  v_{k}     \comma i=1,2,3       \label{8ThswELzXU3X7Ebd1KdZ7v1rN3GiirRXGKWK099ovBM0FDJCvkopYNQ2aN94Z7k0UnUKamE3OjU8DFYFFokbSI2J9V9gVlM8ALWThDPnPu3EL7HPD2VDaZTggzcCCmbvc70qqPcC9mt60ogcrTiA3HEjwTK8ymKeuJMc4q6dVz200XnYUtLR9GYjPXvFOVr6W1zUK1WbPToaWJJuKnxBLnd0ftDEbMmj4loHYyhZyMjM91zQS4p7z8eKa9h0JrbacekcirexG0z4n3227}   \end{equation} and  where $\tilde{a}$ is defined as before in \eqref{8ThswELzXU3X7Ebd1KdZ7v1rN3GiirRXGKWK099ovBM0FDJCvkopYNQ2aN94Z7k0UnUKamE3OjU8DFYFFokbSI2J9V9gVlM8ALWThDPnPu3EL7HPD2VDaZTggzcCCmbvc70qqPcC9mt60ogcrTiA3HEjwTK8ymKeuJMc4q6dVz200XnYUtLR9GYjPXvFOVr6W1zUK1WbPToaWJJuKnxBLnd0ftDEbMmj4loHYyhZyMjM91zQS4p7z8eKa9h0JrbacekcirexG0z4n3189}--\eqref{8ThswELzXU3X7Ebd1KdZ7v1rN3GiirRXGKWK099ovBM0FDJCvkopYNQ2aN94Z7k0UnUKamE3OjU8DFYFFokbSI2J9V9gVlM8ALWThDPnPu3EL7HPD2VDaZTggzcCCmbvc70qqPcC9mt60ogcrTiA3HEjwTK8ymKeuJMc4q6dVz200XnYUtLR9GYjPXvFOVr6W1zUK1WbPToaWJJuKnxBLnd0ftDEbMmj4loHYyhZyMjM91zQS4p7z8eKa9h0JrbacekcirexG0z4n3190}, depending on given  functions $(w,w_{t})$, and such that   \begin{align}\thelt{F x5 z a7h WPl LjD Yd KH1p OkMo 1Tvhxx z5 F LLu 71D UNe UX tDFC 7CZ2 473sjE Re b aYt 2sE pV9 wD J8RG UqQm boXwJn HK F Mps XBv AsX 8N YRZM wmZQ ctltsq of i 8wx n6I W8j c6 8ANB wz8f 4gWowk mZ P Wlw fKp M1f pd o0yT RIKH MDgTl3 BU B Wr6 vHU zFZ bq xnwK kdmJ 3lXzIw kw 7 Jku JcC kgv FZ 3lSo 0ljV Ku9Syb y4 6 zDj M6R XZI DP pHqE fkHt 9SVnVt Wd y YNw dmM m7S Pw mqhO 6FX8 tzwYaM vj z pBS NJ1 z36 89 00v2 i4y2 wQjZhw wF U jq0 UNm k8J 8d OOG3 QlDz p8AWpr uu 4}   \begin{split}     & \tilde{b}_{3i} v_{i} -w_{t}     =0  
    \inon{on $\Gamma_0 \cup \Gamma_1$}     .   \end{split}   \label{8ThswELzXU3X7Ebd1KdZ7v1rN3GiirRXGKWK099ovBM0FDJCvkopYNQ2aN94Z7k0UnUKamE3OjU8DFYFFokbSI2J9V9gVlM8ALWThDPnPu3EL7HPD2VDaZTggzcCCmbvc70qqPcC9mt60ogcrTiA3HEjwTK8ymKeuJMc4q6dVz200XnYUtLR9GYjPXvFOVr6W1zUK1WbPToaWJJuKnxBLnd0ftDEbMmj4loHYyhZyMjM91zQS4p7z8eKa9h0JrbacekcirexG0z4n3228}   \end{align} Based on~\eqref{8ThswELzXU3X7Ebd1KdZ7v1rN3GiirRXGKWK099ovBM0FDJCvkopYNQ2aN94Z7k0UnUKamE3OjU8DFYFFokbSI2J9V9gVlM8ALWThDPnPu3EL7HPD2VDaZTggzcCCmbvc70qqPcC9mt60ogcrTiA3HEjwTK8ymKeuJMc4q6dVz200XnYUtLR9GYjPXvFOVr6W1zUK1WbPToaWJJuKnxBLnd0ftDEbMmj4loHYyhZyMjM91zQS4p7z8eKa9h0JrbacekcirexG0z4n3226}, we claim that   \begin{align}\thelt{4gWowk mZ P Wlw fKp M1f pd o0yT RIKH MDgTl3 BU B Wr6 vHU zFZ bq xnwK kdmJ 3lXzIw kw 7 Jku JcC kgv FZ 3lSo 0ljV Ku9Syb y4 6 zDj M6R XZI DP pHqE fkHt 9SVnVt Wd y YNw dmM m7S Pw mqhO 6FX8 tzwYaM vj z pBS NJ1 z36 89 00v2 i4y2 wQjZhw wF U jq0 UNm k8J 8d OOG3 QlDz p8AWpr uu 4 D9V Rlp VVz QQ g1ca Eqev P0sFPH cw t KI3 Z6n Y79 iQ abga 0i9m RVGbvl TA g V6P UV8 Eup PQ 6xvG bcn7 dQjV7C kw 5 7NP WUy 9Xn wF 9ele bZ8U YJDx3x CB Y CId PCE 2D8 eP 90u4 9NY9 Jxx9RI}     \Vert \zeta(t) \Vert_{H^{1.5+ \delta}}      \dlkjfhlaskdhjflkasdjhflkasjhdflkasjhdflkasjhdfls        \Vert \zeta_{0} \Vert_{H^{1.5+ \delta}}      + \OIUYJHUGFAJKLDHFKJLSDHFLKSDJFHLKSDJHFLKSDJHFLKDJFHLLDKHFLKSDHJFALKJHLJLHGLKHHLKJHLKGKHGJKHGKJHLKHJLKJH_{0}^{t}               P(                 \Vert v\Vert_{H^{2.5+\delta}},                 \Vert w \Vert_{H^{4+\delta}(\Gamma_{1})},
                \Vert w_{t} \Vert_{H^{2+\delta}(\Gamma_{1})}                )         \,ds     ,     \label{8ThswELzXU3X7Ebd1KdZ7v1rN3GiirRXGKWK099ovBM0FDJCvkopYNQ2aN94Z7k0UnUKamE3OjU8DFYFFokbSI2J9V9gVlM8ALWThDPnPu3EL7HPD2VDaZTggzcCCmbvc70qqPcC9mt60ogcrTiA3HEjwTK8ymKeuJMc4q6dVz200XnYUtLR9GYjPXvFOVr6W1zUK1WbPToaWJJuKnxBLnd0ftDEbMmj4loHYyhZyMjM91zQS4p7z8eKa9h0JrbacekcirexG0z4n3229}     \end{align}  where $P$ always denotes a generic polynomial. Note that \eqref{8ThswELzXU3X7Ebd1KdZ7v1rN3GiirRXGKWK099ovBM0FDJCvkopYNQ2aN94Z7k0UnUKamE3OjU8DFYFFokbSI2J9V9gVlM8ALWThDPnPu3EL7HPD2VDaZTggzcCCmbvc70qqPcC9mt60ogcrTiA3HEjwTK8ymKeuJMc4q6dVz200XnYUtLR9GYjPXvFOVr6W1zUK1WbPToaWJJuKnxBLnd0ftDEbMmj4loHYyhZyMjM91zQS4p7z8eKa9h0JrbacekcirexG0z4n3226} is a transport equation of the form  $\zeta_{t} + A \nabla \zeta + B \zeta =0$ such that $A\in L^{\infty}([0,T];H^{2.5 +\delta}(\Omega))$ and $B\in L^{\infty}([0,T];H^{1.5 +\delta}(\Omega))$ with $ A\cdot N|_{\Gamma_0\cup \Gamma_1}=0$. The regularity assumptions hold since
$\tilde{a} \in L^{\infty}([0,T];H^{3.5 +\delta}(\Omega))$, $\psi_{t} \in L^{\infty}([0,T];H^{2.5 +\delta}(\Omega))$, and $v \in L^{\infty}([0,T];H^{2.5 +\delta}(\Omega))$. On the other hand, the boundary condition $A\cdot N|_{\Gamma_0\cup \Gamma_1}=0$ is satisfied by~\eqref{8ThswELzXU3X7Ebd1KdZ7v1rN3GiirRXGKWK099ovBM0FDJCvkopYNQ2aN94Z7k0UnUKamE3OjU8DFYFFokbSI2J9V9gVlM8ALWThDPnPu3EL7HPD2VDaZTggzcCCmbvc70qqPcC9mt60ogcrTiA3HEjwTK8ymKeuJMc4q6dVz200XnYUtLR9GYjPXvFOVr6W1zUK1WbPToaWJJuKnxBLnd0ftDEbMmj4loHYyhZyMjM91zQS4p7z8eKa9h0JrbacekcirexG0z4n3228}. The norms of $\tilde{a}$ and $\tilde{b}$ are estimated using \eqref{8ThswELzXU3X7Ebd1KdZ7v1rN3GiirRXGKWK099ovBM0FDJCvkopYNQ2aN94Z7k0UnUKamE3OjU8DFYFFokbSI2J9V9gVlM8ALWThDPnPu3EL7HPD2VDaZTggzcCCmbvc70qqPcC9mt60ogcrTiA3HEjwTK8ymKeuJMc4q6dVz200XnYUtLR9GYjPXvFOVr6W1zUK1WbPToaWJJuKnxBLnd0ftDEbMmj4loHYyhZyMjM91zQS4p7z8eKa9h0JrbacekcirexG0z4n3199} in terms of $\psi$, which in turn depends on the boundary data $w$. This concludes the proof of~\eqref{8ThswELzXU3X7Ebd1KdZ7v1rN3GiirRXGKWK099ovBM0FDJCvkopYNQ2aN94Z7k0UnUKamE3OjU8DFYFFokbSI2J9V9gVlM8ALWThDPnPu3EL7HPD2VDaZTggzcCCmbvc70qqPcC9mt60ogcrTiA3HEjwTK8ymKeuJMc4q6dVz200XnYUtLR9GYjPXvFOVr6W1zUK1WbPToaWJJuKnxBLnd0ftDEbMmj4loHYyhZyMjM91zQS4p7z8eKa9h0JrbacekcirexG0z4n3229}. \par Now, we use \eqref{8ThswELzXU3X7Ebd1KdZ7v1rN3GiirRXGKWK099ovBM0FDJCvkopYNQ2aN94Z7k0UnUKamE3OjU8DFYFFokbSI2J9V9gVlM8ALWThDPnPu3EL7HPD2VDaZTggzcCCmbvc70qqPcC9mt60ogcrTiA3HEjwTK8ymKeuJMc4q6dVz200XnYUtLR9GYjPXvFOVr6W1zUK1WbPToaWJJuKnxBLnd0ftDEbMmj4loHYyhZyMjM91zQS4p7z8eKa9h0JrbacekcirexG0z4n3204} and \eqref{8ThswELzXU3X7Ebd1KdZ7v1rN3GiirRXGKWK099ovBM0FDJCvkopYNQ2aN94Z7k0UnUKamE3OjU8DFYFFokbSI2J9V9gVlM8ALWThDPnPu3EL7HPD2VDaZTggzcCCmbvc70qqPcC9mt60ogcrTiA3HEjwTK8ymKeuJMc4q6dVz200XnYUtLR9GYjPXvFOVr6W1zUK1WbPToaWJJuKnxBLnd0ftDEbMmj4loHYyhZyMjM91zQS4p7z8eKa9h0JrbacekcirexG0z4n3229}, as well as $\Vert v\Vert_{L^2}\dlkjfhlaskdhjflkasdjhflkasjhdflkasjhdflkasjhdfls \Vert v_0\Vert_{L^2}+\OIUYJHUGFAJKLDHFKJLSDHFLKSDJFHLKSDJHFLKSDJHFLKDJFHLLDKHFLKSDHJFALKJHLJLHGLKHHLKJHLKGKHGJKHGKJHLKHJLKJH_{0}^{t}\Vert v_t\Vert_{L^2}\,ds$, to estimate     \begin{equation}    \Vert v\Vert_{H^{2.5+\delta}}    \dlkjfhlaskdhjflkasdjhflkasjhdflkasjhdflkasjhdfls
   \Vert v_{0}\Vert_{H^{2.5+\delta}}    + \Vert w_t\Vert_{H^{2+\delta}(\Gamma_1)}    +\OIUYJHUGFAJKLDHFKJLSDHFLKSDJFHLKSDJHFLKSDJHFLKDJFHLLDKHFLKSDHJFALKJHLJLHGLKHHLKJHLKGKHGJKHGKJHLKHJLKJH_{0}^{t}                P(                 \Vert v\Vert_{H^{2.5+\delta}},                 \Vert w \Vert_{H^{4+\delta}(\Gamma_{1})}, 		\Vert w_{t} \Vert_{H^{2+\delta}(\Gamma_{1})} 		)      \,ds    .    \label{8ThswELzXU3X7Ebd1KdZ7v1rN3GiirRXGKWK099ovBM0FDJCvkopYNQ2aN94Z7k0UnUKamE3OjU8DFYFFokbSI2J9V9gVlM8ALWThDPnPu3EL7HPD2VDaZTggzcCCmbvc70qqPcC9mt60ogcrTiA3HEjwTK8ymKeuJMc4q6dVz200XnYUtLR9GYjPXvFOVr6W1zUK1WbPToaWJJuKnxBLnd0ftDEbMmj4loHYyhZyMjM91zQS4p7z8eKa9h0JrbacekcirexG0z4n3230}   \end{equation} Applying the Gronwall inequality on \eqref{8ThswELzXU3X7Ebd1KdZ7v1rN3GiirRXGKWK099ovBM0FDJCvkopYNQ2aN94Z7k0UnUKamE3OjU8DFYFFokbSI2J9V9gVlM8ALWThDPnPu3EL7HPD2VDaZTggzcCCmbvc70qqPcC9mt60ogcrTiA3HEjwTK8ymKeuJMc4q6dVz200XnYUtLR9GYjPXvFOVr6W1zUK1WbPToaWJJuKnxBLnd0ftDEbMmj4loHYyhZyMjM91zQS4p7z8eKa9h0JrbacekcirexG0z4n3230}, we consequently obtain     \begin{equation}
   \label{8ThswELzXU3X7Ebd1KdZ7v1rN3GiirRXGKWK099ovBM0FDJCvkopYNQ2aN94Z7k0UnUKamE3OjU8DFYFFokbSI2J9V9gVlM8ALWThDPnPu3EL7HPD2VDaZTggzcCCmbvc70qqPcC9mt60ogcrTiA3HEjwTK8ymKeuJMc4q6dVz200XnYUtLR9GYjPXvFOVr6W1zUK1WbPToaWJJuKnxBLnd0ftDEbMmj4loHYyhZyMjM91zQS4p7z8eKa9h0JrbacekcirexG0z4n3231}    \Vert v\Vert_{H^{2.5+\delta}}    \dlkjfhlaskdhjflkasdjhflkasjhdflkasjhdflkasjhdfls    \Vert v_{0}\Vert_{H^{2.5+\delta}}    + \Vert w_t\Vert_{H^{2+\delta}(\Gamma_1)}    +\OIUYJHUGFAJKLDHFKJLSDHFLKSDJFHLKSDJHFLKSDJHFLKDJFHLLDKHFLKSDHJFALKJHLJLHGLKHHLKJHLKGKHGJKHGKJHLKHJLKJH_{0}^{t}     P(       \Vert w \Vert_{H^{4+\delta}(\Gamma_{1})},       \Vert w_{t} \Vert_{H^{2+\delta}(\Gamma_{1})}      )     \,ds    ,   \end{equation} for small times $t\in (0,T]$.
This inequality provides a bound on $v$ in terms of lower norms  (see~\eqref{8ThswELzXU3X7Ebd1KdZ7v1rN3GiirRXGKWK099ovBM0FDJCvkopYNQ2aN94Z7k0UnUKamE3OjU8DFYFFokbSI2J9V9gVlM8ALWThDPnPu3EL7HPD2VDaZTggzcCCmbvc70qqPcC9mt60ogcrTiA3HEjwTK8ymKeuJMc4q6dVz200XnYUtLR9GYjPXvFOVr6W1zUK1WbPToaWJJuKnxBLnd0ftDEbMmj4loHYyhZyMjM91zQS4p7z8eKa9h0JrbacekcirexG0z4n3195}) of the boundary data, under the assumption of higher regularity~\eqref{8ThswELzXU3X7Ebd1KdZ7v1rN3GiirRXGKWK099ovBM0FDJCvkopYNQ2aN94Z7k0UnUKamE3OjU8DFYFFokbSI2J9V9gVlM8ALWThDPnPu3EL7HPD2VDaZTggzcCCmbvc70qqPcC9mt60ogcrTiA3HEjwTK8ymKeuJMc4q6dVz200XnYUtLR9GYjPXvFOVr6W1zUK1WbPToaWJJuKnxBLnd0ftDEbMmj4loHYyhZyMjM91zQS4p7z8eKa9h0JrbacekcirexG0z4n3206}. In the statement above and in the rest of the paper, we continue to use the convention that the domain in norms is $\Omega$ unless otherwise indicated.\\ \par \emph{Step~5: Solution to the nonlinear problem with less regular boundary data.} Now, assume only~\eqref{8ThswELzXU3X7Ebd1KdZ7v1rN3GiirRXGKWK099ovBM0FDJCvkopYNQ2aN94Z7k0UnUKamE3OjU8DFYFFokbSI2J9V9gVlM8ALWThDPnPu3EL7HPD2VDaZTggzcCCmbvc70qqPcC9mt60ogcrTiA3HEjwTK8ymKeuJMc4q6dVz200XnYUtLR9GYjPXvFOVr6W1zUK1WbPToaWJJuKnxBLnd0ftDEbMmj4loHYyhZyMjM91zQS4p7z8eKa9h0JrbacekcirexG0z4n3195}. We  approximate  $(w,w_{t}, w_{tt}) \in  L^{\infty}([0,T];H^{4+\delta} \times H^{2+\delta} \times H^{\delta})$  by a sequence of more regular data $(w^{(m)},w^{(m)}_{t}, w^{(m)}_{tt}) \in   L^{\infty}([0,T];H^{6+\delta} \times H^{4+\delta} \times H^{2+\delta})$. From Step~2, we can find a sequence of solutions $v^{(m)} \in   L^{\infty}([0,T];H^{2.5+\delta}(\Omega))$ and    $q^{(m)} \in   L^{\infty}([0,T];H^{3.5+\delta}(\Omega))$ with $q^{(m)}$ determined up to a constant employing the given boundary data $(w^{(m)},w^{(m)}_{t}, w^{(m)}_{tt})$. Using the estimate  \eqref{8ThswELzXU3X7Ebd1KdZ7v1rN3GiirRXGKWK099ovBM0FDJCvkopYNQ2aN94Z7k0UnUKamE3OjU8DFYFFokbSI2J9V9gVlM8ALWThDPnPu3EL7HPD2VDaZTggzcCCmbvc70qqPcC9mt60ogcrTiA3HEjwTK8ymKeuJMc4q6dVz200XnYUtLR9GYjPXvFOVr6W1zUK1WbPToaWJJuKnxBLnd0ftDEbMmj4loHYyhZyMjM91zQS4p7z8eKa9h0JrbacekcirexG0z4n3231}, we have a uniform bound on the sequence $v^{(m)}$. Therefore, we may extract a subsequence $v^{(m_{j})}$ which converges weak-* to some $v \in L^{\infty}([0,T];H^{2.5+\delta}(\Omega))$. Moreover, we may also obtain a uniform bound on $\nabla q^{(m)}$ in $L^{\infty}([0,T];H^{0.5+\delta}(\Omega))$ in terms of the data $(w, w_{t}, w_{tt})$ by considering the elliptic problem 
\eqref{8ThswELzXU3X7Ebd1KdZ7v1rN3GiirRXGKWK099ovBM0FDJCvkopYNQ2aN94Z7k0UnUKamE3OjU8DFYFFokbSI2J9V9gVlM8ALWThDPnPu3EL7HPD2VDaZTggzcCCmbvc70qqPcC9mt60ogcrTiA3HEjwTK8ymKeuJMc4q6dVz200XnYUtLR9GYjPXvFOVr6W1zUK1WbPToaWJJuKnxBLnd0ftDEbMmj4loHYyhZyMjM91zQS4p7z8eKa9h0JrbacekcirexG0z4n3210} with the Neumann  boundary conditions \eqref{8ThswELzXU3X7Ebd1KdZ7v1rN3GiirRXGKWK099ovBM0FDJCvkopYNQ2aN94Z7k0UnUKamE3OjU8DFYFFokbSI2J9V9gVlM8ALWThDPnPu3EL7HPD2VDaZTggzcCCmbvc70qqPcC9mt60ogcrTiA3HEjwTK8ymKeuJMc4q6dVz200XnYUtLR9GYjPXvFOVr6W1zUK1WbPToaWJJuKnxBLnd0ftDEbMmj4loHYyhZyMjM91zQS4p7z8eKa9h0JrbacekcirexG0z4n3211}--\eqref{8ThswELzXU3X7Ebd1KdZ7v1rN3GiirRXGKWK099ovBM0FDJCvkopYNQ2aN94Z7k0UnUKamE3OjU8DFYFFokbSI2J9V9gVlM8ALWThDPnPu3EL7HPD2VDaZTggzcCCmbvc70qqPcC9mt60ogcrTiA3HEjwTK8ymKeuJMc4q6dVz200XnYUtLR9GYjPXvFOVr6W1zUK1WbPToaWJJuKnxBLnd0ftDEbMmj4loHYyhZyMjM91zQS4p7z8eKa9h0JrbacekcirexG0z4n3212}, from which one can derive the estimate  \begin{align}\thelt{6FX8 tzwYaM vj z pBS NJ1 z36 89 00v2 i4y2 wQjZhw wF U jq0 UNm k8J 8d OOG3 QlDz p8AWpr uu 4 D9V Rlp VVz QQ g1ca Eqev P0sFPH cw t KI3 Z6n Y79 iQ abga 0i9m RVGbvl TA g V6P UV8 Eup PQ 6xvG bcn7 dQjV7C kw 5 7NP WUy 9Xn wF 9ele bZ8U YJDx3x CB Y CId PCE 2D8 eP 90u4 9NY9 Jxx9RI 4F e a0Q Cjs 5TL od JFph ykcz Bwoe97 Po h Tql 1LM s37 cK hsHO 5jZx qpkHtL bF D nvf Txj iyk LV hpwM qobq DM9A0f 1n 4 i5S Bc6 trq VX wgQB EgH8 lISLPL O5 2 EUv i1m yxk nL 0RBe bO2Y W}         \begin{split}    \Vert \nabla q^{(m)} \Vert_{H^{0.5+\delta}}      \leq         P(           \Vert w_{tt} \Vert_{H^{\delta}(\Gamma_{1})},           \Vert \tilde{a}\Vert_{H^{3.5+\delta}},           \Vert \tdb\Vert_{H^{3.5+\delta}},           \Vert \tdb_t\Vert_{H^{1.5+\delta}},           \Vert \psi_t\Vert_{H^{2.5+\delta}},           \Vert \tilde{v}\Vert_{H^{2.5+\delta}}          )     .
   \end{split}    \llabel{8ThswELzXU3X7Ebd1KdZ7v1rN3GiirRXGKWK099ovBM0FDJCvkopYNQ2aN94Z7k0UnUKamE3OjU8DFYFFokbSI2J9V9gVlM8ALWThDPnPu3EL7HPD2VDaZTggzcCCmbvc70qqPcC9mt60ogcrTiA3HEjwTK8ymKeuJMc4q6dVz200XnYUtLR9GYjPXvFOVr6W1zUK1WbPToaWJJuKnxBLnd0ftDEbMmj4loHYyhZyMjM91zQS4p7z8eKa9h0JrbacekcirexG0z4n3232}   \end{align} In addition, we may adjust the pressure $q^{(m)}$ by an appropriate constant so that $\OIUYJHUGFAJKLDHFKJLSDHFLKSDJFHLKSDJHFLKSDJHFLKDJFHLLDKHFLKSDHJFALKJHLJLHGLKHHLKJHLKGKHGJKHGKJHLKHJLKJH_{\Gamma_{1}} q^{(m)}=0$, which then in turn implies   \begin{align}\thelt{6xvG bcn7 dQjV7C kw 5 7NP WUy 9Xn wF 9ele bZ8U YJDx3x CB Y CId PCE 2D8 eP 90u4 9NY9 Jxx9RI 4F e a0Q Cjs 5TL od JFph ykcz Bwoe97 Po h Tql 1LM s37 cK hsHO 5jZx qpkHtL bF D nvf Txj iyk LV hpwM qobq DM9A0f 1n 4 i5S Bc6 trq VX wgQB EgH8 lISLPL O5 2 EUv i1m yxk nL 0RBe bO2Y Ww8Jhf o1 l HlU Mie sst dW w4aS WrYv Osn5Wn 3w f wzH RHx Fg0 hK FuNV hjzX bg56HJ 9V t Uwa lOX fT8 oi FY1C sUCg CETCIv LR 0 AgT hCs 9Ta Zl 6ver 8hRt edkAUr kI n Sbc I8n yEj Zs VOSz t}   \begin{split}    \Vert q^{(m)} \Vert_{H^{1.5+\delta}}     \dlkjfhlaskdhjflkasdjhflkasjhdflkasjhdflkasjhdfls    \Vert \nabla q^{(m)} \Vert_{H^{0.5+\delta}}    .   \end{split}    \llabel{8ThswELzXU3X7Ebd1KdZ7v1rN3GiirRXGKWK099ovBM0FDJCvkopYNQ2aN94Z7k0UnUKamE3OjU8DFYFFokbSI2J9V9gVlM8ALWThDPnPu3EL7HPD2VDaZTggzcCCmbvc70qqPcC9mt60ogcrTiA3HEjwTK8ymKeuJMc4q6dVz200XnYUtLR9GYjPXvFOVr6W1zUK1WbPToaWJJuKnxBLnd0ftDEbMmj4loHYyhZyMjM91zQS4p7z8eKa9h0JrbacekcirexG0z4n3233}   \end{align} 
It then follows that we have a uniform bound on $q^{(m)}$ in $L^{\infty}([0,T];H^{1.5+\delta}(\Omega))$, and we can thus  extract a further weak-* convergent subsequence with a limit  $q \in L^{\infty}([0,T];H^{1.5+\delta}(\Omega))$. Consequently, the corresponding sequence of time derivatives $v_{t}^{(m)}$ is uniformly bounded in $L^{\infty}([0,T];H^{0.5+\delta}(\Omega))$, which can be directly deduced from the equation.  Using a standard Aubin-Lions compactness argument,  we may pass to the limit in \eqref{8ThswELzXU3X7Ebd1KdZ7v1rN3GiirRXGKWK099ovBM0FDJCvkopYNQ2aN94Z7k0UnUKamE3OjU8DFYFFokbSI2J9V9gVlM8ALWThDPnPu3EL7HPD2VDaZTggzcCCmbvc70qqPcC9mt60ogcrTiA3HEjwTK8ymKeuJMc4q6dVz200XnYUtLR9GYjPXvFOVr6W1zUK1WbPToaWJJuKnxBLnd0ftDEbMmj4loHYyhZyMjM91zQS4p7z8eKa9h0JrbacekcirexG0z4n3188} and boundary conditions \eqref{8ThswELzXU3X7Ebd1KdZ7v1rN3GiirRXGKWK099ovBM0FDJCvkopYNQ2aN94Z7k0UnUKamE3OjU8DFYFFokbSI2J9V9gVlM8ALWThDPnPu3EL7HPD2VDaZTggzcCCmbvc70qqPcC9mt60ogcrTiA3HEjwTK8ymKeuJMc4q6dVz200XnYUtLR9GYjPXvFOVr6W1zUK1WbPToaWJJuKnxBLnd0ftDEbMmj4loHYyhZyMjM91zQS4p7z8eKa9h0JrbacekcirexG0z4n3193} and \eqref{8ThswELzXU3X7Ebd1KdZ7v1rN3GiirRXGKWK099ovBM0FDJCvkopYNQ2aN94Z7k0UnUKamE3OjU8DFYFFokbSI2J9V9gVlM8ALWThDPnPu3EL7HPD2VDaZTggzcCCmbvc70qqPcC9mt60ogcrTiA3HEjwTK8ymKeuJMc4q6dVz200XnYUtLR9GYjPXvFOVr6W1zUK1WbPToaWJJuKnxBLnd0ftDEbMmj4loHYyhZyMjM91zQS4p7z8eKa9h0JrbacekcirexG0z4n3194}  satisfied by $v^{(m)}$ and $q^{(m)}$ as $m \to \infty$  to obtain a solution $v \in L^{\infty}([0,T];H^{2.5+\delta}(\Omega))$  and $q \in L^{\infty}([0,T];H^{0.5+\delta}(\Omega))$ satisfying the equations \eqref{8ThswELzXU3X7Ebd1KdZ7v1rN3GiirRXGKWK099ovBM0FDJCvkopYNQ2aN94Z7k0UnUKamE3OjU8DFYFFokbSI2J9V9gVlM8ALWThDPnPu3EL7HPD2VDaZTggzcCCmbvc70qqPcC9mt60ogcrTiA3HEjwTK8ymKeuJMc4q6dVz200XnYUtLR9GYjPXvFOVr6W1zUK1WbPToaWJJuKnxBLnd0ftDEbMmj4loHYyhZyMjM91zQS4p7z8eKa9h0JrbacekcirexG0z4n3188} and the boundary conditions \eqref{8ThswELzXU3X7Ebd1KdZ7v1rN3GiirRXGKWK099ovBM0FDJCvkopYNQ2aN94Z7k0UnUKamE3OjU8DFYFFokbSI2J9V9gVlM8ALWThDPnPu3EL7HPD2VDaZTggzcCCmbvc70qqPcC9mt60ogcrTiA3HEjwTK8ymKeuJMc4q6dVz200XnYUtLR9GYjPXvFOVr6W1zUK1WbPToaWJJuKnxBLnd0ftDEbMmj4loHYyhZyMjM91zQS4p7z8eKa9h0JrbacekcirexG0z4n3193}--\eqref{8ThswELzXU3X7Ebd1KdZ7v1rN3GiirRXGKWK099ovBM0FDJCvkopYNQ2aN94Z7k0UnUKamE3OjU8DFYFFokbSI2J9V9gVlM8ALWThDPnPu3EL7HPD2VDaZTggzcCCmbvc70qqPcC9mt60ogcrTiA3HEjwTK8ymKeuJMc4q6dVz200XnYUtLR9GYjPXvFOVr6W1zUK1WbPToaWJJuKnxBLnd0ftDEbMmj4loHYyhZyMjM91zQS4p7z8eKa9h0JrbacekcirexG0z4n3194}, given  $(w,w_{t}, w_{tt}) \in L^{\infty}([0,T];H^{4+\delta} \times H^{2+\delta} \times H^{\delta})$.  Note that the resulting pressure
$q$ is periodic in $x_{1}, x_{2}$ and has zero average on~$\Gamma_{1}$.  \end{proof} \par \subsection{The plate equation} \label{sec20} \par Next, we provide the existence theorem for the plate equation. \par \cole \begin{Lemma} \label{L05} Consider the damped plate equation    \begin{align}\thelt{k LV hpwM qobq DM9A0f 1n 4 i5S Bc6 trq VX wgQB EgH8 lISLPL O5 2 EUv i1m yxk nL 0RBe bO2Y Ww8Jhf o1 l HlU Mie sst dW w4aS WrYv Osn5Wn 3w f wzH RHx Fg0 hK FuNV hjzX bg56HJ 9V t Uwa lOX fT8 oi FY1C sUCg CETCIv LR 0 AgT hCs 9Ta Zl 6ver 8hRt edkAUr kI n Sbc I8n yEj Zs VOSz tBbh 7WjBgf aA F t4J 6CT UCU 54 3rba vpOM yelWYW hV B RGo w5J Rh2 nM fUco BkBX UQ7UlO 5r Y fHD Mce Wou 3R oFWt baKh 70oHBZ n7 u nRp Rh3 SIp p0 Btqk 5vhX CU9BHJ Fx 7 qPx B55 a7R kO y}   \begin{split}
   w_{tt} +\Delta_2^{2}w     - \nu \Delta_{2} w_{t}     &=    d       \inon{on $\Gamma_{1}  \times[0,T]$}    ,   \end{split}    \label{8ThswELzXU3X7Ebd1KdZ7v1rN3GiirRXGKWK099ovBM0FDJCvkopYNQ2aN94Z7k0UnUKamE3OjU8DFYFFokbSI2J9V9gVlM8ALWThDPnPu3EL7HPD2VDaZTggzcCCmbvc70qqPcC9mt60ogcrTiA3HEjwTK8ymKeuJMc4q6dVz200XnYUtLR9GYjPXvFOVr6W1zUK1WbPToaWJJuKnxBLnd0ftDEbMmj4loHYyhZyMjM91zQS4p7z8eKa9h0JrbacekcirexG0z4n3234}    \end{align} where $\nu >0$, defined on the domain $\Gamma_{1}= [0,1] \times [0,1] \subseteq \mathbb{R}^{2}$ with periodic  boundary conditions. Given the initial data  $w(0, \cdot)= w_{0} \in H^{4+\delta}(\Gamma_{1})$ and 
$w_{t}(0,\cdot)= w_{1} \in H^{2+\delta}(\Gamma_{1})$  such that \eqref{8ThswELzXU3X7Ebd1KdZ7v1rN3GiirRXGKWK099ovBM0FDJCvkopYNQ2aN94Z7k0UnUKamE3OjU8DFYFFokbSI2J9V9gVlM8ALWThDPnPu3EL7HPD2VDaZTggzcCCmbvc70qqPcC9mt60ogcrTiA3HEjwTK8ymKeuJMc4q6dVz200XnYUtLR9GYjPXvFOVr6W1zUK1WbPToaWJJuKnxBLnd0ftDEbMmj4loHYyhZyMjM91zQS4p7z8eKa9h0JrbacekcirexG0z4n3321} holds and the forcing term  $d \in L^{2}([0,T]; H^{1+\delta}(\Omega))$  with $\delta >0$, there exists a unique  solution  $w \in L^{\infty}([0,T]; H^{4+\delta}(\Gamma_{1}))$ such that $w_{t} \in L^{\infty}([0,T]; H^{2+\delta}(\Gamma_{1})) \cap L^{2}([0,T]; H^{3+\delta}(\Gamma_{1}))$ and $w_{tt} \in L^{2}([0,T]; H^{\delta}(\Gamma_{1}))$ with \eqref{8ThswELzXU3X7Ebd1KdZ7v1rN3GiirRXGKWK099ovBM0FDJCvkopYNQ2aN94Z7k0UnUKamE3OjU8DFYFFokbSI2J9V9gVlM8ALWThDPnPu3EL7HPD2VDaZTggzcCCmbvc70qqPcC9mt60ogcrTiA3HEjwTK8ymKeuJMc4q6dVz200XnYUtLR9GYjPXvFOVr6W1zUK1WbPToaWJJuKnxBLnd0ftDEbMmj4loHYyhZyMjM91zQS4p7z8eKa9h0JrbacekcirexG0z4n331} for all $t\in [0,T]$. Moreover, we have the estimate    \begin{align}\thelt{OX fT8 oi FY1C sUCg CETCIv LR 0 AgT hCs 9Ta Zl 6ver 8hRt edkAUr kI n Sbc I8n yEj Zs VOSz tBbh 7WjBgf aA F t4J 6CT UCU 54 3rba vpOM yelWYW hV B RGo w5J Rh2 nM fUco BkBX UQ7UlO 5r Y fHD Mce Wou 3R oFWt baKh 70oHBZ n7 u nRp Rh3 SIp p0 Btqk 5vhX CU9BHJ Fx 7 qPx B55 a7R kO yHmS h5vw rDqt0n F7 t oPJ UGq HfY 5u At5k QLP6 ppnRjM Hk 3 HGq Z0O Bug FF xSnA SHBI 7agVfq wf g aAl eH9 DMn XQ QTAA QM8q z9trz8 6V R 2gO MMV uMg f6 tGLZ WEKq vkMEOg Uz M xgN 4Cb Q8f}   \begin{split}     &\Vert w \Vert_{ L^{\infty}([0,T]; H^{4+\delta}(\Gamma_{1}))}       + \Vert w_{t} \Vert_{ L^{\infty}([0,T]; H^{2+\delta}(\Gamma_{1}))}
      +\Vert w_{tt} \Vert_{ L^{\infty}([0,T]; H^{\delta}(\Gamma_{1}))}       + \nu\Vert w \Vert_{ L^{2}([0,T]; H^{3+\delta}(\Gamma_{1}))}      \\&\indeq       \dlkjfhlaskdhjflkasdjhflkasjhdflkasjhdflkasjhdfls   \Vert w_{0} \Vert_{H^{4+\delta}(\Gamma_1)}          +\Vert w_{1} \Vert_{H^{2+\delta}(\Gamma_1)}          + \Vert d \Vert_{L^{2}([0,T]; H^{1+\delta}( \Gamma_{1}))}     ,   \end{split}   \label{8ThswELzXU3X7Ebd1KdZ7v1rN3GiirRXGKWK099ovBM0FDJCvkopYNQ2aN94Z7k0UnUKamE3OjU8DFYFFokbSI2J9V9gVlM8ALWThDPnPu3EL7HPD2VDaZTggzcCCmbvc70qqPcC9mt60ogcrTiA3HEjwTK8ymKeuJMc4q6dVz200XnYUtLR9GYjPXvFOVr6W1zUK1WbPToaWJJuKnxBLnd0ftDEbMmj4loHYyhZyMjM91zQS4p7z8eKa9h0JrbacekcirexG0z4n3235}    \end{align}  where the constant depends on $\nu$. \end{Lemma}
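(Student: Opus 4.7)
\emph{Proof plan for Lemma~\ref{L05}.}
Since $\Gamma_1 = \mathbb{T}^2$ with periodic boundary conditions, the operators $\Delta_2$ and $\Delta_2^2$ are simultaneously diagonalized by the Fourier basis $\{e^{2\pi i k \cdot x}\}_{k \in \mathbb{Z}^2}$. Writing $w = \sum_k \hat{w}_k(t) e^{2\pi i k\cdot x}$ and similarly for $d$, each Fourier mode satisfies the scalar constant-coefficient ODE
\begin{equation*}
\hat{w}_k''(t) + (2\pi|k|)^4 \hat{w}_k(t) + \nu (2\pi|k|)^2 \hat{w}_k'(t) = \hat{d}_k(t),
\end{equation*}
which admits a unique solution via Duhamel's formula given $(\hat{w}_k(0), \hat{w}_k'(0))$. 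Summing over $k$ produces a candidate distributional solution; equivalently, one may set up a Galerkin scheme by truncating to $|k| \le N$ and passing to the limit. Uniqueness at the PDE level is immediate from linearity combined with the energy estimate below applied to the difference of two solutions.

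The substantive work is in the regularity estimate. Let $\UIPOIUPOIUPOOYIUIUYOIUYOIUHOIUOIUHIOPUHPOIJPOIJPOUHOIUHOILJHLIUHYOIUYOUI = (I - \Delta_2)^{1/2}$. Applying $\UIPOIUPOIUPOOYIUIUYOIUYOIUHOIUOIUHIOPUHPOIJPOIJPOUHOIUHOILJHLIUHYOIUYOUI^{2+\delta}$ to \eqref{8ThswELzXU3X7Ebd1KdZ7v1rN3GiirRXGKWK099ovBM0FDJCvkopYNQ2aN94Z7k0UnUKamE3OjU8DFYFFokbSI2J9V9gVlM8ALWThDPnPu3EL7HPD2VDaZTggzcCCmbvc70qqPcC9mt60ogcrTiA3HEjwTK8ymKeuJMc4q6dVz200XnYUtLR9GYjPXvFOVr6W1zUK1WbPToaWJJuKnxBLnd0ftDEbMmj4loHYyhZyMjM91zQS4p7z8eKa9h0JrbacekcirexG0z4n3234} and testing against $\UIPOIUPOIUPOOYIUIUYOIUYOIUHOIUOIUHIOPUHPOIJPOIJPOUHOIUHOILJHLIUHYOIUYOUI^{2+\delta} w_t$ in $L^2(\Gamma_1)$ yields, since all operators commute with the Fourier decomposition,
\begin{equation*}
\tfrac{1}{2}\tfrac{d}{dt}\bigl(\|\UIPOIUPOIUPOOYIUIUYOIUYOIUHOIUOIUHIOPUHPOIJPOIJPOUHOIUHOILJHLIUHYOIUYOUI^{2+\delta} w_t\|_{L^2(\Gamma_1)}^2 + \|\Delta_2 \UIPOIUPOIUPOOYIUIUYOIUYOIUHOIUOIUHIOPUHPOIJPOIJPOUHOIUHOILJHLIUHYOIUYOUI^{2+\delta} w\|_{L^2(\Gamma_1)}^2\bigr) + \nu \|\nabla \UIPOIUPOIUPOOYIUIUYOIUYOIUHOIUOIUHIOPUHPOIJPOIJPOUHOIUHOILJHLIUHYOIUYOUI^{2+\delta} w_t\|_{L^2(\Gamma_1)}^2 = (\UIPOIUPOIUPOOYIUIUYOIUYOIUHOIUOIUHIOPUHPOIJPOIJPOUHOIUHOILJHLIUHYOIUYOUI^{1+\delta} d, \UIPOIUPOIUPOOYIUIUYOIUYOIUHOIUOIUHIOPUHPOIJPOIJPOUHOIUHOILJHLIUHYOIUYOUI^{3+\delta} w_t)_{L^2(\Gamma_1)},
\end{equation*}
and by Cauchy-Schwarz the right side is bounded by $\tfrac{1}{2\nu}\|d\|_{H^{1+\delta}(\Gamma_1)}^2 + \tfrac{\nu}{2}\|w_t\|_{H^{3+\delta}(\Gamma_1)}^2$, the second term being controlled by the damping on the left (up to a harmless $\|w_t\|_{L^2}^2$ piece handled by Gronwall). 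Integrating in time over $[0,t]$ and invoking $\|\Delta_2 \UIPOIUPOIUPOOYIUIUYOIUYOIUHOIUOIUHIOPUHPOIJPOIJPOUHOIUHOILJHLIUHYOIUYOUI^{2+\delta} w\| \sim \|w\|_{H^{4+\delta}(\Gamma_1)}$ (equivalent norms on the torus modulo the zero mode) delivers the bounds on $\|w\|_{L^\infty_t H^{4+\delta}}$, $\|w_t\|_{L^\infty_t H^{2+\delta}}$, and $\nu^{1/2}\|w_t\|_{L^2_t H^{3+\delta}}$ in terms of $\|w_0\|_{H^{4+\delta}} + \|w_1\|_{H^{2+\delta}} + \|d\|_{L^2_t H^{1+\delta}}$.

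The $w_{tt}$ bound is read off directly from the equation: $w_{tt} = d - \Delta_2^2 w + \nu \Delta_2 w_t$, so
\begin{equation*}
\|w_{tt}(t)\|_{H^\delta(\Gamma_1)} \les \|d(t)\|_{H^\delta(\Gamma_1)} + \|w(t)\|_{H^{4+\delta}(\Gamma_1)} + \nu \|w_t(t)\|_{H^{2+\delta}(\Gamma_1)},
\end{equation*}
the first term being controlled in the appropriate time-norm dictated by the forcing. There is no substantive obstacle here: the main technical point is the choice of the test function $\UIPOIUPOIUPOOYIUIUYOIUYOIUHOIUOIUHIOPUHPOIJPOIJPOUHOIUHOILJHLIUHYOIUYOUI^{2(2+\delta)} w_t$, which naturally pairs the $H^{1+\delta}$ forcing against the damping, allowing the top-order spatial derivative of $w_t$ to be absorbed. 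All remaining steps are routine since on the torus there are no boundary terms to track.
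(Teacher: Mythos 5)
Your proposal is correct and follows essentially the same route as the paper: existence by Fourier truncation of the linear equation, and the a priori bound obtained by testing with $(I-\Delta_2)^{2+\delta}w_t$, estimating the forcing term by Cauchy--Schwarz as $\frac{1}{2\nu}\|d\|_{H^{1+\delta}}^2+\frac{\nu}{2}\|w_t\|_{H^{3+\delta}}^2$ and absorbing the latter into the damping. Your additional remarks (Duhamel per mode, the lower-order/zero-mode discrepancies handled by Gronwall, and reading the $w_{tt}$ bound off the equation) only make explicit steps the paper leaves implicit.
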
 \colb \par
\begin{proof}[Proof of Lemma~\ref{L05}] We provide a necessary  a~priori estimate for \eqref{8ThswELzXU3X7Ebd1KdZ7v1rN3GiirRXGKWK099ovBM0FDJCvkopYNQ2aN94Z7k0UnUKamE3OjU8DFYFFokbSI2J9V9gVlM8ALWThDPnPu3EL7HPD2VDaZTggzcCCmbvc70qqPcC9mt60ogcrTiA3HEjwTK8ymKeuJMc4q6dVz200XnYUtLR9GYjPXvFOVr6W1zUK1WbPToaWJJuKnxBLnd0ftDEbMmj4loHYyhZyMjM91zQS4p7z8eKa9h0JrbacekcirexG0z4n3235}. Since the equation is linear, it is straight-forward to justify it using a truncation in the Fourier variables. With $\UIPOIUPOIUPOOYIUIUYOIUYOIUHOIUOIUHIOPUHPOIJPOIJPOUHOIUHOILJHLIUHYOIUYOUI$ as in \eqref{8ThswELzXU3X7Ebd1KdZ7v1rN3GiirRXGKWK099ovBM0FDJCvkopYNQ2aN94Z7k0UnUKamE3OjU8DFYFFokbSI2J9V9gVlM8ALWThDPnPu3EL7HPD2VDaZTggzcCCmbvc70qqPcC9mt60ogcrTiA3HEjwTK8ymKeuJMc4q6dVz200XnYUtLR9GYjPXvFOVr6W1zUK1WbPToaWJJuKnxBLnd0ftDEbMmj4loHYyhZyMjM91zQS4p7z8eKa9h0JrbacekcirexG0z4n350}, we test \eqref{8ThswELzXU3X7Ebd1KdZ7v1rN3GiirRXGKWK099ovBM0FDJCvkopYNQ2aN94Z7k0UnUKamE3OjU8DFYFFokbSI2J9V9gVlM8ALWThDPnPu3EL7HPD2VDaZTggzcCCmbvc70qqPcC9mt60ogcrTiA3HEjwTK8ymKeuJMc4q6dVz200XnYUtLR9GYjPXvFOVr6W1zUK1WbPToaWJJuKnxBLnd0ftDEbMmj4loHYyhZyMjM91zQS4p7z8eKa9h0JrbacekcirexG0z4n3234} with  $\Gamma^{4+2\delta}w_t$ obtaining   \begin{align}\thelt{fHD Mce Wou 3R oFWt baKh 70oHBZ n7 u nRp Rh3 SIp p0 Btqk 5vhX CU9BHJ Fx 7 qPx B55 a7R kO yHmS h5vw rDqt0n F7 t oPJ UGq HfY 5u At5k QLP6 ppnRjM Hk 3 HGq Z0O Bug FF xSnA SHBI 7agVfq wf g aAl eH9 DMn XQ QTAA QM8q z9trz8 6V R 2gO MMV uMg f6 tGLZ WEKq vkMEOg Uz M xgN 4Cb Q8f WY 9Tk7 3Gg9 0jy9dJ bO v ddV Zmq Jjb 5q Q5BS Ffl2 tNPRC8 6t I 0PI dLD UqX KO 1ulg XjPV lfDFkF h4 2 W0j wkk H8d xI kjy6 GDge M9mbTY tU S 4lt yAV uor 6w 7Inw Ch6G G9Km3Y oz b uVq ts}   \begin{split}    &    \frac12    \frac{d}{dt}    \Vert \UIPOIUPOIUPOOYIUIUYOIUYOIUHOIUOIUHIOPUHPOIJPOIJPOUHOIUHOILJHLIUHYOIUYOUI^{4+\delta}w\Vert_{L^2(\Gamma_1)}^2    +    \frac12    \frac{d}{dt}    \Vert \UIPOIUPOIUPOOYIUIUYOIUYOIUHOIUOIUHIOPUHPOIJPOIJPOUHOIUHOILJHLIUHYOIUYOUI^{2+\delta}w_t\Vert_{L^2(\Gamma_1)}^2
   +     \nu    \Vert \UIPOIUPOIUPOOYIUIUYOIUYOIUHOIUOIUHIOPUHPOIJPOIJPOUHOIUHOILJHLIUHYOIUYOUI^{3+\delta}w_t\Vert_{L^2(\Gamma_1)}^2    \\&\indeq    =     \OIUYJHUGFAJKLDHFKJLSDHFLKSDJFHLKSDJHFLKSDJHFLKDJFHLLDKHFLKSDHJFALKJHLJLHGLKHHLKJHLKGKHGJKHGKJHLKHJLKJH_{\Gamma_1}     d \UIPOIUPOIUPOOYIUIUYOIUYOIUHOIUOIUHIOPUHPOIJPOIJPOUHOIUHOILJHLIUHYOIUYOUI^{4+2\delta}w_t    \leq    \Vert \UIPOIUPOIUPOOYIUIUYOIUYOIUHOIUOIUHIOPUHPOIJPOIJPOUHOIUHOILJHLIUHYOIUYOUI^{1+\delta} d\Vert_{L^2(\Gamma_1)}    \Vert    \UIPOIUPOIUPOOYIUIUYOIUYOIUHOIUOIUHIOPUHPOIJPOIJPOUHOIUHOILJHLIUHYOIUYOUI^{3+\delta}w_t\Vert_{L^2(\Gamma_1)}    \leq    \frac{\nu}2     \Vert \UIPOIUPOIUPOOYIUIUYOIUYOIUHOIUOIUHIOPUHPOIJPOIJPOUHOIUHOILJHLIUHYOIUYOUI^{3+\delta}w_t\Vert_{L^2(\Gamma_1)}^2    +
   \frac{1}{2\nu}    \Vert d\Vert_{\UIPOIUPOIUPOOYIUIUYOIUYOIUHOIUOIUHIOPUHPOIJPOIJPOUHOIUHOILJHLIUHYOIUYOUI^{1+\delta}}^2    ,    \end{split}    \llabel{8ThswELzXU3X7Ebd1KdZ7v1rN3GiirRXGKWK099ovBM0FDJCvkopYNQ2aN94Z7k0UnUKamE3OjU8DFYFFokbSI2J9V9gVlM8ALWThDPnPu3EL7HPD2VDaZTggzcCCmbvc70qqPcC9mt60ogcrTiA3HEjwTK8ymKeuJMc4q6dVz200XnYUtLR9GYjPXvFOVr6W1zUK1WbPToaWJJuKnxBLnd0ftDEbMmj4loHYyhZyMjM91zQS4p7z8eKa9h0JrbacekcirexG0z4n3364}   \end{align} and the estimate \eqref{8ThswELzXU3X7Ebd1KdZ7v1rN3GiirRXGKWK099ovBM0FDJCvkopYNQ2aN94Z7k0UnUKamE3OjU8DFYFFokbSI2J9V9gVlM8ALWThDPnPu3EL7HPD2VDaZTggzcCCmbvc70qqPcC9mt60ogcrTiA3HEjwTK8ymKeuJMc4q6dVz200XnYUtLR9GYjPXvFOVr6W1zUK1WbPToaWJJuKnxBLnd0ftDEbMmj4loHYyhZyMjM91zQS4p7z8eKa9h0JrbacekcirexG0z4n3235} follows upon absorbing the first term on the far-right side into the third term on the far-left side. \end{proof} \par \subsection{Regularized Euler-plate system} \label{sec10} \par We now consider the regularized Euler-plate system consisting of the Euler equations   \begin{align}\thelt{wf g aAl eH9 DMn XQ QTAA QM8q z9trz8 6V R 2gO MMV uMg f6 tGLZ WEKq vkMEOg Uz M xgN 4Cb Q8f WY 9Tk7 3Gg9 0jy9dJ bO v ddV Zmq Jjb 5q Q5BS Ffl2 tNPRC8 6t I 0PI dLD UqX KO 1ulg XjPV lfDFkF h4 2 W0j wkk H8d xI kjy6 GDge M9mbTY tU S 4lt yAV uor 6w 7Inw Ch6G G9Km3Y oz b uVq tsX TNZ aq mwkz oKxE 9O0QBQ Xh x N5L qr6 x7S xm vRwT SBGJ Y5uo5w SN G p3h Ccf QNa fX Wjxe AFyC xUfM8c 0k K kwg psv wVe 4t FsGU IzoW FYfnQA UT 9 xcl Tfi mLC JR XFAm He7V bYOaFB Pj j e}
   \begin{split}    &     \UIOIUYOIUyHJGKHJLOIUYOIUOIUYOIYIOUYTIUYIOOOIUYOIUYPOIUPOIUPOIUYOIUYOIUYOIUHOUHOHIOUHOIHOIUHOIUHIOUH_{t} v_i     + v_1 a_{j1} \UIOIUYOIUyHJGKHJLOIUYOIUOIUYOIYIOUYTIUYIOOOIUYOIUYPOIUPOIUPOIUYOIUYOIUYOIUHOUHOHIOUHOIHOIUHOIUHIOUH_{j}v_i     + v_2 a_{j2} \UIOIUYOIUyHJGKHJLOIUYOIUOIUYOIYIOUYTIUYIOOOIUYOIUYPOIUPOIUPOIUYOIUYOIUYOIUHOUHOHIOUHOIHOIUHOIUHIOUH_{j}v_i     + (v_3-\psi_t) \UIOIUYOIUyHJGKHJLOIUYOIUOIUYOIYIOUYTIUYIOOOIUYOIUYPOIUPOIUPOIUYOIUYOIUYOIUHOUHOHIOUHOIHOIUHOIUHIOUH_{3} v_i     + a_{ki}\UIOIUYOIUyHJGKHJLOIUYOIUOIUYOIYIOUYTIUYIOOOIUYOIUYPOIUPOIUPOIUYOIUYOIUYOIUHOUHOHIOUHOIHOIUHOIUHIOUH_{k}q=0     ,     \\&      a_{ki} \UIOIUYOIUyHJGKHJLOIUYOIUOIUYOIYIOUYTIUYIOOOIUYOIUYPOIUPOIUPOIUYOIUYOIUYOIUHOUHOHIOUHOIHOIUHOIUHIOUH_{k}v_i=0    \end{split}    \label{8ThswELzXU3X7Ebd1KdZ7v1rN3GiirRXGKWK099ovBM0FDJCvkopYNQ2aN94Z7k0UnUKamE3OjU8DFYFFokbSI2J9V9gVlM8ALWThDPnPu3EL7HPD2VDaZTggzcCCmbvc70qqPcC9mt60ogcrTiA3HEjwTK8ymKeuJMc4q6dVz200XnYUtLR9GYjPXvFOVr6W1zUK1WbPToaWJJuKnxBLnd0ftDEbMmj4loHYyhZyMjM91zQS4p7z8eKa9h0JrbacekcirexG0z4n3241}   \end{align} in $\Omega\times[0,T]$,  with the boundary condition 
  \begin{equation}    v_3=0    \inon{on $\Gamma_0$}    \llabel{8ThswELzXU3X7Ebd1KdZ7v1rN3GiirRXGKWK099ovBM0FDJCvkopYNQ2aN94Z7k0UnUKamE3OjU8DFYFFokbSI2J9V9gVlM8ALWThDPnPu3EL7HPD2VDaZTggzcCCmbvc70qqPcC9mt60ogcrTiA3HEjwTK8ymKeuJMc4q6dVz200XnYUtLR9GYjPXvFOVr6W1zUK1WbPToaWJJuKnxBLnd0ftDEbMmj4loHYyhZyMjM91zQS4p7z8eKa9h0JrbacekcirexG0z4n3242}   \end{equation} on the bottom, and   \begin{equation}      \tda_{3i}v_i = w_{t}     \inon{on $\Gamma_1$}    \llabel{8ThswELzXU3X7Ebd1KdZ7v1rN3GiirRXGKWK099ovBM0FDJCvkopYNQ2aN94Z7k0UnUKamE3OjU8DFYFFokbSI2J9V9gVlM8ALWThDPnPu3EL7HPD2VDaZTggzcCCmbvc70qqPcC9mt60ogcrTiA3HEjwTK8ymKeuJMc4q6dVz200XnYUtLR9GYjPXvFOVr6W1zUK1WbPToaWJJuKnxBLnd0ftDEbMmj4loHYyhZyMjM91zQS4p7z8eKa9h0JrbacekcirexG0z4n3243}   \end{equation} on the top. The coefficient matrices $a$ and $b$ are defined as in \eqref{8ThswELzXU3X7Ebd1KdZ7v1rN3GiirRXGKWK099ovBM0FDJCvkopYNQ2aN94Z7k0UnUKamE3OjU8DFYFFokbSI2J9V9gVlM8ALWThDPnPu3EL7HPD2VDaZTggzcCCmbvc70qqPcC9mt60ogcrTiA3HEjwTK8ymKeuJMc4q6dVz200XnYUtLR9GYjPXvFOVr6W1zUK1WbPToaWJJuKnxBLnd0ftDEbMmj4loHYyhZyMjM91zQS4p7z8eKa9h0JrbacekcirexG0z4n3189}--\eqref{8ThswELzXU3X7Ebd1KdZ7v1rN3GiirRXGKWK099ovBM0FDJCvkopYNQ2aN94Z7k0UnUKamE3OjU8DFYFFokbSI2J9V9gVlM8ALWThDPnPu3EL7HPD2VDaZTggzcCCmbvc70qqPcC9mt60ogcrTiA3HEjwTK8ymKeuJMc4q6dVz200XnYUtLR9GYjPXvFOVr6W1zUK1WbPToaWJJuKnxBLnd0ftDEbMmj4loHYyhZyMjM91zQS4p7z8eKa9h0JrbacekcirexG0z4n3191} in terms of  $(w,w_{t})$ solving the regularized damped plate equation   \begin{equation}
    w_{tt}       -  \nu \Delta_2 w_{t}     +\Delta_2^2 w = q      \inon{on $\Gamma_{1} \times [0,T]$}    \label{8ThswELzXU3X7Ebd1KdZ7v1rN3GiirRXGKWK099ovBM0FDJCvkopYNQ2aN94Z7k0UnUKamE3OjU8DFYFFokbSI2J9V9gVlM8ALWThDPnPu3EL7HPD2VDaZTggzcCCmbvc70qqPcC9mt60ogcrTiA3HEjwTK8ymKeuJMc4q6dVz200XnYUtLR9GYjPXvFOVr6W1zUK1WbPToaWJJuKnxBLnd0ftDEbMmj4loHYyhZyMjM91zQS4p7z8eKa9h0JrbacekcirexG0z4n3244}   \end{equation} defined on a domain $\Gamma_{1}= [0,1] \times [0,1] \subseteq \mathbb{R}^{2}$ with periodic  boundary conditions. The following theorem, which we prove next, establishes the existence of the solution to the above system. \par \cole \begin{Theorem} \label{T05} Let $\nu>0$. Assume that initial data    \begin{equation}
   (v_{0}, w_{0}, w_{1})\in H^{2.5 +\delta} \times H^{4+\delta}(\Gamma_{1}) \times   H^{2+\delta}(\Gamma_{1})    ,    \llabel{8ThswELzXU3X7Ebd1KdZ7v1rN3GiirRXGKWK099ovBM0FDJCvkopYNQ2aN94Z7k0UnUKamE3OjU8DFYFFokbSI2J9V9gVlM8ALWThDPnPu3EL7HPD2VDaZTggzcCCmbvc70qqPcC9mt60ogcrTiA3HEjwTK8ymKeuJMc4q6dVz200XnYUtLR9GYjPXvFOVr6W1zUK1WbPToaWJJuKnxBLnd0ftDEbMmj4loHYyhZyMjM91zQS4p7z8eKa9h0JrbacekcirexG0z4n3245}   \end{equation}  where $\delta \geq 0.5$, satisfy the compatibility conditions \eqref{8ThswELzXU3X7Ebd1KdZ7v1rN3GiirRXGKWK099ovBM0FDJCvkopYNQ2aN94Z7k0UnUKamE3OjU8DFYFFokbSI2J9V9gVlM8ALWThDPnPu3EL7HPD2VDaZTggzcCCmbvc70qqPcC9mt60ogcrTiA3HEjwTK8ymKeuJMc4q6dVz200XnYUtLR9GYjPXvFOVr6W1zUK1WbPToaWJJuKnxBLnd0ftDEbMmj4loHYyhZyMjM91zQS4p7z8eKa9h0JrbacekcirexG0z4n327}--\eqref{8ThswELzXU3X7Ebd1KdZ7v1rN3GiirRXGKWK099ovBM0FDJCvkopYNQ2aN94Z7k0UnUKamE3OjU8DFYFFokbSI2J9V9gVlM8ALWThDPnPu3EL7HPD2VDaZTggzcCCmbvc70qqPcC9mt60ogcrTiA3HEjwTK8ymKeuJMc4q6dVz200XnYUtLR9GYjPXvFOVr6W1zUK1WbPToaWJJuKnxBLnd0ftDEbMmj4loHYyhZyMjM91zQS4p7z8eKa9h0JrbacekcirexG0z4n3321}. Then there exists a unique local-in-time solution $(v,q,w,w_{t} )$ to the system \eqref{8ThswELzXU3X7Ebd1KdZ7v1rN3GiirRXGKWK099ovBM0FDJCvkopYNQ2aN94Z7k0UnUKamE3OjU8DFYFFokbSI2J9V9gVlM8ALWThDPnPu3EL7HPD2VDaZTggzcCCmbvc70qqPcC9mt60ogcrTiA3HEjwTK8ymKeuJMc4q6dVz200XnYUtLR9GYjPXvFOVr6W1zUK1WbPToaWJJuKnxBLnd0ftDEbMmj4loHYyhZyMjM91zQS4p7z8eKa9h0JrbacekcirexG0z4n3241}--\eqref{8ThswELzXU3X7Ebd1KdZ7v1rN3GiirRXGKWK099ovBM0FDJCvkopYNQ2aN94Z7k0UnUKamE3OjU8DFYFFokbSI2J9V9gVlM8ALWThDPnPu3EL7HPD2VDaZTggzcCCmbvc70qqPcC9mt60ogcrTiA3HEjwTK8ymKeuJMc4q6dVz200XnYUtLR9GYjPXvFOVr6W1zUK1WbPToaWJJuKnxBLnd0ftDEbMmj4loHYyhZyMjM91zQS4p7z8eKa9h0JrbacekcirexG0z4n3244} such that   \begin{align}\thelt{DFkF h4 2 W0j wkk H8d xI kjy6 GDge M9mbTY tU S 4lt yAV uor 6w 7Inw Ch6G G9Km3Y oz b uVq tsX TNZ aq mwkz oKxE 9O0QBQ Xh x N5L qr6 x7S xm vRwT SBGJ Y5uo5w SN G p3h Ccf QNa fX Wjxe AFyC xUfM8c 0k K kwg psv wVe 4t FsGU IzoW FYfnQA UT 9 xcl Tfi mLC JR XFAm He7V bYOaFB Pj j eF6 xI3 CzO Vv imZ3 2pt5 uveTrh U6 y 8wj wAy IU3 G1 5HMy bdau GckOFn q6 a 5Ha R4D Ooj rN Ajdh SmhO tphQpc 9j X X2u 5rw PHz W0 32fi 2bz1 60Ka4F Dj d 1yV FSM TzS vF 1YkR zdzb YbI0qj K}   \begin{split}    &v \in L^{\infty}([0,T];H^{2.5+ \delta}(\Omega))     \cap C([0,T];H^{0.5 +\delta}(\Omega))    ,    \\&
      v_{t} \in L^{\infty}([0,T];H^{0.5 +\delta}(\Omega))    ,    \\&      q \in L^{\infty}([0,T];H^{1.5 + \delta}(\Omega))    ,     \\&      w \in L^{\infty}([0,T];H^{4+ \delta}(\Gamma_{1}))    ,    \\&      w_{t} \in L^{\infty}([0,T];H^{2+ \delta}(\Gamma_{1}))      ,   \end{split}    \llabel{8ThswELzXU3X7Ebd1KdZ7v1rN3GiirRXGKWK099ovBM0FDJCvkopYNQ2aN94Z7k0UnUKamE3OjU8DFYFFokbSI2J9V9gVlM8ALWThDPnPu3EL7HPD2VDaZTggzcCCmbvc70qqPcC9mt60ogcrTiA3HEjwTK8ymKeuJMc4q6dVz200XnYUtLR9GYjPXvFOVr6W1zUK1WbPToaWJJuKnxBLnd0ftDEbMmj4loHYyhZyMjM91zQS4p7z8eKa9h0JrbacekcirexG0z4n3248}   \end{align}
for some time $T>0$ depending on initial data as well as on $\nu$ and $\epsilon$. \end{Theorem}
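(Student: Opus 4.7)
The plan is to construct the solution via a fixed-point iteration that alternates Theorem~\ref{T04} with Lemma~\ref{L05}. Concretely, I will work on the closed set
\[
\mathcal{B}_M = \Bigl\{(w,w_t) : (w,w_t)(0)=(w_0,w_1),\ \|w\|_{L^\infty_T H^{4+\delta}(\Gamma_1)} + \|w_t\|_{L^\infty_T H^{2+\delta}(\Gamma_1)} + \nu^{1/2}\|w_t\|_{L^2_T H^{3+\delta}(\Gamma_1)} \le M\Bigr\}
\]
and define $\Phi \colon \mathcal{B}_M \to \mathcal{B}_M$ as follows. Given $(w,w_t) \in \mathcal{B}_M$, I first form $\psi$, $\tilde a$, $\tilde b$ via \eqref{8ThswELzXU3X7Ebd1KdZ7v1rN3GiirRXGKWK099ovBM0FDJCvkopYNQ2aN94Z7k0UnUKamE3OjU8DFYFFokbSI2J9V9gVlM8ALWThDPnPu3EL7HPD2VDaZTggzcCCmbvc70qqPcC9mt60ogcrTiA3HEjwTK8ymKeuJMc4q6dVz200XnYUtLR9GYjPXvFOVr6W1zUK1WbPToaWJJuKnxBLnd0ftDEbMmj4loHYyhZyMjM91zQS4p7z8eKa9h0JrbacekcirexG0z4n3189}--\eqref{8ThswELzXU3X7Ebd1KdZ7v1rN3GiirRXGKWK099ovBM0FDJCvkopYNQ2aN94Z7k0UnUKamE3OjU8DFYFFokbSI2J9V9gVlM8ALWThDPnPu3EL7HPD2VDaZTggzcCCmbvc70qqPcC9mt60ogcrTiA3HEjwTK8ymKeuJMc4q6dVz200XnYUtLR9GYjPXvFOVr6W1zUK1WbPToaWJJuKnxBLnd0ftDEbMmj4loHYyhZyMjM91zQS4p7z8eKa9h0JrbacekcirexG0z4n3191}, apply Theorem~\ref{T04} to solve the variable-coefficient Euler system \eqref{8ThswELzXU3X7Ebd1KdZ7v1rN3GiirRXGKWK099ovBM0FDJCvkopYNQ2aN94Z7k0UnUKamE3OjU8DFYFFokbSI2J9V9gVlM8ALWThDPnPu3EL7HPD2VDaZTggzcCCmbvc70qqPcC9mt60ogcrTiA3HEjwTK8ymKeuJMc4q6dVz200XnYUtLR9GYjPXvFOVr6W1zUK1WbPToaWJJuKnxBLnd0ftDEbMmj4loHYyhZyMjM91zQS4p7z8eKa9h0JrbacekcirexG0z4n3188} for $(v,q)$, and then feed the pressure trace $d = q|_{\Gamma_1}$ into Lemma~\ref{L05} to obtain the next iterate $(\tilde w, \tilde w_t)$. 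The positivity of $\nu$ is decisive here: the trace $q|_{\Gamma_1}$ sits (uniformly in time) only in $H^{1+\delta}(\Gamma_1)$, and without the damping term $-\nu\Delta_2 w_t$ Lemma~\ref{L05} would not produce $\tilde w_{tt} \in L^\infty_T H^\delta(\Gamma_1)$ from such a forcing.

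For invariance of $\Phi$ I will choose $M \gtrsim \|v_0\|_{H^{2.5+\delta}} + \|w_0\|_{H^{4+\delta}} + \|w_1\|_{H^{2+\delta}}$ and $T$ small in terms of $M$, $\nu$, and $\epsilon$. Since $w(0)=0$, we have $\tilde a(0)=\tilde b(0)=I$, and the crude estimate $\|\tilde a - I\|_{H^{1.5+\delta}} \lesssim T\|w_t\|_{L^\infty H^{2+\delta}} \lesssim TM$ yields the smallness \eqref{8ThswELzXU3X7Ebd1KdZ7v1rN3GiirRXGKWK099ovBM0FDJCvkopYNQ2aN94Z7k0UnUKamE3OjU8DFYFFokbSI2J9V9gVlM8ALWThDPnPu3EL7HPD2VDaZTggzcCCmbvc70qqPcC9mt60ogcrTiA3HEjwTK8ymKeuJMc4q6dVz200XnYUtLR9GYjPXvFOVr6W1zUK1WbPToaWJJuKnxBLnd0ftDEbMmj4loHYyhZyMjM91zQS4p7z8eKa9h0JrbacekcirexG0z4n3197}--\eqref{8ThswELzXU3X7Ebd1KdZ7v1rN3GiirRXGKWK099ovBM0FDJCvkopYNQ2aN94Z7k0UnUKamE3OjU8DFYFFokbSI2J9V9gVlM8ALWThDPnPu3EL7HPD2VDaZTggzcCCmbvc70qqPcC9mt60ogcrTiA3HEjwTK8ymKeuJMc4q6dVz200XnYUtLR9GYjPXvFOVr6W1zUK1WbPToaWJJuKnxBLnd0ftDEbMmj4loHYyhZyMjM91zQS4p7z8eKa9h0JrbacekcirexG0z4n3198} needed to apply Theorem~\ref{T04}. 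Theorem~\ref{T04} then gives $\|v\|_{L^\infty_T H^{2.5+\delta}} + \|q\|_{L^\infty_T H^{1.5+\delta}} \le P(M)$, so $\|d\|_{L^2_T H^{1+\delta}(\Gamma_1)} \le T^{1/2} P(M)$, and Lemma~\ref{L05} returns
\[
\|\tilde w\|_{L^\infty_T H^{4+\delta}} + \|\tilde w_t\|_{L^\infty_T H^{2+\delta}} + \nu^{1/2}\|\tilde w_t\|_{L^2_T H^{3+\delta}} \le C_\nu\bigl(\|w_0\|_{H^{4+\delta}} + \|w_1\|_{H^{2+\delta}} + T^{1/2}P(M)\bigr),
\]
which lies in $\mathcal{B}_M$ provided $M$ is chosen $\ge 2C_\nu(\|w_0\|+\|w_1\|)$ and $T$ is then chosen small enough (depending on $M$, $\nu$).

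The main obstacle will be to produce a contraction on $\mathcal{B}_M$ in a weaker norm. I will compare two base points $(w^{(1)},w_t^{(1)})$ and $(w^{(2)},w_t^{(2)})$ with outputs $(v^{(i)},q^{(i)},\tilde w^{(i)})$ and, setting $V = v^{(1)}-v^{(2)}$, $Q = q^{(1)}-q^{(2)}$, $\tilde W = \tilde w^{(1)}-\tilde w^{(2)}$, $\check W = w^{(1)}-w^{(2)}$, mimic the linearized analysis of Section~\ref{sec08}. The same tangential estimate as in \eqref{8ThswELzXU3X7Ebd1KdZ7v1rN3GiirRXGKWK099ovBM0FDJCvkopYNQ2aN94Z7k0UnUKamE3OjU8DFYFFokbSI2J9V9gVlM8ALWThDPnPu3EL7HPD2VDaZTggzcCCmbvc70qqPcC9mt60ogcrTiA3HEjwTK8ymKeuJMc4q6dVz200XnYUtLR9GYjPXvFOVr6W1zUK1WbPToaWJJuKnxBLnd0ftDEbMmj4loHYyhZyMjM91zQS4p7z8eKa9h0JrbacekcirexG0z4n3157}, combined with the pressure bound \eqref{8ThswELzXU3X7Ebd1KdZ7v1rN3GiirRXGKWK099ovBM0FDJCvkopYNQ2aN94Z7k0UnUKamE3OjU8DFYFFokbSI2J9V9gVlM8ALWThDPnPu3EL7HPD2VDaZTggzcCCmbvc70qqPcC9mt60ogcrTiA3HEjwTK8ymKeuJMc4q6dVz200XnYUtLR9GYjPXvFOVr6W1zUK1WbPToaWJJuKnxBLnd0ftDEbMmj4loHYyhZyMjM91zQS4p7z8eKa9h0JrbacekcirexG0z4n3178} and the vorticity bound \eqref{8ThswELzXU3X7Ebd1KdZ7v1rN3GiirRXGKWK099ovBM0FDJCvkopYNQ2aN94Z7k0UnUKamE3OjU8DFYFFokbSI2J9V9gVlM8ALWThDPnPu3EL7HPD2VDaZTggzcCCmbvc70qqPcC9mt60ogcrTiA3HEjwTK8ymKeuJMc4q6dVz200XnYUtLR9GYjPXvFOVr6W1zUK1WbPToaWJJuKnxBLnd0ftDEbMmj4loHYyhZyMjM91zQS4p7z8eKa9h0JrbacekcirexG0z4n3187}, will control $V$ and $Q$ by $\check W$, while the damped plate energy identity applied to $\tilde W$ (now with a nontrivial source $Q$, which on $\Gamma_1$ is only of low regularity) will yield, after a Gronwall step,
\[
\|\tilde W\|^2_{L^\infty_T H^{3+\delta_0}} + \|\tilde W_t\|^2_{L^\infty_T H^{1+\delta_0}} \le C(M,\nu)\, T^\alpha\bigl(\|\check W\|^2_{L^\infty_T H^{3+\delta_0}} + \|\check W_t\|^2_{L^\infty_T H^{1+\delta_0}}\bigr)
\]
for some $\delta_0 < \delta$ and $\alpha > 0$. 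The assumption $\delta \ge 0.5$ is used exactly where Theorem~\ref{T02} uses it, namely to apply the elliptic pressure estimate of Lemma~\ref{L08} at level $l = 0.5+\delta \ge 1$ and to control the commutators between $\tda$ and $\nabla Q$. Choosing $T$ small produces a strict contraction on $\mathcal{B}_M$ in this weaker norm.

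A standard argument then yields a unique fixed point $(w,w_t) \in \mathcal{B}_M$ of $\Phi$, together with the associated $(v,q)$. Weak-$*$ compactness promotes the weak-norm limit to the full regularity stated in \eqref{8ThswELzXU3X7Ebd1KdZ7v1rN3GiirRXGKWK099ovBM0FDJCvkopYNQ2aN94Z7k0UnUKamE3OjU8DFYFFokbSI2J9V9gVlM8ALWThDPnPu3EL7HPD2VDaZTggzcCCmbvc70qqPcC9mt60ogcrTiA3HEjwTK8ymKeuJMc4q6dVz200XnYUtLR9GYjPXvFOVr6W1zUK1WbPToaWJJuKnxBLnd0ftDEbMmj4loHYyhZyMjM91zQS4p7z8eKa9h0JrbacekcirexG0z4n3248}; continuity $v \in C([0,T];H^{0.5+\delta})$ follows from $v_t \in L^\infty H^{0.5+\delta}$. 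Uniqueness of the fixed point together with Theorem~\ref{T02} (applicable because $\delta \ge 0.5$) yields uniqueness of the Euler--plate solution at the claimed regularity, completing the proof.
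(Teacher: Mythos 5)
Your proposal is correct and follows essentially the same route as the paper: iterate Theorem~\ref{T04} (Euler with given coefficients) against Lemma~\ref{L05} (damped plate), establish ball invariance for small $T$, and obtain contraction in lower-order norms by running the Section~\ref{sec08}-type difference estimates (vorticity transport, div-curl, Neumann-type elliptic pressure bound, damped plate energy), with $\delta\geq 0.5$ entering through the pressure estimate exactly as you say. The only detail the paper handles that you elide is the normalization of $q^{(n+1)}$ by a function of time so that $\int_{\Gamma_1}q^{(n+1)}=0$, which preserves the compatibility condition $\int_{\Gamma_1}w_t^{(n+1)}=0$ needed to re-enter the Euler step.
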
 \colb \par \begin{proof}[Proof of Theorem~\ref{T05}] Given $\nu >0$ and a regularization parameter $\epsilon >0$,  we shall construct a solution to the above system using the iteration scheme  \begin{align}\thelt{yC xUfM8c 0k K kwg psv wVe 4t FsGU IzoW FYfnQA UT 9 xcl Tfi mLC JR XFAm He7V bYOaFB Pj j eF6 xI3 CzO Vv imZ3 2pt5 uveTrh U6 y 8wj wAy IU3 G1 5HMy bdau GckOFn q6 a 5Ha R4D Ooj rN Ajdh SmhO tphQpc 9j X X2u 5rw PHz W0 32fi 2bz1 60Ka4F Dj d 1yV FSM TzS vF 1YkR zdzb YbI0qj KM N XBF tXo CZd j9 jD5A dSrN BdunlT DI a A4U jYS x6D K1 X16i 3yiQ uq4zoo Hv H qNg T2V kWG BV A4qe o8HH 70FflA qT D BKi 461 GvM gz d7Wr iqtF q24GYc yi f YkW Hv7 EI0 aq 5JKl fNDC NmW}   \begin{split}    &     \UIOIUYOIUyHJGKHJLOIUYOIUOIUYOIYIOUYTIUYIOOOIUYOIUYPOIUPOIUPOIUYOIUYOIUYOIUHOUHOHIOUHOIHOIUHOIUHIOUH_{t} v^{(n+1)}_i     + v^{(n+1)}_1 a^{(n)}_{j1} \UIOIUYOIUyHJGKHJLOIUYOIUOIUYOIYIOUYTIUYIOOOIUYOIUYPOIUPOIUPOIUYOIUYOIUYOIUHOUHOHIOUHOIHOIUHOIUHIOUH_{j} v^{(n+1)}_i     + v^{(n+1)}_2 a^{(n)}_{j2} \UIOIUYOIUyHJGKHJLOIUYOIUOIUYOIYIOUYTIUYIOOOIUYOIUYPOIUPOIUPOIUYOIUYOIUYOIUHOUHOHIOUHOIHOIUHOIUHIOUH_{j} v^{(n+1)}_i
    + (v^{(n+1)}_3-\psi^{(n)}_t) a^{(n)}_{33} \UIOIUYOIUyHJGKHJLOIUYOIUOIUYOIYIOUYTIUYIOOOIUYOIUYPOIUPOIUPOIUYOIUYOIUYOIUHOUHOHIOUHOIHOIUHOIUHIOUH_{3} v^{(n+1)}_i     + a^{(n)}_{ki}\UIOIUYOIUyHJGKHJLOIUYOIUOIUYOIYIOUYTIUYIOOOIUYOIUYPOIUPOIUPOIUYOIUYOIUYOIUHOUHOHIOUHOIHOIUHOIUHIOUH_{k}q^{(n+1)}     =0     ,     \\&     a^{(n)}_{ki} \UIOIUYOIUyHJGKHJLOIUYOIUOIUYOIYIOUYTIUYIOOOIUYOIUYPOIUPOIUPOIUYOIUYOIUYOIUHOUHOHIOUHOIHOIUHOIUHIOUH_{k}v^{(n+1)}_i=0      ,   \end{split}    \llabel{8ThswELzXU3X7Ebd1KdZ7v1rN3GiirRXGKWK099ovBM0FDJCvkopYNQ2aN94Z7k0UnUKamE3OjU8DFYFFokbSI2J9V9gVlM8ALWThDPnPu3EL7HPD2VDaZTggzcCCmbvc70qqPcC9mt60ogcrTiA3HEjwTK8ymKeuJMc4q6dVz200XnYUtLR9GYjPXvFOVr6W1zUK1WbPToaWJJuKnxBLnd0ftDEbMmj4loHYyhZyMjM91zQS4p7z8eKa9h0JrbacekcirexG0z4n3250}   \end{align} where $a^{(n)}$ is determined from  \begin{align}\thelt{dh SmhO tphQpc 9j X X2u 5rw PHz W0 32fi 2bz1 60Ka4F Dj d 1yV FSM TzS vF 1YkR zdzb YbI0qj KM N XBF tXo CZd j9 jD5A dSrN BdunlT DI a A4U jYS x6D K1 X16i 3yiQ uq4zoo Hv H qNg T2V kWG BV A4qe o8HH 70FflA qT D BKi 461 GvM gz d7Wr iqtF q24GYc yi f YkW Hv7 EI0 aq 5JKl fNDC NmWom3 Vy X JsN t4W P8y Gg AoAT OkVW Z4ODLt kz a 9Pa dGC GQ2 FC H6EQ ppks xFKMWA fY 0 Jda SYg o7h hG wHtt bb4z 5qrcdc 9C n Amx qY6 m8u Gf 7DZQ 6FBU PPiOxg sQ 0 CZl PYP Ba7 5O iV6t ZOB}    \begin{split}       &\Delta \psi^{(n)}= 0
     \inon{on $\Omega$}    \\    &    \psi^{(n)}(x_1,x_2,1,t)=1+w^{(n)} (x_1,x_2,t)      \inon{on $\Gamma_1$}    \\      &    \psi^{(n)}(x_1,x_2,0,t)=0      \inon{on $\Gamma_0$}    \end{split}    \llabel{8ThswELzXU3X7Ebd1KdZ7v1rN3GiirRXGKWK099ovBM0FDJCvkopYNQ2aN94Z7k0UnUKamE3OjU8DFYFFokbSI2J9V9gVlM8ALWThDPnPu3EL7HPD2VDaZTggzcCCmbvc70qqPcC9mt60ogcrTiA3HEjwTK8ymKeuJMc4q6dVz200XnYUtLR9GYjPXvFOVr6W1zUK1WbPToaWJJuKnxBLnd0ftDEbMmj4loHYyhZyMjM91zQS4p7z8eKa9h0JrbacekcirexG0z4n3251}   \end{align} by   \begin{align}\thelt{BV A4qe o8HH 70FflA qT D BKi 461 GvM gz d7Wr iqtF q24GYc yi f YkW Hv7 EI0 aq 5JKl fNDC NmWom3 Vy X JsN t4W P8y Gg AoAT OkVW Z4ODLt kz a 9Pa dGC GQ2 FC H6EQ ppks xFKMWA fY 0 Jda SYg o7h hG wHtt bb4z 5qrcdc 9C n Amx qY6 m8u Gf 7DZQ 6FBU PPiOxg sQ 0 CZl PYP Ba7 5O iV6t ZOBp fYuNcb j4 V Upb TKX ZRJ f3 6EA0 LDgA dfdOpS bg 1 ynC PUV oRW xe WQMK Smuh 3JHqX1 5A P JJX 2v0 W6l m0 llC8 hlss 1NLWaN hR B Aqf Iuz kx2 sp 01oD rYsR ywFrNb z1 h Gpq 99F wUz lf cQk}
   a^{(n)}=     \begin{pmatrix} 1 & 0 & 0 \\       0 & 1& 0\\       -\fractext{\UIOIUYOIUyHJGKHJLOIUYOIUOIUYOIYIOUYTIUYIOOOIUYOIUYPOIUPOIUPOIUYOIUYOIUYOIUHOUHOHIOUHOIHOIUHOIUHIOUH_{1} \psi^{(n)}} {\UIOIUYOIUyHJGKHJLOIUYOIUOIUYOIYIOUYTIUYIOOOIUYOIUYPOIUPOIUPOIUYOIUYOIUYOIUHOUHOHIOUHOIHOIUHOIUHIOUH_{3} \psi^{(n)}}                &     -\fractext{\UIOIUYOIUyHJGKHJLOIUYOIUOIUYOIYIOUYTIUYIOOOIUYOIUYPOIUPOIUPOIUYOIUYOIUYOIUHOUHOHIOUHOIHOIUHOIUHIOUH_{2} \psi^{(n)}} {\UIOIUYOIUyHJGKHJLOIUYOIUOIUYOIYIOUYTIUYIOOOIUYOIUYPOIUPOIUPOIUYOIUYOIUYOIUHOUHOHIOUHOIHOIUHOIUHIOUH_{3} \psi^{(n)}}                & \fractext{1} {\UIOIUYOIUyHJGKHJLOIUYOIUOIUYOIYIOUYTIUYIOOOIUYOIUYPOIUPOIUPOIUYOIUYOIUYOIUHOUHOHIOUHOIHOIUHOIUHIOUH_{3} \psi^{(n)}}     \end{pmatrix}   \label{8ThswELzXU3X7Ebd1KdZ7v1rN3GiirRXGKWK099ovBM0FDJCvkopYNQ2aN94Z7k0UnUKamE3OjU8DFYFFokbSI2J9V9gVlM8ALWThDPnPu3EL7HPD2VDaZTggzcCCmbvc70qqPcC9mt60ogcrTiA3HEjwTK8ymKeuJMc4q6dVz200XnYUtLR9GYjPXvFOVr6W1zUK1WbPToaWJJuKnxBLnd0ftDEbMmj4loHYyhZyMjM91zQS4p7z8eKa9h0JrbacekcirexG0z4n3252}    \end{align} and where $w$ satisfies  \begin{align}\thelt{ o7h hG wHtt bb4z 5qrcdc 9C n Amx qY6 m8u Gf 7DZQ 6FBU PPiOxg sQ 0 CZl PYP Ba7 5O iV6t ZOBp fYuNcb j4 V Upb TKX ZRJ f3 6EA0 LDgA dfdOpS bg 1 ynC PUV oRW xe WQMK Smuh 3JHqX1 5A P JJX 2v0 W6l m0 llC8 hlss 1NLWaN hR B Aqf Iuz kx2 sp 01oD rYsR ywFrNb z1 h Gpq 99F wUz lf cQkT sbCv GIIgmf Hh T rM1 ItD gCM zY ttQR jzFx XIgI7F MA p 1kl lwJ sGo dX AT2P goIp 9VonFk wZ V Qif q9C lAQ 4Y BwFR 4nCy RAg84M LJ u nx8 uKT F3F zl GEQt l32y 174wLX Zm 6 2xX 5xG oaC H}    &  w^{(n+1)}_{tt} +\Delta^{2}w^{(n+1)} - \nu \Delta_{2} w^{(n+1)}_{t}     =q^{(n+1)}   \inon{on $\Gamma_{1}  \times[0,T]$}
   .    \label{8ThswELzXU3X7Ebd1KdZ7v1rN3GiirRXGKWK099ovBM0FDJCvkopYNQ2aN94Z7k0UnUKamE3OjU8DFYFFokbSI2J9V9gVlM8ALWThDPnPu3EL7HPD2VDaZTggzcCCmbvc70qqPcC9mt60ogcrTiA3HEjwTK8ymKeuJMc4q6dVz200XnYUtLR9GYjPXvFOVr6W1zUK1WbPToaWJJuKnxBLnd0ftDEbMmj4loHYyhZyMjM91zQS4p7z8eKa9h0JrbacekcirexG0z4n3253}    \end{align} With the initial data as in Theorem~\ref{T05}, let   \begin{equation}     (w^{(n)}, w_{t}^{(n)}, w^{(n)}_{tt}) \in L^{\infty}([0,T]; H^{4+\delta}(\Gamma_{1}) \times  H^{2+\delta}(\Gamma_{1}) \times H^{\delta}(\Gamma_{1}) )    \llabel{8ThswELzXU3X7Ebd1KdZ7v1rN3GiirRXGKWK099ovBM0FDJCvkopYNQ2aN94Z7k0UnUKamE3OjU8DFYFFokbSI2J9V9gVlM8ALWThDPnPu3EL7HPD2VDaZTggzcCCmbvc70qqPcC9mt60ogcrTiA3HEjwTK8ymKeuJMc4q6dVz200XnYUtLR9GYjPXvFOVr6W1zUK1WbPToaWJJuKnxBLnd0ftDEbMmj4loHYyhZyMjM91zQS4p7z8eKa9h0JrbacekcirexG0z4n3254}   \end{equation}   with $w^{(n)}_{t} \in  L^{2}([0,T]; H^{3+\delta}(\Gamma_{1}))$ such that \eqref{8ThswELzXU3X7Ebd1KdZ7v1rN3GiirRXGKWK099ovBM0FDJCvkopYNQ2aN94Z7k0UnUKamE3OjU8DFYFFokbSI2J9V9gVlM8ALWThDPnPu3EL7HPD2VDaZTggzcCCmbvc70qqPcC9mt60ogcrTiA3HEjwTK8ymKeuJMc4q6dVz200XnYUtLR9GYjPXvFOVr6W1zUK1WbPToaWJJuKnxBLnd0ftDEbMmj4loHYyhZyMjM91zQS4p7z8eKa9h0JrbacekcirexG0z4n3196} and  the bound   \begin{align}\thelt{X 2v0 W6l m0 llC8 hlss 1NLWaN hR B Aqf Iuz kx2 sp 01oD rYsR ywFrNb z1 h Gpq 99F wUz lf cQkT sbCv GIIgmf Hh T rM1 ItD gCM zY ttQR jzFx XIgI7F MA p 1kl lwJ sGo dX AT2P goIp 9VonFk wZ V Qif q9C lAQ 4Y BwFR 4nCy RAg84M LJ u nx8 uKT F3F zl GEQt l32y 174wLX Zm 6 2xX 5xG oaC Hv gZFE myDI zj3q10 RZ r ssw ByA 2Wl OA DDDQ Vin8 PTFLGm wi 6 pgR ZQ6 A5T Ll mnFV tNiJ bnUkLy vq 9 zSB P6e JJq 7P 6RFa im6K XPWaxm 6W 7 fM8 3uK D6k Nj 7vhg 4ppZ 4ObMaS aP H 0oq xAB }   \Vert w^{(n)} \Vert_{ L^{\infty}([0,T]; H^{4+\delta}(\Gamma_{1}))}    + \Vert w^{(n)}_{t}  \Vert_{ L^{\infty}([0,T]; H^{2+\delta}(\Gamma_{1}))} 
   + \Vert w^{(n)}_{tt} \Vert_{ L^{\infty}([0,T]; H^{\delta}(\Gamma_{1}))}   \leq M   \llabel{8ThswELzXU3X7Ebd1KdZ7v1rN3GiirRXGKWK099ovBM0FDJCvkopYNQ2aN94Z7k0UnUKamE3OjU8DFYFFokbSI2J9V9gVlM8ALWThDPnPu3EL7HPD2VDaZTggzcCCmbvc70qqPcC9mt60ogcrTiA3HEjwTK8ymKeuJMc4q6dVz200XnYUtLR9GYjPXvFOVr6W1zUK1WbPToaWJJuKnxBLnd0ftDEbMmj4loHYyhZyMjM91zQS4p7z8eKa9h0JrbacekcirexG0z4n3255}   \end{align} holds, where  $M=C(\Vert v_{0} \Vert_{H^{2.5+\delta}} + \Vert w_{0} \Vert_{H^{4+\delta}}   +\Vert w_{1} \Vert_{H^{2+\delta}(\Gamma_1)})$ with a sufficiently large constant $C\geq1$. We now invoke Theorem~\ref{T04} to obtain $(v^{(n+1)},q^{(n+1)})$.  The functions $\psi^{(n)}$ and $a^{(n)}$ are obtained as in \eqref{8ThswELzXU3X7Ebd1KdZ7v1rN3GiirRXGKWK099ovBM0FDJCvkopYNQ2aN94Z7k0UnUKamE3OjU8DFYFFokbSI2J9V9gVlM8ALWThDPnPu3EL7HPD2VDaZTggzcCCmbvc70qqPcC9mt60ogcrTiA3HEjwTK8ymKeuJMc4q6dVz200XnYUtLR9GYjPXvFOVr6W1zUK1WbPToaWJJuKnxBLnd0ftDEbMmj4loHYyhZyMjM91zQS4p7z8eKa9h0JrbacekcirexG0z4n3252}, while $b^{(n)}=\UIOIUYOIUyHJGKHJLOIUYOIUOIUYOIYIOUYTIUYIOOOIUYOIUYPOIUPOIUPOIUYOIUYOIUYOIUHOUHOHIOUHOIHOIUHOIUHIOUH_{3} \psi^{(n)} a^{(n)}$. The coefficients $a^{(n)}$ and $b^{(n)}$ 
satisfy \eqref{8ThswELzXU3X7Ebd1KdZ7v1rN3GiirRXGKWK099ovBM0FDJCvkopYNQ2aN94Z7k0UnUKamE3OjU8DFYFFokbSI2J9V9gVlM8ALWThDPnPu3EL7HPD2VDaZTggzcCCmbvc70qqPcC9mt60ogcrTiA3HEjwTK8ymKeuJMc4q6dVz200XnYUtLR9GYjPXvFOVr6W1zUK1WbPToaWJJuKnxBLnd0ftDEbMmj4loHYyhZyMjM91zQS4p7z8eKa9h0JrbacekcirexG0z4n3197} and \eqref{8ThswELzXU3X7Ebd1KdZ7v1rN3GiirRXGKWK099ovBM0FDJCvkopYNQ2aN94Z7k0UnUKamE3OjU8DFYFFokbSI2J9V9gVlM8ALWThDPnPu3EL7HPD2VDaZTggzcCCmbvc70qqPcC9mt60ogcrTiA3HEjwTK8ymKeuJMc4q6dVz200XnYUtLR9GYjPXvFOVr6W1zUK1WbPToaWJJuKnxBLnd0ftDEbMmj4loHYyhZyMjM91zQS4p7z8eKa9h0JrbacekcirexG0z4n3198} for some time $T>0$  only depending on $M$ and the initial data. Hence, Theorem~\ref{T04} guarantees the existence of a solution $(v^{(n+1)}, q^{(n+1)}) \in  L^{\infty}([0,T];H^{2.5+ \delta}(\Omega) \times H^{1.5+ \delta}(\Omega))$  for a time $T>0$ depending on $M$ and $v_{0}$. Moreover, from \eqref{8ThswELzXU3X7Ebd1KdZ7v1rN3GiirRXGKWK099ovBM0FDJCvkopYNQ2aN94Z7k0UnUKamE3OjU8DFYFFokbSI2J9V9gVlM8ALWThDPnPu3EL7HPD2VDaZTggzcCCmbvc70qqPcC9mt60ogcrTiA3HEjwTK8ymKeuJMc4q6dVz200XnYUtLR9GYjPXvFOVr6W1zUK1WbPToaWJJuKnxBLnd0ftDEbMmj4loHYyhZyMjM91zQS4p7z8eKa9h0JrbacekcirexG0z4n3202} we have the estimate    \begin{align}\thelt{ V Qif q9C lAQ 4Y BwFR 4nCy RAg84M LJ u nx8 uKT F3F zl GEQt l32y 174wLX Zm 6 2xX 5xG oaC Hv gZFE myDI zj3q10 RZ r ssw ByA 2Wl OA DDDQ Vin8 PTFLGm wi 6 pgR ZQ6 A5T Ll mnFV tNiJ bnUkLy vq 9 zSB P6e JJq 7P 6RFa im6K XPWaxm 6W 7 fM8 3uK D6k Nj 7vhg 4ppZ 4ObMaS aP H 0oq xAB G8v qr qT6Q iRGH BCCN1Z bl T Y4z q8l FqL Ck ghxD UuZw 7MXCD4 ps Z cEX 9Rl Cwf 0C CG8b gFti Uv3mQe LW J oyF kv6 hcS nM mKbi QukL FpYAqo 5F j f9R RRt qS6 XW VoIY VDMl a5c7cW KJ L Uqc}    \Vert v^{(n+1)}(t) \Vert_{H^{2.5+\delta}}      + \Vert \nabla q^{(n+1)}(t) \Vert_{H^{0.5+\delta}}      \dlkjfhlaskdhjflkasdjhflkasjhdflkasjhdflkasjhdfls \Vert v_{0} \Vert_{H^{2.5+\delta}}      + \OIUYJHUGFAJKLDHFKJLSDHFLKSDJFHLKSDJHFLKSDJHFLKDJFHLLDKHFLKSDHJFALKJHLJLHGLKHHLKJHLKGKHGJKHGKJHLKHJLKJH_{0}^{t}        P(          \Vert w^{(n)} \Vert_{H^{4+\delta}(\Gamma_{1})}, 	 \Vert w^{(n)}_{t} \Vert_{H^{2+\delta}(\Gamma_{1})}
       )\, ds   .   \label{8ThswELzXU3X7Ebd1KdZ7v1rN3GiirRXGKWK099ovBM0FDJCvkopYNQ2aN94Z7k0UnUKamE3OjU8DFYFFokbSI2J9V9gVlM8ALWThDPnPu3EL7HPD2VDaZTggzcCCmbvc70qqPcC9mt60ogcrTiA3HEjwTK8ymKeuJMc4q6dVz200XnYUtLR9GYjPXvFOVr6W1zUK1WbPToaWJJuKnxBLnd0ftDEbMmj4loHYyhZyMjM91zQS4p7z8eKa9h0JrbacekcirexG0z4n3256}   \end{align} Invoking Lemma~\ref{L05}, we then solve the plate equation  and  obtain $(w^{(n+1)}, w_{t}^{(n+1)})$ given  $q^{(n+1)}$. We need to adjust the pressure by an appropriate function of time to insure $w_{t}^{(n+1)}$ satisfies the compatibility condition~\eqref{8ThswELzXU3X7Ebd1KdZ7v1rN3GiirRXGKWK099ovBM0FDJCvkopYNQ2aN94Z7k0UnUKamE3OjU8DFYFFokbSI2J9V9gVlM8ALWThDPnPu3EL7HPD2VDaZTggzcCCmbvc70qqPcC9mt60ogcrTiA3HEjwTK8ymKeuJMc4q6dVz200XnYUtLR9GYjPXvFOVr6W1zUK1WbPToaWJJuKnxBLnd0ftDEbMmj4loHYyhZyMjM91zQS4p7z8eKa9h0JrbacekcirexG0z4n3196}.   To achieve this, we adjust $q^{(n+1)}$ with an additive function of time such that  $\OIUYJHUGFAJKLDHFKJLSDHFLKSDJFHLKSDJHFLKSDJHFLKDJFHLLDKHFLKSDHJFALKJHLJLHGLKHHLKJHLKGKHGJKHGKJHLKHJLKJH_{\Gamma_{1}} q^{(n+1)} =0$.  Since $w^{(n+1)}$ and $w^{(n+1)}_{t}$ are periodic, we have $\OIUYJHUGFAJKLDHFKJLSDHFLKSDJFHLKSDJHFLKSDJHFLKDJFHLLDKHFLKSDHJFALKJHLJLHGLKHHLKJHLKGKHGJKHGKJHLKHJLKJH_{\Gamma_{1}} \Delta_2^{2} w^{(n+1)}=-\OIUYJHUGFAJKLDHFKJLSDHFLKSDJFHLKSDJHFLKSDJHFLKDJFHLLDKHFLKSDHJFALKJHLJLHGLKHHLKJHLKGKHGJKHGKJHLKHJLKJH_{\Gamma_{1}} \Delta_{2} w^{(n+1)}_{t} =0$ and
hence, by \eqref{8ThswELzXU3X7Ebd1KdZ7v1rN3GiirRXGKWK099ovBM0FDJCvkopYNQ2aN94Z7k0UnUKamE3OjU8DFYFFokbSI2J9V9gVlM8ALWThDPnPu3EL7HPD2VDaZTggzcCCmbvc70qqPcC9mt60ogcrTiA3HEjwTK8ymKeuJMc4q6dVz200XnYUtLR9GYjPXvFOVr6W1zUK1WbPToaWJJuKnxBLnd0ftDEbMmj4loHYyhZyMjM91zQS4p7z8eKa9h0JrbacekcirexG0z4n3253}, we obtain  $   \OIUYJHUGFAJKLDHFKJLSDHFLKSDJFHLKSDJHFLKSDJHFLKDJFHLLDKHFLKSDHJFALKJHLJLHGLKHHLKJHLKGKHGJKHGKJHLKHJLKJH_{\Gamma_{1}} w^{(n+1)}_{tt} =0 $.    From $\OIUYJHUGFAJKLDHFKJLSDHFLKSDJFHLKSDJHFLKSDJHFLKDJFHLLDKHFLKSDHJFALKJHLJLHGLKHHLKJHLKGKHGJKHGKJHLKHJLKJH_{\Gamma_{1}} w^{(n+1)}_{tt} =0$ and $\OIUYJHUGFAJKLDHFKJLSDHFLKSDJFHLKSDJHFLKSDJHFLKDJFHLLDKHFLKSDHJFALKJHLJLHGLKHHLKJHLKGKHGJKHGKJHLKHJLKJH_{\Gamma_{1}} w_{1} =0$, we obtain   \begin{equation}    \OIUYJHUGFAJKLDHFKJLSDHFLKSDJFHLKSDJHFLKSDJHFLKDJFHLLDKHFLKSDHJFALKJHLJLHGLKHHLKJHLKGKHGJKHGKJHLKHJLKJH_{\Gamma_{1}} w^{(n+1)}_{t} =0    .       \llabel{8ThswELzXU3X7Ebd1KdZ7v1rN3GiirRXGKWK099ovBM0FDJCvkopYNQ2aN94Z7k0UnUKamE3OjU8DFYFFokbSI2J9V9gVlM8ALWThDPnPu3EL7HPD2VDaZTggzcCCmbvc70qqPcC9mt60ogcrTiA3HEjwTK8ymKeuJMc4q6dVz200XnYUtLR9GYjPXvFOVr6W1zUK1WbPToaWJJuKnxBLnd0ftDEbMmj4loHYyhZyMjM91zQS4p7z8eKa9h0JrbacekcirexG0z4n3257}   \end{equation} Also, from \eqref{8ThswELzXU3X7Ebd1KdZ7v1rN3GiirRXGKWK099ovBM0FDJCvkopYNQ2aN94Z7k0UnUKamE3OjU8DFYFFokbSI2J9V9gVlM8ALWThDPnPu3EL7HPD2VDaZTggzcCCmbvc70qqPcC9mt60ogcrTiA3HEjwTK8ymKeuJMc4q6dVz200XnYUtLR9GYjPXvFOVr6W1zUK1WbPToaWJJuKnxBLnd0ftDEbMmj4loHYyhZyMjM91zQS4p7z8eKa9h0JrbacekcirexG0z4n3235} we have the estimate   \begin{align}\thelt{Ly vq 9 zSB P6e JJq 7P 6RFa im6K XPWaxm 6W 7 fM8 3uK D6k Nj 7vhg 4ppZ 4ObMaS aP H 0oq xAB G8v qr qT6Q iRGH BCCN1Z bl T Y4z q8l FqL Ck ghxD UuZw 7MXCD4 ps Z cEX 9Rl Cwf 0C CG8b gFti Uv3mQe LW J oyF kv6 hcS nM mKbi QukL FpYAqo 5F j f9R RRt qS6 XW VoIY VDMl a5c7cW KJ L Uqc vti IOe VC U7xJ dC5W 5bk3fQ by Z jtU Dme gbg I1 79dl U3u3 cvWoAI ow b EZ0 xP2 FBM Sw azV1 XfzV i97mmy 5s T JK0 hz9 O6p Da Gcty tmHT DYxTUB AL N vQe fRQ uF2 Oy okVs LJwd qgDhTT Je }
  \begin{split}    &\Vert w^{(n+1)} \Vert_{ L^{\infty}([0,T]; H^{4+\delta}(\Gamma_{1}))}    + \Vert w^{(n+1)}_{t} \Vert_{ L^{\infty}([0,T]; H^{2+\delta}(\Gamma_{1}))}    \\&\indeq    \dlkjfhlaskdhjflkasdjhflkasjhdflkasjhdflkasjhdfls  \Vert w_{0} \Vert_{H^{4+\delta}(\Gamma_1)}               +\Vert w_{1} \Vert_{H^{2+\delta}(\Gamma_1)}               + \Vert q^{(n+1)} \Vert_{L^{2}([0,T]; H^{1+\delta}( \Gamma_{1}))}         .   \end{split}   \label{8ThswELzXU3X7Ebd1KdZ7v1rN3GiirRXGKWK099ovBM0FDJCvkopYNQ2aN94Z7k0UnUKamE3OjU8DFYFFokbSI2J9V9gVlM8ALWThDPnPu3EL7HPD2VDaZTggzcCCmbvc70qqPcC9mt60ogcrTiA3HEjwTK8ymKeuJMc4q6dVz200XnYUtLR9GYjPXvFOVr6W1zUK1WbPToaWJJuKnxBLnd0ftDEbMmj4loHYyhZyMjM91zQS4p7z8eKa9h0JrbacekcirexG0z4n3258}   \end{align}  Since $\OIUYJHUGFAJKLDHFKJLSDHFLKSDJFHLKSDJHFLKSDJHFLKDJFHLLDKHFLKSDHJFALKJHLJLHGLKHHLKJHLKGKHGJKHGKJHLKHJLKJH_{\Gamma_1}q^{(n+1)}=0$, we have   \begin{align}\thelt{ Uv3mQe LW J oyF kv6 hcS nM mKbi QukL FpYAqo 5F j f9R RRt qS6 XW VoIY VDMl a5c7cW KJ L Uqc vti IOe VC U7xJ dC5W 5bk3fQ by Z jtU Dme gbg I1 79dl U3u3 cvWoAI ow b EZ0 xP2 FBM Sw azV1 XfzV i97mmy 5s T JK0 hz9 O6p Da Gcty tmHT DYxTUB AL N vQe fRQ uF2 Oy okVs LJwd qgDhTT Je R 7Cu Pcz NLV j1 HKml 8mwL Fr8Gz6 6n 4 uA9 YTt 9oi JG clm0 EckA 9zkElO B9 J s7G fwh qyg lc 2RQ9 d52a YQvC8A rK 7 aCL mEN PYd 27 XImG C6L9 gOfyL0 5H M tgR 65l BCs WG wFKG BIQi IRBiT}   \begin{split}
    & \Vert q^{(n+1)} \Vert_{L^{2}([0,T]; H^{1+\delta}( \Gamma_{1}))}   \dlkjfhlaskdhjflkasdjhflkasjhdflkasjhdflkasjhdfls \Vert \nabla q^{(n+1)} \Vert_{L^{2}([0,T]; H^{0.5+\delta}( \Omega))}   ,   \end{split}   \llabel{8ThswELzXU3X7Ebd1KdZ7v1rN3GiirRXGKWK099ovBM0FDJCvkopYNQ2aN94Z7k0UnUKamE3OjU8DFYFFokbSI2J9V9gVlM8ALWThDPnPu3EL7HPD2VDaZTggzcCCmbvc70qqPcC9mt60ogcrTiA3HEjwTK8ymKeuJMc4q6dVz200XnYUtLR9GYjPXvFOVr6W1zUK1WbPToaWJJuKnxBLnd0ftDEbMmj4loHYyhZyMjM91zQS4p7z8eKa9h0JrbacekcirexG0z4n3259}    \end{align} where we used the standard trace inequality. We now estimate the pressure term using \eqref{8ThswELzXU3X7Ebd1KdZ7v1rN3GiirRXGKWK099ovBM0FDJCvkopYNQ2aN94Z7k0UnUKamE3OjU8DFYFFokbSI2J9V9gVlM8ALWThDPnPu3EL7HPD2VDaZTggzcCCmbvc70qqPcC9mt60ogcrTiA3HEjwTK8ymKeuJMc4q6dVz200XnYUtLR9GYjPXvFOVr6W1zUK1WbPToaWJJuKnxBLnd0ftDEbMmj4loHYyhZyMjM91zQS4p7z8eKa9h0JrbacekcirexG0z4n3256}, so that \eqref{8ThswELzXU3X7Ebd1KdZ7v1rN3GiirRXGKWK099ovBM0FDJCvkopYNQ2aN94Z7k0UnUKamE3OjU8DFYFFokbSI2J9V9gVlM8ALWThDPnPu3EL7HPD2VDaZTggzcCCmbvc70qqPcC9mt60ogcrTiA3HEjwTK8ymKeuJMc4q6dVz200XnYUtLR9GYjPXvFOVr6W1zUK1WbPToaWJJuKnxBLnd0ftDEbMmj4loHYyhZyMjM91zQS4p7z8eKa9h0JrbacekcirexG0z4n3258} becomes \begin{align}\thelt{ XfzV i97mmy 5s T JK0 hz9 O6p Da Gcty tmHT DYxTUB AL N vQe fRQ uF2 Oy okVs LJwd qgDhTT Je R 7Cu Pcz NLV j1 HKml 8mwL Fr8Gz6 6n 4 uA9 YTt 9oi JG clm0 EckA 9zkElO B9 J s7G fwh qyg lc 2RQ9 d52a YQvC8A rK 7 aCL mEN PYd 27 XImG C6L9 gOfyL0 5H M tgR 65l BCs WG wFKG BIQi IRBiT9 5N 7 8wn cbk 7EF ei BRB2 16Si HoHJSk Ng x qup JmZ 1px Eb Wcwi JX5N fiYPGD 6u W sXT P94 uaF VD ZuhJ H2d0 PLOY24 3x M K47 VP6 FTy T3 5zpL xRC6 tN89as 3k u 8eG rdM KWo MI U946 FBjk }   \begin{split}     & \Vert w^{(n+1)} \Vert_{ L^{\infty}([0,T]; H^{4+\delta}(\Gamma_{1}))}+ \Vert w^{(n+1)}_{t} \Vert_{ L^{\infty}([0,T]; H^{2+\delta}(\Gamma_{1}))}     \\&\indeq     \dlkjfhlaskdhjflkasdjhflkasjhdflkasjhdflkasjhdfls       \Vert w_{0} \Vert_{H^{4+\delta}(\Gamma_1)}       +\Vert w_{1} \Vert_{H^{2+\delta}(\Gamma_1)} 
     + T^{1/2}\Vert v_{0} \Vert_{H^{2.5+\delta}}     + T^{1/2} \OIUYJHUGFAJKLDHFKJLSDHFLKSDJFHLKSDJHFLKSDJHFLKDJFHLLDKHFLKSDHJFALKJHLJLHGLKHHLKJHLKGKHGJKHGKJHLKHJLKJH_{0}^{t}          P( 	    \Vert w^{(n)} \Vert_{H^{4+\delta}(\Gamma_{1})}, 	    \Vert w^{(n)}_{t} \Vert_{H^{2+\delta}(\Gamma_{1})} 	  )\, ds     .   \end{split}   \label{8ThswELzXU3X7Ebd1KdZ7v1rN3GiirRXGKWK099ovBM0FDJCvkopYNQ2aN94Z7k0UnUKamE3OjU8DFYFFokbSI2J9V9gVlM8ALWThDPnPu3EL7HPD2VDaZTggzcCCmbvc70qqPcC9mt60ogcrTiA3HEjwTK8ymKeuJMc4q6dVz200XnYUtLR9GYjPXvFOVr6W1zUK1WbPToaWJJuKnxBLnd0ftDEbMmj4loHYyhZyMjM91zQS4p7z8eKa9h0JrbacekcirexG0z4n3260}   \end{align}  From \eqref{8ThswELzXU3X7Ebd1KdZ7v1rN3GiirRXGKWK099ovBM0FDJCvkopYNQ2aN94Z7k0UnUKamE3OjU8DFYFFokbSI2J9V9gVlM8ALWThDPnPu3EL7HPD2VDaZTggzcCCmbvc70qqPcC9mt60ogcrTiA3HEjwTK8ymKeuJMc4q6dVz200XnYUtLR9GYjPXvFOVr6W1zUK1WbPToaWJJuKnxBLnd0ftDEbMmj4loHYyhZyMjM91zQS4p7z8eKa9h0JrbacekcirexG0z4n3260}, it is standard to obtain   \begin{align}\thelt{ 2RQ9 d52a YQvC8A rK 7 aCL mEN PYd 27 XImG C6L9 gOfyL0 5H M tgR 65l BCs WG wFKG BIQi IRBiT9 5N 7 8wn cbk 7EF ei BRB2 16Si HoHJSk Ng x qup JmZ 1px Eb Wcwi JX5N fiYPGD 6u W sXT P94 uaF VD ZuhJ H2d0 PLOY24 3x M K47 VP6 FTy T3 5zpL xRC6 tN89as 3k u 8eG rdM KWo MI U946 FBjk sOTe0U xZ D 4av bTw 5mQ 3R y9Af JFjP gvLFKz 0o l fZd j3O 07E av pWfb M3rB GSyOiu xp I 4o8 2JJ 42X 1G Iux8 QFh3 PhRtY9 vj i SL6 x76 W9y 2Z z3YA SGRM p7kDhr gm a 8fW GG0 qKL sO 5oQr }     \Vert w^{(n+1)} \Vert_{ L^{\infty}([0,T]; H^{4+\delta}(\Gamma_{1}))}        + \Vert w^{(n+1)}_{t} \Vert_{ L^{\infty}([0,T]; H^{2+\delta}(\Gamma_{1}))} 
    \leq M = C M_0     ,   \llabel{8ThswELzXU3X7Ebd1KdZ7v1rN3GiirRXGKWK099ovBM0FDJCvkopYNQ2aN94Z7k0UnUKamE3OjU8DFYFFokbSI2J9V9gVlM8ALWThDPnPu3EL7HPD2VDaZTggzcCCmbvc70qqPcC9mt60ogcrTiA3HEjwTK8ymKeuJMc4q6dVz200XnYUtLR9GYjPXvFOVr6W1zUK1WbPToaWJJuKnxBLnd0ftDEbMmj4loHYyhZyMjM91zQS4p7z8eKa9h0JrbacekcirexG0z4n3261}   \end{align} where   \begin{equation}     M_0=     \Vert w_{0} \Vert_{H^{4+\delta}(\Gamma_1)}       +\Vert w_{1} \Vert_{H^{2+\delta}(\Gamma_1)}      ,    \llabel{8ThswELzXU3X7Ebd1KdZ7v1rN3GiirRXGKWK099ovBM0FDJCvkopYNQ2aN94Z7k0UnUKamE3OjU8DFYFFokbSI2J9V9gVlM8ALWThDPnPu3EL7HPD2VDaZTggzcCCmbvc70qqPcC9mt60ogcrTiA3HEjwTK8ymKeuJMc4q6dVz200XnYUtLR9GYjPXvFOVr6W1zUK1WbPToaWJJuKnxBLnd0ftDEbMmj4loHYyhZyMjM91zQS4p7z8eKa9h0JrbacekcirexG0z4n3246}   \end{equation} provided $T$ is chosen so that $T\leq 1/P(M_0)$, where $P_0$ is a certain polynomial depending on $P$ in \eqref{8ThswELzXU3X7Ebd1KdZ7v1rN3GiirRXGKWK099ovBM0FDJCvkopYNQ2aN94Z7k0UnUKamE3OjU8DFYFFokbSI2J9V9gVlM8ALWThDPnPu3EL7HPD2VDaZTggzcCCmbvc70qqPcC9mt60ogcrTiA3HEjwTK8ymKeuJMc4q6dVz200XnYUtLR9GYjPXvFOVr6W1zUK1WbPToaWJJuKnxBLnd0ftDEbMmj4loHYyhZyMjM91zQS4p7z8eKa9h0JrbacekcirexG0z4n3260}.
This estimate establishes the iteration map, which takes a ball of appropriate size $M$ into itself. \par Now we proceed by obtaining a contraction estimate for the sequence. We denote the differences between  two iterates  by $W^{(n+1)}= w^{(n+1)} -w^{(n)}$,  $A^{(n+1)}= a^{(n+1)} -a^{(n)}$, $B^{(n+1)}= b^{(n+1)} -b^{(n)}$,  $V^{(n+1)}= v^{(n+1)} -v^{(n)}$,  and $Q^{(n+1)}= q^{(n+1)} -q^{(n)}$. Consider the vorticity formulation of the $(n+1)$-th iterate, which reads   \begin{align}\thelt{aF VD ZuhJ H2d0 PLOY24 3x M K47 VP6 FTy T3 5zpL xRC6 tN89as 3k u 8eG rdM KWo MI U946 FBjk sOTe0U xZ D 4av bTw 5mQ 3R y9Af JFjP gvLFKz 0o l fZd j3O 07E av pWfb M3rB GSyOiu xp I 4o8 2JJ 42X 1G Iux8 QFh3 PhRtY9 vj i SL6 x76 W9y 2Z z3YA SGRM p7kDhr gm a 8fW GG0 qKL sO 5oQr 42t1 jP1crM 2f C lRb ETd qra 5l VG1l Kitb XqbdPK ca U V0l v4L alo 8V TXcl aUqh 5GWCzA nR n lNN cmw aF8 Er bwX3 2rji Hleb4g XS j LRO JgG 2yb 8O CAxN 4uy4 RsLQjD 7U 7 enw cYC nZx iK }   &\UIOIUYOIUyHJGKHJLOIUYOIUOIUYOIYIOUYTIUYIOOOIUYOIUYPOIUPOIUPOIUYOIUYOIUYOIUHOUHOHIOUHOIHOIUHOIUHIOUH_{t} \zeta^{(n+1)}_i     + v^{(n+1)}_k a^{(n)}_{jk} \UIOIUYOIUyHJGKHJLOIUYOIUOIUYOIYIOUYTIUYIOOOIUYOIUYPOIUPOIUPOIUYOIUYOIUYOIUHOUHOHIOUHOIHOIUHOIUHIOUH_{j} \zeta^{(n+1)}_i     -\psi^{(n)}_t a^{(n)}_{33} \UIOIUYOIUyHJGKHJLOIUYOIUOIUYOIYIOUYTIUYIOOOIUYOIUYPOIUPOIUPOIUYOIUYOIUYOIUHOUHOHIOUHOIHOIUHOIUHIOUH_{3} \zeta^{(n+1)}_i      - \zeta^{(n+1)}_k a^{(n)}_{jk} \UIOIUYOIUyHJGKHJLOIUYOIUOIUYOIYIOUYTIUYIOOOIUYOIUYPOIUPOIUPOIUYOIUYOIUYOIUHOUHOHIOUHOIHOIUHOIUHIOUH_{j} v^{(n+1)}_i
    =0     \inon{in $\Omega$}   \llabel{8ThswELzXU3X7Ebd1KdZ7v1rN3GiirRXGKWK099ovBM0FDJCvkopYNQ2aN94Z7k0UnUKamE3OjU8DFYFFokbSI2J9V9gVlM8ALWThDPnPu3EL7HPD2VDaZTggzcCCmbvc70qqPcC9mt60ogcrTiA3HEjwTK8ymKeuJMc4q6dVz200XnYUtLR9GYjPXvFOVr6W1zUK1WbPToaWJJuKnxBLnd0ftDEbMmj4loHYyhZyMjM91zQS4p7z8eKa9h0JrbacekcirexG0z4n3262}      \\&      \epsilon_{ijk} a^{(n)}_{mj}\UIOIUYOIUyHJGKHJLOIUYOIUOIUYOIYIOUYTIUYIOOOIUYOIUYPOIUPOIUPOIUYOIUYOIUYOIUHOUHOHIOUHOIHOIUHOIUHIOUH_{m} v^{(n+1)}_{k}     =\zeta^{(n+1)}_{i}      \inon{in $\Omega$}   \llabel{8ThswELzXU3X7Ebd1KdZ7v1rN3GiirRXGKWK099ovBM0FDJCvkopYNQ2aN94Z7k0UnUKamE3OjU8DFYFFokbSI2J9V9gVlM8ALWThDPnPu3EL7HPD2VDaZTggzcCCmbvc70qqPcC9mt60ogcrTiA3HEjwTK8ymKeuJMc4q6dVz200XnYUtLR9GYjPXvFOVr6W1zUK1WbPToaWJJuKnxBLnd0ftDEbMmj4loHYyhZyMjM91zQS4p7z8eKa9h0JrbacekcirexG0z4n3263}      \\&     a^{(n)}_{mj}\UIOIUYOIUyHJGKHJLOIUYOIUOIUYOIYIOUYTIUYIOOOIUYOIUYPOIUPOIUPOIUYOIUYOIUYOIUHOUHOHIOUHOIHOIUHOIUHIOUH_{m} v^{(n+1)}_{j} =0       \inon{in $\Omega$}    \llabel{8ThswELzXU3X7Ebd1KdZ7v1rN3GiirRXGKWK099ovBM0FDJCvkopYNQ2aN94Z7k0UnUKamE3OjU8DFYFFokbSI2J9V9gVlM8ALWThDPnPu3EL7HPD2VDaZTggzcCCmbvc70qqPcC9mt60ogcrTiA3HEjwTK8ymKeuJMc4q6dVz200XnYUtLR9GYjPXvFOVr6W1zUK1WbPToaWJJuKnxBLnd0ftDEbMmj4loHYyhZyMjM91zQS4p7z8eKa9h0JrbacekcirexG0z4n3264}        \\     & b^{(n)}_{3j} v^{(n+1)}_{j} -\psi^{(n)}_{t}=0  
    \inon{on $\Gamma_0 \cup \Gamma_1$}    .    \llabel{8ThswELzXU3X7Ebd1KdZ7v1rN3GiirRXGKWK099ovBM0FDJCvkopYNQ2aN94Z7k0UnUKamE3OjU8DFYFFokbSI2J9V9gVlM8ALWThDPnPu3EL7HPD2VDaZTggzcCCmbvc70qqPcC9mt60ogcrTiA3HEjwTK8ymKeuJMc4q6dVz200XnYUtLR9GYjPXvFOVr6W1zUK1WbPToaWJJuKnxBLnd0ftDEbMmj4loHYyhZyMjM91zQS4p7z8eKa9h0JrbacekcirexG0z4n3265}   \end{align} We now consider the equation satisfied by the difference $Z^{(n+1)}= \zeta^{(n+1)} - \zeta^{(n)}$. Denote the corresponding extension  defined on ${\mathbb T}^2\times{\mathbb R}$ by  $\Theta^{(n+1)}= \theta^{(n+1)} - \theta^{(n)}$. Then we have   \begin{align}\thelt{2JJ 42X 1G Iux8 QFh3 PhRtY9 vj i SL6 x76 W9y 2Z z3YA SGRM p7kDhr gm a 8fW GG0 qKL sO 5oQr 42t1 jP1crM 2f C lRb ETd qra 5l VG1l Kitb XqbdPK ca U V0l v4L alo 8V TXcl aUqh 5GWCzA nR n lNN cmw aF8 Er bwX3 2rji Hleb4g XS j LRO JgG 2yb 8O CAxN 4uy4 RsLQjD 7U 7 enw cYC nZx iK dju7 4vpj BKKjRR l3 6 kXX zvn X2J rD 8aPD UWGs tgb8CT WY n HRs 6y6 JCp 8L x1jz CI1m tG26y5 zr J 1nF hX6 7wC zq F8uZ QIS0 dnYxPe XD y jBz 1aY wzD Xa xaMI ZzJ3 C3QRra hp w 8sW Lxr As}   \begin{split}      &     \UIOIUYOIUyHJGKHJLOIUYOIUOIUYOIYIOUYTIUYIOOOIUYOIUYPOIUPOIUPOIUYOIUYOIUYOIUHOUHOHIOUHOIHOIUHOIUHIOUH_{t} \Theta^{(n+1)}_i
      + V^{(n+1)}_k a^{(n)}_{jk} \UIOIUYOIUyHJGKHJLOIUYOIUOIUYOIYIOUYTIUYIOOOIUYOIUYPOIUPOIUPOIUYOIUYOIUYOIUHOUHOHIOUHOIHOIUHOIUHIOUH_{j} \theta^{(n+1)}_i       + v^{(n)}_k A^{(n)}_{jk} \UIOIUYOIUyHJGKHJLOIUYOIUOIUYOIYIOUYTIUYIOOOIUYOIUYPOIUPOIUPOIUYOIUYOIUYOIUHOUHOHIOUHOIHOIUHOIUHIOUH_{j} \theta^{(n+1)}_i       + v^{(n)}_k a^{(n-1)}_{jk} \UIOIUYOIUyHJGKHJLOIUYOIUOIUYOIYIOUYTIUYIOOOIUYOIUYPOIUPOIUPOIUYOIUYOIUYOIUHOUHOHIOUHOIHOIUHOIUHIOUH_{j} \Theta^{(n+1)}_i        \\&\indeq\indeq        -\Psi^{(n)}_t a^{(n)}_{33} \UIOIUYOIUyHJGKHJLOIUYOIUOIUYOIYIOUYTIUYIOOOIUYOIUYPOIUPOIUPOIUYOIUYOIUYOIUHOUHOHIOUHOIHOIUHOIUHIOUH_{3} \theta^{(n+1)}_i          -\psi^{(n-1)}_t A^{(n)}_{33} \UIOIUYOIUyHJGKHJLOIUYOIUOIUYOIYIOUYTIUYIOOOIUYOIUYPOIUPOIUPOIUYOIUYOIUYOIUHOUHOHIOUHOIHOIUHOIUHIOUH_{3} \theta^{(n+1)}_i          -\psi^{(n-1)}_t a^{(n-1)}_{33} \UIOIUYOIUyHJGKHJLOIUYOIUOIUYOIYIOUYTIUYIOOOIUYOIUYPOIUPOIUPOIUYOIUYOIUYOIUHOUHOHIOUHOIHOIUHOIUHIOUH_{3} \Theta^{(n+1)}_i        \\&\indeq\indeq         - \Theta^{(n+1)}_k a^{(n)}_{jk} \UIOIUYOIUyHJGKHJLOIUYOIUOIUYOIYIOUYTIUYIOOOIUYOIUYPOIUPOIUPOIUYOIUYOIUYOIUHOUHOHIOUHOIHOIUHOIUHIOUH_{j} v^{(n+1)}_i        - \theta^{(n)}_k A^{(n)}_{jk} \UIOIUYOIUyHJGKHJLOIUYOIUOIUYOIYIOUYTIUYIOOOIUYOIUYPOIUPOIUPOIUYOIUYOIUYOIUHOUHOHIOUHOIHOIUHOIUHIOUH_{j} v^{(n+1)}_i        - \theta^{(n)}_k a^{(n-1)}_{jk} \UIOIUYOIUyHJGKHJLOIUYOIUOIUYOIYIOUYTIUYIOOOIUYOIUYPOIUPOIUPOIUYOIUYOIUYOIUHOUHOHIOUHOIHOIUHOIUHIOUH_{j} V^{(n+1)}_i     =0      \inon{in $\Omega$}     .
    \end{split}   \llabel{8ThswELzXU3X7Ebd1KdZ7v1rN3GiirRXGKWK099ovBM0FDJCvkopYNQ2aN94Z7k0UnUKamE3OjU8DFYFFokbSI2J9V9gVlM8ALWThDPnPu3EL7HPD2VDaZTggzcCCmbvc70qqPcC9mt60ogcrTiA3HEjwTK8ymKeuJMc4q6dVz200XnYUtLR9GYjPXvFOVr6W1zUK1WbPToaWJJuKnxBLnd0ftDEbMmj4loHYyhZyMjM91zQS4p7z8eKa9h0JrbacekcirexG0z4n3266}   \end{align} Using the same estimates of Section~\ref{sec08},  we obtain the inequality \eqref{8ThswELzXU3X7Ebd1KdZ7v1rN3GiirRXGKWK099ovBM0FDJCvkopYNQ2aN94Z7k0UnUKamE3OjU8DFYFFokbSI2J9V9gVlM8ALWThDPnPu3EL7HPD2VDaZTggzcCCmbvc70qqPcC9mt60ogcrTiA3HEjwTK8ymKeuJMc4q6dVz200XnYUtLR9GYjPXvFOVr6W1zUK1WbPToaWJJuKnxBLnd0ftDEbMmj4loHYyhZyMjM91zQS4p7z8eKa9h0JrbacekcirexG0z4n3187},  with the constants depending on $M$, i.e., \begin{align}\thelt{ lNN cmw aF8 Er bwX3 2rji Hleb4g XS j LRO JgG 2yb 8O CAxN 4uy4 RsLQjD 7U 7 enw cYC nZx iK dju7 4vpj BKKjRR l3 6 kXX zvn X2J rD 8aPD UWGs tgb8CT WY n HRs 6y6 JCp 8L x1jz CI1m tG26y5 zr J 1nF hX6 7wC zq F8uZ QIS0 dnYxPe XD y jBz 1aY wzD Xa xaMI ZzJ3 C3QRra hp w 8sW Lxr AsS qZ P5Wv v1QF 7JPAVQ wu W u69 YLw NHU PJ 0wjs 7RSi VaPrEG gx Y aVm Sk3 Yo1 wL n0q0 PVeX rzoCIH 7v x q5z tOm q6m p4 drAp dzhw SOlRPD ps C lr8 FoZ UG7 vD UYhb ScJ6 gJb8Q8 em G 2JG 9}    \begin{split}      \Vert \Theta^{(n+1)}(t)\Vert_{H^{0.5+\delta}}       &\leq        P(M) \OIUYJHUGFAJKLDHFKJLSDHFLKSDJFHLKSDJHFLKSDJHFLKDJFHLLDKHFLKSDHJFALKJHLJLHGLKHHLKJHLKGKHGJKHGKJHLKHJLKJH_{0}^{t}    \bigl(     \Vert V^{(n+1)} \Vert_{H^{1.5+\delta}}     +     \Vert \Theta^{(n+1)} \Vert_{H^{0.5+\delta}}
    \\&\indeq\indeq\indeq\indeq\indeq\indeq\indeq\indeq\indeq\indeq     +     \Vert W^{(n)} \Vert_{H^{3+\delta}(\Gamma_1)}     +     \Vert W^{(n)}_t\Vert_{H^{1+\delta}(\Gamma_1)}    \bigr)\,ds    .    \end{split}    \llabel{8ThswELzXU3X7Ebd1KdZ7v1rN3GiirRXGKWK099ovBM0FDJCvkopYNQ2aN94Z7k0UnUKamE3OjU8DFYFFokbSI2J9V9gVlM8ALWThDPnPu3EL7HPD2VDaZTggzcCCmbvc70qqPcC9mt60ogcrTiA3HEjwTK8ymKeuJMc4q6dVz200XnYUtLR9GYjPXvFOVr6W1zUK1WbPToaWJJuKnxBLnd0ftDEbMmj4loHYyhZyMjM91zQS4p7z8eKa9h0JrbacekcirexG0z4n3267}   \end{align} Choosing $T$ sufficiently small compared to $P(M)$, we may absorb the term containing $\Theta^{(n+1)}$ and obtain   \begin{align}\thelt{ zr J 1nF hX6 7wC zq F8uZ QIS0 dnYxPe XD y jBz 1aY wzD Xa xaMI ZzJ3 C3QRra hp w 8sW Lxr AsS qZ P5Wv v1QF 7JPAVQ wu W u69 YLw NHU PJ 0wjs 7RSi VaPrEG gx Y aVm Sk3 Yo1 wL n0q0 PVeX rzoCIH 7v x q5z tOm q6m p4 drAp dzhw SOlRPD ps C lr8 FoZ UG7 vD UYhb ScJ6 gJb8Q8 em G 2JG 9Oj a83 ow Ywjo zLa3 DB500s iG j EHo lPu qe4 p7 T1kQ JmU6 cHnOo2 9o r oOz Ta3 j31 n8 mDL7 CIvC pKZUs0 jV r b7v HIH 7NT tY Y7JK vVdG LhA1ON CW o QW1 fvj mlH 7l SlIm 8T1Q SdUWhT iM P }    \begin{split}      \Vert \Theta^{(n+1)}\Vert_{L^\infty H^{0.5+\delta}}       &\leq        T P(M) 
   \bigl(     \Vert V^{(n+1)} \Vert_{L^\infty H^{1.5+\delta}}     +     \Vert W^{(n)} \Vert_{L^\infty H^{3+\delta}}     +     \Vert W^{(n)}_t\Vert_{L^{\infty}H^{1+\delta}}    \bigr)    ,    \end{split}    \label{8ThswELzXU3X7Ebd1KdZ7v1rN3GiirRXGKWK099ovBM0FDJCvkopYNQ2aN94Z7k0UnUKamE3OjU8DFYFFokbSI2J9V9gVlM8ALWThDPnPu3EL7HPD2VDaZTggzcCCmbvc70qqPcC9mt60ogcrTiA3HEjwTK8ymKeuJMc4q6dVz200XnYUtLR9GYjPXvFOVr6W1zUK1WbPToaWJJuKnxBLnd0ftDEbMmj4loHYyhZyMjM91zQS4p7z8eKa9h0JrbacekcirexG0z4n3268}   \end{align} where the norms of the terms involving $W^{(n)}$ are over $\Gamma_1\times (0,T)$, while others are over $\Omega\times (0,T)$. We next invoke the div-curl estimates  on the system   \begin{align}\thelt{zoCIH 7v x q5z tOm q6m p4 drAp dzhw SOlRPD ps C lr8 FoZ UG7 vD UYhb ScJ6 gJb8Q8 em G 2JG 9Oj a83 ow Ywjo zLa3 DB500s iG j EHo lPu qe4 p7 T1kQ JmU6 cHnOo2 9o r oOz Ta3 j31 n8 mDL7 CIvC pKZUs0 jV r b7v HIH 7NT tY Y7JK vVdG LhA1ON CW o QW1 fvj mlH 7l SlIm 8T1Q SdUWhT iM P KDZ mm4 V7o fR W1dn lqg0 Ah1QRj dt K ZVz EBN E1e Xi RRSL LQPE SEDeXb iM M Ffx C5F I1z vi yNsY HPsG xfGiIu hD P Di0 OIH uBT TH OCHy CTkA BxuCjg OZ s 965 wfe Fwv fR pNLL T3Ev gKgkO9 }
  \begin{split}    &      \epsilon_{ijk} a^{(n)}_{mj}\UIOIUYOIUyHJGKHJLOIUYOIUOIUYOIYIOUYTIUYIOOOIUYOIUYPOIUPOIUPOIUYOIUYOIUYOIUHOUHOHIOUHOIHOIUHOIUHIOUH_{m} V^{(n+1)}_{k}     =Z^{(n+1)}_{i}      -    \epsilon_{ijk} A^{(n)}_{mj}\UIOIUYOIUyHJGKHJLOIUYOIUOIUYOIYIOUYTIUYIOOOIUYOIUYPOIUPOIUPOIUYOIUYOIUYOIUHOUHOHIOUHOIHOIUHOIUHIOUH_{m} v^{(n)}_{k}     \inon{in $\Omega\times[0,T]$}    \\&     a^{(n)}_{mj}\UIOIUYOIUyHJGKHJLOIUYOIUOIUYOIYIOUYTIUYIOOOIUYOIUYPOIUPOIUPOIUYOIUYOIUYOIUHOUHOHIOUHOIHOIUHOIUHIOUH_{m} V^{(n+1)}_{j}     = -   A^{(n)}_{mj}\UIOIUYOIUyHJGKHJLOIUYOIUOIUYOIYIOUYTIUYIOOOIUYOIUYPOIUPOIUPOIUYOIUYOIUYOIUHOUHOHIOUHOIHOIUHOIUHIOUH_{m} v^{(n)}_{j}     \inon{in $\Omega\times [0,T]$}     \\     & b^{(n)}_{3j} V^{(n+1)}_{j} -\Psi^{(n)}_{t}     =     -B^{(n)}_{3j} v^{(n)}_{j}
    \inon{on $(\Gamma_0 \cup \Gamma_1) \times [0,T]$}   \end{split}    \llabel{8ThswELzXU3X7Ebd1KdZ7v1rN3GiirRXGKWK099ovBM0FDJCvkopYNQ2aN94Z7k0UnUKamE3OjU8DFYFFokbSI2J9V9gVlM8ALWThDPnPu3EL7HPD2VDaZTggzcCCmbvc70qqPcC9mt60ogcrTiA3HEjwTK8ymKeuJMc4q6dVz200XnYUtLR9GYjPXvFOVr6W1zUK1WbPToaWJJuKnxBLnd0ftDEbMmj4loHYyhZyMjM91zQS4p7z8eKa9h0JrbacekcirexG0z4n3271}   \end{align} to obtain, for any $t\in[0,T]$,   \begin{align}\thelt{IvC pKZUs0 jV r b7v HIH 7NT tY Y7JK vVdG LhA1ON CW o QW1 fvj mlH 7l SlIm 8T1Q SdUWhT iM P KDZ mm4 V7o fR W1dn lqg0 Ah1QRj dt K ZVz EBN E1e Xi RRSL LQPE SEDeXb iM M Ffx C5F I1z vi yNsY HPsG xfGiIu hD P Di0 OIH uBT TH OCHy CTkA BxuCjg OZ s 965 wfe Fwv fR pNLL T3Ev gKgkO9 jy y vot RRl pDT dn 9H5Z nqwW r4OUkI lx t sk0 RZd ODn so Yid6 ctgw wQrxQk 1S 8 ajp PiZ Jlp 5p IAT1 t482 KxtvQ6 D1 T VzQ 7F3 xoz 6H w2ph WDlC Jg7VcE ix 6 XFI dlO lcN bg ODKp 86tC HV}   \begin{split}   \Vert V^{(n+1)} \Vert_{H^{1.5+\delta}}   &\dlkjfhlaskdhjflkasdjhflkasjhdflkasjhdflkasjhdfls    \Vert \Theta^{(n+1)} \Vert_{H^{0.5+\delta}}    + \Vert A^{(n)} \Vert_{H^{0.5+\delta}}      \Vert \nabla v^{(n)} \Vert_{H^{1.5+\delta}}    \\&\indeq    + \Vert \Psi_{t}^{(n)} \Vert_{H^{1+\delta}(\Gamma_{1})}
   + \Vert B^{(n)} \Vert_{H^{1.5+\delta}}      \Vert v^{(n)} \Vert_{H^{1.5+\delta}}    ,   \end{split}   \label{8ThswELzXU3X7Ebd1KdZ7v1rN3GiirRXGKWK099ovBM0FDJCvkopYNQ2aN94Z7k0UnUKamE3OjU8DFYFFokbSI2J9V9gVlM8ALWThDPnPu3EL7HPD2VDaZTggzcCCmbvc70qqPcC9mt60ogcrTiA3HEjwTK8ymKeuJMc4q6dVz200XnYUtLR9GYjPXvFOVr6W1zUK1WbPToaWJJuKnxBLnd0ftDEbMmj4loHYyhZyMjM91zQS4p7z8eKa9h0JrbacekcirexG0z4n3272}   \end{align} where we used $\Vert Z^{(n+1)} \Vert_{H^{0.5+\delta}}\dlkjfhlaskdhjflkasdjhflkasjhdflkasjhdflkasjhdfls\Vert \Theta^{(n+1)} \Vert_{H^{0.5+\delta}}$. \colb From \eqref{8ThswELzXU3X7Ebd1KdZ7v1rN3GiirRXGKWK099ovBM0FDJCvkopYNQ2aN94Z7k0UnUKamE3OjU8DFYFFokbSI2J9V9gVlM8ALWThDPnPu3EL7HPD2VDaZTggzcCCmbvc70qqPcC9mt60ogcrTiA3HEjwTK8ymKeuJMc4q6dVz200XnYUtLR9GYjPXvFOVr6W1zUK1WbPToaWJJuKnxBLnd0ftDEbMmj4loHYyhZyMjM91zQS4p7z8eKa9h0JrbacekcirexG0z4n3272}, we obtain   \begin{align}\thelt{NsY HPsG xfGiIu hD P Di0 OIH uBT TH OCHy CTkA BxuCjg OZ s 965 wfe Fwv fR pNLL T3Ev gKgkO9 jy y vot RRl pDT dn 9H5Z nqwW r4OUkI lx t sk0 RZd ODn so Yid6 ctgw wQrxQk 1S 8 ajp PiZ Jlp 5p IAT1 t482 KxtvQ6 D1 T VzQ 7F3 xoz 6H w2ph WDlC Jg7VcE ix 6 XFI dlO lcN bg ODKp 86tC HVGrzE cV n Bk9 9sq 5XG d1 DNFA Negg JYjfBW jA b JSc hyE uVl EN awP0 DWoZ WKuP4I Pt v Zbm nRL 047 2K 3bBQ IH5S pPxtXy 5N J joW ceA 7Fe T7 Iwpi vQdq LaeZE0 Qf i MW1 Koz kdU tR sGH6 ry}    \Vert V^{(n+1)} \Vert_{L^\infty H^{1.5+\delta}}    \dlkjfhlaskdhjflkasdjhflkasjhdflkasjhdflkasjhdfls    \Vert \Theta^{(n+1)} \Vert_{L^{\infty}H^{0.5+\delta}}
   + M\Vert W^{(n)} \Vert_{L^{\infty}H^{1+\delta}(\Gamma_1\times(0,T))}    + \Vert W_{t}^{(n)} \Vert_{L^{\infty}H^{1+\delta}(\Gamma_{1}\times(0,T))}    .    \label{8ThswELzXU3X7Ebd1KdZ7v1rN3GiirRXGKWK099ovBM0FDJCvkopYNQ2aN94Z7k0UnUKamE3OjU8DFYFFokbSI2J9V9gVlM8ALWThDPnPu3EL7HPD2VDaZTggzcCCmbvc70qqPcC9mt60ogcrTiA3HEjwTK8ymKeuJMc4q6dVz200XnYUtLR9GYjPXvFOVr6W1zUK1WbPToaWJJuKnxBLnd0ftDEbMmj4loHYyhZyMjM91zQS4p7z8eKa9h0JrbacekcirexG0z4n3273}   \end{align} Using \eqref{8ThswELzXU3X7Ebd1KdZ7v1rN3GiirRXGKWK099ovBM0FDJCvkopYNQ2aN94Z7k0UnUKamE3OjU8DFYFFokbSI2J9V9gVlM8ALWThDPnPu3EL7HPD2VDaZTggzcCCmbvc70qqPcC9mt60ogcrTiA3HEjwTK8ymKeuJMc4q6dVz200XnYUtLR9GYjPXvFOVr6W1zUK1WbPToaWJJuKnxBLnd0ftDEbMmj4loHYyhZyMjM91zQS4p7z8eKa9h0JrbacekcirexG0z4n3268} in \eqref{8ThswELzXU3X7Ebd1KdZ7v1rN3GiirRXGKWK099ovBM0FDJCvkopYNQ2aN94Z7k0UnUKamE3OjU8DFYFFokbSI2J9V9gVlM8ALWThDPnPu3EL7HPD2VDaZTggzcCCmbvc70qqPcC9mt60ogcrTiA3HEjwTK8ymKeuJMc4q6dVz200XnYUtLR9GYjPXvFOVr6W1zUK1WbPToaWJJuKnxBLnd0ftDEbMmj4loHYyhZyMjM91zQS4p7z8eKa9h0JrbacekcirexG0z4n3273}  and again reducing $T$ if necessary sufficiently small to absorb the term containing $V^{(n+1)}$, we get \begin{align}\thelt{ 5p IAT1 t482 KxtvQ6 D1 T VzQ 7F3 xoz 6H w2ph WDlC Jg7VcE ix 6 XFI dlO lcN bg ODKp 86tC HVGrzE cV n Bk9 9sq 5XG d1 DNFA Negg JYjfBW jA b JSc hyE uVl EN awP0 DWoZ WKuP4I Pt v Zbm nRL 047 2K 3bBQ IH5S pPxtXy 5N J joW ceA 7Fe T7 Iwpi vQdq LaeZE0 Qf i MW1 Koz kdU tR sGH6 ryob MpDbfL t0 Z 2FA XbR 3QQ wu Iizg ZFQ4 Gh4lY5 pt 9 RMT ieq BIk dX I979 BGU2 yYtJSa nO M sDL Wyd CQf ol xJWb bIdb EggZLB Kb F mKX oRM cUy M8 NlGn WyuE RUtbAs 4Z R PHd IWt lbJ Rt Qw}    \Vert V^{(n+1)} \Vert_{H^{1.5+\delta}}    \leq    P(M)    \bigl(     \Vert W^{(n)} \Vert_{H^{1+\delta}(\Gamma_1)}    + \Vert W_{t}^{(n)} \Vert_{H^{1+\delta}(\Gamma_{1})}
   \bigr)    .    \label{8ThswELzXU3X7Ebd1KdZ7v1rN3GiirRXGKWK099ovBM0FDJCvkopYNQ2aN94Z7k0UnUKamE3OjU8DFYFFokbSI2J9V9gVlM8ALWThDPnPu3EL7HPD2VDaZTggzcCCmbvc70qqPcC9mt60ogcrTiA3HEjwTK8ymKeuJMc4q6dVz200XnYUtLR9GYjPXvFOVr6W1zUK1WbPToaWJJuKnxBLnd0ftDEbMmj4loHYyhZyMjM91zQS4p7z8eKa9h0JrbacekcirexG0z4n3274}   \end{align} We next use the fact that the pressure term $Q^{(n+1)}$  satisfies an elliptic boundary value problem  with the Neumann type boundary conditions.  In particular, the term $Q^{(n+1)}$ satisfies  the equation with the structure of \eqref{8ThswELzXU3X7Ebd1KdZ7v1rN3GiirRXGKWK099ovBM0FDJCvkopYNQ2aN94Z7k0UnUKamE3OjU8DFYFFokbSI2J9V9gVlM8ALWThDPnPu3EL7HPD2VDaZTggzcCCmbvc70qqPcC9mt60ogcrTiA3HEjwTK8ymKeuJMc4q6dVz200XnYUtLR9GYjPXvFOVr6W1zUK1WbPToaWJJuKnxBLnd0ftDEbMmj4loHYyhZyMjM91zQS4p7z8eKa9h0JrbacekcirexG0z4n3175} with the Neumann boundary condition  similar to \eqref{8ThswELzXU3X7Ebd1KdZ7v1rN3GiirRXGKWK099ovBM0FDJCvkopYNQ2aN94Z7k0UnUKamE3OjU8DFYFFokbSI2J9V9gVlM8ALWThDPnPu3EL7HPD2VDaZTggzcCCmbvc70qqPcC9mt60ogcrTiA3HEjwTK8ymKeuJMc4q6dVz200XnYUtLR9GYjPXvFOVr6W1zUK1WbPToaWJJuKnxBLnd0ftDEbMmj4loHYyhZyMjM91zQS4p7z8eKa9h0JrbacekcirexG0z4n3177} on both $\Gamma_{0}$ and $\Gamma_{1}$ (but not the Robin boundary condition as in \eqref{8ThswELzXU3X7Ebd1KdZ7v1rN3GiirRXGKWK099ovBM0FDJCvkopYNQ2aN94Z7k0UnUKamE3OjU8DFYFFokbSI2J9V9gVlM8ALWThDPnPu3EL7HPD2VDaZTggzcCCmbvc70qqPcC9mt60ogcrTiA3HEjwTK8ymKeuJMc4q6dVz200XnYUtLR9GYjPXvFOVr6W1zUK1WbPToaWJJuKnxBLnd0ftDEbMmj4loHYyhZyMjM91zQS4p7z8eKa9h0JrbacekcirexG0z4n3176}), with the usual extra terms on $\Gamma_1$. Omitting the details, as they are similar to 
those in \eqref{8ThswELzXU3X7Ebd1KdZ7v1rN3GiirRXGKWK099ovBM0FDJCvkopYNQ2aN94Z7k0UnUKamE3OjU8DFYFFokbSI2J9V9gVlM8ALWThDPnPu3EL7HPD2VDaZTggzcCCmbvc70qqPcC9mt60ogcrTiA3HEjwTK8ymKeuJMc4q6dVz200XnYUtLR9GYjPXvFOVr6W1zUK1WbPToaWJJuKnxBLnd0ftDEbMmj4loHYyhZyMjM91zQS4p7z8eKa9h0JrbacekcirexG0z4n3175}--\eqref{8ThswELzXU3X7Ebd1KdZ7v1rN3GiirRXGKWK099ovBM0FDJCvkopYNQ2aN94Z7k0UnUKamE3OjU8DFYFFokbSI2J9V9gVlM8ALWThDPnPu3EL7HPD2VDaZTggzcCCmbvc70qqPcC9mt60ogcrTiA3HEjwTK8ymKeuJMc4q6dVz200XnYUtLR9GYjPXvFOVr6W1zUK1WbPToaWJJuKnxBLnd0ftDEbMmj4loHYyhZyMjM91zQS4p7z8eKa9h0JrbacekcirexG0z4n3178}, we obtain the elliptic estimate   \begin{align}\thelt{L 047 2K 3bBQ IH5S pPxtXy 5N J joW ceA 7Fe T7 Iwpi vQdq LaeZE0 Qf i MW1 Koz kdU tR sGH6 ryob MpDbfL t0 Z 2FA XbR 3QQ wu Iizg ZFQ4 Gh4lY5 pt 9 RMT ieq BIk dX I979 BGU2 yYtJSa nO M sDL Wyd CQf ol xJWb bIdb EggZLB Kb F mKX oRM cUy M8 NlGn WyuE RUtbAs 4Z R PHd IWt lbJ Rt Qwod dmlZ hI3I8A 9K 8 Syf lGz cVj Cq GkZn aZrx HNxIcM ae G QdX XxG HFi 6A eYBA lo4Q 9HZIjJ jt O hl4 VLm Vvc ph mMES M8lt xHQQUH jJ h Yyf 5Nd c0i 8m HOTN S7yx 5hNrJC yJ 1 ZFj 4Qe Iom }    \begin{split}    \Vert \nabla Q^{(n+1)} \Vert_{H^{\delta -0.5}}    \dlkjfhlaskdhjflkasdjhflkasjhdflkasjhdflkasjhdfls    \Vert V^{(n+1)} \Vert_{H^{1.5+\delta}}    + \Vert W_{tt}^{(n)}\Vert_{H^{\delta}(\Gamma_1)}    + \Vert W^{(n)}_t\Vert_{H^{1+\delta  }(\Gamma_1)}    + \Vert W^{(n)}\Vert_{H^{3+\delta }(\Gamma_1)}    ,    \end{split}    \label{8ThswELzXU3X7Ebd1KdZ7v1rN3GiirRXGKWK099ovBM0FDJCvkopYNQ2aN94Z7k0UnUKamE3OjU8DFYFFokbSI2J9V9gVlM8ALWThDPnPu3EL7HPD2VDaZTggzcCCmbvc70qqPcC9mt60ogcrTiA3HEjwTK8ymKeuJMc4q6dVz200XnYUtLR9GYjPXvFOVr6W1zUK1WbPToaWJJuKnxBLnd0ftDEbMmj4loHYyhZyMjM91zQS4p7z8eKa9h0JrbacekcirexG0z4n3275}   \end{align}
where the constant depends on~$M$. Therefore, using the fundamental theorem of calculus, we have   \begin{align}\thelt{DL Wyd CQf ol xJWb bIdb EggZLB Kb F mKX oRM cUy M8 NlGn WyuE RUtbAs 4Z R PHd IWt lbJ Rt Qwod dmlZ hI3I8A 9K 8 Syf lGz cVj Cq GkZn aZrx HNxIcM ae G QdX XxG HFi 6A eYBA lo4Q 9HZIjJ jt O hl4 VLm Vvc ph mMES M8lt xHQQUH jJ h Yyf 5Nd c0i 8m HOTN S7yx 5hNrJC yJ 1 ZFj 4Qe Iom 7w czw9 8Bn6 SxxoqP tn X p4F yiE b2M Cy j2AH aB8F ejdIRh qQ V fR8 rEt z0m q5 4IZt bSlX dBmEvC uv A f5b YxZ 3LE sJ YEX8 eNmo tV2IHl hJ E 70c s45 KVw JR 1riF MPEs P3srHa 8p q wVN AHu}    \begin{split}    \Vert Q^{(n+1)}\Vert_{2}    &= \left\Vert   Q^{(n+1)} - \frac{1}{|\Gamma_1|}\OIUYJHUGFAJKLDHFKJLSDHFLKSDJFHLKSDJHFLKSDJHFLKDJFHLLDKHFLKSDHJFALKJHLJLHGLKHHLKJHLKGKHGJKHGKJHLKHJLKJH_{\Gamma_1} Q^{(n+1)}\right\Vert_{L^2}    \dlkjfhlaskdhjflkasdjhflkasjhdflkasjhdflkasjhdfls    \Vert \UIOIUYOIUyHJGKHJLOIUYOIUOIUYOIYIOUYTIUYIOOOIUYOIUYPOIUPOIUPOIUYOIUYOIUYOIUHOUHOHIOUHOIHOIUHOIUHIOUH_{3}Q^{(n+1)}\Vert_{L^2}    .    \end{split}    \label{8ThswELzXU3X7Ebd1KdZ7v1rN3GiirRXGKWK099ovBM0FDJCvkopYNQ2aN94Z7k0UnUKamE3OjU8DFYFFokbSI2J9V9gVlM8ALWThDPnPu3EL7HPD2VDaZTggzcCCmbvc70qqPcC9mt60ogcrTiA3HEjwTK8ymKeuJMc4q6dVz200XnYUtLR9GYjPXvFOVr6W1zUK1WbPToaWJJuKnxBLnd0ftDEbMmj4loHYyhZyMjM91zQS4p7z8eKa9h0JrbacekcirexG0z4n3276}   \end{align} Combining \eqref{8ThswELzXU3X7Ebd1KdZ7v1rN3GiirRXGKWK099ovBM0FDJCvkopYNQ2aN94Z7k0UnUKamE3OjU8DFYFFokbSI2J9V9gVlM8ALWThDPnPu3EL7HPD2VDaZTggzcCCmbvc70qqPcC9mt60ogcrTiA3HEjwTK8ymKeuJMc4q6dVz200XnYUtLR9GYjPXvFOVr6W1zUK1WbPToaWJJuKnxBLnd0ftDEbMmj4loHYyhZyMjM91zQS4p7z8eKa9h0JrbacekcirexG0z4n3275} and \eqref{8ThswELzXU3X7Ebd1KdZ7v1rN3GiirRXGKWK099ovBM0FDJCvkopYNQ2aN94Z7k0UnUKamE3OjU8DFYFFokbSI2J9V9gVlM8ALWThDPnPu3EL7HPD2VDaZTggzcCCmbvc70qqPcC9mt60ogcrTiA3HEjwTK8ymKeuJMc4q6dVz200XnYUtLR9GYjPXvFOVr6W1zUK1WbPToaWJJuKnxBLnd0ftDEbMmj4loHYyhZyMjM91zQS4p7z8eKa9h0JrbacekcirexG0z4n3276}, we get  \begin{align}\thelt{t O hl4 VLm Vvc ph mMES M8lt xHQQUH jJ h Yyf 5Nd c0i 8m HOTN S7yx 5hNrJC yJ 1 ZFj 4Qe Iom 7w czw9 8Bn6 SxxoqP tn X p4F yiE b2M Cy j2AH aB8F ejdIRh qQ V fR8 rEt z0m q5 4IZt bSlX dBmEvC uv A f5b YxZ 3LE sJ YEX8 eNmo tV2IHl hJ E 70c s45 KVw JR 1riF MPEs P3srHa 8p q wVN AHu soh YI rkNw ekfR bDVLm2 ax u 6ca KkT Xrg Bg nQhU A1z8 X6Mtqv ks U fAF VLg Tmq Pn trgI ggjf JfMGfC uB y BS7 njW fYR Nh pHsj FCzM 4f6cRD gj P Zkb SUH QBn zQ wEnS 9CxS fn00xm Af w lT}
   \begin{split}    \Vert Q^{(n+1)} \Vert_{H^{\delta +0.5}}    \dlkjfhlaskdhjflkasdjhflkasjhdflkasjhdflkasjhdfls    \Vert V^{(n+1)} \Vert_{H^{1.5+\delta}}    + \Vert W_{tt}^{(n)}\Vert_{H^{\delta}(\Gamma_1)}    + \Vert W^{(n)}_t\Vert_{H^{1+\delta  }(\Gamma_1)}    + \Vert W^{(n)}\Vert_{H^{3+\delta }(\Gamma_1)}.    \end{split}    \label{8ThswELzXU3X7Ebd1KdZ7v1rN3GiirRXGKWK099ovBM0FDJCvkopYNQ2aN94Z7k0UnUKamE3OjU8DFYFFokbSI2J9V9gVlM8ALWThDPnPu3EL7HPD2VDaZTggzcCCmbvc70qqPcC9mt60ogcrTiA3HEjwTK8ymKeuJMc4q6dVz200XnYUtLR9GYjPXvFOVr6W1zUK1WbPToaWJJuKnxBLnd0ftDEbMmj4loHYyhZyMjM91zQS4p7z8eKa9h0JrbacekcirexG0z4n3277}   \end{align} The energy estimate for the plate equation  yields   \begin{align}\thelt{EvC uv A f5b YxZ 3LE sJ YEX8 eNmo tV2IHl hJ E 70c s45 KVw JR 1riF MPEs P3srHa 8p q wVN AHu soh YI rkNw ekfR bDVLm2 ax u 6ca KkT Xrg Bg nQhU A1z8 X6Mtqv ks U fAF VLg Tmq Pn trgI ggjf JfMGfC uB y BS7 njW fYR Nh pHsj FCzM 4f6cRD gj P Zkb SUH QBn zQ wEnS 9CxS fn00xm Af w lTv 4HI ZIZ Ay XIs4 hPOP jQ3v93 iT L 0Jt NJ8 baB BW cY18 vifU iGKvSQ 4g E kZ1 0yS 5lX Cw I4oX 2gPB isFp7T jK u pgV n5o i4u xK t2QP 4kbr ChS5Zn uW X Wep 0mO jW1 r2 IaXv Hle8 ksF2XQ 52}    \begin{split}
    &     \Vert W^{(n+1)}(t) \Vert^{2}_{H^{3+\delta}(\Gamma_1)}     + \Vert W_{t}^{(n+1)}(t) \Vert^{2}_{H^{1+\delta}(\Gamma_1)}     + \Vert W_{tt}^{(n+1)}(t) \Vert^{2}_{H^{\delta}(\Gamma_1)}     + \nu \OIUYJHUGFAJKLDHFKJLSDHFLKSDJFHLKSDJHFLKSDJHFLKDJFHLLDKHFLKSDHJFALKJHLJLHGLKHHLKJHLKGKHGJKHGKJHLKHJLKJH_{0}^{t} \Vert W_{t}^{(n+1)}(s) \Vert^{2}_{H^{2.5+\delta}(\Gamma_1)}\,ds    \\&\indeq     \dlkjfhlaskdhjflkasdjhflkasjhdflkasjhdflkasjhdfls \OIUYJHUGFAJKLDHFKJLSDHFLKSDJFHLKSDJHFLKSDJHFLKDJFHLLDKHFLKSDHJFALKJHLJLHGLKHHLKJHLKGKHGJKHGKJHLKHJLKJH_{0}^{t} \Vert Q^{(n+1)}(s) \Vert^{2}_{H^{\delta}} \, ds     \dlkjfhlaskdhjflkasdjhflkasjhdflkasjhdflkasjhdfls \OIUYJHUGFAJKLDHFKJLSDHFLKSDJFHLKSDJHFLKSDJHFLKDJFHLLDKHFLKSDHJFALKJHLJLHGLKHHLKJHLKGKHGJKHGKJHLKHJLKJH_{0}^{t} \Vert Q^{(n+1)}(s) \Vert^{2}_{H^{\delta+0.5}} \, ds      \dlkjfhlaskdhjflkasdjhflkasjhdflkasjhdflkasjhdfls       T \Vert Q^{(n+1)} \Vert^{2}_{L^{\infty}([0,T];H^{\delta+0.5}(\Omega)) }    ,    \end{split}    \label{8ThswELzXU3X7Ebd1KdZ7v1rN3GiirRXGKWK099ovBM0FDJCvkopYNQ2aN94Z7k0UnUKamE3OjU8DFYFFokbSI2J9V9gVlM8ALWThDPnPu3EL7HPD2VDaZTggzcCCmbvc70qqPcC9mt60ogcrTiA3HEjwTK8ymKeuJMc4q6dVz200XnYUtLR9GYjPXvFOVr6W1zUK1WbPToaWJJuKnxBLnd0ftDEbMmj4loHYyhZyMjM91zQS4p7z8eKa9h0JrbacekcirexG0z4n3278}   \end{align}
for $t\in[0,T]$, Using \eqref{8ThswELzXU3X7Ebd1KdZ7v1rN3GiirRXGKWK099ovBM0FDJCvkopYNQ2aN94Z7k0UnUKamE3OjU8DFYFFokbSI2J9V9gVlM8ALWThDPnPu3EL7HPD2VDaZTggzcCCmbvc70qqPcC9mt60ogcrTiA3HEjwTK8ymKeuJMc4q6dVz200XnYUtLR9GYjPXvFOVr6W1zUK1WbPToaWJJuKnxBLnd0ftDEbMmj4loHYyhZyMjM91zQS4p7z8eKa9h0JrbacekcirexG0z4n3277} in \eqref{8ThswELzXU3X7Ebd1KdZ7v1rN3GiirRXGKWK099ovBM0FDJCvkopYNQ2aN94Z7k0UnUKamE3OjU8DFYFFokbSI2J9V9gVlM8ALWThDPnPu3EL7HPD2VDaZTggzcCCmbvc70qqPcC9mt60ogcrTiA3HEjwTK8ymKeuJMc4q6dVz200XnYUtLR9GYjPXvFOVr6W1zUK1WbPToaWJJuKnxBLnd0ftDEbMmj4loHYyhZyMjM91zQS4p7z8eKa9h0JrbacekcirexG0z4n3278}, we get   \begin{align}\thelt{f JfMGfC uB y BS7 njW fYR Nh pHsj FCzM 4f6cRD gj P Zkb SUH QBn zQ wEnS 9CxS fn00xm Af w lTv 4HI ZIZ Ay XIs4 hPOP jQ3v93 iT L 0Jt NJ8 baB BW cY18 vifU iGKvSQ 4g E kZ1 0yS 5lX Cw I4oX 2gPB isFp7T jK u pgV n5o i4u xK t2QP 4kbr ChS5Zn uW X Wep 0mO jW1 r2 IaXv Hle8 ksF2XQ 52 9 gTL s3u vAO f6 4HOV Iqrb LoG5I2 n0 X skv cKY FIV 8y P9tf MEVP R7F0ip Da q wgQ xro 5Et IW r3tE aSs5 CjzfRR AL g vmy MhI ztV Kj StP7 44RC 0TTPQp n8 g LVt zpL zEQ e2 Rck9 WuM7 XHGA}    \begin{split}      &      \Vert W^{(n+1)}(t) \Vert^{2}_{L^{\infty}H^{3+\delta}(\Gamma_1)\times(0,T)}      + \Vert W_{t}^{(n+1)}(t) \Vert^{2}_{L^{\infty}H^{1+\delta}(\Gamma_1\times(0,T))}      \\&\indeq\indeq      + \Vert W_{tt}^{(n+1)}(t) \Vert^{2}_{L^{\infty}H^{\delta}(\Gamma_1\times(0,T))}      + \nu            \Vert W_{t}^{(n+1)} \Vert^{2}_{L^{2}H^{2.5+\delta}(\Gamma_1\times(0,T))}      \\&\indeq      \dlkjfhlaskdhjflkasdjhflkasjhdflkasjhdflkasjhdfls       T 
     \Vert V^{(n+1)}(t) \Vert^{2}_{L^{\infty}H^{1.5+\delta}(\Omega\times(0,T))}      +  T \Vert W^{(n)}(t) \Vert^{2}_{L^{\infty}H^{3+\delta}(\Gamma_1)\times(0,T)}      + T\Vert W_{t}^{(n)}(t) \Vert^{2}_{L^{\infty}H^{1+\delta}(\Gamma_1\times(0,T))}      \\&\indeq\indeq      + T\Vert W_{tt}^{(n)}(t) \Vert^{2}_{L^{\infty}H^{\delta}(\Gamma_1\times(0,T))}     ,    \end{split}    \label{8ThswELzXU3X7Ebd1KdZ7v1rN3GiirRXGKWK099ovBM0FDJCvkopYNQ2aN94Z7k0UnUKamE3OjU8DFYFFokbSI2J9V9gVlM8ALWThDPnPu3EL7HPD2VDaZTggzcCCmbvc70qqPcC9mt60ogcrTiA3HEjwTK8ymKeuJMc4q6dVz200XnYUtLR9GYjPXvFOVr6W1zUK1WbPToaWJJuKnxBLnd0ftDEbMmj4loHYyhZyMjM91zQS4p7z8eKa9h0JrbacekcirexG0z4n3280}   \end{align} which then, choosing $T$ sufficiently small showing the contractivity property for the plate component. On the other hand, using \eqref{8ThswELzXU3X7Ebd1KdZ7v1rN3GiirRXGKWK099ovBM0FDJCvkopYNQ2aN94Z7k0UnUKamE3OjU8DFYFFokbSI2J9V9gVlM8ALWThDPnPu3EL7HPD2VDaZTggzcCCmbvc70qqPcC9mt60ogcrTiA3HEjwTK8ymKeuJMc4q6dVz200XnYUtLR9GYjPXvFOVr6W1zUK1WbPToaWJJuKnxBLnd0ftDEbMmj4loHYyhZyMjM91zQS4p7z8eKa9h0JrbacekcirexG0z4n3280},  with $n_1$ replaced by $n$ in \eqref{8ThswELzXU3X7Ebd1KdZ7v1rN3GiirRXGKWK099ovBM0FDJCvkopYNQ2aN94Z7k0UnUKamE3OjU8DFYFFokbSI2J9V9gVlM8ALWThDPnPu3EL7HPD2VDaZTggzcCCmbvc70qqPcC9mt60ogcrTiA3HEjwTK8ymKeuJMc4q6dVz200XnYUtLR9GYjPXvFOVr6W1zUK1WbPToaWJJuKnxBLnd0ftDEbMmj4loHYyhZyMjM91zQS4p7z8eKa9h0JrbacekcirexG0z4n3274} and choosing $T$ sufficiently small compared to $P(M)$, we get that the velocity component is contractive too. Hence, there exists a unique solution 
$(w, w_{t}) \in L^{\infty}([0,T];H^{4+ \delta}(\Gamma_{1})\times H^{2+ \delta}(\Gamma_{1}) )$  satisfying \eqref{8ThswELzXU3X7Ebd1KdZ7v1rN3GiirRXGKWK099ovBM0FDJCvkopYNQ2aN94Z7k0UnUKamE3OjU8DFYFFokbSI2J9V9gVlM8ALWThDPnPu3EL7HPD2VDaZTggzcCCmbvc70qqPcC9mt60ogcrTiA3HEjwTK8ymKeuJMc4q6dVz200XnYUtLR9GYjPXvFOVr6W1zUK1WbPToaWJJuKnxBLnd0ftDEbMmj4loHYyhZyMjM91zQS4p7z8eKa9h0JrbacekcirexG0z4n3241}--\eqref{8ThswELzXU3X7Ebd1KdZ7v1rN3GiirRXGKWK099ovBM0FDJCvkopYNQ2aN94Z7k0UnUKamE3OjU8DFYFFokbSI2J9V9gVlM8ALWThDPnPu3EL7HPD2VDaZTggzcCCmbvc70qqPcC9mt60ogcrTiA3HEjwTK8ymKeuJMc4q6dVz200XnYUtLR9GYjPXvFOVr6W1zUK1WbPToaWJJuKnxBLnd0ftDEbMmj4loHYyhZyMjM91zQS4p7z8eKa9h0JrbacekcirexG0z4n3244}.  \par To obtain the contractive property, denote   \begin{align}\thelt{X 2gPB isFp7T jK u pgV n5o i4u xK t2QP 4kbr ChS5Zn uW X Wep 0mO jW1 r2 IaXv Hle8 ksF2XQ 52 9 gTL s3u vAO f6 4HOV Iqrb LoG5I2 n0 X skv cKY FIV 8y P9tf MEVP R7F0ip Da q wgQ xro 5Et IW r3tE aSs5 CjzfRR AL g vmy MhI ztV Kj StP7 44RC 0TTPQp n8 g LVt zpL zEQ e2 Rck9 WuM7 XHGA7O 7K G wfm ZHL hJR NU DEQe Brqf KIt0Y4 RW 4 9GK EHY ptg LH 4F8r ZfYC vcf1pO yj k 8iT ES0 ujR vF pipc wIvL DgikPu qq k 9RE dH9 YjR UM kr9b yFJK LBex0S gD J 2gB IeC X2C UZ yyRt GNY3}    \begin{split}    \alpha_n     =      \Vert W^{(n)}(t) \Vert^{2}_{L^{\infty}H^{3+\delta}(\Gamma_1)\times(0,T)}      + \Vert W_{t}^{(n)}(t) \Vert^{2}_{L^{\infty}H^{1+\delta}(\Gamma_1\times(0,T))}      + \Vert W_{tt}^{(n)}(t) \Vert^{2}_{L^{\infty}H^{\delta}(\Gamma_1\times(0,T))}    \end{split}    \llabel{8ThswELzXU3X7Ebd1KdZ7v1rN3GiirRXGKWK099ovBM0FDJCvkopYNQ2aN94Z7k0UnUKamE3OjU8DFYFFokbSI2J9V9gVlM8ALWThDPnPu3EL7HPD2VDaZTggzcCCmbvc70qqPcC9mt60ogcrTiA3HEjwTK8ymKeuJMc4q6dVz200XnYUtLR9GYjPXvFOVr6W1zUK1WbPToaWJJuKnxBLnd0ftDEbMmj4loHYyhZyMjM91zQS4p7z8eKa9h0JrbacekcirexG0z4n3247}   \end{align} 
and   \begin{align}\thelt{W r3tE aSs5 CjzfRR AL g vmy MhI ztV Kj StP7 44RC 0TTPQp n8 g LVt zpL zEQ e2 Rck9 WuM7 XHGA7O 7K G wfm ZHL hJR NU DEQe Brqf KIt0Y4 RW 4 9GK EHY ptg LH 4F8r ZfYC vcf1pO yj k 8iT ES0 ujR vF pipc wIvL DgikPu qq k 9RE dH9 YjR UM kr9b yFJK LBex0S gD J 2gB IeC X2C UZ yyRt GNY3 eGOaDp 3m w QyV 1Aj tGL gS C1dD pQCB cocMSM 4j q bSW bvx 6aS nu MtD0 5qpw NDlW0t Z1 c bjz wU5 bUd CG AghC w0nI CDFKHR kp h btA 6nY ld6 c5 TSkD q3Qx o2jhDx Qb m b8n Pq3 zNZ QF JJyu}    \begin{split}     \beta_n     =      \Vert V^{(n)}(t) \Vert^{2}_{L^{\infty}H^{3+\delta}(\Omega)\times(0,T)}    .    \end{split}    \llabel{8ThswELzXU3X7Ebd1KdZ7v1rN3GiirRXGKWK099ovBM0FDJCvkopYNQ2aN94Z7k0UnUKamE3OjU8DFYFFokbSI2J9V9gVlM8ALWThDPnPu3EL7HPD2VDaZTggzcCCmbvc70qqPcC9mt60ogcrTiA3HEjwTK8ymKeuJMc4q6dVz200XnYUtLR9GYjPXvFOVr6W1zUK1WbPToaWJJuKnxBLnd0ftDEbMmj4loHYyhZyMjM91zQS4p7z8eKa9h0JrbacekcirexG0z4n3322}   \end{align} The inequality \eqref{8ThswELzXU3X7Ebd1KdZ7v1rN3GiirRXGKWK099ovBM0FDJCvkopYNQ2aN94Z7k0UnUKamE3OjU8DFYFFokbSI2J9V9gVlM8ALWThDPnPu3EL7HPD2VDaZTggzcCCmbvc70qqPcC9mt60ogcrTiA3HEjwTK8ymKeuJMc4q6dVz200XnYUtLR9GYjPXvFOVr6W1zUK1WbPToaWJJuKnxBLnd0ftDEbMmj4loHYyhZyMjM91zQS4p7z8eKa9h0JrbacekcirexG0z4n3280} may then be written as   \begin{align}\thelt{ujR vF pipc wIvL DgikPu qq k 9RE dH9 YjR UM kr9b yFJK LBex0S gD J 2gB IeC X2C UZ yyRt GNY3 eGOaDp 3m w QyV 1Aj tGL gS C1dD pQCB cocMSM 4j q bSW bvx 6aS nu MtD0 5qpw NDlW0t Z1 c bjz wU5 bUd CG AghC w0nI CDFKHR kp h btA 6nY ld6 c5 TSkD q3Qx o2jhDx Qb m b8n Pq3 zNZ QF JJyu Vm1C 6rzRDC B1 m eQy 4Tt Yr5 jQ VWoO fbrY Q6qakZ ep H b2b 5w4 KN3 mE HtQK AXsI ycbaky ID 9 O8Y CmR lEW 7f GISs 6xaz bM6PSB N2 B jtb 65z z2N uY o4kU lpIq JVBC4D zu Z ZN6 Zkz 0oo mm}    \begin{split}    \alpha_{n+1}
   \leq C_0 T (\alpha_n + \beta_{n+1})    ,    \end{split}    \llabel{8ThswELzXU3X7Ebd1KdZ7v1rN3GiirRXGKWK099ovBM0FDJCvkopYNQ2aN94Z7k0UnUKamE3OjU8DFYFFokbSI2J9V9gVlM8ALWThDPnPu3EL7HPD2VDaZTggzcCCmbvc70qqPcC9mt60ogcrTiA3HEjwTK8ymKeuJMc4q6dVz200XnYUtLR9GYjPXvFOVr6W1zUK1WbPToaWJJuKnxBLnd0ftDEbMmj4loHYyhZyMjM91zQS4p7z8eKa9h0JrbacekcirexG0z4n3323}   \end{align} while    \begin{align}\thelt{ wU5 bUd CG AghC w0nI CDFKHR kp h btA 6nY ld6 c5 TSkD q3Qx o2jhDx Qb m b8n Pq3 zNZ QF JJyu Vm1C 6rzRDC B1 m eQy 4Tt Yr5 jQ VWoO fbrY Q6qakZ ep H b2b 5w4 KN3 mE HtQK AXsI ycbaky ID 9 O8Y CmR lEW 7f GISs 6xaz bM6PSB N2 B jtb 65z z2N uY o4kU lpIq JVBC4D zu Z ZN6 Zkz 0oo mm nswe bstF mlxkKE QE L 6bs oYz xx0 8I Q5Ma 7Inf dXLQ9j eH S Tmi gtt k4v P7 778H p1o6 7atRbf cr S 2CW zwQ 9j0 Rj r0VL 9vlv kkk6J9 bM 1 Xgi Yla y8Z Eq 39Z5 3jRn Xh5mKP Pa 5 tFw 7E0 n}    \begin{split}    \beta_{n+1}    \leq C_0 \alpha_{n}    ,    \end{split}    \llabel{8ThswELzXU3X7Ebd1KdZ7v1rN3GiirRXGKWK099ovBM0FDJCvkopYNQ2aN94Z7k0UnUKamE3OjU8DFYFFokbSI2J9V9gVlM8ALWThDPnPu3EL7HPD2VDaZTggzcCCmbvc70qqPcC9mt60ogcrTiA3HEjwTK8ymKeuJMc4q6dVz200XnYUtLR9GYjPXvFOVr6W1zUK1WbPToaWJJuKnxBLnd0ftDEbMmj4loHYyhZyMjM91zQS4p7z8eKa9h0JrbacekcirexG0z4n3324}   \end{align}
where $C_0\geq1$ is a fixed constant. With $\epsilon_0>0$ to be determined, we have   \begin{align}\thelt{9 O8Y CmR lEW 7f GISs 6xaz bM6PSB N2 B jtb 65z z2N uY o4kU lpIq JVBC4D zu Z ZN6 Zkz 0oo mm nswe bstF mlxkKE QE L 6bs oYz xx0 8I Q5Ma 7Inf dXLQ9j eH S Tmi gtt k4v P7 778H p1o6 7atRbf cr S 2CW zwQ 9j0 Rj r0VL 9vlv kkk6J9 bM 1 Xgi Yla y8Z Eq 39Z5 3jRn Xh5mKP Pa 5 tFw 7E0 nE7 Cu FIoV lFxg uxB1hq lH e OLd b7R Kfl 0S KJiY ekpv RSYnNF f7 U VOW Bvw pN9 mt gGwh 2NJC Y53IdJ XP p YAZ 1B1 AgS xn 61oQ Vtg7 W7QcPC 42 e cSA 5jG 4K5 H1 tQs6 TNph OKTBId Gk F SGm }    \begin{split}    \alpha_{n+1}     + \epsilon_0 \beta_{n+1}    \leq      C_0 T (\beta_{n+1}+\alpha_{n})      + C_0 \epsilon_0 \alpha_n    .    \end{split}    \llabel{8ThswELzXU3X7Ebd1KdZ7v1rN3GiirRXGKWK099ovBM0FDJCvkopYNQ2aN94Z7k0UnUKamE3OjU8DFYFFokbSI2J9V9gVlM8ALWThDPnPu3EL7HPD2VDaZTggzcCCmbvc70qqPcC9mt60ogcrTiA3HEjwTK8ymKeuJMc4q6dVz200XnYUtLR9GYjPXvFOVr6W1zUK1WbPToaWJJuKnxBLnd0ftDEbMmj4loHYyhZyMjM91zQS4p7z8eKa9h0JrbacekcirexG0z4n3325}   \end{align}
To obtain a contractive property, it is sufficient to require $C_0 T \leq \epsilon_0$ and  $C_0 T + C_0 \epsilon_0 \leq 1/2$. Note that it is possible to achieve these two inequalities if $T=\epsilon_0/C_0$ and $(1+C_0)\epsilon_0\leq 1/2$. With the two choices, we obtain $\alpha_{n+1}+\epsilon_0 \beta_{n+1}\leq (1/2)(\alpha_n+\epsilon_0\beta_n)$. Hence, there exists a unique solution  $(w, w_{t}) \in L^{\infty}([0,T];H^{4+ \delta}(\Gamma_{1})\times H^{2+ \delta}(\Gamma_{1}) )$  satisfying \eqref{8ThswELzXU3X7Ebd1KdZ7v1rN3GiirRXGKWK099ovBM0FDJCvkopYNQ2aN94Z7k0UnUKamE3OjU8DFYFFokbSI2J9V9gVlM8ALWThDPnPu3EL7HPD2VDaZTggzcCCmbvc70qqPcC9mt60ogcrTiA3HEjwTK8ymKeuJMc4q6dVz200XnYUtLR9GYjPXvFOVr6W1zUK1WbPToaWJJuKnxBLnd0ftDEbMmj4loHYyhZyMjM91zQS4p7z8eKa9h0JrbacekcirexG0z4n3241}--\eqref{8ThswELzXU3X7Ebd1KdZ7v1rN3GiirRXGKWK099ovBM0FDJCvkopYNQ2aN94Z7k0UnUKamE3OjU8DFYFFokbSI2J9V9gVlM8ALWThDPnPu3EL7HPD2VDaZTggzcCCmbvc70qqPcC9mt60ogcrTiA3HEjwTK8ymKeuJMc4q6dVz200XnYUtLR9GYjPXvFOVr6W1zUK1WbPToaWJJuKnxBLnd0ftDEbMmj4loHYyhZyMjM91zQS4p7z8eKa9h0JrbacekcirexG0z4n3244}.  \colb The regularity and uniqueness of the corresponding pair $(v,q)$ can be deduced from Theorem~\ref{T04}. This establishes Theorem~\ref{T05}.
\end{proof} \par \subsection{Applying the a~priori estimates to constructed $\nu>0$ solutions} \label{sec11} Now that we have constructed solutions given $\nu>0$,  the uniform bounds from a~priori estimates in  Section~\ref{sec06} are used to pass through the limit  as $\nu \to 0$.  However, the constructed solutions are not sufficiently regular to justify  a direct application of the a~priori estimates in Section~\ref{sec02}. Instead, we perform the a~priori estimates on partial difference quotients of solutions. \par \begin{proof}[Proof of Theorem~\ref{T03}]
Denote by    \begin{equation}    D_{h,l}f(x)= \frac{1}{h}(f(x+h e_{l}) - f(x))    \comma x\in\Omega       \commaone l=1,2    \commaone h\in\mathbb{R}\backslash\{0\}    \llabel{8ThswELzXU3X7Ebd1KdZ7v1rN3GiirRXGKWK099ovBM0FDJCvkopYNQ2aN94Z7k0UnUKamE3OjU8DFYFFokbSI2J9V9gVlM8ALWThDPnPu3EL7HPD2VDaZTggzcCCmbvc70qqPcC9mt60ogcrTiA3HEjwTK8ymKeuJMc4q6dVz200XnYUtLR9GYjPXvFOVr6W1zUK1WbPToaWJJuKnxBLnd0ftDEbMmj4loHYyhZyMjM91zQS4p7z8eKa9h0JrbacekcirexG0z4n3283}   \end{equation}     the difference quotient of a function $f$ by $h\in\mathbb{R}\backslash \{0\}$ in the direction~$e_l$. We start with the analog of the plate estimate \eqref{8ThswELzXU3X7Ebd1KdZ7v1rN3GiirRXGKWK099ovBM0FDJCvkopYNQ2aN94Z7k0UnUKamE3OjU8DFYFFokbSI2J9V9gVlM8ALWThDPnPu3EL7HPD2VDaZTggzcCCmbvc70qqPcC9mt60ogcrTiA3HEjwTK8ymKeuJMc4q6dVz200XnYUtLR9GYjPXvFOVr6W1zUK1WbPToaWJJuKnxBLnd0ftDEbMmj4loHYyhZyMjM91zQS4p7z8eKa9h0JrbacekcirexG0z4n352}, which reads   \begin{align}\thelt{f cr S 2CW zwQ 9j0 Rj r0VL 9vlv kkk6J9 bM 1 Xgi Yla y8Z Eq 39Z5 3jRn Xh5mKP Pa 5 tFw 7E0 nE7 Cu FIoV lFxg uxB1hq lH e OLd b7R Kfl 0S KJiY ekpv RSYnNF f7 U VOW Bvw pN9 mt gGwh 2NJC Y53IdJ XP p YAZ 1B1 AgS xn 61oQ Vtg7 W7QcPC 42 e cSA 5jG 4K5 H1 tQs6 TNph OKTBId Gk F SGm V0k zAx av Qzje XGbi Sjg3kY Z5 L xzF 3JN Hkn rm y4sm J70w hEtBeX kS T WEu jcA uS0 Nk Hloa 7wYg Ma5j8g 4g i 7WZ 77D s5M ZZ MtN5 iJEa CfHJ0s D6 z VuX 06B P99 Fg a9Gg YMv6 YFVOBE Ry 3}   \begin{split}    &
   \frac12 \frac{d}{dt}     \Bigl(      \Vert \Delta_2 \UIPOIUPOIUPOOYIUIUYOIUYOIUHOIUOIUHIOPUHPOIJPOIJPOUHOIUHOILJHLIUHYOIUYOUI^{1+\delta}D_{h,l}  w\Vert_{L^2(\Gamma_1)}^2      + \Vert \UIPOIUPOIUPOOYIUIUYOIUYOIUHOIUOIUHIOPUHPOIJPOIJPOUHOIUHOILJHLIUHYOIUYOUI^{1+\delta} D_{h,l} w_{t}\Vert_{L^2(\Gamma_1)}^2   \Bigr)      +  \nu  \Vert  \nabla_2  \UIPOIUPOIUPOOYIUIUYOIUYOIUHOIUOIUHIOPUHPOIJPOIJPOUHOIUHOILJHLIUHYOIUYOUI^{1+\delta} D_{h,l} w_{t} \Vert_{L^2(\Gamma_1)}^2     \\&\indeq     = \OIUYJHUGFAJKLDHFKJLSDHFLKSDJFHLKSDJHFLKSDJHFLKDJFHLLDKHFLKSDHJFALKJHLJLHGLKHHLKJHLKGKHGJKHGKJHLKHJLKJH_{\Gamma_1} \UIPOIUPOIUPOOYIUIUYOIUYOIUHOIUOIUHIOPUHPOIJPOIJPOUHOIUHOILJHLIUHYOIUYOUI^{1+\delta} D_{h,l}q \UIPOIUPOIUPOOYIUIUYOIUYOIUHOIUOIUHIOPUHPOIJPOIJPOUHOIUHOILJHLIUHYOIUYOUI^{1+\delta} D_{h,l}w_{t}    ,   \end{split}    \label{8ThswELzXU3X7Ebd1KdZ7v1rN3GiirRXGKWK099ovBM0FDJCvkopYNQ2aN94Z7k0UnUKamE3OjU8DFYFFokbSI2J9V9gVlM8ALWThDPnPu3EL7HPD2VDaZTggzcCCmbvc70qqPcC9mt60ogcrTiA3HEjwTK8ymKeuJMc4q6dVz200XnYUtLR9GYjPXvFOVr6W1zUK1WbPToaWJJuKnxBLnd0ftDEbMmj4loHYyhZyMjM91zQS4p7z8eKa9h0JrbacekcirexG0z4n3284}   \end{align} where we assume $h\in\mathbb{R}\backslash \{0\}$ and $l\in\{1,2\}$ throughout.
Since $h$ and $l$ are fixed for most of the proof, we denote   \begin{equation}    D = D_{h,l}    .    \llabel{8ThswELzXU3X7Ebd1KdZ7v1rN3GiirRXGKWK099ovBM0FDJCvkopYNQ2aN94Z7k0UnUKamE3OjU8DFYFFokbSI2J9V9gVlM8ALWThDPnPu3EL7HPD2VDaZTggzcCCmbvc70qqPcC9mt60ogcrTiA3HEjwTK8ymKeuJMc4q6dVz200XnYUtLR9GYjPXvFOVr6W1zUK1WbPToaWJJuKnxBLnd0ftDEbMmj4loHYyhZyMjM91zQS4p7z8eKa9h0JrbacekcirexG0z4n3335}   \end{equation} The identity \eqref{8ThswELzXU3X7Ebd1KdZ7v1rN3GiirRXGKWK099ovBM0FDJCvkopYNQ2aN94Z7k0UnUKamE3OjU8DFYFFokbSI2J9V9gVlM8ALWThDPnPu3EL7HPD2VDaZTggzcCCmbvc70qqPcC9mt60ogcrTiA3HEjwTK8ymKeuJMc4q6dVz200XnYUtLR9GYjPXvFOVr6W1zUK1WbPToaWJJuKnxBLnd0ftDEbMmj4loHYyhZyMjM91zQS4p7z8eKa9h0JrbacekcirexG0z4n3284} is obtained by applying $\UIPOIUPOIUPOOYIUIUYOIUYOIUHOIUOIUHIOPUHPOIJPOIJPOUHOIUHOILJHLIUHYOIUYOUI^{1+\delta} D_{h,l}$  to the plate equation \eqref{8ThswELzXU3X7Ebd1KdZ7v1rN3GiirRXGKWK099ovBM0FDJCvkopYNQ2aN94Z7k0UnUKamE3OjU8DFYFFokbSI2J9V9gVlM8ALWThDPnPu3EL7HPD2VDaZTggzcCCmbvc70qqPcC9mt60ogcrTiA3HEjwTK8ymKeuJMc4q6dVz200XnYUtLR9GYjPXvFOVr6W1zUK1WbPToaWJJuKnxBLnd0ftDEbMmj4loHYyhZyMjM91zQS4p7z8eKa9h0JrbacekcirexG0z4n322} and  testing the  resulting equation  with  $\UIPOIUPOIUPOOYIUIUYOIUYOIUHOIUOIUHIOPUHPOIJPOIJPOUHOIUHOILJHLIUHYOIUYOUI^{1+\delta} D_{h,l}w_{t}$, for $l=1,2$.  Integrating \eqref{8ThswELzXU3X7Ebd1KdZ7v1rN3GiirRXGKWK099ovBM0FDJCvkopYNQ2aN94Z7k0UnUKamE3OjU8DFYFFokbSI2J9V9gVlM8ALWThDPnPu3EL7HPD2VDaZTggzcCCmbvc70qqPcC9mt60ogcrTiA3HEjwTK8ymKeuJMc4q6dVz200XnYUtLR9GYjPXvFOVr6W1zUK1WbPToaWJJuKnxBLnd0ftDEbMmj4loHYyhZyMjM91zQS4p7z8eKa9h0JrbacekcirexG0z4n3284} in time, we obtain   \begin{align}\thelt{Y53IdJ XP p YAZ 1B1 AgS xn 61oQ Vtg7 W7QcPC 42 e cSA 5jG 4K5 H1 tQs6 TNph OKTBId Gk F SGm V0k zAx av Qzje XGbi Sjg3kY Z5 L xzF 3JN Hkn rm y4sm J70w hEtBeX kS T WEu jcA uS0 Nk Hloa 7wYg Ma5j8g 4g i 7WZ 77D s5M ZZ MtN5 iJEa CfHJ0s D6 z VuX 06B P99 Fg a9Gg YMv6 YFVOBE Ry 3 Xw2 SBY ZDx ix xWHr rlxj KA3fok Ph 9 Y75 8fG XEh gb Bw82 C4JC StUeoz Jf I uGj Ppw p7U xC E5ah G5EG JF3nRL M8 C Qc0 0Tc mXI SI yZNJ WKMI zkF5u1 nv D 8GW YqB t2l Nx dvzb Xj00 EEpUTc}   \begin{split}
   &    \frac12 \Vert  \Delta_2 \UIPOIUPOIUPOOYIUIUYOIUYOIUHOIUOIUHIOPUHPOIJPOIJPOUHOIUHOILJHLIUHYOIUYOUI^{1+\delta} D w\Vert_{L^2(\Gamma_1)}^2    +  \frac12 \Vert \UIPOIUPOIUPOOYIUIUYOIUYOIUHOIUOIUHIOPUHPOIJPOIJPOUHOIUHOILJHLIUHYOIUYOUI^{1+\delta} D w_{t}\Vert_{L^2(\Gamma_1)}^2    +   \nu \OIUYJHUGFAJKLDHFKJLSDHFLKSDJFHLKSDJHFLKSDJHFLKDJFHLLDKHFLKSDHJFALKJHLJLHGLKHHLKJHLKGKHGJKHGKJHLKHJLKJH_{0}^{t}   \Vert  \nabla_2  \UIPOIUPOIUPOOYIUIUYOIUYOIUHOIUOIUHIOPUHPOIJPOIJPOUHOIUHOILJHLIUHYOIUYOUI^{1+\delta} w_{t} \Vert_{L^2(\Gamma_1)}^2 \, ds   \\&\indeq   =    \frac12 \Vert \UIPOIUPOIUPOOYIUIUYOIUYOIUHOIUOIUHIOPUHPOIJPOIJPOUHOIUHOILJHLIUHYOIUYOUI^{1+\delta} D w_{t}(0)\Vert_{L^2(\Gamma_1)}^2 \colrr    + \OIUYJHUGFAJKLDHFKJLSDHFLKSDJFHLKSDJHFLKSDJHFLKDJFHLLDKHFLKSDHJFALKJHLJLHGLKHHLKJHLKGKHGJKHGKJHLKHJLKJH_{0}^{t}\OIUYJHUGFAJKLDHFKJLSDHFLKSDJFHLKSDJHFLKSDJHFLKDJFHLLDKHFLKSDHJFALKJHLJLHGLKHHLKJHLKGKHGJKHGKJHLKHJLKJH_{\Gamma_1}  \UIPOIUPOIUPOOYIUIUYOIUYOIUHOIUOIUHIOPUHPOIJPOIJPOUHOIUHOILJHLIUHYOIUYOUI^{1+\delta} D q \UIPOIUPOIUPOOYIUIUYOIUYOIUHOIUOIUHIOPUHPOIJPOIJPOUHOIUHOILJHLIUHYOIUYOUI^{1+\delta} D w_{t}\,d\sigma\,ds \colb    .   \end{split}   \llabel{8ThswELzXU3X7Ebd1KdZ7v1rN3GiirRXGKWK099ovBM0FDJCvkopYNQ2aN94Z7k0UnUKamE3OjU8DFYFFokbSI2J9V9gVlM8ALWThDPnPu3EL7HPD2VDaZTggzcCCmbvc70qqPcC9mt60ogcrTiA3HEjwTK8ymKeuJMc4q6dVz200XnYUtLR9GYjPXvFOVr6W1zUK1WbPToaWJJuKnxBLnd0ftDEbMmj4loHYyhZyMjM91zQS4p7z8eKa9h0JrbacekcirexG0z4n3285}   \end{align}
For the tangential estimate for the Euler equations, we start, in analogy with \eqref{8ThswELzXU3X7Ebd1KdZ7v1rN3GiirRXGKWK099ovBM0FDJCvkopYNQ2aN94Z7k0UnUKamE3OjU8DFYFFokbSI2J9V9gVlM8ALWThDPnPu3EL7HPD2VDaZTggzcCCmbvc70qqPcC9mt60ogcrTiA3HEjwTK8ymKeuJMc4q6dVz200XnYUtLR9GYjPXvFOVr6W1zUK1WbPToaWJJuKnxBLnd0ftDEbMmj4loHYyhZyMjM91zQS4p7z8eKa9h0JrbacekcirexG0z4n3328}, by   \begin{align}\thelt{7wYg Ma5j8g 4g i 7WZ 77D s5M ZZ MtN5 iJEa CfHJ0s D6 z VuX 06B P99 Fg a9Gg YMv6 YFVOBE Ry 3 Xw2 SBY ZDx ix xWHr rlxj KA3fok Ph 9 Y75 8fG XEh gb Bw82 C4JC StUeoz Jf I uGj Ppw p7U xC E5ah G5EG JF3nRL M8 C Qc0 0Tc mXI SI yZNJ WKMI zkF5u1 nv D 8GW YqB t2l Nx dvzb Xj00 EEpUTc w3 z vyf ab6 yQo Rj HWRF JzPB uZ61G8 w0 S Abz pNL IVj WH kWfj ylXj 6VZvjs Tw O 3Uz Bos Q7e rX yGsd vcKr YzZGQe AM 1 u1T Nky bHc U7 1Kmp yaht wKEj7O u0 A 7ep b7v 4Fd qS AD7c 02cG v}    \begin{split}    &    \frac12    \frac{d}{dt}    \OIUYJHUGFAJKLDHFKJLSDHFLKSDJFHLKSDJHFLKSDJHFLKDJFHLLDKHFLKSDHJFALKJHLJLHGLKHHLKJHLKGKHGJKHGKJHLKHJLKJH  J \UIPOIUPOIUPOOYIUIUYOIUYOIUHOIUOIUHIOPUHPOIJPOIJPOUHOIUHOILJHLIUHYOIUYOUI^{0.5+\delta} D v_i \UIPOIUPOIUPOOYIUIUYOIUYOIUHOIUOIUHIOPUHPOIJPOIJPOUHOIUHOILJHLIUHYOIUYOUI^{1.5+\delta} D v_i      =       \frac12 \OIUYJHUGFAJKLDHFKJLSDHFLKSDJFHLKSDJHFLKSDJHFLKDJFHLLDKHFLKSDHJFALKJHLJLHGLKHHLKJHLKGKHGJKHGKJHLKHJLKJH J_t \UIPOIUPOIUPOOYIUIUYOIUYOIUHOIUOIUHIOPUHPOIJPOIJPOUHOIUHOILJHLIUHYOIUYOUI^{0.5+\delta} D v_i \UIPOIUPOIUPOOYIUIUYOIUYOIUHOIUOIUHIOPUHPOIJPOIJPOUHOIUHOILJHLIUHYOIUYOUI^{1.5+\delta} D v_i      + \OIUYJHUGFAJKLDHFKJLSDHFLKSDJFHLKSDJHFLKSDJHFLKDJFHLLDKHFLKSDHJFALKJHLJLHGLKHHLKJHLKGKHGJKHGKJHLKHJLKJH            J \UIPOIUPOIUPOOYIUIUYOIUYOIUHOIUOIUHIOPUHPOIJPOIJPOUHOIUHOILJHLIUHYOIUYOUI^{0.5+\delta} \UIOIUYOIUyHJGKHJLOIUYOIUOIUYOIYIOUYTIUYIOOOIUYOIUYPOIUPOIUPOIUYOIUYOIUYOIUHOUHOHIOUHOIHOIUHOIUHIOUH_{t} D v_i             \UIPOIUPOIUPOOYIUIUYOIUYOIUHOIUOIUHIOPUHPOIJPOIJPOUHOIUHOILJHLIUHYOIUYOUI^{1.5+\delta} D v_i      + \bar I
    ,    \end{split}    \label{8ThswELzXU3X7Ebd1KdZ7v1rN3GiirRXGKWK099ovBM0FDJCvkopYNQ2aN94Z7k0UnUKamE3OjU8DFYFFokbSI2J9V9gVlM8ALWThDPnPu3EL7HPD2VDaZTggzcCCmbvc70qqPcC9mt60ogcrTiA3HEjwTK8ymKeuJMc4q6dVz200XnYUtLR9GYjPXvFOVr6W1zUK1WbPToaWJJuKnxBLnd0ftDEbMmj4loHYyhZyMjM91zQS4p7z8eKa9h0JrbacekcirexG0z4n3333}   \end{align} where   \begin{align}\thelt{E5ah G5EG JF3nRL M8 C Qc0 0Tc mXI SI yZNJ WKMI zkF5u1 nv D 8GW YqB t2l Nx dvzb Xj00 EEpUTc w3 z vyf ab6 yQo Rj HWRF JzPB uZ61G8 w0 S Abz pNL IVj WH kWfj ylXj 6VZvjs Tw O 3Uz Bos Q7e rX yGsd vcKr YzZGQe AM 1 u1T Nky bHc U7 1Kmp yaht wKEj7O u0 A 7ep b7v 4Fd qS AD7c 02cG vsiW44 4p F eh8 Odj wM7 ol sSQo eyZX ota8wX r6 N SG2 sFo GBe l3 PvMo Ggam q3Ykaa tL i dTQ 84L YKF fA F15v lZae TTxvru 2x l M2g FBb V80 UJ Qvke bsTq FRfmCS Ve 3 4YV HOu Kok FX YI2M T}   \begin{split}    \bar I      &=      \frac12\OIUYJHUGFAJKLDHFKJLSDHFLKSDJFHLKSDJHFLKSDJHFLKDJFHLLDKHFLKSDHJFALKJHLJLHGLKHHLKJHLKGKHGJKHGKJHLKHJLKJH            J \UIPOIUPOIUPOOYIUIUYOIUYOIUHOIUOIUHIOPUHPOIJPOIJPOUHOIUHOILJHLIUHYOIUYOUI^{0.5+\delta} D v_i             \UIPOIUPOIUPOOYIUIUYOIUYOIUHOIUOIUHIOPUHPOIJPOIJPOUHOIUHOILJHLIUHYOIUYOUI^{1.5+\delta} \UIOIUYOIUyHJGKHJLOIUYOIUOIUYOIYIOUYTIUYIOOOIUYOIUYPOIUPOIUPOIUYOIUYOIUYOIUHOUHOHIOUHOIHOIUHOIUHIOUH_{t} D v_i      -      \frac12\OIUYJHUGFAJKLDHFKJLSDHFLKSDJFHLKSDJHFLKSDJHFLKDJFHLLDKHFLKSDHJFALKJHLJLHGLKHHLKJHLKGKHGJKHGKJHLKHJLKJH 
          J \UIPOIUPOIUPOOYIUIUYOIUYOIUHOIUOIUHIOPUHPOIJPOIJPOUHOIUHOILJHLIUHYOIUYOUI^{0.5+\delta} \UIOIUYOIUyHJGKHJLOIUYOIUOIUYOIYIOUYTIUYIOOOIUYOIUYPOIUPOIUPOIUYOIUYOIUYOIUHOUHOHIOUHOIHOIUHOIUHIOUH_{t} D v_i             \UIPOIUPOIUPOOYIUIUYOIUYOIUHOIUOIUHIOPUHPOIJPOIJPOUHOIUHOILJHLIUHYOIUYOUI^{1.5+\delta} D v_i    \\&      =      \frac12      \OIUYJHUGFAJKLDHFKJLSDHFLKSDJFHLKSDJHFLKSDJHFLKDJFHLLDKHFLKSDHJFALKJHLJLHGLKHHLKJHLKGKHGJKHGKJHLKHJLKJH        \Bigl(        \UIPOIUPOIUPOOYIUIUYOIUYOIUHOIUOIUHIOPUHPOIJPOIJPOUHOIUHOILJHLIUHYOIUYOUI^2 (J\UIPOIUPOIUPOOYIUIUYOIUYOIUHOIUOIUHIOPUHPOIJPOIJPOUHOIUHOILJHLIUHYOIUYOUI^{0.5+\delta}D v_i)        - J \UIPOIUPOIUPOOYIUIUYOIUYOIUHOIUOIUHIOPUHPOIJPOIJPOUHOIUHOILJHLIUHYOIUYOUI^{2.5+\delta}D v_i       \Bigr)       \UIPOIUPOIUPOOYIUIUYOIUYOIUHOIUOIUHIOPUHPOIJPOIJPOUHOIUHOILJHLIUHYOIUYOUI^{-0.5+\delta} \UIOIUYOIUyHJGKHJLOIUYOIUOIUYOIYIOUYTIUYIOOOIUYOIUYPOIUPOIUPOIUYOIUYOIUYOIUHOUHOHIOUHOIHOIUHOIUHIOUH_{t}D v_i       \\&\indeq       +      \frac12
     \OIUYJHUGFAJKLDHFKJLSDHFLKSDJFHLKSDJHFLKSDJHFLKDJFHLLDKHFLKSDHJFALKJHLJLHGLKHHLKJHLKGKHGJKHGKJHLKHJLKJH        \Bigl(        J \UIPOIUPOIUPOOYIUIUYOIUYOIUHOIUOIUHIOPUHPOIJPOIJPOUHOIUHOILJHLIUHYOIUYOUI^{2.5+\delta}D v_i        - \UIPOIUPOIUPOOYIUIUYOIUYOIUHOIUOIUHIOPUHPOIJPOIJPOUHOIUHOILJHLIUHYOIUYOUI (J \UIPOIUPOIUPOOYIUIUYOIUYOIUHOIUOIUHIOPUHPOIJPOIJPOUHOIUHOILJHLIUHYOIUYOUI^{1.5+\delta}D v_i)       \Bigr)       \UIPOIUPOIUPOOYIUIUYOIUYOIUHOIUOIUHIOPUHPOIJPOIJPOUHOIUHOILJHLIUHYOIUYOUI^{-0.5+\delta} \UIOIUYOIUyHJGKHJLOIUYOIUOIUYOIYIOUYTIUYIOOOIUYOIUYPOIUPOIUPOIUYOIUYOIUYOIUHOUHOHIOUHOIHOIUHOIUHIOUH_{t}D v_i     \\&     \dlkjfhlaskdhjflkasdjhflkasjhdflkasjhdflkasjhdfls     \Vert J\Vert_{H^{3}}     \Vert v\Vert_{H^{2.5+\delta}}     \Vert v_t\Vert_{H^{0.5+\delta}}    \leq     P(        \Vert v\Vert_{H^{2.5+\delta}},
       \Vert w\Vert_{H^{4+\delta}(\Gamma_1)}      )    ,   \end{split}   \llabel{8ThswELzXU3X7Ebd1KdZ7v1rN3GiirRXGKWK099ovBM0FDJCvkopYNQ2aN94Z7k0UnUKamE3OjU8DFYFFokbSI2J9V9gVlM8ALWThDPnPu3EL7HPD2VDaZTggzcCCmbvc70qqPcC9mt60ogcrTiA3HEjwTK8ymKeuJMc4q6dVz200XnYUtLR9GYjPXvFOVr6W1zUK1WbPToaWJJuKnxBLnd0ftDEbMmj4loHYyhZyMjM91zQS4p7z8eKa9h0JrbacekcirexG0z4n3336}   \end{align} recalling that $\delta\geq 0.5$. \par For the second term in \eqref{8ThswELzXU3X7Ebd1KdZ7v1rN3GiirRXGKWK099ovBM0FDJCvkopYNQ2aN94Z7k0UnUKamE3OjU8DFYFFokbSI2J9V9gVlM8ALWThDPnPu3EL7HPD2VDaZTggzcCCmbvc70qqPcC9mt60ogcrTiA3HEjwTK8ymKeuJMc4q6dVz200XnYUtLR9GYjPXvFOVr6W1zUK1WbPToaWJJuKnxBLnd0ftDEbMmj4loHYyhZyMjM91zQS4p7z8eKa9h0JrbacekcirexG0z4n3333}, we use the  Euler equations \eqref{8ThswELzXU3X7Ebd1KdZ7v1rN3GiirRXGKWK099ovBM0FDJCvkopYNQ2aN94Z7k0UnUKamE3OjU8DFYFFokbSI2J9V9gVlM8ALWThDPnPu3EL7HPD2VDaZTggzcCCmbvc70qqPcC9mt60ogcrTiA3HEjwTK8ymKeuJMc4q6dVz200XnYUtLR9GYjPXvFOVr6W1zUK1WbPToaWJJuKnxBLnd0ftDEbMmj4loHYyhZyMjM91zQS4p7z8eKa9h0JrbacekcirexG0z4n354}$_1$, which leads to   \begin{align}\thelt{e rX yGsd vcKr YzZGQe AM 1 u1T Nky bHc U7 1Kmp yaht wKEj7O u0 A 7ep b7v 4Fd qS AD7c 02cG vsiW44 4p F eh8 Odj wM7 ol sSQo eyZX ota8wX r6 N SG2 sFo GBe l3 PvMo Ggam q3Ykaa tL i dTQ 84L YKF fA F15v lZae TTxvru 2x l M2g FBb V80 UJ Qvke bsTq FRfmCS Ve 3 4YV HOu Kok FX YI2M TZj8 BZX0Eu D1 d Imo cM9 3Nj ZP lPHq Ell4 z66IvF 3T O Mb7 xuV RYj lV EBGe PNUg LqSd4O YN e Xud aDQ 6Bj KU rIpc r5n8 QTNztB ho 3 LC3 rc3 0it 5C N2Tm N88X YeTdqT LP l S97 uLM w0N As M}    \begin{split}    &    \frac12
   \frac{d}{dt}    \OIUYJHUGFAJKLDHFKJLSDHFLKSDJFHLKSDJHFLKSDJHFLKDJFHLLDKHFLKSDHJFALKJHLJLHGLKHHLKJHLKGKHGJKHGKJHLKHJLKJH  J \UIPOIUPOIUPOOYIUIUYOIUYOIUHOIUOIUHIOPUHPOIJPOIJPOUHOIUHOILJHLIUHYOIUYOUI^{0.5+\delta} D v_i \UIPOIUPOIUPOOYIUIUYOIUYOIUHOIUOIUHIOPUHPOIJPOIJPOUHOIUHOILJHLIUHYOIUYOUI^{1.5+\delta} D v_i      \\&\indeq      =       \frac12 \OIUYJHUGFAJKLDHFKJLSDHFLKSDJFHLKSDJHFLKSDJHFLKDJFHLLDKHFLKSDHJFALKJHLJLHGLKHHLKJHLKGKHGJKHGKJHLKHJLKJH J_t \UIPOIUPOIUPOOYIUIUYOIUYOIUHOIUOIUHIOPUHPOIJPOIJPOUHOIUHOILJHLIUHYOIUYOUI^{0.5+\delta} D v_i \UIPOIUPOIUPOOYIUIUYOIUYOIUHOIUOIUHIOPUHPOIJPOIJPOUHOIUHOILJHLIUHYOIUYOUI^{1.5+\delta} D v_i      - \OIUYJHUGFAJKLDHFKJLSDHFLKSDJFHLKSDJHFLKSDJHFLKDJFHLLDKHFLKSDHJFALKJHLJLHGLKHHLKJHLKGKHGJKHGKJHLKHJLKJH           \Bigl(           \UIPOIUPOIUPOOYIUIUYOIUYOIUHOIUOIUHIOPUHPOIJPOIJPOUHOIUHOILJHLIUHYOIUYOUI^{0.5+\delta} D(J\UIOIUYOIUyHJGKHJLOIUYOIUOIUYOIYIOUYTIUYIOOOIUYOIUYPOIUPOIUPOIUYOIUYOIUYOIUHOUHOHIOUHOIHOIUHOIUHIOUH_t v_i) - J \UIPOIUPOIUPOOYIUIUYOIUYOIUHOIUOIUHIOPUHPOIJPOIJPOUHOIUHOILJHLIUHYOIUYOUI^{0.5+\delta}D (\UIOIUYOIUyHJGKHJLOIUYOIUOIUYOIYIOUYTIUYIOOOIUYOIUYPOIUPOIUPOIUYOIUYOIUYOIUHOUHOHIOUHOIHOIUHOIUHIOUH_{t}v_i)          \Bigr) \UIPOIUPOIUPOOYIUIUYOIUYOIUHOIUOIUHIOPUHPOIJPOIJPOUHOIUHOILJHLIUHYOIUYOUI^{1.5+\delta} Dv_i     \\&\indeq\indeq     - \sum_{m=1}^{2}\OIUYJHUGFAJKLDHFKJLSDHFLKSDJFHLKSDJHFLKSDJHFLKDJFHLLDKHFLKSDHJFALKJHLJLHGLKHHLKJHLKGKHGJKHGKJHLKHJLKJH \UIPOIUPOIUPOOYIUIUYOIUYOIUHOIUOIUHIOPUHPOIJPOIJPOUHOIUHOILJHLIUHYOIUYOUI^{0.5+\delta}D(v_m\tda_{jm}\UIOIUYOIUyHJGKHJLOIUYOIUOIUYOIYIOUYTIUYIOOOIUYOIUYPOIUPOIUPOIUYOIUYOIUYOIUHOUHOHIOUHOIHOIUHOIUHIOUH_{j}v_i) \UIPOIUPOIUPOOYIUIUYOIUYOIUHOIUOIUHIOPUHPOIJPOIJPOUHOIUHOILJHLIUHYOIUYOUI^{1.5+\delta}D v_i     - \OIUYJHUGFAJKLDHFKJLSDHFLKSDJFHLKSDJHFLKSDJHFLKDJFHLLDKHFLKSDHJFALKJHLJLHGLKHHLKJHLKGKHGJKHGKJHLKHJLKJH \UIPOIUPOIUPOOYIUIUYOIUYOIUHOIUOIUHIOPUHPOIJPOIJPOUHOIUHOILJHLIUHYOIUYOUI^{0.5+\delta} D             \bigl(               (v_3-\UIOIUYOIUyHJGKHJLOIUYOIUOIUYOIYIOUYTIUYIOOOIUYOIUYPOIUPOIUPOIUYOIUYOIUYOIUHOUHOHIOUHOIHOIUHOIUHIOUH_{t}\eta_3)\UIOIUYOIUyHJGKHJLOIUYOIUOIUYOIYIOUYTIUYIOOOIUYOIUYPOIUPOIUPOIUYOIUYOIUYOIUHOUHOHIOUHOIHOIUHOIUHIOUH_{3}v_i                                                          
            \bigr) \UIPOIUPOIUPOOYIUIUYOIUYOIUHOIUOIUHIOPUHPOIJPOIJPOUHOIUHOILJHLIUHYOIUYOUI^{1.5+\delta}D v_i     \\&\indeq\indeq     -\OIUYJHUGFAJKLDHFKJLSDHFLKSDJFHLKSDJHFLKSDJHFLKDJFHLLDKHFLKSDHJFALKJHLJLHGLKHHLKJHLKGKHGJKHGKJHLKHJLKJH \UIPOIUPOIUPOOYIUIUYOIUYOIUHOIUOIUHIOPUHPOIJPOIJPOUHOIUHOILJHLIUHYOIUYOUI^{0.5+\delta}D(\tda_{ki}\UIOIUYOIUyHJGKHJLOIUYOIUOIUYOIYIOUYTIUYIOOOIUYOIUYPOIUPOIUPOIUYOIUYOIUYOIUHOUHOHIOUHOIHOIUHOIUHIOUH_{k}q)\UIPOIUPOIUPOOYIUIUYOIUYOIUHOIUOIUHIOPUHPOIJPOIJPOUHOIUHOILJHLIUHYOIUYOUI^{1.5+\delta} D v_i      \\&\indeq     = I_1 + I_2 +I_3 + I_4 + I_5 + \bar I    .    \end{split}    \llabel{8ThswELzXU3X7Ebd1KdZ7v1rN3GiirRXGKWK099ovBM0FDJCvkopYNQ2aN94Z7k0UnUKamE3OjU8DFYFFokbSI2J9V9gVlM8ALWThDPnPu3EL7HPD2VDaZTggzcCCmbvc70qqPcC9mt60ogcrTiA3HEjwTK8ymKeuJMc4q6dVz200XnYUtLR9GYjPXvFOVr6W1zUK1WbPToaWJJuKnxBLnd0ftDEbMmj4loHYyhZyMjM91zQS4p7z8eKa9h0JrbacekcirexG0z4n3286}   \end{align} The first term satisfies    \begin{align}\thelt{4L YKF fA F15v lZae TTxvru 2x l M2g FBb V80 UJ Qvke bsTq FRfmCS Ve 3 4YV HOu Kok FX YI2M TZj8 BZX0Eu D1 d Imo cM9 3Nj ZP lPHq Ell4 z66IvF 3T O Mb7 xuV RYj lV EBGe PNUg LqSd4O YN e Xud aDQ 6Bj KU rIpc r5n8 QTNztB ho 3 LC3 rc3 0it 5C N2Tm N88X YeTdqT LP l S97 uLM w0N As MphO uPNi sXNIlW fX B Gc2 hxy kg5 0Q TN75 t5JN wZR3NH 1M n VRZ j2P rUY ve HPEl jGaT Ix4sCF zK B 0qp 3Pl eK6 8p 85w4 4l5z Zl07br v6 1 Kki AuT SA5 dk wYS3 F3YF 3e1xKE JW o AvV OZV bwN}   \begin{split}    I_1          \dlkjfhlaskdhjflkasdjhflkasjhdflkasjhdflkasjhdfls
         \Vert J_t\Vert_{H^{1.5+\delta}}          \Vert v\Vert_{H^{1.5+\delta}}          \Vert v\Vert_{H^{2.5+\delta}}    .    \end{split}    \llabel{8ThswELzXU3X7Ebd1KdZ7v1rN3GiirRXGKWK099ovBM0FDJCvkopYNQ2aN94Z7k0UnUKamE3OjU8DFYFFokbSI2J9V9gVlM8ALWThDPnPu3EL7HPD2VDaZTggzcCCmbvc70qqPcC9mt60ogcrTiA3HEjwTK8ymKeuJMc4q6dVz200XnYUtLR9GYjPXvFOVr6W1zUK1WbPToaWJJuKnxBLnd0ftDEbMmj4loHYyhZyMjM91zQS4p7z8eKa9h0JrbacekcirexG0z4n3287}   \end{align} For the next commutator term $I_{2}$, we use the product rule   \begin{equation}    D_{h,l}(fg)= D_{h,l}f g + \tau_{h,l}f D_{h,l} g    ,    \label{8ThswELzXU3X7Ebd1KdZ7v1rN3GiirRXGKWK099ovBM0FDJCvkopYNQ2aN94Z7k0UnUKamE3OjU8DFYFFokbSI2J9V9gVlM8ALWThDPnPu3EL7HPD2VDaZTggzcCCmbvc70qqPcC9mt60ogcrTiA3HEjwTK8ymKeuJMc4q6dVz200XnYUtLR9GYjPXvFOVr6W1zUK1WbPToaWJJuKnxBLnd0ftDEbMmj4loHYyhZyMjM91zQS4p7z8eKa9h0JrbacekcirexG0z4n3288}   \end{equation}  where  we denote by
  \begin{equation}     \tau_{h,l}g = g(x+ he_{l})    \comma x\in\Omega       \comma l=1,2    \commaone h\in\mathbb{R}\backslash\{0\}    \llabel{8ThswELzXU3X7Ebd1KdZ7v1rN3GiirRXGKWK099ovBM0FDJCvkopYNQ2aN94Z7k0UnUKamE3OjU8DFYFFokbSI2J9V9gVlM8ALWThDPnPu3EL7HPD2VDaZTggzcCCmbvc70qqPcC9mt60ogcrTiA3HEjwTK8ymKeuJMc4q6dVz200XnYUtLR9GYjPXvFOVr6W1zUK1WbPToaWJJuKnxBLnd0ftDEbMmj4loHYyhZyMjM91zQS4p7z8eKa9h0JrbacekcirexG0z4n3289}   \end{equation}    the translation operator and abbreviate $\tau=\tau_{h,l}$. We get   \begin{align}\thelt{Xud aDQ 6Bj KU rIpc r5n8 QTNztB ho 3 LC3 rc3 0it 5C N2Tm N88X YeTdqT LP l S97 uLM w0N As MphO uPNi sXNIlW fX B Gc2 hxy kg5 0Q TN75 t5JN wZR3NH 1M n VRZ j2P rUY ve HPEl jGaT Ix4sCF zK B 0qp 3Pl eK6 8p 85w4 4l5z Zl07br v6 1 Kki AuT SA5 dk wYS3 F3YF 3e1xKE JW o AvV OZV bwN Yg F7CK bSi9 2R0rlW h2 a khC oEp pr6 O2 PZJD ZN8Z ZD4IhH PT M vSD TgO y1l Z0 Y86n 9aMg kWdeuO Zj O i2F g3z iYa SR Cjlz XdQK bcnb5p KT q rJp 1P6 oGy xc 9vZZ RZeF r5TsSZ zG l 7HW uI}   \begin{split}    I_{2}=\OIUYJHUGFAJKLDHFKJLSDHFLKSDJFHLKSDJHFLKSDJHFLKDJFHLLDKHFLKSDHJFALKJHLJLHGLKHHLKJHLKGKHGJKHGKJHLKHJLKJH           \Bigl(           \UIPOIUPOIUPOOYIUIUYOIUYOIUHOIUOIUHIOPUHPOIJPOIJPOUHOIUHOILJHLIUHYOIUYOUI^{0.5+\delta} (J D(\UIOIUYOIUyHJGKHJLOIUYOIUOIUYOIYIOUYTIUYIOOOIUYOIUYPOIUPOIUPOIUYOIUYOIUYOIUHOUHOHIOUHOIHOIUHOIUHIOUH_t v_i)) - J \UIPOIUPOIUPOOYIUIUYOIUYOIUHOIUOIUHIOPUHPOIJPOIJPOUHOIUHOILJHLIUHYOIUYOUI^{0.5+\delta}D (\UIOIUYOIUyHJGKHJLOIUYOIUOIUYOIYIOUYTIUYIOOOIUYOIUYPOIUPOIUPOIUYOIUYOIUYOIUHOUHOHIOUHOIHOIUHOIUHIOUH_{t}v_i)
         \Bigr) \UIPOIUPOIUPOOYIUIUYOIUYOIUHOIUOIUHIOPUHPOIJPOIJPOUHOIUHOILJHLIUHYOIUYOUI^{1.5+\delta} Dv_i     +            \OIUYJHUGFAJKLDHFKJLSDHFLKSDJFHLKSDJHFLKSDJHFLKDJFHLLDKHFLKSDHJFALKJHLJLHGLKHHLKJHLKGKHGJKHGKJHLKHJLKJH            \UIPOIUPOIUPOOYIUIUYOIUYOIUHOIUOIUHIOPUHPOIJPOIJPOUHOIUHOILJHLIUHYOIUYOUI^{0.5+\delta} \bigl(DJ \tau(\UIOIUYOIUyHJGKHJLOIUYOIUOIUYOIYIOUYTIUYIOOOIUYOIUYPOIUPOIUPOIUYOIUYOIUYOIUHOUHOHIOUHOIHOIUHOIUHIOUH_t v_i)\bigr)            \UIPOIUPOIUPOOYIUIUYOIUYOIUHOIUOIUHIOPUHPOIJPOIJPOUHOIUHOILJHLIUHYOIUYOUI^{1.5+\delta} Dv_i     .   \end{split}    \llabel{8ThswELzXU3X7Ebd1KdZ7v1rN3GiirRXGKWK099ovBM0FDJCvkopYNQ2aN94Z7k0UnUKamE3OjU8DFYFFokbSI2J9V9gVlM8ALWThDPnPu3EL7HPD2VDaZTggzcCCmbvc70qqPcC9mt60ogcrTiA3HEjwTK8ymKeuJMc4q6dVz200XnYUtLR9GYjPXvFOVr6W1zUK1WbPToaWJJuKnxBLnd0ftDEbMmj4loHYyhZyMjM91zQS4p7z8eKa9h0JrbacekcirexG0z4n3290}   \end{align} Using  commutator estimates, we get   \begin{align}\thelt{zK B 0qp 3Pl eK6 8p 85w4 4l5z Zl07br v6 1 Kki AuT SA5 dk wYS3 F3YF 3e1xKE JW o AvV OZV bwN Yg F7CK bSi9 2R0rlW h2 a khC oEp pr6 O2 PZJD ZN8Z ZD4IhH PT M vSD TgO y1l Z0 Y86n 9aMg kWdeuO Zj O i2F g3z iYa SR Cjlz XdQK bcnb5p KT q rJp 1P6 oGy xc 9vZZ RZeF r5TsSZ zG l 7HW uIG M0y Re YDw3 lMux gAdF6d pp 8 ZVR cl7 uqH 8O BMbz L6dK BflWCW dl V hyc V5n Epv 2J SkD0 ccMp oIR38Q pe Z j9j 0Zo Pmq XR TxBs 8w9Q 5epR3t N5 j bvb rbS K7U 4W 4PJ0 ovnB 0opRpC YN P s}   \begin{split}     I_2    &\dlkjfhlaskdhjflkasdjhflkasjhdflkasjhdflkasjhdfls 
         \Vert \UIPOIUPOIUPOOYIUIUYOIUYOIUHOIUOIUHIOPUHPOIJPOIJPOUHOIUHOILJHLIUHYOIUYOUI^{0.5+\delta} J\Vert_{L^\infty}          \Vert D v_{t}\Vert_{L^2}          \Vert D v\Vert_{H^{1.5+\delta}}        +          \Vert  \UIPOIUPOIUPOOYIUIUYOIUYOIUHOIUOIUHIOPUHPOIJPOIJPOUHOIUHOILJHLIUHYOIUYOUI J\Vert_{L^\infty}          \Vert \UIPOIUPOIUPOOYIUIUYOIUYOIUHOIUOIUHIOPUHPOIJPOIJPOUHOIUHOILJHLIUHYOIUYOUI^{-0.5+\delta} D v_{t}\Vert_{L^2}          \Vert D v\Vert_{H^{1.5+\delta}}     \\&\indeq        +             \Vert \UIPOIUPOIUPOOYIUIUYOIUYOIUHOIUOIUHIOPUHPOIJPOIJPOUHOIUHOILJHLIUHYOIUYOUI^{0.5+\delta} D J\Vert_{L^6}          \Vert \tau v_{t}\Vert_{L^3}          \Vert D v\Vert_{H^{1.5+\delta}}        +             \Vert D J\Vert_{L^\infty}
         \Vert \UIPOIUPOIUPOOYIUIUYOIUYOIUHOIUOIUHIOPUHPOIJPOIJPOUHOIUHOILJHLIUHYOIUYOUI^{0.5+\delta}\tau v_{t}\Vert_{L^2}          \Vert D v\Vert_{H^{1.5+\delta}}     \\&      \dlkjfhlaskdhjflkasdjhflkasjhdflkasjhdflkasjhdfls         \Vert J\Vert_{H^{3.5+\delta}}          \Vert v_{t}\Vert_{H^{0.5+\delta}}          \Vert v\Vert_{H^{2.5+\delta}}    \leq     P(        \Vert v\Vert_{H^{2.5+\delta}},        \Vert v_t\Vert_{H^{0.5+\delta}},        \Vert w\Vert_{H^{4+\delta}(\Gamma_1)}      )    ,
   \end{split}    \llabel{8ThswELzXU3X7Ebd1KdZ7v1rN3GiirRXGKWK099ovBM0FDJCvkopYNQ2aN94Z7k0UnUKamE3OjU8DFYFFokbSI2J9V9gVlM8ALWThDPnPu3EL7HPD2VDaZTggzcCCmbvc70qqPcC9mt60ogcrTiA3HEjwTK8ymKeuJMc4q6dVz200XnYUtLR9GYjPXvFOVr6W1zUK1WbPToaWJJuKnxBLnd0ftDEbMmj4loHYyhZyMjM91zQS4p7z8eKa9h0JrbacekcirexG0z4n3291}   \end{align} where we used $\delta\geq0.5$. The third term $I_3$ may be estimated using the product rule as   \begin{align}\thelt{deuO Zj O i2F g3z iYa SR Cjlz XdQK bcnb5p KT q rJp 1P6 oGy xc 9vZZ RZeF r5TsSZ zG l 7HW uIG M0y Re YDw3 lMux gAdF6d pp 8 ZVR cl7 uqH 8O BMbz L6dK BflWCW dl V hyc V5n Epv 2J SkD0 ccMp oIR38Q pe Z j9j 0Zo Pmq XR TxBs 8w9Q 5epR3t N5 j bvb rbS K7U 4W 4PJ0 ovnB 0opRpC YN P so8 34P wtS Rq vir4 DRqu jaJq32 QU T G1P gbp 6nJ M2 CUnE NdJC r3ZGBH Eg B tds Td8 4gM 22 gKBN 7Qnm RtJgKU IG E eKx 64y AGK Ge zeJN mpeQ kLR389 HH 9 fXL BcE 6T4 Gj VZLI dLQI iQtkBk 9}    \begin{split}    I_3     &         \dlkjfhlaskdhjflkasdjhflkasjhdflkasjhdflkasjhdfls     \bigl\Vert        D(v_m \tda_{jm}\UIOIUYOIUyHJGKHJLOIUYOIUOIUYOIYIOUYTIUYIOOOIUYOIUYPOIUPOIUPOIUYOIUYOIUYOIUHOUHOHIOUHOIHOIUHOIUHIOUH_{j} v_i)     \bigr\Vert_{H^{0.5+\delta}}     \Vert 
     D v     \Vert_{H^{1.5+\delta}}     \dlkjfhlaskdhjflkasdjhflkasjhdflkasjhdflkasjhdfls     \Vert v\Vert_{H^{2.5+\delta}}^3     \Vert b\Vert_{H^{3.5+\delta}}    \leq     P(        \Vert v\Vert_{H^{2.5+\delta}},        \Vert w\Vert_{H^{4+\delta}(\Gamma_1)}      )    .    \end{split}    \llabel{8ThswELzXU3X7Ebd1KdZ7v1rN3GiirRXGKWK099ovBM0FDJCvkopYNQ2aN94Z7k0UnUKamE3OjU8DFYFFokbSI2J9V9gVlM8ALWThDPnPu3EL7HPD2VDaZTggzcCCmbvc70qqPcC9mt60ogcrTiA3HEjwTK8ymKeuJMc4q6dVz200XnYUtLR9GYjPXvFOVr6W1zUK1WbPToaWJJuKnxBLnd0ftDEbMmj4loHYyhZyMjM91zQS4p7z8eKa9h0JrbacekcirexG0z4n3292}   \end{align}
Similarly,    \begin{align}\thelt{Mp oIR38Q pe Z j9j 0Zo Pmq XR TxBs 8w9Q 5epR3t N5 j bvb rbS K7U 4W 4PJ0 ovnB 0opRpC YN P so8 34P wtS Rq vir4 DRqu jaJq32 QU T G1P gbp 6nJ M2 CUnE NdJC r3ZGBH Eg B tds Td8 4gM 22 gKBN 7Qnm RtJgKU IG E eKx 64y AGK Ge zeJN mpeQ kLR389 HH 9 fXL BcE 6T4 Gj VZLI dLQI iQtkBk 9G 9 FzH WIG m91 M7 SW02 9tzN UX3HLr OU t vG5 QZn Dqy M6 ESTx foUV ylEQ99 nT C SkH A8s fxr ON eFp9 QLDn hLBPib iu j cJc 8Qz Z2K zD oDHg 252c lhDcaQ continuous n xG9 aJl jFq mA DsfD }    \begin{split}    I_4    &    \dlkjfhlaskdhjflkasdjhflkasjhdflkasjhdflkasjhdfls     \bigl\Vert               D (v_3-\psi_t)\UIOIUYOIUyHJGKHJLOIUYOIUOIUYOIYIOUYTIUYIOOOIUYOIUYPOIUPOIUPOIUYOIUYOIUYOIUHOUHOHIOUHOIHOIUHOIUHIOUH_{3}v                                                               \bigr\Vert_{H^{0.5+\delta}}     \Vert      D v     \Vert_{H^{1.5+\delta}}     \dlkjfhlaskdhjflkasdjhflkasjhdflkasjhdflkasjhdfls     \Vert v\Vert_{H^{2.5+\delta}}^3
    +     \Vert \eta_{t}\Vert_{H^{2.5+\delta}}     \Vert v\Vert_{H^{2.5+\delta}}^2    \\&    \leq     P(        \Vert v\Vert_{H^{2.5+\delta}},        \Vert w_t\Vert_{H^{2+\delta}(\Gamma_1)}      )     .    \end{split}    \llabel{8ThswELzXU3X7Ebd1KdZ7v1rN3GiirRXGKWK099ovBM0FDJCvkopYNQ2aN94Z7k0UnUKamE3OjU8DFYFFokbSI2J9V9gVlM8ALWThDPnPu3EL7HPD2VDaZTggzcCCmbvc70qqPcC9mt60ogcrTiA3HEjwTK8ymKeuJMc4q6dVz200XnYUtLR9GYjPXvFOVr6W1zUK1WbPToaWJJuKnxBLnd0ftDEbMmj4loHYyhZyMjM91zQS4p7z8eKa9h0JrbacekcirexG0z4n3293}   \end{align} Next, $I_{5}$ can be expressed as 
   \begin{align}\thelt{BN 7Qnm RtJgKU IG E eKx 64y AGK Ge zeJN mpeQ kLR389 HH 9 fXL BcE 6T4 Gj VZLI dLQI iQtkBk 9G 9 FzH WIG m91 M7 SW02 9tzN UX3HLr OU t vG5 QZn Dqy M6 ESTx foUV ylEQ99 nT C SkH A8s fxr ON eFp9 QLDn hLBPib iu j cJc 8Qz Z2K zD oDHg 252c lhDcaQ continuous n xG9 aJl jFq mA DsfD FA0w DO3CZr Q1 a 2IG tqK bjc iq zRSd 0fjS JA1rsi e9 i qOr 5xg Vlj y6 afNu ooOy IVlT21 vJ W fKU deL bcq 1M wF9N R9xQ np6Tqg El S k50 p43 Hsd Cl 7VKk Zd12 Ijx43v I7 2 QyQ vUm 77B V2 }    \begin{split}    I_5      & =     \OIUYJHUGFAJKLDHFKJLSDHFLKSDJFHLKSDJHFLKSDJHFLKDJFHLLDKHFLKSDHJFALKJHLJLHGLKHHLKJHLKGKHGJKHGKJHLKHJLKJH \UIPOIUPOIUPOOYIUIUYOIUYOIUHOIUOIUHIOPUHPOIJPOIJPOUHOIUHOILJHLIUHYOIUYOUI^{0.5+\delta}D(\tda_{ki}q)\UIPOIUPOIUPOOYIUIUYOIUYOIUHOIUOIUHIOPUHPOIJPOIJPOUHOIUHOILJHLIUHYOIUYOUI^{1.5+\delta} \UIOIUYOIUyHJGKHJLOIUYOIUOIUYOIYIOUYTIUYIOOOIUYOIUYPOIUPOIUPOIUYOIUYOIUYOIUHOUHOHIOUHOIHOIUHOIUHIOUH_{k} D v_i      -    \OIUYJHUGFAJKLDHFKJLSDHFLKSDJFHLKSDJHFLKSDJHFLKDJFHLLDKHFLKSDHJFALKJHLJLHGLKHHLKJHLKGKHGJKHGKJHLKHJLKJH_{\Gamma_1} \UIPOIUPOIUPOOYIUIUYOIUYOIUHOIUOIUHIOPUHPOIJPOIJPOUHOIUHOILJHLIUHYOIUYOUI^{1+\delta}D(\tda_{3i}q)\UIPOIUPOIUPOOYIUIUYOIUYOIUHOIUOIUHIOPUHPOIJPOIJPOUHOIUHOILJHLIUHYOIUYOUI^{1+\delta} D v_i   \\&    = I_{51} + I_{52}    .    \end{split}    \llabel{8ThswELzXU3X7Ebd1KdZ7v1rN3GiirRXGKWK099ovBM0FDJCvkopYNQ2aN94Z7k0UnUKamE3OjU8DFYFFokbSI2J9V9gVlM8ALWThDPnPu3EL7HPD2VDaZTggzcCCmbvc70qqPcC9mt60ogcrTiA3HEjwTK8ymKeuJMc4q6dVz200XnYUtLR9GYjPXvFOVr6W1zUK1WbPToaWJJuKnxBLnd0ftDEbMmj4loHYyhZyMjM91zQS4p7z8eKa9h0JrbacekcirexG0z4n3294}   \end{align} Using the product rule \eqref{8ThswELzXU3X7Ebd1KdZ7v1rN3GiirRXGKWK099ovBM0FDJCvkopYNQ2aN94Z7k0UnUKamE3OjU8DFYFFokbSI2J9V9gVlM8ALWThDPnPu3EL7HPD2VDaZTggzcCCmbvc70qqPcC9mt60ogcrTiA3HEjwTK8ymKeuJMc4q6dVz200XnYUtLR9GYjPXvFOVr6W1zUK1WbPToaWJJuKnxBLnd0ftDEbMmj4loHYyhZyMjM91zQS4p7z8eKa9h0JrbacekcirexG0z4n3288}, we rewrite $I_{51}$ as   \begin{align}\thelt{ON eFp9 QLDn hLBPib iu j cJc 8Qz Z2K zD oDHg 252c lhDcaQ continuous n xG9 aJl jFq mA DsfD FA0w DO3CZr Q1 a 2IG tqK bjc iq zRSd 0fjS JA1rsi e9 i qOr 5xg Vlj y6 afNu ooOy IVlT21 vJ W fKU deL bcq 1M wF9N R9xQ np6Tqg El S k50 p43 Hsd Cl 7VKk Zd12 Ijx43v I7 2 QyQ vUm 77B V2 3a6W h6IX dP9n67 St l Zll bRi DyG Nr 0g9S 4AHA Vga0Xo fk X FZw gGt sW2 J4 92NC 7FAd 8AVzIE 0S w EaN EI8 v9e le 8EfN Yg3u WVH3JM gi 7 vGf 4N0 akx mB AIjp x4dX lxQRGJ Ze r TMz BxY 9J}
   \begin{split}    I_{51}    &=     \OIUYJHUGFAJKLDHFKJLSDHFLKSDJFHLKSDJHFLKSDJHFLKDJFHLLDKHFLKSDHJFALKJHLJLHGLKHHLKJHLKGKHGJKHGKJHLKHJLKJH \tda_{ki}\UIPOIUPOIUPOOYIUIUYOIUYOIUHOIUOIUHIOPUHPOIJPOIJPOUHOIUHOILJHLIUHYOIUYOUI^{0.5+\delta}D q\UIPOIUPOIUPOOYIUIUYOIUYOIUHOIUOIUHIOPUHPOIJPOIJPOUHOIUHOILJHLIUHYOIUYOUI^{1.5+\delta} \UIOIUYOIUyHJGKHJLOIUYOIUOIUYOIYIOUYTIUYIOOOIUYOIUYPOIUPOIUPOIUYOIUYOIUYOIUHOUHOHIOUHOIHOIUHOIUHIOUH_{k}D v_i      +\OIUYJHUGFAJKLDHFKJLSDHFLKSDJFHLKSDJHFLKSDJHFLKDJFHLLDKHFLKSDHJFALKJHLJLHGLKHHLKJHLKGKHGJKHGKJHLKHJLKJH \Bigl(\UIPOIUPOIUPOOYIUIUYOIUYOIUHOIUOIUHIOPUHPOIJPOIJPOUHOIUHOILJHLIUHYOIUYOUI^{0.5+\delta}(\tda_{ki}D q)                  - \tda_{ki}\UIPOIUPOIUPOOYIUIUYOIUYOIUHOIUOIUHIOPUHPOIJPOIJPOUHOIUHOILJHLIUHYOIUYOUI^{0.5+\delta}D q           \Bigr)\UIPOIUPOIUPOOYIUIUYOIUYOIUHOIUOIUHIOPUHPOIJPOIJPOUHOIUHOILJHLIUHYOIUYOUI^{1.5+\delta} \UIOIUYOIUyHJGKHJLOIUYOIUOIUYOIYIOUYTIUYIOOOIUYOIUYPOIUPOIUPOIUYOIUYOIUYOIUHOUHOHIOUHOIHOIUHOIUHIOUH_{k}Dv_i        \\ &  \indeq + \OIUYJHUGFAJKLDHFKJLSDHFLKSDJFHLKSDJHFLKSDJHFLKDJFHLLDKHFLKSDHJFALKJHLJLHGLKHHLKJHLKGKHGJKHGKJHLKHJLKJH \UIPOIUPOIUPOOYIUIUYOIUYOIUHOIUOIUHIOPUHPOIJPOIJPOUHOIUHOILJHLIUHYOIUYOUI^{0.5+\delta} ( D \tda_{ki}) \tau q\UIPOIUPOIUPOOYIUIUYOIUYOIUHOIUOIUHIOPUHPOIJPOIJPOUHOIUHOILJHLIUHYOIUYOUI^{1.5+\delta} \UIOIUYOIUyHJGKHJLOIUYOIUOIUYOIYIOUYTIUYIOOOIUYOIUYPOIUPOIUPOIUYOIUYOIUYOIUHOUHOHIOUHOIHOIUHOIUHIOUH_{k}D v_i    \\ &    =      \OIUYJHUGFAJKLDHFKJLSDHFLKSDJFHLKSDJHFLKSDJHFLKDJFHLLDKHFLKSDHJFALKJHLJLHGLKHHLKJHLKGKHGJKHGKJHLKHJLKJH          \Bigl(           \tda_{ki} \UIPOIUPOIUPOOYIUIUYOIUYOIUHOIUOIUHIOPUHPOIJPOIJPOUHOIUHOILJHLIUHYOIUYOUI^{1.5+\delta} \UIOIUYOIUyHJGKHJLOIUYOIUOIUYOIYIOUYTIUYIOOOIUYOIUYPOIUPOIUPOIUYOIUYOIUYOIUHOUHOHIOUHOIHOIUHOIUHIOUH_{k}D v_i           -  \UIPOIUPOIUPOOYIUIUYOIUYOIUHOIUOIUHIOPUHPOIJPOIJPOUHOIUHOILJHLIUHYOIUYOUI^{1.5+\delta}D \UIOIUYOIUyHJGKHJLOIUYOIUOIUYOIYIOUYTIUYIOOOIUYOIUYPOIUPOIUPOIUYOIUYOIUYOIUHOUHOHIOUHOIHOIUHOIUHIOUH_{k}(    \tda_{ki}  v_i )
        \Bigr)     \UIPOIUPOIUPOOYIUIUYOIUYOIUHOIUOIUHIOPUHPOIJPOIJPOUHOIUHOILJHLIUHYOIUYOUI^{0.5+\delta}D q     \\&\indeq      +\OIUYJHUGFAJKLDHFKJLSDHFLKSDJFHLKSDJHFLKSDJHFLKDJFHLLDKHFLKSDHJFALKJHLJLHGLKHHLKJHLKGKHGJKHGKJHLKHJLKJH \Bigl(\UIPOIUPOIUPOOYIUIUYOIUYOIUHOIUOIUHIOPUHPOIJPOIJPOUHOIUHOILJHLIUHYOIUYOUI^{0.5+\delta}(\tda_{ki}D q)                  - \tda_{ki}\UIPOIUPOIUPOOYIUIUYOIUYOIUHOIUOIUHIOPUHPOIJPOIJPOUHOIUHOILJHLIUHYOIUYOUI^{0.5+\delta}D q           \Bigr)\UIPOIUPOIUPOOYIUIUYOIUYOIUHOIUOIUHIOPUHPOIJPOIJPOUHOIUHOILJHLIUHYOIUYOUI^{1.5+\delta} \UIOIUYOIUyHJGKHJLOIUYOIUOIUYOIYIOUYTIUYIOOOIUYOIUYPOIUPOIUPOIUYOIUYOIUYOIUHOUHOHIOUHOIHOIUHOIUHIOUH_{k}Dv_i        + \OIUYJHUGFAJKLDHFKJLSDHFLKSDJFHLKSDJHFLKSDJHFLKDJFHLLDKHFLKSDHJFALKJHLJLHGLKHHLKJHLKGKHGJKHGKJHLKHJLKJH \UIPOIUPOIUPOOYIUIUYOIUYOIUHOIUOIUHIOPUHPOIJPOIJPOUHOIUHOILJHLIUHYOIUYOUI^{0.5+\delta} ( D \tda_{ki}) \tau q\UIPOIUPOIUPOOYIUIUYOIUYOIUHOIUOIUHIOPUHPOIJPOIJPOUHOIUHOILJHLIUHYOIUYOUI^{1.5+\delta} \UIOIUYOIUyHJGKHJLOIUYOIUOIUYOIYIOUYTIUYIOOOIUYOIUYPOIUPOIUPOIUYOIUYOIUYOIUHOUHOHIOUHOIHOIUHOIUHIOUH_{k}D v_i          ,    \end{split}    \llabel{8ThswELzXU3X7Ebd1KdZ7v1rN3GiirRXGKWK099ovBM0FDJCvkopYNQ2aN94Z7k0UnUKamE3OjU8DFYFFokbSI2J9V9gVlM8ALWThDPnPu3EL7HPD2VDaZTggzcCCmbvc70qqPcC9mt60ogcrTiA3HEjwTK8ymKeuJMc4q6dVz200XnYUtLR9GYjPXvFOVr6W1zUK1WbPToaWJJuKnxBLnd0ftDEbMmj4loHYyhZyMjM91zQS4p7z8eKa9h0JrbacekcirexG0z4n3295}   \end{align} and thus   \begin{align}\thelt{ fKU deL bcq 1M wF9N R9xQ np6Tqg El S k50 p43 Hsd Cl 7VKk Zd12 Ijx43v I7 2 QyQ vUm 77B V2 3a6W h6IX dP9n67 St l Zll bRi DyG Nr 0g9S 4AHA Vga0Xo fk X FZw gGt sW2 J4 92NC 7FAd 8AVzIE 0S w EaN EI8 v9e le 8EfN Yg3u WVH3JM gi 7 vGf 4N0 akx mB AIjp x4dX lxQRGJ Ze r TMz BxY 9JA tm ZCjH 9064 Q4uzKx gm p CQg 8x0 6NY x0 2vkn EtYX 5O2vgP 3g c spG swF qhX 3a pbPW sf1Y OzHivD ia 1 eOD MIL TC2 mP ojef mEVB 9hWwMa Td I Gjm 9Pd pHV WG V4hX kfK5 Rtci05 ek z j0L 8}    \begin{split}
   I_{51}    &=      \OIUYJHUGFAJKLDHFKJLSDHFLKSDJFHLKSDJHFLKSDJHFLKDJFHLLDKHFLKSDHJFALKJHLJLHGLKHHLKJHLKGKHGJKHGKJHLKHJLKJH          \Bigl(           \tda_{ki} \UIPOIUPOIUPOOYIUIUYOIUYOIUHOIUOIUHIOPUHPOIJPOIJPOUHOIUHOILJHLIUHYOIUYOUI^{1.5+\delta} \UIOIUYOIUyHJGKHJLOIUYOIUOIUYOIYIOUYTIUYIOOOIUYOIUYPOIUPOIUPOIUYOIUYOIUYOIUHOUHOHIOUHOIHOIUHOIUHIOUH_{k}D v_i           -            \UIPOIUPOIUPOOYIUIUYOIUYOIUHOIUOIUHIOPUHPOIJPOIJPOUHOIUHOILJHLIUHYOIUYOUI^{1.5+\delta} \UIOIUYOIUyHJGKHJLOIUYOIUOIUYOIYIOUYTIUYIOOOIUYOIUYPOIUPOIUPOIUYOIUYOIUYOIUHOUHOHIOUHOIHOIUHOIUHIOUH_{k}(    \tda_{ki}  Dv_i )         \Bigr)     \UIPOIUPOIUPOOYIUIUYOIUYOIUHOIUOIUHIOPUHPOIJPOIJPOUHOIUHOILJHLIUHYOIUYOUI^{0.5+\delta}D q    \\&\indeq      +\OIUYJHUGFAJKLDHFKJLSDHFLKSDJFHLKSDJHFLKSDJHFLKDJFHLLDKHFLKSDHJFALKJHLJLHGLKHHLKJHLKGKHGJKHGKJHLKHJLKJH \Bigl(\UIPOIUPOIUPOOYIUIUYOIUYOIUHOIUOIUHIOPUHPOIJPOIJPOUHOIUHOILJHLIUHYOIUYOUI^{0.5+\delta}(\tda_{ki}D q)                  - \tda_{ki}\UIPOIUPOIUPOOYIUIUYOIUYOIUHOIUOIUHIOPUHPOIJPOIJPOUHOIUHOILJHLIUHYOIUYOUI^{0.5+\delta}D q           \Bigr)\UIPOIUPOIUPOOYIUIUYOIUYOIUHOIUOIUHIOPUHPOIJPOIJPOUHOIUHOILJHLIUHYOIUYOUI^{1.5+\delta} \UIOIUYOIUyHJGKHJLOIUYOIUOIUYOIYIOUYTIUYIOOOIUYOIUYPOIUPOIUPOIUYOIUYOIUYOIUHOUHOHIOUHOIHOIUHOIUHIOUH_{k}Dv_i       \\ &  \indeq 
   - \OIUYJHUGFAJKLDHFKJLSDHFLKSDJFHLKSDJHFLKSDJHFLKDJFHLLDKHFLKSDHJFALKJHLJLHGLKHHLKJHLKGKHGJKHGKJHLKHJLKJH             \UIPOIUPOIUPOOYIUIUYOIUYOIUHOIUOIUHIOPUHPOIJPOIJPOUHOIUHOILJHLIUHYOIUYOUI^{1.5+\delta} \UIOIUYOIUyHJGKHJLOIUYOIUOIUYOIYIOUYTIUYIOOOIUYOIUYPOIUPOIUPOIUYOIUYOIUYOIUHOUHOHIOUHOIHOIUHOIUHIOUH_{k}(  D  \tda_{ki}  \tau v_i )             \UIPOIUPOIUPOOYIUIUYOIUYOIUHOIUOIUHIOPUHPOIJPOIJPOUHOIUHOILJHLIUHYOIUYOUI^{0.5+\delta}D q    + \OIUYJHUGFAJKLDHFKJLSDHFLKSDJFHLKSDJHFLKSDJHFLKDJFHLLDKHFLKSDHJFALKJHLJLHGLKHHLKJHLKGKHGJKHGKJHLKHJLKJH \UIPOIUPOIUPOOYIUIUYOIUYOIUHOIUOIUHIOPUHPOIJPOIJPOUHOIUHOILJHLIUHYOIUYOUI^{0.5+\delta} ( D \tda_{ki}) \tau q\UIPOIUPOIUPOOYIUIUYOIUYOIUHOIUOIUHIOPUHPOIJPOIJPOUHOIUHOILJHLIUHYOIUYOUI^{1.5+\delta} \UIOIUYOIUyHJGKHJLOIUYOIUOIUYOIYIOUYTIUYIOOOIUYOIUYPOIUPOIUPOIUYOIUYOIUYOIUHOUHOHIOUHOIHOIUHOIUHIOUH_{k}D v_i         .    \end{split}    \llabel{8ThswELzXU3X7Ebd1KdZ7v1rN3GiirRXGKWK099ovBM0FDJCvkopYNQ2aN94Z7k0UnUKamE3OjU8DFYFFokbSI2J9V9gVlM8ALWThDPnPu3EL7HPD2VDaZTggzcCCmbvc70qqPcC9mt60ogcrTiA3HEjwTK8ymKeuJMc4q6dVz200XnYUtLR9GYjPXvFOVr6W1zUK1WbPToaWJJuKnxBLnd0ftDEbMmj4loHYyhZyMjM91zQS4p7z8eKa9h0JrbacekcirexG0z4n3249}   \end{align} We treat these four terms using Kato-Ponce  commutator estimates, and proceeding as for $I_2$, we obtain $   I_{51}    \leq    P(         \Vert v\Vert_{H^{2.5+\delta}},
        \Vert q\Vert_{H^{1.5+\delta}},         \Vert \tda\Vert_{H^{3.5+\delta}}     )$. As for $I_{52}=    -    \OIUYJHUGFAJKLDHFKJLSDHFLKSDJFHLKSDJHFLKSDJHFLKDJFHLLDKHFLKSDHJFALKJHLJLHGLKHHLKJHLKGKHGJKHGKJHLKHJLKJH_{\Gamma_1} \UIPOIUPOIUPOOYIUIUYOIUYOIUHOIUOIUHIOPUHPOIJPOIJPOUHOIUHOILJHLIUHYOIUYOUI^{1+\delta}D(\tda_{3i}q)\UIPOIUPOIUPOOYIUIUYOIUYOIUHOIUOIUHIOPUHPOIJPOIJPOUHOIUHOILJHLIUHYOIUYOUI^{1+\delta} D v_i$, we write    \begin{align}\thelt{ 0S w EaN EI8 v9e le 8EfN Yg3u WVH3JM gi 7 vGf 4N0 akx mB AIjp x4dX lxQRGJ Ze r TMz BxY 9JA tm ZCjH 9064 Q4uzKx gm p CQg 8x0 6NY x0 2vkn EtYX 5O2vgP 3g c spG swF qhX 3a pbPW sf1Y OzHivD ia 1 eOD MIL TC2 mP ojef mEVB 9hWwMa Td I Gjm 9Pd pHV WG V4hX kfK5 Rtci05 ek z j0L 8Tm e2J PX pDI8 Ebcq V4Fdxv rH I eP8 CdO RJp Ti MVEb AunS GsUMWP ts 4 uBv 2QS iXI b7 B8zo 7bp9 voEwNR uX J 4Zx uRZ Yhc 1h 339T HRXV Fw5XVW 8g a B39 mFS v6M ze znkb LHrt Z73hUu aq L }    \begin{split}    I_{52}      &=    -    \OIUYJHUGFAJKLDHFKJLSDHFLKSDJFHLKSDJHFLKSDJHFLKDJFHLLDKHFLKSDHJFALKJHLJLHGLKHHLKJHLKGKHGJKHGKJHLKHJLKJH_{\Gamma_1} (\UIPOIUPOIUPOOYIUIUYOIUYOIUHOIUOIUHIOPUHPOIJPOIJPOUHOIUHOILJHLIUHYOIUYOUI^{\delta} D q  )\tda_{3i}\UIPOIUPOIUPOOYIUIUYOIUYOIUHOIUOIUHIOPUHPOIJPOIJPOUHOIUHOILJHLIUHYOIUYOUI^{2+\delta} D v_i    -     \OIUYJHUGFAJKLDHFKJLSDHFLKSDJFHLKSDJHFLKSDJHFLKDJFHLLDKHFLKSDHJFALKJHLJLHGLKHHLKJHLKGKHGJKHGKJHLKHJLKJH_{\Gamma_1} \UIPOIUPOIUPOOYIUIUYOIUYOIUHOIUOIUHIOPUHPOIJPOIJPOUHOIUHOILJHLIUHYOIUYOUI^{\delta} D q             \Bigl(               \UIPOIUPOIUPOOYIUIUYOIUYOIUHOIUOIUHIOPUHPOIJPOIJPOUHOIUHOILJHLIUHYOIUYOUI(\tda_{3i} \UIPOIUPOIUPOOYIUIUYOIUYOIUHOIUOIUHIOPUHPOIJPOIJPOUHOIUHOILJHLIUHYOIUYOUI^{1+\delta}Dv_i)          
                    - \tda_{3i} \UIPOIUPOIUPOOYIUIUYOIUYOIUHOIUOIUHIOPUHPOIJPOIJPOUHOIUHOILJHLIUHYOIUYOUI^{2+\delta}D v_i            \Bigr)    \\    & \indeq -    \OIUYJHUGFAJKLDHFKJLSDHFLKSDJFHLKSDJHFLKSDJHFLKDJFHLLDKHFLKSDHJFALKJHLJLHGLKHHLKJHLKGKHGJKHGKJHLKHJLKJH_{\Gamma_1} \Bigl(               \UIPOIUPOIUPOOYIUIUYOIUYOIUHOIUOIUHIOPUHPOIJPOIJPOUHOIUHOILJHLIUHYOIUYOUI^{1+\delta}(\tda_{3i}Dq) - \tda_{3i} \UIPOIUPOIUPOOYIUIUYOIUYOIUHOIUOIUHIOPUHPOIJPOIJPOUHOIUHOILJHLIUHYOIUYOUI^{1+\delta}D q                    \Bigr)                    \UIPOIUPOIUPOOYIUIUYOIUYOIUHOIUOIUHIOPUHPOIJPOIJPOUHOIUHOILJHLIUHYOIUYOUI^{1+\delta} D v_i              -    \OIUYJHUGFAJKLDHFKJLSDHFLKSDJFHLKSDJHFLKSDJHFLKDJFHLLDKHFLKSDHJFALKJHLJLHGLKHHLKJHLKGKHGJKHGKJHLKHJLKJH_{\Gamma_1}                \UIPOIUPOIUPOOYIUIUYOIUYOIUHOIUOIUHIOPUHPOIJPOIJPOUHOIUHOILJHLIUHYOIUYOUI^{1+\delta}(D\tda_{3i}\tau q)  \UIPOIUPOIUPOOYIUIUYOIUYOIUHOIUOIUHIOPUHPOIJPOIJPOUHOIUHOILJHLIUHYOIUYOUI^{1+\delta} D v_i    \\&    =    -
   \OIUYJHUGFAJKLDHFKJLSDHFLKSDJFHLKSDJHFLKSDJHFLKDJFHLLDKHFLKSDHJFALKJHLJLHGLKHHLKJHLKGKHGJKHGKJHLKHJLKJH_{\Gamma_1} (\UIPOIUPOIUPOOYIUIUYOIUYOIUHOIUOIUHIOPUHPOIJPOIJPOUHOIUHOILJHLIUHYOIUYOUI^{\delta} D q  ) \UIPOIUPOIUPOOYIUIUYOIUYOIUHOIUOIUHIOPUHPOIJPOIJPOUHOIUHOILJHLIUHYOIUYOUI^{2+\delta}D(\tda_{3i} v_i)    +     \OIUYJHUGFAJKLDHFKJLSDHFLKSDJFHLKSDJHFLKSDJHFLKDJFHLLDKHFLKSDHJFALKJHLJLHGLKHHLKJHLKGKHGJKHGKJHLKHJLKJH_{\Gamma_1} \UIPOIUPOIUPOOYIUIUYOIUYOIUHOIUOIUHIOPUHPOIJPOIJPOUHOIUHOILJHLIUHYOIUYOUI^{\delta} Dq   \Bigl(\UIPOIUPOIUPOOYIUIUYOIUYOIUHOIUOIUHIOPUHPOIJPOIJPOUHOIUHOILJHLIUHYOIUYOUI^{2+\delta}(\tda_{3i} Dv_i)                                                  - \tda_{3i} \UIPOIUPOIUPOOYIUIUYOIUYOIUHOIUOIUHIOPUHPOIJPOIJPOUHOIUHOILJHLIUHYOIUYOUI^{2+\delta}Dv_i                                             \Bigr)    \\&\indeq   -    \OIUYJHUGFAJKLDHFKJLSDHFLKSDJFHLKSDJHFLKSDJHFLKDJFHLLDKHFLKSDHJFALKJHLJLHGLKHHLKJHLKGKHGJKHGKJHLKHJLKJH_{\Gamma_1} \Bigl(               \UIPOIUPOIUPOOYIUIUYOIUYOIUHOIUOIUHIOPUHPOIJPOIJPOUHOIUHOILJHLIUHYOIUYOUI^{1+\delta}(\tda_{3i}Dq) - \tda_{3i} \UIPOIUPOIUPOOYIUIUYOIUYOIUHOIUOIUHIOPUHPOIJPOIJPOUHOIUHOILJHLIUHYOIUYOUI^{1+\delta}D q                    \Bigr)                    \UIPOIUPOIUPOOYIUIUYOIUYOIUHOIUOIUHIOPUHPOIJPOIJPOUHOIUHOILJHLIUHYOIUYOUI^{1+\delta} D v_i              -    \OIUYJHUGFAJKLDHFKJLSDHFLKSDJFHLKSDJHFLKSDJHFLKDJFHLLDKHFLKSDHJFALKJHLJLHGLKHHLKJHLKGKHGJKHGKJHLKHJLKJH_{\Gamma_1}                \UIPOIUPOIUPOOYIUIUYOIUYOIUHOIUOIUHIOPUHPOIJPOIJPOUHOIUHOILJHLIUHYOIUYOUI^{1+\delta}(D\tda_{3i}\tau q)  \UIPOIUPOIUPOOYIUIUYOIUYOIUHOIUOIUHIOPUHPOIJPOIJPOUHOIUHOILJHLIUHYOIUYOUI^{1+\delta} D v_i    
              \\&            \indeq  -    \OIUYJHUGFAJKLDHFKJLSDHFLKSDJFHLKSDJHFLKSDJHFLKDJFHLLDKHFLKSDHJFALKJHLJLHGLKHHLKJHLKGKHGJKHGKJHLKHJLKJH_{\Gamma_1}                \UIPOIUPOIUPOOYIUIUYOIUYOIUHOIUOIUHIOPUHPOIJPOIJPOUHOIUHOILJHLIUHYOIUYOUI^{1+\delta}(D\tda_{3i}\tau q)  \UIPOIUPOIUPOOYIUIUYOIUYOIUHOIUOIUHIOPUHPOIJPOIJPOUHOIUHOILJHLIUHYOIUYOUI^{1+\delta} D v_i      + \OIUYJHUGFAJKLDHFKJLSDHFLKSDJFHLKSDJHFLKSDJHFLKDJFHLLDKHFLKSDHJFALKJHLJLHGLKHHLKJHLKGKHGJKHGKJHLKHJLKJH_{\Gamma_1} \UIPOIUPOIUPOOYIUIUYOIUYOIUHOIUOIUHIOPUHPOIJPOIJPOUHOIUHOILJHLIUHYOIUYOUI^{\delta} Dq   \UIPOIUPOIUPOOYIUIUYOIUYOIUHOIUOIUHIOPUHPOIJPOIJPOUHOIUHOILJHLIUHYOIUYOUI^{2+\delta}(D\tda_{3i} \tau v_i)                                                                                              .    \end{split}    \llabel{8ThswELzXU3X7Ebd1KdZ7v1rN3GiirRXGKWK099ovBM0FDJCvkopYNQ2aN94Z7k0UnUKamE3OjU8DFYFFokbSI2J9V9gVlM8ALWThDPnPu3EL7HPD2VDaZTggzcCCmbvc70qqPcC9mt60ogcrTiA3HEjwTK8ymKeuJMc4q6dVz200XnYUtLR9GYjPXvFOVr6W1zUK1WbPToaWJJuKnxBLnd0ftDEbMmj4loHYyhZyMjM91zQS4p7z8eKa9h0JrbacekcirexG0z4n3297}   \end{align} After integration in time, the first boundary term cancels with the boundary integral on the right hand side of~\eqref{8ThswELzXU3X7Ebd1KdZ7v1rN3GiirRXGKWK099ovBM0FDJCvkopYNQ2aN94Z7k0UnUKamE3OjU8DFYFFokbSI2J9V9gVlM8ALWThDPnPu3EL7HPD2VDaZTggzcCCmbvc70qqPcC9mt60ogcrTiA3HEjwTK8ymKeuJMc4q6dVz200XnYUtLR9GYjPXvFOVr6W1zUK1WbPToaWJJuKnxBLnd0ftDEbMmj4loHYyhZyMjM91zQS4p7z8eKa9h0JrbacekcirexG0z4n3284}.  The remaining terms are estimated as above by $    P(         \Vert v\Vert_{H^{2.5+\delta}},
        \Vert q\Vert_{H^{1.5+\delta}},         \Vert \tda\Vert_{H^{3.5+\delta}}     ) $. \par We also need to justify applying the vorticity estimate \eqref{8ThswELzXU3X7Ebd1KdZ7v1rN3GiirRXGKWK099ovBM0FDJCvkopYNQ2aN94Z7k0UnUKamE3OjU8DFYFFokbSI2J9V9gVlM8ALWThDPnPu3EL7HPD2VDaZTggzcCCmbvc70qqPcC9mt60ogcrTiA3HEjwTK8ymKeuJMc4q6dVz200XnYUtLR9GYjPXvFOVr6W1zUK1WbPToaWJJuKnxBLnd0ftDEbMmj4loHYyhZyMjM91zQS4p7z8eKa9h0JrbacekcirexG0z4n3107} for the constructed solutions. We apply $\UIPOIUPOIUPOOYIUIUYOIUYOIUHOIUOIUHIOPUHPOIJPOIJPOUHOIUHOILJHLIUHYOIUYOUI_3^{0.5+\delta}D$ to the equation \eqref{8ThswELzXU3X7Ebd1KdZ7v1rN3GiirRXGKWK099ovBM0FDJCvkopYNQ2aN94Z7k0UnUKamE3OjU8DFYFFokbSI2J9V9gVlM8ALWThDPnPu3EL7HPD2VDaZTggzcCCmbvc70qqPcC9mt60ogcrTiA3HEjwTK8ymKeuJMc4q6dVz200XnYUtLR9GYjPXvFOVr6W1zUK1WbPToaWJJuKnxBLnd0ftDEbMmj4loHYyhZyMjM91zQS4p7z8eKa9h0JrbacekcirexG0z4n398} and test with $\UIPOIUPOIUPOOYIUIUYOIUYOIUHOIUOIUHIOPUHPOIJPOIJPOUHOIUHOILJHLIUHYOIUYOUI_3^{0.5+\delta}D\theta$, obtaining    \begin{align}\thelt{zHivD ia 1 eOD MIL TC2 mP ojef mEVB 9hWwMa Td I Gjm 9Pd pHV WG V4hX kfK5 Rtci05 ek z j0L 8Tm e2J PX pDI8 Ebcq V4Fdxv rH I eP8 CdO RJp Ti MVEb AunS GsUMWP ts 4 uBv 2QS iXI b7 B8zo 7bp9 voEwNR uX J 4Zx uRZ Yhc 1h 339T HRXV Fw5XVW 8g a B39 mFS v6M ze znkb LHrt Z73hUu aq L vPh gTl NnV po 1Zgg mnRA qM3X31 OR Y Sj8 Rkt S8V GO jrz1 iblt 3uOuEs 8Q 3 xJ1 cA2 NKo F8 o6U3 mW2H q5y6jp os x Jgw WZ4 Exd 79 Jvlc wauo RDCYZz mp a bV0 9jg ume bz cbug patf 9yU9iB }    \begin{split}    &    \frac12 \frac{d}{dt}     \OIUYJHUGFAJKLDHFKJLSDHFLKSDJFHLKSDJHFLKSDJHFLKDJFHLLDKHFLKSDHJFALKJHLJLHGLKHHLKJHLKGKHGJKHGKJHLKHJLKJH_{\Omega_0} \bar J|\UIPOIUPOIUPOOYIUIUYOIUYOIUHOIUOIUHIOPUHPOIJPOIJPOUHOIUHOILJHLIUHYOIUYOUI_3^{0.5+\delta}D \theta|^2    \\&\indeq
    =     -     \sum_{m=1}^{2}     \OIUYJHUGFAJKLDHFKJLSDHFLKSDJFHLKSDJHFLKSDJHFLKDJFHLLDKHFLKSDHJFALKJHLJLHGLKHHLKJHLKGKHGJKHGKJHLKHJLKJH_{\Omega_0}  \tilde v_m \tilde\tda_{jm}\UIOIUYOIUyHJGKHJLOIUYOIUOIUYOIYIOUYTIUYIOOOIUYOIUYPOIUPOIUPOIUYOIUYOIUYOIUHOUHOHIOUHOIHOIUHOIUHIOUH_{j} \UIPOIUPOIUPOOYIUIUYOIUYOIUHOIUOIUHIOPUHPOIJPOIJPOUHOIUHOILJHLIUHYOIUYOUI_3^{0.5+\delta}D\theta_i \UIPOIUPOIUPOOYIUIUYOIUYOIUHOIUOIUHIOPUHPOIJPOIJPOUHOIUHOILJHLIUHYOIUYOUI_3^{0.5+\delta}D     \theta_i      - \OIUYJHUGFAJKLDHFKJLSDHFLKSDJFHLKSDJHFLKSDJHFLKDJFHLLDKHFLKSDHJFALKJHLJLHGLKHHLKJHLKGKHGJKHGKJHLKHJLKJH_{\Omega_0} (\tilde v_3-\tilde\psi_t) \tilde\tda_{j3} \UIOIUYOIUyHJGKHJLOIUYOIUOIUYOIYIOUYTIUYIOOOIUYOIUYPOIUPOIUPOIUYOIUYOIUYOIUHOUHOHIOUHOIHOIUHOIUHIOUH_{j}  \UIPOIUPOIUPOOYIUIUYOIUYOIUHOIUOIUHIOPUHPOIJPOIJPOUHOIUHOILJHLIUHYOIUYOUI_3^{0.5+\delta}D \theta_i \UIPOIUPOIUPOOYIUIUYOIUYOIUHOIUOIUHIOPUHPOIJPOIJPOUHOIUHOILJHLIUHYOIUYOUI_3^{0.5+\delta}D\theta_i      \\&\indeq\indeq     + \OIUYJHUGFAJKLDHFKJLSDHFLKSDJFHLKSDJHFLKSDJHFLKDJFHLLDKHFLKSDHJFALKJHLJLHGLKHHLKJHLKGKHGJKHGKJHLKHJLKJH_{\Omega_0} \theta_k \tilde\tda_{mk}\UIOIUYOIUyHJGKHJLOIUYOIUOIUYOIYIOUYTIUYIOOOIUYOIUYPOIUPOIUPOIUYOIUYOIUYOIUHOUHOHIOUHOIHOIUHOIUHIOUH_{m}\UIPOIUPOIUPOOYIUIUYOIUYOIUHOIUOIUHIOPUHPOIJPOIJPOUHOIUHOILJHLIUHYOIUYOUI_3^{0.5+\delta}D\tilde v_i \UIPOIUPOIUPOOYIUIUYOIUYOIUHOIUOIUHIOPUHPOIJPOIJPOUHOIUHOILJHLIUHYOIUYOUI_3^{0.5+\delta}D\theta_i      \\&\indeq\indeq     -  \sum_{m=1}^2  \OIUYJHUGFAJKLDHFKJLSDHFLKSDJFHLKSDJHFLKSDJHFLKDJFHLLDKHFLKSDHJFALKJHLJLHGLKHHLKJHLKGKHGJKHGKJHLKHJLKJH_{\Omega_0} \Bigl(\UIPOIUPOIUPOOYIUIUYOIUYOIUHOIUOIUHIOPUHPOIJPOIJPOUHOIUHOILJHLIUHYOIUYOUI_3^{0.5+\delta}(\tilde v_m \tilde\tda_{jm}\UIOIUYOIUyHJGKHJLOIUYOIUOIUYOIYIOUYTIUYIOOOIUYOIUYPOIUPOIUPOIUYOIUYOIUYOIUHOUHOHIOUHOIHOIUHOIUHIOUH_{j} D\theta_i )                       - \tilde v_m \tilde\tda_{jm}\UIOIUYOIUyHJGKHJLOIUYOIUOIUYOIYIOUYTIUYIOOOIUYOIUYPOIUPOIUPOIUYOIUYOIUYOIUHOUHOHIOUHOIHOIUHOIUHIOUH_{j} \UIPOIUPOIUPOOYIUIUYOIUYOIUHOIUOIUHIOPUHPOIJPOIJPOUHOIUHOILJHLIUHYOIUYOUI_3^{0.5+\delta}D\theta_i                       \Bigr)\UIPOIUPOIUPOOYIUIUYOIUYOIUHOIUOIUHIOPUHPOIJPOIJPOUHOIUHOILJHLIUHYOIUYOUI_3^{0.5+\delta}D\theta_i      \\&\indeq\indeq     -  \sum_{m=1}^2  \OIUYJHUGFAJKLDHFKJLSDHFLKSDJFHLKSDJHFLKSDJHFLKDJFHLLDKHFLKSDHJFALKJHLJLHGLKHHLKJHLKGKHGJKHGKJHLKHJLKJH_{\Omega_0} \UIPOIUPOIUPOOYIUIUYOIUYOIUHOIUOIUHIOPUHPOIJPOIJPOUHOIUHOILJHLIUHYOIUYOUI_3^{0.5+\delta} \Bigl( D(\tilde v_m \tilde\tda_{jm}) \UIOIUYOIUyHJGKHJLOIUYOIUOIUYOIYIOUYTIUYIOOOIUYOIUYPOIUPOIUPOIUYOIUYOIUYOIUHOUHOHIOUHOIHOIUHOIUHIOUH_{j}  \tau\theta_i  \Bigr)
                      \UIPOIUPOIUPOOYIUIUYOIUYOIUHOIUOIUHIOPUHPOIJPOIJPOUHOIUHOILJHLIUHYOIUYOUI_3^{0.5+\delta}D\theta_i      \\&\indeq\indeq     - \OIUYJHUGFAJKLDHFKJLSDHFLKSDJFHLKSDJHFLKSDJHFLKDJFHLLDKHFLKSDHJFALKJHLJLHGLKHHLKJHLKGKHGJKHGKJHLKHJLKJH_{\Omega_0} \Bigl(                 \UIPOIUPOIUPOOYIUIUYOIUYOIUHOIUOIUHIOPUHPOIJPOIJPOUHOIUHOILJHLIUHYOIUYOUI_3^{0.5+\delta}( (\tilde v_3-\tilde\psi_t) \tilde\tda_{j3}  \UIOIUYOIUyHJGKHJLOIUYOIUOIUYOIYIOUYTIUYIOOOIUYOIUYPOIUPOIUPOIUYOIUYOIUYOIUHOUHOHIOUHOIHOIUHOIUHIOUH_{j}D\theta_i )                 - (\tilde v_3-\tilde\psi_t) \tilde\tda_{j3} \UIOIUYOIUyHJGKHJLOIUYOIUOIUYOIYIOUYTIUYIOOOIUYOIUYPOIUPOIUPOIUYOIUYOIUYOIUHOUHOHIOUHOIHOIUHOIUHIOUH_{j}  \UIPOIUPOIUPOOYIUIUYOIUYOIUHOIUOIUHIOPUHPOIJPOIJPOUHOIUHOILJHLIUHYOIUYOUI_3^{0.5+\delta}D\theta_i            \Bigr)             \UIPOIUPOIUPOOYIUIUYOIUYOIUHOIUOIUHIOPUHPOIJPOIJPOUHOIUHOILJHLIUHYOIUYOUI_3^{0.5+\delta}D \theta_i                \\&\indeq\indeq     - \OIUYJHUGFAJKLDHFKJLSDHFLKSDJFHLKSDJHFLKSDJHFLKDJFHLLDKHFLKSDHJFALKJHLJLHGLKHHLKJHLKGKHGJKHGKJHLKHJLKJH_{\Omega_0}                 \UIPOIUPOIUPOOYIUIUYOIUYOIUHOIUOIUHIOPUHPOIJPOIJPOUHOIUHOILJHLIUHYOIUYOUI_3^{0.5+\delta}  \Bigl( D( (\tilde v_3-\tilde\psi_t) \tilde\tda_{j3}) \UIOIUYOIUyHJGKHJLOIUYOIUOIUYOIYIOUYTIUYIOOOIUYOIUYPOIUPOIUPOIUYOIUYOIUYOIUHOUHOHIOUHOIHOIUHOIUHIOUH_{j}\tau\theta_i  \Bigr)   \UIPOIUPOIUPOOYIUIUYOIUYOIUHOIUOIUHIOPUHPOIJPOIJPOUHOIUHOILJHLIUHYOIUYOUI_3^{0.5+\delta}D \theta_i                   \\&\indeq\indeq    + \OIUYJHUGFAJKLDHFKJLSDHFLKSDJFHLKSDJHFLKSDJHFLKDJFHLLDKHFLKSDHJFALKJHLJLHGLKHHLKJHLKGKHGJKHGKJHLKHJLKJH_{\Omega_0} \Bigl(             \UIPOIUPOIUPOOYIUIUYOIUYOIUHOIUOIUHIOPUHPOIJPOIJPOUHOIUHOILJHLIUHYOIUYOUI_3^{0.5+\delta}(\theta_k \tilde\tda_{mk}\UIOIUYOIUyHJGKHJLOIUYOIUOIUYOIYIOUYTIUYIOOOIUYOIUYPOIUPOIUPOIUYOIUYOIUYOIUHOUHOHIOUHOIHOIUHOIUHIOUH_{m}D \tilde v_i )               - \theta_k \tilde\tda_{mk}\UIOIUYOIUyHJGKHJLOIUYOIUOIUYOIYIOUYTIUYIOOOIUYOIUYPOIUPOIUPOIUYOIUYOIUYOIUHOUHOHIOUHOIHOIUHOIUHIOUH_{m}\UIPOIUPOIUPOOYIUIUYOIUYOIUHOIUOIUHIOPUHPOIJPOIJPOUHOIUHOILJHLIUHYOIUYOUI_3^{0.5+\delta}D\tilde v_i 
           \Bigr)\UIPOIUPOIUPOOYIUIUYOIUYOIUHOIUOIUHIOPUHPOIJPOIJPOUHOIUHOILJHLIUHYOIUYOUI_3^{0.5+\delta}D\theta_i    \\&\indeq\indeq    + \OIUYJHUGFAJKLDHFKJLSDHFLKSDJFHLKSDJHFLKSDJHFLKDJFHLLDKHFLKSDHJFALKJHLJLHGLKHHLKJHLKGKHGJKHGKJHLKHJLKJH_{\Omega_0}              \UIPOIUPOIUPOOYIUIUYOIUYOIUHOIUOIUHIOPUHPOIJPOIJPOUHOIUHOILJHLIUHYOIUYOUI_3^{0.5+\delta}               \Bigl(D(\theta_k \tilde\tda_{mk}) \UIOIUYOIUyHJGKHJLOIUYOIUOIUYOIYIOUYTIUYIOOOIUYOIUYPOIUPOIUPOIUYOIUYOIUYOIUHOUHOHIOUHOIHOIUHOIUHIOUH_{m}\tau \tilde v_i                \Bigr)\UIPOIUPOIUPOOYIUIUYOIUYOIUHOIUOIUHIOPUHPOIJPOIJPOUHOIUHOILJHLIUHYOIUYOUI_3^{0.5+\delta}D\theta_i    \\&\indeq\indeq      +  \frac12 \OIUYJHUGFAJKLDHFKJLSDHFLKSDJFHLKSDJHFLKSDJHFLKDJFHLLDKHFLKSDHJFALKJHLJLHGLKHHLKJHLKGKHGJKHGKJHLKHJLKJH_{\Omega_0} \bar J_t |\UIPOIUPOIUPOOYIUIUYOIUYOIUHOIUOIUHIOPUHPOIJPOIJPOUHOIUHOILJHLIUHYOIUYOUI_3^{1.5+\delta} D\theta|^2      - \OIUYJHUGFAJKLDHFKJLSDHFLKSDJFHLKSDJHFLKSDJHFLKDJFHLLDKHFLKSDHJFALKJHLJLHGLKHHLKJHLKGKHGJKHGKJHLKHJLKJH_{\Omega_0}           \Bigl(           \UIPOIUPOIUPOOYIUIUYOIUYOIUHOIUOIUHIOPUHPOIJPOIJPOUHOIUHOILJHLIUHYOIUYOUI_3^{0.5+\delta}(\bar J D\UIOIUYOIUyHJGKHJLOIUYOIUOIUYOIYIOUYTIUYIOOOIUYOIUYPOIUPOIUPOIUYOIUYOIUYOIUHOUHOHIOUHOIHOIUHOIUHIOUH_t \theta_i) - \bar J \UIPOIUPOIUPOOYIUIUYOIUYOIUHOIUOIUHIOPUHPOIJPOIJPOUHOIUHOILJHLIUHYOIUYOUI_3^{0.5+\delta} D(\UIOIUYOIUyHJGKHJLOIUYOIUOIUYOIYIOUYTIUYIOOOIUYOIUYPOIUPOIUPOIUYOIUYOIUYOIUHOUHOHIOUHOIHOIUHOIUHIOUH_{t}\theta_i)          \Bigr) \UIPOIUPOIUPOOYIUIUYOIUYOIUHOIUOIUHIOPUHPOIJPOIJPOUHOIUHOILJHLIUHYOIUYOUI_3^{0.5+\delta}D\theta_i     \\&\indeq\indeq      - \OIUYJHUGFAJKLDHFKJLSDHFLKSDJFHLKSDJHFLKSDJHFLKDJFHLLDKHFLKSDHJFALKJHLJLHGLKHHLKJHLKGKHGJKHGKJHLKHJLKJH_{\Omega_0} 
          \UIPOIUPOIUPOOYIUIUYOIUYOIUHOIUOIUHIOPUHPOIJPOIJPOUHOIUHOILJHLIUHYOIUYOUI_3^{0.5+\delta} \Bigl(D\bar J \tau \UIOIUYOIUyHJGKHJLOIUYOIUOIUYOIYIOUYTIUYIOOOIUYOIUYPOIUPOIUPOIUYOIUYOIUYOIUHOUHOHIOUHOIHOIUHOIUHIOUH_t \theta_i          \Bigr) \UIPOIUPOIUPOOYIUIUYOIUYOIUHOIUOIUHIOPUHPOIJPOIJPOUHOIUHOILJHLIUHYOIUYOUI_3^{0.5+\delta}D\theta_i     .    \end{split}    \llabel{8ThswELzXU3X7Ebd1KdZ7v1rN3GiirRXGKWK099ovBM0FDJCvkopYNQ2aN94Z7k0UnUKamE3OjU8DFYFFokbSI2J9V9gVlM8ALWThDPnPu3EL7HPD2VDaZTggzcCCmbvc70qqPcC9mt60ogcrTiA3HEjwTK8ymKeuJMc4q6dVz200XnYUtLR9GYjPXvFOVr6W1zUK1WbPToaWJJuKnxBLnd0ftDEbMmj4loHYyhZyMjM91zQS4p7z8eKa9h0JrbacekcirexG0z4n3298}   \end{align} The first two terms are treated as above by integrating by parts in $x_{j}$  and noting that the boundary term vanishes.  The rest of the terms are estimated also as above using commutator estimates and Sobolev inequalities   to conclude   \begin{align}\thelt{bp9 voEwNR uX J 4Zx uRZ Yhc 1h 339T HRXV Fw5XVW 8g a B39 mFS v6M ze znkb LHrt Z73hUu aq L vPh gTl NnV po 1Zgg mnRA qM3X31 OR Y Sj8 Rkt S8V GO jrz1 iblt 3uOuEs 8Q 3 xJ1 cA2 NKo F8 o6U3 mW2H q5y6jp os x Jgw WZ4 Exd 79 Jvlc wauo RDCYZz mp a bV0 9jg ume bz cbug patf 9yU9iB Ey v 3Uh S79 XdI mP NEhN 64Rs 9iHQ84 7j X UCA ufF msn Uu dD4S g3FM LMWbcB Ys 4 JFy Yzl rSf nk xPjO Hhsq lbV5eB ld 5 H6A sVt rHg CN Yn5a C028 FEqoWa KS s 9uu 8xH rbn 1e RIp7 sL8J rF}    \begin{split}    \frac12 \frac{d}{dt}     \OIUYJHUGFAJKLDHFKJLSDHFLKSDJFHLKSDJHFLKSDJHFLKDJFHLLDKHFLKSDHJFALKJHLJLHGLKHHLKJHLKGKHGJKHGKJHLKHJLKJH_{\Omega_0} \bar J|\UIPOIUPOIUPOOYIUIUYOIUYOIUHOIUOIUHIOPUHPOIJPOIJPOUHOIUHOILJHLIUHYOIUYOUI_3^{0.5+\delta}D_{h,l} \theta|^2
    &\leq        P(          \Vert v\Vert_{H^{2.5+\delta}},          \Vert b\Vert_{H^{3.5+\delta}},          \Vert \psi_t\Vert_{H^{2.5+\delta}},          \Vert J\Vert_{H^{3.5+\delta}},           \Vert J_t\Vert_{H^{1.5+\delta}} 	 )          \Vert \zeta\Vert_{H^{1.5+\delta}(\Omega)}^2    \\&\indeq      +        P(          \Vert v\Vert_{H^{2.5+\delta}},          \Vert b\Vert_{H^{3.5+\delta}},
         \Vert \psi_t\Vert_{H^{2.5+\delta}},          \Vert J\Vert_{H^{3.5+\delta}},           \Vert J_t\Vert_{H^{1.5+\delta}} 	 )     \OIUYJHUGFAJKLDHFKJLSDHFLKSDJFHLKSDJHFLKSDJHFLKDJFHLLDKHFLKSDHJFALKJHLJLHGLKHHLKJHLKGKHGJKHGKJHLKHJLKJH_{\Omega_0} \bar J|\UIPOIUPOIUPOOYIUIUYOIUYOIUHOIUOIUHIOPUHPOIJPOIJPOUHOIUHOILJHLIUHYOIUYOUI_3^{0.5+\delta}D_{h,l} \theta|^2    ,    \end{split}    \llabel{8ThswELzXU3X7Ebd1KdZ7v1rN3GiirRXGKWK099ovBM0FDJCvkopYNQ2aN94Z7k0UnUKamE3OjU8DFYFFokbSI2J9V9gVlM8ALWThDPnPu3EL7HPD2VDaZTggzcCCmbvc70qqPcC9mt60ogcrTiA3HEjwTK8ymKeuJMc4q6dVz200XnYUtLR9GYjPXvFOVr6W1zUK1WbPToaWJJuKnxBLnd0ftDEbMmj4loHYyhZyMjM91zQS4p7z8eKa9h0JrbacekcirexG0z4n3299}    \end{align}   for all $h\in\mathbb{R}\backslash \{0\}$ and $l\in\{1,2\}$. Then the a~priori estimates \eqref{8ThswELzXU3X7Ebd1KdZ7v1rN3GiirRXGKWK099ovBM0FDJCvkopYNQ2aN94Z7k0UnUKamE3OjU8DFYFFokbSI2J9V9gVlM8ALWThDPnPu3EL7HPD2VDaZTggzcCCmbvc70qqPcC9mt60ogcrTiA3HEjwTK8ymKeuJMc4q6dVz200XnYUtLR9GYjPXvFOVr6W1zUK1WbPToaWJJuKnxBLnd0ftDEbMmj4loHYyhZyMjM91zQS4p7z8eKa9h0JrbacekcirexG0z4n324} and \eqref{8ThswELzXU3X7Ebd1KdZ7v1rN3GiirRXGKWK099ovBM0FDJCvkopYNQ2aN94Z7k0UnUKamE3OjU8DFYFFokbSI2J9V9gVlM8ALWThDPnPu3EL7HPD2VDaZTggzcCCmbvc70qqPcC9mt60ogcrTiA3HEjwTK8ymKeuJMc4q6dVz200XnYUtLR9GYjPXvFOVr6W1zUK1WbPToaWJJuKnxBLnd0ftDEbMmj4loHYyhZyMjM91zQS4p7z8eKa9h0JrbacekcirexG0z4n339} can then be applied directly to the constructed $\nu>0$ solutions. \par We now pass to the limit as $\nu \to 0$
with solutions for which we have  uniform bounds. For a sequence $\nu_1,\nu_2,\ldots \to0$,  denote the corresponding solution  to the damped system by $(v^{(n)},q^{(n)}, w^{(n)},w_{t}^{(n)})$  and the corresponding matrix coefficient by $a^{(n)}$.  We then have the uniform bound    \begin{align}\thelt{6U3 mW2H q5y6jp os x Jgw WZ4 Exd 79 Jvlc wauo RDCYZz mp a bV0 9jg ume bz cbug patf 9yU9iB Ey v 3Uh S79 XdI mP NEhN 64Rs 9iHQ84 7j X UCA ufF msn Uu dD4S g3FM LMWbcB Ys 4 JFy Yzl rSf nk xPjO Hhsq lbV5eB ld 5 H6A sVt rHg CN Yn5a C028 FEqoWa KS s 9uu 8xH rbn 1e RIp7 sL8J rFQJat og Z c54 yHZ vPx Pk nqRq Gw7h lG6oBk zl E dJS Eig f0Q 1B oCMa nS1u LzlQ3H nA u qHG Plc Iad FL Rkdj aLg0 VAPAn7 c8 D qoV 8bR CvO zq k5e0 Zh3t zJBWBO RS w Zs9 CgF bGo 1E FAK7 Ee}   \begin{split}    &\Vert v^{(n)} \Vert_{L^{\infty}([0,T];H^{2.5+\delta})}       + \Vert q^{(n)} \Vert_{L^{\infty}([0,T];H^{1.5+\delta})}       + \Vert w^{(n)} \Vert_{L^{\infty}([0,T];H^{4+\delta}(\Gamma_{1}))}          + \Vert w_{t}^{(n)} \Vert_{L^{\infty}([0,T];H^{2+\delta}(\Gamma_{1}))}     \\&\indeq  
    \dlkjfhlaskdhjflkasdjhflkasjhdflkasjhdflkasjhdfls       \Vert v_{0} \Vert_{H^{2.5  +\delta}}      + \Vert w_{0} \Vert_{H^{4+\delta}(\Gamma_{1})}       + \Vert w_{1} \Vert_{H^{2+\delta}(\Gamma_{1})}          ,   \end{split}    \label{8ThswELzXU3X7Ebd1KdZ7v1rN3GiirRXGKWK099ovBM0FDJCvkopYNQ2aN94Z7k0UnUKamE3OjU8DFYFFokbSI2J9V9gVlM8ALWThDPnPu3EL7HPD2VDaZTggzcCCmbvc70qqPcC9mt60ogcrTiA3HEjwTK8ymKeuJMc4q6dVz200XnYUtLR9GYjPXvFOVr6W1zUK1WbPToaWJJuKnxBLnd0ftDEbMmj4loHYyhZyMjM91zQS4p7z8eKa9h0JrbacekcirexG0z4n3300}   \end{align} for all $n \in \mathbb{N}$, for a uniform time $T>0$ depending on the initial data, independent of $\nu$. Consequently, $a^{(n)}$ is also uniformly bounded in $L^{\infty}([0,T];H^{3+\delta})$. We may now pass to a subsequence for which   \begin{align}\thelt{ nk xPjO Hhsq lbV5eB ld 5 H6A sVt rHg CN Yn5a C028 FEqoWa KS s 9uu 8xH rbn 1e RIp7 sL8J rFQJat og Z c54 yHZ vPx Pk nqRq Gw7h lG6oBk zl E dJS Eig f0Q 1B oCMa nS1u LzlQ3H nA u qHG Plc Iad FL Rkdj aLg0 VAPAn7 c8 D qoV 8bR CvO zq k5e0 Zh3t zJBWBO RS w Zs9 CgF bGo 1E FAK7 EesL XYWaOP F4 n XFo GQl h3p G7 oNtG 4mpT MwEqV4 pO 8 fMF jfg ktn kw IB8N P60f wfEhjA DF 3 bMq EPV 9U0 o7 fcGq UUL1 0f65lT hL W yoX N4v uSY es 96Sc 2HbJ 0hugJM eB 5 hVa EdL TXr No 2L}    \begin{split}     &v^{(n)} \to v \weaks L^{\infty}([0,T];H^{2.5+\delta})
     \\&     q^{(n)}\to q  \weaks \in  L^{\infty}([0,T];H^{1.5+\delta})      \\&     w^{(n)}\to w \weaks \in  L^{\infty}([0,T];H^{4+\delta}(\Gamma_{1}))      \\&     w_{t}^{(n)}\to w_{t} \weaks L^{\infty}([0,T];H^{2+\delta}(\Gamma_{1}))      \\&     \nu_n w_{t}^{(n)}\to \chi \weak L^{2}([0,T];H^{3+\delta}(\Gamma_{1}))      \\&     w_{tt}^{(n)} \to w_{tt} \weaks  L^{\infty}([0,T];H^{\delta}(\Gamma_{1}))      \\&     \eta^{(n)} \to \eta \weaks  L^{\infty}([0,T];H^{4.5+\delta})      \\&     a^{(n)} \to a \weaks L^{\infty}([0,T];H^{3.5+\delta})
     \\&     a_{t}^{(n)} \to a_{t} \weaks  L^{\infty}([0,T];H^{1.5+\delta})     .    \end{split}    \llabel{8ThswELzXU3X7Ebd1KdZ7v1rN3GiirRXGKWK099ovBM0FDJCvkopYNQ2aN94Z7k0UnUKamE3OjU8DFYFFokbSI2J9V9gVlM8ALWThDPnPu3EL7HPD2VDaZTggzcCCmbvc70qqPcC9mt60ogcrTiA3HEjwTK8ymKeuJMc4q6dVz200XnYUtLR9GYjPXvFOVr6W1zUK1WbPToaWJJuKnxBLnd0ftDEbMmj4loHYyhZyMjM91zQS4p7z8eKa9h0JrbacekcirexG0z4n3301}   \end{align} To pass through the limit in the nonlinear terms,  we need a strong convergence.  Given that we also have   \begin{align}\thelt{c Iad FL Rkdj aLg0 VAPAn7 c8 D qoV 8bR CvO zq k5e0 Zh3t zJBWBO RS w Zs9 CgF bGo 1E FAK7 EesL XYWaOP F4 n XFo GQl h3p G7 oNtG 4mpT MwEqV4 pO 8 fMF jfg ktn kw IB8N P60f wfEhjA DF 3 bMq EPV 9U0 o7 fcGq UUL1 0f65lT hL W yoX N4v uSY es 96Sc 2HbJ 0hugJM eB 5 hVa EdL TXr No 2L78 fJme hCMd6L SW q ktp Mgs kNJ q6 tvZO kgp1 GBBqG4 mA 7 tMV p8F n60 El QGMx joGW CrvQUY V1 K YKL pPz Vhh uX VnWa UVqL xeS9ef sA i 7Lm HXC ARg 4Y JnvB e46D UuQYkd jd z 5Mf PLH oWI }   \begin{split}     v_{t}^{(n)}\to v_t \weaks  L^{\infty}([0,T];H^{0.5+\delta})   \end{split}    \llabel{8ThswELzXU3X7Ebd1KdZ7v1rN3GiirRXGKWK099ovBM0FDJCvkopYNQ2aN94Z7k0UnUKamE3OjU8DFYFFokbSI2J9V9gVlM8ALWThDPnPu3EL7HPD2VDaZTggzcCCmbvc70qqPcC9mt60ogcrTiA3HEjwTK8ymKeuJMc4q6dVz200XnYUtLR9GYjPXvFOVr6W1zUK1WbPToaWJJuKnxBLnd0ftDEbMmj4loHYyhZyMjM91zQS4p7z8eKa9h0JrbacekcirexG0z4n3302}
  \end{align} by the a~priori estimates,  the Aubin-Lions lemma yields    \begin{align}\thelt{Mq EPV 9U0 o7 fcGq UUL1 0f65lT hL W yoX N4v uSY es 96Sc 2HbJ 0hugJM eB 5 hVa EdL TXr No 2L78 fJme hCMd6L SW q ktp Mgs kNJ q6 tvZO kgp1 GBBqG4 mA 7 tMV p8F n60 El QGMx joGW CrvQUY V1 K YKL pPz Vhh uX VnWa UVqL xeS9ef sA i 7Lm HXC ARg 4Y JnvB e46D UuQYkd jd z 5Mf PLH oWI TM jUYM 7Qry u7W8Er 0O g j2f KqX Scl Gm IgqX Tam7 J8UHFq zv b Vvx Niu j6I h7 lxbJ gMQY j5qtga xb M Hwb JT2 tlB si b8i7 zj6F MTLbwJ qH V IiQ 3O0 LNn Ly pZCT VUM1 bcuVYT ej G 3bf hcX}   \begin{split}    v^{(n)}\to v \inn  C([0,T];H^{s})    ,   \end{split}   \llabel{8ThswELzXU3X7Ebd1KdZ7v1rN3GiirRXGKWK099ovBM0FDJCvkopYNQ2aN94Z7k0UnUKamE3OjU8DFYFFokbSI2J9V9gVlM8ALWThDPnPu3EL7HPD2VDaZTggzcCCmbvc70qqPcC9mt60ogcrTiA3HEjwTK8ymKeuJMc4q6dVz200XnYUtLR9GYjPXvFOVr6W1zUK1WbPToaWJJuKnxBLnd0ftDEbMmj4loHYyhZyMjM91zQS4p7z8eKa9h0JrbacekcirexG0z4n3303}   \end{align} for any $s < 2.5+\delta$. Similarly, we can conclude that    \begin{align}\thelt{1 K YKL pPz Vhh uX VnWa UVqL xeS9ef sA i 7Lm HXC ARg 4Y JnvB e46D UuQYkd jd z 5Mf PLH oWI TM jUYM 7Qry u7W8Er 0O g j2f KqX Scl Gm IgqX Tam7 J8UHFq zv b Vvx Niu j6I h7 lxbJ gMQY j5qtga xb M Hwb JT2 tlB si b8i7 zj6F MTLbwJ qH V IiQ 3O0 LNn Ly pZCT VUM1 bcuVYT ej G 3bf hcX 0BV Ql 6Dc1 xiWV K4S4RW 5P y ZEV W8A Yt9 dN VSXa OkkG KiLHhz FY Y K1q NGG EEU 4F xdja S2NR REnhHm B8 V y44 6a3 VCe Ck wjCM e3DG fMiFop vl z Lp5 r0z dXr rB DZQv 9HQ7 XJMJog kJ n sD}   \begin{split}
    a^{(n)}\to a \inn  C([0,T];H^{r})     ,   \end{split}   \llabel{8ThswELzXU3X7Ebd1KdZ7v1rN3GiirRXGKWK099ovBM0FDJCvkopYNQ2aN94Z7k0UnUKamE3OjU8DFYFFokbSI2J9V9gVlM8ALWThDPnPu3EL7HPD2VDaZTggzcCCmbvc70qqPcC9mt60ogcrTiA3HEjwTK8ymKeuJMc4q6dVz200XnYUtLR9GYjPXvFOVr6W1zUK1WbPToaWJJuKnxBLnd0ftDEbMmj4loHYyhZyMjM91zQS4p7z8eKa9h0JrbacekcirexG0z4n3304}   \end{align} for any $r < 3.5+\delta$ since    \begin{align}\thelt{tga xb M Hwb JT2 tlB si b8i7 zj6F MTLbwJ qH V IiQ 3O0 LNn Ly pZCT VUM1 bcuVYT ej G 3bf hcX 0BV Ql 6Dc1 xiWV K4S4RW 5P y ZEV W8A Yt9 dN VSXa OkkG KiLHhz FY Y K1q NGG EEU 4F xdja S2NR REnhHm B8 V y44 6a3 VCe Ck wjCM e3DG fMiFop vl z Lp5 r0z dXr rB DZQv 9HQ7 XJMJog kJ n sDx WzI N7F Uf veeL 0ljk 83TxrJ FD T vEX LZY pEq 5e mBaw Z8VA zvvzOv CK m K2Q ngM MBA Wc UH8F jSJt hocw4l 9q J TVG sq8 yRw 5z qVSp d9Ar UfVDcD l8 B 1o5 iyU R4K Nq b84i OkIQ GIczg2 nc}   \begin{split}    a_{t}^{(n)}\to a_t \weaks L^{\infty}([0,T];H^{1.5+\delta})    .   \end{split}   \llabel{8ThswELzXU3X7Ebd1KdZ7v1rN3GiirRXGKWK099ovBM0FDJCvkopYNQ2aN94Z7k0UnUKamE3OjU8DFYFFokbSI2J9V9gVlM8ALWThDPnPu3EL7HPD2VDaZTggzcCCmbvc70qqPcC9mt60ogcrTiA3HEjwTK8ymKeuJMc4q6dVz200XnYUtLR9GYjPXvFOVr6W1zUK1WbPToaWJJuKnxBLnd0ftDEbMmj4loHYyhZyMjM91zQS4p7z8eKa9h0JrbacekcirexG0z4n3305}     \end{align} We are now ready to pass to the limit in both equations.
Starting with the Euler equations, and denoting the duality pairing  by $\langle \cdot , \cdot  \rangle$,  we pass to the limit as $n \to \infty$ by   \begin{align}\thelt{R REnhHm B8 V y44 6a3 VCe Ck wjCM e3DG fMiFop vl z Lp5 r0z dXr rB DZQv 9HQ7 XJMJog kJ n sDx WzI N7F Uf veeL 0ljk 83TxrJ FD T vEX LZY pEq 5e mBaw Z8VA zvvzOv CK m K2Q ngM MBA Wc UH8F jSJt hocw4l 9q J TVG sq8 yRw 5z qVSp d9Ar UfVDcD l8 B 1o5 iyU R4K Nq b84i OkIQ GIczg2 nc t txd WfL QlN ns g3BB jX2E TiPrpq ig M OSw 4Cg dGP fi G2HN ZhLe aQwyws ii A WrD jo4 LDb jB ZFDr LMuY dt6k6H n9 w p4V k7t ddF rz CKid QPfC RKUedz V8 z ISv ntB qpu 3c p5q7 J4Fg Bq59}   \begin{split}    \langle v_{t}^{(n)} -v_{t}, \phi \rangle  \to 0     \comma \phi \in  C_{0}^{\infty}(\Omega\times(0,T))   .   \end{split}    \llabel{8ThswELzXU3X7Ebd1KdZ7v1rN3GiirRXGKWK099ovBM0FDJCvkopYNQ2aN94Z7k0UnUKamE3OjU8DFYFFokbSI2J9V9gVlM8ALWThDPnPu3EL7HPD2VDaZTggzcCCmbvc70qqPcC9mt60ogcrTiA3HEjwTK8ymKeuJMc4q6dVz200XnYUtLR9GYjPXvFOVr6W1zUK1WbPToaWJJuKnxBLnd0ftDEbMmj4loHYyhZyMjM91zQS4p7z8eKa9h0JrbacekcirexG0z4n3306}   \end{align} We next pass through the limit in the nonlinear terms using   \begin{align}\thelt{F jSJt hocw4l 9q J TVG sq8 yRw 5z qVSp d9Ar UfVDcD l8 B 1o5 iyU R4K Nq b84i OkIQ GIczg2 nc t txd WfL QlN ns g3BB jX2E TiPrpq ig M OSw 4Cg dGP fi G2HN ZhLe aQwyws ii A WrD jo4 LDb jB ZFDr LMuY dt6k6H n9 w p4V k7t ddF rz CKid QPfC RKUedz V8 z ISv ntB qpu 3c p5q7 J4Fg Bq59pS Md E onG 7PQ CzM cW lVR0 iNJh WHVugW PY d IMg tXB 2ZS ax azHe Wp7r fhk4qr Ab J FFG 0li i9M WI l44j s9gN lu46Cf P3 H vS8 vQx Yw9 cE yGYX i3wi 41aIuU eQ X EjG 3XZ IUl 8V SPJV gCJ3}   \begin{split}
    & |\langle (v_{j})^{(n)} (a_{kj})^{(n)} \UIOIUYOIUyHJGKHJLOIUYOIUOIUYOIYIOUYTIUYIOOOIUYOIUYPOIUPOIUPOIUYOIUYOIUYOIUHOUHOHIOUHOIHOIUHOIUHIOUH_{k} (v_{i})^{(n)}  - v_{j} a_{kj} \UIOIUYOIUyHJGKHJLOIUYOIUOIUYOIYIOUYTIUYIOOOIUYOIUYPOIUPOIUPOIUYOIUYOIUYOIUHOUHOHIOUHOIHOIUHOIUHIOUH_{k} v_{i} , \phi \rangle |        \\&\indeq         \dlkjfhlaskdhjflkasdjhflkasjhdflkasjhdflkasjhdfls           \Vert v^{(n)} - v \Vert_{L^{\infty}([0,T];H^{2})}           \Vert a^{(n)} \Vert_{L^{\infty}([0,T];H^{2+\delta})}            \Vert \nabla v^{(n)} \Vert_{L^{\infty}([0,T];H^{1.5+\delta})}           \Vert \phi \Vert_{L^{1}([0,T];H^{-0.5-\delta})}          \\&\indeq\indeq        + \Vert  v \Vert_{L^{\infty}([0,T];H^{0.5+\delta})}           \Vert a^{(n)} - a \Vert_{L^{\infty}([0,T];H^{1.5+\delta})}            \Vert \nabla v^{(n)} \Vert_{L^{\infty}([0,T];H^{1.5+\delta})}           \Vert \phi \Vert_{L^{1}([0,T];H^{-0.5-\delta})}       \\&\indeq\indeq       + \Vert v \Vert_{L^{\infty}([0,T];H^{1.5+\delta})} 
      \Vert a \Vert_{L^{\infty}([0,T];H^{1.5+\delta})}         \Vert \nabla v^{(n)}  -  \nabla v \Vert_{L^{\infty}([0,T];H^{0.5+\delta})}        \Vert \phi \Vert_{L^{1}([0,T];H^{-0.5-\delta})}      ,    \end{split}    \llabel{8ThswELzXU3X7Ebd1KdZ7v1rN3GiirRXGKWK099ovBM0FDJCvkopYNQ2aN94Z7k0UnUKamE3OjU8DFYFFokbSI2J9V9gVlM8ALWThDPnPu3EL7HPD2VDaZTggzcCCmbvc70qqPcC9mt60ogcrTiA3HEjwTK8ymKeuJMc4q6dVz200XnYUtLR9GYjPXvFOVr6W1zUK1WbPToaWJJuKnxBLnd0ftDEbMmj4loHYyhZyMjM91zQS4p7z8eKa9h0JrbacekcirexG0z4n3307}    \end{align} for $\phi \in  C_{0}^{\infty}(\Omega\times(0,T))$. By the strong convergence result above,  the right-hand side goes to zero as  $n \to \infty$. For the other nonlinear term, the argument is similar. It remains to pass through the limit in the pressure term,  for which we have
  \begin{align}\thelt{B ZFDr LMuY dt6k6H n9 w p4V k7t ddF rz CKid QPfC RKUedz V8 z ISv ntB qpu 3c p5q7 J4Fg Bq59pS Md E onG 7PQ CzM cW lVR0 iNJh WHVugW PY d IMg tXB 2ZS ax azHe Wp7r fhk4qr Ab J FFG 0li i9M WI l44j s9gN lu46Cf P3 H vS8 vQx Yw9 cE yGYX i3wi 41aIuU eQ X EjG 3XZ IUl 8V SPJV gCJ3 ZOliZQ LO R zOF VKq lyz 8D 4NB6 M5TQ onmBvi kY 8 8TJ ONa DfE 2u zbcv fL67 bnJUz8 Sd 7 yx5 jWr oXd Jp 0lSy mIK8 bkKzql jN n 4Kx luF hYL g0 FrO6 yRzt wFTK7Q RN 0 1O2 1Zc HNK gR M7GZ}   \begin{split}     |\langle a^{(n)}_{ki} \UIOIUYOIUyHJGKHJLOIUYOIUOIUYOIYIOUYTIUYIOOOIUYOIUYPOIUPOIUPOIUYOIUYOIUYOIUHOUHOHIOUHOIHOIUHOIUHIOUH_{k} q^{(n)} - a_{ki} \UIOIUYOIUyHJGKHJLOIUYOIUOIUYOIYIOUYTIUYIOOOIUYOIUYPOIUPOIUPOIUYOIUYOIUYOIUHOUHOHIOUHOIHOIUHOIUHIOUH_{k} q , \phi \rangle |     &\dlkjfhlaskdhjflkasdjhflkasjhdflkasjhdflkasjhdfls      \Vert a^{(n)} - a \Vert_{L^{\infty}([0,T];H^{1.5+\delta})}      \Vert \nabla q^{(n)} \Vert_{L^{\infty}([0,T];H^{0.5+\delta})}        \Vert \phi \Vert_{L^{1}([0,T];H^{-0.5-\delta})}     \\&\indeq      +  |\langle  \UIOIUYOIUyHJGKHJLOIUYOIUOIUYOIYIOUYTIUYIOOOIUYOIUYPOIUPOIUPOIUYOIUYOIUYOIUHOUHOHIOUHOIHOIUHOIUHIOUH_{k} q^{(n)} - \UIOIUYOIUyHJGKHJLOIUYOIUOIUYOIYIOUYTIUYIOOOIUYOIUYPOIUPOIUPOIUYOIUYOIUYOIUHOUHOHIOUHOIHOIUHOIUHIOUH_{k} q  , a_{ki} \phi \rangle|    .   \end{split}    \llabel{8ThswELzXU3X7Ebd1KdZ7v1rN3GiirRXGKWK099ovBM0FDJCvkopYNQ2aN94Z7k0UnUKamE3OjU8DFYFFokbSI2J9V9gVlM8ALWThDPnPu3EL7HPD2VDaZTggzcCCmbvc70qqPcC9mt60ogcrTiA3HEjwTK8ymKeuJMc4q6dVz200XnYUtLR9GYjPXvFOVr6W1zUK1WbPToaWJJuKnxBLnd0ftDEbMmj4loHYyhZyMjM91zQS4p7z8eKa9h0JrbacekcirexG0z4n3308}   \end{align} The first term on the right hand side again converges to zero 
by strong convergence of $a^{(n)}$ to $a$  for all test functions $  \phi \in  L^{1}([0,T];H^{-0.5+\delta})$. The second term goes to zero as well for all $  \phi \in  L^{1}([0,T];H^{-0.5+\delta})$ by the weak-* convergence of $q^{(n)} \to q$ since the element  $a \phi \in L^{1}([0,T];H^{-0.5-\delta})$ which easily follows from  $ a \in L^{\infty}([0,T];L^{\infty})$. Passing to the limit in the plate equation, we have   \begin{align}\thelt{i9M WI l44j s9gN lu46Cf P3 H vS8 vQx Yw9 cE yGYX i3wi 41aIuU eQ X EjG 3XZ IUl 8V SPJV gCJ3 ZOliZQ LO R zOF VKq lyz 8D 4NB6 M5TQ onmBvi kY 8 8TJ ONa DfE 2u zbcv fL67 bnJUz8 Sd 7 yx5 jWr oXd Jp 0lSy mIK8 bkKzql jN n 4Kx luF hYL g0 FrO6 yRzt wFTK7Q RN 0 1O2 1Zc HNK gR M7GZ 9nB1 Etq8sq lA s fxo tsl 927 c6 Y8IY 8T4x 0DRhoh 07 1 8MZ Joo 1oe hV Lr8A EaLK hyw6Sn Dt h g2H Mt9 D1j UF 5b4w cjll AvvOSh tK 8 06u jYa 0TY O4 pcVX hkOO JVtHN9 8Q q q0J 1Hk Ncm LS}   \begin{split}     &     \langle w_{tt}^{(n)} -w_{tt}, \psi \rangle_{\Gamma_{1}} \to 0     \comma  \psi \in  L^{1}([0,T];H^{-\delta}(\Gamma_{1}) )     \\&    \langle \Delta_2^{2} w^{(n)} -\Delta_2^{2} w, \psi \rangle  \to 0       \comma \psi \in  L^{1}([0,T];H^{-\delta}(\Gamma_{1}) )    .
  \end{split}    \llabel{8ThswELzXU3X7Ebd1KdZ7v1rN3GiirRXGKWK099ovBM0FDJCvkopYNQ2aN94Z7k0UnUKamE3OjU8DFYFFokbSI2J9V9gVlM8ALWThDPnPu3EL7HPD2VDaZTggzcCCmbvc70qqPcC9mt60ogcrTiA3HEjwTK8ymKeuJMc4q6dVz200XnYUtLR9GYjPXvFOVr6W1zUK1WbPToaWJJuKnxBLnd0ftDEbMmj4loHYyhZyMjM91zQS4p7z8eKa9h0JrbacekcirexG0z4n3309}   \end{align} Moreover, since  $(1/n) w_{tt}^{(n)}$ converges weakly in $L^{2}([0,T];H^{\delta}(\Gamma_{1}) )$, by the Aubin-Lions Lemma, we have additionally   \begin{align}\thelt{ jWr oXd Jp 0lSy mIK8 bkKzql jN n 4Kx luF hYL g0 FrO6 yRzt wFTK7Q RN 0 1O2 1Zc HNK gR M7GZ 9nB1 Etq8sq lA s fxo tsl 927 c6 Y8IY 8T4x 0DRhoh 07 1 8MZ Joo 1oe hV Lr8A EaLK hyw6Sn Dt h g2H Mt9 D1j UF 5b4w cjll AvvOSh tK 8 06u jYa 0TY O4 pcVX hkOO JVtHN9 8Q q q0J 1Hk Ncm LS 3MAp Q75A lAkdnM yJ M qAC erD l5y Py s44a 7cY7 sEp6Lq mG 3 V53 pBs 2uP NU M7pX 6sy9 5vSv7i IS 8 VGJ 08Q KhA S3 jIDN TJsf bhIiUN fe H 9Xf 8We Cxm BL gzJT IN5N LhvdBO zP m opx YqM 4}   \begin{split}    \nu_n w_{t}^{(n)} \to \chi \inn L^{2}([0,T];H^{s}(\Gamma_{1}) )   \end{split}   \llabel{8ThswELzXU3X7Ebd1KdZ7v1rN3GiirRXGKWK099ovBM0FDJCvkopYNQ2aN94Z7k0UnUKamE3OjU8DFYFFokbSI2J9V9gVlM8ALWThDPnPu3EL7HPD2VDaZTggzcCCmbvc70qqPcC9mt60ogcrTiA3HEjwTK8ymKeuJMc4q6dVz200XnYUtLR9GYjPXvFOVr6W1zUK1WbPToaWJJuKnxBLnd0ftDEbMmj4loHYyhZyMjM91zQS4p7z8eKa9h0JrbacekcirexG0z4n3310}   \end{align} (strongly) for all $s <3+\delta$. Since we also have   \begin{align}\thelt{h g2H Mt9 D1j UF 5b4w cjll AvvOSh tK 8 06u jYa 0TY O4 pcVX hkOO JVtHN9 8Q q q0J 1Hk Ncm LS 3MAp Q75A lAkdnM yJ M qAC erD l5y Py s44a 7cY7 sEp6Lq mG 3 V53 pBs 2uP NU M7pX 6sy9 5vSv7i IS 8 VGJ 08Q KhA S3 jIDN TJsf bhIiUN fe H 9Xf 8We Cxm BL gzJT IN5N LhvdBO zP m opx YqM 4Vh ky btYg a3XV TTqLyA Hy q Yqo fKP 58n 8q R9AY rRRe tBFxHG g7 p duM 8gm 1Td pl RKIW 9gi5 ZxEEAH De A sfP 5hb xAx bW CvpW k9ca qNibi5 A5 N Y5I lVA S3a hA aB8z zUTu yK55gl DL 5 XO9 }
  \begin{split}    w_{t}^{(n)} \to w_{t} \inn L^{\infty}([0,T];H^{r}(\Gamma_{1}) )   \end{split}    \llabel{8ThswELzXU3X7Ebd1KdZ7v1rN3GiirRXGKWK099ovBM0FDJCvkopYNQ2aN94Z7k0UnUKamE3OjU8DFYFFokbSI2J9V9gVlM8ALWThDPnPu3EL7HPD2VDaZTggzcCCmbvc70qqPcC9mt60ogcrTiA3HEjwTK8ymKeuJMc4q6dVz200XnYUtLR9GYjPXvFOVr6W1zUK1WbPToaWJJuKnxBLnd0ftDEbMmj4loHYyhZyMjM91zQS4p7z8eKa9h0JrbacekcirexG0z4n3311}   \end{align} for all $r < 2+\delta$, we get   \begin{align}\thelt{i IS 8 VGJ 08Q KhA S3 jIDN TJsf bhIiUN fe H 9Xf 8We Cxm BL gzJT IN5N LhvdBO zP m opx YqM 4Vh ky btYg a3XV TTqLyA Hy q Yqo fKP 58n 8q R9AY rRRe tBFxHG g7 p duM 8gm 1Td pl RKIW 9gi5 ZxEEAH De A sfP 5hb xAx bW CvpW k9ca qNibi5 A5 N Y5I lVA S3a hA aB8z zUTu yK55gl DL 5 XO9 CpO RXw rE V1IJ G7wE gpOag9 zb J iGe T6H Emc Ma QpDf yDxh eTNjwf wM x 2Ci pkQ eUj RU VhCf NMo5 DZ4h2a dE j ZTk Ox9 46E eU IZv7 rFL6 dj2dwg Rx g bOb qJs Yms Dq QAss n9g2 kCb1Ms gK f}   \begin{split}    \nu_n w_{t}^{(n)} \to 0 \inn L^{2}([0,T];H^{r}(\Gamma_{1}) )   \end{split}    \llabel{8ThswELzXU3X7Ebd1KdZ7v1rN3GiirRXGKWK099ovBM0FDJCvkopYNQ2aN94Z7k0UnUKamE3OjU8DFYFFokbSI2J9V9gVlM8ALWThDPnPu3EL7HPD2VDaZTggzcCCmbvc70qqPcC9mt60ogcrTiA3HEjwTK8ymKeuJMc4q6dVz200XnYUtLR9GYjPXvFOVr6W1zUK1WbPToaWJJuKnxBLnd0ftDEbMmj4loHYyhZyMjM91zQS4p7z8eKa9h0JrbacekcirexG0z4n3312}   \end{align} for all $r < 2+\delta$. By uniqueness of the limit, this implies $\chi=0$ and   \begin{align}\thelt{ZxEEAH De A sfP 5hb xAx bW CvpW k9ca qNibi5 A5 N Y5I lVA S3a hA aB8z zUTu yK55gl DL 5 XO9 CpO RXw rE V1IJ G7wE gpOag9 zb J iGe T6H Emc Ma QpDf yDxh eTNjwf wM x 2Ci pkQ eUj RU VhCf NMo5 DZ4h2a dE j ZTk Ox9 46E eU IZv7 rFL6 dj2dwg Rx g bOb qJs Yms Dq QAss n9g2 kCb1Ms gK f x0Y jK0 Glr XO 7xI5 WmQH ozMPfC XT m Dk2 Tl0 oRr nZ vAsF r7wY EJHCd1 xz C vMm jeR 4ct k7 cS2f ncvf aN6AO2 nI h 6nk VkN 8tT 8a Jdb7 08jZ ZqvL1Z uT 5 lSW Go0 8cL J1 q3Tm AZF8 qhxaoY}
  \begin{split}    | \nu_n \langle \Delta_2 w_{t}^{(n)},  \psi \rangle_{\Gamma_{1}}     \leq      \Vert \nu_n w_{t}^{(n)} \Vert_{L^{\infty}([0,T];H^{3+\delta}(\Gamma_{1}))}      \Vert \psi \Vert_{L^{1}([0,T];H^{-\delta}(\Gamma_{1}))}  \to 0    \comma \psi \in  L^{1}([0,T];H^{-\delta}(\Gamma_{1}) )    .    \end{split}    \llabel{8ThswELzXU3X7Ebd1KdZ7v1rN3GiirRXGKWK099ovBM0FDJCvkopYNQ2aN94Z7k0UnUKamE3OjU8DFYFFokbSI2J9V9gVlM8ALWThDPnPu3EL7HPD2VDaZTggzcCCmbvc70qqPcC9mt60ogcrTiA3HEjwTK8ymKeuJMc4q6dVz200XnYUtLR9GYjPXvFOVr6W1zUK1WbPToaWJJuKnxBLnd0ftDEbMmj4loHYyhZyMjM91zQS4p7z8eKa9h0JrbacekcirexG0z4n3313}   \end{align} Finally, by weak-* convergence  of the pressure terms on the boundary $\Gamma_{1}$ in $ L^{\infty}([0,T];H^{1}(\Gamma_{1}) )$, we obtain   \begin{align}\thelt{NMo5 DZ4h2a dE j ZTk Ox9 46E eU IZv7 rFL6 dj2dwg Rx g bOb qJs Yms Dq QAss n9g2 kCb1Ms gK f x0Y jK0 Glr XO 7xI5 WmQH ozMPfC XT m Dk2 Tl0 oRr nZ vAsF r7wY EJHCd1 xz C vMm jeR 4ct k7 cS2f ncvf aN6AO2 nI h 6nk VkN 8tT 8a Jdb7 08jZ ZqvL1Z uT 5 lSW Go0 8cL J1 q3Tm AZF8 qhxaoY JC 6 FWR uXH Mx3 Dc w8uJ 87Q4 kXVac6 OO P DZ4 vRt sP0 1h KUkd aCLB iPSAtL u9 W Loy xMa Bvi xH yadn qQSJ WgSCkF 7l H aO2 yGR IlK 3a FZen CWqO 9EyRof Yb k idH Qh1 G2v oh cMPo EUzp 6}   \begin{split}
   \langle q^{(n)}-q,  \psi \rangle_{\Gamma_{1}}      \to 0    \comma \psi \in  L^{1}([0,T];H^{-\delta}(\Gamma_{1}) )    .   \end{split}   \llabel{8ThswELzXU3X7Ebd1KdZ7v1rN3GiirRXGKWK099ovBM0FDJCvkopYNQ2aN94Z7k0UnUKamE3OjU8DFYFFokbSI2J9V9gVlM8ALWThDPnPu3EL7HPD2VDaZTggzcCCmbvc70qqPcC9mt60ogcrTiA3HEjwTK8ymKeuJMc4q6dVz200XnYUtLR9GYjPXvFOVr6W1zUK1WbPToaWJJuKnxBLnd0ftDEbMmj4loHYyhZyMjM91zQS4p7z8eKa9h0JrbacekcirexG0z4n3314}   \end{align} For the divergence term we have   \begin{align}\thelt{cS2f ncvf aN6AO2 nI h 6nk VkN 8tT 8a Jdb7 08jZ ZqvL1Z uT 5 lSW Go0 8cL J1 q3Tm AZF8 qhxaoY JC 6 FWR uXH Mx3 Dc w8uJ 87Q4 kXVac6 OO P DZ4 vRt sP0 1h KUkd aCLB iPSAtL u9 W Loy xMa Bvi xH yadn qQSJ WgSCkF 7l H aO2 yGR IlK 3a FZen CWqO 9EyRof Yb k idH Qh1 G2v oh cMPo EUzp 6f14Ni oa r vW8 OUc 426 Ar sSo7 HiBU KdVs7c Oj a V9K EUt Kne 4V IPuZ c4bP RFB9AB fq c lU2 ct6 PDQ ud t4VO zMMU NrnzJX px k E2N B8p fJi M4 UNg4 Oi1g chfOU6 2a v Nrp cc8 IJm 2W nVXL D}   \begin{split}    |     \langle (a_{ki})^{(n)}\UIOIUYOIUyHJGKHJLOIUYOIUOIUYOIYIOUYTIUYIOOOIUYOIUYPOIUPOIUPOIUYOIUYOIUYOIUHOUHOHIOUHOIHOIUHOIUHIOUH_{k} (v_{i})^{(n)} - a_{ki} \UIOIUYOIUyHJGKHJLOIUYOIUOIUYOIYIOUYTIUYIOOOIUYOIUYPOIUPOIUPOIUYOIUYOIUYOIUHOUHOHIOUHOIHOIUHOIUHIOUH_{k} v_{i}, \rho \rangle     |        &\dlkjfhlaskdhjflkasdjhflkasjhdflkasjhdflkasjhdfls  
      \Vert a^{(n)} - a \Vert_{L^{\infty}([0,T];H^{2.5+\delta})}        \Vert v^{(n)}  \Vert_{L^{\infty}([0,T];H^{2.5+\delta})} \Vert \rho \Vert_{ L^{1}([0,T];H^{-1.5-\delta})}   \\&\indeq               + | \langle \UIOIUYOIUyHJGKHJLOIUYOIUOIUYOIYIOUYTIUYIOOOIUYOIUYPOIUPOIUPOIUYOIUYOIUYOIUHOUHOHIOUHOIHOIUHOIUHIOUH_{k} ((v_{i})^{(n)} -v_{i}),  a_{ki} \rho \rangle|   ,   \end{split}   \llabel{8ThswELzXU3X7Ebd1KdZ7v1rN3GiirRXGKWK099ovBM0FDJCvkopYNQ2aN94Z7k0UnUKamE3OjU8DFYFFokbSI2J9V9gVlM8ALWThDPnPu3EL7HPD2VDaZTggzcCCmbvc70qqPcC9mt60ogcrTiA3HEjwTK8ymKeuJMc4q6dVz200XnYUtLR9GYjPXvFOVr6W1zUK1WbPToaWJJuKnxBLnd0ftDEbMmj4loHYyhZyMjM91zQS4p7z8eKa9h0JrbacekcirexG0z4n3315}     \end{align} for all $ \rho \in L^{1}([0,T];H^{-1.5-\delta}) $. The first term converges to zero as $ n \to \infty$  by the strong convergence of $a^{(n)}$ to $a$,  while the second term goes to zero as $ n \to \infty$  by the weak-* convergence $ \nabla v^{(n)}\to \nabla v$ in  $L^{\infty}([0,T];H^{1.5+ \delta})$, 
since $  a \in L^{\infty}([0,T];L^{\infty})$  and thus $ \rho a \in L^{1}([0,T];H^{-1.5- \delta})$. \par We finally pass to the limit in the boundary condition on $\Gamma_{1}$, to obtain   \begin{align}\thelt{i xH yadn qQSJ WgSCkF 7l H aO2 yGR IlK 3a FZen CWqO 9EyRof Yb k idH Qh1 G2v oh cMPo EUzp 6f14Ni oa r vW8 OUc 426 Ar sSo7 HiBU KdVs7c Oj a V9K EUt Kne 4V IPuZ c4bP RFB9AB fq c lU2 ct6 PDQ ud t4VO zMMU NrnzJX px k E2N B8p fJi M4 UNg4 Oi1g chfOU6 2a v Nrp cc8 IJm 2W nVXL D672 ltZTf8 RD w qTv BXE WuH 2c JtO1 INQU lOmEPv j3 O OvQ SHx iKc 8R vNnJ NNCC 3KXp3J 8w 5 0Ws OTX HHh vL 5kBp Kr5u rqvVFv 8u p qgP RPQ bjC xm e33u JUFh YHBhYM Od 0 1Jt 7yS fVp F0 z}   \begin{split}     | \langle a^{(n)}_{3i} (v_{i})^{(n)} - a_{3i} v_{i}, \xi \rangle_{\Gamma_{1}}|    &\dlkjfhlaskdhjflkasdjhflkasjhdflkasjhdflkasjhdfls     \Vert a^{(n)} - a \Vert_{L^{\infty}([0,T];H^{2.5+\delta})}     \Vert v^{(n)}  \Vert_{L^{\infty}([0,T];H^{2.5+\delta})}      \Vert \xi \Vert_{ L^{1}([0,T];H^{-2-\delta}(\Gamma_{1}) ) }     \\&\indeq       + | \langle (v_{i})^{(n)} -v_{i},  a_{3i} \xi \rangle_{\Gamma_{1}}|    .
  \end{split}   \llabel{8ThswELzXU3X7Ebd1KdZ7v1rN3GiirRXGKWK099ovBM0FDJCvkopYNQ2aN94Z7k0UnUKamE3OjU8DFYFFokbSI2J9V9gVlM8ALWThDPnPu3EL7HPD2VDaZTggzcCCmbvc70qqPcC9mt60ogcrTiA3HEjwTK8ymKeuJMc4q6dVz200XnYUtLR9GYjPXvFOVr6W1zUK1WbPToaWJJuKnxBLnd0ftDEbMmj4loHYyhZyMjM91zQS4p7z8eKa9h0JrbacekcirexG0z4n3316}     \end{align} The first term on the right converges to zero as  $n \to \infty$ for all $\xi \in L^{1}([0,T];H^{-2-\delta}(\Gamma_{1}) ) $,  by the strong convergence of $a^{(n)}$ to $a$.  The second term also converges to $0$ by weak star convergence of $v^{(n)}|_{\Gamma_{1}}$ in $L^{\infty}([0,T];H^{2+\delta}(\Gamma_{1}) )$ since $ a \xi \in  L^{1}([0,T];H^{-2-\delta}(\Gamma_{1}) )$  which is a consequence of $a\in L^{\infty}([0,T];L^{\infty}(\Gamma_{1}) )$. \end{proof} \par \section*{Appendix} Here we provide the proof of the compatibility condition \eqref{8ThswELzXU3X7Ebd1KdZ7v1rN3GiirRXGKWK099ovBM0FDJCvkopYNQ2aN94Z7k0UnUKamE3OjU8DFYFFokbSI2J9V9gVlM8ALWThDPnPu3EL7HPD2VDaZTggzcCCmbvc70qqPcC9mt60ogcrTiA3HEjwTK8ymKeuJMc4q6dVz200XnYUtLR9GYjPXvFOVr6W1zUK1WbPToaWJJuKnxBLnd0ftDEbMmj4loHYyhZyMjM91zQS4p7z8eKa9h0JrbacekcirexG0z4n3130}. Computing the integral of $\tilde{f}$ over $\Omega$, we have
\begin{align*} \OIUYJHUGFAJKLDHFKJLSDHFLKSDJFHLKSDJHFLKSDJHFLKDJFHLLDKHFLKSDHJFALKJHLJLHGLKHHLKJHLKGKHGJKHGKJHLKHJLKJH_{\Omega} \tilde{f} &= \OIUYJHUGFAJKLDHFKJLSDHFLKSDJFHLKSDJHFLKSDJHFLKDJFHLLDKHFLKSDHJFALKJHLJLHGLKHHLKJHLKGKHGJKHGKJHLKHJLKJH_{\Omega} \UIOIUYOIUyHJGKHJLOIUYOIUOIUYOIYIOUYTIUYIOOOIUYOIUYPOIUPOIUPOIUYOIUYOIUYOIUHOUHOHIOUHOIHOIUHOIUHIOUH_{j}(\UIOIUYOIUyHJGKHJLOIUYOIUOIUYOIYIOUYTIUYIOOOIUYOIUYPOIUPOIUPOIUYOIUYOIUYOIUHOUHOHIOUHOIHOIUHOIUHIOUH_{t}\tdb_{ji} \tilde{v}_i)      -          \OIUYJHUGFAJKLDHFKJLSDHFLKSDJFHLKSDJHFLKSDJHFLKDJFHLLDKHFLKSDHJFALKJHLJLHGLKHHLKJHLKGKHGJKHGKJHLKHJLKJH_{\Omega}  \tdb_{ji} \UIOIUYOIUyHJGKHJLOIUYOIUOIUYOIYIOUYTIUYIOOOIUYOIUYPOIUPOIUPOIUYOIUYOIUYOIUHOUHOHIOUHOIHOIUHOIUHIOUH_{j}(\tilde{v}_m \tilde{a}_{km}) \UIOIUYOIUyHJGKHJLOIUYOIUOIUYOIYIOUYTIUYIOOOIUYOIUYPOIUPOIUPOIUYOIUYOIUYOIUHOUHOHIOUHOIHOIUHOIUHIOUH_{k} \tilde{v}_i      + \OIUYJHUGFAJKLDHFKJLSDHFLKSDJFHLKSDJHFLKSDJHFLKDJFHLLDKHFLKSDHJFALKJHLJLHGLKHHLKJHLKGKHGJKHGKJHLKHJLKJH_{\Omega} \tdb_{ji}           \UIOIUYOIUyHJGKHJLOIUYOIUOIUYOIYIOUYTIUYIOOOIUYOIUYPOIUPOIUPOIUYOIUYOIUYOIUHOUHOHIOUHOIHOIUHOIUHIOUH_{j}(J^{-1} \psi_t) \UIOIUYOIUyHJGKHJLOIUYOIUOIUYOIYIOUYTIUYIOOOIUYOIUYPOIUPOIUPOIUYOIUYOIUYOIUHOUHOHIOUHOIHOIUHOIUHIOUH_{3}\tilde{v}_i    \\&\indeq\indeq       + \OIUYJHUGFAJKLDHFKJLSDHFLKSDJFHLKSDJHFLKSDJHFLKDJFHLLDKHFLKSDHJFALKJHLJLHGLKHHLKJHLKGKHGJKHGKJHLKHJLKJH_{\Omega}             \tilde{v}_m \tilde{a}_{km} \UIOIUYOIUyHJGKHJLOIUYOIUOIUYOIYIOUYTIUYIOOOIUYOIUYPOIUPOIUPOIUYOIUYOIUYOIUHOUHOHIOUHOIHOIUHOIUHIOUH_{k}\tdb_{ji} \UIOIUYOIUyHJGKHJLOIUYOIUOIUYOIYIOUYTIUYIOOOIUYOIUYPOIUPOIUPOIUYOIUYOIUYOIUHOUHOHIOUHOIHOIUHOIUHIOUH_{j}\tilde{v}_i      -  \OIUYJHUGFAJKLDHFKJLSDHFLKSDJFHLKSDJHFLKSDJHFLKDJFHLLDKHFLKSDHJFALKJHLJLHGLKHHLKJHLKGKHGJKHGKJHLKHJLKJH_{\Omega}             J^{-1} \psi_t \UIOIUYOIUyHJGKHJLOIUYOIUOIUYOIYIOUYTIUYIOOOIUYOIUYPOIUPOIUPOIUYOIUYOIUYOIUHOUHOHIOUHOIHOIUHOIUHIOUH_{3}\tdb_{ji}\UIOIUYOIUyHJGKHJLOIUYOIUOIUYOIYIOUYTIUYIOOOIUYOIUYPOIUPOIUPOIUYOIUYOIUYOIUHOUHOHIOUHOIHOIUHOIUHIOUH_{j}\tilde{v}_i            + \OIUYJHUGFAJKLDHFKJLSDHFLKSDJFHLKSDJHFLKSDJHFLKDJFHLLDKHFLKSDHJFALKJHLJLHGLKHHLKJHLKGKHGJKHGKJHLKHJLKJH_{\Omega} \mathcal{E}    .
\end{align*} This can be rewritten using the divergence theorem and the product rule as   \begin{align*} \OIUYJHUGFAJKLDHFKJLSDHFLKSDJFHLKSDJHFLKSDJHFLKDJFHLLDKHFLKSDHJFALKJHLJLHGLKHHLKJHLKGKHGJKHGKJHLKHJLKJH_{\Omega} \tilde{f} &= \OIUYJHUGFAJKLDHFKJLSDHFLKSDJFHLKSDJHFLKSDJHFLKDJFHLLDKHFLKSDHJFALKJHLJLHGLKHHLKJHLKGKHGJKHGKJHLKHJLKJH_{\Gamma} \UIOIUYOIUyHJGKHJLOIUYOIUOIUYOIYIOUYTIUYIOOOIUYOIUYPOIUPOIUPOIUYOIUYOIUYOIUHOUHOHIOUHOIHOIUHOIUHIOUH_{t}\tdb_{3i} \tilde{v}_i      -          \OIUYJHUGFAJKLDHFKJLSDHFLKSDJFHLKSDJHFLKSDJHFLKDJFHLLDKHFLKSDHJFALKJHLJLHGLKHHLKJHLKGKHGJKHGKJHLKHJLKJH_{\Omega}  \UIOIUYOIUyHJGKHJLOIUYOIUOIUYOIYIOUYTIUYIOOOIUYOIUYPOIUPOIUPOIUYOIUYOIUYOIUHOUHOHIOUHOIHOIUHOIUHIOUH_{j}( \tdb_{ji} \tilde{v}_m \tilde{a}_{km} \UIOIUYOIUyHJGKHJLOIUYOIUOIUYOIYIOUYTIUYIOOOIUYOIUYPOIUPOIUPOIUYOIUYOIUYOIUHOUHOHIOUHOIHOIUHOIUHIOUH_{k} \tilde{v}_i)      + \OIUYJHUGFAJKLDHFKJLSDHFLKSDJFHLKSDJHFLKSDJHFLKDJFHLLDKHFLKSDHJFALKJHLJLHGLKHHLKJHLKGKHGJKHGKJHLKHJLKJH_{\Omega}  \UIOIUYOIUyHJGKHJLOIUYOIUOIUYOIYIOUYTIUYIOOOIUYOIUYPOIUPOIUPOIUYOIUYOIUYOIUHOUHOHIOUHOIHOIUHOIUHIOUH_{j}( \tdb_{ji}          J^{-1} \psi_t \UIOIUYOIUyHJGKHJLOIUYOIUOIUYOIYIOUYTIUYIOOOIUYOIUYPOIUPOIUPOIUYOIUYOIUYOIUHOUHOHIOUHOIHOIUHOIUHIOUH_{3}\tilde{v}_i)    \\&\indeq\indeq       + \OIUYJHUGFAJKLDHFKJLSDHFLKSDJFHLKSDJHFLKSDJHFLKDJFHLLDKHFLKSDHJFALKJHLJLHGLKHHLKJHLKGKHGJKHGKJHLKHJLKJH_{\Omega}             \tilde{v}_m \tilde{a}_{km} \UIOIUYOIUyHJGKHJLOIUYOIUOIUYOIYIOUYTIUYIOOOIUYOIUYPOIUPOIUPOIUYOIUYOIUYOIUHOUHOHIOUHOIHOIUHOIUHIOUH_{k}(\tdb_{ji} \UIOIUYOIUyHJGKHJLOIUYOIUOIUYOIYIOUYTIUYIOOOIUYOIUYPOIUPOIUPOIUYOIUYOIUYOIUHOUHOHIOUHOIHOIUHOIUHIOUH_{j}\tilde{v}_i)      -  \OIUYJHUGFAJKLDHFKJLSDHFLKSDJFHLKSDJHFLKSDJHFLKDJFHLLDKHFLKSDHJFALKJHLJLHGLKHHLKJHLKGKHGJKHGKJHLKHJLKJH_{\Omega}             J^{-1} \psi_t \UIOIUYOIUyHJGKHJLOIUYOIUOIUYOIYIOUYTIUYIOOOIUYOIUYPOIUPOIUPOIUYOIUYOIUYOIUHOUHOHIOUHOIHOIUHOIUHIOUH_{3}(\tdb_{ji}\UIOIUYOIUyHJGKHJLOIUYOIUOIUYOIYIOUYTIUYIOOOIUYOIUYPOIUPOIUPOIUYOIUYOIUYOIUHOUHOHIOUHOIHOIUHOIUHIOUH_{j}\tilde{v}_i)
           + \OIUYJHUGFAJKLDHFKJLSDHFLKSDJFHLKSDJHFLKSDJHFLKDJFHLLDKHFLKSDHJFALKJHLJLHGLKHHLKJHLKGKHGJKHGKJHLKHJLKJH_{\Omega} \mathcal{E}   . \end{align*} Noting that $b=I$ on $\Gamma_{0}$ and using the divergence theorem again, this can be expressed as  \begin{align*} \OIUYJHUGFAJKLDHFKJLSDHFLKSDJFHLKSDJHFLKSDJHFLKDJFHLLDKHFLKSDHJFALKJHLJLHGLKHHLKJHLKGKHGJKHGKJHLKHJLKJH_{\Omega} \tilde{f} &= \OIUYJHUGFAJKLDHFKJLSDHFLKSDJFHLKSDJHFLKSDJHFLKDJFHLLDKHFLKSDHJFALKJHLJLHGLKHHLKJHLKGKHGJKHGKJHLKHJLKJH_{\Gamma_{1}} \UIOIUYOIUyHJGKHJLOIUYOIUOIUYOIYIOUYTIUYIOOOIUYOIUYPOIUPOIUPOIUYOIUYOIUYOIUHOUHOHIOUHOIHOIUHOIUHIOUH_{t}\tdb_{3i} \tilde{v}_i      -          \OIUYJHUGFAJKLDHFKJLSDHFLKSDJFHLKSDJHFLKSDJHFLKDJFHLLDKHFLKSDHJFALKJHLJLHGLKHHLKJHLKGKHGJKHGKJHLKHJLKJH_{\Gamma}   \tdb_{3i} \tilde{v}_m \tilde{a}_{km} \UIOIUYOIUyHJGKHJLOIUYOIUOIUYOIYIOUYTIUYIOOOIUYOIUYPOIUPOIUPOIUYOIUYOIUYOIUHOUHOHIOUHOIHOIUHOIUHIOUH_{k} \tilde{v}_i      + \OIUYJHUGFAJKLDHFKJLSDHFLKSDJFHLKSDJHFLKSDJHFLKDJFHLLDKHFLKSDHJFALKJHLJLHGLKHHLKJHLKGKHGJKHGKJHLKHJLKJH_{\Gamma}   \tdb_{3i}          J^{-1} \psi_t \UIOIUYOIUyHJGKHJLOIUYOIUOIUYOIYIOUYTIUYIOOOIUYOIUYPOIUPOIUPOIUYOIUYOIUYOIUHOUHOHIOUHOIHOIUHOIUHIOUH_{3}\tilde{v}_i    \\&\indeq\indeq       - \OIUYJHUGFAJKLDHFKJLSDHFLKSDJFHLKSDJHFLKSDJHFLKDJFHLLDKHFLKSDHJFALKJHLJLHGLKHHLKJHLKGKHGJKHGKJHLKHJLKJH_{\Omega}             \UIOIUYOIUyHJGKHJLOIUYOIUOIUYOIYIOUYTIUYIOOOIUYOIUYPOIUPOIUPOIUYOIUYOIUYOIUHOUHOHIOUHOIHOIUHOIUHIOUH_{k}(\tilde{v}_m \tilde{a}_{km}) \tdb_{ji} \UIOIUYOIUyHJGKHJLOIUYOIUOIUYOIYIOUYTIUYIOOOIUYOIUYPOIUPOIUPOIUYOIUYOIUYOIUHOUHOHIOUHOIHOIUHOIUHIOUH_{j}\tilde{v}_i
            + \OIUYJHUGFAJKLDHFKJLSDHFLKSDJFHLKSDJHFLKSDJHFLKDJFHLLDKHFLKSDHJFALKJHLJLHGLKHHLKJHLKGKHGJKHGKJHLKHJLKJH_{\Gamma}             \tilde{v}_m \tilde{a}_{3m} \tdb_{ji} \UIOIUYOIUyHJGKHJLOIUYOIUOIUYOIYIOUYTIUYIOOOIUYOIUYPOIUPOIUPOIUYOIUYOIUYOIUHOUHOHIOUHOIHOIUHOIUHIOUH_{j}\tilde{v}_i      +  \OIUYJHUGFAJKLDHFKJLSDHFLKSDJFHLKSDJHFLKSDJHFLKDJFHLLDKHFLKSDHJFALKJHLJLHGLKHHLKJHLKGKHGJKHGKJHLKHJLKJH_{\Omega}             \UIOIUYOIUyHJGKHJLOIUYOIUOIUYOIYIOUYTIUYIOOOIUYOIUYPOIUPOIUPOIUYOIUYOIUYOIUHOUHOHIOUHOIHOIUHOIUHIOUH_{3}(J^{-1} \psi_t) \tdb_{ji}\UIOIUYOIUyHJGKHJLOIUYOIUOIUYOIYIOUYTIUYIOOOIUYOIUYPOIUPOIUPOIUYOIUYOIUYOIUHOUHOHIOUHOIHOIUHOIUHIOUH_{j}\tilde{v}_i            - \OIUYJHUGFAJKLDHFKJLSDHFLKSDJFHLKSDJHFLKSDJHFLKDJFHLLDKHFLKSDHJFALKJHLJLHGLKHHLKJHLKGKHGJKHGKJHLKHJLKJH_{\Gamma} J^{-1} \psi_t \tdb_{ji}\UIOIUYOIUyHJGKHJLOIUYOIUOIUYOIYIOUYTIUYIOOOIUYOIUYPOIUPOIUPOIUYOIUYOIUYOIUHOUHOHIOUHOIHOIUHOIUHIOUH_{j}\tilde{v}_i            + \OIUYJHUGFAJKLDHFKJLSDHFLKSDJFHLKSDJHFLKSDJHFLKDJFHLLDKHFLKSDHJFALKJHLJLHGLKHHLKJHLKGKHGJKHGKJHLKHJLKJH_{\Omega} \mathcal{E}     . \end{align*} The last five integrals cancel with terms in $\mathcal{E}$. The boundary integral $\OIUYJHUGFAJKLDHFKJLSDHFLKSDJFHLKSDJHFLKSDJHFLKDJFHLLDKHFLKSDHJFALKJHLJLHGLKHHLKJHLKGKHGJKHGKJHLKHJLKJH_{\Gamma}   \tdb_{3i} \tilde{v}_m \tilde{a}_{km} \UIOIUYOIUyHJGKHJLOIUYOIUOIUYOIYIOUYTIUYIOOOIUYOIUYPOIUPOIUPOIUYOIUYOIUYOIUHOUHOHIOUHOIHOIUHOIUHIOUH_{k} \tilde{v}_i$ can be expressed using a similar calculation as in \eqref{8ThswELzXU3X7Ebd1KdZ7v1rN3GiirRXGKWK099ovBM0FDJCvkopYNQ2aN94Z7k0UnUKamE3OjU8DFYFFokbSI2J9V9gVlM8ALWThDPnPu3EL7HPD2VDaZTggzcCCmbvc70qqPcC9mt60ogcrTiA3HEjwTK8ymKeuJMc4q6dVz200XnYUtLR9GYjPXvFOVr6W1zUK1WbPToaWJJuKnxBLnd0ftDEbMmj4loHYyhZyMjM91zQS4p7z8eKa9h0JrbacekcirexG0z4n3339} as  \begin{align*} \OIUYJHUGFAJKLDHFKJLSDHFLKSDJFHLKSDJHFLKSDJHFLKDJFHLLDKHFLKSDHJFALKJHLJLHGLKHHLKJHLKGKHGJKHGKJHLKHJLKJH_{\Gamma} \tdb_{3i} \tilde{v}_m \tilde{a}_{km} \UIOIUYOIUyHJGKHJLOIUYOIUOIUYOIYIOUYTIUYIOOOIUYOIUYPOIUPOIUPOIUYOIUYOIUYOIUHOUHOHIOUHOIHOIUHOIUHIOUH_{k} \tilde{v}_i= \OIUYJHUGFAJKLDHFKJLSDHFLKSDJFHLKSDJHFLKSDJHFLKDJFHLLDKHFLKSDHJFALKJHLJLHGLKHHLKJHLKGKHGJKHGKJHLKHJLKJH_{\Gamma}  \biggl(  \tilde{a}_{3k} \tilde{v}_{k} \tdb_{3i} \UIOIUYOIUyHJGKHJLOIUYOIUOIUYOIYIOUYTIUYIOOOIUYOIUYPOIUPOIUPOIUYOIUYOIUYOIUHOUHOHIOUHOIHOIUHOIUHIOUH_{3} \tilde{v}_i        +      \sum_{j=1}^{2}  \tilde{v}_k \tilde{a}_{jk} \UIOIUYOIUyHJGKHJLOIUYOIUOIUYOIYIOUYTIUYIOOOIUYOIUYPOIUPOIUPOIUYOIUYOIUYOIUHOUHOHIOUHOIHOIUHOIUHIOUH_{j}(\tdb_{3i} \tilde{v}_{i})         + \frac{1}{\UIOIUYOIUyHJGKHJLOIUYOIUOIUYOIYIOUYTIUYIOOOIUYOIUYPOIUPOIUPOIUYOIUYOIUYOIUHOUHOHIOUHOIHOIUHOIUHIOUH_{3} \psi} \tdb_{3k} \tilde{v}_{k} \UIOIUYOIUyHJGKHJLOIUYOIUOIUYOIYIOUYTIUYIOOOIUYOIUYPOIUPOIUPOIUYOIUYOIUYOIUHOUHOHIOUHOIHOIUHOIUHIOUH_{3}\tilde{b}_{3i} \tilde{v}_i
      -  \frac{1}{\UIOIUYOIUyHJGKHJLOIUYOIUOIUYOIYIOUYTIUYIOOOIUYOIUYPOIUPOIUPOIUYOIUYOIUYOIUHOUHOHIOUHOIHOIUHOIUHIOUH_{3} \psi}     \UIOIUYOIUyHJGKHJLOIUYOIUOIUYOIYIOUYTIUYIOOOIUYOIUYPOIUPOIUPOIUYOIUYOIUYOIUHOUHOHIOUHOIHOIUHOIUHIOUH_{j}      \tdb_{3i} \tilde{v}_k \tdb_{jk} \tilde{v}_i \biggr)    . \end{align*} In particular,  \begin{align}\thelt{t6 PDQ ud t4VO zMMU NrnzJX px k E2N B8p fJi M4 UNg4 Oi1g chfOU6 2a v Nrp cc8 IJm 2W nVXL D672 ltZTf8 RD w qTv BXE WuH 2c JtO1 INQU lOmEPv j3 O OvQ SHx iKc 8R vNnJ NNCC 3KXp3J 8w 5 0Ws OTX HHh vL 5kBp Kr5u rqvVFv 8u p qgP RPQ bjC xm e33u JUFh YHBhYM Od 0 1Jt 7yS fVp F0 z6nC K8gr RahMJ6 XH o LGu 4v2 o9Q xO NVY8 8aum 7cZHRN XH p G1a 8KY XMa yT xXIk O5vV 5PSkCp 8P B oBv 9dB mep ms 7DDU aicX Y8Lx8I Bj F Btk e2y ShN GE 7a0o EMFy AUUFkR WW h eDb HhA M6U}    \begin{split}       \tilde{b}_{3i} \tilde{v}_k \tilde{b}_{jk} \UIOIUYOIUyHJGKHJLOIUYOIUOIUYOIYIOUYTIUYIOOOIUYOIUYPOIUPOIUPOIUYOIUYOIUYOIUHOUHOHIOUHOIHOIUHOIUHIOUH_{j}\tilde v_i      &=    \tilde v_k \tilde{b}_{jk}  \UIOIUYOIUyHJGKHJLOIUYOIUOIUYOIYIOUYTIUYIOOOIUYOIUYPOIUPOIUPOIUYOIUYOIUYOIUHOUHOHIOUHOIHOIUHOIUHIOUH_{j} ( \tilde{b}_{3i} \tilde{v}_i) - \tilde{v}_k \tdb_{jk}  \UIOIUYOIUyHJGKHJLOIUYOIUOIUYOIYIOUYTIUYIOOOIUYOIUYPOIUPOIUPOIUYOIUYOIUYOIUHOUHOHIOUHOIHOIUHOIUHIOUH_{j} \tdb_{3i}  \tilde{v}_i. \\      &=     \sum_{j=1}^{2}  \tilde{v}_k \tilde{b}_{jk}  \UIOIUYOIUyHJGKHJLOIUYOIUOIUYOIYIOUYTIUYIOOOIUYOIUYPOIUPOIUPOIUYOIUYOIUYOIUHOUHOHIOUHOIHOIUHOIUHIOUH_{j} ( \tilde{b}_{3i} \tilde{v}_i) + \tilde{v}_k \tilde{b}_{3k}  \UIOIUYOIUyHJGKHJLOIUYOIUOIUYOIYIOUYTIUYIOOOIUYOIUYPOIUPOIUPOIUYOIUYOIUYOIUHOUHOHIOUHOIHOIUHOIUHIOUH_{3} ( \tilde{b}_{3i} \tilde{v}_i)  - \tilde{v}_k \tilde{b}_{jk}  \UIOIUYOIUyHJGKHJLOIUYOIUOIUYOIYIOUYTIUYIOOOIUYOIUYPOIUPOIUPOIUYOIUYOIUYOIUHOUHOHIOUHOIHOIUHOIUHIOUH_{j}  \tilde{b}_{3i}  \tilde{v}_i. \\    \end{split}    \llabel{8ThswELzXU3X7Ebd1KdZ7v1rN3GiirRXGKWK099ovBM0FDJCvkopYNQ2aN94Z7k0UnUKamE3OjU8DFYFFokbSI2J9V9gVlM8ALWThDPnPu3EL7HPD2VDaZTggzcCCmbvc70qqPcC9mt60ogcrTiA3HEjwTK8ymKeuJMc4q6dVz200XnYUtLR9GYjPXvFOVr6W1zUK1WbPToaWJJuKnxBLnd0ftDEbMmj4loHYyhZyMjM91zQS4p7z8eKa9h0JrbacekcirexG0z4n3345}   \end{align} From the definition of $\mathcal{E}$, \eqref{8ThswELzXU3X7Ebd1KdZ7v1rN3GiirRXGKWK099ovBM0FDJCvkopYNQ2aN94Z7k0UnUKamE3OjU8DFYFFokbSI2J9V9gVlM8ALWThDPnPu3EL7HPD2VDaZTggzcCCmbvc70qqPcC9mt60ogcrTiA3HEjwTK8ymKeuJMc4q6dVz200XnYUtLR9GYjPXvFOVr6W1zUK1WbPToaWJJuKnxBLnd0ftDEbMmj4loHYyhZyMjM91zQS4p7z8eKa9h0JrbacekcirexG0z4n3348}, we get cancellation of the first three terms, while only the  integral over  $\Gamma_{1}$ of the last term remains. Hence, 
 we get \begin{align*} \OIUYJHUGFAJKLDHFKJLSDHFLKSDJFHLKSDJHFLKSDJHFLKDJFHLLDKHFLKSDHJFALKJHLJLHGLKHHLKJHLKGKHGJKHGKJHLKHJLKJH_{\Omega} \tilde{f} &= \OIUYJHUGFAJKLDHFKJLSDHFLKSDJFHLKSDJHFLKSDJHFLKDJFHLLDKHFLKSDHJFALKJHLJLHGLKHHLKJHLKGKHGJKHGKJHLKHJLKJH_{\Gamma_{1}} \UIOIUYOIUyHJGKHJLOIUYOIUOIUYOIYIOUYTIUYIOOOIUYOIUYPOIUPOIUPOIUYOIUYOIUYOIUHOUHOHIOUHOIHOIUHOIUHIOUH_{t}\tdb_{3i} \tilde{v}_i             -   \OIUYJHUGFAJKLDHFKJLSDHFLKSDJFHLKSDJHFLKSDJHFLKDJFHLLDKHFLKSDHJFALKJHLJLHGLKHHLKJHLKGKHGJKHGKJHLKHJLKJH_{\Gamma}     \sum_{j=1}^{2}  \tilde{v}_k \tilde{a}_{jk} \UIOIUYOIUyHJGKHJLOIUYOIUOIUYOIYIOUYTIUYIOOOIUYOIUYPOIUPOIUPOIUYOIUYOIUYOIUHOUHOHIOUHOIHOIUHOIUHIOUH_{j}(\psi_{t})              - \OIUYJHUGFAJKLDHFKJLSDHFLKSDJFHLKSDJHFLKSDJHFLKDJFHLLDKHFLKSDHJFALKJHLJLHGLKHHLKJHLKGKHGJKHGKJHLKHJLKJH_{\Gamma} \frac{1}{\UIOIUYOIUyHJGKHJLOIUYOIUOIUYOIYIOUYTIUYIOOOIUYOIUYPOIUPOIUPOIUYOIUYOIUYOIUHOUHOHIOUHOIHOIUHOIUHIOUH_{3} \psi} \psi_{t} \UIOIUYOIUyHJGKHJLOIUYOIUOIUYOIYIOUYTIUYIOOOIUYOIUYPOIUPOIUPOIUYOIUYOIUYOIUHOUHOHIOUHOIHOIUHOIUHIOUH_{3}\tilde{b}_{3i} \tilde{v}_i       +  \OIUYJHUGFAJKLDHFKJLSDHFLKSDJFHLKSDJHFLKSDJHFLKDJFHLLDKHFLKSDHJFALKJHLJLHGLKHHLKJHLKGKHGJKHGKJHLKHJLKJH_{\Gamma_{1}} \frac{1}{\UIOIUYOIUyHJGKHJLOIUYOIUOIUYOIYIOUYTIUYIOOOIUYOIUYPOIUPOIUPOIUYOIUYOIUYOIUHOUHOHIOUHOIHOIUHOIUHIOUH_{3} \psi}     \UIOIUYOIUyHJGKHJLOIUYOIUOIUYOIYIOUYTIUYIOOOIUYOIUYPOIUPOIUPOIUYOIUYOIUYOIUHOUHOHIOUHOIHOIUHOIUHIOUH_{j}      \tdb_{3i} \tilde{v}_k \tdb_{jk} \tilde{v}_i    . \end{align*} Noting that $\psi_{t} =0$ on $\Gamma_{0}$ and $\psi_{t}= w_{t}$ on $\Gamma_{1}$ while  $\OIUYJHUGFAJKLDHFKJLSDHFLKSDJFHLKSDJHFLKSDJHFLKDJFHLLDKHFLKSDHJFALKJHLJLHGLKHHLKJHLKGKHGJKHGKJHLKHJLKJH_{\Gamma_{1}} w_{tt}=0$, this is precisely the integral of $\tilde{g}_{1}$ on $\Gamma_{1}$. \par \colb \section*{Acknowledgments} IK was supported in part by the
NSF grant DMS-1907992. \par \colb \small  \end{document}